\newcommand\HH{\mathrm{H}}
\title{T-dual branes on hyperk\"ahler manifolds}
\author{M\'aria Anna Sis\'ak}
\date{October, 2024}
\begin{document}

\begin{titlepage}
\begin{spacing}{0.5}
    \vspace*{\fill}
\end{spacing}
\begin{spacing}{1.5}
\vspace*{\fill}
\end{spacing}
\begin{spacing}{1.5}
\vspace{4cm}
\centering \textbf{\huge T-dual branes on hyperk\"ahler manifolds}\\[0.5cm]
\textbf{by}\\
\textbf{\large M\'aria Anna Sis\'ak}\\
\textbf{October, 2024}
\vspace*{\fill}
\end{spacing}
\vspace{6cm}
\begin{spacing}{1.15}
\centering
\textnormal{\large A thesis submitted to the\\Graduate School\\of the\\
Institute of Science and Technology Austria\\
in partial fulfillment of the requirements\\
for the degree of\\ Doctor of Philosophy}\\
\vspace*{\fill}
\textnormal{\large Supervisor:\\
Tam\'as Hausel}

\vspace*{\fill}

\end{spacing}
\begin{center}
\includegraphics[width=0.5\textwidth]{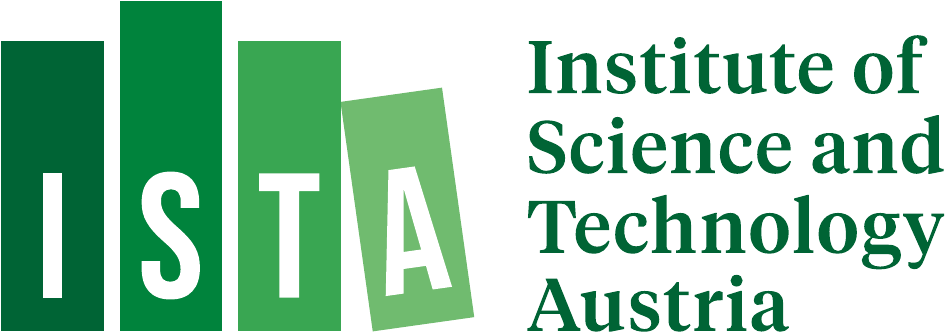}
\end{center}
\end{titlepage}

\begin{abstract}
In \cite{KW} Kapustin and Witten conjectured that there is a mirror symmetry relation between the hyperk\"ahler structures on certain Higgs bundle moduli spaces. As a consequence, they conjecture an equivalence between categories of $BBB$ and $BAA$-branes. At the classical level, this mirror symmetry is given by T-duality between semi-flat hyperk\"ahler structures on algebraic integrable systems.

In this thesis, we investigate the T-duality relation between hyperk\"ahler structures and the corresponding branes on affine torus bundles. We use the techniques of generalized geometry to show that semi-flat hyperk\"ahler structures are T-dual on algebraic integrable systems. We also describe T-duality for generalized branes. Motivated by Fourier-Mukai transform we upgrade the T-duality between generalized branes to T-duality of submanifolds endowed with $U(1)$-bundles and connections. This T-duality in the appropriate context specializes to T-duality between $BBB$ and $BAA$-branes.
\end{abstract}

\renewcommand{\contentsname}{Table of Contents} 
\tableofcontents




\pagestyle{plain} 

\chapter{Introduction}
Mirror symmetry is a conjectured mathematical duality between Calabi-Yau manifolds originating in string theory. It states that certain physical theories on mirror manifolds are isomorphic, in particular, there exists an equivalence between their categories of boundary conditions (branes). One of the main approaches to mirror symmetry is due to Str\"ominger, Yau and Zaslow \cite{SYZ} who proposed that on the "classical level" mirror symmetry is given by T-duality.

Mathematically T-duality describes a relationship between torus bundles fibered over the same base, roughly exchanging the radius of each circle in the fibers. A mathematical definition was first given by Bouwknegt, Evslin and Mathai \cite{BEM}. In physics, T-duality exchanges certain topological twists of supersymmetric sigma models on the T-dual manifolds, the $A$ and $B$ twists \cite{wittenTft}. The $A$ twist is defined using a symplectic structure while a $B$ twist is related to a complex structure, with corresponding boundary conditions called $A$ and $B$-branes.

Initially, the category of $A$-branes on a symplectic manifold was thought to be the category of Lagrangian submanifolds endowed with local systems, the Fukaya category. However, Kapustin and Orlov \cite{kapustinOrlov}, as well as Gualtieri \cite{MGpaper}, independently showed that there exist $A$-branes supported on coisotropic submanifolds endowed with Hermitian line bundles and non-flat connections. The inclusion of such branes into the Fukaya category remains an open problem. In \cite{GW} Gaiotto and Witten gave a conjecture to the space of homomorphisms between a rank one Lagrangian and a space-filling $A$-brane, a method extended by Bischoff and Gualtieri \cite{BG} to define morphisms between generalized branes. Nonetheless, examples of coisotropic branes are rare, especially those that are neither space-filling nor Lagrangian. Additionally, the theory of higher-rank coisotropic branes is still underdeveloped although a definition was given by Herbst in \cite{herbst}.

The category of $B$-branes on a complex manifold is much better understood. It is the derived category of coherent sheaves. On the zeroth level $B$-branes are complex submanifolds endowed with holomorphic vector bundles, but one can argue that "stacking" branes and other physical operations recover the entire derived category.

In the context of \cite{SYZ}, T-duality maps $A$-branes on a symplectic torus fibration with Lagrangian fibers to $B$-branes on a complex torus fibration with real fibers. The first mathematical formulation of this duality is due to Arinkin and Polishchuk \cite{AP} who transformed $A$-branes supported on Lagrangian sections of a torus fibration. Bruzzo, Marelli and Pioli \cite{BMP2} extended this result to Lagrangian submanifolds which intersect the fibers in subtori of arbitrary dimension. They started by describing a Fourier-Mukai type transform for rank one local systems supported on affine subtori of a single real torus \cite{BMP1}.

Glazebook, Jardim and Kamber \cite{GJK} developed a T-duality transformation for Hermitian vector bundles with connections on torus bundles which are flat on the fibers. Here the bundles are not assumed to be supported on Lagrangian submanifolds and it is not assumed that the connection is flat. Their work and \cite{BMP1} indicate that a T-duality transformation should be defined for Hermitian bundles with connections rather than just for $A$-branes. This approach was further developed in \cite{CLZ} by Chan, Leung and Zhang who defined T-duality for projectively flat Hermitian vector bundles supported on submanifolds which intersect the fibers in subtori, even for bundles not flat on the torus fibers. 

The motivation for this work came from the seminal paper of Kapustin and Witten \cite{KW}. The Higgs bundle moduli space with gauge group $G$ is mirror, in the sense of Strominger-Yau-Zaslow \cite{SYZ}, to the Higgs bundle moduli space with the Langlands dual gauge group $G^L$. Kapustin and Witten utilised this duality to reformulate the geometric Langlands program to the language of S-duality between gauge theories. In this process, they defined branes which are boundary conditions in a triple of sigma models corresponding to complex and symplectic structures coming from the hyperk\"ahler structure on the moduli spaces. They called these branes $BBB$ and $BAA$-branes and they conjectured that such branes should map to each other under mirror symmetry.

The hyperk\"ahler structure on the Higgs moduli space is constructed via infinite dimensional hyperk\"ahler quotient. Meanwhile, on certain open subsets of the moduli space, there is a second, simpler hyperk\"ahler structure called the semi-flat hyperk\"ahler structure \cite{freed}. Gaiotto, Moore and Nietzke \cite{GMN1, GMN2} have conjectured that one can understand the original hyperk\"ahler metric on the Higgs moduli space by adding instanton corrections to the semi-flat metric. Similarly, they proposed building mirror symmetry between $BBB$ and $BAA$-branes by starting with $BBB$ and $BAA$-branes of the semi-flat hyper\"ahler structure, where mirror symmetry is T-duality, and adding corrections. 

While T-duality between $A$ and $B$-branes is complicated, in the setting of \cite{KW} T-duality is also supposed to map certain $B$-branes to $B$-branes.  This $B$ to $B$ duality is supposed to be realized by fiberwise Fourier-Mukai transform \cite{mukai}. Although this transform remains poorly understood on the very singular fibers, Arinkin and Fedorov \cite{AF} have extended it up to mildly singular fibers and showed the derived equivalence between large subsets of the moduli spaces. 

This thesis focuses on torus bundles with smooth fibers, where Fourier-Mukai transform is well-defined. Therefore, if a brane is $BBB$ or $BAA$ one can take its T-dual as a $B$-brane and use the result as a motivation. As also hinted by the work of Glazebook, Jardim and Kamber \cite{GJK} and Chan, Leung and Zhang \cite{CLZ} a Fourier-type transform should exist for Hermitian vector bundles with connections. This unified transform should restrict to Fourier-Mukai transform on certain holomorphic objects and to $A$ to $B$ duality on the appropriate $A$-branes.

One of the most successful formalisms unifying $A$ and $B$-theory is generalized geometry introduced by Hitchin in the early 2000s. His theory has since been expanded by many of his students.  In \cite{MGthesis} Gualtieri introduced generalized complex structures which encompass complex and symplectic structures. Moreover, in \cite{MGpaper} he showed that branes have a natural interpretation in this context as well. In \cite{cavalcantiTdual} Cavalcanti and Gualtieri have also reformulated T-duality to the language of generalized geometry. This theory therefore provides the perfect toolkit for a unified understanding of duality between branes.

The aim of this project was to explore the mirror symmetry relationship between $BBB$ and $BAA$-branes in hyperk\"ahler manifolds with applications to the Higgs moduli space. In the end, we restrict our attention to the semi-flat case where mirror symmetry is given by T-duality. We give some insight into the structure of coisotropic $BAA$-branes and explain T-duality for semi-flat hyperk\"ahler structures based on T-duality in generalized geometry. 

We have two sets of results. First, we describe in detail T-duality of generalized branes on affine torus bundles with torsion Chern classes. This is an application of T-duality in generalized geometry but has not been fully worked out yet. We first define a class of generalized branes which we call "locally T-dualizable". Then, we study the conditions under which such branes admit T-duals in this class. 

Let $M\ra B$ and $\hat{M}\ra B$ be a T-dual pair of affine torus bundles with torsion Chern class. We call the fiber product of $M$ and $\hat{M}$ over the base the correspondence space.
\begin{equation}\label{intro1}
\begin{tikzcd}
   &  M\times_B \hat{M} \arrow{dr}{\hat{p}} \arrow{dl}[swap]{p} & \\
    M \arrow{dr}[swap]{\pi} & & \hat{M} \arrow{dl}{\hat{\pi}}\\
    & B & 
\end{tikzcd}
\end{equation}
\begin{defin}[Definition \ref{locally T-dualizable brane}]
A \emph{locally T-dualizable brane} in $M$ is a pair $\cL=(S,F)$, where $S\subset M$ is an affine torus subbundle and $F\in \Omega^2(S)$ is a closed invariant two-form on $S$ which represents a rational cohomology class on the fibers of $S$.
\end{defin} 
We show that when the base of $S$ is contractible a locally T-dualizable brane admits an entire family of T-duals.
\begin{thm}[Theorem \ref{local gg thm}]
Let $\cL=(S,F)$ be a locally T-dualizable brane in $M$ with $\pi(S)$ simply connected. Then, there exists a foliation of $S\times_{\pi(S)}\hat{M}$ by affine torus subbundles denoted by $Z$. For each such $Z$ we denote by $\hat{S}_Z\subset \hat{M}$ the image of $Z$ under the projection $S\times_{\pi(S)}\hat{M}\ra \hat{M}$.

Then, $\hat{S}_Z$ is an affine torus subbundle of $\hat{M}$ over $\pi(S)$. Moreover, there exists a unique closed invariant two-form $\hat{F}_Z$ on $\hat{S}_Z$ which represents a rational cohomology class on the fibers of $\hat{S}_Z$ and satisfies
$$p_Z^*F+P|_Z=\hat{p}_Z^*\hat{F},$$
where $P\in \Omega^2(M\times_B\hat{M})$ is a closed invariant two-form and $p_Z:Z\ra S$ and $\hat{p}_Z:Z\ra \hat{S}_Z$ are the projections.

Whenever two leaves $Z_1$ and $Z_2$ of the foliation have the same image $\hat{S}_{Z_1}=\hat{S}_{Z_2}$ we have
$$\hat{F}_{Z_1}=\hat{F}_{Z_2}. $$
Moreover, then $(\hat{S}_Z,\hat{F}_Z)$ is a T-dual of the generalized brane $(S,F)$.
\end{thm}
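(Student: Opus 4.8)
The plan is to reduce the statement to a fibrewise computation in linear algebra and then integrate it over the simply connected base. Since $\pi(S)$ is simply connected, the local system of fibre lattices of both $M$ and $\hat M$ is trivial over $\pi(S)$, so I may fix a vector space $V$ with lattice $\Lambda$, write the fibre of $M$ as $T=V/\Lambda$ and the fibre of $\hat M$ as the dual torus $\hat T=V^*/\Lambda^*$, and arrange the correspondence two-form $P$ to restrict on each fibre to the standard Poincar\'e pairing $P((u,\xi),(u',\xi'))=\langle u,\xi'\rangle-\langle u',\xi\rangle$. The subbundle $S$ then integrates a fixed rational subspace $U\subseteq V$, and the fibrewise part of $F$ is a constant antisymmetric form $F\in\Lambda^2 U^*$ with $F(\Lambda\cap U,\Lambda\cap U)\subseteq\mathbb Q$. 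The first task is to write the fibre of the generalized tangent bundle $\tau_S^F=\{u+\eta:u\in U,\ \eta|_U=\iota_u F\}$ and apply the fibrewise T-duality isomorphism $\varphi$ of generalized geometry, which on fibres swaps $TT=V$ with $T^*\hat T=V$ and $T^*T=V^*$ with $T\hat T=V^*$. I would then \emph{define} the relevant distribution $D$ on $W:=S\times_{\pi(S)}\hat M$ fibrewise as the subspace $D=\{(u,\xi)\in U\oplus V^*:\xi|_U=\iota_u F\}$ of the fibre tangent $U\oplus V^*$ of $W$; this subspace encodes $\varphi(\tau_S^F)$ in the sense that $\hat p$ will project it onto the T-dual data.

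Next I would produce the foliation. The subspace $D$ is translation invariant and, being built from the flat linear data $(U,F^\flat)$ with $p^*F+P$ closed, induces an integrable distribution on $W$; here simple connectedness of $\pi(S)$ is exactly what lets me integrate it to honest subbundles rather than fight monodromy. The crucial point is \emph{compactness} of the leaves: the leaf through a lattice-compatible point closes into a subtorus only if $D$ is rational with respect to the lattice $(\Lambda\cap U)\oplus\Lambda^*$, which is guaranteed by the rationality of $F$, since $F^\flat(\Lambda\cap U)\subseteq\tfrac1N\Lambda^*|_U$ for some integer $N$. I would thus obtain the foliation by affine torus subbundles $Z$, set $\hat S_Z:=\hat p(Z)$, and compute its fibre: projecting $D$ to $V^*=T\hat T$ gives $\hat U=\{\eta:\eta|_U\in\mathrm{im}\,F^\flat\}=W_0^{\perp}$, where $W_0=\ker F^\flat$ is the radical of $F|_U$. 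Rationality of $W_0$, hence of $\hat U$, shows $\hat S_Z$ is an affine torus subbundle of $\hat M$ over $\pi(S)$.

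To construct $\hat F_Z$ I would descend the form. The vertical tangent of $\hat p_Z\colon Z\to\hat S_Z$ is $W_0\oplus 0\subseteq D$, and a direct check using $\eta|_U=\iota_u F$ shows this vertical space lies in the kernel of $(p^*F+P)|_Z$, both terms vanishing because $W_0$ is the radical of $F|_U$. Hence $(p^*F+P)|_Z$ is basic for $\hat p_Z$ and descends to a unique invariant two-form $\hat F_Z$ on $\hat S_Z$ satisfying $p_Z^*F+P|_Z=\hat p_Z^*\hat F_Z$; closedness of $\hat F_Z$ follows from closedness of $p^*F+P$, its rationality from the integrality of the Poincar\'e pairing together with the rationality of $F$, and uniqueness from the fact that $\hat p_Z$ is a surjective submersion. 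Evaluating the descended form on lifts gives $\hat F_Z(\eta,\eta')=-F(u,u')$ whenever $\iota_u F=\eta|_U$ and $\iota_{u'}F=\eta'|_U$; since the right-hand side depends only on the linear data $(U,F)$ and not on the position of the leaf, any two leaves $Z_1,Z_2$ with $\hat S_{Z_1}=\hat S_{Z_2}$ carry the same form $\hat F_{Z_1}=\hat F_{Z_2}$.

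Finally, to conclude that $(\hat S_Z,\hat F_Z)$ is a T-dual of $(S,F)$, I would verify that its generalized tangent bundle equals $\varphi(\tau_S^F)$: by construction $T\hat S_Z=\hat U$ and the defining relation $\hat\xi|_{\hat U}=\iota_{\hat X}\hat F_Z$ reproduces exactly the description of $\varphi(\tau_S^F)$ obtained in the first step, so the two generalized submanifolds are interchanged by the T-duality isomorphism in the sense defined earlier. The main obstacle I anticipate is not the pointwise linear algebra but the global passage from $D$ to the foliation: one must show that rationality of $F$ forces the leaves to close up into compact affine torus subbundles rather than winding densely, track the remaining translational freedom that yields the $(\dim U-\dim W_0)$-parameter family of leaves sharing a common image $\hat S_Z$, and check that all of this is compatible with the affine, not merely linear, structure of the bundles over $\pi(S)$.
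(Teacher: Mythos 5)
Your fibrewise linear algebra agrees with the paper's (the fibre of $Z$, the image $\hat{U}$, the descended fibrewise form $-F(u,u')$, and the role of rationality all match Lemmas \ref{local leaves coordfree} and \ref{local images of Z}), but there is a genuine gap exactly at the point you flag as the ``main obstacle'': the passage from the fibrewise subspace $D$ to a foliation of $S\times_{\pi(S)}\hat{M}$. You define $D$ inside the vertical tangent $U\oplus V^*$ using only the \emph{fibrewise} component of $F$, and then assert that the flat structure ``induces'' an integrable distribution on all of $W$. But the distribution whose leaves do the job is not $D$ extended by the flat horizontal distribution: in the paper it is $\Delta=\ker\,(F+P)^1$ from (\ref{F+P}), and its horizontal part is twisted by the \emph{mixed} component of $F$. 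In the coordinates of Lemma \ref{invariant}, writing $F=\sum F_{ij}\,dy_i\wedge dy_j+\sum G_{i\mu}\,dy_i\wedge dq^\mu+\sum H_{\mu\nu}\,dq^\mu\wedge dq^\nu$, the horizontal directions in $\Delta$ are $\partial_{y_i}-G_{i\mu}\partial_{\hat{p}_\mu}$, not $\partial_{y_i}$. If you take instead the leaves of $D$ plus the flat horizontal lift, then (i) their images in $\hat{M}$ sit at a \emph{constant} translation, whereas the true T-dual $\hat{S}$ is translated by a primitive $-G$ of the mixed part, which varies over the base, and (ii) the form $p_Z^*F+P|_Z$ fails to be basic for $\hat{p}_Z$: the cross terms $G_{i\mu}\,dy_i\wedge dq^\mu$ cancel only against the $dG$ terms that $P$ acquires when restricted to the $G$-twisted leaves. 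Relatedly, your evaluation $\hat{F}_Z(\eta,\eta')=-F(u,u')$ captures only the fibrewise component of $\hat{F}_Z$; the full form (\ref{hat(F) in coordinates}) also contains the base and mixed components $\sum F_0^{ij}\,dy_i\wedge dy_j-\sum \partial_i b^\mu\, dy_i\wedge d\hat{q}_\mu$, produced by the horizontal directions and by the position functions $b^\mu$ of $S$ inside $M$, which a purely fibrewise computation cannot see.

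The same issue undermines your last step: showing that $(\hat{S}_Z,\hat{F}_Z)$ is T-dual to $(S,F)$ in the sense of Definition \ref{def tdual gg branes} requires matching the \emph{reduced} generalized tangent bundles as invariant sections, including their horizontal components (in the paper's proof these are the conditions involving $\partial\phi_i/\partial y_j$ and $\partial b^\mu/\partial y_j$); identifying the fibrewise data of $(\hat{U},\hat{F}_Z)$ with the fibrewise image of $\tau_{\cL}$ under the T-duality map is not enough. To repair the proposal, define the distribution globally as the kernel of the composition of $p_S^*F+i^*P$ with the projection to relative differentials $\Omega^1_{S\times_{\pi(S)}\hat{M}/\hat{M}}$ --- equivalently, adjoin to $D$ the $G$-twisted horizontal lifts --- prove its integrability using closedness and invariance of $F$ as in Lemma \ref{invariant}, and only then run your descent argument leaf by leaf.
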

To prove this theorem we first construct an integrable distribution on $S\times_{\pi(B)}\hat{M}$ using the two-forms $F\in \Omega^2(S)$ and $P\in \Omega^2(M\times_B\hat{M})$. Then, we show that this distribution has closed leaves, which we denote by $Z$, and the images of these leaves in $\hat{M}$ give the T-dual branes $\hat{S}$. That is, for a T-dual pair of generalized branes $(S,F)$ and $(\hat{S},\hat{F})$, we have the following diagram.
\begin{equation}\label{intro2}
\begin{tikzcd}
   &  (Z, \hat{p}^*\hat{F}-p^*F=P) \arrow{dr}{\hat{p}} \arrow{dl}[swap]{p} & \\
    (S,F) \arrow{dr}[swap]{\pi} & & (\hat{S},\hat{F}) \arrow{dl}{\hat{\pi}}\\
    & \pi(S) & 
\end{tikzcd}
\end{equation}
When the base of $S$ is contractible there is no obstruction to the existence of multiple T-duals. We finally study whether locally T-dualizable branes in non-trivial affine torus bundles admit global T-duals. We find that there is a topological constraint to the existence of such T-duals which is given as follows.
\begin{thm}[Theorem \ref{global thm1}]
    Let $M$ and $\hat{M}$ be a T-dual pair and $\cL=(S,F)$ a locally T-dualizable brane of $M$. Then, $\cL$ has a T-dual if and only if
$$q(c_{\hat{M}})\in \HH^2(\pi(S),\Gamma_S^\vee\cap H(V_S)).$$    
\end{thm}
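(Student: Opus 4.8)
The plan is to globalize the local construction of Theorem~\ref{local gg thm} and to read off the obstruction to gluing from the classifying class $c_{\hat{M}}$ of the affine torus bundle $\hat{M}$. First I would cover $\pi(S)$ by simply connected open sets $\{U_i\}$. By Theorem~\ref{local gg thm}, over each $U_i$ the restricted brane $(S|_{U_i},F|_{U_i})$ admits a T-dual: a choice of leaf $Z_i$ of the foliation of $(S\times_{\pi(S)}\hat{M})|_{U_i}$ gives an affine torus subbundle $\hat{S}_i\subset\hat{M}|_{U_i}$ and a unique form $\hat{F}_i$ with $p_{Z_i}^*F+P|_{Z_i}=\hat{p}_{Z_i}^*\hat{F}_i$. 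A global T-dual is exactly a coherent choice of such data, so the problem becomes one of gluing the $\hat{S}_i$.

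The next step is to pin down the freedom in the local choice. Since two leaves with the same image give the same form, distinct local T-duals over $U_i$ differ by a fibrewise translation of $\hat{M}$, and I would characterize which translations are admissible. Such a translation must keep $\hat{S}_i$ an affine torus subbundle, hence lie in the period lattice $\Gamma_S^\vee$, and it must be absorbable by a change of $\hat{F}$ compatible with the defining relation $p_Z^*F+P|_Z=\hat{p}_Z^*\hat{F}$, hence lie in the image $H(V_S)$ of the map induced by $F$. The admissible directions therefore form the sublattice $\Gamma_S^\vee\cap H(V_S)$, which is simultaneously the period lattice of the fibre subtori that a global $\hat{S}$ may carry together with a T-dual form. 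This intersection of the integrality constraint from $\Gamma_S^\vee$ with the image constraint from $F$ is precisely the coefficient group in the statement.

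Finally, a global T-dual amounts to a global affine torus subbundle $\hat{S}\subset\hat{M}$ over $\pi(S)$ whose fibre period lattice sits inside $\Gamma_S^\vee\cap H(V_S)$; by the uniqueness in Theorem~\ref{local gg thm} the form $\hat{F}$ is then determined and patches automatically. Such affine subbundles are classified in degree two: the obstruction to the existence of $\hat{S}$ is the image of the restriction of $c_{\hat{M}}$ to $\pi(S)$ in the quotient coefficient system $\Gamma_S^\vee/(\Gamma_S^\vee\cap H(V_S))$, and it vanishes exactly when the corresponding class lies in the subgroup $\HH^2(\pi(S),\Gamma_S^\vee\cap H(V_S))$. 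Identifying this restricted, projected class with $q(c_{\hat{M}})$ then gives the stated criterion, and the converse is immediate: a global $\hat{S}$ makes the obstruction a coboundary, forcing $q(c_{\hat{M}})$ into the subgroup. The main obstacle I anticipate is exactly this identification — verifying that $\Gamma_S^\vee\cap H(V_S)$ is neither too large (which would permit spurious subbundles violating the T-dual relation) nor too small (which would exclude genuine reparametrizations of leaves), and then matching the degree-two obstruction with $q(c_{\hat{M}})$ on the nose rather than up to an indeterminate subgroup, all while keeping $p_Z^*F+P|_Z=\hat{p}_Z^*\hat{F}$ satisfied across the gluing.
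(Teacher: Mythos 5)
Your overall architecture coincides with the paper's: produce local T-duals from Theorem \ref{local gg thm}, note that the two-forms patch automatically once the supports do (this is exactly the paper's gluing lemma, and your appeal to the uniqueness clause and the ``same image, same form'' clause of Theorem \ref{local gg thm} is an adequate substitute for its generalized-tangent-bundle proof), and then read off the existence of a global support $\hat{S}$ as a degree-two condition on $c_{\hat{M}}$, using that the Chern class of the ambient bundle is the image of the Chern class of any affine torus subbundle (Lemma \ref{chern class of s}).

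However, the reasoning in your second paragraph is wrong, even though its conclusion is the one you need. The lattice $\Gamma_S^\vee\cap H(V_S)$ does not arise as a group of ``admissible translations'': any fibrewise translation of $\hat{M}$ carries an affine torus subbundle to an affine torus subbundle, and in fact every flat translate of a local T-dual is again a local T-dual --- by Proposition \ref{space of T-duals} the T-duals foliate $\hat{M}|_{\pi(S)}$, parametrized by $coker(H)$ up to the image of $\Gamma_S^\vee$ --- so admissibility in your sense cuts out no sublattice at all. The correct origin of $\Gamma_S^\vee\cap H(V_S)$ is the one you mention only in passing: it is the image under $q$ of the monodromy lattice $\Gamma_{\hat{S}}$ common to all local T-duals, as determined by the local analysis (Lemma \ref{local images of Z}, sequence (\ref{hat(S) lattice})). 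Since your third paragraph only uses this second characterization, the translation argument should simply be deleted. Two further points you gloss over but the paper addresses: (i) for $\Gamma_S^\vee\cap H(V_S)$ to be a well-defined coefficient system over all of $\pi(S)$ one needs $H$ to be globally defined, which the paper extracts from the torsion Chern class of $M$ via degeneration of the Leray spectral sequence (the map (\ref{H global})); and (ii) the equivalence between ``$c_{\hat{M}}|_{\pi(S)}$ lifts to $\HH^2(\pi(S),\Gamma_{\hat{S}})$'' (the statement controlled by Lemma \ref{chern class of s}) and ``$q(c_{\hat{M}})$ lifts to $\HH^2(\pi(S),\Gamma_S^\vee\cap H(V_S))$'' requires a short diagram chase through the two exact sequences with common kernel $Ann(\Gamma_S)$; this is the same chase hiding behind your ``vanishing of the image in the quotient $\Gamma_S^\vee/(\Gamma_S^\vee\cap H(V_S))$'' formulation, and it should be recorded rather than asserted.
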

Here, $c_{\hat{M}}$ is the Chern class of $\hat{M}$, $q$ is a map induced by a morphism of local systems and $\Gamma_S^\vee\cap H(V_S)$ is a sub local system of the dual of $\Gamma_S$ depending on $F$. 

The existence of an affine torus subbundle in a non-trivial affine torus bundle is obstructed by the Chern class and the monodromy. Theorem \ref{global thm1} is a direct consequence of this constraint

Our second set of results is the description of a T-duality transformation for "physical branes" which are $U(n)$-bundles with connections supported on affine torus subbundles. We first focus on rank one branes, that is $U(1)$-bundles. We build on our treatment of generalized branes, so we restrict to connections whose curvatures are invariant two-forms. These results are partially covered by \cite{CLZ} where T-duality is worked out for projectively flat $U(d)$-bundles supported on affine torus subbundles in a special class of affine torus bundles. Our method differs from the method of \cite{CLZ} and we expand to a more general class of torus bundles, specifically to ones without smooth sections. On the other hand, we only treat rank one bundles and a very specific case of higher rank bundles.

Our first theorem parallels Theorem \ref{local gg thm} in that we first assume that the base $\pi(S)$ of the submanifold $S$ is contractible. We upgrade the two-form $P\in \Omega^2(M\times_B\hat{M})$ from Theorem \ref{local gg thm} to a $U(1)$-bundle with connection $\cP$, the relative Poincar\'e bundle.
\begin{thm}[Theorem \ref{tdual U(1) bundles trivial base}]
Let $S\subset M$ be an affine torus subbundle such that $\pi(S)$ is contractible. Let $L\ra S$ be a $U(1)$-bundle with connection as such that the curvature $2\pi i F$ is invariant. Then, $\cL=(S,F)$ is a locally T-dualizable generalized brane and the following hold.
\begin{enumerate}
    \item\label{intro thm1} There exists a unique generalized T-dual $(\hat{S},\hat{F})$ of $(S,F)$ such that for any leaf $Z$ of the foliation on $S\times_{\pi(S)}\hat{M}$ projecting onto $\hat{S}$, the $U(1)$-bundle with connection
$$ \hat{L}_Z:= p_Z^*L\otimes \cP|_Z$$
is trivial when restricted to the fibers of $\hat{p}_Z:Z\ra \hat{S}.$ 
\item\label{intro thm2} For any leaf $Z$ as in 1., we define the  pushforward 
$$\hat{E}:=(\hat{p}_Z)_*\hat{L}_Z,$$
as the direct image of the sheaf of fiber-wise flat sections of $\hat{L}_Z$. Then, $\hat{E}$ is a projectively flat $U(d^2)$-bundle with connection independent of $Z$, and the curvature of the connection is given by $\hat{F} \cdot Id \in \Omega^2(\hat{S},\gu(d^2))$.
\item\label{intro thm3}  Finally, there exists a projectively flat $U(d)$-bundle $\hat{L}\ra \hat{S}$ such that
$$\hat{E} \cong \hat{L}\otimes U(d).$$
The curvature of the connection on $\hat{L}$ is  $\hat{F}\cdot Id\in \Omega^2(\hat{S},\gu(d))$.
\end{enumerate}
We say that $(\hat{L},\hat{S})$ is the T-dual of $(S,L)$.
\end{thm}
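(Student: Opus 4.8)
The plan is to realize $(\hat{L},\hat{S})$ as a fiberwise Fourier--Mukai transform of $(S,L)$, using the generalized T-dual supplied by Theorem \ref{local gg thm} as the geometric backbone and the relative Poincar\'e bundle $\cP$ as the integral kernel. Since $\pi(S)$ is contractible (hence simply connected, so Theorem \ref{local gg thm} applies), I may trivialize all of $S$, $\hat{M}$ and the foliation over the base, so the whole argument is the globalization over a point of the Fourier--Mukai transform on a single torus pair. Throughout, the degree $d$ is the index attached to the rational class of $F$ on the fibers, and the key structural fact I will use repeatedly is that it controls the order $d^2$ of the associated finite ``theta'' group.

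First I would pin down $\hat{S}$ inside the family of generalized T-duals. For any leaf $Z$ the curvature of $\hat{L}_Z=p_Z^*L\otimes\cP|_Z$ is $2\pi i(p_Z^*F+P|_Z)$, and the relation $p_Z^*F+P|_Z=\hat{p}_Z^*\hat{F}_Z$ from Theorem \ref{local gg thm} shows this curvature is pulled back from $\hat{S}_Z$; hence $\hat{L}_Z$ is flat along the fibers of $\hat{p}_Z$. A fiberwise-flat $U(1)$-bundle on $Z\ra\hat{S}_Z$ carries a well-defined fiberwise holonomy, a section of the associated dual torus bundle, and this is precisely the obstruction to $\hat{L}_Z$ being trivial on the fibers. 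As the leaf --- equivalently the translate $\hat{S}_Z$, which ranges over a torus of T-duals --- varies, the holonomy varies affinely and bijectively over the dual torus, so there is a unique $\hat{S}$ on which it vanishes. For that $\hat{S}$, any two leaves projecting onto it differ by a translation along the fibers of $\hat{p}_Z$, which carries the trivial flat bundle to a trivial flat bundle and hence preserves fiberwise triviality; this yields part \ref{intro thm1}, uniqueness included.

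Next, for part \ref{intro thm2} I would form $\hat{E}=(\hat{p}_Z)_*\hat{L}_Z$. Since $\hat{L}_Z$ is flat and trivial on each fiber of $\hat{p}_Z$, the sheaf of fiberwise-flat sections is locally free; its rank is $d^2$ because the fiber of $\hat{p}_Z$ consists of $d^2$ copies of a torus on which $\hat{L}_Z$ is trivial flat, the copies being indexed by a finite group $K$ of order $d^2$ --- the mismatch between the fiber lattice of $Z$ and the pulled-back lattice of $\hat{S}$, i.e.\ the theta group of the degree-$d$ class of $F$. Parallel transport of flat sections along base directions equips $\hat{E}$ with a connection, and as the only surviving curvature of $\hat{L}_Z$ is the scalar $2\pi i\,\hat{p}_Z^*\hat{F}$, the induced curvature on $\hat{E}$ is $\hat{F}\cdot Id\in\Omega^2(\hat{S},\gu(d^2))$; in particular $\hat{E}$ is projectively flat, and since the construction only used the data of $\hat{S}$, it is independent of $Z$.

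Finally, part \ref{intro thm3} is where I expect the main difficulty. The finite group $K$ of order $d^2$ acts on $\hat{E}$ by the fiberwise translations of $\hat{p}_Z$, and this action is parallel for the pushforward connection; the commutator pairing coming from $\cP$ promotes it to a central $U(1)$-extension, a Heisenberg group. By Stone--von Neumann, at each point the fiber of $\hat{E}$ is, as a representation of this Heisenberg group, a sum of $d$ copies of its unique irreducible representation of dimension $d$. The task is to upgrade this pointwise statement to a global, connection-compatible splitting $\hat{E}\cong\hat{L}\otimes U(d)$ with $\hat{L}$ of rank $d$: I would extract $\hat{L}$ as the isotypic factor, using flatness of the Heisenberg action to see that the decomposition is preserved by parallel transport, so that $\hat{L}$ is a genuine subbundle inheriting the scalar curvature $\hat{F}\cdot Id\in\Omega^2(\hat{S},\gu(d))$, hence projectively flat of rank $d$. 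The delicate points --- constructing the Heisenberg extension consistently over all of $\hat{S}$ and verifying that the isotypic splitting is respected by the connection --- are the crux; once they are settled, $(\hat{L},\hat{S})$ is the required T-dual.
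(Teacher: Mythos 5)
Your Parts 1 and 2 follow the paper's route, just phrased invariantly: fiberwise flatness of $\hat{L}_Z$ from $p_Z^*F+P|_Z=\hat{p}_Z^*\hat{F}$, fiberwise holonomy as the obstruction to triviality, and pushforward of fiberwise-flat sections. Two corrections there. First, the holonomy does \emph{not} vary bijectively with the leaf: by Proposition \ref{space of leaves of Delta} the space of leaves is a torus of dimension $\dim V_S$, while the dual torus of the identity component of the fiber of $\hat{p}_Z$ has dimension $\dim V_S-2r$; the map only becomes a bijection after passing to the space of images $\hat{S}_Z$ (Proposition \ref{space of T-duals}, Corollary \ref{space of leaves mapping to a T-dual}), so you must first argue that the holonomy depends only on $\hat{S}_Z$ and not on $Z$. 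Second, your justification of independence of $Z$ in Part 2 (``the construction only used the data of $\hat{S}$'') is not true as stated: the construction manifestly uses $Z$ and $\hat{L}_Z$. Leaves over the same $\hat{S}$ differ by translations by elements of $H(V_S)/H(\Gamma_S)$, and identifying the resulting pushforwards requires the translation-versus-twist lemmas (the analogues of Lemmas \ref{Lemma: translation and tensor} and \ref{Lemma: invariance under kernel}); the paper spends a large part of the proof of Theorem \ref{tdual U(1) bundles trivial base} on exactly this.

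The genuine gap is in Part 3. Stone--von Neumann gives, fiberwise, $\hat{E}\cong V\otimes W$ with $V$ the rank-$d$ irreducible and $W$ the rank-$d$ multiplicity space, and parallelism of the Heisenberg action upgrades this to a decomposition of bundles with connection in which $W$ is \emph{flat}. But flat is not trivial: $\hat{S}$ retains the fundamental group of its torus fibers, and $W$ has non-trivial monodromy. Concretely, in the paper's computation (Theorem \ref{pushforward tensor U(1) nondegen}) the multiplicity factor is the permutation representation $E(\hat{\lambda})=\big(\delta^i_{\hat{\lambda}(i)}\big)$, which diagonalizes into $d$ pairwise distinct flat line bundles $L_{\lambda_i}$, namely the characters killed by the dual isogeny $\hat{f}$. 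So extracting the ``isotypic factor'' only yields $\hat{E}\cong\bigoplus_{i}\big(V\otimes L_{\lambda_i}\big)$; to reach $\hat{E}\cong\hat{L}\otimes U(d)$, i.e.\ $d$ copies of one and the same $\hat{L}$ compatibly with connections, you must additionally prove that $V\otimes L_{\lambda_i}\cong V$ for precisely these flat line bundles --- the semihomogeneity of the pushforward along the degree-$d$ isogeny, equation (\ref{invariance under tensoring with line bundle}), implemented in the paper by the change of coset representatives after the Vandermonde diagonalization. This absorption property is the actual crux, not the two points you flag (global existence of the Heisenberg extension and parallelism of the splitting); without it your argument produces a projectively flat $U(d^2)$-bundle that splits into $d$ mutually non-isomorphic rank-$d$ pieces, which is strictly weaker than the theorem. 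The paper sidesteps the issue by noting that over the contractible base the semi-representation is constant, so Part 3 reduces to the single-torus statements (Theorems \ref{pushforward tensor U(1) nondegen} and \ref{pushforward U(1) degen}), where the absorption is verified by explicit factor-of-automorphy computation.
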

The proof relies on techniques borrowed from many sources. We represent the $U(1)$-bundles with connections by factors of automorphy. This description was used in \cite{BMP1,BMP2}, in particular, the Poincar\'e bundle is described there.

We follow the technique of T-dualizing branes given in \cite{AP} and \cite{BMP1,BMP2}, that is we pull back the $U(1)$-bundle $L$ to the correspondence space and tensor it with the Poincar\'e bundle. When $L$ is flat on the fibers of $S$, the resulting bundle is flat on the fibers of the projection $S\times_{\pi(S)}\hat{M}\ra \hat{M}$. Then one can take fiber-wise flat sections to define the pushforward.

When $L$ is not flat on the fibers, however, neither is its pullback to the correspondence space. Here we use an idea from \cite{GJK} to restrict to the leaves $Z$ defined by the underlying generalized branes (\ref{intro2}). The restriction is now flat on the fibers of $Z\ra \hat{S}$ and we can take flat sections to push forward as we do in Part \ref{intro thm2}.

It turns out, however, that the resulting image is not the genuine T-dual. Instead, it is the T-dual tensored with a trivial bundle (cf. Part \ref{intro thm3}.). We demonstrate that one can recover the genuine T-dual using techniques inspired by the Fourier-Mukai transform.

We generalize the local theorem to T-dual affine torus bundles with non-zero Chern classes. To do this we introduce gerbes and topological T-duality based on Baraglia's work \cite{B2}. In this setting, T-dualizable branes are given by submanifolds endowed with bundles twisted by a gerbe and we also re-define the Poincar\'e bundle as a twisted $U(1)$-bundle. We start with a pair of topologically T-dual affine torus bundles with torsion Chern classes $M$ and $\hat{M}$ endowed with gerbes $\cG$ and $\hat{\cG}$. Our final theorem is as follows. 
\begin{thm}[Theorem \ref{tdual u(1) bundle general base}]
Let $S\subset M$ be an affine torus subbundle and $L\ra S$ a $U(1)$-bundle with connection twisted by $\cG|_S$. Let $F\in \Omega^2(S)$ be the curvature of the connection on $L$. Then, $(S,F)$ is a locally T-dualizable generalized brane and the following holds.
\begin{enumerate}
    \item There exist global affine torus subbundles $Z\subset S\times_{\pi(S)}\hat{M}$, such that over any contractible open set $U\subset \pi(S)$ the submanifold $Z|_{U}$ is a leaf of the local foliation of $S\times_U\hat{M}$ corresponding to the locally T-dualizable brane $(S|_{U},F)$.
    
    \item There exists a submanifold $Z\subset S\times_{\pi(S)}\hat{M}$ as in 1., such that the twisted line bundle $p^*L\otimes \cP$ is trivial on the fibers of $\hat{p}_Z:Z\ra \hat{S}$. Moreover, over $Z$ we have an isomorphism between twisted $U(1)$-bundles
    $$p^*L\otimes \cP \cong \hat{L}_Z$$
    such that $\hat{L}_Z$ is twisted by the gerbe $\hat{p}_Z^*\hat{\cG}$.
    \item The T-dual $(\hat{S},\hat{L})$, locally constructed as in Theorem \ref{tdual U(1) bundles trivial base}, is a rank $d$ T-dualizable brane on $\hat{M}$.
\end{enumerate}
\end{thm}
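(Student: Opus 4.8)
The plan is to globalize Theorem \ref{tdual U(1) bundles trivial base} by a patching argument over the base $\pi(S)$, with the topological criterion of Theorem \ref{global thm1} controlling the only obstruction. First I would choose a cover $\{U_\alpha\}$ of $\pi(S)$ by contractible open sets. Over each $U_\alpha$ the hypotheses of Theorem \ref{tdual U(1) bundles trivial base} hold for the restricted brane $(S|_{U_\alpha},F)$, so I obtain a local foliation of $S\times_{U_\alpha}\hat M$, its distinguished family of leaves, and for a chosen leaf a local T-dual $(\hat S_\alpha,\hat L_\alpha)$. Since the distribution cutting out these foliations is built from the globally defined forms $F\in\Omega^2(S)$ and $P\in\Omega^2(M\times_B\hat M)$, the local foliations automatically agree on the overlaps $U_\alpha\cap U_\beta$; the substance of Part 1 is to select the leaves coherently across the cover.

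For Part 1 I would phrase the choice of a global leaf $Z$ as a gluing problem. Over an overlap $U_\alpha\cap U_\beta$ two local leaves differ by a translation valued in a sub-local-system of $\hat\Gamma$, namely $\Gamma_S^\vee\cap H(V_S)$, and the induced cocycle is exactly the class $q(c_{\hat M})$ appearing in Theorem \ref{global thm1}. The crucial observation is that, in the gerbe formalism of \cite{B2}, the class $c_{\hat M}$ is matched under topological T-duality with the gerbe $\cG$ on $M$; the fact that we are handed a genuine $\cG|_S$-twisted bundle $L$ means its factor of automorphy supplies precisely the translations needed to make the local leaves cohere. In other words, the obstruction of Theorem \ref{global thm1} is absorbed by the twisting, a global affine torus subbundle $Z$ exists, and restricting $Z$ to any contractible $U$ recovers a leaf of the local foliation of $S\times_U\hat M$. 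This establishes Part 1.

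For Part 2 I would track the gerbe twists. By the construction of the twisted Poincar\'e bundle, $\cP$ is twisted by $\hat p^*\hat\cG\otimes p^*\cG^{-1}$, while $p^*L$ is twisted by $p^*\cG|_S$; the two $p^*\cG$ contributions cancel, so $p^*L\otimes\cP$ is twisted only by $\hat p^*\hat\cG$. It then remains to check that the global leaf $Z$ of Part 1 makes $p^*L\otimes\cP|_Z$ flat along the fibers of $\hat p_Z:Z\ra\hat S$. This is the same invariant-curvature computation as in the contractible case carried out globally: the defining condition for $Z$ is precisely the fiberwise-flatness condition, so taking fiberwise-flat sections descends $p^*L\otimes\cP|_Z$ to a $\hat p_Z^*\hat\cG$-twisted bundle $\hat L_Z$ and produces the stated isomorphism $p^*L\otimes\cP\cong\hat L_Z$.

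Finally, for Part 3 I would glue the local T-duals. Because the global leaf $Z$ and the twisted bundle $p^*L\otimes\cP$ are globally defined, the pushforward $(\hat p_Z)_*\hat L_Z$ and its reduction to a projectively flat $U(d)$-bundle $\hat L$, via Parts 2 and 3 of Theorem \ref{tdual U(1) bundles trivial base}, are canonical over each $U_\alpha$ and hence agree on overlaps. This yields a global projectively flat $\hat\cG$-twisted $U(d)$-bundle $\hat L\ra\hat S$ with curvature $\hat F\cdot\mathrm{Id}\in\Omega^2(\hat S,\gu(d))$, so that $(\hat S,\hat L)$ is a rank $d$ T-dualizable brane on $\hat M$. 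I expect the main obstacle to be Part 1: verifying that the existence of the $\cG|_S$-twisted bundle $L$ together with the gerbe $\hat\cG$ really does trivialize the monodromy-and-Chern-class obstruction of Theorem \ref{global thm1}, so that a global leaf $Z$ can be chosen. Once this is in hand, the twist bookkeeping of Part 2 and the descent of Part 3 are comparatively routine, the delicate $U(d^2)\to U(d)$ reduction having already been settled in the local theorem.
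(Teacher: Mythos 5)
Your overall architecture --- localize over contractible opens, invoke Theorem \ref{tdual U(1) bundles trivial base}, and let the gerbe data resolve the gluing obstructions --- is the same as the paper's, but both places where you defer the real work contain genuine gaps. First, in Part 1 you cite the wrong obstruction: Theorem \ref{global thm1} controls only the existence of the global support $\hat{S}$, whereas the existence of global leaves $Z$ is governed by Theorem \ref{gluing of Z}, namely by the two conditions $c\in \HH^2(\pi(S),\Gamma_H)$ and $-H(c)=q(\hat{c})\in \HH^2(\pi(S),\Gamma_S^\vee)$ (also, two local leaves differ by a section of $V_S^*/\Gamma_H^\vee$, not of $\Gamma_S^\vee\cap H(V_S)$). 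More importantly, your ``crucial observation'' that the factor of automorphy of the twisted bundle $L$ ``supplies precisely the translations needed'' is exactly the statement that must be proved, and you give no argument for it --- you flag it yourself as the main obstacle. The paper's entire proof of Part 1 is this verification: the gerbe module structure of $L$ is a strict isomorphism $\delta(L)\cong \cG|_S$, so the gerbe products satisfy $\{g^L_{ijk}\}=\{g_{ijk}\}$ in $\HH^2(\pi(S),(\pi_S)_*\cC_{U(1)})$; a canonical-factor computation, using the global invariant semicharacter of Lemma \ref{global H semichar}, then shows $\delta_2\{g^L_{ijk}\}=H(c)$, while $\delta_2\{g_{ijk}\}=-q(\hat{c})$, which is the required identity. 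Without this computation Part 1 is an assertion, not a proof.

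Second, your Part 2 rests on a false claim: ``the defining condition for $Z$ is precisely the fiberwise-flatness condition.'' The restriction of $p^*L\otimes \cP$ to \emph{every} leaf has curvature $2\pi i\,\hat{p}_Z^*\hat{F}$, hence is flat on every fiber of $\hat{p}_Z$; but flat is not trivial, and by the paper's definition of the pushforward the fiberwise-flat-section direct image of a flat-but-nontrivial $U(1)$-bundle on a torus is zero. In the local theorem only a distinguished subfamily of leaves --- those projecting onto one particular $\hat{S}$ --- has the triviality property, so globally one must show these distinguished local leaves can be chosen coherently; equivalently, that the local sections $[-G_i]$ cutting out the local T-dual supports glue to a global section of $coker(H)/coker(H,\Gamma_S^\vee)$. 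The paper derives this from the gerbe module isomorphisms $\mu_{ij}:L_i\otimes L_j^{-1}\ra L^{\cG}_{ij}|_{S_{ij}}$, whose compatibility with the connections forces a certain combination of the transition data and the $G_i$ to be a constant lattice vector $\hat{n}_{ij}\in\Gamma_S^\vee$, hence zero in the cokernel. This step is absent from your proposal, and Part 3 depends on it; note also that in Part 3 the local pushforwards do not ``agree on overlaps'' --- by the projection formula they glue only up to tensoring with the gerbe line bundles of $\hat{\cG}$, and it is precisely this twisted gluing that makes $\hat{L}$ a $\hat{\cG}$-module rather than an honest bundle.
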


In the final section of this thesis, we study the relationship between T-dual generalized branes and T-dual physical branes. Our final theorem concerns higher-rank T-dual branes with projectively flat connections. These T-dual pairs were treated in \cite{CLZ} for affine torus bundles that admit sections. We extend these results to torus bundles with torsion Chern classes and to branes twisted by gerbes.

\begin{thm}[Theorem \ref{last global}] Let $(S,F)$ and $(\hat{S},\hat{F})$ be a T-dual pair of generalized branes such that there exists a global submanifold $Z\subset S\times_{\pi(S)}\hat{S}$ as in (\ref{intro2}). Let $H$ be the fiberwise component of $F$, and suppose that in a local symplectic frame $\{\mu\}$ of $\Gamma_S$ it can be written as
    $$H=\sum_{i=1}^r \frac{n_i}{m_i}\mu_i^*\wedge \mu_{i+r}^*. $$
    Let $m=\prod_{i=1}^rm_i$ and $n=\prod_{i+1}^rn_i$.
 Then, there exist flat gerbes $\cG'$ and $\hat{\cG}'$ on $S$ and $\hat{S}$ such that the following holds.
    \begin{enumerate}
        \item Over any $Z\subset S\times_{\pi(S)}\hat{S}$ as in (\ref{intro2}) the twisted Poincar\'e bundle is a trivialization of $\hat{p}_Z^*\hat{\cG}'\otimes (p_Z^*\cG')^{-1}$. Moreover,
        $$\cG'\otimes (\cG|_S)^{-1}\cong\pi^*\cG_0\ \ \ \text{and}\ \ \ \hat{\cG}'\otimes (\hat{\cG}|_{\hat{S}})^{-1}\cong\hat{\pi}^*\cG_0 $$
        where $\cG_0$ is a gerbe on $\pi(S)$.
        \item For any $Z\subset S\times_{\pi(S)}\hat{S}$ as in (\ref{intro2}) there exists $U(1)$-bundles $L$ and $\hat{L}$ on $Z$ twisted by $p_Z^*\cG'$ and $\hat{p}_Z^*\hat{\cG}'$ such that we have an isomorphism of gerbe trivializations
    \begin{align}\label{intro tdual 1}\delta(\hat{L})\otimes \delta(L)^{-1} \cong \delta(\cP).\end{align}
        Moreover, $L$ is
        trivial on the fibers of $p_Z:Z\ra S$ and $\hat{L}$ is trivial on the fibers of $\hat{p}_Z:Z\ra \hat{S}$. 
        \item There exists a $\cG'$-module $E$ of rank $m$ on $S$ and a $\hat{\cG}'$-module $\hat{E}$ of rank $n$ on $\hat{S}$ such that
         \begin{align}\label{intro tdual 2}E\otimes U(m)\cong (p_Z)_*L\ \ \text{and}\ \ \hat{E}\otimes U(n)\cong (\hat{p}_Z)_*\hat{L}_Z.\end{align}
         Moreover, the connections on $E$ and $\hat{E}$ are projectively flat with curvatures given by 
         $$F_E=2\pi i F\cdot Id \in \Omega^2(S,\gu(m))\ \ \ \text{and}\ \ \ F_{\hat{E}}=2\pi i \hat{F} \cdot Id \in \Omega^2(\hat{S},\gu(n)).$$
    \end{enumerate}
    \end{thm}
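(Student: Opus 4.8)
The plan is to reduce everything to the local, contractible-base situation already handled in Theorem \ref{tdual U(1) bundles trivial base} and then globalize, using the symplectic normal form of the fiberwise curvature $H$ to organize the higher-rank bookkeeping. The decisive point is that $H=\sum_{i=1}^r \frac{n_i}{m_i}\mu_i^*\wedge\mu_{i+r}^*$ is rational but not integral along the fibers, so $F$ cannot be the curvature of an honest $U(1)$-bundle; the denominators $m_i$ must be absorbed into a flat gerbe twist. I would first record that $H$ has no base component, so the entire fractional obstruction is fiberwise and therefore descends, after clearing denominators, to torsion data of order dividing $m$.

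For part 1, I would define $\cG'$ on $S$ to be the flat gerbe whose torsion Dixmier--Douady class is exactly the obstruction to promoting the fractional form $H$ to an integral curvature; since $m\,H$ is integral on the fibers, the class of $\cG'$ is $m$-torsion and is determined fiberwise by the elementary divisors $m_i$, and the dual construction on $\hat S$ produces $\hat{\cG}'$. The isomorphisms $\cG'\otimes(\cG|_S)^{-1}\cong\pi^*\cG_0$ and $\hat{\cG}'\otimes(\hat{\cG}|_{\hat S})^{-1}\cong\hat\pi^*\cG_0$ then follow because both difference classes are fiberwise trivial, hence pulled back from a single gerbe $\cG_0$ on $\pi(S)$; matching the two descents to the same $\cG_0$ is where I would invoke the topological T-duality framework of \cite{B2}. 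Part 2 is then almost formal: the required trivialization $\delta(\cP)$ of $\hat p_Z^*\hat{\cG}'\otimes(p_Z^*\cG')^{-1}$ is precisely the defining property of the twisted Poincar\'e bundle on the correspondence space, combined with the leafwise triviality built into the foliation of diagram (\ref{intro2}).

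The substance lies in producing the bundles and computing the pushforwards. Using the factor-of-automorphy description of \cite{BMP1,BMP2} I would write down $L$ on $Z$, twisted by $p_Z^*\cG'$ and trivial on the fibers of $p_Z$, and dually $\hat L$, twisted by $\hat p_Z^*\hat{\cG}'$ and trivial on the fibers of $\hat p_Z$, arranged so that their difference of trivializations realizes $\delta(\cP)$ as in (\ref{intro tdual 1}); the leafwise triviality is forced by the same flatness that cut out the leaves $Z$ in the first place. I would then push $L$ forward along $p_Z$ by fiberwise flat sections, exactly as in Theorem \ref{tdual U(1) bundles trivial base}. The key computation is that the fibers of $p_Z$ carry a finite abelian group of order $\prod_i m_i^2=m^2$ — the kernel of the fiberwise isogeny read off from the denominators of $H$ — so that $(p_Z)_*L$ has rank $m^2$; because the fiber directions are integrated out, the induced connection is projectively flat with scalar curvature $2\pi i F\cdot Id$, and symmetrically the $\hat S$-side pushforward has rank $n^2$ with curvature $2\pi i\hat F\cdot Id$.

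The hard part will be the factorization $(p_Z)_*L\cong E\otimes U(m)$ of part 3, i.e. splitting off the trivial rank-$m$ factor to extract the genuine $\cG'$-module $E$ of rank exactly $m$. This is the Fourier--Mukai over-counting phenomenon flagged in the introduction, and I expect to resolve it through the theta-group (finite Heisenberg group) acting on the order-$m^2$ fiber: its unique irreducible representation has dimension $m$, which identifies the trivial multiplicity space $U(m)$ and leaves a rank-$m$ module $E$. The genuinely delicate points are (i) checking that this splitting is connection-preserving, so that $E$ inherits the stated projectively flat connection rather than merely a projectively flat bundle; and (ii) verifying that the locally defined $E$ and its twisting glue to a global $\cG'$-module on $S$ (and dually $\hat E$ on $\hat S$), which is exactly the step at which the descent relations of part 1 and the monodromy constraints underlying Theorem \ref{global thm1} must be reconciled.
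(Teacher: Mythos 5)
Your local skeleton is essentially the paper's: the factor-of-automorphy construction of $L$ and $\hat L$ on the leaves, the count of $\prod_i m_i^2=m^2$ (resp.\ $\prod_i n_i^2=n^2$) components in the fibers of $p_Z$ (resp.\ $\hat p_Z$), and the Heisenberg-type splitting of the rank-$m^2$ pushforward into $E\otimes U(m)$ are exactly how Theorem \ref{last local} together with Theorems \ref{pushforward tensor U(1) nondegen} and \ref{pushforward U(1) degen} proceed. The genuine gap is your construction of $\cG'$ in part 1, and it propagates to parts 2 and 3. You define $\cG'$ as the torsion gerbe obstructing integrality of the fractional fiberwise form $H$, ``determined fiberwise by the elementary divisors $m_i$''. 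But fiberwise there is no obstruction for a gerbe to encode: on a single torus fiber the curvature $2\pi i H\cdot \mathrm{Id}$ is already realized by an \emph{untwisted} projectively flat rank-$m$ bundle (Matsushima's semi-representations --- the very Heisenberg representation you invoke in part 3), so the denominators $m_i$ are absorbed by the rank $m$, not by any twist. The gerbe the theorem needs is a purely global gluing device: the paper defines $\cG'$ by descent from the local T-duals of Theorem \ref{last local}, i.e.\ $p_Z^*\cG'\cong \delta(L)$ where $\{L_i\}$ are the honest local bundles over a cover of $\pi(S)$, and its class is then governed by $q(\hat{c})=-H(c)$ --- the pairing of $H$ with the Chern class --- plus basic corrections (the gerbe product comes out as $\exp(2\pi i\, q(\hat{n}_{ijk})(v)+h_{ijk}(y))$). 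In particular this $\cG'$ is trivializable as a flat gerbe on every fiber of $S$, whereas yours has fiberwise holonomy $[H]\bmod \dZ\neq 0$ whenever some $m_i>1$, independently of the Chern classes (and, as a purely topological gerbe, yours is actually trivial, since the Bockstein of a class lifting to real cohomology vanishes --- another sign the definition is not doing what you want). A concrete failure: take $c=\hat{c}=0$ over a non-contractible base. Then $\cG|_S$ and the paper's $\cG'$ are both pulled back from $\pi(S)$, your $\cG'$ is not, so your isomorphism $\cG'\otimes(\cG|_S)^{-1}\cong\pi^*\cG_0$ fails; moreover $(p_Z)_*L$ is an honest bundle over each $\pi_S^{-1}(U_i)$, which cannot be a module of your $\cG'$ there (your $\cG'$ admits no flat trivialization over $\pi_S^{-1}(U_i)$), so (\ref{intro tdual 2}) becomes an equation between modules of non-isomorphic gerbes.

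Two further soft spots, both repaired once $\cG'$ is defined by descent. First, your argument for $\cG'\otimes(\cG|_S)^{-1}\cong\pi^*\cG_0$ (``both difference classes are fiberwise trivial, hence pulled back'') is insufficient even for the correct $\cG'$: fiberwise triviality kills only the $E^{0,2}$ Leray component of the class, not the $E^{1,1}$ component, so it does not place the class in the image of $\pi^*$; the paper instead compares the explicit gerbe products of $\cG'$ and $\cG|_S$ and checks directly that the difference is basic. Second, your part 2 is circular as stated: the defining property of the twisted Poincar\'e bundle is that it trivializes $p^*\cG\otimes(\hat{p}^*\hat{\cG})^{-1}$, not $\hat{p}_Z^*\hat{\cG}'\otimes(p_Z^*\cG')^{-1}$; the latter is precisely what must be proved, and in the paper it is immediate only because $\cG'$ and $\hat{\cG}'$ are \emph{defined} so that $p_Z^*\cG'\cong\delta(L)$ and $\hat{p}_Z^*\hat{\cG}'\cong\delta(\hat{L})$, after which the local identity $\cP_i\cong L_i^{-1}\otimes\hat{L}_i$ gives (\ref{intro tdual 1}) at once. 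With the descent definition of $\cG',\hat{\cG}'$ in place, the rest of your outline, including the two delicate points you flag, goes through essentially as in the paper.
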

    That is, for any pair of generalized T-duals, we can find higher-rank T-dualizable branes over $S$ and $\hat{S}$ which are T-dual in the sense of (\ref{intro tdual 1}) and (\ref{intro tdual 2}). Moreover, the connections of these higher rank branes are projectively flat and the underlying generalized branes determine their curvatures.

Our results apply to general branes, not just $A$ or $B$-branes. We show that, in the appropriate context, we recover the Fourier-Mukai transform and we explore T-duality between $BBB$ and $BAA$-branes through specific examples.

\textbf{Organization of the thesis.}
Chapter 2 contains the necessary background on affine torus bundles and algebraic integrable systems and we introduce the semi-flat hyperk\"ahler structure. We define it with respect to a flat connection on an algebraic integrable system. This slightly extends the original definition given by Freed \cite{freed}. 

In Chapter 3 we give the necessary background on generalized geometry and we introduce generalized branes. We spend a section studying the structure of $BAA$-branes. We show that whenever the leaf space of a coisotropic brane is a manifold, the brane structure descends to a hyperk\"ahler structure with an indefinite metric on the leaf space. 
\begin{prop}[Proposition \ref{leaf space of coisotropic 1} and Proposition \ref{leaf space of coisotropic 2}]
The leaf space of a coisotropic $BAA$ brane is hypercomplex. Moreover, there exists a symmetric non-degenerate two-tensor on the leaf space, that is a pseudo-Riemannian metric, compatible with all three complex structures.
\end{prop}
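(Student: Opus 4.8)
The plan is to reduce the statement to linear algebra transverse to the characteristic foliation of the brane, and then to assemble the three complex structures and the metric out of the restricted hyperk\"ahler forms. Write $(g,I,J,K)$ for the hyperk\"ahler data, with K\"ahler forms $\omega_I,\omega_J,\omega_K$ and holomorphic symplectic form $\Omega_I=\omega_J+i\omega_K$, and let $C$ denote the support of the $BAA$-brane and $F\in\Omega^2(C)$ its curvature. Unwinding the generalized-brane conditions of Chapter 3, being $BAA$ means: $C$ is an $I$-complex submanifold and $F$ is of type $(1,1)$ for $I$ (the $B$-condition for $I$), while $(C,F)$ satisfies the Kapustin--Orlov coisotropy conditions for $\omega_J$ and for $\omega_K$ (the two $A$-conditions). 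I will use these three inputs repeatedly.

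The decisive first step is to prove that the two characteristic distributions coincide, $\mathcal{N}:=(TC)^{\omega_J}=(TC)^{\omega_K}$. Writing $(TC)^{\omega_J}=\{v\in TC : Jv\perp TC\}$ (with $\perp$ the $g$-orthogonal complement) and using that $C$ is $I$-complex, so that $(TC)^{\perp}$ is $I$-invariant, one gets $v\in(TC)^{\omega_J}\Rightarrow Kv=I(Jv)\in(TC)^{\perp}\Rightarrow v\in(TC)^{\omega_K}$, and symmetrically the converse. This is the heart of the argument: it is precisely the holomorphicity ($B$ for $I$) hypothesis that forces the two $A$-brane null foliations to agree, so that there is a single leaf space $Q:=C/\mathcal{N}$. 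Since $\omega_J,\omega_K,F$ all annihilate $\mathcal{N}$ and are closed, they are basic and descend to forms $\omega_J^Q,\omega_K^Q,F_Q$ on $Q$; here $\omega_J^Q,\omega_K^Q$ are non-degenerate because we have quotiented by exactly their kernel, $F_Q$ is non-degenerate by the Kapustin--Orlov condition, and $I$ descends to $I_Q$ because $TC$ and $\mathcal{N}=\ker(\Omega_I|_{TC})$ are $I$-invariant.

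Next I would produce the hypercomplex triple on $Q$. Set $I_Q$ as above and $P:=(\omega_J^Q)^{-1}F_Q$, $R:=(\omega_K^Q)^{-1}F_Q$; the $A$-brane conditions give $P^2=R^2=-\mathrm{id}$, and $I_Q^2=-\mathrm{id}$. The relation $I_QP=R$ follows from $\Omega_I$ being of type $(2,0)$ for $I$, whose imaginary part reads $\omega_K^Q I_Q=\omega_J^Q$, equivalently $I_Q(\omega_J^Q)^{-1}=(\omega_K^Q)^{-1}$; combined with $P^2=R^2=-\mathrm{id}$ this forces the full quaternion relations $I_QP=R=-PI_Q$ and their cyclic permutations. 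Integrability of $I_Q$ is inherited from $I$, while integrability of $P$ and $R$ is the standard fact that the transverse complex structure of a coisotropic $A$-brane is integrable; thus $(Q,I_Q,P,R)$ is hypercomplex, which is the first proposition.

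For the metric I would set $g_Q(X,Y):=F_Q(X,I_QY)$. Symmetry and $I_Q$-Hermitianity are immediate from $F_Q$ being of type $(1,1)$ for $I_Q$, and non-degeneracy from that of $F_Q$. Compatibility with $P$ and $R$ I would verify by computing the associated fundamental forms: using $F_Q(\cdot,\cdot)=\omega_J^Q(P\cdot,\cdot)=\omega_K^Q(R\cdot,\cdot)$ together with the $(2,0)$-relations one finds $g_Q(P\cdot,\cdot)=\omega_K^Q$ and $g_Q(R\cdot,\cdot)=-\omega_J^Q$, both genuine two-forms, which is exactly $P$- and $R$-Hermitianity. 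The metric need not be definite — only in the space-filling case $C=M$, $F=\omega_I$ does one recover $(g,I,K,-J)$ with $g$ positive definite — so in general $g_Q$ is merely pseudo-Riemannian, as claimed. I expect the main obstacle to be concentrated in the identification of the correct foliation, namely the lemma $(TC)^{\omega_J}=(TC)^{\omega_K}$ guaranteeing a single leaf space on which $\omega_J^Q,\omega_K^Q,F_Q$ are \emph{simultaneously} non-degenerate, together with fixing the integrability of $P,R$ and the signs in the $(2,0)$-relations; once the descent is in place, the quaternion relations and the Hermitianity of $g_Q$ are routine linear algebra.
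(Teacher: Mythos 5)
Your proof is correct and follows the same overall strategy as the paper's: pass to the leaf space of the common characteristic foliation, define the transverse structures $I_Q$, $P=(\omega_J^Q)^{-1}F_Q$, $R=(\omega_K^Q)^{-1}F_Q$ (the paper's $I$, $J=\bar{\rho}_1^{-1}\bar{F}$, $K=\bar{\rho}_2^{-1}\bar{F}$), and take $g_Q=I_Q^*F_Q$ as the metric, with fundamental forms $\omega_K^Q$ and $-\omega_J^Q$ exactly as in Proposition \ref{leaf space of coisotropic 2}. The differences are in execution, and they are worth recording. First, you deduce $\Delta_J=\Delta_K$ from $I$-holomorphicity of the support; in the paper this coincidence is immediate, because both distributions equal $\ker(F|_{TS})$ by the two $A$-brane conditions, so the real content of your orthogonality computation is the $I$-invariance of $\Delta$, which the paper proves by essentially the same argument and which you also need. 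Second, and more substantially, you obtain the quaternion relations purely algebraically on the quotient: the single identity $R=I_QP$, descended from $\omega_K I=\omega_J$ (the type-$(2,0)$ property of $\Omega_I$), combined with $I_Q^2=P^2=R^2=-\mathrm{id}$, forces $PI_Q=-R$ and all cyclic relations. This is a clean shortcut over the paper's proof of Proposition \ref{leaf space of coisotropic 1}, which verifies $IJ=-JI$ and $IJ=K$ separately by lifting vectors to $TS\subset TM$ and tracking annihilator identifications. What the paper's heavier lift formalism buys in exchange is an explicit check that the ambient identities are basic and hence descend to the leaf space; your quotient-level algebra uses this implicitly (for $\omega_K I=\omega_J$ and for the type-$(1,1)$ property of $F$), so you should add the remark---made in the paper for $-I^*\rho_1=\rho_2$---that each tensor involved respects $TS$, $\Delta$ and their annihilators, so the identities do descend. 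Your treatment of integrability ($I_Q$ inherited; $P$, $R$ by the standard coisotropic $A$-brane fact) and of compatibility of $g_Q$ (antisymmetry of the fundamental forms plus symmetry of $g_Q$) is at the same level of rigor as the paper, which likewise delegates integrability to the background section on $A$-branes.
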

This constrains the geometry of coisotropic branes. We also generalize a result of Kamenova and Verbitsky \cite[Theorem 3.1]{kamenovaVerbitsky} to coisotropic submanifolds of integrable systems.
\begin{thm}[Theorem \ref{coisotropic in integrable system}]
Let $\pi:M\ra B$ be an algebraic integrable system. Let $Z\subset M$ be a connected complex coisotropic submanifold such that $Z$ projects to $\pi(Z)$ smoothly and regularly and let $x\in \pi(Z)$. Then, either $Z\cap \pi^{-1}(x)=\pi^{-1}(x)$ or for any leaf $L_z$, of the characteristic foliation, passing through a point $z\in (\pi|_Z)^{-1}(x)$ the intersection $L_z\cap \pi^{-1}(x)$ is a disjoint union of translates of a subtorus in $\pi^{-1}(x)$, which is independent of $z$. In particular, $Z_x:=Z\cap \pi^{-1}(x)$ is foliated by translates of a subtorus in $\pi^{-1}(x)$. 

If moreover, the characteristic foliation on $Z$ has closed leaves, $\pi(Z)$ inherits a special K\"ahler structure from $B$.
\end{thm}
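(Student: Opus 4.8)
The plan is to reduce everything to a pointwise computation in the holomorphic symplectic vector space $T_zM$ and then to globalize along the fiber using the translation-invariant structure of the integrable system. Fix $x\in\pi(Z)$ and $z\in Z_x$, and write $W=T_zZ$, $V=T_z\pi^{-1}(x)$, and $K_z=W^\omega$ for the characteristic distribution, with $n=\dim_{\mathbb{C}}B$. Since the fibers are Lagrangian, $V=V^\omega$, and using $(A\cap B)^\omega=A^\omega+B^\omega$ I would first record the identity $K_z\cap V=(W+V)^\omega$. Because $\pi|_Z$ is a smooth regular map onto $\pi(Z)$ one has $\dim(W\cap V)=\dim Z-\dim\pi(Z)$, and a short count then gives $\dim(K_z\cap V)=n-\dim\pi(Z)$, independent of $z$. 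Dualizing, I would identify $K_z\cap V$ with the annihilator in $V$ of the subspace $\sigma_z(T_x\pi(Z))\subseteq V^*$, where $\sigma_z:T_xB\cong T_zM/V\to V^*$ is the map induced by $\omega$; the Lagrangian condition on $V$ is exactly what makes $\sigma_z$ an isomorphism.

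The key step is to show that, after the canonical translation-invariant trivialization of the vertical tangent bundle along the fiber, the subspace $D_z:=K_z\cap V$ does not depend on $z\in Z_x$. For this I would invoke the defining structure of an algebraic integrable system: the Hamiltonian vector fields $X_{b_i}$ of (pullbacks of) local base coordinates $b_i$ are tangent to the fibers, pairwise commuting, and constitute a translation-invariant frame of the vertical bundle, yielding an identification $T_z\pi^{-1}(x)\cong T_x^*B$ that is constant along the fiber. Pairing $\sigma_z$ against this frame gives $\sigma_z(\partial_{b_j})(X_{b_i})=-\delta_{ij}$, which is manifestly independent of $z$; hence $\sigma_z$, and therefore $D_z\equiv D$, is translation-invariant. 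Thus $D$ is a constant complex subspace of the Lie algebra $\mathfrak{t}$ of the fiber, and the leaves of the resulting foliation are translates of the connected complex subgroup $T_D\subseteq\pi^{-1}(x)$ with Lie algebra $D$.

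From here I would assemble the statement. Each connected component of $L_z\cap\pi^{-1}(x)$ is a connected integral manifold of the constant distribution $D$, hence an open piece of a translate of $T_D$; using that $Z_x=Z\cap\pi^{-1}(x)$ is a closed complex submanifold of the abelian variety $\pi^{-1}(x)$ I would upgrade this to genuine closed translates of the subtorus $\overline{T_D}$, so that $L_z\cap\pi^{-1}(x)$ is a disjoint union of such translates, independent of $z$ (since $D$ is). The dichotomy then falls out of the dimension count: $\dim D=\dim K_z$ precisely when $\dim Z_x=n$, i.e. when $Z$ contains the whole fiber ($Z_x=\pi^{-1}(x)$) and the characteristic leaves are vertical; otherwise $D\subsetneq K_z$ and the intersections are translates of a proper subtorus. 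In either case $Z_x$ is foliated by translates of $T_D$.

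For the final assertion, assume the characteristic foliation on $Z$ has closed leaves. Closedness forces the subtori $T_D$ to be closed, i.e. $D$ to be rational with respect to the period lattice of the fiber; transporting this through $\sigma$ shows that $T_x\pi(Z)$ is compatible with the integral affine structure underlying the special K\"ahler geometry of $B$. I would then verify that $\pi(Z)$ is a special K\"ahler submanifold: it is complex (as $Z$ is complex and projects holomorphically), the special K\"ahler metric restricts non-degenerately (this is the non-degeneracy that made $\sigma_z$ an isomorphism), and the flat torsion-free connection $\nabla$ of $B$ restricts because rationality makes $T\pi(Z)$ a $\nabla$-parallel affine subbundle; one then checks that the compatibilities $d^\nabla J=0$ and $\nabla\omega=0$ survive restriction. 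I expect this last step to be the main obstacle: promoting the ``closed leaves'' hypothesis to rationality of $D$ and, above all, confirming that the restricted connection, complex structure, and form assemble into a genuine special K\"ahler structure rather than merely a K\"ahler one is where the interplay between the foliation's closedness and the flat affine structure of $B$ must be controlled. A secondary technical point is the completeness argument showing that the components of $L_z\cap\pi^{-1}(x)$ are full subtorus translates rather than dense immersed leaves, needed precisely when $T_D$ is not already closed.
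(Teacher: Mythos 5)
Your proposal is correct and follows essentially the same route as the paper's proof: you identify the tangent space of a leaf--fiber intersection as the annihilator of $T_x\pi(Z)$ under the symplectic, translation-invariant identification of the vertical space with $T_x^*B$ (the paper packages this linear algebra as Lemma \ref{coisotropic lemma}, you derive it from $K_z\cap V=(W+V)^\omega$ plus a dimension count), and you then pass from closedness of the leaves to rationality and local constancy of the sublattice to conclude that $T\pi(Z)\subset TB$ is a flat subbundle. The final step you flag as the main obstacle is in fact immediate once flatness of $T\pi(Z)$ is known, since $\nabla\omega=0$, torsion-freeness and $d_\nabla I=0$ restrict automatically to the complex submanifold $\pi(Z)$, exactly as the paper observes.
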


At the beginning of Chapter 4, we introduce T-duality in generalized geometry. This chapter contains the first part of our main results. We first show that on T-dual affine torus bundles endowed with flat connections, the semi-flat hyperk\"ahler structures are indeed T-dual. 
\begin{thm}[Theorem \ref{semiflat tdual}]
    Let $M$ and $\hat{M}$ be algebraically integrable systems which are T-dual in the sense of generalized geometry. Assume that $M$ and $\hat{M}$ are endowed with flat connections and the corresponding semi-flat hyperk\"ahler structures. Then, the semi-flat hyperk\"ahler structure on $M$ is T-dual to the semi-flat hyperk\"ahler structure on $\hat{M}$.
\end{thm}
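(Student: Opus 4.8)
The plan is to translate the semi-flat hyperk\"ahler data into the language of generalized geometry and then apply the Courant-algebroid isomorphism underlying T-duality. A hyperk\"ahler structure $(I,J,K,\omega_I,\omega_J,\omega_K)$ on $M$ determines a generalized complex structure for each point of the twistor sphere, but it suffices to record the two commuting generalized complex structures $\mathcal{J}_1,\mathcal{J}_2$ of the associated generalized K\"ahler structure together with the holomorphic symplectic form $\Omega_I=\omega_J+i\omega_K$ in complex structure $I$. Since the semi-flat structure is by construction invariant under the torus action of the fibration $\pi:M\to B$, all of these tensors are invariant, so they lie in the domain of the T-duality transform: the isomorphism of invariant Courant algebroids $\varphi: TM\oplus T^*M\to T\hat M\oplus T^*\hat M$ induced by the correspondence two-form $P\in\Omega^2(M\times_B\hat M)$ of Theorem \ref{local gg thm}. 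The statement to prove is then that conjugation by $\varphi$, equivalently the fiber-integration transform $\alpha\mapsto \hat p_*(e^{P}\wedge p^*\alpha)$ on invariant forms, carries the generalized structures of the semi-flat datum on $M$ to those of the semi-flat datum on $\hat M$.

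First I would fix the explicit model. On the special K\"ahler base $B$ I would choose special holomorphic coordinates $z^i$, with metric $g_{i\bar j}$ and the flat connection singled out in Chapter 2, and on the fibers the corresponding angle coordinates dual to a frame of $\Gamma$; the dual bundle $\hat M$ then carries the dual lattice $\Gamma^\vee$, the inverse fiber metric, and the conjugate angle coordinates. In this frame the invariant one-forms split into a horizontal piece pulled back from $B$ and a vertical piece $d\theta^a$, and $\varphi$ is the standard T-duality exchange fixing the horizontal directions while swapping the vertical $d\theta^a$ with the vertical vector fields $\partial_{\hat\theta_a}$ (dually $d\hat\theta_a\leftrightarrow\partial_{\theta^a}$). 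I would write $\omega_I,\omega_J,\omega_K$ out in these coordinates, separating base-base, base-fiber and fiber-fiber components, so that the action of $\varphi$ on each becomes a finite linear computation in the coefficients $g_{i\bar j}$.

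By the rigidity of hyperk\"ahler structures it suffices to verify the transformation on a generating pair rather than on all three forms. Concretely I would check that the T-dual of $\Omega_I$ is the holomorphic symplectic form $\hat\Omega_{\hat I}$ of $\hat M$ and that the T-dual of $\omega_I$ (equivalently of the complex structure $I$, under which $\pi$ is a holomorphic Lagrangian fibration) is the corresponding object on $\hat M$; the remaining structure, the compatibility relations such as $I=JK$, and the closedness of the three forms are then automatic because $\varphi$ is a Courant-algebroid isomorphism and hence preserves integrability of generalized complex structures. In complex structure $I$ the fibers are Lagrangian, so T-duality there is the fiberwise Fourier--Mukai/Legendre transform and the matching of $\Omega_I$ with $\hat\Omega_{\hat I}$ is essentially the duality of the fiber torus with its dual; the genuinely metric-dependent content sits in the $\omega_J,\omega_K$ directions, where the fibers are not holomorphic and the transform mixes base and fiber through $g_{i\bar j}$.

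The main obstacle I anticipate is precisely this metric bookkeeping: I must confirm that the fiber-integration transform produces exactly the \emph{inverse} special K\"ahler metric on the dual fibers, i.e. that the coefficients appearing after transforming $\omega_J,\omega_K$ are the ones dictated by the dual special K\"ahler structure defining the semi-flat datum on $\hat M$, and that no residual $B$-field survives (consistent with the torsion Chern classes allowed in Theorem \ref{local gg thm}). A secondary point requiring care is the role of the chosen flat connection, since the semi-flat structure of Chapter 2 is defined relative to it; I would need to check that the horizontal splitting used to define $\varphi$ is induced by the same connection, so that the transform of the $M$-datum lands on the $\hat M$-datum built from the dual connection rather than on a gauge-equivalent but distinct structure. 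Once the coefficients are matched and positivity and compatibility are seen to be preserved, the hyperk\"ahler T-duality follows.
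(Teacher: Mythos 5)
Your overall strategy---compute the T-duality map explicitly in coordinates adapted to the flat connections, check it on a generating set of invariant generalized complex structures, and use multiplicativity of $T$ to propagate to the rest---is essentially the paper's strategy, and your points about using the Legendre-transformed (dual) special K\"ahler structure on $\hat{M}$ and about the splitting defining $T$ being induced by the same flat connections are exactly the right ingredients. However, there is a genuine error in \emph{what} you propose to verify, and the verification as planned would fail.

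You expect the transform to preserve types: that the T-dual of $\Omega_I=\omega_\dJ+i\omega_\dK$ is the holomorphic symplectic form of $\hat{M}$, i.e.\ that $T(\cJ_{\omega_\dJ})=\cJ_{\omega_{\hat{\dJ}}}$ and $T(\cJ_{\omega_\dK})=\cJ_{\omega_{\hat{\dK}}}$. This is false. The fibers of $\pi$ are Lagrangian with respect to $\omega_\dJ$ and $\omega_\dK$, and T-duality along Lagrangian fibers switches complex and symplectic types (\cite[Table 1]{cavalcantiTdual}); what actually holds is $T(\cJ_{\omega_\dJ})=\cJ_{\hat{\dJ}}$ and $T(\cJ_{\dJ})=\cJ_{\omega_{\hat{\dJ}}}$, and likewise for $K$---these are precisely (\ref{BAJ}) and (\ref{BAK}). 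Only the $I$-direction preserves type, $T(\cJ_\dI)=\cJ_{\hat{\dI}}$ and $T(\cJ_{\omega_\dI})=\cJ_{\omega_{\hat{\dI}}}$, because the fibers are complex for $\dI$ and symplectic for $\omega_\dI$. The theorem remains true---the six invariant generalized complex structures on $M$ map onto the six on $\hat{M}$---but exactly through this exchange, which is the $BAA\leftrightarrow BBB$ switching of \cite{KW} and is the real content of the result; a proof aimed at the type-preserving identities is aiming at false statements. Relatedly, your claim that the metric-dependent content sits in the $\omega_\dJ,\omega_\dK$ directions is inverted: in the eigenbundle computation $L_{\omega_\dJ}$ and $L_{\omega_\dK}$ are metric-free, while $L_\dJ$, $L_\dK$ and their symplectic-type T-duals are the ones carrying $g$. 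Your ``rigidity'' shortcut is salvageable---$T(\cJ_1\cdot\cJ_2)=T(\cJ_1)\cdot T(\cJ_2)$ lets you deduce the complex-type identities from the three symplectic-type ones via the quaternionic relations (\ref{gen quat2}), and the paper uses exactly this device in Theorem \ref{local def tdual}---but it must be aimed at the correct, type-switched identities.
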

A local version of this theorem was already shown by Hitchin in \cite{hitchinCplxLag}, the novelty of our treatment is that we put the metric on affine torus bundles globalizing the result of Hitchin. 

After this, we turn our attention to generalized branes. We introduce T-duality for generalized branes without restricting to $A$ or $B$-type branes based on \cite{cavalcantiTdual}. We define the class of "locally T-dualizable" generalized branes. We construct the T-duals of locally T-dualizable generalized branes over a contractible base (Theorem \ref{local gg thm}), and finally, we study whether locally T-dualizable branes admit global T-duals over a non-contractible base. We find that there is a topological constraint to the existence of such T-duals (Theorem \ref{global thm1}).

Chapter 5 is dedicated to the study of holomorphic line bundles on a complex torus. This chapter motivates our treatment of T-duality. We describe the Fourier-Mukai transform of holomorphic line bundles supported on affine subtori in an algebraic torus. In this chapter, we also introduce factors of automorphy which are our main tools in generalizing this section's results. We describe the Fourier-Mukai transform using factors of automorphy.

In Chapter 6 we start working with "physical branes". We treat them as submanifolds endowed with principal $U(d)$-bundles and connections. That is, we work with the frame bundles of Hermitian vector bundles with connections instead of the vector bundles themselves. We develop the theory of factors of automorphy for principal $U(d)$-bundles with connections. This is analogous to the holomorphic description and also was used by Bruzzo, Marelli and Pioli \cite{BMP1, BMP2} in their treatment of T-duality for $A$-branes. We prove several technical theorems analogous to the ones in Chapter 5.

Chapter 7 contains our main results about the T-duality of $U(1)$-bundles. We first describe T-duality for $U(1)$-bundles supported on a single torus (Theorem \ref{tdual u(1) bundles on a torus}). We show that for bundles compatible with a complex structure the T-dual agrees with the Fourier-Mukai transform. We then extend the factor of automorphy description to $U(1)$-bundles on a trivial family of tori and prove Theorem \ref{tdual U(1) bundles trivial base}. In Section \ref{last chapter section general base}, we introduce gerbes and topological T-duality and prove Theorem \ref{tdual u(1) bundle general base}. Finally, we prove Theorem \ref{last global} by first proving a local version and then extending to a general base.

In Chapter 8 we conclude this thesis with a few possible extensions of our results.

\paragraph{Acknowledgment:}  A special thanks to my supervisor Tamas Hausel for his guidance during our weekly meetings. I would also like to thank Emilio Franco, Marco Gulatieri, Mirko Mauri, Goncalo Oliveira and Kamil Rychlewicz for helpful discussions. 

This project was supported in part by the DOC Fellowship of the Austrian Academy of Sciences titled "Branes on hyperk\"ahler manifolds", which was awarded under grant number 26069.

\chapter{Affine torus bundles and integrable systems}
The starting points of SYZ mirror symmetry \cite{SYZ} are symplectic manifolds which admit special Lagrangian torus fibrations. In this thesis we restrict our attention to fibrations with smooth fibers and affine structure groups, that is to affine torus bundles and integrable systems. In this chapter, we introduce these geometric objects and their properties which are needed in the next chapters.

In the first section, we recall the linear algebra of compact tori, both real and complex, based on their treatment in \cite{BL}. In the second section, we introduce affine torus bundles following \cite{B2} and we define connections on them with special attention to flat ones. We also explain how the Leray spectral sequence can be used to calculate the cohomology and its degeneration when the torus bundle admits a flat connection. The third section focuses on algebraic integrable systems, which are special cases of affine torus bundles admitting flat connections. We recite the work of Freed  \cite{freed} who showed that the base of algebraic integrable systems carry geometric structures called special K\"ahler structures. The end of section 3 and 
section 4 introduces semi-flat hyperk\"ahler structures on algebraic integrable systems which are induced by a special K\"ahler structure and a flat connection.

\section{Tori}\label{section tori}
In this section, we define real and complex compact tori and their duals. Moreover, we introduce the most important morphisms of tori, translations, projections and isogenies.

 A real torus $T=V/\Gamma$ is defined as the quotient of a real vector space $V$  by a full rank lattice $\Gamma\subset V$. It inherits the abelian group structure from $V$. We denote the translation by an element $x\in T$ as
\begin{align*}
t_x: T&\ra T\\
y&\mapsto y+x.
\end{align*}

Let $T=V/\Gamma_T$ and $T'=W/\Gamma_{T'}$ be tori. A homomorphism $f:T\ra T'$ can be lifted to a homomorphism of the universal covers $F:V\ra W$ which we call the \emph{analytic representation} of $f$. 
\begin{definition}
    An \emph{isogeny} is a homomorphism $f:T\ra T'$ with finite kernel. The \emph{degree of an isogeny} is $deg(f):=|Ker(f)|$.
\end{definition}
The analytic representation of an isogeny is an isomorphism $F:V\ra W$ which induces an injection $F:\Gamma_T\ra \Gamma_{T'}$. Hence, we may assume that $F=id$ and $\Gamma_T\subset \Gamma_{T'}$. In particular, we have a short exact sequence of abelian groups
$$0\ra  \Gamma_T \ra \Gamma_{T'}
\ra G \ra 0$$
where $|G|=[\Gamma_{T'}:\Gamma_T]=d$, the degree of $f$. With the identification $V=W$ we also have $d\cdot \Gamma_{T'}\subset \Gamma_T$.

Let $f:T\ra T'$ be a homomorphism. Then, $im(f)\subset T'$ is a subtorus and $ker(f)\subset T$ is a subgroup. Moreover, if we denote by $ker(f)_0$ the connected component of $0$ in $ker(f)$, then $ker(f)_0$ is a subtorus of $T$ and a finite index subgroup of $ker(f)$. In particular, we can decompose $f$ as a projection $q$ to $T/ker(f)_0$ and an isogeny $f'$ onto its image.
\begin{equation}\label{stein factorization}
\begin{tikzcd}
T \arrow{rr}{f}\arrow{dr}[swap]{q} & & im(f)\\
& T/ker(f)_0 \arrow{ur}[swap]{f'}
\end{tikzcd}
\end{equation}
This decomposition is called the \emph{Stein factorization} of $f$. The proofs of these statements can be found in \cite[Chapter 1.1]{BL} applied to complex tori. 

\paragraph{Complex tori.}
Let $V\cong \dC^g$ be a $g$-dimensional complex vector space and $\Gamma\subset V$ a full rank lattice. Then, the torus
$$X:=V/\Gamma$$
inherits the complex structure $V$. We call $X$ a \emph{complex torus}. A complex torus can be endowed with the structure of an abelian variety if it admits a \emph{polarization}, that is a Hermitian inner product $H: V\times V\ra \dC$ which is positive definite and its imaginary part takes integer values on $\Gamma$. Homomorphisms of complex tori are holomorphic maps.

Let $Y=W/\Gamma_Y$ be another complex torus. If there is an isogeny $f:X\ra Y$ and $X$ is an algebraic variety so is $Y$. Indeed, if $H$ is the polarization on $X$ and $d=deg(f)$, then $d\cdot H$ is a positive definite Hermitian form on $V\cong W$ whose imaginary part takes integer values on $\Gamma_Y$, since $d\cdot \Gamma_Y\subset \Gamma_X$.

\paragraph{Dual tori.}
Let $T=V/\Gamma$ be a real torus. Then, the dual torus is defined as 
\begin{align}\hat{T}_\dR:=V^*/\Gamma^\vee,\end{align}
where $\Gamma^\vee\subset V^*$ is the \emph{dual lattice} defined by
\begin{align*}\Gamma^\vee:=\{\gamma^*\in V^*\ |\ \gamma^*(\Gamma)\subset \dZ\}.\end{align*}
If $X=V/\Gamma $ is a complex torus we can also endow the dual torus with a complex structure. We follow \cite[Chapter 2.3]{BL}. Let 
\begin{align}\overline{\Omega}:=\text{Hom}_{\bar{\dC}}(V,\dC)\end{align}
that is, the space of $\dC$-antilinear homomorphism from $V$ to $\dC$. Let us define the dual complex lattice as 
\begin{align}\label{dual cplx lattice}\Gamma^\vee_\dC:=\{ \gamma^*\in \overline{\Omega}\ |\ im(\gamma^*(\Gamma))\subset \dZ \}.\end{align}
The dual complex torus is then given by
$$\hat{X}_\dC:=\overline{\Omega}/\Gamma_\dC^\vee.$$
We view $V$ as a real vector space with a complex structure $I$. Then 
$$V^{1,0}=\Big\{ \frac{1}{2}(v-iIv)\ | \ v\in V\Big \}$$
is again a real vector space with complex structure given by multiplication with $i\in \dC$. We have $(V,I)\cong (V^{1,0},i)$ via the map $v\mapsto \frac{1}{2}(v-iIv)$. The dual vector space $V^*=\text{Hom}_\dR(V,\dR)$ is endowed with the dual complex structure $I^*$. For $f\in V^*$, we have $I^*f(v)=f(Iv)$ for all $v\in V$. Then, the space of $\dC$-antilinear forms is given by 
$$\overline{\Omega}=(V^*)^{0,1}=\{ f: V\ra \dC\ |\ f(Iv)=-if(v)\}=\{f+iI^*f\ | \ f\in V^*\}.$$
That is, $(V^*)^{0,1}\cong (V,-I^*)$ via the map $f\mapsto i(f+iI^*f)$. The multiplication by $i$ is added so that we have
\begin{equation} \begin{tikzcd}\Gamma^\vee \arrow{r}{\cong} & \Gamma_\dC^\vee, & f\ \mapsto\ i(f+iI^*f). \end{tikzcd} \end{equation}
In particular,
\begin{align}\label{dual torus}\hat{X}_\dC\cong (V^*,-I^*)/\Gamma^\vee=\hat{X}_\dR.\end{align}
We will denote the dual torus simply by $\hat{X}$ both in the real and complex cases.

\section{Affine torus bundles}
Fiber bundles with torus fibers are key players in T-duality. The most general mathematical framework of T-duality is due to Baraglia \cite{B2} who extended the theory to affine torus bundles endowed with gerbes. In this section, we focus on affine torus bundles. We define these objects, their monodromy local systems, vertical bundles and Chern classes and introduce invariant forms on them. We describe several ways a connection on such fiber bundles can be viewed and finally introduce the Leray spectral sequence which abuts to the cohomology.

Let $T^n=\dR^n/\dZ^n$ be the standard torus. We denote by $\text{Aff}(T^n)$ the group of affine transformations of $T^n$, that is $\text{Aff}(T^n)=GL(n,\dZ)\ltimes T^n$ with $GL(n,\dZ)$ acting on $\dR^n$ via the standard representation preserving the lattice $\dZ^n$.
\begin{definition}
    An \emph{affine torus bundle} on a manifold $B$ is a torus bundle $\pi:M\ra B$ with structure group $\text{Aff}(T^n)$.
\end{definition}
In particular, there exists an $\text{Aff}(T^n)$ principal bundle $P\ra B$ such that 
\begin{align}\label{P for affine torus bundle}P\times_{\text{Aff}(T^n)}T^n=M.\end{align}
We may also identify $M$ with $P/GL(n,\dZ)$.  Indeed, suppose we trivialize $P$ on a cover $\{U_i\}$ of $B$ with transition functions $(A_{ij},c_{ij})\in GL(n,\dZ)\ltimes T^n$. Over $U_i$ the maps
\begin{equation*}
\begin{tikzcd}[column sep = small]
   & P\cong U_i\times \text{Aff}(T^n) \arrow{rd}{q} \arrow{ld}[swap]{p} &  \\
   M|_{U_i}\cong U_i\times \text{Aff}(T^n)\times_{\text{Aff}(T^n)} T^n \arrow{rr}{r} & &
    P/GL(n,\dZ)|_{U_i}\cong U_i\times T^n   \\
\end{tikzcd}
\end{equation*}
are given by  $p(b,(A,t))=[b,(A,t),0]$, $q((b,(A,t))=t$ and $r[b,(A,t),x]=(b,Ax+t)$ so we have 
\begin{equation*}
\begin{tikzcd}
    & (b,(A,t)  \arrow{dr}{q} \arrow{dl}[swap]{p} & \\ \left [b,(A,t),0\right ]\sim \left [b,(A,t)(1,-t'),t' \right ] \arrow{rr}{r} &
    & (b,t) =(b, A\cdot 0 +t).
\end{tikzcd}
\end{equation*}
Therefore locally $M|_{U_i}\cong P/GL(n,\dZ)|_{U_i}$ and the maps are compatible with the transition functions.

Using the homomorphism $\phi:\text{Aff}(T^n)\ra GL(n,\dZ)$ we can define a principal $GL(n,\dZ)$-bundle 
\begin{align}\label{P0 for aff torus bundle}P_0:=P\times_\phi GL(n,\dZ)\end{align} 
which encodes the ``linear part" of the transition functions of $P$ and of $M$. 
\begin{definition}
    The \emph{monodromy local system} of $M$ is the local system $\Gamma:=P_0\times_{GL(n,\dZ)} \dZ^n\ra B$  and the \emph{vertical bundle} of $M$ is the flat vector bundle $V:=P_0\times_{GL(n,\dZ)}\dR^n$, where the associated bundles are constructed via the standard representations.
\end{definition}
In this work, we assume every affine torus bundle to be \emph{orientable} in the sense that $V$ is orientable. 

There are many other ways to define $\Gamma$ and $V$. We will explain a few here that will be used later. First, notice that the vector bundle $V$ is the adjoint bundle of $P$. Indeed, $\text{Lie}(\text{Aff}(T^n))=\dR^n$ and the adjoint action of $\text{Aff}(T^n)$ is given by
$$Ad_{(A,t)}s_0=\frac{d}{ds}\Big|_{s=s_0}(A,t)(1,s)(A^{-1},-A^{-1}t)=\frac{d}{ds}\Big|_{s=s_0}(1,As)=As_0 ,$$
the action of $\phi((A,t))$ on $\dR^n$. We then have $$P\times_\phi GL(n,\dZ)\times_{GL(n,\dZ)}\dR^n=P\times_{Ad}\text{Lie}(\text{Aff}(T^n))=ad(P).$$

Secondly, for any $b\in B$ the constant vector fields on the torus $\pi^{-1}(b)$ form a vector space $V_b$ which glue together to form the vector bundle $V\ra B$. The lattice $\Gamma\subset V$ is given by the constant vector fields whose flow is 1-periodic. Finally, we have $V\cong (R^1\pi_*\dR)^*$ and $\Gamma\cong (R^1\pi_*\dZ)^*$ where $\dR$ and $\dZ$ are the constant sheaves on $M$, since the stalk of $R^1\pi_*\dZ$ at $b\in B$ can be identified with $\HH^1(\pi^{-1}(b),\dZ)$ and that of $R^1\pi_*\dR$ with $\HH^1(\pi^{-1}(b),\dR)$. Moreover, the vertical bundle pulled back to $M$ can be identified with the subbundle of vertical vectors in $TM$, that is the kernel of $\pi_*$. 

The vector bundle $V$ and lattice $\Gamma$ define a group bundle 
\begin{align}\label{group bundle M0}M_0:=V/\Gamma\end{align}
whose local sections act on $M$ by fiberwise translation. Choosing local sections of $M$ over $U_i$ we may identify
$M_0|_{U_i}\cong M|_{U_i}$ and the difference between $M_0$ and $M$ can be measured by the linear part of the transition functions viewed as sections $c_{ij}\in \Gamma(U_{ij},M_0)$. Let $\cC^\infty_V$ and $\cC^\infty_{M_0}$ be the sheaves of smooth sections of $V\ra B$ and $M_0\ra B$ respectively. The $\{c_{ij}\}$ form a class in $\HH^1(B,\cC^\infty_{M_0})$, and via the short exact sequence of sheaves 
$$0 \ra \Gamma \ra \cC^\infty_V\ra \cC^\infty_{M_0} \ra 0$$
using that $\cC^\infty_{V}$ is acyclic we find that $\{c_{ij}\}$ determines a class $c\in \HH^2(B,\Gamma)$.
\begin{definition}
    The class $c\in \HH^2(B,\Gamma)$ is called the \emph{Chern class} of $M$.
\end{definition}
The monodromy local system and the Chern class completely determine $M$ according to the following theorem.
\begin{theorem}[\cite{B1} Proposition 3.1, 3.2]\label{iso of afft}
Let $B$ be locally contractible and paracompact. To every pair $(\Gamma,c)$, where $\Gamma$ is a $\dZ^n$-valued local system on $B$ and $c\in \HH^2(B,\Gamma)$, there is an affine $T^n$-bundle $\pi:M\ra B$ with monodromy local system $\Gamma$ and twisted Chern class $c$. Two pairs $(\Gamma_1,c_1)$, $(\Gamma_2,c_2)$ determine the same $T^n$-bundle (up to bundle
isomorphisms covering the identity on $B$) if and only if there is an isomorphism
$\phi:\Gamma_1\ra \Gamma_2$ of local systems which sends $c_1$ to $c_2$ under the induced homomorphism $\phi:\HH^2(B,\Gamma_1) \ra \HH^2(M,\Gamma_2)$.
\end{theorem}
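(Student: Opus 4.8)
The plan is to fix the monodromy first and then classify affine $T^n$-bundles with a prescribed monodromy local system by a twisted degree-one cohomology group, which I will then identify with $\HH^2(B,\Gamma)$ through a connecting homomorphism. Concretely, an affine $T^n$-bundle has transition functions $(A_{ij},c_{ij})\in GL(n,\dZ)\ltimes T^n$, and the linear parts $\{A_{ij}\}$ are precisely the transition data of the $GL(n,\dZ)$-bundle $P_0$, equivalently of the local system $\Gamma$. So the first step is to record that $M\mapsto\Gamma$ (equivalently $M\mapsto P_0$) is well defined and that a bundle isomorphism covering $\mathrm{id}_B$ induces an isomorphism of monodromy local systems. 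This reduces the problem to the following: with $P_0$ fixed (hence $V$, $\Gamma$ and the group bundle $M_0=V/\Gamma$ fixed), classify the admissible translation parts.

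Next I would unwind the cocycle condition for $\mathrm{Aff}(T^n)$. Expanding $(A_{ij},c_{ij})(A_{jk},c_{jk})=(A_{ik},c_{ik})$ forces $A_{ij}A_{jk}=A_{ik}$ (already guaranteed by $P_0$) together with $A_{ij}c_{jk}+c_{ij}=c_{ik}$, and the latter is exactly the \v Cech $1$-cocycle condition for $\{c_{ij}\}$ with values in the sheaf $\cC^\infty_{M_0}$ of smooth sections of $M_0$. Changing the local sections of $M$ used for trivialization alters $\{c_{ij}\}$ by a coboundary, so affine $T^n$-bundles with fixed monodromy $\Gamma$, up to isomorphism over $\mathrm{id}_B$, are in bijection with $\HH^1(B,\cC^\infty_{M_0})$. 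From the non-abelian viewpoint this is the fibre over $[\Gamma]$ of the map $\HH^1(B,\mathcal A)\to\HH^1(B,\underline{GL(n,\dZ)})$ induced by $\phi$, where $\mathcal A$ is the sheaf of smooth $\mathrm{Aff}(T^n)$-valued functions; the extension $1\to T^n\to\mathrm{Aff}(T^n)\to GL(n,\dZ)\to 1$ being non-central is exactly what produces the twisted coefficient sheaf $\cC^\infty_{M_0}$.

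Then I would feed this into the short exact sequence $0\to\Gamma\to\cC^\infty_V\to\cC^\infty_{M_0}\to 0$ already recorded in the text. Since $B$ is paracompact, $\cC^\infty_V$ is a fine, hence acyclic, sheaf, so the long exact sequence collapses to an isomorphism $\delta:\HH^1(B,\cC^\infty_{M_0})\xrightarrow{\ \cong\ }\HH^2(B,\Gamma)$, and by construction $\delta$ sends the class of $M$ to its Chern class $c$. Composing with the previous step yields a bijection between affine $T^n$-bundles with monodromy $\Gamma$ and $\HH^2(B,\Gamma)$, sending $M$ to $c$. Existence is then immediate: given $(\Gamma,c)$ I realize $\Gamma$ by a flat $GL(n,\dZ)$-bundle $P_0$ (possible because a $\dZ^n$-valued local system is the same datum as a $GL(n,\dZ)$-representation of $\pi_1$, using local contractibility), pull $c$ back through $\delta^{-1}$ to a cocycle $\{c_{ij}\}$, and assemble $M$ with transition functions $(A_{ij},c_{ij})$.

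Finally, to obtain the stated classification I would make the dependence on $\Gamma$ natural. An isomorphism $\phi:\Gamma_1\to\Gamma_2$ of local systems is the same as an isomorphism $P_0^{(1)}\to P_0^{(2)}$ of linear frame bundles, and it carries $V_1,M_0^{(1)}$ to $V_2,M_0^{(2)}$ compatibly with the two short exact sequences; hence it intertwines the two copies of $\delta$ with the induced map $\phi_*:\HH^2(B,\Gamma_1)\to\HH^2(B,\Gamma_2)$ (the target in the statement should read $\HH^2(B,\Gamma_2)$ rather than $\HH^2(M,\Gamma_2)$). Therefore $\phi$ carries $c_1$ to $c_2$ precisely when it lifts to an isomorphism $M_1\to M_2$ over $\mathrm{id}_B$, which is the assertion. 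I expect the main obstacle to be the non-abelian bookkeeping in the reduction step: justifying cleanly that the set of lifts of a fixed $GL(n,\dZ)$-cocycle to an $\mathrm{Aff}(T^n)$-cocycle, modulo bundle isomorphism, is genuinely a torsor identified with $\HH^1(B,\cC^\infty_{M_0})$, and checking that the basepoint subtleties of non-abelian $\HH^1$ do not corrupt the bijection. Everything downstream of that is the standard fine-sheaf long-exact-sequence argument.
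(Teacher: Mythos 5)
Your proposal is correct and follows essentially the same route as the paper and the cited Baraglia proof: reduce to the linear part $P_0$/$\Gamma$, identify the admissible translation parts as cocycles in $\HH^1(B,\cC^\infty_{M_0})$, and convert this to $\HH^2(B,\Gamma)$ via the connecting isomorphism of $0\ra \Gamma\ra \cC^\infty_V\ra \cC^\infty_{M_0}\ra 0$ (using paracompactness to make $\cC^\infty_V$ fine, hence acyclic), with naturality in $\Gamma$ giving the classification up to isomorphisms of local systems. You are also right that $\HH^2(M,\Gamma_2)$ in the statement is a typo for $\HH^2(B,\Gamma_2)$.
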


\paragraph{Invariant forms.}
The group bundle $M_0\ra B$ associated to an affine torus bundle $M\ra B$ has a canonical Ehresmann connection. Indeed, the vector bundle $V\ra B$ has a canonical flat connection by requiring the sections of $\Gamma$ to be flat. This connection descends to $M_0=V/\Gamma$
and we call it the Gauss-Manin connection. Another way to describe it is that $M_0\cong P_0\times_{GL(n,\dZ)}T^n$ and $P_0$ is a principal $GL(n,\dZ)$-bundle on $B$ which has a unique flat connection. The Gauss-Manin connection is the induced connection. A principal $GL(n,\dZ)$-bundle is defined by a representation of the fundamental group of $B$ in $GL(n,\dZ)$. This representation is also the monodromy of the Gauss-Manin connection. 

Even though $M$ is not a principal bundle we can define invariant forms on it. Recall that $P$ can be viewed as a principal $GL(n,\dZ)$-bundle over $M$ and let us denote by $p:P\ra M$ the projection. We use Definition 4.1. from \cite{B2}.
\begin{definition}
    A differential form $\alpha\in \Omega^k(M)$ is said to be \emph{invariant} if $p^*\alpha$ is an $\text{Aff}(T^n)$-invariant form on $P$.
\end{definition}
A form $\alpha$ on $M$ pulls back to a $GL(n,\dZ)$-invariant form and we can write any $(A,t)\in \text{Aff}(T^n)$ as $ (1,t)\cdot (A,0)$ so
$$R_{(A,t)}^*p^*\alpha=(R_{(A,0)}R_{(1,t)})^*p^*\alpha=R_{(1,t)}^*R_{(A,0)}^*p^*\alpha=R_{(1,t)}^*p^*\alpha,$$
and $p^*\alpha$ is invariant if and only if it is invariant under the action of $T^n\leq \text{Aff}(T^n)$.

We can view this action from the perspective of the action of $M_0$ on $M$. Let us denote by $t_s$ the translation along the fibers of $M$ by a (possibly local) section $s$ of $M_0$. Locally, over $U_i\subset B$ we can write $P|_{U_i}\cong U_i\times \text{Aff}(T^n)$ and the action of $(1,t)\in \text{Aff}(T^n)$ is given by 
$$(b,(A_0,t_0))\cdot(1,t)=(b,(A_0t+t_0))$$
so we have that $$p(R_{(1,t)}(b,(A_0t+t_0)))=t_{A_0t}\circ p((b,(A_0,t))),$$
where by $A_0t$ we mean the flat section of $M_0$ passing through $(b,A_0t)$.  Notice that the locally defined map
$$\mu: P\times T^n \ra X_0\ \ \text{mapping}\ \ (b,(A_0,t_0))\times t \mapsto (b,A_0t)$$
is just the quotient map $P_0\times T^n\ra M_0$ precomposed with $P\ra P_0$ so it is well-defined globally. We thus have
$$p(R_{(1,t)}(x))=t_{\mu(x,t)}p(x)\ \ \ \text{for any }x\in P,$$
so the action of $T^n$ on $P$ corresponds to the action of flat sections of $M_0$ on $M$. Therefore the following lemma is clear.
\begin{lemma}
A differential form $\alpha$ on $M$ is invariant if and only if $t_s^*\alpha=\alpha$ for any flat local section $s$ of $M_0$. 
\end{lemma}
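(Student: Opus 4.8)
The plan is to deduce the lemma directly from the relation $p(R_{(1,t)}(x))=t_{\mu(x,t)}(p(x))$ established above, together with the observation, also made above, that $\text{Aff}(T^n)$-invariance of $p^*\alpha$ is equivalent to its invariance under the subgroup $T^n\leq \text{Aff}(T^n)$. Since invariance of a form is a local condition on $B$, I would fix a contractible trivializing open set $U_i\subset B$ with $P|_{U_i}\cong U_i\times \text{Aff}(T^n)$, over which the flat sections of $M_0$ for the Gauss-Manin connection are exactly the constant sections $s_\tau\colon b\mapsto(b,\tau)$ with $\tau\in T^n$.

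For the forward direction, suppose $\alpha$ is invariant, so that $R_{(1,t)}^*p^*\alpha=p^*\alpha$ for all $t\in T^n$. I would introduce the local section $\sigma\colon M|_{U_i}\to P|_{U_i}$, $(b,t_0)\mapsto (b,(1,t_0))$, of $p$, so that $p\circ\sigma=\mathrm{id}$. A direct computation with $\mu$ gives $p\circ R_{(1,\tau)}\circ\sigma=t_{s_\tau}$, and hence $t_{s_\tau}^*\alpha=\sigma^*R_{(1,\tau)}^*p^*\alpha=\sigma^*p^*\alpha=\alpha$. As every flat local section equals some $s_\tau$ over a contractible set, this yields $t_s^*\alpha=\alpha$ for all flat local $s$.

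For the converse, assume $t_s^*\alpha=\alpha$ for every flat local section $s$. By the reduction recalled above it suffices to prove $R_{(1,t)}^*p^*\alpha=p^*\alpha$. The one point that needs care is that the translation $\mu(x,t)=(b,A_0t)$ appearing in $p\circ R_{(1,t)}=t_{\mu(\cdot,t)}\circ p$ depends on the point $x=(b,(A_0,t_0))$ through its linear part $A_0$, so it is not a priori a single fixed section. The key observation is that $GL(n,\dZ)$ is discrete, hence $A_0$ is locally constant on $P$; thus in a connected neighborhood of any $x_0=(b_0,(A_0,t_0))$ the assignment $x\mapsto\mu(x,t)$ is the \emph{single} flat section $s\colon b\mapsto(b,A_0t)$. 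On such a neighborhood $p\circ R_{(1,t)}=t_s\circ p$ with $s$ flat, so $R_{(1,t)}^*p^*\alpha=p^*(t_s^*\alpha)=p^*\alpha$. Since this holds near every point, $p^*\alpha$ is $T^n$-invariant, and together with its automatic $GL(n,\dZ)$-invariance we conclude that $\alpha$ is invariant.

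The main obstacle is precisely this point-dependence of $\mu$ in the converse direction; once one notes that the discreteness of $GL(n,\dZ)$ forces the linear part $A_0$ to be locally constant, the correspondence $p\circ R_{(1,t)}=t_s\circ p$ holds locally with a genuinely fixed flat section $s$, and both implications reduce to naturality of pullback. Everything else is bookkeeping within the local trivializations already set up above.
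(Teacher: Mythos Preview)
Your proof is correct and follows the same route the paper indicates: you unpack the relation $p(R_{(1,t)}(x))=t_{\mu(x,t)}p(x)$ that the paper sets up just before the lemma and from which it declares the result ``clear.'' Your added care in the converse direction---noting that the discreteness of $GL(n,\dZ)$ makes the linear part $A_0$ locally constant on $P$, so that $\mu(\cdot,t)$ is a single fixed flat section on each connected component---is exactly the point one needs to make the paper's one-line justification rigorous.
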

Similarly to principal bundles, the cohomology of an affine torus bundle can be computed using invariant forms.  We will use the following statement.
\begin{lemma}[\cite{B2} Corollary 4.1.]
    Any cohomology class $h\in \HH^k(M,\dR)$ has an invariant representative $H\in \Omega^k(M)$.
\end{lemma}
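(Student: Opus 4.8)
The strategy is the classical one: average an arbitrary representative of $h$ over the fibre tori and invoke the principle that, for a compact connected group, invariant forms compute the cohomology. The one genuine difficulty is that the fibrewise translation action of $M_0$ is a global \emph{group-bundle} action but not a global torus action (the monodromy obstructs a global $T^n$-action and, as one checks already on $\HH^1$, the translation and projection maps below are never homotopic). I would therefore set up the averaging globally by fibre integration and handle the cohomological comparison by a local-to-global (Leray) argument.

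First I would build the averaging operator intrinsically. Let $a\colon M_0\times_B M\ra M$, $(s,m)\mapsto m+s$, be the fibrewise translation action and $q\colon M_0\times_B M\ra M$, $(s,m)\mapsto m$, the projection; both are proper submersions with fibre the compact torus $M_0|_b$. Using orientability of $V$ together with flatness of the Gauss--Manin connection, the fibrewise Haar form of total mass $1$ glues to a global \emph{closed} $n$-form $\mathrm{vol}$ on $M_0$: in a flat frame it is $d\theta_1\wedge\cdots\wedge d\theta_n$, the transition maps lie in $SL(n,\dZ)$ and hence preserve it. Pulling $\mathrm{vol}$ back to $M_0\times_B M$ and taking any representative $\alpha$ of $h$, I set
\[
H:=q_*\bigl(a^*\alpha\wedge\mathrm{vol}\bigr),
\]
where $q_*$ is integration over the torus fibre; in a flat chart this is exactly the average $\int t_s^*\alpha$ over translations by flat sections. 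Two properties are then immediate. Invariance: the substitution $s\mapsto s+\sigma$ preserves $\mathrm{vol}$, so $t_\sigma^*H=H$ for every flat local section $\sigma$, which by the preceding lemma is invariance in the required sense. Closedness: the fibres are closed, so $q_*$ commutes with $d$, and since $d\mathrm{vol}=0$ we get $dH=q_*\bigl(a^*(d\alpha)\wedge\mathrm{vol}\bigr)=0$. Thus $H$ is a closed invariant $k$-form.

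It remains to show $[H]=h$, equivalently that the inclusion $\Omega^\bullet_{\mathrm{inv}}(M)\hookrightarrow\Omega^\bullet(M)$ of invariant forms is a quasi-isomorphism (the averaging $A\colon\alpha\mapsto H$ is a chain map retracting onto it, so a quasi-isomorphism forces $[H]=[\alpha]$). This is where I would localise: over a contractible $U\subset B$ one has $M|_U\cong U\times T^n$ with a genuine action of the compact connected group $T^n$ by flat translations, whose fundamental vector fields are globally defined, so the classical torus-averaging homotopy shows $\Omega^\bullet_{\mathrm{inv}}(M|_U)\hookrightarrow\Omega^\bullet(M|_U)$ is a quasi-isomorphism. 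To globalise, I regard invariant forms as a complex of sheaves $\mathcal{A}^\bullet$ on $B$, $U\mapsto\Omega^\bullet_{\mathrm{inv}}(\pi^{-1}U)$, included in $\pi_*\Omega^\bullet_M$. The local statement says this inclusion is an isomorphism on cohomology sheaves (both have stalk $\HH^\bullet(T^n)=(R^\bullet\pi_*\dR)_b$), hence a quasi-isomorphism of complexes of sheaves; and both complexes consist of $\cC^\infty_B$-modules, hence of fine sheaves, so their hypercohomology equals the cohomology of global sections. Since $\pi$ is proper, $\HH^\bullet(B,\pi_*\Omega^\bullet_M)=\HH^\bullet(M,\dR)$ by the Leray argument, and therefore $\HH^\bullet(\Omega^\bullet_{\mathrm{inv}}(M))\xrightarrow{\sim}\HH^\bullet(M,\dR)$. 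In particular $[H]=h$, proving the lemma. The main obstacle is precisely this last globalisation: the absence of a global torus action prevents a single global chain homotopy, and it is the fineness of the sheaf of invariant forms on $B$, together with the purely local validity of the classical averaging argument, that bridges the gap.
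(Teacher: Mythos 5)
The paper itself contains no proof of this lemma: it is quoted directly from Baraglia \cite{B2}, Corollary 4.1, so there is no internal argument to compare yours against. Judged on its own terms, your proof is correct, and it supplies exactly the two ingredients that make the statement non-trivial. First, the fibrewise Haar form glues to a global closed $n$-form on $M_0$ precisely because the monodromy lies in $SL(n,\dZ)$ — this is where orientability of $V$ enters — so the averaging operator $A(\alpha)=q_*\bigl(a^*\alpha\wedge\mathrm{vol}\bigr)$ is globally defined, is a chain map landing in the invariant subcomplex, and restricts to the identity on invariant forms. Second, you correctly identify that the monodromy obstructs any global $T^n$-action, so no single global chain homotopy between $\iota\circ A$ and the identity exists, and you replace it by the right local-to-global mechanism: the inclusion of the sheaf of invariant forms into $\pi_*\Omega^\bullet_M$ is a quasi-isomorphism of complexes of fine sheaves on $B$ (stalkwise this is the classical compact connected group averaging over a local trivialization $M|_U\cong U\times T^n$, which needs only a local section, not a flat connection or torsion Chern class), hence induces an isomorphism after taking global sections. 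Combined with $A\circ\iota=\mathrm{id}$ this forces $[A\alpha]=[\alpha]$, which is the lemma. This is the same fibrewise-averaging principle that underlies the cited result in \cite{B2}.

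Two minor remarks. Properness of $\pi$ is not actually needed for the identification $\HH^\bullet(B,\pi_*\Omega^\bullet_M)\cong \HH^\bullet(M,\dR)$: fineness of each $\pi_*\Omega^k_M$ as a $\cC^\infty_B$-module already gives that hypercohomology is computed by global sections, so the appeal to "the Leray argument" is superfluous (though harmless, since $\pi$ is indeed proper). Also, when you assert that the stalks of both cohomology sheaves are $\HH^\bullet(T^n,\dR)$, the clean justification is that filtered colimits are exact, so the stalk of the cohomology sheaf is the colimit over contractible $U\ni b$ of $\HH^k$ of the section complexes, and over each such $U$ the inclusion is an isomorphism on cohomology; it is worth saying this explicitly, since it is the only point where the quasi-isomorphism claim could be questioned.
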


\paragraph{Connections.} We can think of a connection on an affine torus bundle $\pi:M\ra B$ in several different ways. Firstly, as an Ehresmann connection, that is a split of the tangent bundle $TM$ into horizontal and vertical subbundles
$$TM\cong \pi^*TB\oplus \pi^*V,$$
where $\pi^*V=ker(\pi_*)$. We may furthermore require the split to be invariant under the action of $M_0$ on $M$, that is for any flat local section $s$ passing through $x\in M$ we have $(t_s)_*(\pi^*TB)_x=(\pi^*TB)_{t_s(x)}$.

From the discussion in the previous section, it is clear that such a connection can equivalently be represented by an invariant 1-form $A\in \Omega^1(M,\pi^*V)$ which induces an isomorphism $A: ker(\pi_*)\ra \pi^*V$. Here by invariant form we mean a form $A$ such that its pullback $p^*A\in \Omega^1(P,p^*\pi^*V)=\Omega^1(P,Lie(\text{Aff}(T^n)))$ is equivariant with respect to 
the action of $\text{Aff}(T^n)$. The horizontal subbundle is given by $ker(A)$. Clearly, $p^*A$ is a principal bundle connection on $P$ and any connection on $M$ is induced by a principal bundle connection on $P$. 

In particular, the curvature of the connection $p^*A$ is a closed two-form $F\in \Omega^2(B,ad(P))=\Omega^2(B,V)$ which can also be understood as the obstruction to the integrability of the horizontal vector bundle on $M$. More precisely, for any $X,Y\in TB$, if  $(\ )^H$ represents the horizontal lift we have
$$F(X,Y)=[X^H,Y^H]-[X,Y]^H,$$
where we identify $F(X,Y)\in V$ with a constant vertical vector field on $M$.

The curvature form $F$ represents a cohomology class in $\HH^2(B,V)$ and it is clear that it is the image of the Chern class $c$ under the morphism $\HH^2(B,\Gamma)\ra \HH^2(B,V)$. In particular, when the Chern class of $M$ is torsion, the decomposition 
$$TM=\pi^*TB\oplus \pi^*V$$
is into integrable subbundles and we may find local coordinates representing the split.

Indeed, suppose that $A\in \Omega^1(M,\pi^*V)$ is a flat connection on $M$. Let $U\in B$ be a contractible open set and choose a local frame $\{v_1,...,v_n\}$ of $V|_U$. The $v_i$ pull back to an invariant frame $\{v_i^A\}$ of $\pi^*V$ and we may use $A$ to identify the $v_i$ with an invariant frame of $ker(\pi_*)$. We have
$$[v_i^A,v_j^A]=[v_i,v_j]^A=0,$$
where we use that $V$ is the adjoint bundle of the overlying principal bundle and so its sections can be endowed with a Lie bracket. 

Similarly, a coordinate frame $\{e_i\}$ for $TB$ can be lifted to an invariant frame $\{e_i^H\}$ of the horizontal bundle. Since $A$ is flat and $\{e_i\}$ come from a coordinate frame, we have
$$[e^H_i,e^H_j]=[e_i,e_j]^H=0.$$
In particular, $\{e_i^H,v_i^A\}$ is an integrable frame of $TM|_U$ but the $v_i^A$ cannot yield global coordinates on the fiber. Nonetheless, choosing $\{v_i\}$ as a frame of $\Gamma|_U$ inside $V|_U$ allows us to integrate the $v_i^A$ to 1-periodic coordinates.

Choosing these coordinates for a cover $\{U_i\}$ of $B$ gives us an atlas $\{x^i,p^j\}$ of 1-periodic coordinates $\{p^i\}$ on the fiber and base coordinates  $\{x^j\}$ such that $\{\partial_{x_j}\}$ span the horizontal distribution. In these coordinates the transition functions must be of the form $x'=\phi_{ij}(x)$ and $p'=A_{ij}p+c_{ij}$ moreover, $c_{ij}\in \Gamma(U_{ij},V/\Gamma)$ must be constant.

In conclusion, a choice of flat connection is equivalent to a choice of local 1-periodic coordinates realizing the isomorphism
$$M|_{U_i}\cong V/\Gamma|_{U_i}$$
such that the transition functions are constant.

\paragraph{Leray spectral sequence.} Let $f:M\ra B$ be a continuous map of topological spaces and let $\cF$ be a sheaf on $M$. To the data $(M,B,f,\cF)$ we associate the Leray spectral sequence  $E^{p,q}_r(f,\cF)$ associated to the composition of $f_*$ with the global sections functor converging to the sheaf cohomology of $\cF$ on $M$. In particular, there is a filtration
$$0=F^{n+1,n}\subset F^{n,n}\subset ...\subset F^{1,n}\subset F^{0,n}=\HH^n(M,\cF)$$
such that $E^{p,q}_\infty(f,\cF)\cong F^{p,p+q}/F^{p+1,p+q}$ and the second page is given by $\HH^p(B,R^qf_*\cF)$.

Let $\pi:M\ra B$ be an affine torus bundle with monodromy local system $\Gamma\ra B$ and Chern class $c\in \HH^2(B,\Gamma)$. Let us first consider the Leray-Serre spectral sequence associated to the constant sheaf $\dZ$. The second page is given by 
$$E_2^{p,q}(\pi,\dZ)=\HH^2(B,R^q\pi_*\dZ)\cong \HH^2(B,\wedge^q\Gamma^*)$$
and the differentials are given by cupping with the Chern class and contracting the coefficients (\cite{B1} Proposition 3.3.). 

Considering the same spectral sequence with $\dR$ coefficients we have that
$$E^{p,q}_2(\pi,\dR)=\HH^p(B,\wedge^q V^*).$$
Moreover, the change of coefficients from $\dZ$ to $\dR$ induces a morphism of spectral sequences $E^{p,q}_2(\pi,\dZ)\ra E^{p,q}_2(\pi,\dR)$ so the second page differential $d_2^\dR$ is given by cupping with the image of $c$ in $\HH^2(B,V)$. 

\begin{lemma}
Let $\pi:M\ra B$ be an affine torus bundle. Then, the Leray spectral sequence $E^{p,q}_r(\pi,\dR)$ degenerates on the second page if and only if the Chern class of $M$ is torsion.
\end{lemma}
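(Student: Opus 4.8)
The plan is to lean on the explicit description of the second-page differential already recorded: $E_2^{p,q}(\pi,\dR)=\HH^p(B,\wedge^q V^*)$, and $d_2^\dR$ is cup product with the image $c_\dR\in \HH^2(B,V)$ of the Chern class followed by the contraction $V\otimes \wedge^q V^*\to \wedge^{q-1}V^*$. The first observation is that $c$ is torsion if and only if $c_\dR=0$: the coefficient map $\HH^2(B,\Gamma)\to \HH^2(B,V)=\HH^2(B,\Gamma)\otimes\dR$ has exactly the torsion subgroup as its kernel (flatness of $\dR$ over $\dZ$ and universal coefficients). So the content of the lemma is that the whole sequence collapses at $E_2$ precisely when $d_2^\dR$ sees a nonzero $c_\dR$.

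Necessity is the easy half, and I would settle it by a single evaluation of $d_2^\dR$. Since $M$ is assumed orientable, $\wedge^n V^*$ is the trivial flat line bundle and carries a nonzero flat global section $\omega$ (a fiberwise volume form), giving a class $\omega\in E_2^{0,n}=\HH^0(B,\wedge^n V^*)$. Interior multiplication $v\mapsto \iota_v\omega$ is an isomorphism $V\xrightarrow{\sim}\wedge^{n-1}V^*$, so the contraction identifies $d_2^\dR(\omega)\in \HH^2(B,\wedge^{n-1}V^*)$ with $\pm c_\dR\in \HH^2(B,V)$. Hence $d_2^\dR(\omega)=0$ if and only if $c_\dR=0$; in particular, if the sequence degenerates at $E_2$ then $d_2^\dR=0$, forcing $c_\dR=0$ and $c$ torsion.

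For sufficiency I would exploit the flat model of invariant forms. If $c$ is torsion then $c_\dR=0$, so the curvature class vanishes and $M$ admits a global flat connection; in the adapted $1$-periodic coordinates $(x^j,p^i)$ the vertical generators $dp^i$ glue to a flat frame of $\pi^*V^*$ and are closed. Consequently the complex of invariant forms is identified with $(\Omega^\bullet(B,\wedge^\bullet V^*),d^\nabla)$ for the flat Gauss--Manin connection $\nabla$, and flatness means the exterior derivative only differentiates the base coefficients: it preserves vertical degree and carries no cross term. The complex therefore splits as the direct sum over $b$ of the twisted de Rham complexes $(\Omega^\bullet(B,\wedge^b V^*),d^\nabla)$, whose cohomologies are the $\HH^a(B,\wedge^b V^*)=E_2^{a,b}$. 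Since every real cohomology class of $M$ has an invariant representative (Corollary 4.1 of \cite{B2}) and, by fiberwise averaging, an exact invariant form is the differential of an invariant form, the invariant complex computes $\HH^\bullet(M,\dR)$, whence $\dim\HH^k(M,\dR)=\sum_{a+b=k}\dim E_2^{a,b}$ for every $k$. Now $E_\infty^{a,b}$ is a subquotient of $E_2^{a,b}$, so $\dim E_\infty^{a,b}\le \dim E_2^{a,b}$, while convergence gives $\sum_{a+b=k}\dim E_\infty^{a,b}=\dim\HH^k(M,\dR)$; comparing forces $\dim E_\infty^{a,b}=\dim E_2^{a,b}$ for all $a,b$, which is degeneration at $E_2$.

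The step I expect to be the main obstacle is sufficiency: one must genuinely produce a global flat connection from $c_\dR=0$ (adjusting an arbitrary connection by a primitive of its curvature) and then verify both that the flat structure removes every cross term in the invariant de Rham differential and that invariant forms compute the full cohomology of $M$. The necessity direction, by contrast, reduces to the one-line computation $d_2^\dR(\omega)=\pm c_\dR$ with the top class $\omega$, where orientability is exactly what guarantees $E_2^{0,n}\neq 0$.
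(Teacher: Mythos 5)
Your proof is correct and follows essentially the same route as the paper. The necessity half is identical: the paper's proof block consists precisely of your computation, namely using orientability to trivialize $\wedge^n V^*$, the duality pairing $\wedge^{n-1}V^*\cong V$, and the evaluation of $d_2^\dR$ on the class in $\HH^0(B,\wedge^n V^*)$ to recover $[c]\in \HH^2(B,V)$. For sufficiency the paper is actually terser than you are: its proof block stops after necessity, and the converse is only addressed in the discussion following the lemma, where a flat connection (which exists since $[c]=0$) is said to split the filtration on forms and hence on cohomology, yielding $\HH^n(M,\dR)=\bigoplus_{p+q=n}\HH^p(B,\wedge^qV^*)$. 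Your argument rests on the same two ingredients (flat connection from torsion $c$, plus the fact that invariant forms compute $\HH^\bullet(M,\dR)$ via averaging), but you close the argument by a dimension count comparing $E_2$ with $E_\infty$. One caveat there: that comparison forces degeneration only when the groups $\HH^p(B,\wedge^qV^*)$ are finite dimensional, so strictly speaking your sufficiency argument needs $B$ of finite type; to remove this hypothesis one would instead match the vertical-degree splitting of the invariant complex with the Leray filtration directly, as the paper's surrounding discussion implicitly does. This is a minor technical point, and on the shared step of deducing ``$[c]=0\Rightarrow c$ torsion'' you and the paper make the same implicit finite-generation assumption.
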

\begin{proof}
    Since we assume $V$ to be orientable, if $rank(V)=n$ then $\wedge^n V^*\cong \dR$. Moreover, via the pairing $\wedge^k V^* \otimes \wedge^{n-k}V^*\ra \wedge^n V^*\cong \dR$ we get an isomorphism $\wedge^{k}V^*\cong \wedge^{n-k}V$. Consider now the segment $d_2^\dR:\HH^0(B,\wedge^nV^*) \ra \HH^2(B,\wedge^{n-1}V^*)$ of $E^{p,q}_2(\pi,\dR)$. Using the established isomorphisms it reads as 
$$d_2^\dR:\HH^0(B,\dR)\ra \HH^2(B,V),\ \ \ 1\mapsto [c]$$
where $[c]$ represents the image of the Chern class $c$ in $\HH^2(B,V)$. Therefore, if $d_2^\dR$ is zero, so is $[c]$ and $c$ must be torsion.
\end{proof}

On an affine torus bundle, there is also a filtration on differential forms
$$0=F^{n+1}\Omega^n\subset F^{n}\Omega^n\subset ...\subset F^0\Omega^n=\Omega^n(M) $$
where
$$F^i\Omega^n=\{\omega\in \Omega^n(M)|\ \iota_{V_1}\iota_{V_2}...\iota_{V_{n+1-i}}\omega=0\ \forall\ V_1,...,V_i\in \pi^*V \}.$$
and a choice of connection splits this filtration. The indexing is done in the opposite way to match the filtration coming from the Leray spectral sequence. Indeed, when the connection is flat, it also splits the filtration on cohomology coming from the Leray spectral sequence. In particular, we have
$$H^n(M,\dR)=\bigoplus_{p+q=n}\HH^p(B,\wedge^qV^*).$$

\section{Algebraic integrable systems}
Algebraic integrable systems are a special case of affine torus bundles endowed with interesting geometric structures. The base of an algebraic integrable system is special Kh\"aler and the total space carries a semi-flat hyperk\"ahler structure. In this section we introduce these structures based on work of Freed \cite{freed}.

\begin{definition}\label{integrable system}
    An algebraic integrable system is a holomorphic surjection $\pi: M\ra B$ between smooth complex manifolds such that the following hold\\
    (1) $M$ carries a holomorphic symplectic structure $\eta\in \Omega^{2,0}(M)$,\\
    (2) the fibers of $\pi$ are compact complex Lagrangian tori, and\\
    (3) there exists a class $\alpha\in \HH^2(M,\dR)$ such that its restriction $\alpha_b$ to each fiber $M_b$ lies in $\HH^2(M_b,\dZ)\cap \HH^{1,1}(M_b)\subset \HH^2(M_b,\dR)$ and defines a positive polarization.  
\end{definition}
A positive polarization on a complex torus $M_b\cong V_b/\Gamma_b$ is a non-degenerate Hermitian form on $V_b$ which is positive definite and whose imaginary part takes integer values on $\Gamma_b$. A polarization can also be defined by a cohomology class $\alpha_b\in \HH^2(M_b,\dZ)\cap \HH^{1,1}(M_b)$ as follows. Let $E_b\in \Omega^2(M_b,\dR)$ be the unique invariant representative of $\alpha_b$. Then, we may consider $E_b$ as an alternating bilinear form on $V_b$ and define the corresponding Hermitian form as
$$H_b(v,w)=E_b(Iv,w)+iE_b(v,w).$$
We say that $\alpha_b\in \HH^2(M,\dR)$ defines a positive polarization if $H_b$ does.  Note that this convention differs from the convention we use for K\"ahler geometry. In a K\"ahler triple $(g,I,\omega)$ we take $\omega=gI$ so the associated Hermitian metric is $h=g-i\omega$ with $Im(h)=-\omega$. 

Viewed as real manifolds $\pi:M\ra B$ is an integrable system with respect to both $Re(\eta)=\sigma$ and $Im(\eta)$ so we can utilize the theory of real integrable systems, see for example \cite{Dui}. In particular, if the fibers are compact Lagrangian submanifolds they are necessarily affine tori. Indeed, let us choose local coordinates $\{x^i\}$ on some open $U\subset B$. Then, $\{f_i=x^i\circ \pi\}$ are $dim_{\dR}(M)/2$ Poisson commuting functions. The Hamiltonian flow along the corresponding vector fields $X_{f_i}=\sigma^{-1}df_i$ endows the fibers with the affine torus structure. 

For each $b\in B$ then we may consider the vector space $V_b$ of constant vector fields with respect to the Hamiltonian action. Inside $V_b$ there is a lattice $\Gamma_b$ of vector fields whose flow is one-periodic. The local vector fields $V_b$ glue together to a vector bundle $V\ra B$ and the lattices to a local system $\Gamma\ra B$. In conclusion, $M$ is an affine torus bundle with monodromy local system $\Gamma$ and Chern class defined as before.

Let $X\in V_b$ be a constant vector field on the fiber $\pi^{-1}(b)$. Then, since the fibers are Lagrangian $\iota_X\sigma$ is a basic one-form and we get a symplectic isomorphism 
$$V\cong T^*B|_U.$$
The image of $\Gamma$ under this symplectomorphism is a Lagrangian submanifold, which we still denote by $\Gamma$, in $T^*B$. Since $\Gamma$ is Lagrangian it must be spanned by closed one-forms viewed as sections of $T^*B$. Finally, via a Lagrangian section $s:U\ra M$ we have a symplectomorphism
$$T^*B/\Gamma|_U\cong M|_U.$$

In the algebraic case, we have a holomorphic symplectic form $\eta$ with $Re(\eta)=\sigma$ and $Im(\eta)=I^*\sigma$. Since $\pi$ is a holomorphic map pulling back holomorphic coordinates $\{z^i\}$ from the base induces complex valued Poisson commuting functions $\{\varphi_i=z^i\circ \pi\}$ on $M$. The complex structure on the fibers induces a complex structure on $V$ and $V\cong T^*B$ becomes a holomorphic symplectomorphism.

In particular, the fibers of $\pi:M\ra B$ are complex tori but the existence of polarization is a non-trivial addition which allows us to define the special K\"ahler structure on the base. The polarization also endows the fibers with a K\"ahler structure and by the relative hard Lefschetz theorem the Leray spectral sequence for $\pi:M\ra B$ degenerates on the second page. In particular, as we have discussed before $M$ as an affine torus bundle must have torsion Chern class.

\paragraph{Special K\"ahler manifolds.} Special K\"ahler geometry was first considered in the physics literature as the geometry of scalars in four-dimensional $N=2$ supersymmetric gauge theories. A clear mathematical description was given by Freed in \cite{freed} which we follow here.
\begin{definition}
    A \emph{special K\"ahler structure} on a manifold $B$ is a triple $(\omega,I,\nabla)$ where $\omega$ is a symplectic form, $I$ a compatible complex structure and $\nabla$ is a torsion-free, flat, symplectic connection satisfying $d_\nabla I=0 $. 
\end{definition}
In particular, $(\omega,I)$ form a K\"ahler structure on $B$. In the definition we consider $I$ as an element of $\Omega^1(TB)$ and $d_\nabla$ as the operator $\Omega^1(TB)\ra \Omega^2(TB)$.  

Since $\nabla$ is flat we may choose a flat local frame $\{e_i\}$ for $TB$. If $\{e^i\}$ is the dual frame of $T^*B$ we must have $de^i=0$. Indeed, for $X,Y\in TB$ we have
$$de^i(X,Y)=X(Y^i)-Y(X^i)-[X,Y]^i,$$
which, since $\nabla_XY=X(Y^i)e_i+Y^i\nabla_Xe_i$, is exactly the $e_i$ component of $\nabla_XY-\nabla_YX-[X,Y]=0$. In particular, locally $e^i=dx^i$ and the $\{x^i\}$ form coordinates since $[e_i,e_j]=\nabla_{e_i}e_j-\nabla_{e_i}e_j=0$. We call coordinates $\{x^i\}$ flat whenever the corresponding vector fields $\{\partial_{x^i}\}$ are flat with respect to $\nabla$.

The connection is symplectic, that is $\omega$ is a parallel form with respect to $\nabla$. In particular, in the flat coordinates, $\omega$ must have constant coefficients and by scaling we may choose the $\{x^i\}$ to be \emph{flat Darboux coordinates}. We can form an atlas of these flat coordinates and find that the transition functions must be affine transformations preserving $\omega$, more precisely if $dim(M)=2n$ we have
$$x'=A\cdot x +b\ \ \ A\in Sp(2n,\dR),\ b\in \dR^{2n}. $$
That is, a special K\"ahler manifold carries an \emph{affine structure}. 

Using flat coordinates, which do not have to be Darboux, one can show that the metric is induced by a \emph{potential}. This result is due to Hitchin.
\begin{theorem}\label{special kahler potential}\emph{(\cite[Section 4]{hitchinCplxLag})}
Let $(B,\nabla,\omega,I)$ be a special K\"ahler manifold and let $\{x^i\}$ be flat coordinates. Then the Riemannian metric $g$ is 
$$g=\frac{\partial^2\phi}{\partial x^i\partial x^j}dx^i\otimes dx^j$$
for some real function $\phi$. In fact, $\phi$ is a K\"ahler potential.    
\end{theorem}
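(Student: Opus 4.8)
The plan is to work entirely in the flat coordinates $\{x^i\}$ provided by $\nabla$, in which the Christoffel symbols vanish, so that the exterior covariant derivative $d_\nabla$ acts as the ordinary exterior derivative on component functions and, as already observed, the parallel form $\omega$ has constant coefficients $\omega_{ij}$. Viewing $I$ as the $TB$-valued one-form $I = I^l_i\,dx^i\otimes\partial_{x^l}$, the defining condition $d_\nabla I = 0$ unwinds, via $(d_\nabla I)(\partial_{x^k},\partial_{x^i}) = (\partial_k I^l_i - \partial_i I^l_k)\,\partial_{x^l}$, to the pointwise symmetry
\[
\partial_k I^l_i = \partial_i I^l_k \qquad \text{for all } i,k,l.
\]
This single identity is the engine of the whole argument, and it is the only place the special-K\"ahler hypothesis genuinely intervenes.

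First I would establish the Hessian form of the metric. Using the compatibility $\omega = gI$ one expresses the metric components by contracting the \emph{constant} symplectic form with the complex structure, $g_{ij} = \pm\, I^k_i\,\omega_{kj}$, the sign being fixed by the stated convention. Differentiating and using that $\omega_{kj}$ is constant gives $\partial_m g_{ij} = \pm(\partial_m I^k_i)\,\omega_{kj}$, whereupon the symmetry $\partial_m I^k_i = \partial_i I^k_m$ converts this into $\partial_m g_{ij} = \partial_i g_{mj}$. Combined with the symmetry $g_{ij}=g_{ji}$ of the metric, this makes $\partial_m g_{ij}$ totally symmetric in its three indices, which is exactly the integrability condition for a potential. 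Two applications of the Poincar\'e lemma then finish this step: for each fixed $i$ the one-form $g_{ij}\,dx^j$ is closed, hence equals $d\phi_i$ for a local function $\phi_i$; the relation $\partial_j\phi_i = g_{ij} = g_{ji} = \partial_i\phi_j$ shows that $\phi_i\,dx^i$ is closed, so $\phi_i = \partial_i\phi$ for a single local $\phi$, giving $g_{ij} = \partial_i\partial_j\phi$.

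The harder and more interesting step is to check that this same $\phi$ is a \emph{K\"ahler} potential, i.e.\ that $\omega = i\,\partial\bar\partial\phi$. Here I would compute directly with the real one-form $\lambda := d\phi\circ I$, whose components are $\lambda_i = I^k_i\,\partial_k\phi$. Expanding $d\lambda$ produces two kinds of terms: those in which a derivative falls on $I$, namely $(\partial_m I^k_i - \partial_i I^k_m)\,\partial_k\phi$, and those in which it falls on $d\phi$. The first group \emph{vanishes identically} by the symmetry $\partial_m I^k_i = \partial_i I^k_m$ --- this is where $d_\nabla I = 0$ re-enters a second time --- while the second group reassembles, using $g_{ij}=\partial_i\partial_j\phi$ together with the compatibility relation between $g$, $I$ and $\omega$, into a constant multiple of $\omega$. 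The outcome is $d(d\phi\circ I) = -2\omega$, equivalently $\omega = i\,\partial\bar\partial\phi$, which is precisely the assertion that $\phi$ is a K\"ahler potential. I expect this final cancellation to be the main obstacle: the surviving terms must regroup into $\omega$ rather than into some unrelated combination, so the index bookkeeping has to be carried out with care, and the compatibility identity must be invoked in exactly the right form. Throughout, the statement is local, since flat coordinates exist only on the affine charts, so $\phi$ is a local potential on each such chart.
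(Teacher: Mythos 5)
Your proposal is correct, and the first half coincides with the paper's own proof: the paper also works in flat coordinates, extracts the symmetry $\partial_k I^i_j = \partial_j I^i_k$ from $d_\nabla I = 0$, writes $g_{ij} = -\omega_{ik} I^k_j$ with $\omega_{ik}$ constant (by $\nabla\omega = 0$), deduces that $\partial_l g_{ij}$ is totally symmetric, and concludes via the Poincar\'e lemma applied twice that $g_{ij} = \partial_i\partial_j\phi$ locally. Where you genuinely go beyond the paper is the second step: the paper's proof stops at the Hessian identity and does not verify the claim that $\phi$ is a K\"ahler potential, leaving that to the citation of Hitchin. Your computation of $d(d\phi\circ I)$ closes this gap correctly: after the cancellation of the $(\partial_m I^k_i - \partial_i I^k_m)\partial_k\phi$ terms (the second use of $d_\nabla I = 0$), the surviving terms are $I^k_i g_{km} - I^k_m g_{ki}$, and the compatibility in the form $I^k_i g_{kj} = -\omega_{ij}$ (a direct consequence of the antisymmetry of $\omega = gI$ together with the symmetry of $g$) regroups them into $\mp 2\omega_{im}$, so $d(d\phi\circ I) = \pm 2\omega$, i.e.\ $\omega$ is a constant multiple of $dd^c\phi$, which is exactly the K\"ahler potential statement. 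So your argument proves strictly more than the text it would replace; the only caution is to fix the sign conventions ($\omega = gI$ versus $g = -\omega I$, and $d^c\phi = \mp d\phi\circ I$) consistently with the paper's stated conventions when carrying out the final bookkeeping.
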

Indeed, if $I=I^i_j\frac{\partial}{\partial u^i}\otimes du^j$ in flat coordinates then the condition $d_\nabla I=0$ reads
$$\frac{\partial I^i_j}{\partial u^k}-\frac{\partial I^i_k}{\partial u^j}=0. $$
Then, the metric $g=-\omega I$ can be written as
$$g_{ij}=-\omega_{ik}I^k_j $$
where $\omega_{ij}$ are constant due to $\nabla \omega=0$. Finally, 
$$\frac{\partial g_{ij}}{\partial u^l}=-\omega^{ik}\frac{\partial I^k_j}{\partial u^l}=-\omega_{ik}\frac{\partial I^k_l}{\partial u^j}=\frac{\partial g_{il}}{\partial u^j} $$
and hence the matrix $g$ is locally the derivative of a function $\Psi:\dR^n\ra \dR^n$. Since $g_{ij}$ is also symmetric, $\Psi$ is also the derivative of a function $\phi_\alpha:\dR^n\ra \dR$.

Finally, the dual of the flat connection $\nabla$ on $T^*B$ is again a flat connection. It can also be seen as a connection on the fiber bundle $p:T^*B\ra B$ so it induces a splitting of $TT^*B$ into horizontal and vertical subspaces 
$$TT^*B\cong p^*TB\oplus p^*T^*B .$$
The horizontal subspace at $x \in T^*B$ is given by the space $(s_x)_*(T_{p(\xi)}B)$ where $s_x: B\ra TB$ is a flat section through $x$. This splitting turns out to respect the complex structure of $T^*B$. Moreover, since $\nabla$ is flat this splitting is into integrable subbundles. That is, if $\{x^i,y_i\}$ are flat Darboux coordinates on $B$, let $\{p_i,q^i\}$ be the dual fiberwise coordinates on $T^*B$. Then, $\{x^i,y_i,p_i,q^i\}$ are coordinates on $T^*B$ realizing the splitting.

Let $x\in T^*B$ with $p(x)=b$ so the tangent space at $x$ is given by $T_xT^*B=T^*_bB\oplus T_bB$. The base is K\"ahler so $(g,I,\omega)$ endows $T_bB$ with a Hermitian structure. Moreover, $T^*B$ is holomorphic symplectic and the holomorphic symplectic form $\eta$ is in standard form in the splitting. In particular, we can write the complex structure $\dI$ of $T^*B$ and the holomorphic symplectic structure $\eta$ in matrix form as
\begin{align}\label{semiflat hklr 1}\dI=\begin{pmatrix}I & 0 \\ 0 & I^* \end{pmatrix}, \ \ \ Re(\eta)=\begin{pmatrix}0 & Id\\ -Id & 0\end{pmatrix},\ \ \ Im(\eta)=-\dI^*Re(\eta)=\begin{pmatrix} 0 & -I^* \\ I & 0\end{pmatrix}.\end{align}
Using the K\"ahler form $\omega$ on $B$ the triple at $T_xT^*B$ can be extended to a hypek\"ahler structure with $Re(\eta)=\omega_\dJ$ and $Im(\eta)=\omega_\dK$. The rest of the structures are given in matrix form as
\begin{align}\label{semiflat hklr 2}\omega_\dI=\begin{pmatrix}\omega & 0 \\ 0 & \omega^{-1}\end{pmatrix} \ \ \ \dJ=\begin{pmatrix}0 & g^{-1}\\ -g & 0 \end{pmatrix}\ \ \ \dK=\dI\dJ=\begin{pmatrix} 0 & -\omega^{-1} \\ \omega & 0\end{pmatrix}.\end{align}
These expressions are coordinate independent so $\omega_\dI$, $\dJ$ and $\dK$ are globally defined and since the splitting of $TT^*B$ is integrable, $\dJ$ and $\dK$ are complex structures. To show that the triple $(\dI, \dJ,\dK)$ defines a hyperk\"ahler structure on $T^*B$ it remains to show that $\omega_\dI$, $\omega_\dJ$ and $\omega_\dK$ are closed \cite{hhklrlemma}. Indeed, since $d\eta=0$ we have $d\omega_\dJ=d\omega_\dK=0$ and in the coordinates $\{x^i,y_i,p_i,q^i\}$ we may write $\omega_\dI$ as 
$$\omega_\dI=dx^i\wedge dy_i- dp_i\wedge dq^i$$
which is closed. The corresponding metric is given by
$$\cG=\begin{pmatrix}g & 0 \\ 0 & g^{-1}\end{pmatrix}.$$

The hyperk\"ahler metric defined here is what we call \emph{semi-flat hyperk\"ahler metric}. In the next section, we will see that such a metric can also be defined on algebraic integrable systems.

\paragraph{Special K\"ahler geometry on the base of an algebraic integrable system.} 
We have seen that an algebraic integrable system $\pi:M\ra B$ defines a flat connection on $B$. Moreover, the horizontal distribution of the dual connection, considered as an Ehresmann connection on $T^*B\cong V$, is complex Lagrangian. In particular, flat sections of $T^*B$ must be closed one-forms.

Indeed, let $s:B\ra TB$ be a flat section, that is $s_*TB$ is complex Lagrangian in $T(T^*B)$. If $\{x^i,p_i\}_{i=1}^{2n}$ are dual coordinates on $T^*B$, then the real standard symplectic form is given by $\sigma=d\tau=d(p_idx^i)$. Moreover, $s_*T^*B$ is Lagrangian if and only if $s^*\sigma=0$. In coordinates $s(x)=(x^i,s_i(x))$ so we have 
$$s^*\sigma=s^*d(\tau)=ds^*(\tau)=d(s_idx^i)=ds.$$

The canonical holomorphic symplectic form on $T^*B$ is given by 
$\eta=\sigma-i\dI^*\sigma$
where $\dI$ represents the complex structure on $T^*B$. Note that in general for a real two-from, $F$ the tensor $\dI^*F$ is not a two-form, it happens precisely when $F$ is the real part of a holomorphic two-form. 

Similar calculations show that a section $s:B\ra T^*B$ defines a complex horizontal subspace if and only if $s-iI^*s$ is a holomorphic $(1,0)$-form. Finally, the horizontal subspace is Lagrangian with respect to $Im(\eta)$ if and only if $dI^*s=0$. That is, $s$ is a complex Lagrangian section precisely when $ds=0$, $dI^*s=0$ and $\overline{\partial}(s-iI^*s)=0$. It is easy to see that any two of these conditions imply the third.



As we have discussed before, the cohomology class $\alpha\in \HH^2(M,\dR)$ can be viewed as a family of alternating bilinear forms $\{E_b\}$ on $V_b\cong T^*_bB$ taking integer values on $\Gamma_b$. The dual forms $\{-E_b^{-1}\}$ then give smoothly varying alternating bilinear forms on $T_bB$, that is a two-form $\omega$. The significance of the sign will become clear later. Locally, we can find a \emph{symplectic frame} $\{\mu_i,\nu_i\}$ for the $\{E_b\}$, that is
$$E_b(\mu_i,\nu_j)=-d_i\delta_{ij},\ \ \ E_b(\mu_i,\mu_j)=E_b(\nu_i,\nu_j)=0,$$
where $d_i\in \dZ_{>0}$ and $d_i|d_{i+1}$. The vector $d=(d_1,...,d_n)$ is called the type of the polarization $E_b$ and it must stay constant over a connected base. In particular, $\omega$ can be written locally as
$\omega=d_i^{-1} \nu_i\wedge \mu_i.$ Moreover, since $\{\mu_i,\nu_i\}$ are flat we may integrate them to flat coordinates $\{x^i,y_i\}$ such that $\{dx^i,dy_i\}$ is an integral frame of $\Gamma$ and such that
\begin{align}\label{omega in flat darboux coords}\omega=\frac{1}{d_i}dy_i\wedge dx^i.\end{align}
Clearly, $\nabla\omega=0$ and since $\nabla$ is torsion-free $d\omega=0$ as well.

Finally, $\omega$ is compatible with the complex structure on $TB$, since $E_b$, which agrees fiberwise with $-\omega^{-1}$, is compatible with the complex structure on $T^*_bB$. The reason for the minus sign is clear now. Due to our convention for the K\"ahler form $(I,\omega)$ defines a K\"ahler structure on $B$ with positive definite K\"ahler metric $I^*\omega$.

The compatibility with the complex structure $I$ is equivalent to the existence of conjugate special complex coordinates $\{z^i,w_i\}$ adapted to flat Darboux coordinates $\{x^i,y_i\}$. That is $\{z^i\}$ and $\{w_i\}$ are two sets of complex coordinates on $B$ satisfying $Re(z^i)=x^i$ and $Re(w_i)=y_i$. These coordinates can be found as follows. The flat Darboux coordinates induce a frame $\{dx^i,dy_i\}$ of $\Gamma$ which on each fiber can be identified with a cycles $\{\gamma^i,\delta_i\}$ generating $\HH_1(M_b,\dZ)$. The holomorphic coordinates are then given by integrating the holomorphic symplectic form over these families of cycles
$$dz^i=\int_{\gamma_i}\eta,\ \ \ dw_i=-\int_{\delta^i}\eta.$$
One can also show that in these coordinates 
$$\frac{\partial}{\partial z^i}=\frac{1}{2}\Big(\frac{\partial}{\partial x^i}-\tau_{ij}\frac{\partial}{\partial y_j}\Big).$$
where $\tau_{ij}$ is the period matrix of the fibers.

The conclusion is the following theorem from \cite{freed}, first stated in \cite{DW}.
\begin{theorem}[\cite{freed} Theorem 3.4.]
Let $(M\ra B,\eta,\alpha)$ be an algebraic integrable system. Then, the K\"ahler form $\omega$ and the connection $\nabla$ constructed above comprise of a special K\"ahler structure on $B$. Furthermore, there is a lattice $\Gamma^\vee\subset TM$
whose dual $\Gamma\subset T^*B$ is a complex Lagrangian submanifold, and the holonomy of $\nabla$ is contained in the integral symplectic group defined by $\Gamma$.
\end{theorem}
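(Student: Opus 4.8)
The plan is to verify the special K\"ahler axioms one at a time, most of which have already been extracted from the construction above, and then to dispatch the two remaining geometric claims about $\Gamma$ and the holonomy. The only axiom that genuinely uses the \emph{special} structure, rather than just the K\"ahler and affine data, is $d_\nabla I=0$, so I would isolate that as the crux.

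First I would collect what is already in hand. Fiberwise $\omega$ agrees with $-E_b^{-1}$, so its non-degeneracy is inherited from that of the polarization forms $E_b$; moreover, in the flat Darboux coordinates $\{x^i,y_i\}$ adapted to $\Gamma$ one has $\omega=\tfrac{1}{d_i}\,dy_i\wedge dx^i$ with constant coefficients, which gives simultaneously $d\omega=0$ and $\nabla\omega=0$. The connection $\nabla$ is by definition the one whose flat coframe is the locally closed generating frame $\{dx^i,dy_i\}$ of $\Gamma\subset T^*B$, so in these coordinates it is the trivial connection and is therefore both flat and torsion-free. Compatibility of $\omega$ with $I$ has also been recorded. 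Thus the single axiom left to check is $d_\nabla I=0$.

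For this last condition I would use the characterization of complex Lagrangian sections obtained earlier. In a flat frame the identity $d_\nabla I=0$ is equivalent to the symmetry $\partial_k I^i_j=\partial_j I^i_k$. Now the horizontal distribution of the integrable system, equivalently the lattice of flat sections of $T^*B$, is complex Lagrangian, so by the criterion established above every flat section $s$ satisfies both $ds=0$ and $dI^*s=0$. Applying the second equation to the flat coframe $s=dx^a$ and expanding $I^*dx^a$ in the frame $\{dx^b\}$, the vanishing $d(I^*dx^a)=0$ is exactly the required symmetry of the derivatives of the matrix of $I$. Alternatively, one may pass to the special complex coordinates $\{z^i,w_i\}$ and read off the matrix of $I$ from $\partial_{z^i}=\tfrac12(\partial_{x^i}-\tau_{ij}\partial_{y_j})$; the symmetry of the period matrix $\tau$ produces a holomorphic prepotential whose third derivatives $\partial_k\tau_{ij}$ are totally symmetric, yielding $d_\nabla I=0$ once more.

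The remaining two statements are then short. That $\Gamma\subset T^*B$ is Lagrangian is the content of the symplectomorphism $V\cong T^*B$ together with the fact that its generators are closed one-forms; since $V\cong T^*B$ is moreover a \emph{holomorphic} symplectomorphism, $\Gamma$ is a complex submanifold and hence complex Lagrangian, while its dual lattice $\Gamma^\vee$ sits in $TB$ (pulling back to the vertical bundle of $M$). For the holonomy, $\nabla$ is the Gauss-Manin connection, whose monodromy is the monodromy representation of the local system $\Gamma$; it preserves the integral lattice $\Gamma$ and, being symplectic, preserves $E$ and hence $\omega=-E^{-1}$, so it lands in the integral symplectic group defined by $\Gamma$. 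The main obstacle is precisely $d_\nabla I=0$: the cleanest route is the complex-Lagrangian criterion for flat sections, since the explicit period route would otherwise require the Riemann bilinear relations for the polarized fibers to establish the symmetry of $\tau$ and the existence of the prepotential.
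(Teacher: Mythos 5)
Your proposal is correct, but the route you prefer for the crux $d_\nabla I=0$ is genuinely different from the one the paper relies on. The paper assembles the auxiliary data exactly as you do ($\omega$ closed, parallel, compatible with $I$; $\nabla$ flat and torsion-free from the integral frame of $\Gamma$) and then defers the special K\"ahler condition to Freed's argument via the conjugate special coordinates $\{z^i,w_i\}$ obtained by integrating $\eta$ over the cycles dual to $\{dx^i,dy_i\}$: there $d_\nabla I=0$ is encoded in the symmetry of the period matrix $\tau_{ij}$, which rests on the Riemann bilinear relations for the polarized fibers --- this is precisely your \emph{alternative} route. Your primary route instead reuses the paper's criterion that a section $s$ of $T^*B$ is complex Lagrangian precisely when $ds=0$ and $dI^*s=0$ (derived in the paper for the different purpose of analyzing obstructions to complex Lagrangian sections), applied to the flat coframe $\{du^a\}=\{dx^i,dy_i\}$: these are lattice generators, hence complex Lagrangian sections, and $d(I^*du^a)=0$ written out in flat coordinates is exactly the symmetry $\partial_k I^a_j=\partial_j I^a_k$, i.e.\ $d_\nabla I=0$, the same formula the paper records below Theorem \ref{special kahler potential}. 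This is Hitchin's complex Lagrangian viewpoint \cite{hitchinCplxLag}. What your route buys: it is shorter, bypasses the period matrix and the Riemann relations, and makes visible that $d_\nabla I=0$ uses only the complex Lagrangian fibration, the polarization entering solely through $\omega$; what Freed's route buys: it constructs the special holomorphic coordinates and the prepotential explicitly, which the paper needs anyway for Theorem \ref{special kahler potential} and the Legendre transform. Your handling of the two remaining claims ($\Gamma$ complex Lagrangian via the holomorphic symplectomorphism $V\cong T^*B$, holonomy integral symplectic from $\nabla$-parallelism of $\Gamma$ and of $\omega$) coincides with the paper's construction. One caveat: your crux argument and your proof that $\Gamma$ is complex Lagrangian both rest on the same single input, namely that flat sections of $T^*B\cong V$ are complex Lagrangian; this is not circular --- it is the geometric content of the fibers being complex Lagrangian tori, which the paper asserts before the theorem --- but you should flag it as the one external input rather than appear to derive each of the two statements from the other.
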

There is a second part of this theorem which states that a special K\"ahler manifold $B$ together with a complex Lagrangian lattice $\Gamma$ whose dual is flat, $T^*B/\Gamma=M_0$ with the structure of an algebraic integrable system. Indeed, the holomorphic symplectic structure of $T^*B$ descends to $T^*B/\Gamma$ and the dual of the K\"ahler form $\omega$ induces the polarizations on the fibers. This algebraic integrable system has a well-defined zero section which is complex Lagrangian.

Even more is true, however. We saw that holomorphic symplectic structure on $T^*B$ can be extended to a semi-flat hyperk\"ahler structure which is invariant under translation by $\Gamma \subset T^*B$. In particular, it descends to a hyperk\"ahler structure on $T^*B/\Gamma$ extending the holomorphic symplectic structure and such that for each $b\in B$ the K\"ahler form $\omega_\dI$ restricted to the fiber $T_b^*B/\Gamma_b=(M_0)_b$ agrees with minus the polarization. The minus sign is due to our conventions.

\section{Semi-flat hyperk\"ahler structure on an algebraic integrable system}\label{section deformation of symplectic}
In this section we show that via a flat connection, the semi-flat hyperk\"ahler structure can also be extended to the algebraic integrable system $\pi: M\ra B$. On the other hand, this hyperk\"ahler structure may not extend the holomorphic symplectic structure. In the real case, if an integrable system has a smooth section then it is diffeomorphic to $T^*B/\Gamma$ but it is only symplectomorphic if it has a Lagrangian section \cite[Theorem 2.1]{Dui}. In the holomorphic case, Lagrangian sections are replaced by complex Lagrangian ones but the obstruction remains. This issue is further explored in the second half of this section.

As we have discussed before $M$ has torsion Chern class, so any connection will define an integrable horizontal distribution. We have seen that a choice of such a connection can be viewed as a choice of an atlas in which the transitions between the fiber coordinates are constant affine transformations. Since the base of $M$ is special K\"ahler we may also choose the base coordinates to be flat Darboux and the coordinates on the vertical bundle as the dual coordinates on $T^*B$.

The construction of these coordinates is via smooth sections as follows. Let  $s_i:U_i\ra M|_{U_i}$ be any smooth sections, where $\{U_i\}$ is a good cover of $B$. The Chern class is given by $c=\{c_{ij}\}\in \HH^1(B,\cC^\infty(T^*B/\Gamma))\cong \HH^2(B,\Gamma)$ defined as
$$s'_i=s'_j+c_{ij}\ \ \ c_{ij}:U_{ij}\ra T^*U_{ij}/\Gamma.$$
Since $c$ is torsion there exists some $r\in \dZ_{>0}$ such that $r\cdot c$ is trivial. In particular there exist local sections $n_{ij}:U_{ij}\ra \Gamma$ such that 
$$c_{ij}-\frac{1}{r}n_{ij}=l_i-l_j\ \ \ \text{for sections }\  l_i:U_i\ra T^*B/\Gamma.$$
Then, via the action of $M_0$ on $M$, $s_i=s'_i-l_i$ are local smooth sections of $M$ which differ by flat sections of $T^*B/\Gamma$ over double intersections. This translates to constant affine transformations between coordinates.

These smooth sections together with flat coordinates on the base and their dual coordinates on the cotangent bundle decompose the tangent bundle of $M$ into \emph{integrable} vertical and horizontal subbundles 
$$TM\cong \pi^*TB\oplus \pi^*T^*B.$$
Therefore, the semi-flat hyperk\"ahler structure on $V+V^*$ extended to the vector bundle on $TB\oplus T^*B$ pulls back to an $M_0$-invariant structure on $M$. This is indeed a hyperk\"ahler structure by the integrability of the horizontal distribution. In this hyperk\"ahler structure the local sections $s_i$ defining the flat connection are complex Lagrangian and via these sections $M|_{U_i}\cong T^*U_i/\Gamma$ as hyperk\"ahler manifolds. The conclusion is the following theorem, which is a slight extension of Theorem 3.8 of \cite{freed}.
\begin{theorem}
Let $(M\ra B, \eta, \alpha)$ be an algebraic integrable system. Then to any flat connection on $M$ we can associate a semi-flat hyperk\"ahler structure on $M$. In particular, $M_0=T^*B/\Gamma$ carries a canonical semi-flat hyperk\"ahler structure via the Gauss-Manin connection.
\end{theorem}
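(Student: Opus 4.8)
The plan is to reduce the statement to the semi-flat structure already built on $T^*B$ and its quotient $M_0 = T^*B/\Gamma$, and then to transport that structure to $M$ through local trivializations furnished by the flat connection. The second assertion comes almost for free: the matrices (\ref{semiflat hklr 1}) and (\ref{semiflat hklr 2}) define the tensors $\dI,\dJ,\dK,\cG$ and the forms $\omega_\dI,\omega_\dJ,\omega_\dK$ on $T^*B$ in a coordinate-independent fashion, each is invariant under translation by the lattice $\Gamma\subset T^*B$, and so each descends to $M_0$. Since the Gauss--Manin connection is canonically determined by the flat structure on $V\cong T^*B$, this is precisely the canonical semi-flat hyperk\"ahler structure on $M_0$, so it suffices to treat a general $M$ and to note that $M_0$ with its Gauss--Manin connection is the special case $M=M_0$.

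For a general algebraic integrable system $M$ with a chosen flat connection, the essential difficulty is that $M$ is only locally isomorphic to $M_0$, the discrepancy being measured by the (necessarily torsion) Chern class. First I would use the flat connection, together with flat Darboux coordinates on the special K\"ahler base, to produce over a good cover $\{U_i\}$ local trivializations $\phi_i : M|_{U_i} \xrightarrow{\sim} M_0|_{U_i}$ whose transition maps $\phi_i\circ\phi_j^{-1}$ are constant affine transformations. This is exactly the content of the preceding construction: the torsion of $c$ (via $r\cdot c=0$ and the splitting $c_{ij}-\tfrac{1}{r}n_{ij}=l_i-l_j$) lets one choose sections differing only by $\nabla$-parallel sections of $M_0$, so that the translation parts of the transitions are constant, while the linear parts lie in the integral symplectic monodromy group.

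I would then pull the semi-flat structure on $M_0$ back along each $\phi_i$ and verify that these local structures agree on overlaps; this is the heart of the argument. The agreement rests on two invariance properties of the semi-flat structure on $M_0$: invariance under constant fiberwise translation, which holds because every defining tensor is $M_0$-invariant, and invariance under the integral symplectic monodromy, which holds because the defining matrices are assembled from $I,\omega,g$ on $B$ and are coordinate-independent. Since every transition $\phi_i\circ\phi_j^{-1}$ is a composite of maps of these two kinds, the pulled-back tensors and forms coincide on each $U_{ij}$ and glue to globally defined $\dI,\dJ,\dK,\cG,\omega_\dI,\omega_\dJ,\omega_\dK$ on $M$, manifestly invariant under the $M_0$-action.

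It remains to see that the glued structure is genuinely hyperk\"ahler, which I would check locally. Each $\phi_i$ is a diffeomorphism onto an open subset of $M_0$, where the complex structures are integrable---this is where the integrability of the horizontal distribution, guaranteed by flatness together with torsionality of $c$, enters---and the three K\"ahler forms are closed. Both integrability of an almost complex structure and closedness of a form are local conditions preserved under pullback by a diffeomorphism, so they pass to all of $M$. The main obstacle is precisely this gluing step, namely arranging that the transition functions be constant affine maps: without the torsion hypothesis a non-torsion Chern class would force non-constant translations, which destroy both the integrability of the horizontal distribution and the compatibility of the locally defined semi-flat tensors on overlaps.
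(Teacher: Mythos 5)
Your proof is correct and follows essentially the same route as the paper: both use the torsion Chern class together with the flat connection to produce local identifications $M|_{U_i}\cong M_0|_{U_i}$ (equivalently, an atlas whose fiberwise transitions are constant affine maps), transport the semi-flat structure of $M_0$, and verify integrability and closedness locally. The only cosmetic difference is that the paper packages your overlap-invariance check as a single global statement --- the connection splits $TM\cong \pi^*TB\oplus\pi^*T^*B$ into integrable subbundles, and the semi-flat tensors, being coordinate-independent expressions in $g$, $I$, $\omega$, are defined directly on this splitting --- rather than gluing local pullbacks.
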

It is clear from this perspective that the semi-flat structure extends the holomorphic symplectic structure on $M$ if we can find coordinates for the horizontal distribution via holomorphic Lagrangian sections. In particular, only if the flat horizontal distribution of $M$ can be chosen to be complex Lagrangian. 

The obstruction to this can be explained via the short exact sequence of shaves
$$0 \ra \Gamma \ra \Lambda(T^*B)\ra \Lambda(T^*B/\Gamma) \ra 0,$$
where $\Lambda$ denotes complex Lagrangian sections with respect to the canonical holomorphic symplectic structure $\eta$ on $T^*B$. 

A section of $T^*B$ is a one-form $\xi$ on $B$. It is Lagrangian with respect to $Re(\eta)=\sigma$ if $d\xi=0$. Indeed, $\sigma=d\tau$ where $\tau$ is the Liouville one-form, so we have $\xi^*\sigma=\xi^*(d\tau)=d(\xi^*\tau)=d\xi$. The section is holomorphic, if and only if  $\xi-iI^*\xi$ is a holomorphic one-form on $B$. Finally, if $\xi$ is Lagrangian with respect to $Re(\eta)$ and holomorphic, it is also Lagrangian with respect to $Im(\eta)$.

The relevant part of the long exact sequence of cohomology is
$$\hdots \ra \HH^1(B,\Lambda(T^*B)) \ra \HH^1(B, \Lambda(T^*B/\Gamma)) \ra \HH^2(B,\Gamma) \ra  \hdots $$
Suppose that $M\ra B$ has a smooth section $s:B\ra X$. Let $\mu_i:U_i\ra M|_{U_i}$ be local complex Lagrangian sections which define a class in $ \HH^1(B,\Lambda(T^*B/\Gamma))$. Since it is in the kernel of the boundary morphism it can be lifted to a class  $\mu\in \HH^1(B,\Lambda(T^*B))$ represented by sections $\mu_{ij}:U_{ij}\ra \Lambda(T^*U_{ij})$. Since $\cC^\infty(T^*B)$ is an acyclic sheaf $\HH^1(B,\cC^\infty(T^*B))=0$ so there exist sections $\xi_i: U_i\ra T^*U_i$ satisfying $\mu_{ij}=\xi_i-\xi_j$. That is, the sections $s$ and $\mu_i$ over $U_i$ are related as $s|_{U_i}+\xi_i=\mu_i$ where we take the image of $\xi_i$ under $T^*B\ra T^*B/\Gamma$.

Let us take dual coordinates $(x,p)=(x^\alpha,p_\alpha)$ with respect to the sections $s|_{U_i}$ and $(\tilde{x},\tilde{p})=(\tilde{x}^i,\tilde{p}_i)$ with respect to $\mu_i$ on $M|_{U_i}$. The two coordinates are related by $\tilde{x}=x$ and $\tilde{p}=p+\xi_i$.  Writing $\xi_i$ as a one-form $\xi_i=\chi_\alpha dx^\alpha$ we define $F_{\alpha\beta}=\partial_\alpha\chi_\beta$ as the matrix of differentials, so we have $d\xi_i=(F-F^T)_{\alpha\beta}dx^\alpha\wedge dx^\beta$. In the coordinates $(\tilde{x},\tilde{p})$ the holomorphic symplectic structure $(\dI,\sigma-\dI^*\sigma)$ on $M$ is in the standard form. Transforming to the coordinates $(x,p)$ we find
\begin{align}\label{nonstandard symplectic}
    \sigma=\begin{pmatrix}F-F^T & 1\\ -1 & 0\end{pmatrix}, \ \ \ \dI=\begin{pmatrix} I & 0 \\ I^*F-FI & I^* \end{pmatrix},\ \ \ -\dI^*\sigma=\begin{pmatrix} F^TI-I^*F & -I^* \\ I & 0 \end{pmatrix}.
\end{align}
We can identify $(F-F^T)_{\alpha\beta}dx^\alpha\wedge dx^\beta+i(I^*F-F^TI)_{\alpha\beta}dx^\alpha\wedge dx^\beta$ with the pullback of the holomorphic symplectic form on $M$ via the section $s:B\ra M$. It is shown in \cite[Proposition 2.10]{BDV} that this pullback must be closed and have Hodge type $(2,0)+(1,1)$. 

We see that the obstruction to the existence of a global complex Lagrangian section lies in $\HH^1(B,\Lambda(T^*B))$ and can be represented by a complex two-from on the base. If $M\ra B$ is an algebraic integrable system with torsion but non-zero Chern class, we have no chance of finding a smooth section. On the other hand, if $r\cdot c=0$ for some $r\in \dZ_>0$ we may choose local sections $s_i:U_i\ra M$ such that $s_i=s_j+r^{-1}+r^{-1}\cdot n_{ij}$ where $r^{-1}\cdot n_{ij}$ are flat sections of  $T^*U_{ij}$ and also r-torsion elements of $T^*U_{ij}/\Gamma$. 

We can explain this in a coordinate-free way as follows. On $B$ there is a short exact sequence of local system
\[
\begin{tikzcd}
    0 \arrow{r}& \Gamma \arrow{r}{\cdot r} & \Gamma \arrow{r} & \Gamma[r]\arrow{r} & 0
\end{tikzcd}
\]
where the local system $\Gamma[r]$ can be identified with the r-torsion points of $T^*B/\Gamma$. Via the induced map on cohomology $\HH^2(B,\Gamma)\ra \HH^2(B,\Gamma)$ the Chern class $c$ of $M$ maps to zero. That is, $c$ lies in the image of the boundary morphism $\HH^1(B,\Gamma[r])\ra \HH^2(B,\Gamma)$ of the long exact sequence which is equivalent to the representation of $c$ as the collection of sections $\{r^{-1}\cdot n_{ij}\}$.

The morphism of lattices $\cdot r: \Gamma\ra \Gamma$ induces a map of affine torus bundles $\rho:M\ra M_0$ where $M_0$ is the affine torus bundle corresponding to the local system $\Gamma$ with zero Chern class (\ref{group bundle M0}). In the coordinates associated to the local sections $s_i$ of $M$ and a global flat section of $M_0$ we have $\rho(x,p)=(x,r\cdot p)$. The map $\rho$ is a degree $r^{2n}$ isogeny on the fibers. 

Locally we can write the sections $s_i$ as $\mu_i+\xi_i$ for some complex Lagrangian sections $\mu_i$ of $M$ and one-forms $\xi_i$ on the base. Then, the holomorphic symplectic structure of $M$ is given by (\ref{nonstandard symplectic}) in the coordinates corresponding to the $s_i$. We can then endow $M_0$ with a holomorphic symplectic structure such that $\rho$ is a local holomorphic symplectomorphism.

Now suppose that $M_0$ endowed with the induced holomorphic symplectic structure has a global complex Lagrangian section $\mu:B\ra M_0$. Then, over a contractible cover, we may choose preimages $\mu_i$ of $\mu$ which will be complex Lagrangian and which will differ by an r-torsion element over double intersections. The conclusion of the previous section is the following theorem.

\begin{proposition}
    On an algebraic integrable system $M$ there exists a semi-flat hyperk\"ahler structure extending the holomorphic symplectic structure if and only if $M_0$ has a complex Lagrangian section.
\end{proposition}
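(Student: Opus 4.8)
The plan is to lean on the reduction already established before the statement: from the discussion of the splitting $TM\cong\pi^{*}TB\oplus\pi^{*}T^{*}B$, a flat connection on $M$ produces a semi-flat hyperk\"ahler structure extending $\eta$ \emph{precisely} when its flat horizontal distribution can be chosen complex Lagrangian, equivalently when the local sections $s_{i}:U_{i}\to M$ defining the connection can be taken complex Lagrangian. So the entire content is to match flat connections on $M$ with complex-Lagrangian defining sections against global complex Lagrangian sections of $M_{0}$. The bridge is the degree $r^{2n}$ isogeny $\rho:M\to M_{0}$ constructed above, fiberwise multiplication by $r$ (with $r$ the order of the torsion Chern class $c$) and, by construction, a local holomorphic symplectomorphism for the holomorphic symplectic structure induced on $M_{0}$, i.e. $\rho^{*}\eta_{M_{0}}=\eta$ locally. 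The observation that makes both implications nearly formal is that whenever $\rho\circ s=\mu$ one has $s^{*}\eta=s^{*}\rho^{*}\eta_{M_{0}}=(\rho\circ s)^{*}\eta_{M_{0}}=\mu^{*}\eta_{M_{0}}$; since $\rho$ is moreover a local biholomorphism, $s$ is complex Lagrangian if and only if $\mu$ is.

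For the forward implication I would begin with a semi-flat structure extending $\eta$, take the underlying flat connection, and present it by local complex Lagrangian sections $s_{i}$ of the type built earlier, with $s_{i}=s_{j}+r^{-1}n_{ij}$ over $U_{ij}$ for flat $r$-torsion sections $r^{-1}n_{ij}$ of $M_{0}$. Since $\rho$ is equivariant for multiplication by $r$ on $M_{0}$, we get $\rho(s_{i})=\rho(s_{j})+n_{ij}=\rho(s_{j})$, because $n_{ij}$ is a section of $\Gamma$ and hence the zero section of $M_{0}$. Thus the $\rho\circ s_{i}$ glue to a single global section $\mu:B\to M_{0}$, which is complex Lagrangian by the identity above.

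For the converse I would start from a global complex Lagrangian section $\mu:B\to M_{0}$, pass to a good contractible cover $\{U_{i}\}$, and lift $\mu$ through $\rho$ to local sections $s_{i}:U_{i}\to M$ with $\rho\circ s_{i}=\mu$. Each $s_{i}$ is complex Lagrangian by the same pullback identity, and on overlaps the differences $s_{i}-s_{j}$ lie in $\ker\rho$, the fiberwise $r$-torsion subgroup of $M_{0}$, which is a \emph{flat} subbundle; hence these differences are flat $r$-torsion sections and the $\{s_{i}\}$ genuinely define a flat connection on $M$ whose horizontal distribution is complex Lagrangian. Feeding this back into the reduction of the first paragraph yields a semi-flat hyperk\"ahler structure extending $\eta$.

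The step I expect to be the real obstacle is not the Lagrangian bookkeeping, which collapses to the single identity $s^{*}\eta=\mu^{*}\eta_{M_{0}}$, but verifying cleanly that the two constructions are mutually inverse: that the $r$-torsion ambiguity in lifting a section of $M_{0}$ through $\rho$ is \emph{exactly} the data of a flat connection on $M$ with torsion Chern class, and conversely that local complex Lagrangian sections defining such a connection descend through $\rho$ to a section of $M_{0}$ that is well defined independently of the cover and trivializations. Making this precise amounts to identifying $\ker\rho$ with the flat subbundle $\Gamma[r]\subset M_{0}$ and checking that the resulting \v Cech data reproduces the description of $c$ coming from the sequence $0\to\Gamma\xrightarrow{\cdot r}\Gamma\to\Gamma[r]\to 0$; the sheaf sequence $0\to\Gamma\to\Lambda(T^{*}B)\to\Lambda(T^{*}B/\Gamma)\to 0$ then organizes the complex-Lagrangian refinement of this bookkeeping, so that existence of the global section of $M_{0}$ is equivalent to realizing $c$ by the $r$-torsion cocycle $\{r^{-1}n_{ij}\}$ that defines the complex-Lagrangian flat connection on $M$.
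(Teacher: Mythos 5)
Your proposal is correct and is essentially the paper's own proof: the paper establishes the proposition through exactly the chain you use --- the reduction of the extension property to flat connections defined by complex Lagrangian local sections, the isogeny $\rho\colon M\to M_0$ as a local holomorphic symplectomorphism, and the correspondence between global complex Lagrangian sections of $M_0$ and local complex Lagrangian sections of $M$ differing by flat $r$-torsion elements --- with your forward direction merely making explicit the descent through $\rho$ that the paper leaves implicit. The one caveat, which your argument shares with the paper's, is the implicit assumption in the forward direction that the flat connection underlying the given semi-flat structure is presented by sections whose mutual differences are $r$-torsion (rather than arbitrary flat) sections of $M_0$; this holds for the connections the paper constructs but is not verified for an arbitrary flat connection with complex Lagrangian horizontal distribution.
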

 
\begin{example}[Higgs bundle moduli spaces]\label{higgs bundes 1} Let $\Sigma$ be a compact Riemann surface of genus $g\geq 2$. Let $K$ be the canonical bundle. A \emph{Higgs bundle of degree $d$ and rank $r$ on $\Sigma$} is a pair $(V,\Phi)$, where $V$ is a degree $d$ rank $r$ holomorphic vector bundle and $\Phi$ is a holomorphic section of $End(V)\otimes K$. We call $\Phi$ the \emph{Higgs field}. A Higgs bundle is \emph{stable} (resp. \emph{semi-stable}) if any proper $\phi$-invariant subbundle $F\subset E$ satisfies
$$\frac{deg(F)}{rank(F)}<\frac{deg(E)}{rank(E)}$$
(resp. $deg(F)/rank(F)\leq deg(E)/rank(E)$). For a fixed $r$ and $d$ there exists a coarse moduli space parametrizing the isomorphism classes of semi-stable Higgs bundles $\cM(r,d)$. It was first constructed by Hitchin \cite{hitchinHiggs} using infinite dimensional hyperk\"ahler quotient and later by Nitsure \cite{nitsure} and Simpson \cite{simpson1, simpson2} via GIT quotient. The smooth points of $\cM(r,d)$, denoted by $\cM^s(r,d)$, correspond to stable Higgs bundles and carry a hyperk\"ahler structure.

The characteristic polynomial of the Higgs field
$$\det(I\cdot x-\Phi)=x^r+a_1x^{r-1}+...+a_1$$
defines a map
\begin{equation}\label{hitchin map}
    \begin{aligned}
        h:\cM(d,r)\ \  &\ra\ \  \cA= \bigoplus_{i=1}^r \HH^0(\Sigma, K^{\otimes i})\\
        (E,\Phi)\ \  &\mapsto\ \  (a_1,...,a_r)
    \end{aligned}
\end{equation}
called the \emph{Hitchin map} to an affine space, called the \emph{Hitchin base}. In one of the holomorphic symplectic structures, the fibers of $\cM^s(d,r)$ are (singular) polarized complex Lagrangian tori, that is $\cM^s(d,r)$ is an algebraic integrable system with singular fibers. The polarization is induced by one of the K\"ahler forms. Let us denote by $\cA^{reg}$ the locus where the fibers of $h$ are smooth and let $\cM^{reg}(r,d)\subset \cM^s(r,d)$ be the preimage of $\cA^{reg}$ via $h$. Then 
\begin{align}\label{hitchin map reg}\cM^{reg}(r,d) \ra \cA^{reg}\end{align}
is an integrable system in the sense of Definition \ref{integrable system}. 
 
An \emph{$SL(r,\dC)$-Higgs bundle} is a Higgs bundle $(V,\Phi)$ of rank $r$ such that $det(V)$ is trivial and $\Phi\in End_0(V)\otimes K$ is trace-free. Once again, there exists a moduli space parametrizing $SL(r,\dC)$-Higgs bundles which we denote by $\cM_{SL}(r,\cO)$. Here, $r$ denotes the rank and $\cO$ is the trivial line bundle, that is, the determinant of $V$. Clearly, 
$$\cM_{SL}(r,\cO)\subset \cM(r, 0).$$
The restriction of the Hitchin map 
\begin{align}\label{hitchin map for SL}\cM_{SL}(r,\cO)\ra \cA_0=\bigoplus_{i=2}^n\HH^i(\Sigma,K^i)\end{align}
is again an algebraic integrable system with singular fibers. It turns out that this map has a section over all of $\cA_0$ called the \emph{Hitchin section}.

In rank $r=2$ the Hitchin section is constructed as follows \cite[Section 3]{hitchinSection}. The Hitchin base is given by $\cA_0=\HH^0(\Sigma, K^2)$ the space of quadratic differentials. Let $K^{1/2}$ be a square root of the canonical bundle, that is a spin structure on $\Sigma$. Define the Higgs bundle $(V_a,\Phi_a)$ for an $a\in \cA_0$ as
$$V_a=K^{1/2}+K^{-1/2},\ \ \Phi_a=\begin{pmatrix}1 & 0 \\ a & 1\end{pmatrix}.$$
It is easy to see that $(V_a,\Phi_a)$ is stable for all $a\in \cA_0$. This construction can be generalized to any $r\geq 3$ and it is a complex Lagrangian section (see for example \cite[Proposition 2.10]{hauselHitchin}).

To the Hitchin section over $\cA_0^{reg}=\cA_0\cap \cA^{reg}$ we can associate a semi-flat hyperk\"ahler structure which has been extensively studied. It was conjectured by Gaiotto, Moore and Nietzke  \cite{GMN1, GMN2} that the semi-flat metric is exponentially close to the original hyperk\"ahler metric far away from the locus of singular fibers $\cA_0\backslash \cA^{reg}_0$. This has only been partially proved.

\end{example}


\chapter{Generalized geometry}
The term generalized geometry was introduced by Nigel Hitchin in 2002 referring to the study of the bundle $TM+T^*M$ on a differentiable manifold $M$. He was motivated by the double field theory formalism of supersymmetric sigma models. 
Generalized geometry was further developed by two of his students Marco Gualtieri and Gil Cavalcanti during their PhD. Gualtieri demonstrated that generalized geometry unifies complex and symplectic structures into 
so-called generalized complex structures. He also showed that the formalism of supersymmetric sigma models can be encoded into generalized K\"ahler structures. 

In this chapter, we first introduce the basic notions of generalized geometry. The remainder of this section is dedicated to one of the most important notions of this thesis: branes. These objects appear in all sorts of physical theories as boundary conditions, in our context they come from supersymmetric sigma models. Gualtieri has also defined branes in the framework of generalized geometry. We study different types of generalized branes which are compatible with different generalized complex structures. The last subsection of this chapter contains the first small results of this thesis regarding the structure of branes called BAA-branes.

\section{Basic notions}
In this section we follow the work of Gualtieri \cite{MGpaper, MGthesis}. We introduce Courant algebroids, generalized complex/K\"ahler/hyperk\"ahler structures and  B-field transformations.

Let $M$ be a smooth manifold.
\begin{definition}
    A Courant algebroid on $M$ is a quadruple $(E,\rho,\langle,\rangle, [,])$ where $E\ra M$ is a vector bundle, $\rho:E\ra TM$ is a morphism of vector bundles called the anchor, $\langle, \rangle$ is a non-degenerate bilinear pairing on sections of $E$ and $[,]$ is a bracket, called the Courant bracket, satisfying the following for all $x,y,z\in \Gamma(E)$:
    \begin{enumerate} 
        \item $[x,[y,z]]=[[x,y],z]+[y,[x,z]]$, 
        \item $\rho([x,y])=[\rho(x),\rho(y)]$,  
        \item $[x,fy]=f[x,y]+df(\rho(x)) y$ for $f\in \cC^\infty(M)$,
        \item $\rho(x)\langle y,z\rangle = \langle [x,y],z\rangle +\langle y,[x,z]\rangle$,
        \item $[x,x]=\rho^*d\langle x,x\rangle$.
    \end{enumerate}
\end{definition}
An exact Courant algebroid is a Courant algebroid fitting into the short exact sequence
\begin{equation}\label{SES exact Calg}
\begin{tikzcd}
    0 \arrow{r}& T^*M \arrow{r}{\rho^*}& E \arrow{r}{\rho} & TM \arrow{r} & 0.
\end{tikzcd}
\end{equation}
One can show that in an exact Courant algebroid, the image of $T^*M$ is isotropic and the bilinear pairing $\langle,\rangle$ has split signature. Moreover, there exist isotropic splittings $s:TM\ra E$ and via such a splitting $E\cong TM+T^*M$, the anchor map is projection to the first summand, the pairing is given by
$$\langle X+\xi, Y+\eta\rangle= \frac12 (\xi(Y)-eta(X))$$
and the bracket is
$$[X+\xi,Y+\eta]=[X,Y]+\cL_X\eta -\iota_Yd\xi+\iota_X\iota_YH$$
for a closed three-form $H\in \Omega^2(M)$. 

Choosing a different isotropic splitting $s':TM\ra E$ changes $H$ by an exact three form $dB$. A change in isotropic splitting is called a $B$-field transformation. The isomorphism classes of Courant algebroids are classified by the de Rham cohomology class of the three-form $H$ and each representative can be attained by a choice of isotropic splitting.

The novelty of generalized geometry is that several geometrical structures have analogues as structures on $E=TM\oplus T^*M$. Most importantly, complex and symplectic structures can be viewed as examples of generalized complex structures. 
\begin{definition}
A \emph{generalized almost complex structure} on $E$ is an endomorphism $\cJ: E\ra E$ which is orthogonal with respect to the natural pairing and satisfies $\cJ^2=-\text{Id}$.
\end{definition}

A generalized almost complex structure decomposes the complexified bundle $E\otimes \dC$ into $\pm i$ eigenbundles. That is,
$$E\otimes \dC=L\oplus \overline{L} $$
where $L=\text{Im}(\frac12(\text{Id}+i\cJ))$ and $\overline{L}=\text{Im}(\frac12(\text{Id}-i\cJ))$. We say that a generalized almost complex structure is \emph{integrable} if $L$ is involutive with respect to the Courant bracket.
\begin{definition}
A \emph{generalized complex structure} (GCS) on $E$ is an integrable generalized almost complex structure.
\end{definition} 
Alternatively, a generalized complex structure is a maximal isotropic subbundle $L\subset E\otimes \dC $ which is involutive with respect to the Courant bracket and satisfies $L\cap \overline{L} = 0$, where $\overline{L}$ is the conjugate of $L$. Moreover, as $L$ is an isotropic subbundle it is endowed with the structure of a Lie algebroid via the Courant bracket.

The complexification of the anchor map gives a morphism of complex vector bundles $\rho:E\otimes \dC\ra TM\otimes \dC$ The \emph{type} of a generalized complex structure is the complex codimension $k$ of the complex distribution $\rho(L)\subset TM\otimes \dC$. The type of a generalized complex structure may not be constant.

The most commonly used examples of GCSs are induced by complex and symplectic structures. If $J$ is a complex structure on $M$ then the \emph{complex type generalized almost complex structure} corresponding to $J$ is 
\begin{align}\label{cplx type}
\cJ_J=\begin{pmatrix}
J & 0 \\
0 & -J^*
\end{pmatrix}\end{align}
which is integrable whenever $H^{3,0}=0$. The $+i$ eigenbundle of $\cJ_J$ is given  by
$$L_J=T^{1,0}M\oplus \Omega^{0,1}(M).$$
This GCS has type $k=dim_\dR(M)/2$. 

If $\omega\in \Omega^2(M)$ is a symplectic form on $M$, then the \emph{symplectic type generalized almost complex} structure is
\begin{align}\label{symplectic type}\cJ_\omega=\begin{pmatrix}
0 & -\omega^{-1} \\
\omega & 0
\end{pmatrix}\end{align}
which is integrable if and only if $H=0$. The $+i$ eigenbundle of $\cJ_\omega$ is
$$L_\omega=\{X-i\omega(X)\ |\ X\in TM\otimes \dC\}.$$
This GCS has type $k=0$. 
If $(M,J,\omega)$ is a K\"ahler manifold, there is both a symplectic and complex type GCS associated to the K\"ahler structure for the Courant bracket with $H=0$. The pair $(\cJ_\omega, \cJ_J)$ is an example of a generalized K\"ahler structure.

In the case of a hyperk\"ahler manifold $(M,g,I,J,K)$ if we denote the K\"ahler forms by $\omega_I,\ \omega_J$ and $\omega_K$ there are three complex type $\cJ_I,\ \cJ_J,\ \cJ_K$ and three symplectic type $\cJ_{\omega_I},\ \cJ_{\omega_J},\ \cJ_{\omega_K}$ generalized complex structures on $TM\oplus T^*M$ with $H=0$. Moreover, since $I,\ J$ and $K$ obey the quaternionic relations, the corresponding generalized complex structures satisfy
\begin{align}\label{gen quat1}
    \cJ_I\cJ_J\cJ_K&=-1,\\
    \label{gen quat2}
    \cJ_{\omega_I} \cJ_{\omega_J} =\cJ_K,\ \ \  \cJ_{\omega_J}\cJ_{\omega_K}&=\cJ_I,\ \ \  \cJ_{\omega_K}\cJ_{\omega_I}=\cJ_J.
\end{align}
Such a structure is also called \emph{generalized hyperk\"ahler} and it can also be defined for $H\neq 0$ as three generalized K\"ahler structures which satisfy the generalized quaternionic relations (\ref{gen quat1}) and (\ref{gen quat2}).

An important automorphism of exact Courant algebroids is the \emph{B-field transform} corresponding to some $B\in \Omega^2(M)$. This can be seen as changing the isotropic splitting (\ref{SES exact Calg}). It acts on $TM+T^*M$ as the matrix
$$e^B=\begin{pmatrix} 1 & 0 \\ B & 1 \end{pmatrix}$$
that is, it maps 
$$e^B(X+\xi)=X+\xi+\iota_XB,\ \ \ X\in TM,\ \xi\in T^*M.$$
The B-field transform changes the three-form which defines the Courant bracket as $H\mapsto H+dB$, so it is an automorphism precisely when $B$ is closed. The B-field transform acts on generalized complex structures $\cJ$ via conjugation
\begin{align}\label{Bfield}
    e^B(\cJ)=e^{B}\cJ e^{-B},
\end{align}
and it changes the $+i$ eigenbundle as
\begin{align}\label{Bfield+i}
    L_{e^B(\cJ)}=e^BL_\cJ=\{ X+\xi+\iota_B X \ | \ X+\xi \in L_\cJ\}.
\end{align}
For example, if $I$ and $\omega$ are a complex and a symplectic structure on $M$ and $\cJ_I$, $\cJ_\omega$ are the corresponding generalized complex structures we have
\begin{align*}e^B\cJ_I e^{-B}=\begin{pmatrix}I & 0 \\ BI+I^*B & -I^*\end{pmatrix},\ \ \ e^B\cJ_\omega e^{-B}=\begin{pmatrix} \omega^{-1}B & -\omega^{-1} \\ \omega+B\omega^{-1} B & -B\omega^{-1}  \end{pmatrix} . \end{align*}
It can also be shown that any GCS of type $k=0$ is the $B$-field transform of a symplectic type GCS.

\section{Generalized branes}
The term ``brane" originates from physics where it roughly means boundary conditions for a certain theory. The branes we are concerned with are coming from topological twists of two-dimensional nonlinear supersymmetric sigma models \cite{twisted}. In such a model the bosonic fields are given by smooth maps 
$$\Phi: \Sigma \ra M $$
from a base manifold $\Sigma$ to a target manifold $M$. Nonlinear means that the target space is a manifold, not a vector space and two-dimensional refers to the dimension of the base $\Sigma$. The base $\Sigma$ is taken to be a Riemann surface and the target $M$ is endowed with a metric and a $B$-field, a collection of local two-forms $B_{i}\in \Omega^2(U_i)$ with respect to a good cover $\{U_i\}$ of $M$, which satisfy $dB_i=dB_j $ on double intersections. Such a B-field can be understood as the curving of a connection on a $U(1)$-bundle gerbe whose curvature is the global three form $H\in \Omega^3(M)$ defined as $H|_{U_i}=dB_i$. We call $H$ the $H$-flux.

The fermionic fields are sections $\psi$ of a bundle over the space of smooth maps $\Sigma\ra M$. More precisely, we choose a spin bundle $S$ on $\Sigma$ and at the point $\Phi$ the field $\psi$ takes value in $S\otimes \Phi^*(TM)$. The physical theory is given by an action functional on the space of bosonic and fermionic fields depending on the metrics on and the $B$-field. Classically, the extremal points of the action functional provide the physical trajectories of the particles.

Supersymmetry (SUSY) transformations mix the local components of the fermionic $\psi$ and the bosonic $\Phi$ fields and we say that the theory is supersymmetric whenever the action functional is invariant under these transformations. The generators of the SUSY transformations are sections of spin bundles on $\Sigma$. In two dimensions the spin representation is not irreducible, so spin bundles decompose as $S=S^+\oplus S^-$ into ``left-handed'' and ``right-handed'' parts. When we say a theory is $\cN=(p,q)$-supersymmetric we mean that there are $p$ right-handed and $q$ left-handed independent supersymmetry transformations under which the Lagrangian is invariant.  

On a two-dimensional nonlinear sigma model, the metric of $M$ induces a pair of supersymmetry transformations and any further ones act via complex structures on $M$. These complex structures must be covariantly constant under certain connections which depend on the $H$-flux. The induced geometry on the target space of an $\cN=(p,q)$ supersymmetric sigma model is called \emph{$(p,q)$ hermitian geometry} \cite{pqGeo}. 

It has been shown that these geometries have a natural interpretation in generalized complex geometry with the $H$-twisted Courant bracket on $TM\oplus T^*M$. In \cite{MGthesis} Gualtieri proved that $(p,q)=(2,2)$ geometry is equivalent to the existence of a generalized K\"ahler structure on $(TM\oplus T^*M, H)$. The general case was summarized in \cite{pqGeo}, in particular, $(p,q)=(4,4)$ corresponds to a generalized hyperk\"ahler structure. 

In this work, we will focus on the case when the $B$-field is flat, that is when $H=0$, and $\cN=(2,2)$ or $\cN=(4,4)$. Then, the complex structures have to be parallel with respect to the Levi-Civita connection on $M$. Therefore, $(2,2)$-hermitian structure translates to a K\"ahler structure and $(4,4)$-hermitian structure to a hyperk\"ahler structure on $M$. If moreover the $B$-field vanishes, the corresponding generalized K\"ahler and hyperk\"ahler structures are precisely (\ref{cplx type}), (\ref{symplectic type}) and (\ref{gen quat1}), (\ref{gen quat2}) respectively. Turning on the $B$-field amounts to transforming the generalized complex structures via the $B$-field transform (\ref{Bfield}).

Witten in \cite{wittenTft} defined a twisting procedure which creates topological field theories from supersymmetric sigma models. In the terminology ``topological" means that the theory is independent of the metric on the base $\Sigma$. The gist of the construction is that we require the fermions to take values in different bundles than before but the action functional is kept unchanged. In this procedure half of the supersymmetry is lost that is, the generators are set to zero, but the other half is made into global symmetries with global generators. The two-dimensional $\cN=(2,2)$ supersymmetric sigma model admits two different twists that result in different physical theories which are called the $A$ and $B$ twists. In the K\"ahler case ($H=0$) the $A$ twist results in a theory that only depends on the symplectic structure, while the $B$ twist only depends on the complex part of the original K\"ahler structure. 

In the $\cN=(4,4)$ supersymmetric $H=0$ case the target manifold is hyperk\"ahler and there are a $\dC\dP^1$ worth of K\"ahler structures. Moreover, the topological twist can be performed along any two of these K\"ahler structures. In conclusion a two-dimensional $\cN=(4,4)$ supersymmetric sigma model admits a topological twist corresponding to each point of $\dC\dP^1\times\dC\dP^1$.

 When the base space $\Sigma$ is a Riemann surface with a boundary, varying the action with respect to the fields and supersymmetry generators yields boundary equations of motion. The equations of the bosonic field $\Phi$ restrict where the boundary  $\partial\Sigma$ of the base is mapped to \cite{N=2susy}. More precisely, the constraints define a (local) distribution in the tangent bundle of the target $M$ and the boundary of the base space must map to a leaf of this distribution. A submanifold to which the boundary $\partial\Sigma$ can be mapped is called the support of a brane.   In the simplest, rank one, case the fields on the boundary couple to a $\text{U}(1)$ gauge field, which can be viewed as a connection on a hermitian line bundle over the support of the brane. This is what physicists call the \emph{Chan Pathon bundle} of the brane. The gauge field is related to the difference between two trivializations of the ambient gerbe providing the $B$-field. We call a pair $(S,\nabla)$ a \emph{rank one brane} where $S$ is a submanifold of $M$ and $\nabla$ is a connection on a hermitian line bundle on $S$. In our discussion of branes in relation to generalized geometry only the curvature $F\in \Omega^2(S)$ of this connection plays a role. In this context, we will think of a brane as a pair $(S,F)$ of a submanifold and a closed two-form.

The geometry of a brane is determined by the type of supersymmetry the boundary conditions conserve. In particular, in the $\cN=(2,2)$ case there are two different types of branes which conserve half of the supersymmetry. These are called $A$ and $B$-branes and the name indicates that when one considers the twisted theories, $A$-branes are compatible with the $A$ twist and $B$-branes with the $B$ twist. In the $\cN=(4,4)$ supersymmetric sigma model there are special branes which conserve more supersymmetry and therefore are compatible with a triple of topological twist. In this section, we give a mathematical definition to all of these brane types and study their geometry.

The generalized geometry description of branes was introduced by Gualtieri \cite{MGpaper}. This definition led to the discovery of coisotropic $A$-branes independently from the work of Kapustin and Orlov \cite{kapustinOrlov}. 
\begin{definition}
A \emph{generalized submanifold} of the manifold $M$ endowed with the exact Courant algebroid $(TM\oplus T^*M,H)$, is a pair $\cL=(S, F)$ of a submanifold $S\subset M$ and a two-form $F\in \Omega^2(S)$ such that $dF=H|_S$. We will also call generalized submanifolds \emph{generalized branes}.
\end{definition}
If $M$ is the target space of a sigma model there is a gerbe with a connection on $M$ whose curvature is the three-form $H$ twisting the Courant bracket. In the sigma models, branes come equipped with vector bundles twisted by the ambient gerbe. The assumption $dF=H|_S$ requires the gerbe to be torsion when restricted to $S$, meaning that there are finite-dimensional vector bundles on $S$ twisted by the gerbe. In particular, a generalized brane can support a brane in the classical sense. 

To a generalized submanifold $\cL$ we can associate an involutive subbundle 
\begin{align}\label{gen tangent bundle}
    \tau_\cL=\{X+\xi\in TS\oplus T^*M|_S\ | \ \iota_XF=\xi|_S \} 
\end{align}
of $TM\oplus T^*M$ over $S$, which is called the \emph{generalized tangent bundle} of the generalized submanifold $\cL$. Suppose now that there is a generalized complex structure $\cJ: TM\oplus T^*M\ra TM\oplus T^*M$ on $M$. 

Let us denote by $N^*S$ the conormal bundle $Ann(TS)\subset T^*M$ of $S$. Then, the generalized tangent bundle fits into the short exact sequence
\begin{align*}
0 \ra N^*S \ra \tau_\cL \ra TS \ra 0
\end{align*}
and the two-from $F\in \Omega^2(S)$ can be viewed as the extension class of $\tau_\cL$ inside $TM\oplus T^*M$.

\begin{definition}
A generalized submanifold $\cL=(S,F)$ is a \emph{generalized complex submanifold} if  its generalized tangent bundle is invariant under $\cJ$, that is $\cJ(\tau_\cL)=\tau_\cL$.
\end{definition}

\subsection{A and B-branes}

For the $\cN=(2,2)$ supersymmetric sigma model with vanishing $B$-field the target space is a K\"ahler manifold $(M,g,J)$ and we have two generalized almost complex structures
$$\cJ_J= \begin{pmatrix}
J & 0\\
0 & -J^*
\end{pmatrix},\ \ \text{and}\ \ \cJ_\omega = \begin{pmatrix}
0 & -\omega^{-1} \\
\omega & 0
\end{pmatrix}$$
which are both integrable. We use the following terminology:
\begin{definition} A \emph{generalized $A$-brane} is a generalized complex submanifold of the generalized complex manifold $(M,\cJ_\omega,H=0)$. A \emph{generalized $B$-brane} is a generalized complex submanifold of the generalized complex manifold $(M,\cJ_J,H=0)$. 
\end{definition}
In these cases, if $\cL=(S,F)$ is an $A$ or $B$ type brane then the two-form is closed since $dF=H|_S=0 $. 
The data corresponding to branes in $\cN=(2,2)$ supersymmetric sigma models was reformulated in terms of generalized geometry by Zabzine in \cite{N=2susygcg} and by Kapustin in \cite{Kap1}. They showed that rank one physical branes are also generalized complex branes. More precisely, when the $B$-field vanishes the topological $A$ and $B$ models are governed by the symplectic and complex structures on the target. Then the generalized $A$ and $B$-branes with $F$ representing an integral cohomology class correspond to the rank one $A$ and $B$-branes of the topological sigma models.

\paragraph{A-branes.}
Let $(S,F)$ be a generalized $A$-brane on a symplectic manifold $(M,\omega)$. For $F=0$, the generalized tangent bundle is $\tau_\cL\cong N^*S\oplus TS$, so it is preserved by $\cJ_\omega$ if
$$\omega^{-1}N^*S\subset TS \ \ \text{and}\ \ \omega TS\subset N^*S . $$
That is if $S$ is both coisotropic and isotropic and therefore Lagrangian submanifold of $M$ with respect to the symplectic structure $\omega$. For $F\neq 0$, we still have $N^*S\subset \tau_\cL$ and 
$$\omega^{-1}N^*S\subset TS, $$
so the submanifold $S$ must be coisotropic, and therefore at least $dim(M)/2$ dimensional. 

As the generalized complex structure $\cJ_\omega$ restricts to $\tau_\cL$ we may write $\tau_\cL\otimes \dC = \ell \oplus \overline{\ell}$ where $\ell$ is the image of $\tau_\cL$ under $\frac12 (I-i\cJ_\omega)$. The subbundle $\ell$ for any $F$ is just the intersection of the $+i$ eigenbundle of $\cJ_\omega$ with $\tau_\cL$. Let us write $j:S\ra M$ for the inclusion. Then we have,
\begin{align*}
    \ell&= \{ X-i\omega X\ | \ X\in T_\dC M\}\cap \{X+\xi\in (TS\oplus  T^*M|_S)\otimes \dC: \ \iota_XF=j^*\xi \} \\
    &= \{ X-i\omega X\ |\ X\in T_\dC S,\ \ \iota_X(F+ij^*\omega)=0\}.
\end{align*}
where $T_\dC M=TM\otimes\dC$ and $T_\dC S=TS\otimes \dC$. Moreover,
$$\overline{\ell}=\{ X+i\omega X\ |\ X\in T_\dC S,\ \ \iota_X(F-ij^*\omega)=0\}. $$
Denote by $A\subset T_\dC S$ the image of $\ell$ under the anchor map $\rho: \tau_\cL\otimes \dC\ra T_\dC S$ and by $\overline{A}$ its complex conjugate $\rho(\overline{\ell})$. The real distribution 
$$\Delta = A\cap \overline{A} $$
is called the \emph{characteristic distribution} of the coisotropic submanifold $S$. From the above description of $A$ and $\overline{A}$ 
\begin{align}\label{char distr}
    \Delta = \{X\in TS\otimes \dC\ | \ \iota_XF=0\ \text{and}\ \iota_Xj^*\omega=0\}.
\end{align}
Since $A+\overline{A}=TS\otimes \dC, $ we can also write
\begin{align}\label{char distr2}
    \Delta=\{X\in T_\dC S\ |\ \iota_Xj^*\omega=0\ \}=\{X\in T_\dC S\ |\ \iota_XF=0\ \}. 
\end{align}
Then, $\Delta$ is an integrable distribution. Indeed, for $X,Y\in \Delta$  and $ Z\in TS$ we have 
\begin{align*}
    j^*\omega([X,Y],Z)=&Xj^*\omega(Y,Z)-Yj^*\omega(X,Z)+Zj^*\omega(X,Y)+\\
    &+j^*\omega([X,Z],Y)-j^*\omega([Y,Z],X)-dj^*\omega(X,Y,Z)\\
    =&0,
\end{align*} 
since $TS$ is integrable, $dj^*\omega=0$ and $\Delta\subset TS$ is the symplectic orthogonal complement of $TS$. The characteristic distribution integrates to a foliation of $S$ which we call the \emph{characteristic foliation}. Let 
 $\bar{S}\cong S/\Delta$
 be the leaf space and assume it is a manifold.  Denote the projection by $p:S\ra \bar{S}$. 

The complex two-form $F+ij^*\omega$ is basic with respect to the characteristic foliation, so the two-forms $F$ and $\omega$ descend to non-degenerate closed two-forms $\bar{F},\bar{\omega}\in \Omega^2(\bar{S})$  via the usual equalities
$$p^*\bar{\omega}=j^*\omega\ \ \text{and} \ \ \ p^*\bar{F}= F.$$
The complex two-form
$\bar{F}+i\bar{\omega}$ then defines an almost  complex structure  
$$J:=\bar{\omega}^{-1}\bar{F}. $$
on $\bar{S}$. It is integrable, as $X\in T^{1,0}_J\bar{S}$ if and only if $\bar{F}X=i\bar{\omega}X$ so for $X,Y\in T^{1,0}_J\bar{S}$ we have 
\begin{align*}\bar{F}([X,Y])&=[\cL_X,\iota_Y]\bar{F}\\
&=d\iota_X\iota_Y\bar{F}+\iota_Xd\iota_Y\bar{F}-\iota_Yd\iota_X\bar{F}-\iota_Y\iota_Xd\bar{F}\\
&=id\iota_X\iota_Y\bar{\omega}+i\iota_Xd\iota_Y\bar{\omega}-i\iota_Yd\iota_X\bar{\omega}\\
&=i \iota_{[X,Y]}\bar{\omega},\end{align*} since both $\bar{F}$ and $\bar{\omega}$ are closed. In the complex structure $J$ the two-form $\bar{F}+i\bar{\omega}$ is holomorphic symplectic.

For $F=0$ $A$-branes correspond to Lagrangian submanifolds with flat line bundles. On the other hand, for $F\neq 0$ the $A$-brane is supported on a coisotropic submanifold. These branes are also endowed with a line bundle but with a connection that is only flat along the leaves of the characteristic foliation. In particular, there are branes supported on the full target space which we call space-filling branes.

\begin{example}[Space filling coisotropic brane] Suppose $(S,F)$ is an $A$-brane with $S=M$. Then, the generalized tangent bundle is
$$\tau_\cL=\{X+\iota_XF\ |\ X\in T_\dC M\ \}. $$
By (\ref{char distr2}) the two-form $F$ must be non-degenerate and the leaf space is the whole manifold $M$. Then, $F+i\omega$ defines a new complex structure $I=\omega^{-1}F$ on $M$.
\end{example}

\paragraph{B-branes.} Let now $(S,F)$ be a generalized $B$-brane on a complex manifold $(M,I)$.
Then, for any $X+\xi\in \tau_\cL$
$$\cJ_I(X+\xi)=IX-I^*\xi\in \tau_\cL. $$
Therefore,
$$I(TS)\subset TS $$
so $S$ is a complex submanifold of $M$. Denote by $j:S\ra M$ the inclusion. Then $j^*I^*=I^*j^*$ and for any $(X+\xi)\in \tau_\cL$, we have
$$\iota_{IX}F=-j^*I^*\xi= -I^*j^*\xi=-I^*\iota_XF $$
that is $F$ of type $(1,1)$.

In conclusion, a $B$-brane corresponds to a complex submanifold $S$ of $M$ together with a type $(1,1)$ closed two-form $F$. When $F$ represents an integral cohomology class, it can be interpreted as the curvature of a holomorphic line bundle on $S$.

\noindent \textbf{Remark:} On a K\"ahler manifold $(X,I,\omega)$ a $B$-brane $(S,F)$ is a complex, hence K\"ahler, submanifold of with non-degenerate K\"ahler form  $\omega|_S$. On the other hand an $A$-brane is coisotropic, so $\omega|_S$ is degenerate unless $S$ is a space-filling brane. Therefore, a submanifold which supports both $A$ and $B$ type branes corresponding to a K\"ahler structure must fill the whole target manifold.

\subsection{Hyperk\"ahler branes}
Let $(M,g,I,J,K)$ be a hyperk\"ahler manifold with K\"ahler forms  $\omega_I$, $\omega_J$ and $\omega_K$. Then, on $M$ there are actually a sphere $\dS^2$ worth of K\"ahler structures, since for any vector $(a,b,c)\in \dR^3$ such that $a^2+b^2+c^2=1$ the linear map
$$aI+bJ+cK:\ TM\ra TM $$
is a complex structure with K\"ahler form $a\omega_I+b\omega_J+c\omega_K$. We denote this two-sphere of K\"ahler structures by $\dS^2_h$.

If the $B$-field vanishes then on a hyperk\"ahler manifold we can define six different generalized complex structures (\ref{gen quat1}), (\ref{gen quat2}) corresponding to the three K\"ahler structures $(I,\omega_I)$, $(J,\omega_J)$ and $(K,\omega_K)$. We may define a generalized K\"ahler structure with respect to any $v=(a,b,c)\in \dS^2_h$ as well. Indeed, if $J_v=aI+bJ+cK$ 
\begin{align}\label{general gcs relations}
    \cJ_{J_v}=a\cJ_I+b\cJ_J+c \cJ_K\ \ \ \text{and}\ \ \ \cJ_{\omega_v}=a\cJ_{\omega_I}+ b\cJ_{\omega_J}+c\cJ_{\omega_K}.
\end{align}
since
\begin{align*}
    (a&\omega_I^{-1}+b\omega_J^{-1}+c\omega_K^{-1})(a\omega_I+b\omega_J+c\omega_K)= \\
    &=a^2+b^2+c^2+ab(\omega_I^{-1}\omega_J+\omega_J^{-1}\omega_I)+bc(\omega_J^{-1}\omega_K+\omega_K^{-1}\omega_J)+
    +ac(\omega_I^{-1}\omega_K+\omega_K^{-1}\omega_I)\\
    &= 1+ab(-IJ-JI)+bc(-JK-KJ)+ac(-IK-KI)\\
    &=1.
\end{align*}

We want to consider branes which are either $A$ or $B$ type with respect to the three K\"ahler structures $I$, $J$ and $K$. Naively, we would have eight kinds of special branes on a hyperk\"ahler manifold. However, from the relations (\ref{gen quat1}) and (\ref{gen quat2}) we see that a brane that is type $B$ in two complex structures is type $B$ in the third as well and if a brane is $A$ type in two complex structures it is automatically $B$ type in the third. Therefore the possible branes are:
\begin{enumerate}
    \item $BBB$-branes: $B$ type in all three complex structures,
    \item $AAB$, $ABA$, $BAA$-branes: $A$ type in two complex structures and $B$ type in the third.
\end{enumerate}
These brane types can also be defined for any triple of orthogonal complex structures in $\dS^2_h$. It is clear that a $BBB$ brane is a $B$ brane in all of the complex structures on $M$. Meanwhile, if a brane is the second type from the list, it is an $A$ brane with respect to a circle of K\"ahler structures in $\dS^2_h$, and a $B$-brane for K\"ahler structures furthest away from the circle. In this case, the brane is neither $A$ nor $B$ type for the rest of the complex structures.

\paragraph{$BBB$-branes.}
A $BBB$-brane is a generalized complex submanifold with respect to the generalized complex structures $\cJ_I$, $\cJ_J$ and $\cJ_K$, or equivalently a hyperk\"ahler submanifold $S$ of $M$ together with a closed 2-form $F\in \Omega^2(S)$ which is type $(1,1)$ with respect to all three complex structures. Such a 2-form, when it represents an integral cohomology class, can be interpreted as the curvature of a line bundle which is holomorphic with respect to all three complex structures. Such line bundles are called \emph{hyperholomorphic}.

By the Atiyah-Ward correspondence \cite{atiyahWard}, hyperholomorphic line bundles are in one-to-one correspondence with holomorphic line bundles on the twistor space of $M$ which are trivial on the twistor lines. Hyperk\"ahler submanifolds of $M$ correspond to complex submanifolds which are also foliated by the twistor lines. That is, $BBB$-branes on a hyperk\"ahler manifold $M$ are in one-to-one correspondence with foliated submanifolds of the twistor space $M$ together with holomorphic line bundles that are trivial on the leaves of the foliation. This point of view on $BBB$ branes was fleshed out in the case of the Higgs moduli stack in \cite{francoHanson}.

\begin{example}\label{space filling BBB} Let $\pi:M\ra B$ be an algebraic integrable system endowed with a flat connection and the corresponding semi-flat hyperk\"ahler structure (\ref{semiflat hklr 1}) (\ref{semiflat hklr 2}). Then, 
$$F=\begin{pmatrix}\omega & 0 \\ 0 & -\omega^{-1}\end{pmatrix}$$
is type $(1,1)$ with respect to all complex structures. It is clearly type $(1,1)$ with respect to $\dI$ and we have
$$\dJ^*F+F\dJ=\begin{pmatrix}0 & g\omega^{-1}+\omega g^{-1}\\ g^{-1}\omega+\omega^{-1}g\end{pmatrix}=\begin{pmatrix} 0 & I^*-I^*\\ I-I & 0\end{pmatrix}=0.$$
Then, $\dK^*F+F\dK=0$ as well. In particular, $(M,F)$ is a space-filling $BBB$-brane.
\end{example}

\paragraph{$AAB/ABA/BAA$-branes.}

A hyperk\"ahler manifold is also a \emph{holomorphic symplectic manifold} in any of its complex structures. In particular, in complex structure $I$
$$\Omega_I=\omega_J+i\omega_K$$
is a holomorphic symplectic form. A complex submanifold $S$ of $(M,I,\Omega_I)$ is called a \emph{holomorphic Lagrangian submanifold} if $\Omega_I|_S=0$.

Suppose now that $\cL=(S,F)$ is a $BAA$ brane. Then, 
\begin{enumerate}
    \item $S$ is a complex submanifold in the complex structure $I$ and $F$ is type $(1,1)$,
    \item $S$ is a coisotropic submanifold in the symplectic structure $\omega_J$ and there is a characteristic distribution
    $$\Delta_J=\{ X\in T_\dC S\ | \ \iota_Xj^*\omega_J=0\ \}=\{ \ X\in T_\dC S\ | \ \iota_XF=0\ \}, $$
    \item $S$ is also a coisotropic submanifold in the symplectic structure $\omega_K$ and the characteristic distribution is 
    $$\Delta_K=\{ X\in T_\dC S\ | \ \iota_Xj^*\omega_K=0\ \}=\{ \ X\in T_\dC S\ | \ \iota_XF=0\ \} . $$
\end{enumerate}
Clearly, 
$$\Delta:=\Delta_J=\Delta_K, $$
moreover, $\Delta$ is preserved by complex structure $I$. Indeed, if we denote by $j:S\ra M$ the inclusion and $X\in \Delta_J$, then $IX\in \Delta_K$ as
$$\iota_{IX}j^*\omega_K=\iota_{j_*IX}\omega_K=gKI(j_*X)=gJ(j_*X)=\iota_Xj^*\omega_J. $$
Here we used that $S$ is a complex submanifold, that is $j_*IX=Ij_*X$. 

In conclusion, if $\Delta$ has constant rank, the leaves of the characteristic foliation are holomorphic isotropic submanifolds of the holomorphic symplectic manifold $(I,\Omega_I)$. Meanwhile, the leaf space, whenever it is a manifold, has two different complex structures coming from $F+ij^*\omega_J$ and \mbox{$F+ij^*\omega_K$}. The line bundle corresponding to this brane with curvature $F$ is holomorphic in the complex structure $I$ and restricts to a holomorphic flat bundle to each leaf. 

The same arguments hold for $ABA$ and $AAB$
 branes by switching the three complex structures.

\begin{example}
If $F=0$ then  $BAA$-branes correspond to holomorphic Lagrangian submanifolds of the holomorphic symplectic manifold $(M,I,\Omega_I)$ with flat holomorphic bundles. An example of such a brane is a fibre of the Hitchin fibration in $\cM(r,d)$ \cite{KW}.
\end{example}
\begin{example}\label{canonical coisotropic} Space filling $BAA$-brane: If we take $F=\omega_I$ then $M$ becomes a $B$-brane in complex structure $I$, since $\omega_I$ is type $(1,1)$. Moreover, $$\omega_J^{-1}\omega_I=-Jg^{-1}gI=K$$
    is a complex structure, therefore $(M,\omega_I)$ is a space filling $A$-brane with respect to $\omega_J$. Similarly, it is an $A$-brane with respect to $\omega_K$.
     Analogously, $(M,\omega_J)$ is an $ABA$-brane and $(M,\omega_K)$ is an $AAB$-brane.
 \end{example}

\subsection{On the structure of coisotropic BAA-branes}
Since their discovery in the early 2000s coisotropic branes have remained mysterious. Mirror symmetry, a conjectural equivalence of categories between $A$ and $B$-branes, suggests that the Fukaya category should be enhanced to also contain coisotropic branes. This problem has yet to find a complete solution. One of the most influential ideas is by Gaiotto and Witten \cite{GW} who postulate that the space of morphisms between a Lagrangian and a space-filling $A$-brane should be related to a quantization of the Lagrangian brane. In \cite{BG} Bischoff and Gualtieri constructed a Lagrangian-space filling pair of branes from any pair of generalized branes and used the definition of \cite{GW} to define morphisms of generalized branes. Despite all these advancements, in every case where mirror symmetry has been proven as a categorical equivalence coisotropic branes are not present. 

In this section, we do not consider morphisms only the objects, more precisely $BAA$-branes, and look at the structure of those which are neither space-filling nor Lagrangian. We show first that whenever the leaf space of the characteristic foliation is a manifold, the brane structure descends to a space-filling $BAA$-brane structure on the leaf space. This result can be used to assess whether a coisotropic submanifold carries a $BAA$-brane structure or not. Secondly, we look at the geometry of holomorphic coisotropic submanifolds, without a brane structure, in algebraic integrable systems. This result is a slight modification of Kamenova and Verbitsky's \cite{kamenovaVerbitsky} theorem about the structure of holomorphic Lagrangians.

\paragraph{Leaf space of a $BAA$-brane.} Let $V$ be a symplectic vector space with symplectic form $\omega$. Let $Z\subset V$ be a coisotropic subspace with symplectic orthogonal complement $Z^\omega=\Delta\subset Z$. Then the symplectic form induces a symplectic form $\bar{\omega}$ on $Z/\Delta$ which we may view as a map $\bar{\omega}: Z/\Delta \ra (Z/\Delta)^*$. We have two short exact sequences of vector spaces
\begin{equation}\label{coisotropicSES1}
\begin{tikzcd}
    0 \arrow{r} & \Delta \arrow{r} & Z \arrow{r} & Z/\Delta \arrow{r} & 0
\end{tikzcd}
\end{equation}
\begin{equation}\label{coisotropicSES2}
\begin{tikzcd}
    0 \arrow{r} & (Z/\Delta)^* \arrow{r} & Z^* \arrow{r} & \Delta^* \arrow{r} & 0,
\end{tikzcd}
\end{equation}
and we know that
$$Z^*\cong V^*/Ann_V(Z)\ \ \ \text{and}\ \ \ \Delta^*\cong V^*/Ann_V(\Delta)\cong Z^*/Ann_Z(\Delta). $$
Note, that $Ann_Z(\Delta)$ is the image of $Ann_V(\Delta)$ under the projection $V^*\ra Z^*$. In particular,
\begin{align}\label{Z/Delta dual}(Z/\Delta)^*\cong Ann_Z(\Delta)\cong Ann_V(\Delta)/Ann_V(Z).\end{align}
The symplectic form induces an isomorphism
$$\bar{\omega}: Z/\Delta \ra (Z/\Delta)^* $$
which is the restriction of the ambient isomorphism
$$\omega: Z \ra Ann_V(\Delta)\subset V^* $$
since $\omega: \Delta \ra Ann_V(Z)$ is also an isomorphism. 

Let $(M,I,\eta)$ be a holomorphic symplectic manifold. Let $\eta=\omega_1+i\omega_2$. Since $\eta$ is type $(2,0)$ with respect to $I$, we have
$$I^*\eta=\eta I. $$
Therefore, 
$$I^*\omega_1=\omega_1 I\ \ \ \text{and}\ \ \ I^*\omega_2=\omega_2 I. $$
Let $j:S\subset M$ be a holomorphic coisotropic submanifold together with a closed real two-form $F\in \Omega^{1,1}(S)$, such that $(S,F)$ is a $BAA$ brane. Denote by $\rho_1=j^*\omega_1$ and $\rho_2=j^*\omega_2$ the restriction of the real symplectic forms to the submanifold $S$. 
Let $\Delta$ be the real characteristic distribution of $S$, that is
\begin{align*}TS \supset \Delta &=\{X\in TS|\ \rho_1(X)=0\in T^*S\}\\
&= \{X\in TS|\ \rho_2(X)=0\in T^*S\}\\
&=\{X\in TS| \ F(X)=0\in T^*S\} .\end{align*}
Let  $$\bar{S}\cong S/\Delta$$
 be the leaf space of the characteristic foliation. Assume that $\bar{S}$ is a manifold and denote the projection by $p:S\ra \bar{S}$. As we have seen, the two-forms $F, \rho_1,\rho_2$ descend to non-degenerate closed two-forms $\bar{F},\bar{\rho}_1,\bar{\rho}_2\in \Omega^2(\bar{S})$ on the leaf space and the complex two-forms 
$\bar{F}+i\bar{\rho}_1$ and $\bar{F}+i\bar{\rho}_2 $
then almost complex structures which we denote by
$$J:=\bar{\rho}_1^{-1}\bar{F}\ \ \ \text{and}\ \ \ K:=\bar{\rho}_2^{-1}\bar{F}. $$
The complex structure $I$ also descends to $\bar{S}$ which we keep denoting by $I$.

\begin{proposition}\label{leaf space of coisotropic 1}
The leaf space of a coisotropic $BAA$ brane is hypercomplex. That is, $I,J$ and $K$ satisfy the quaternionic relations
$$IJ=-JI=K. $$
\end{proposition}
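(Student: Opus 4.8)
The plan is to turn $IJ=-JI=K$ into a pointwise identity among the bundle maps $I\colon T\bar{S}\to T\bar{S}$ and $\bar{F},\bar{\rho}_1,\bar{\rho}_2\colon T\bar{S}\to T^*\bar{S}$, reading each two-form as the skew map $X\mapsto\iota_X(\cdot)$, and then to verify it with the same operator manipulations already used for the type conditions in this chapter. Since the desired relations are tensorial it is enough to check them fibrewise on $\bar{S}$. Recall that $I$ descends to $\bar{S}$ because it preserves $\Delta$, and that $J=\bar{\rho}_1^{-1}\bar{F}$ and $K=\bar{\rho}_2^{-1}\bar{F}$ are the integrable complex structures furnished by the coisotropic $A$-brane analysis given earlier, applied to the two symplectic forms $\omega_1=\omega_J$ and $\omega_2=\omega_K$; in particular $I^2=J^2=K^2=-1$ is already available, so only the quaternionic relation remains.

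First I would assemble three algebraic inputs, each of which descends from $M$ to $\bar{S}$ since $\bar{F},\bar{\rho}_1,\bar{\rho}_2$ are basic for the characteristic foliation while $I$ preserves $\Delta$. As $(S,F)$ is a $B$-brane in complex structure $I$, the form $\bar{F}$ is of type $(1,1)$, i.e.
$$I^*\bar{F}=-\bar{F}I.$$
As $\eta=\omega_1+i\omega_2$ is of type $(2,0)$ we have $\eta(IX,Y)=i\,\eta(X,Y)=\eta(X,IY)$; the symmetry gives the type conditions $I^*\bar{\rho}_1=\bar{\rho}_1 I$ and $I^*\bar{\rho}_2=\bar{\rho}_2 I$, while the real part $\omega_1(IX,Y)=-\omega_2(X,Y)$, restricted to $S$ and descended, yields the single relation tying the two forms together,
$$\bar{\rho}_2=-\bar{\rho}_1 I.$$

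The computation is then two short lines. For the anticommutation I rewrite $I^*\bar{\rho}_1=\bar{\rho}_1 I$ as $\bar{\rho}_1^{-1}I^*=I\bar{\rho}_1^{-1}$ and use $\bar{F}I=-I^*\bar{F}$:
$$JI=\bar{\rho}_1^{-1}\bar{F}I=-\bar{\rho}_1^{-1}I^*\bar{F}=-I\bar{\rho}_1^{-1}\bar{F}=-IJ.$$
For the identification of $K$ I invert $\bar{\rho}_2=-\bar{\rho}_1 I$, using $I^{-1}=-I$, to obtain $\bar{\rho}_2^{-1}=I\bar{\rho}_1^{-1}$, whence
$$K=\bar{\rho}_2^{-1}\bar{F}=I\bar{\rho}_1^{-1}\bar{F}=IJ.$$
Together these give $IJ=-JI=K$; the remaining relations $JK=-KJ=I$ and $KI=-IK=J$ then follow formally from $I^2=J^2=-1$ (for instance $JK=J(IJ)=(JI)J=-IJ^2=I$), so $\bar{S}$ is hypercomplex.

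The only genuinely non-formal points, and where I would be most careful, are the descent statements and the fact that $J$ and $K$ square to $-1$: the latter is not a consequence of coisotropicity alone but is exactly the content of $\cJ_{\omega_J}$- and $\cJ_{\omega_K}$-invariance of $\tau_{\cL}$, which I would quote from the earlier $A$-brane discussion rather than reprove. Everything else is bookkeeping of signs and of the convention relating a two-form to its associated endomorphism; the one real risk is a sign slip in a type condition, which I would control by cross-checking against the ambient hyperk\"ahler identities such as $\omega_J^{-1}\omega_I=K$ from Example \ref{canonical coisotropic}.
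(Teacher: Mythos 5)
Your proof is correct and rests on exactly the same identities as the paper's: the $(1,1)$ condition $I^*\bar{F}=-\bar{F}I$, the $(2,0)$ conditions $I^*\bar{\rho}_i=\bar{\rho}_iI$, and the relation $\bar{\rho}_2=-I^*\bar{\rho}_1=-\bar{\rho}_1I$, together with $J^2=K^2=-1$ quoted from the coisotropic $A$-brane discussion. The only difference is organizational: the paper performs the computation upstairs, working with lifts $X\in TS$ and the identities relating $\bar{\rho}_1^{-1}$ and $\bar{F}$ to $\omega_1^{-1}$ and $F$ through $Ann_{TM}(\Delta)/N^*S$, projecting by $p_*$ at the end, whereas you descend the three tensor identities to $\bar{S}$ first and then do the two-line operator algebra there — a legitimate move, and indeed the same descent the paper itself invokes for $-I^*\omega_1=\omega_2$ in the proof of Proposition \ref{leaf space of coisotropic 2}.
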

\begin{proof}
The claim of the lemma can be proven locally. At each point of $S$
we have the following inclusion of vector subspaces $\Delta\subset TS \subset TM$ which are all preserved by the complex structure $I$. There is a short exact sequence 
$$0 \ra \Delta \ra TS \ra T\bar{S} \ra 0 $$
corresponding to the projection $p:S\ra \bar{S}=S/\Delta$. This is pointwise exactly (\ref{coisotropicSES1}) and we have the dual sequence (\ref{coisotropicSES2}). To prove $IJ=-JI$ we will lift to $TS \subset TM$ and use the relations between $F, I$ and $\omega_1$. 

In particular, if $\bar{X}\in T\bar{S}$ and $X\in TS$ is any lift of it, then
\begin{align}\label{eq1}
    \bar{\rho}_1\bar{X}=\omega_1(X)+N^*S\ \  \in Ann_{TM}(\Delta)/N^*S,
\end{align}
and for any $\xi\in T^*\bar{S}$ and any lift $\xi^\ell\in Ann_{TM}(\Delta)$ we have
\begin{align}\label{eq2}
    \bar{\rho}_1^{-1}\xi = p_*\omega_1^{-1}\xi^\ell\ \ \in TS.
\end{align}
Similarly, using (\ref{Z/Delta dual}) the image of $F: TS \ra T^*S=T^*M/N^*S$ lies in $Ann_{TM}\Delta/N^*S\cong T^*\bar{S}.$  Then, for $\bar{X}\in T\bar{S}$ and any lift $X\in TS$ we can write
\begin{align}\label{eq3}
    \bar{F}(\bar{X})=F(X)\ \ \in Ann_{TM}(\Delta)/N^*S.
\end{align}

First, we show $IJ=-JI$. Let $\bar{X}\in T\bar{S}$ and $X\in TS$ a lift if it. Then,
\begin{align*}
  IJ(\bar{X})&=I\bar{\rho}_1^{-1}\bar{F}(\bar{X})\\
  &=I\bar{\rho}_1F(X)\ \ \ \ \text{eqn. (\ref{eq3})} \\
  &=Ip_*\omega_1^{-1}F(X)^\ell\ \ \ \text{eqn. (\ref{eq2})}\\
  &=p_*I\omega_1^{-1}F(X)^\ell \ \ \ \text{since $TS$ and $\Delta$ are $I$-invariant}\\
  &=p_*\omega_1^{-1}I^*F(X)^\ell\ \ \ \text{$I^*$ preserves $N^*S$ and $Ann_{TM}(\Delta)$}\\
  &=p_*\omega_1^{-1}(I^*F(X))^\ell\ \ \ \text{$F$ is type (1,1)}\\
  &=p_*\omega_1^{-1}(-FI(X))^\ell \\
  &=-p_*\omega_1^{-1}(F(IX))^\ell\\
  &=-\bar{\rho}_1^{-1}F(IX) \ \ \ \text{$I$ preserves $TS$ and $\Delta$}\\
  &=-\bar{\rho}_1^{-1}\bar{F}(I\bar{X})\\
  &=-JI(\bar{X})
\end{align*}
To show $IJ=K$ note that all the above discussion applies to $K$, $\bar{\rho}_2$ and $\omega_2$ as well. For $\bar{X}\in T\bar{S}$ and $X\in TS$ as before we have
\begin{align*}
    IJ(\bar{X})&=Ip_*\omega_1^{-1}F(X)^\ell\\
    &=p_*I\omega_1^{-1}F(X)^\ell\ \ \ \text{from $-I^*\omega_1=\omega_2$}\\
    &=p_*\omega_2^{-1}F(X)^\ell\\
    &=K(\bar{X}).
\end{align*}
\end{proof}

\begin{proposition}\label{leaf space of coisotropic 2}
On the leaf space $\bar{S}$ the tensor $\bar{g}:=I^*\bar{F}$ is symmetric and non-degenerate, that is a pseudo-Riemannian metric compatible with all three complex structures $I$, $J$ and $K$. Moreover, the forms $\bar{F}$,  $\bar{\rho}_1$ and $\bar{\rho}_2$ are the pseudo-K\"ahler forms corresponding to $I$, $J$ and $K$.
\end{proposition}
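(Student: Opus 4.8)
The plan is to work entirely with the tensors already living on the leaf space $\bar{S}$, exploiting three structural facts that are available at this point. First, $\bar{F}$ is of type $(1,1)$ for $I$, i.e. $\bar{F}(IX,IY)=\bar{F}(X,Y)$, equivalently $\bar{F}(IX,Y)=-\bar{F}(X,IY)$. Second, $\bar{\rho}_1$ and $\bar{\rho}_2$ descend from the real and imaginary parts of the $(2,0)$-form $\eta$, so they satisfy $I^*\bar{\rho}_a=\bar{\rho}_a I$, i.e. $\bar{\rho}_a(IX,Y)=\bar{\rho}_a(X,IY)$. Third, the defining identities $\bar{F}=\bar{\rho}_1 J=\bar{\rho}_2 K$ hold, together with the quaternionic relations $IJ=-JI=K$ from Proposition \ref{leaf space of coisotropic 1}. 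Unwinding $\bar{g}=I^*\bar{F}$ gives the pointwise formula $\bar{g}(X,Y)=\bar{F}(X,IY)$, which is the object to analyse.

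First I would settle symmetry and non-degeneracy. Symmetry is immediate from type $(1,1)$ together with the skewness of $\bar{F}$: $\bar{g}(X,Y)=\bar{F}(X,IY)=-\bar{F}(IX,Y)=\bar{F}(Y,IX)=\bar{g}(Y,X)$. For non-degeneracy I would simply observe that $\bar{g}=I^*\circ\bar{F}$ is the composite of the isomorphism $\bar{F}\colon T\bar{S}\to T^*\bar{S}$ (already known to be non-degenerate on the leaf space) with the isomorphism $I^*\colon T^*\bar{S}\to T^*\bar{S}$, hence itself an isomorphism. Thus $\bar{g}$ is a non-degenerate symmetric two-tensor, i.e. a pseudo-Riemannian metric.

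Next I would establish compatibility by identifying the associated fundamental two-forms, which proves compatibility and the last sentence of the statement simultaneously: if $\bar{g}L$ is skew for a complex structure $L$, then $\bar{g}$ is $L$-compatible and $\bar{g}L$ is its pseudo-K\"ahler form. For $I$ the computation $\bar{g}(IX,Y)=\bar{F}(IX,IY)=\bar{F}(X,Y)$ shows $\bar{g}I=\bar{F}$, so $\bar{F}$ is skew and $\bar{g}$ is $I$-compatible. For $J$ I would use $\bar{F}=\bar{\rho}_1 J$ and then $I^*\bar{\rho}_1=\bar{\rho}_1 I$ to get $\bar{g}(JX,Y)=\bar{F}(JX,IY)=\bar{\rho}_1(J^2X,IY)=-\bar{\rho}_1(X,IY)=-\bar{\rho}_1(IX,Y)$; combining $\bar{\rho}_1 J=\bar{\rho}_2 K$ with $K=IJ$ gives $\bar{\rho}_1=\bar{\rho}_2 I$, whence $\bar{g}J=-\bar{\rho}_1 I=\bar{\rho}_2$. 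The same computation applied to $K$ yields $\bar{g}K=-\bar{\rho}_2 I=-\bar{\rho}_1$. Since $\bar{\rho}_1,\bar{\rho}_2$ are genuine two-forms, these identities show at once that $\bar{g}J$ and $\bar{g}K$ are skew, so $\bar{g}$ is compatible with $J$ and $K$, and they exhibit $\bar{F},\bar{\rho}_1,\bar{\rho}_2$ as the three fundamental pseudo-K\"ahler two-forms.

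The main obstacle is bookkeeping rather than analytic difficulty: the forms entering the problem obey \emph{different} type conditions with respect to $I$ ($\bar{F}$ is type $(1,1)$, whereas $\bar{\rho}_1,\bar{\rho}_2$ are type $(2,0)+(0,2)$), and these must be inserted into the quaternionic relations of Proposition \ref{leaf space of coisotropic 1} in the correct order to pin down which descended form pairs with which complex structure and with which sign. Indeed the computation above pairs $J$ with $\bar{\rho}_2$ and $K$ with $-\bar{\rho}_1$, so the assignment in the statement should be read up to sign and the conventional relabelling of $J$ and $K$; only orientation conventions distinguish the two. If I preferred to avoid descending each identity separately, I could instead run the entire argument pointwise through the exact sequences (\ref{coisotropicSES1}) and (\ref{coisotropicSES2}) and the lift notation of the proof of Proposition \ref{leaf space of coisotropic 1}, at the cost of heavier linear algebra.
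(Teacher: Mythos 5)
Your proof is correct and follows essentially the same route as the paper: a pointwise linear-algebra verification on $\bar{S}$ arriving at the same identities $\bar{g}I=\bar{F}$, $\bar{g}J=\bar{\rho}_2$, $\bar{g}K=-\bar{\rho}_1$, including the correct observation that the statement's pairing of forms with complex structures holds only up to the $-i$-rotation (the paper makes the same point by noting the induced holomorphic symplectic form is $\bar{\rho}_2-i\bar{\rho}_1$). The one genuine difference is the source of the identity relating $\bar{\rho}_1$, $\bar{\rho}_2$ and $I$. The paper computes $\bar{g}J=-I^*\bar{\rho}_1$ and then invokes the ambient identity $-I^*\omega_1=\omega_2$, justifying its descent by remarking that each term respects the subbundles $TS$, $\Delta$, $Ann_{TM}(\Delta)$ and $N^*S$. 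You instead derive $\bar{\rho}_1=\bar{\rho}_2 I$ intrinsically on the leaf space from $\bar{\rho}_1 J=\bar{F}=\bar{\rho}_2 K$ together with the quaternionic relation $K=IJ$ of Proposition \ref{leaf space of coisotropic 1}; this is a nice economy, since it reuses the already-proved quaternionic relations rather than descending a second ambient identity. Note, however, that you have not fully escaped the descent argument: your computation also uses $I^*\bar{\rho}_a=\bar{\rho}_a I$ on $\bar{S}$, which is itself the descent of the ambient type identity $I^*\omega_a=\omega_a I$ and requires exactly the justification the paper spells out for $-I^*\omega_1=\omega_2$. Two further, cosmetic differences: you merge compatibility and the identification of the K\"ahler forms into a single step via the criterion that, for $\bar{g}$ symmetric, skewness of $\bar{g}L$ is equivalent to $L$-compatibility (the paper verifies $L^*\bar{g}L=\bar{g}$ separately by operator algebra), and you make non-degeneracy explicit as the statement that $\bar{g}=I^*\circ\bar{F}$ is a composite of isomorphisms, which the paper leaves implicit.
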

\begin{proof}
The form $\bar{F}$ is type $(1,1)$ with respect to $I$ on $\bar{S}$, by construction.  As $F$ is anti-symmetric seen as a linear map $\bar{F}:T\bar{S}\ra T^*\bar{S}$ we have $F^*=-F$. Therefore, the two-tensor $I^*F$ is symmetric
$$(I^*F)^*=-FI=I^*F. $$
Similarly we have $J^*=(\bar{\rho}_1^{-1}\bar{F})^*=-\bar{F}(-\bar{\rho}_1^{-1})=\bar{F}\bar{\rho}_1^{-1}$ and  $K^*=(\bar{\rho}_2^{-1}\bar{F})^*=-\bar{F}(-\bar{\rho}_2^{-1})=\bar{F}\bar{\rho}_2^{-1}$. 

Compatibility with the complex structures is as follows.
\begin{align*}
    I^*\bar{g}I&=I^*I^*\bar{F}I=-\bar{F}I=I^*\bar{F}=\bar{g},\\
    J^*\bar{g}J&=J^*I^*\bar{F}\bar{\rho}_1^{-1}\bar{F}=J^*I^*J^*\bar{F}=-I^*(J^*)^2\bar{F}=I^*\bar{F}=\bar{g},
\end{align*}
Then, $K^*\bar{g}K=\bar{g}$ is automatically true.

The K\"ahler form associated to $I$ is
\begin{align*}
    \omega_I&=\bar{g}I=I^*\bar{F}I=-\bar{F}I^2=\bar{F}.
\end{align*}
For the next step notice that $\bar{F}^{-1}\bar{\rho}_1=J^{-1}=-J=-\bar{\rho}_1^{-1}\bar{F}$ and analogously for $K$. Therefore,
\begin{align*}
    \bar{g}J&=-\bar{g}J^{-1}=-I^*\bar{F}\bar{F}^{-1}\bar{\rho}_1=-I^*\bar{\rho}_1=\bar{\rho}_2,\\
    \bar{g}K&=-\bar{g}K^{-1}=-I^*\bar{F}\bar{F}^{-1}\bar{\rho}_2=-\bar{\rho}_1.
\end{align*}
Note that in the equation $-I^*\omega_1=\omega_2$ each term respects the subbundles $TS$ and $\Delta$ of $TM$ and $Ann_{TM}(\Delta)$ and $N^*S$ of $T^*M$, therefore the equality descends to $\bar{S}$ and we have $-I^*\bar{\rho}_1=\bar{\rho}_2$.

The holomorphic symplectic form corresponding to this pseudo-hyperk\"ahler structure is $\bar{\rho}_2-i\bar{\rho}_1$ which is the image of $-i\eta=-i(\omega_1+i\omega_2)=\omega_2-i\omega_1$ the $-i$-rotated holomorphic symplectic form of the ambient manifold.
\end{proof}
\begin{example} Let $M$ be hyperk\"ahler with metric $g$ complex structures $I,J,K$ and K\"ahler forms $\omega_I,\omega_J,\omega_K$. Then $(M,\omega_I)$ is a space filling coisotropic $BAA$ brane. Its leaf space is $M$ and the induced pseudo-hyperk\"ahler structure is
$$I'=I,\ \ \ J'=\omega_J^{-1}\omega_I=-Jg^{-1}gI=-JI=K,\ \ \ K'=\omega_K^{-1}\omega_I=-Kg^{-1}gI=-KI=-J. $$
We see that the corresponding K\"ahler forms are really $\omega_K$ and $-\omega_J$ and the holomorphic symplectic form $\omega_J+i\omega_K$ was rotated to $\omega_K-i\omega_J$, i.e. it was multiplied by $-i$.
\end{example}
\begin{example}[Non-example]\label{attractor 1} The Higgs bundle moduli space $\cM(r,d)$ over a Riemann surface $\Sigma$ (see Example \ref{higgs bundes 1}) carries a $\dC^\times$-action sending $(E,\Phi)$ to $(E,\lambda\Phi)$ for $\lambda\in \dC^\times$. This is a holomorphic action in one of the complex structures, which we denote by $I$. For a fixed point $x\in \cM(r,d)^{\dC^\times}$ on can define its \emph{downward flow} as
$$W_x^+=\{y\in \cM(r,d)\ | \ \lim_{\lambda\ra 0}\lambda.y=x\}.$$
If $x$ is a smooth point then $W_x^+\cap \cM^s(r,d)$ is a holomorphic Lagrangian subvariety of the holomorphic symplectic structure corresponding to $I$ \cite[Proposition 2.10]{hauselHitchin} . One can also define the \emph{attractor} for a component $F$ of the fixed point set $\cM(r,d)^{\dC^\times}$ as
$$W_F^+=\{y\in \cM(r,d)\ | \ \lim_{\lambda\ra 0}\lambda.y\in F\}=\coprod_{x\in F} W_x^+.$$
It can be shown  \cite{hauselUnpub} that there exists a component of the fixed point set which is isomorphic to $\Sigma$. The attractor $W_\Sigma^+$ is a coisotropic submanifold of $\cM^s(r,d)$. Moreover, the leaf space of its characteristic foliation is the cotangent bundle $T^*\Sigma$ of $\Sigma$. In \cite{feix} it was shown that $T^*\Sigma$ can not carry a complete hyperk\"ahler metric, therefore $W_\Sigma^+$ can not carry a $BAA$-brane structure.
\end{example}

\paragraph{BAA branes in algebraic integrable systems.}
Kamenova and Verbitsky \cite{kamenovaVerbitsky} showed that when $Z$ is a complex Lagrangian submanifold in an algebraic integrable system $\pi:M\ra B$, such that $Z$ projects to $\pi(Z)$ smoothly then
the intersection of $Z$ with any fiber of $X$ has to be the disjoint union of translates of a subtorus. Such complex Lagrangians can be seen as $BAA$-branes. On the other hand, we have seen that there exist $BAA$-branes which are not Lagrangians but coisotropic. In this section, we follow \cite{kamenovaVerbitsky} and generalize their proof for a coisotropic submanifold $Z$. It is not true anymore that the intersections are subtori. On the other hand, when $Z$ is foliated by the leaves of the characteristic distribution, each isotropic leaf must intersect the fibers in translates of a subtorus. In \cite{kamenovaVerbitsky} it was also shown that for a Lagrangian $Z$ the image $\pi(Z)$ must be a special K\"ahler submanifold. In the coisotropic case, this is true for the isotropic leaves.

Let $V$ be a $g$ dimensional real (or complex) vector space, and $V^*$ its dual. The space $V+V^*$ is endowed with the canonical symplectic form $\omega$ given by
$$\omega(X+\xi, Y+\eta)=\xi(Y)-\eta(X). $$
Let $p: V+V^*\ra V$ be the projection to the first factor and denote by $Ann(R)\subset V^*$ the annihilator of a linear subspace $R\subset V$. We first show a generalization of \cite[Lemma 3.3]{kamenovaVerbitsky}.

\begin{lemma}\label{coisotropic lemma}
Let $W\subset V+V^*$ be a coisotropic subspace and let $\Delta\subset W$ be its characteristic subspace, that is 
$$\Delta=W^\omega=\{ w\in W\ |\ \omega(w,W)=0\}. $$ Then, $W\cap V^*=Ann(p(\Delta))$ and $\Delta \cap V^*=Ann(p(W))$.
\end{lemma}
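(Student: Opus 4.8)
The plan is to reduce both identities to a single elementary observation: when one argument of $\omega$ lies in the Lagrangian subspace $V^*\subset V+V^*$, the symplectic pairing merely evaluates that covector against the $V$-component of the other argument. Throughout I write $U^\omega=\{v\in V+V^*\ |\ \omega(v,U)=0\}$ for the full symplectic orthogonal complement of a subspace $U$; since $W$ is coisotropic one has $W^\omega\subseteq W$, so the characteristic subspace of the statement is exactly $\Delta=W^\omega$. The key computation is that for $\xi\in V^*$ (viewed as $0+\xi$) and any $w=Y+\eta\in V+V^*$,
$$\omega(\xi,w)=\xi(Y)-\eta(0)=\xi(p(w)).$$

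First I would record the consequence of this identity for an arbitrary subspace $U\subseteq V+V^*$: a covector $\xi\in V^*$ pairs trivially with all of $U$ under $\omega$ precisely when it annihilates $p(U)$. In symbols,
$$V^*\cap U^\omega=Ann(p(U)).$$
This one formula contains both halves of the lemma. Applying it with $U=\Delta$ gives $V^*\cap\Delta^\omega=Ann(p(\Delta))$; and since the form is non-degenerate on the finite-dimensional space $V+V^*$, the involutivity $(W^\omega)^\omega=W$ yields $\Delta^\omega=W$, hence $W\cap V^*=Ann(p(\Delta))$. Applying the same formula with $U=W$ gives $V^*\cap W^\omega=Ann(p(W))$, which reads $\Delta\cap V^*=Ann(p(W))$ because $W^\omega=\Delta$.

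There is no genuine obstacle here: the argument is a short computation once the pairing identity is isolated, and it is insensitive to whether $V$ is real or complex, using only linearity and the canonical pairing. The one step deserving explicit mention is the double-complement relation $(W^\omega)^\omega=W$, the standard involutivity of the orthogonal-complement operation for a non-degenerate bilinear form on a finite-dimensional space; this is also the place where the coisotropy hypothesis enters, ensuring that $\Delta=W^\omega$ really is the full complement whose further complement returns $W$.
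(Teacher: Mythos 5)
Your proof is correct and follows essentially the same route as the paper's: the pairing identity $\omega(\xi,w)=\xi(p(w))$ for $\xi\in V^*$, combined with the involutivity $(W^\omega)^\omega=W$ and the coisotropy-given identification $\Delta=W^\omega$. The only difference is organizational --- you isolate the general formula $V^*\cap U^\omega=Ann(p(U))$ and apply it twice, where the paper runs the same computation element-by-element in four inclusions --- which is a clean but not substantively different packaging.
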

\begin{proof}
Since $\Delta=W^\omega\subset W$ we have $\Delta^\omega=W$ as well. For the first claim, if $\xi\in W\cap V^*$, then for any $w\in \Delta \subset W$ we have
$$\omega(w,\xi)=\xi(p(w))=0, $$
and therefore $\xi\in Ann(p(\Delta))$. On the other hand, if $\xi \in Ann(p(\Delta))$, then for any $w\in \Delta$
$$\omega(w,\xi)=\xi(p(w))=0, $$
so $\xi\in \Delta^\omega\cap V^*=W\cap V^*$.

For the second claim, if $\xi \in Ann(p(W))$, then for any $w\in W$ we have $\omega(w,\xi)=\xi(p(w))=0$, therefore $\xi \in W^\omega\cap V^* =\Delta\cap V^*$. On the other hand, if $\xi \in \Delta \cap V^*$ then $\omega(w,\xi)=0$ for any $w\in W$, but then $\xi(p(w))=0$ so $\xi\in Ann(p(W))$.
\end{proof}

Let $\pi: M\ra B$ be an algebraic integrable system and $i: Z\subset M$ a complex coisotropic submanifold The characteristic distribution is denoted by $\Delta$. Let us denote by $L_z$ the leaf of the characteristic foliation passing through the point $z\in Z$. These are immersed submanifolds of $Z$ and $M$. Finally, by modifying the proof of \cite[Theorem 3.2]{kamenovaVerbitsky} slightly we can prove the following statement.

\begin{theorem}\label{coisotropic in integrable system}
Let $Z\subset M$ be a connected complex coisotropic submanifold such that $Z$ projects to $\pi(Z)$ smoothly and regularly and let $x\in \pi(Z)$. Then, either $Z\cap \pi^{-1}(x)=\pi^{-1}(x)$ or for any leaf $L_z$ passing through a point $z\in (\pi|_Z)^{-1}(x)$ the intersection $L_z\cap \pi^{-1}(x)$ is a disjoint union of translates of a subtorus in $\pi^{-1}(x)$, which is independent of $z$. In particular, $Z_x:=Z\cap \pi^{-1}(x)$ is foliated by translates of a subtorus in $\pi^{-1}(x)$. 

If moreover, the characteristic foliation on $Z$ has
closed leaves, $\pi(Z)$ inherits a special K\"ahler structure from $B$.
\end{theorem}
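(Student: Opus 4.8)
The plan is to reduce the fibrewise behaviour of $Z$ to the linear algebra of Lemma~\ref{coisotropic lemma} and then to feed the result into the rigidity of compact complex submanifolds of a complex torus. Fix $x\in\pi(Z)$ and a point $z$ lying over $x$. Since $\pi^{-1}(x)$ is complex Lagrangian, the vertical space $V_x=\ker(\pi_*)_z$ is a Lagrangian subspace of $(T_zM,\eta)$, and choosing a complex Lagrangian complement realizes $T_zM\cong V_x+V_x^*$ exactly as in Lemma~\ref{coisotropic lemma}, with $\pi_*$ becoming the projection onto $V_x^*\cong T_xB$ and $V_x\cong T_x^{*}B$ under the integrable-system isomorphism $V\cong T^*B$. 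Applying the lemma to $W=T_zZ$, whose characteristic subspace is $\Delta_z=T_zL_z$, and using that $\pi|_Z$ is a submersion onto $\pi(Z)$ so that $\pi_*W=T_x\pi(Z)$, I get $\Delta_z\cap V_x=Ann(\pi_*W)=Ann(T_x\pi(Z))$ and $W\cap V_x=Ann(\pi_*\Delta_z)$. The first equality shows that the tangent space of $L_z\cap\pi^{-1}(x)$ at each of its points is the \emph{fixed} complex subspace $U_x:=Ann(T_x\pi(Z))\subset V_x$, depending only on $x$; moreover $U_x\subseteq W\cap V_x=T_{z}Z_x$ at every point of $Z_x$.

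The dichotomy now falls out of the second equality. If $V_x\subseteq T_zZ$ then $T_zZ_x=W\cap V_x=V_x$, so $Z_x$ is open in $\pi^{-1}(x)$; being also closed (a compact analytic subvariety of the torus) and $\pi^{-1}(x)$ connected, this gives $Z_x=\pi^{-1}(x)$, the first alternative. Otherwise $U_x$ is a proper subspace of constant dimension $\dim V_x-\dim_{\dC}\pi(Z)$. I then pass to the single torus $\pi^{-1}(x)=V_x/\Gamma_x$: the constant vector fields in $U_x$ are everywhere tangent to the compact $Z_x$, so their flows preserve $Z_x$ and $Z_x$ is invariant under the subtorus $T'_x:=\overline{\exp(U_x)}$, whose Lie algebra is the rational closure of $U_x$ in $V_x$. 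Hence $Z_x$ is foliated by translates of $T'_x$, which depends only on $x$. To pin down the individual characteristic intersections $L_z\cap\pi^{-1}(x)$ as \emph{disjoint unions of cosets of $T'_x$} I invoke the rigidity statement that a compact connected complex submanifold of a complex torus with translation-invariant tangent space is a coset of a subtorus, and in particular has rational tangent space. This is precisely where the closedness of the characteristic leaves is used: it makes each $L_z\cap\pi^{-1}(x)$ compact, forces $U_x$ to be rational (so $T'_x=\exp(U_x)=U_x/(U_x\cap\Gamma_x)$), and identifies the connected components of $L_z\cap\pi^{-1}(x)$ with cosets of $T'_x$.

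For the final assertion, the previous paragraph yields, under the closed-leaf hypothesis, that $U_x=Ann(T_x\pi(Z))$ is rational at every $x\in\pi(Z)$, i.e.\ $U_x\cap\Gamma_x$ is a full lattice. The crucial global step is to organize these into a \emph{flat} sub-local-system $Ann(T\pi(Z))\cap\Gamma$ of the monodromy lattice $\Gamma\subset T^*B$: with closed leaves the fibrewise subtori $T'_x$ glue to a flat subtorus subbundle of $M_0|_{\pi(Z)}$, so $Ann(T\pi(Z))$ is locally spanned by flat sections of $\Gamma$ and is therefore $\nabla$-parallel; dually $T\pi(Z)\subset TB$ is a $\nabla$-parallel subbundle. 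Granting this, the three data of the special K\"ahler structure on $B$ restrict: $\pi(Z)$ is a complex submanifold and $\omega|_{\pi(Z)}$ is the K\"ahler form of the induced metric, hence non-degenerate; the flat sections of $Ann(T\pi(Z))\cap\Gamma$ are closed one-forms, because sections of the complex Lagrangian lattice $\Gamma$ are closed, so the connection $\nabla'$ induced by $\nabla$ on $T\pi(Z)$ is flat and torsion-free; $\nabla'$ is symplectic since both $\omega$ and $T\pi(Z)$ are $\nabla$-parallel; and $d_{\nabla'}I'=0$ descends from $d_\nabla I=0$ because $I$ preserves $T\pi(Z)$. Thus $(\omega|_{\pi(Z)},I,\nabla')$ is a special K\"ahler structure on $\pi(Z)$.

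I expect the genuine difficulties to be the two rationality points rather than the bookkeeping. In the first part one must keep in mind that a characteristic leaf is only immersed, so $L_z\cap\pi^{-1}(x)$ need not be closed; turning it into an honest union of subtori really does require compactness of the intersection, which is exactly what the closed-leaf hypothesis supplies. In the second part the hard work is global rather than pointwise: promoting the fibrewise rationality of $Ann(T\pi(Z))$ to a locally constant sublattice of $\Gamma$, equivalently the $\nabla$-parallelism of $T\pi(Z)$, is what allows $\nabla$ to descend to $\pi(Z)$, and only parallelism---not fibrewise rationality alone---makes the restricted connection flat, torsion-free and symplectic.
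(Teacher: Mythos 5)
Your proposal is correct and takes essentially the same route as the paper's proof: you apply Lemma~\ref{coisotropic lemma} in the splitting $T_zM\cong T_xB\oplus T_x^*B$ coming from a complex Lagrangian section to show that the leaf--fiber intersections have the constant tangent space $Ann(T_x\pi(Z))$, conclude fibrewise by torus theory, and for the special K\"ahler statement you upgrade the fibrewise rational subspaces to a locally constant (flat) sublattice of $\Gamma$ so that $T\pi(Z)\subset TB$ is $\nabla$-parallel and the connection restricts---which is exactly the paper's argument via local constancy of $H_1(L\cap Z_x,\dZ)\subset H_1(Z_x,\dZ)$. The only cosmetic difference is that you invoke the closed-leaf hypothesis already when identifying individual leaf intersections as cosets of closed subtori, whereas the paper tolerates a dense immersed ``subtorus'' in the first part of the statement; this matches the paper's own caveat and does not change the substance.
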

\begin{proof}
We show: $T_zZ_x=Ann(\pi_*\Delta_z)\subset F_zM$, when $x\in \pi(Z)$ is smooth.

Via a complex Lagrangian section, the tangent bundle of an algebraic integrable system locally splits as 
$$TM=\pi^*TB\oplus \pi^*T^*B. $$
Moreover, the real part of the holomorphic symplectic form is just the canonical symplectic form on $V+V^*$ at each point with respect to this splitting. 

At each point, $z\in Z_x$ the tangent space $T_zZ\subset T_zM\cong T_xB\oplus T^*_xB$ is a coisotropic subspace and $\Delta_z$ is its symplectic orthogonal subspace. The vectors tangent to the fibre $Z_x\subset \pi^{-1}(x)$ are given by $T_zZ\cap F_zM=T^*_xB$. By Lemma \ref{coisotropic lemma} we know that
$$T_zZ_x=Ann(\pi_*\Delta_z)\subset T^*_xB. $$
Since $\Delta_z=T_zL_z$ we can identify $\pi_*\Delta_z$ with $T_x(\pi(L_z))$ if the map $L_z\ra \pi(L_z)$ is also regular. Then along a certain leaf of the characteristic distribution, the tangent space $T_{z}Z_x$ is constant. On the other hand, generally, $T_zZ_x$ can vary as we vary the leaves, even if $\pi$ is regular restricted to all the leaves, since different leaves can have different images in the base. 

Consider now a leaf $L$ intersecting $Z_x$ non-trivially. Its tangent bundle is given by $\Delta\subset TZ\subset TM$ restricted to $L$. Using the second part of Lemma \ref{coisotropic lemma} we can understand the intersection $L\cap Z_x$ as follows. The vectors tangent to $L\cap Z_x$ at the point $z$ are given by $\Delta_z\cap T^*_xB$, which by the Lemma above equals $Ann(\pi_*T_zZ_x)=Ann(T_x\pi(Z))$. In particular, 
$$T_{z}(L\cap Z_x)=Ann(T_x\pi(Z))\ \ \ \forall z\in L\cap Z_x. $$
That is, the tangent space of $L\cap Z_x$ is constant along the fiber, therefore it is a disjoint union of translates of a certain subtorus of $Z_x$. Moreover, $Ann(T_x\pi(Z))$ is independent of the leaf intersecting the fiber. Note that by this theorem the foliating subtorus may be dense. In that case, $Z_x$ has to be the whole fiber as $Z_x$ is a closed submanifold of $M$. Otherwise, $Z_x$ is foliated by the translates of a closed subtorus. 

Alternatively, if we assume that the leaves are closed then the tori foliating $Z\cap \pi^{-1}(x)$ must also be closed. 

To show that $\pi(Z)$ inherits a special K\"ahler structure from $B$ it suffices to show that the connection restricts to a special K\"ahler connection. Indeed, $\pi(Z)$ is a complex, hence K\"ahler submanifold of $B$, so the connection is the only missing piece. This holds if $T\pi(Z)\subset TB$ is a flat subbundle, since the fact that $\omega$ is flat, that $d_\nabla I=0$ and that $\nabla$ is torsion free follows from $\pi(Z)$ being a submanifold.

At each smooth point $x\in \pi(Z)$ the tangent space $T_x\pi(Z)$ is isomorphic to $Ann(T(L\cap Z_x))$ where $L$ is any leaf of the characteristic foliation which intersects the fibre over $x$. The intersection $L\cap Z_x$ is union of translates of a closed subtorus in $Z_x$, therefore $H_1(L \cap Z_x, \dZ)$ is a sublattice in $H_1(Z_x,\dZ)$. This sublattice cannot change in a neighbourhood without passing through open fibers so locally $L$ is an affine torus subbundle of $M$ over $\pi(Z)$. In particular, it inherits the Gauss-Manin connection.
\end{proof}

\begin{example} Let $B=\dC^2$ with complex coordinates $(z,w)$ and real coordinates $(x,y,u,v)$ with $z=x+iy$ and $w=u+iw$. Let $\Gamma\subset T^*B$ be the lattice $span_\dZ\{dx,dy,du,dv\}$. Then flat K\"ahler structure on the base then defines a semi-flat hyperk\"ahler structure on $M=T^*B/\Gamma$ so it is an algebraic integrable system. Denote the natural complex structure on $T^*B$ by $I$ and the holomorphic symplectic form by $\Omega_I$. Let $\{x,y,u,v,p,q,r,s\}$ be dual coordinates on $M$ with $\{p,q,r,s\}$ 1-periodic. We have
\begin{align*}
\Omega_I=(dp-idq)\wedge (dx+idy)+(dr-ids)\wedge (du+idv).
\end{align*}
For any $\alpha\in \dR$ the affine torus subbundle
$$S_\alpha=\{w=\alpha \cdot z\}\subset M$$
is complex and since it is of complex codimension 1 it is also coisotropic. In coordinates
\begin{align*}
i:S\ \ &\hookrightarrow{}\ \  M\\
[t_1,t_2,p,q,r,s]\ \  &\mapsto\ \ [t_1,t_2,\alpha t_1,\alpha t_2,p,q,r,s]
\end{align*}
We have
$$i^*\Omega_I=(dp-idq)\wedge (dt_1+idt_2)+\alpha (dr-ids)\wedge (dt_1+idt_2).$$
$$Ker(i^*\Omega_I)=\Delta=\Big\{\frac{\partial}{\partial r}-\alpha \frac{\partial}{\partial p},\ \frac{\partial}{\partial s}-\alpha \frac{\partial}{\partial q}\Big\}$$
The leaves of the distribution $\Delta$ are dense in the fiber if $\alpha$ is irrational.
\end{example}

\begin{example}\label{jacobian fiber} Recall from Example \ref{attractor 1} that there is a coisotropic submanifold $W_\Sigma^+$ inside the smooth locus of the Higgs moduli space $\cM^s(r,d)$ (\ref{higgs bundes 1}). Its intersection with the fibers of $h^{reg}:\cM^{reg}\ra \cA^{reg}$ can be understood as follows. To each $a\in \cA^{reg}$ one can associate a \emph{spectral curve} $S_a\subset T^*\Sigma$ which is a smooth projective curve determined by $\Phi$. The BNR correspondence \cite{BNR} asserts that the fiber of $h$ over $a$ is isomorphic to the Jacobian of the spectral curve $Jac(S_a)$ (for definition see Section 11.1 of \cite{BL}). There is an injection
$$\alpha:C\ra J(C)$$
called the Abel-Jacobi map \cite[Corollary 11.15]{BL} and the intersection is given by
$$W_\Sigma^+\cap h^{-1}(a)=S_a\subset Jac(S_a),$$
(conjectured by Bousseau, proved in \cite{hauselUnpub}). In particular, $W_\Sigma^+$ is a coisotropic submanifold of an integrable system whose intersection with the fibers is not a union of affine subtori. The submanifold $W_\Sigma^+$ is foliated by Lagrangian submanifolds, the upward flows of points in $\Sigma$. The null foliation is a refinement of these foliations. In \cite{hauselHitchin} it was shown that the intersection of the upward flows with the generic fibers $h^{-1}(a)$ is finitely many points, in particular, the isotropic leaves also intersect the fibers in finitely many points. That is, in translates of affine subtori.
\end{example}

\chapter{T-duality in generalized geometry}\label{chapter gen geom tdual} 
Topological T-duality is a relation between affine torus bundles endowed with bundle gerbes or equivalently, degree three integral cohomology classes. T-duality in generalized geometry can be viewed as the ``de Rham shadow" of it, a relation between affine torus bundles endowed with degree three de Rham cohomology classes. In particular, two affine torus bundles with some de Rham classes may be T-dual in the sense of generalized geometry but there might be no integral classes allowing for a topological relation.

This chapter is organized as follows. In the first section, we review the relevant definitions and theorems about T-duality in generalized geometry using works of Cavalcanti and Gualtieri \cite{cavalcantiTdual} and Baraglia \cite{B1, B2}. In particular, we introduce the T-duality map, an isomorphism of Courant algebroids derived from the T-duality relation.

In the second section, we first describe the Legendre transform of special K\"ahler structures (Theorem \ref{Leg1}). Then, we apply the T-duality map to algebraic integrable systems endowed with semi-flat hyperk\"ahler structures. We show that the T-dual of a semi-flat hyperk\"ahler structure is also a semi-flat hyperk\"ahler structure and that they are connected by the Legendre transform on the base (Theorem \ref{semiflat tdual}). The connection between T-duality and Legendre transform has already been observed by Hitchin in \cite{hitchinCplxLag}. The novelty of our treatment is that we put the semi-flat hyperk\"ahler structure on torus bundles endowed with flat connections. 

In the last section, we apply T-duality to generalized branes. We first define a general method of constructing T-duals locally, then determine a class of branes to which our method applies. We call these branes "locally T-dualizable" (Definition \ref{locally T-dualizable brane}). We show that locally T-dualizable branes in a trivial affine torus bundle admit an entire family of T-duals (Theorem \ref{local gg thm}). Finally, we study the conditions under which a locally T-dualizable brane admits a T-dual in a non-trivial affine torus bundle (Theorem \ref{global thm1}).

\section{Formalism}
In this section, we review differential T-duality in the context of generalized geometry. T-duality is a duality derived from physics between torus bundles endowed with H-fluxes. The first mathematical description is due to Bouwknegt, Evslin and Mathai \cite{BEM} who showed that the T-duality relation induces an isomorphism on twisted cohomologies. The generalization of this result was used by Cavalcanti and Gualtieri \cite{cavalcantiTdual} who reformulated the isomorphism of twisted cohomologies to an isomorphism of Courant algebroids on T-dual pairs of principal torus bundles. Finally, Baraglia \cite{B1,B2} extended the results of \cite{cavalcantiTdual} to T-dual pairs of affine torus bundles.

Let $\pi:M\ra B$ and $\hat{\pi}:\hat{M}\ra B$ be affine torus bundles and denote by $C=M\times_B\hat{M}$ the fiber product of $M$ and $\hat{M}$ over $B$. Let $H\in \Omega^3(M)$ and $\hat{H}\in \Omega^3(\hat{M})$ be closed invariant 3-forms. We have then the following diagram.
\begin{equation}\label{tduality diamond 1}
\begin{tikzcd}
    & C \arrow{dl}[swap]{p} \arrow{dr}{\hat{p}} \arrow{dd}{q} & \\
    M \arrow{dr}[swap]{\pi} & & \hat{M} \arrow{dl}{\hat{\pi}}\\
    & B &
\end{tikzcd}
\end{equation}
\begin{definition}\label{gen geom tdual}
   We say that $(M,H)$ and $(\hat{M},\hat{H})$ are T-dual in the sense of generalized geometry if there exists an invariant two-form $P\in \Omega^2(C)$ which induces a non-degenerate pairing
   $$P:q^*V\otimes \hat{q}^*\hat{V}\ra \dR$$
   and such that
   $$\hat{p}^*\hat{H}-p^*H=dP.$$
\end{definition}
Note that even though the definition is given in terms of a specific representative, if $H'=H+dB$ is another invariant representative of the de Rham class of $H$, then $(M,H')$ is also T-dual to $(M,\hat{H})$ via the two-form $P'=P-B$. We could therefore define the relation between pairs $(M,[H])$ and $(\hat{M},[\hat{H}])$ where $[H]\in \HH^3(M,\dR)$ and $[\hat{H}]\in \HH^3(\hat{M},\dR)$ but for our purposes we want to set $H=0$ and $\hat{H}=0$ and use the exact isomorphism outlined in this chapter.

\begin{example}\label{Tdual torsion coordinates} Let $\pi:M\ra B$ and $\hat{\pi}:\hat{M}\ra B$ be affine torus bundles with monodromy local systems $\Gamma_M$ and $\Gamma_{\hat{M}}$ and Chern classes $c_M$ and $c_{\hat{M}}$. Suppose moreover that $\Gamma_M^\vee\cong \Gamma_{\hat{M}}$ and the Chern classes of $M$ and $\hat{M}$ are torsion. Then $(M,0)$ and $(\hat{M},0)$ are T-dual.

Indeed, we have an element $p\in \HH^0(B,\wedge^2(\Gamma_M+\Gamma_{\hat{M}}))$ corresponding to the identity. The image $[P]$ of $p$ in $\HH^0(B,\wedge^2(q^*V+q^*\hat{V}))$ satisfies $d_2([P])=0$ since the spectral sequence with real coefficients degenerates on page 2. That is, there is a global de Rham class projecting onto $[P]$ under $F^{2,2}(q,\dR)\ra E^{2,0}_\infty(q,\dR)$. 

A choice of flat connections $A$  on $M$ and $\hat{A}$ on $\hat{M}$ gives a representative
$$P=\langle \hat{p}^*\hat{A} \wedge p^*A\rangle.$$
We can write this representative of $P$ via flat coordinates associated to the connections $A$ and $\hat{A}$. Since $\Gamma_{\hat{M}}\cong \Gamma_M^\vee$ we can identify $\hat{V}\cong V^*$ as well. Then we may choose dual frames of $\Gamma_M$ and $\Gamma_{\hat{M}}$ which we can integrate to 1-periodic fiber coordinates $\{p_i\}$ on $M$ and $\{\hat{p}_i\}$ on $\hat{M}$. In these coordinates $P=\hat{p}^*(d\hat{p}_i)\wedge p^*(dp^i)$.
\end{example}

 \paragraph{Remark.} Note that we can relax the above example to $M$ and $\hat{M}$ endowed with torsion Chern classes and an isomorphism $\phi: V^*\ra \hat{V}$. The same argument shows that there exists a T-duality relation between $(M,0)$  and $(\hat{M},0)$. On the other hand, if we want to upgrade this differential T-duality to topological T-duality we must require $\Gamma_M^\vee\cong\Gamma_{\hat{M}}$ which will be clear later.

Let us now consider the geometric consequences of a T-duality relation between pairs $(M,H)$ and $(\hat{M},\hat{H})$. To the pair $(M,H)$ we may associate an exact Courant algebroid $E=TM+T^*M$ together with the $H$-twisted Courant bracket. As we have defined invariant differential forms we can also define invariant vector fields under the action of $M_0$ on $M$. Therefore $M_0$ acts on the vector bundle $E$ and its invariant sections form a vector bundle $E_{red}$ over $B$. In \cite{BCG} it was shown that if $H$ is also invariant, the Courant algebroid structure on $E$ reduces to one on $E_{red}$. This Courant algebroid is not exact anymore, indeed given a connection on $M$ we can split $E_{red}$ as
$$E_{red}\cong TB\oplus V \oplus T^*B \oplus V^*.$$
We can repeat this construction on the other side to find another Courant algebroid $\hat{E}_{red}$ on $B$. The following theorem was shown for principal torus bundles in \cite{cavalcantiTdual} and more generally for affine torus bundles in \cite{B2}.
\begin{theorem}\label{tdualitymap}\emph{ (\cite[Theorem 3.1]{cavalcantiTdual}, \cite[Theorem 5.3]{B2}) }
Let $(M,H)$ and $(\hat{M},\hat{H})$ be a T-dual pair of affine torus bundles endowed with $H$-fluxes. Then, there exists an isomorphism of Courant algebroids
$$T:E_{red}\ra \hat{E}_{red}$$
which we call the T-duality map.    
\end{theorem}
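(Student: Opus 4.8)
The plan is to work over the base $B$, where by the reduction of \cite{BCG} the invariant sections of the exact Courant algebroids $(TM\oplus T^*M, H)$ and $(T\hat{M}\oplus T^*\hat{M},\hat{H})$ assemble into the reduced Courant algebroids $E_{red}$ and $\hat{E}_{red}$. A choice of connections $A$ on $M$ and $\hat A$ on $\hat M$ splits these as $E_{red}\cong TB\oplus V\oplus T^*B\oplus V^*$ and $\hat E_{red}\cong TB\oplus \hat V\oplus T^*B\oplus \hat V^*$, and in both cases the reduced anchor is projection onto the $TB$-summand, since invariant vertical fields project to zero under $\pi_*$; this is exactly why neither algebroid is exact. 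First I would record the explicit reduced brackets in these splittings: they are built from the Lie bracket on $TB$, the Gauss--Manin connection on $V$, the curvature of $A$ coupling $TB$ to $V$, and the components of $H$ graded by vertical degree.

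Next I would use the non-degenerate pairing $P\colon q^*V\otimes\hat q^*\hat V\to\dR$ of Definition \ref{gen geom tdual} to identify $\hat V\cong V^*$ and $\hat V^*\cong V$, and then define
$$T\colon TB\oplus V\oplus T^*B\oplus V^*\ \ra\ TB\oplus \hat V\oplus T^*B\oplus \hat V^*$$
as the identity on the base summands $TB$ and $T^*B$ together with the swap sending the fibre-tangent summand $V$ to the fibre-cotangent summand $\hat V^*$ and the fibre-cotangent summand $V^*$ to the fibre-tangent summand $\hat V$, both through the identification provided by $P$. That $T$ is a bundle isomorphism is immediate from non-degeneracy of $P$; that it preserves the symmetric pairing is a direct check, since the pairing couples $TB$ with $T^*B$ and $V$ with $V^*$ and the swap respects these couplings; and that it intertwines the anchors follows because both reduced anchors are projection onto $TB$ and $T$ fixes the $TB$-summand.

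The substance of the theorem is that $T$ intertwines the reduced Courant brackets, and here I would avoid the split computation in favour of the correspondence space $C=M\times_B\hat M$. The key remark is that the $B$-field transform $e^{P}$, in the sense of (\ref{Bfield}), is an honest isomorphism of Courant algebroids
$$e^{P}\colon (TC\oplus T^*C,\ \hat p^*\hat H)\ \ra\ (TC\oplus T^*C,\ p^*H),$$
precisely because the defining relation $\hat p^*\hat H-p^*H=dP$ says that $e^P$ changes the twisting three-form by exactly $dP$. Since $p$ and $\hat p$ are torus fibrations and $P$ is invariant, $e^P$ descends to invariant sections; I would then show that, under pullback to $C$, the invariant data reduced from $\hat M$ is carried by $e^P$ to the invariant data reduced from $M$, with the fibre-tangent and fibre-cotangent directions exchanged exactly because $P$ pairs $V$ with $\hat V$ non-degenerately. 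Transporting the bracket through this chain of reductions yields the bracket compatibility of $T$, and the theorem follows.

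The step I expect to be the main obstacle is this last one: making the passage \emph{reduce over $\hat M$ $\to$ transform by $e^P$ on $C$ $\to$ reduce over $M$} rigorous. One must check that the three reductions (over $M$, over $\hat M$, and the intermediate invariant sections on $C$) are compatible and that $e^P$ genuinely matches the two families of invariant generalized vector fields rather than merely the underlying forms. If instead one proceeds by the explicit split computation, the same difficulty reappears as the need to verify that the curvatures of $A$ and $\hat A$ together with the vertically graded components of $H$ and $\hat H$ cancel in the right way under $T$; that cancellation is exactly the unwinding of $\hat p^*\hat H-p^*H=dP$, so either route concentrates the real work in the relation between $P$ and the twisting fluxes.
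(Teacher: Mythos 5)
Your proposal is correct and follows essentially the same route as the paper: the paper defines $T$ exactly through the correspondence space (lift an invariant section, twist by $P$ so that the form part becomes basic — uniqueness of the lift coming from non-degeneracy of $P$ — and push down via $\hat{p}$), which is precisely your pull–$e^P$–push construction, with bracket compatibility resting on $\hat{p}^*\hat{H}-p^*H=dP$ as you identify. The only caveat is a sign/direction convention in your statement of $e^{P}$ (it intertwines the $p^*H$- and $(p^*H+dP)$-twisted brackets), which does not affect the argument.
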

We do not need every detail of the proof for our purposes but we need to understand the T-duality map as it plays a crucial role in the next chapter.

As in most T-duality and adjacent relations, the morphism between the two objects goes through the correspondence space via pulling, twisting with a universal object and pushing down.  Consider therefore a section of  $E_{red}$ as an invariant section $X+\xi$ of $TM+T^*M=E$. We would like to lift this to an invariant section of $TC+T^*C$ but this lift is not well-defined for the vector component. Then we want to ``twist" with the two-form $P$ and push forward to get an invariant section of $T\hat{M}+T^*\hat{M}$. Once again, pushing forward is not well-defined for the covector unless it is basic. 

These two ambiguities can be used to cancel each other and come to a unique solution. Let us choose a lift 
$$\hat{X}+p^*\xi \in TC+T^*C$$
such that $p_*\hat{X}=X$ and moreover such that in
$$\hat{X}+p^*\xi +\iota_{\hat{X}}P \in TC+T^*C$$
the covector component $p^*\xi+\iota_{\hat{X}}P$ is basic with respect to $\hat{p}:C\ra \hat{M}$.

Since $P$ is non-degenerate there is a unique such lift $\hat{X}$ and we may define
\begin{align}\label{Tmap}T(X+\xi)=\hat{p}_*(\hat{X}+p^*\xi +\iota_{\hat{X}}P).\end{align}
Although the isomorphism is between Courant algebroids on the base $B$, the invariant sections span $E$ and $\hat{E}$ so we can also transfer certain invariant features of $E$ to $\hat{E}$. In particular, via the T-duality map one can also transport invariant generalized complex structures.
\begin{theorem}\label{thm gil structures}\emph{\cite[Theorem 4.1]{cavalcantiTdual}}
Let $(M, H)$ and $(\hat{M},\hat{H})$ be T-dual spaces. Then a generalized
complex (or generalized K\"ahler) structure on $M$ which is invariant under the
action of $M_0$ is transformed via $T$ into an invariant generalized complex (or generalized K\"ahler) structure on $\hat{M}$.
\end{theorem}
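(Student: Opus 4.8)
The plan is to reduce everything to the single fact that $T$ is an isomorphism of Courant algebroids (Theorem \ref{tdualitymap}), since a generalized complex structure is defined entirely through Courant-algebroid data. First I would make precise the correspondence between invariant generalized complex structures on $M$ and endomorphisms of the reduced algebroid $E_{red}$. An invariant $\cJ$ commutes with the $M_0$-action, hence preserves the subbundle of invariant sections; restricting gives an orthogonal endomorphism $\cJ_{red}:E_{red}\ra E_{red}$ with $\cJ_{red}^2=-\mathrm{Id}$. Because invariant sections span $E$, the integrability of $\cJ$ with respect to the $H$-twisted Courant bracket is equivalent to the involutivity of the $+i$-eigenbundle of $\cJ_{red}$ under the reduced bracket; this uses the reduction of Courant algebroids from \cite{BCG}. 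Thus invariant generalized complex structures on $M$ are in bijection with generalized complex structures on $E_{red}$, and the same dictionary holds on the $\hat{M}$ side.

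Next I would set $\hat{\cJ}_{red}:=T\circ\cJ_{red}\circ T^{-1}$ and verify it is a generalized complex structure on $\hat{E}_{red}$. Since $T$ is invertible, $\hat{\cJ}_{red}^2=-\mathrm{Id}$ is immediate, and because $T$ is an isometry for the pairings, conjugation by $T$ preserves orthogonality. For integrability, the complexified map $T\otimes\dC$ sends the $+i$-eigenbundle $L$ of $\cJ_{red}$ to the $+i$-eigenbundle $T(L)$ of $\hat{\cJ}_{red}$; as $T$ intertwines the reduced Courant brackets, $T[a,b]=[Ta,Tb]$, it carries the involutive subbundle $L$ to the involutive subbundle $T(L)$. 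Hence $\hat{\cJ}_{red}$ is integrable, and by the dictionary above it lifts to an invariant generalized complex structure $\hat{\cJ}$ on $\hat{M}$, which is by construction the transform of $\cJ$ via $T$.

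For the generalized K\"ahler case I would apply the same transport to the defining data. A generalized K\"ahler structure is a pair of commuting generalized complex structures $(\cJ_1,\cJ_2)$ for which $G:=-\cJ_1\cJ_2$ is a generalized metric, i.e.\ a symmetric involution with $\langle G\,\cdot,\cdot\rangle$ positive definite. Setting $\hat{\cJ}_i=T\cJ_i T^{-1}$, conjugation preserves commutators, so $[\hat{\cJ}_1,\hat{\cJ}_2]=T[\cJ_1,\cJ_2]T^{-1}=0$, and each $\hat{\cJ}_i$ is a generalized complex structure by the previous paragraph. Moreover $\hat{G}=-\hat{\cJ}_1\hat{\cJ}_2=TGT^{-1}$, and since $T$ is an isometry the symmetric form $\langle \hat{G}\,\cdot,\cdot\rangle$ equals $\langle G\,T^{-1}\cdot,T^{-1}\cdot\rangle$, which is again positive definite; thus $\hat{G}$ is a generalized metric and $(\hat{\cJ}_1,\hat{\cJ}_2)$ is a generalized K\"ahler structure.

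The step I expect to be the main obstacle is the first one: making the equivalence between invariant integrability on $M$ and involutivity on $E_{red}$ fully rigorous. One must check that the reduced Courant bracket on $E_{red}$ (built from the $H$-twisted bracket via the reduction of \cite{BCG}) detects involutivity of eigenbundles spanned by invariant sections correctly — in particular that no non-invariant sections are required to witness a failure of involutivity, so that reduced integrability genuinely matches integrability of the invariant $\cJ$ upstairs. Once this dictionary is established on both sides, the transport through $T$ is purely formal, being nothing more than conjugation by a Courant-algebroid isomorphism.
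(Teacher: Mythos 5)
Your proposal is correct and follows essentially the same route as the paper's proof: reduce the invariant structure to $E_{red}$, transport it through the Courant algebroid isomorphism $T$ (your conjugation $T\cJ_{red}T^{-1}$ is the endomorphism whose $+i$-eigenbundle is exactly the paper's $T(L_{red})$), and lift back to an invariant structure on $\hat{M}$, with the generalized K\"ahler case handled by the fact that conjugation preserves products — which is precisely the paper's claim that $T(\cJ_1\cdot\cJ_2)=T(\cJ_1)\cdot T(\cJ_2)$. Your proposal is in fact more careful than the paper's sketch about the dictionary between invariant integrability upstairs and involutivity in the reduced algebroid, which the paper delegates to the reduction results of \cite{BCG}.
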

\begin{proof}(sketch) 
A generalized complex structure decomposes $E\otimes \dC$  into complex maximal isotropic subbundles $L$ and $\overline{L}$ of satisfying $L\cap \overline{L}=0$. A GCS is ``invariant" if these subbundles are invariant. Therefore, we get a decomposition of $E_{red}\otimes \dC$ into $L_{red}$ and $\overline{L}_{red}$. The T-dual of these $T(L_{red})=\hat{L}_{red}$ and $T(\overline{L}_{red})=\overline{\hat{L}}_{red}$ is another GCS which can be lifted to an invariant GCS on $\hat{E}$. One can also show that $T$ respects multiplication, that is for two generalized complex structures $\cJ_1$ and $\cJ_2$ we have 
$$T(\cJ_1\cdot \cJ_2)=T(\cJ_1)\cdot T(\cJ_2).$$
\end{proof}
This theorem appeared only as a theorem for $M$ and $\hat{M}$ being principal torus bundles but the version concerning affine torus bundles is a straightforward consequence of Baraglia's \cite[Theorem 5.3]{B2}.

It is explained in  \cite{cavalcantiTdual} that the T-duality map although maps generalized complex structures to generalized complex structures, does not preserve their type. If $\pi:M\ra B$ is endowed with invariant complex structure $I$ and symplectic structure $\omega$ then we denote the corresponding generalized complex structures by $\cJ_I$ and $\cJ_\omega$. The type of $\cJ_I$ is $k=dim_\dR(M)/2$ and the type of $\cJ_\omega $ is $k=0$. The table \cite[Table 1]{cavalcantiTdual} shows that if the fibers of $\pi:M\ra B$ are complex, the T-dual of $\cJ_I$ is again a complex type generalized complex structure, Moreover, the fibers of $\hat{M}$ are also complex submanifolds with respect to the induced complex structures. Meanwhile, if the fibers are ``real" the T-dual of $\cJ_I$ is a symplectic type generalized complex structure with respect to which the fibers of $\hat{M}$ are Lagrangian. Finally, if the fibers of $M$ are symplectic with respect to $\omega$, the T-dual of $\cJ_\omega$ is a symplectic type GCS and the fibers stay symplectic.

This is an instance of the $A$-$B$ type switching which we will see in our examples in the next section.

\section{T-duality of semi-flat hyperk\"ahler structure} 
 
The Legendre transform is a classical transform which assigns to a convex real-valued function another convex real-valued function. It has been generalized to many different geometric situations, it appears for example as the translation between Lagrangian and Hamiltonian dynamics. A very important application is \cite{hklr} where it was shown that one can cook up a new hyperk\"ahler structure from a hyperk\"ahler structure on $\dR^{4n}$ invariant under the action of $\dR^{2n}$. The connection between this hyperk\"ahler Legendre transform and T-duality has been understood before \cite{hitchinCplxLag} as a transformation between structures on vector bundles. Here we first recount the relevant classical background, then we show that T-duality applied to affine torus bundles in the generalized geometry context also recovers the expected results.

\subsection{Legendre transform}
In this section, we reprove the Legendre transform of special K\"ahler structures. Legendre transform is a classical transformation applied to convex functions. It appeared in the context of hyperk\"ahler metrics in \cite[Section 3 (G)]{hklr} and in the context of special K\"ahler geometry and T-duality in \cite[Section 5]{hitchinCplxLag}. The theorem below is a reformulation of the aforementioned results.
\begin{theorem}[Legendre transform]\label{Leg1}
Let $(B,\omega,I,\nabla)$ be a special K\"ahler manifold. Then, the dual connection $\hat{\nabla}$ on $T^*B$ pulled back via the isomorphism induced by the K\"ahler metric $g: TB\ra T^*B$ defines a new special K\"ahler structure $(\omega, I, \hat{\nabla})$ on $B$. Moreover, a potential of this new special K\"ahler structure can be obtained as the Legendre transform of a potential of $(\omega, I, \nabla)$.
\end{theorem}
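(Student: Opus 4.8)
The plan is to work in local flat coordinates for $\nabla$ and to transport the whole structure to the Legendre-dual coordinates, where the new connection becomes the trivial one. Fix flat coordinates $\{x^i\}$ for $\nabla$ on a contractible open set. By Theorem \ref{special kahler potential} the metric reads $g_{ij}=\phi_{ij}:=\partial^2\phi/\partial x^i\partial x^j$ for a potential $\phi$, while the $\omega_{ij}$ are constant (since $\nabla\omega=0$) and the components of $I$ satisfy $\partial_k I^i_j=\partial_j I^i_k$ (this is $d_\nabla I=0$ in flat coordinates). I would then introduce the \emph{dual coordinates} $\tilde x_i:=\partial\phi/\partial x^i$; their Jacobian $\partial\tilde x_i/\partial x^j=\phi_{ij}=g_{ij}$ is the positive definite metric, so $x\mapsto\tilde x=\nabla\phi$ is a local diffeomorphism and $\{\tilde x_i\}$ is a genuine coordinate system. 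The Legendre transform $\psi$ of $\phi$, characterized by $\nabla_{\tilde x}\psi=x$ and $\mathrm{Hess}\,\psi=(\mathrm{Hess}\,\phi)^{-1}$, is the candidate potential of the dual structure, and the claim will reduce to showing that $\{\tilde x_i\}$ are flat coordinates for the new connection.

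Write $\nabla'=g^{-1}\hat\nabla g$ for the dual connection pulled back via $g:TB\to T^*B$ (this is the connection the statement denotes $\hat\nabla$). Flatness of $\nabla'$ is immediate, being the conjugate of the flat connection $\hat\nabla$ by a bundle isomorphism. The key computation is that $\{\tilde x_i\}$ are flat for $\nabla'$: the coframe $\{dx^i\}$ is $\hat\nabla$-parallel, and since $g^{-1}$ is symmetric one checks $\partial_{\tilde x_i}=g^{-1}(dx^i)$, whence $\nabla'_Y\partial_{\tilde x_i}=g^{-1}\hat\nabla_Y(dx^i)=0$. Because a flat connection admitting a holonomic parallel frame is automatically torsion-free, $\nabla'$ is torsion-free. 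For the symplectic condition I would compute the components of $\omega$ in the dual coordinates directly: using the compatibility $g=-\omega I$, hence $g^{-1}=I\omega^{-1}$, one finds $\tilde\omega=g^{-1}\omega g^{-1}=-\omega^{-1}$, which has constant coefficients, so $\nabla'\omega=0$.

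The remaining and most delicate axiom is $d_{\nabla'}I=0$, which I would establish conceptually. Let $D$ be the Levi-Civita connection of $(g,I,\omega)$ and write $\nabla=D+A$ with $A\in\Omega^1(B,\mathrm{End}\,TB)$. The special Kähler hypotheses translate into: $A_XY=A_YX$ (torsion-free), $A_X$ is $\omega$-skew (symplectic), the cubic form $(X,Y,Z)\mapsto\omega(A_XY,Z)$ is totally symmetric, and $A_X$ anticommutes with $I$; these last two force $A_X$ to be $g$-self-adjoint. Since $g$ is $D$-parallel, pulling the dual connection back by $g$ yields $\nabla'=D-g^{-1}A_X^{\mathsf T}g=D-A$. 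Then $(\nabla'_XI)Y=-[A_X,I]Y=-(\nabla_XI)Y$, so $d_{\nabla'}I=-d_\nabla I=0$; this single sign flip simultaneously re-proves torsion-freeness ($-A$ is still symmetric) and the symplectic property ($-A$ is still $\omega$-skew) without coordinates. Finally, reading off the metric in the coordinates $\{\tilde x_i\}$ gives components $(\phi^{-1})^{ij}=\mathrm{Hess}\,\psi$, so $\psi$ is a potential of $(\omega,I,\nabla')$, and it is the Legendre transform of $\phi$ by construction.

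The main obstacle is the structural identity $\nabla'=D-A$, i.e. the $g$-self-adjointness of $A$ (equivalently, that the difference tensor anticommutes with $I$): this is precisely where the full force of the special Kähler condition—beyond mere Kählerity—is used, and isolating it cleanly is the crux. If one prefers to stay coordinate-bound, the equivalent obstacle is verifying $d_{\nabla'}I=0$ by direct differentiation of the transformed components $\tilde I^k_l=g_{ki}I^i_j(g^{-1})^{jl}$ with respect to $\tilde x$, using $\partial_{\tilde x_m}=(g^{-1})^{mn}\partial_{x^n}$ together with $\partial_kI^i_j=\partial_jI^i_k$; this is routine but tedious. A final bookkeeping point is globalization: one must check that the local Legendre charts patch (the flat $\nabla'$-coordinates change by integral affine-symplectic maps, by the same argument that gives $B$ its affine structure) and that the dual potential is well-defined up to the expected affine ambiguity.
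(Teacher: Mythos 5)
Your proposal is correct, and it diverges from the paper's proof at the one genuinely delicate point. The paper works entirely in charts: it introduces the dual coordinates $v_i=\partial\phi/\partial u^i$, defines the new connection as the flat connection for which these are flat, gets torsion-freeness and $\hat\nabla\omega=0$ exactly as you do, and then establishes $d_{\hat\nabla}I=0$ by showing that the metric in the dual coordinates is the Hessian of the Legendre transform $\hat\phi(v)=u^iv_i-\phi(u)$, i.e.\ by running the Hessian characterization (Theorem \ref{special kahler potential}) in reverse; the potential is thus the engine of the proof rather than a corollary, and the proof must close with a patching argument showing the dual charts differ by affine maps with linear part $(A^{-1})^T\in Sp(2n,\mathbb{R})$. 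You instead define $\nabla'=g^{-1}\hat\nabla g$ invariantly and derive $d_{\nabla'}I=0$ from the difference tensor $A=\nabla-D$ and the identity $\nabla'=D-A$. What this buys is a global, tensorial formulation: all four axioms for $\nabla'$ are coordinate-free statements, so the globalization step you defer to the end is in fact automatic in your setup (and, a minor slip, the transitions are only \emph{real} affine-symplectic for a general special K\"ahler manifold --- there is no integrality). What it costs is the structural lemma that $A_X$ is $g$-self-adjoint, which the paper's computational route never needs to isolate.

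That lemma --- the obstacle you flag --- is not actually hard, so your proposal has no genuine gap: it closes in three lines from the facts you already listed. Torsion-freeness and $\omega$-skewness make $c(X,Y,Z)=\omega(A_XY,Z)$ totally symmetric. Since $DI=0$, the condition $d_\nabla I=0$ reads $[A_X,I]Y=[A_Y,I]X$, which by symmetry of $A$ reduces to $A_X(IY)=A_{IX}Y$, i.e.\ $c(IX,Y,Z)=c(X,IY,Z)$. Total symmetry then lets $I$ migrate to the last slot, $c(X,IY,Z)=c(X,Y,IZ)$, and since $\omega(U,IZ)=-\omega(IU,Z)$ this says $\omega\bigl(A_X(IY),Z\bigr)=-\omega\bigl(IA_XY,Z\bigr)$ for all $Z$; by nondegeneracy $A_X$ anticommutes with $I$, equivalently (using $\omega$-skewness again) $A_X$ is $g$-self-adjoint. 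With this inserted, your argument is complete, and your final identification $\mathrm{Hess}\,\psi=(\mathrm{Hess}\,\phi)^{-1}=g^{-1}$ reproduces the paper's potential statement.
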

\begin{proof}
Let $g$ be the K\"ahler metric on $B$. Choose a good cover $\{U_\alpha\}$ of $B$ and flat Darboux coordinates $\{u^i\}_{i=1}^{2n}=\{x^i,y_i\}_{i=1}^n$ on each coordinate patch $U_\alpha$. That is the symplectic form is in canonical form
$$\omega|_{U_\alpha}=dx^i\wedge dy_i $$
and the coordinates satisfy $\nabla dx^i=\nabla dy_i=0$. 
The transition functions between such special coordinate charts are affine transformations, that is for $\{x',y'\}$ coordinates on $U_\beta$ we have
$$\begin{pmatrix} x \\ y
\end{pmatrix}=A \begin{pmatrix} x' \\ y'
\end{pmatrix}+b\ \ \ \text{with} \ A\in Sp(2n,\dR),\ b\in \dR^{2n}.$$
An atlas of these flat Darboux coordinate charts defines the affine structure on $B$ which encodes the connection $\nabla$. 

We now can define a new set of special coordinates which will define an affine structure and a connection. Let $\phi_\alpha: U_\alpha \ra \dR$ be a potential for $g$, that is, in the flat coordinates $\{u_i\}$
$$g_{ij}=\frac{\partial^2 \phi_\alpha}{\partial u^i \partial u^j}. $$
Let us now use the gradient $\Psi_\alpha$ on $U_\alpha\subset \dR^n$ of $\phi_\alpha$ to define the new set of coordinates as
\begin{align*}
   v_i:=(\Psi_\alpha)_i=\frac{\partial \phi_\alpha}{\partial u^i}.
\end{align*} 
Since the derivative of $\Psi_\alpha$ is given by the matrix of $g$ which is invertible, this transformation is a well-defined change of coordinates.  Writing out in detail we have
\begin{align*}
    dv_i&=\frac{\partial v_i}{\partial u^j}du^j=\frac{\partial^2 \phi_\alpha}{\partial u^j \partial u^i} du^j=g_{ji}du^j=g_{ij}du^j,\\
    \frac{\partial }{\partial v_i}&= \frac{\partial u^j}{\partial v_i}\frac{\partial }{\partial u_j}=g^{ji}\frac{\partial}{\partial u^j}=g^{ij}\frac{\partial}{\partial u^j},
\end{align*}
indeed, since $\delta^i_j=dv_j(\frac{\partial}{\partial v_i})=\frac{\partial v_j}{\partial u_k}du^k(\frac{\partial u^l}{\partial v_i}\frac{\partial}{\partial u^l})= \frac{\partial v_j}{\partial u_k}\frac{\partial u^l}{\partial v_i}\delta^k_l=\frac{\partial v_j}{\partial u_k}\frac{\partial u^k}{\partial v_i}$ so $\frac{\partial u^j}{\partial v_i}=g^{ji} $. In particular, we see that $dv_i$ is the frame of $T^*B$ obtained from the frame $\frac{\partial}{\partial u^i}$ of $TB$ under the isomorphism $g: TB\ra T^*B$ induced by the K\"ahler metric.

The K\"ahler structure in the new coordinates is then clearly given (as matrices)
  $$\hat{I}=gIg^{-1}=-I^*,\  \ \  \hat{\omega}=g^{-1}\omega g^{-1}=g^{-1}gIg^{-1}=Ig^{-1}=-\omega^{-1}, \ \ \ \hat{g}=g^{-1}gg^{-1}=g^{-1},$$
   where $-I^*$ is the matrix the transpose of the matrix of $-I$ in coordinates $\{u^i\}$ and so on. In particular, since $\omega$ is in standard form in the flat Darboux coordinates, the matrix of $-\omega^{-1}$ is again the standard symplectic form.

  Finally, the new connection is defined as the flat connection with respect to which the coordinates $\{v_i\}_{i=1}^{2n}$ are flat. Then, $\hat{\nabla}$ is torsion-free since the local flat framing $\{dv_i\}$ of $T^*M$ is associated with a coordinate system, so $$\Big[\frac{\partial}{\partial v_i},\frac{\partial}{\partial v_j}\Big]=0=\nabla_{\frac{\partial}{\partial v_i}}\frac{\partial}{\partial v_j}-\nabla_{\frac{\partial}{\partial v_j}}\frac{\partial}{\partial v_i}.$$  
  The connection is symplectic, $\hat{\nabla} \hat{\omega} =0 $
since the coefficients of $\hat{\omega}$ are constant in the flat coordinates. To prove the last identity
$$d_{\hat{\nabla}}\hat{I}=0 ,$$
one has to show that $\hat{g}$ can also be given locally as the Hessian of a potential $\hat{\phi}_\alpha$. Indeed, $\hat{\phi}_\alpha$ is the \textbf{Legendre transform} of the potential $\phi_\alpha$. Let
$$\hat{\phi}_\alpha(v_1,...,v_{2n}):=u^iv_i-\phi_\alpha(u_1,...,u_{2n}) \subset \dR$$
where at any $p\in B$ point $p=(u_1,...,u_{2n})=(v_1,...,v_{2n})$ are the dual coordinates.  This is again a clear calculation.

It remains to show that newly constructed special coordinates define an affine structure, that is the locally defined $(\hat{g},\hat{\omega},\hat{I},\hat{\nabla})$ extends globally to $B$.

If $\phi_\alpha$ and $\phi_\beta$ are potentials for the K\"ahler metric $g$ over $U_\alpha$ in coordinates $(u^1,...,u^{2n})$ and $U_\beta$ in coordinates $(\bar{u}^1,...,\bar{u}^{2n})$ then by definition
\begin{align*}
   (v_1,...,v_{2n})=\left(\frac{\partial \phi_\alpha}{\partial u^1},...,\frac{\partial \phi_\alpha}{\partial u^{2n}}\right)\\
    (\bar{v}_1,...,\bar{v}_{2n})=\left(\frac{\partial \phi_\beta}{\partial \bar{u}^1},...,\frac{\partial \phi_\beta}{\partial \bar{u}^{2n}}\right).
\end{align*}
The transition between flat Darboux coordinates is an affine transformation, that is
$$\bar{u}^i=A^i_ju^j+b^j\ \ \ A\in Sp(2n,\dR),\ b\in \dR^{2n}. $$
We know that in coordinates $\{u^i\}$, resp. $\{\bar{u}^i\}$, the metric tensor is given by the Hessian of $\phi_\alpha$, resp. $\phi_\beta$. On the overlap $U_\alpha\cap U_\beta$
\begin{align*}
    g_{ij}&=\frac{\partial^2 \phi_\alpha}{\partial u^i \partial u^j},\\
    \bar{g}_{ij}&=\frac{\partial^2 \phi_\beta}{\partial \bar{u}^i\partial \bar{u}^j}=\frac{\partial }{\partial \bar{u}^i}\frac{\partial }{\partial \bar{u}^j}\phi_\beta(u(\bar{u}))=\frac{\partial }{\partial \bar{u}^i}\Big(\frac{\partial \phi_\beta}{\partial u^k}\frac{\partial u^k}{\partial \bar{u}^j}\Big)=\frac{\partial^2 \phi_\beta}{\partial u^l \partial u^k}\frac{\partial u^l}{\partial \bar{u}^i}\frac{\partial u^k}{\partial \bar{u}^j}+\frac{\partial \phi_\beta}{\partial u^k}\frac{\partial^2 u^k}{\partial \bar{u}^i\partial \bar{u}^j}\\
    &=\frac{\partial^2 \phi_\beta}{\partial u^l \partial u^k}\frac{\partial u^l}{\partial \bar{u}^i}\frac{\partial u^k}{\partial \bar{u}^j}
\end{align*}
since the transition is affine. As $g$ is a $(0,2)$-tensor we have
\begin{align*}
    g_{ij}du^i\otimes du^j=\bar{g}_{ij}d\bar{u}^i\otimes d\bar{u}^j=\bar{g}_{ij}\frac{\partial \bar{u}^i}{\partial u^k}\frac{\partial \bar{u}^j}{\partial u^l}du^k\otimes du^l
\end{align*}
that is,
\begin{align*}
  \frac{\partial^2 \phi_\alpha}{\partial u^i \partial u^j}=g_{ij} = \bar{g}_{kl}\frac{\partial \bar{u}^k}{\partial u^i}\frac{\partial \bar{u}^l}{\partial u^j}=\frac{\partial^2 \phi_\beta}{\partial u^m \partial u^n}\frac{\partial u^m}{\partial \bar{u}^k}\frac{\partial u^n}{\partial \bar{u}^l}\frac{\partial \bar{u}^k}{\partial u^i}\frac{\partial \bar{u}^l}{\partial u^j}=\frac{\partial^2 \phi_\beta}{\partial u^m \partial u^n}\delta^m_i\delta^n_j=\frac{\partial^2 \phi_\beta}{\partial u^i \partial u^j}.
\end{align*}
In particular, the difference between $\phi_\alpha$ and $\phi_\beta$ is at most linear in $\{u^i\}$ and we have
$$\frac{\partial \phi_\alpha}{\partial u^i}=\frac{\partial \phi_\beta}{\partial u^i}+c_i $$
with $c\in \dR^{2n}$. In conclusion,
$$v_i=\frac{\partial \phi_\alpha}{\partial u^i}=\frac{\partial \phi_\beta}{\partial u^i}+c_i =\frac{\partial \phi_\beta}{\partial \bar{u}^k}\frac{\partial \bar{u}^k}{\partial u^i}+c_i=\bar{v}_kA^k_i+c_i. $$
In flat coordinates $\{v_i\}$ on $\hat{U}_\alpha$ and $\{\bar{v}_i\}$ on $\hat{U}_\beta$ we have
$$v_i=\bar{v}_kA^k_i+c_i $$
that is, the transition between the dual coordinates is again affine
$$\bar{v}=\hat{A}v+\hat{c} $$
but with $\hat{A}=(A^{-1})^T\in Sp(2n,\dR)$ and the translation part is governed by the linear difference between $\phi_\alpha$ and $\phi_\beta$.
\end{proof}

\paragraph{Monodromy.} The linear part of the transition functions of $\{u^i\}$ is the inverse transpose of that for $\{v_j\}$. As these transition functions around nontrivial loops define the monodromy of $\nabla$ and $\hat{\nabla}$ these monodromies can be viewed as dual representations of $\pi_1(B)$ acting on $T_xB$ or $T^*_xB$ for some $x\in B$. On the other hand, monodromy is only defined up to conjugation which amounts to linear change in the coordinates. Namely, let
$$v'_i=v_{i+n},\ \ \ v'_{i+n}=-v_i\ \ \ \text{for all }i=1,...,n $$
on each coordinate patch, so that the new affine structure will have transition functions whose linear part is
$$\Omega(\hat{A})\Omega^{-1}=(\hat{A}^{-1})^T=A ,$$
where $\Omega$ is the standard symplectic matrix. Indeed, since $A\in Sp(2n,\dR)$ it satisfies
$$A^T\Omega A=\Omega. $$
That is the monodromy of $\nabla$ and $\hat{\nabla}$ agree up to conjugation which is just the fact that any $Sp(2n,\dR)$-representation is self-dual. On the other hand, the two connections are generally distinct, as the coordinates $\{v_i\}$ are only flat for $\nabla$ if $g$ is constant. Indeed,
$$\nabla dv_i=\nabla g_{ij}du^j=\frac{\partial g_{ij}}{\partial u^k} du^k\otimes du^j. $$

\begin{remark} [Semi-flat hyperk\"ahler metrics] The two special K\"ahler structures on $B$ define semi-flat hyperk\"ahler structures on $T^*B$ and on $TB$. In the coordinates $\{u^i\}$ if we denote by $\{p_i\}$ the canonical coordinates on the cotangent fibres we have 
\begin{align}\label{semiflat cotangent1}
    \dI=\begin{pmatrix}I & 0 \\ 0 & I^* \end{pmatrix},\ \ \ \dJ=\begin{pmatrix}0 &  g^{-1}\\ -g & 0 \end{pmatrix},\ \ \  \dK=\begin{pmatrix}0 & -\omega^{-1} \\ \omega & 0 \end{pmatrix},\\ \label{semiflat cotangent2}
    \omega_\dI=\begin{pmatrix}\omega & 0 \\ 0 & \omega^{-1} \end{pmatrix},\ \ \ \omega_\dJ=\begin{pmatrix}0 & 1 \\ -1 & 0 \end{pmatrix},\ \ \ \omega_\dK=\begin{pmatrix}0 & -I^* \\ I & 0 \end{pmatrix}.
\end{align}
Similarly, in coordinates $\{v_i\}$ and their canonical dual coordinates $\{q^i\}$ we have another hyperk\"ahler structure $\hat{\dI}, \hat{\dJ},\hat{\dK}, \omega_{\hat{\dI}}, \omega_{\hat{\dJ}}, \omega_{\hat{\dK}}$ in the same block-diagonal form but with $g,\omega$ and $I$ replaced by $\hat{g}=g^{-1}$, $\hat{\omega}=-\omega^{-1}$ and $\hat{I}=-I^*$. Rewriting in coordinates $\{u^i, q^i\}$ we find a semi-flat hyperk\"ahler structure on $TB$ given by
\begin{align}\label{semiflat tangent1}
\hat{\dI}=\begin{pmatrix}I & 0 \\ 0 & -I \end{pmatrix},\ \ \ \hat{\dJ}=\begin{pmatrix}0 &  1\\ -1 & 0 \end{pmatrix},\ \ \  \hat{\dK}=\begin{pmatrix}0 & I \\ I & 0 \end{pmatrix},\\ \label{semiflat tangent2}
    \omega_{\hat{\dI}}=\begin{pmatrix}\omega & 0 \\ 0 & -\omega \end{pmatrix},\ \ \ \omega_{\hat{\dJ}}=\begin{pmatrix}0 & g \\ -g & 0 \end{pmatrix},\ \ \ \omega_{\hat{\dK}}=\begin{pmatrix}0 & \omega \\ \omega & 0 \end{pmatrix}.
\end{align}
This hyperk\"ahler structure appears also in Hitchin's paper \cite{hitchinCplxLag} where he shows that it is obtained by the hyperk\"ahler Legendre transform \cite{hklr}.
\end{remark}

 Let $\pi: M\ra B$ be an algebraic integrable system and let $(I,\omega,\nabla)$ be the induced special K\"ahler structure on the base $B$. Let $\{x^i,y_i\}$ be flat coordinates on $B$ such that $\{dx^i,dy_i\}$ is a symplectic frame of the monodromy lattice $\Gamma\subset T^*B$ with respect to the fiberwise polarization. Then, 
 $$\omega = \frac{1}{d_i}dy_i\wedge dx^i. $$
  where $d_i\in \dN$ with $d_i|d_{i+1}$ for $i=1,..,n-1$ give the type of the polarization. We call these coordinates flat Darboux coordinates as in the corresponding frame the polarization is in standard form. 
 
 The special K\"ahler structure on $B$ induces an affine structure which can also be understood as the monodromy of the flat connection $\nabla$. The transition functions between flat Darboux coordinate charts are constant affine transformations
 $$\begin{pmatrix}x'\\ y' \end{pmatrix}=A\begin{pmatrix}x \\ y \end{pmatrix}+c.$$
  such that $A$ is a transformation between frames of the lattice $\Gamma$, thus the K\"ahler form is preserved. In conclusion, $A$ lies in the group
 $$Sp(\Gamma^\vee)=\{A\in SL(2n,\dZ)\ | \ A^T\omega A=\omega \}, $$
 the symplectic group associated to the dual lattice. Indeed, if $d=\prod_i d_i$ then $-d\cdot \omega$ is the polarization dual to $E_b$ on the dual lattice $\Gamma^\vee\subset TB$. 

The dual lattice also defines a special K\"ahler structure on the base via the Legendre transform. If $B$ in special coordinates is defined via transition functions $(A_{\alpha\beta})$, then the Legendre transformed affine structure in the new coordinates have transition functions $((A_{\alpha\beta}^{-1})^T)$. These do not lie in $Sp(\Gamma^\vee)$ anymore, but in $Sp(\Gamma)$ defined as
$$Sp(\Gamma)=\{A\in SL(2n,\dZ)\ | \ A^T\omega^{-1}A=\omega^{-1}\}. $$
The following proposition is clear from the proof of Theorem \ref{Leg1}.

\begin{proposition}[Legendre transform 2.]\label{Leg2}
Let $B$ be the base of an algebraic integrable system with induced special K\"ahler structure $(g,I,\omega,\nabla)$. Then the connection $\hat{\nabla}$ of the dual special K\"ahler structure has monodromy representation $\hat{\rho}:\pi_1(\hat{B})\ra Sp(\Gamma)$ dual to the monodromy representation of $\nabla$.
\end{proposition}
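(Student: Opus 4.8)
The plan is to extract the monodromy directly from the transition functions already computed in the proof of Theorem \ref{Leg1} and to recognize the resulting assignment as the contragredient representation. For a flat, torsion-free connection whose flat (here flat Darboux) charts overlap by affine maps, the monodromy along a loop $\gamma\in\pi_1(B)$ is the ordered product of the linear parts of the transition functions encountered along $\gamma$, acting on a flat frame. In the integrable-system setting these linear parts are the matrices $A\in Sp(\Gamma^\vee)$, i.e.\ $A\in SL(2n,\dZ)$ with $A^T\omega A=\omega$, so that $\rho(\gamma)\in Sp(\Gamma^\vee)$.

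Next I would invoke the end of the proof of Theorem \ref{Leg1}: passing to the Legendre-transformed flat coordinates $\{v_i\}$, the transition functions become $\bar v=\hat A v+\hat c$ with linear part $\hat A=(A^{-1})^T$. Since the Legendre transform changes only the connection and not the underlying manifold, we may identify $\pi_1(\hat B)$ with $\pi_1(B)$, and the monodromy of $\hat\nabla$ along the same loop is the product of the corresponding $\hat A=(A^{-1})^T$. Thus $\hat\rho(\gamma)=(\rho(\gamma)^{-1})^T$, which is exactly the contragredient, i.e.\ the dual, of $\rho(\gamma)$.

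It then remains to check that $\hat\rho$ takes values in $Sp(\Gamma)$. Writing $A=\rho(\gamma)$ with $A^T\omega A=\omega$, the contragredient $\hat A=(A^T)^{-1}$ satisfies
$$\hat A^T\omega^{-1}\hat A=A^{-1}\omega^{-1}(A^T)^{-1},$$
and inverting the relation $\omega=A^T\omega A$ gives $\omega^{-1}=A^{-1}\omega^{-1}(A^T)^{-1}$; hence $\hat A^T\omega^{-1}\hat A=\omega^{-1}$. Since $\hat A$ is again integral with $\det\hat A=1$, we conclude $\hat A\in Sp(\Gamma)$, matching the definition of $Sp(\Gamma)$ recalled just before the proposition.

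The computation itself is routine; the only point demanding care is the bookkeeping that matches the monodromy with the ordered product of linear transition parts and keeps the two dual lattices $\Gamma^\vee\subset TB$ and $\Gamma\subset T^*B$, together with their respective symplectic forms $\omega$ and $\omega^{-1}$, on the correct sides. Once the assignment $A\mapsto(A^{-1})^T$ is in place, both the identification of $\hat\rho$ with the dual representation and its membership in $Sp(\Gamma)$ follow immediately, so the proposition is indeed an immediate consequence of Theorem \ref{Leg1}.
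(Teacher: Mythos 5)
Your proposal is correct and follows essentially the same route as the paper: the paper declares the proposition "clear from the proof of Theorem \ref{Leg1}," relying on exactly the observations you spell out, namely that the flat Darboux transition functions of the original structure have linear parts $A\in Sp(\Gamma^\vee)$, that the Legendre-transformed coordinates transform with linear parts $(A^{-1})^T$, and that these constitute the dual representation landing in $Sp(\Gamma)$. Your explicit verification that $(A^{-1})^T$ preserves $\omega^{-1}$ (and remains integral of determinant one) is the only detail the paper leaves implicit, and it is carried out correctly.
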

The two groups $Sp(\Gamma)$ and $Sp(\Gamma^\vee)$ coincide and equal $Sp(2n,\dZ)$ if and only if $\omega$ is a principal polarization, that is when its type is $d=(1,...,1)$. Then the two monodromy representations are again equivalent via conjugation by $\omega$.

\subsection{T-duality of semi-flat hyperk\"ahler structures}
In this section, we show that on a T-dual pair of algebraic integrable systems, the T-dual of a semi-flat hyperk\"ahler structure is again a semi-flat hyperk\"ahler structure. We moreover show that the corresponding transformation on the base is the Legendre transform of special K\"ahler structures.

Let us view the integrable system $\pi: M\ra B$ as an affine torus fibration with monodromy local system $\Gamma \subset T^*B\cong V$ and torsion Chern class $c\in \HH^2(B,\Gamma)$. Let $A$ be a flat connection and endow $M$ with the corresponding semi-flat hyperk\"ahler structure (\ref{semiflat cotangent1}), (\ref{semiflat cotangent2}).

The dual lattice $\Gamma^\vee\ra B$ together with a torsion class $\hat{c}\in \HH^2(B,\Gamma^\vee)$ defines another affine torus fibration $\hat{\pi}:\hat{M}\ra B$. The vertical bundle of $\hat{M}$ is $TB$ and the Gauss-Manin connection is the Legendre transformed connection $\hat{\nabla}$ on $B$. Let $\hat{A}$ be a connection on $\hat{M}$ and endow $\hat{M}$ with the corresponding semi-flat hyperk\"ahler structure. In coordinates corresponding to coordinates on $TB$ the semi-flat hyperk\"ahler structure on $\hat{M}$ is given by (\ref{semiflat tangent1}), (\ref{semiflat tangent2}).

In complex structure $\hat{\dI}$ the map $\hat{\pi}$ is holomorphic and the fibres are compact Lagrangian submanifolds with respect to the holomorphic symplectic structure $\omega_{\hat{\dJ}}+i\omega_{\hat{\dK}}$. Moreover, over any point, $b\in B$ the fibre $\hat{M}_b$ and $M_b$ are torsors under dual tori. 

Given a family of polarizations on the fibres, $\hat{M}$ becomes an algebraic integrable system. The fiber $\hat{M}_b$ of $\hat{\pi}$ over $b\in B$ is a complex torus $\hat{M}_b\cong V^*_b/\Gamma^\vee_b$, where $M_b\cong V/\Gamma$ is the fiber of $\pi$ over $B$. The restriction of $\omega_{\hat{\dI}}$ to the fiber $\hat{M}_b$ is an invariant K\"ahler form. Then $-\omega_{\hat{\dI}}|_{\hat{M}_b}$ can be viewed as an alternating bilinear pairing on $V^*_b$ compatible with the complex structure inducing a positive definite Hermitian structure. That is, a smoothly varying family of inner products $\{\hat{E}_b\}_{b\in B}$. Note, however, that these do not take integral values on the lattice $\Gamma^\vee$. Indeed for any $b\in B$ we have $-\omega_{\hat{\dI}}|_{\hat{M}_b}=\omega$. In the basis provided by the fiber coordinates it is given by
$$\hat{E}_b= \begin{pmatrix}0 & D^{-1}\\ -D^{-1} & 0 \end{pmatrix},$$
where $D=diag(d_1,...,d_n)$ as before. This inner product is not integral but it can be scaled to be integral.

Let us scale $E_b$ to get polarized fibres. If $d=d_1\cdot...\cdot d_n$ then the matrix
$$d\cdot E_b=\begin{pmatrix}0 & dD^{-1}\\ -dD^{-1} & 0 \end{pmatrix}$$
defines the \emph{dual polarization} on the fibres of $\hat{M}$ as defined in \cite{BLpaper}.

We can also scale the special K\"ahler structure $(B,I,d\cdot g, d\cdot \omega,\hat{\nabla})$. The associated hyperk\"ahler structure in the splitting above is given by
\begin{align}
    \hat{\dI}^d=\hat{\dI}=\begin{pmatrix}\hat{I} & 0 \\ 0 & \hat{I}^* \end{pmatrix},\ \ \ \hat{\dJ}^d=\begin{pmatrix}0 &  \frac{1}{d}\hat{g}^{-1}\\ -d\hat{g} & 0 \end{pmatrix},\ \ \  \hat{\dK}^d=\begin{pmatrix}0 & -\frac{1}{d}\hat{\omega}^{-1} \\ d\hat{\omega} & 0 \end{pmatrix},\\
    \omega_{\hat{\dI}^d}=\begin{pmatrix}d\hat{\omega} & 0 \\ 0 & \frac{1}{d}\hat{\omega}^{-1} \end{pmatrix},\ \ \ \omega_{\hat{\dJ}^d}=\omega_{\hat{\dJ}}=\begin{pmatrix}0 & 1 \\ -1 & 0 \end{pmatrix},\ \ \ \omega_{\hat{\dK}^d}=\omega_{\hat{\dK}}=\begin{pmatrix}0 & -\hat{I}^* \\ \hat{I} & 0 \end{pmatrix}.
\end{align}
Since only the metric and the K\"ahler form is changed on the base, the holomorphic symplectic structure of $\hat{M}$ is the same in the new hyperk\"ahler structure. Denote by $\hat{M}^d$ the integrable system with the scaled metric, which now has a smoothly varying family of polarizations on the fibres. We saw that the fibres $M_b$ and $\hat{M}^d_b$ are torsors under dual tori and now we see that they are torsors under dual \emph{polarized} tori.

In the next theorem, we will show that the Legendre transformed semi-flat hyperk\"ahler structures are T-dual to each other. Under T-duality the volume of the fiber inverts, which can be seen from the fact that the $\{\hat{E}_b\}_{b\in B}$ do not induce polarizations on the fibers unless the fibers have volume 1. The relationship between Legendre transforms and T-duality has been observed by Hitchin already in \cite{hitchinCplxLag}.

Consider the usual T-duality diamond
\[
\begin{tikzcd}
    & M\times_B\hat{M} \arrow{dl}[swap]{p} \arrow{dr}{\hat{p}} \arrow{dd}{q} & \\
    M \arrow{dr}[swap]{\pi} & & \hat{M} \arrow{dl}{\hat{\pi}}\\
    & B &
\end{tikzcd}
\]
We have seen already that $(M,0)$ and $(\hat{M},0)$ are T-dual in the sense of generalized geometry, by setting $P=\langle \hat{p}^*\hat{A}\wedge p^*A\rangle$. In Baraglia's terminology \cite{B2} $(A,\hat{A},0)$ is a T-duality triple. We can show the following.
\begin{theorem}\label{semiflat tdual}
    Let $(M,A)$ and $(\hat{M},\hat{A})$ be as above. Then, via $P=\langle \hat{p}^*\hat{A}\wedge p^*A\rangle$, the semi-flat hyperk\"ahler structure on $M$ is T-dual to the semi-flat hyperk\"ahler structure on $\hat{M}$.
\end{theorem}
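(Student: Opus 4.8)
The plan is to encode each semi-flat hyperk\"ahler structure as a \emph{generalized} hyperk\"ahler structure—that is, as the six invariant generalized complex structures $\cJ_\dI,\cJ_\dJ,\cJ_\dK,\cJ_{\omega_\dI},\cJ_{\omega_\dJ},\cJ_{\omega_\dK}$ built from (\ref{semiflat cotangent1}), (\ref{semiflat cotangent2}) on $M$ and the analogous six built from (\ref{semiflat tangent1}), (\ref{semiflat tangent2}) on $\hat M$—and then to apply the transport theorem. Since $(M,0)$ and $(\hat M,0)$ are T-dual via $P=\langle\hat p^*\hat A\wedge p^*A\rangle$ (Example \ref{Tdual torsion coordinates}), Theorem \ref{thm gil structures} supplies the T-duality map $T\colon E_{red}\ra \hat E_{red}$, which sends invariant generalized complex (indeed generalized K\"ahler) structures to invariant ones and respects products, $T(\cJ_1\cJ_2)=T(\cJ_1)T(\cJ_2)$. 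Because a generalized hyperk\"ahler structure is by definition three generalized K\"ahler structures obeying the quaternionic relations (\ref{gen quat1}), (\ref{gen quat2}), and both conditions are preserved by $T$, the image of the six structures on $M$ is automatically a generalized hyperk\"ahler structure on $\hat M$. It therefore remains only to identify this image with the six structures coming from the semi-flat metric (\ref{semiflat tangent1}), (\ref{semiflat tangent2}) on $\hat M$.

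First I would record that the six structures on $M$ really are $M_0$-invariant: they are assembled from the base data $(g,I,\omega)$ pulled back through the flat connection $A$, so they are constant in the fibre directions and descend to $E_{red}$; the same holds on $\hat M$. This puts us squarely in the hypotheses of Theorems \ref{tdualitymap} and \ref{thm gil structures}. Next I would compute the coordinate form of $T$ from (\ref{Tmap}). In the flat Darboux and dual fibre coordinates of Example \ref{Tdual torsion coordinates}, where $P=\hat p^*(d\hat p_i)\wedge p^*(dp_i)$ and I write $q^i=\hat p_i$ for the tangent-fibre coordinates on $\hat M$, the defining property of the lift (that $p^*\xi+\iota_{\hat X}P$ be $\hat p$-basic) forces
\begin{align*}
\partial_{u^j}\mapsto \partial_{u^j},\quad du^j\mapsto du^j,\quad \partial_{p_j}\mapsto -dq^j,\quad dp_j\mapsto -\partial_{q^j}.
\end{align*}
Thus $T$ fixes the horizontal tangent and cotangent directions and interchanges the vertical tangent bundle $V$ of $M$ with the vertical cotangent bundle $\hat V^*$ of $\hat M$ (up to sign), exactly as one expects from inverting the fibre.

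With this explicit $T$ in hand, the proof reduces to a finite matrix check: conjugating each $\cJ_\bullet$ of $M$ by $T$ and comparing with the corresponding structure on $\hat M$. For instance, for $\cJ_\dI=\mathrm{diag}(\dI,-\dI^*)$ one verifies directly that $\partial_u\mapsto I\partial_u$, $\partial_q\mapsto -I\partial_q$, $du\mapsto -I^*du$ and $dq\mapsto I^*dq$, which is precisely $\cJ_{\hat\dI}$ for $\hat\dI=\mathrm{diag}(I,-I)$ from (\ref{semiflat tangent1}). One then checks enough of the remaining structures—it suffices to handle a generating set, say $\cJ_\dI$ together with the symplectic-type $\cJ_{\omega_\dI},\cJ_{\omega_\dJ}$, and to recover the rest from products and (\ref{gen quat1}), (\ref{gen quat2})—to conclude $T(\cJ_\bullet)=\cJ_{\hat\bullet}$ for all six, which is exactly the assertion that the two semi-flat hyperk\"ahler structures are T-dual. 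Finally I would record, following Proposition \ref{Leg2}, that the induced transformation of the base is the Legendre transform of special K\"ahler structures.

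The main obstacle is not conceptual but organisational: it lies in the bookkeeping of the vertical/horizontal and tangent/cotangent identifications together with their signs under the fibre-inverting swap, and in making sure the flat connections $A$ and $\hat A$ are chosen so that the horizontal lifts through $p$ and $\hat p$ agree, which is what makes $T$ take the clean diagonal-plus-swap form above. The apparent tension with polarizations—that under T-duality the fibre volume inverts and the dual forms $\{\hat E_b\}$ fail to be integral until rescaled—does not affect the statement here, since the semi-flat hyperk\"ahler structure and its six generalized complex structures are insensitive to that rescaling; the scaling only matters when one additionally asks $\hat M^d$ to be a polarized algebraic integrable system.
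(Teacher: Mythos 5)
Your proposal follows essentially the same route as the paper's proof: check invariance, compute the T-duality map (\ref{Tmap}) in flat dual coordinates as a swap of the vertical tangent and cotangent components, and then match the six invariant generalized complex structures; your reduction to a generating set via multiplicativity of $T$ is only a mild streamlining of the paper's direct computation of all six reduced $+i$-eigenbundles on both sides. However, your stated conclusion contains a genuine error. You assert that the matrix check yields $T(\cJ_\bullet)=\cJ_{\hat\bullet}$ ``for all six'', i.e.\ a type-preserving matching between same-named structures. This is false: T-duality preserves the types only of the two $\dI$-structures, while for $\dJ$ and $\dK$ the complex and symplectic types are exchanged, namely $T(\cJ_\dJ)=\cJ_{\omega_{\hat\dJ}}$, $T(\cJ_{\omega_\dJ})=\cJ_{\hat\dJ}$, $T(\cJ_\dK)=\cJ_{\omega_{\hat\dK}}$ and $T(\cJ_{\omega_\dK})=\cJ_{\hat\dK}$ (equations (\ref{BAJ}) and (\ref{BAK}) of the paper). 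Concretely, the reduced $+i$-eigenbundle of $\cJ_{\omega_\dJ}$ is $\{(X,\xi,-i\xi,iX)\}$; the component swap sends it to $\{(X,iX,-i\xi,\xi)\}$, which is the eigenbundle of the \emph{complex}-type structure $\cJ_{\hat\dJ}$, not of $\cJ_{\omega_{\hat\dJ}}=\{(X,Y,-igY,igX)\}$. So the very verification you propose for the generator $\cJ_{\omega_\dJ}$, if compared against its same-named counterpart as your conclusion demands, fails.

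This does not sink the theorem: the six structures on $M$ still map bijectively onto the six on $\hat M$, so the generalized hyperk\"ahler structure is carried to the generalized hyperk\"ahler structure and T-duality of the semi-flat structures follows; but the matching must be the switched one, and this switch is precisely the content the paper highlights in the remark after the proof (the $BAA\leftrightarrow BBB$, or $A$/$B$-model, exchange). Note that your single worked example, $\cJ_\dI$, is exactly one of the two structures blind to this phenomenon; being block-diagonal it is also blind to the sign ambiguity in your coordinate formula for $T$. Your version, with $\partial_{p_j}\mapsto -dq^j$ and $dp_j\mapsto -\partial_{q^j}$, differs from the paper's sign-free swap by composition with fiberwise inversion of $\hat M$; either convention is a legitimate T-duality map, but with yours the images of the $\dJ$- and $\dK$-type structures acquire an additional sign (they land on the structures built from $-\hat\dJ$ and $-\hat\dK$), which is harmless as a hyperk\"ahler rotation but must be tracked consistently. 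To repair the proposal: fix one sign convention for $T$, verify it on a generating triple such as $\cJ_\dI$, $\cJ_{\omega_\dI}$, $\cJ_{\omega_\dJ}$, and state the type-switched matching as the conclusion.
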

\begin{proof}
    We work locally using coordinates associated to the flat connections. Over an open set $U\subset B$ let $\{x^i,p_i\}$ be dual coordinates on $M$ spanning the vertical and horizontal distributions. Let $\{y_i\}$ be the Legendre transformed flat coordinates on $B$ as described in the proof of \ref{Leg1}. Then if $\{y_i,\hat{p}^i\}$ are dual coordinates $d\hat{p}^i$ is a frame of $\Gamma^\vee$ and $\{y_i,\hat{p}^i\}$ are coordinates on $\hat{M}$ spanning the vertical and horizontal distributions of $\hat{A}$. The flat connections are defined by the choice of 1-periodic fiber coordinates, so we may change the base coordinates to $\{x^i\}$.

    We may define 1-periodic coordinates on $M\times_B\hat{M}$ by pulling back $p_i$ and $\hat{p}^i$ which defines a flat connection on the correspondence space. Let us use coordinates $\{x^i,p_i,\hat{p}^i\}$ on $M\times_B\hat{M}$. We have
    $$P=d\hat{p}^i\wedge dp^i.$$
    In these coordinates an invariant section of $TM\oplus T^*M$ is given by
    $$X+\xi=X_b^i\frac{\partial}{\partial x^i}+X^f_i\frac{\partial}{\partial p_i}+\xi^b_idx^i+\xi_f^idp_i,$$
    and via the T-duality map (\ref{Tmap}) it becomes
    $$T(X+\xi)=X_b^i\frac{\partial}{\partial x^i}+\xi_f^i\frac{\partial}{\partial \hat{p}^i}+\xi^b_idx^i+X^f_id\hat{p}_i.$$
    That is, if we use the connections to split the tangent and cotangent bundles, we have
    $$E_{red}\cong TB\oplus T^*B\oplus T^*B\oplus TB,\ \ \ \text{and}\ \ \ \hat{E}_{red}=TB\oplus TB\oplus T^*B\oplus T^*B$$
    and the T-duality map is just swapping the second and third components.

    To calculate the T-dual of the semi-flat hyperk\"ahler structure $(\dI,\dJ,\dK,\omega_\dI,\omega_\dJ,\omega_\dK)$ we have to promote it to a generalized hyperk\"ahler structure $(\cJ_\dI,\cJ_\dJ,\cJ_\dK,\cJ_{\omega_\dI},\cJ_{\omega_{\dJ}},\cJ_{\omega_{\dK}})$ as in (\ref{gen quat1}). The corresponding reduced $+i$ eigenbundles are as follows.
    \begin{align*}
    L_\dI&=\{(X-iIX,\xi-iI^*\xi,\eta+iI^*\eta,X+iIX)\ |\ X,Y\in TB,\ \xi,\eta\in T^*B\},\\
    L_{\omega_\dI}&=\{(X,\xi,-i\omega X,-i\omega^{-1}\xi)\ |\ X\in T_\dC B,\ \xi\in T^*_\dC B\},\\
    L_\dJ&=\{ (X-ig^{-1}\xi,\xi+igX,\eta-igY,Y+ig^{-1}\eta) \ |\ X,Y\in TB, \ \xi,\eta\in T^*B \}, \\
    L_{\omega_\dJ}&=\{ (X,\xi,-i\xi,iX) \ |\ X\in T_\dC B, \ \xi\in T^*_\dC B \},\\
    L_\dK&=\{ (X+i\omega^{-1}\xi,\xi-i\omega X,\eta-i\omega Y,Y+i\omega^{-1}\eta) \ |\ X,Y\in TB, \ \xi,\eta\in T^*B \},\\
    L_{\omega_\dK}&=\{ (X, \xi, iI^*\xi, -iIX)\ |\ X\in T_\dC B, \ \xi\in T^*_\dC B \}.
    \end{align*}
    We can do the same for the semi-flat hyperk\"ahler structure $(\hat{\dI},\hat{\dJ},\hat{\dK},\omega_{\hat{\dI}},\omega_{\hat{\dJ}},\omega_{\hat{\dK}})$ on $\hat{M}$. We have to use the form (\ref{semiflat tangent1}), (\ref{semiflat tangent2}) so that T-duality is just swapping components of the split. The reduced $+i$ eigenbundles in $\hat{E}$ are
    \begin{align*}
    \hat{L}_{\hat{\dI}}&=\{(X-iIX,Y+iIY,\xi+iI^*\xi,\eta-iI^*X)\ |\ X,Y\in TB,\ \xi,\eta\in T^*B\},\\ 
    \hat{L}_{\omega_{\hat{\dI}}}&=\{(X,Y,-i\omega X,i\omega Y)\ |\ X\in T_\dC B,\ \xi\in T^*_\dC B\},\\
    \hat{L}_{\hat{\dJ}}&=\{ (X+iY,-Y+iX,\xi+i\eta,-\eta+i\xi) \ |\ X,Y\in TB, \ \xi,\eta\in T^*B \}, \\ 
    \hat{L}_{\omega_{\hat{\dJ}}}&=\{ (X,Y,-igY,igX) \ |\ X\in T_\dC B, \ \xi\in T^*_\dC B \},\\
    \hat{L}_{\hat{\dK}}&=\{ (X,-iIX,\xi, iI^*\xi) \ |\ X\in T_\dC B, \ \xi\in T^*_\dC B \},\\
    \hat{L}_{\omega_{\hat{\dK}}}&=\{ (X, Y, -i\omega X, -i\omega Y )\ |\ X\in T_\dC B, \ \xi\in T^*_\dC B \}.
    \end{align*}
    Indeed under $T$ we have
    \begin{align}\label{BBI}
        T(L_\dI)=\hat{L}_{\hat{\dI}},\ \ T(L_{\omega_\dI})=\hat{L}_{\omega_{\hat{\dI}}}, \\ \label{BAJ}
        T(L_\dJ)=\hat{L}_{\omega_{\hat{\dJ}}},\ \ T( L_{\omega_\dJ})=\hat{L}_{\hat{\dJ}},\\ 
        \label{BAK}
        T(L_\dK)=\hat{L}_{\omega_{\hat{\dK}}},\ \ T(L_{\omega_\dK})=\hat{L}_{\hat{\dK}}.
    \end{align}
\end{proof}
\begin{remark}
Even though the two semi-flat hyperk\"ahler structures are T-dual, two of the complex structures map to symplectic structures (\ref{BAJ}) and (\ref{BAK}). This is the switching of type described in \cite[Table 1]{cavalcantiTdual} and the switching between $A$ and $B$-models in physics. In our case, the switching is between $BAA$ and $BBB$ models or between $ABA$ and $AAB$ models. The former was the base of Kapustin and Witten's treatment of the geometric Langlands program in \cite{KW}.
\end{remark}

We have seen in Theorem \ref{Leg2} the connections $\nabla$ and $\hat{\nabla}$ have dual monodromy, and they are only equivalent when the polarizations on the fibers are \emph{principal}, that is when the fibers have volume 1. 

Let $\pi:M\ra B$ be an algebraic integrable system with principally polarized fibers. Then, the cohomology class $\alpha\in \HH^2(B,\dR)$ restricted to the fibers induces an isomorphism
$$E:\Gamma\ra \Gamma^\vee.$$
Let $c\in \HH^2(B,\Gamma)$ be the Chern class of $M$ and define $\hat{c}:=E(c)\in \HH^2(B,\Gamma^\vee)$. Then, by Theorem \ref{iso of afft} the affine torus bundle $\hat{M}$ defined by $\Gamma^\vee$ and $\hat{c}$ is isomorphic to $M$. Let $A$ be a connection on $M$ and $\hat{A}$ the pullback of $A$ via the isomorphism on $\hat{M}$. Then we have the following. 
\begin{theorem}\label{semiflat tdual principal pol}
Let $M$ be a principally polarized algebraic integrable system. Then, $M$ is self T-dual and T-duality of the semi-flat hyperk\"ahler structure is hyperk\"ahler rotation.  
\end{theorem}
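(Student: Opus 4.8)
The plan is to split the statement into two independent parts: the self-duality of the underlying affine torus bundle, and the identification of the transported metric and complex structures as a hyperkähler rotation.

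First I would establish self T-duality. Because the fibers are principally polarized, the class $\alpha$ restricted to the fibers induces an isomorphism of local systems $E\colon \Gamma\ra\Gamma^\vee$ which by construction carries the Chern class $c$ of $M$ to $\hat c=E(c)$. Theorem \ref{iso of afft} then produces an isomorphism $\Phi\colon M\ra\hat M$ of affine torus bundles covering the identity on $B$, so $\hat M\cong M$ and $M$ is self T-dual. Choosing $\hat A=\Phi_*A$ guarantees that $\Phi$ intertwines the two flat splittings $TM\cong\pi^*TB\oplus\pi^*T^*B$ and $T\hat M\cong\hat\pi^*TB\oplus\hat\pi^*TB$: on the horizontal summand $d\Phi$ is the identity, and on the vertical summand it is the fiberwise polarization map $E\colon T^*B\ra TB$. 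Recalling from the construction of the special Kähler form on $B$ that $\omega$ is the fiberwise inverse $\omega=-E^{-1}$, we obtain $E=-\omega^{-1}$, so $d\Phi=\mathrm{diag}(\mathrm{Id},-\omega^{-1})$.

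Next I would reduce the hyperkähler statement to a single pullback computation. By Theorem \ref{semiflat tdual} the T-dual of the semi-flat hyperkähler structure of $M$, given by (\ref{semiflat cotangent1})--(\ref{semiflat cotangent2}), is exactly the semi-flat hyperkähler structure of $\hat M$, given by (\ref{semiflat tangent1})--(\ref{semiflat tangent2}); since the polarization is principal ($d=1$) no rescaling intervenes. Hence the net effect of T-duality on $M$, after composing with the self-identification $\Phi$, is to replace the original triple by $\Phi^*$ of the structure on $\hat M$. For an endomorphism this pullback is $\Phi^*\hat{\dI}=(d\Phi)^{-1}\hat{\dI}\,d\Phi$ (and likewise for $\hat{\dJ},\hat{\dK}$), while for the metric it is $\Phi^*\hat{\cG}=(d\Phi)^{T}\hat{\cG}\,d\Phi$.

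Carrying out the block multiplications, using the Kähler compatibility $\omega I\omega^{-1}=-I^{*}$ and $g=-\omega I$ (whence $I\omega^{-1}=g^{-1}$ and $-\omega^{-1}I^{*}=g^{-1}$), I expect to find
$$\Phi^*\hat{\dI}=\dI,\qquad \Phi^*\hat{\dJ}=\dK,\qquad \Phi^*\hat{\dK}=-\dJ,$$
and that the metric $\hat{\cG}=\mathrm{diag}(g,g)$ of $\hat M$ pulls back to $\Phi^*\hat{\cG}=\mathrm{diag}(g,g^{-1})=\cG$. Thus $\Phi$ preserves the metric and sends the quaternionic triple $(\dI,\dJ,\dK)$ to $(\dI,\dK,-\dJ)$, which is again quaternionic since $\dI\cdot\dK\cdot(-\dJ)=-1$; this is precisely a hyperkähler rotation fixing $\dI$ and rotating the $(\dJ,\dK)$-plane by a quarter turn. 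The main obstacle is the careful bookkeeping of the vertical identification $E=-\omega^{-1}$ with the correct sign coming from the fiberwise polarization, and in particular checking that the \emph{metric}, not merely the complex structures, is preserved — this is what upgrades the conclusion from an isomorphism of the underlying quaternionic data to a genuine hyperkähler rotation.
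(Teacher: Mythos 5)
Your proposal is correct and follows essentially the same route as the paper: construct the isomorphism $\phi\colon M\ra\hat M$ from $E\colon\Gamma\ra\Gamma^\vee$ via Theorem \ref{iso of afft}, note $E=-\omega^{-1}$ in flat coordinates so that $d\phi=\mathrm{diag}(\mathrm{Id},-\omega^{-1})$, and pull back the T-dual semi-flat structure (\ref{semiflat tangent1})--(\ref{semiflat tangent2}) block by block to obtain $\dI\mapsto\dI$, $\dJ\mapsto\dK$, $\dK\mapsto-\dJ$. The only cosmetic difference is that you verify preservation of the metric $\cG=\mathrm{diag}(g,g^{-1})$ while the paper instead records the pullbacks of the three K\"ahler forms; these checks are equivalent given the pullbacks of the complex structures, and your stated identities ($\omega I\omega^{-1}=-I^*$, $I\omega^{-1}=g^{-1}$) are exactly what the computation needs.
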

\begin{proof}
 Let $(\Gamma,c)$ be the local system and Chern class associated to $M$ and denote by $\hat{M}$ the affine torus bundle corresponding to $(\Gamma^\vee,E(c))$. Let $\{x^i,p_i,p^i\}$ be dual coordinates on $M\times_B\hat{M}$ as in the proof of \ref{semiflat tdual}. Then, the induced isomorphism $\phi:M\ra \hat{M}$ in coordinates is given by
 $$\phi:(x^i,p_i)\mapsto (x^i,E(p_i))$$
 where $E$ is the matrix representation of the polarization. In particular, in dual flat coordinates on $M$ we have $E=-\omega^{-1}$. Since the coordinates are flat, the differential of $\phi$ is 
 \begin{align*}
     D\phi=\phi_*=\begin{pmatrix} 1 & 0 \\ 0 & -\omega^{-1} \end{pmatrix},\ \ \phi^*=(D\phi)^{-1}=\begin{pmatrix} 1 & 0 \\ 0 & - \omega\end{pmatrix}.
 \end{align*} 
 Then, the pullback of the T-dual semi-flat hyperk\"ahler structure (\ref{semiflat tangent1}), (\ref{semiflat tangent2}) is as follows.
 \begin{align*}
     \phi^*\hat{\dI}=\phi^{-1}_*\hat{\dI}\phi_*=\begin{pmatrix}I & 0 \\ 0 & I^*\end{pmatrix},\ \ \ \phi^*\hat{\dJ}=\begin{pmatrix}0 & -\omega^{-1}\\ \omega & 0 \end{pmatrix},\  \ \ \phi^*\hat{\dK}=\begin{pmatrix} 0 & -g^{-1}\\ g & 0\end{pmatrix}\\
     \phi^*\omega_{\hat{\dI}}=(\phi_*)^T\omega_{\hat{\dI}}\phi_*=\begin{pmatrix}\omega & 0 \\ 0 & \omega^{-1}\end{pmatrix},\ \ \ \phi^*\omega_{\hat{\dJ}}=\begin{pmatrix} 0 & -I^* \\ I & 0\end{pmatrix} ,\ \ \ \phi^*\omega_{\hat{\dK}}=\begin{pmatrix}0 & -1\\ 1 & 0 \end{pmatrix}.
 \end{align*}
 In conclusion, the pullback of the T-dual semiflat hyperk\"ahler structure is precisely the hyperk\"ahler rotation $\dI\mapsto \dI$, $\dJ\mapsto \dK$, $\dK\mapsto -\dJ$.
\end{proof}
\begin{example}
    The generic fiber of the Higgs bundle moduli space $\cM(r,d)\ra \cA$  is the Jacobian of the spectral curve which is a principally polarized Abelian variety (Example \ref{jacobian fiber}).  
\end{example}
\begin{remark}
    The fact that T-duality does not map the polarization to a polarization in the non-principally polarized case can be seen in light of Fourier-Mukai transform \cite{mukai}. T-duality preserves the types of generalized complex structures (\ref{BBI}) corresponding to one of the K\"ahler structures. This is supposed to be a generalized geometry shadow of the Fourier-Mukai transform. The Fourier transform of an ample line bundle connected to a non-principal polarization is not an ample line bundle but a vector bundle whose determinant bundle is ample. This is a hint that T-duality of generalized complex structures may be a derivative of T-duality of branes rather than the other way around.
\end{remark}

\subsection{Deformations of holomorphic symplectic structure under T-duality.}
Let $A$ be a flat connection on an algebraic integrable system $\pi:M\ra B$. Recall, from Section \ref{section deformation of symplectic} that if the horizontal distribution of $A$ is not complex Lagrangian then the holomorphic symplectic structure of the semi-flat hyperk\"ahler structure does not agree with the original holomorphic symplectic structure. Instead, we can write the holomorphic symplectic structure of $M$ as a deformation (\ref{nonstandard symplectic}) of the semi-flat holomorphic symplectic structure via a global two-form on the base which is type (1,1)+(0,2).

In this section, we will calculate the T-dual of the deformed holomorphic symplectic structure. We will find that a deformation of the semi-flat holomorphic symplectic structure maps to a B-field transform (\ref{Bfield}) of the T-dual. 

We work locally. Let $U\subset B$ be an open set, $s:U\ra M$ a smooth section such that $s_*(TB)$ spans the horizontal distribution of $A$. Let $\mu: U\ra M$ be a complex Lagrangian section of$M$. Then there exists some $\xi: U\ra T^*U$ such that $\mu=s+\xi$. The connection $A$ splits the tangent bundle of $T^*B$ and the differential of $\xi$ in block diagonal form can be written as
\begin{align*}
    D\xi: T_bU\ra T_bU+T^*_bU,\ \ \ D\xi=\begin{pmatrix}1 \\ F \end{pmatrix},
\end{align*}
where $F-F^T$ gives the coefficients of $d\xi$.

Then, in the split provided by the connection, we can write the holomorphic symplectic structure of $M$ as the deformation of the semi-flat structures $\dI$, $\omega_\dJ$ and $\omega_\dK$ 
\begin{align*}
    \dI^F=\begin{pmatrix} I & 0 \\ I^*F-FI & I^* \end{pmatrix},\ \ \ \omega_{\dJ}^F=\begin{pmatrix}F-F^T & 1\\ -1 & 0\end{pmatrix}, \ \ \ \ \ \ \omega_{\dK}^F=\begin{pmatrix} F^TI-I^*F & -I^* \\ I & 0 \end{pmatrix}.
\end{align*}
We write $\cJ_{\dI^F}$, $\cJ_{\omega_{\dJ}^F}$ and $\cJ_{\omega_{\dK}^F}$ for the corresponding generalized complex structures.
\begin{theorem}\label{local def tdual}
    In the setting of Theorem \ref{semiflat tdual} the T-duals of $\cJ_{\dI^F}$, $\cJ_{\omega_{\dJ}^F}$ and $\cJ_{\omega_{\dK}^F}$ are locally the B-field transforms of $\cJ_{\hat{\dI}}$, $\cJ_{\hat{\dJ}}$ and $\cJ_{\hat{\dK}}$ with B-field
    $$B=\begin{pmatrix}0 & F^T\\ -F & 0 \end{pmatrix}.$$
\end{theorem}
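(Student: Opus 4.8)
The plan is to avoid treating $\cJ_{\dI^F}$, $\cJ_{\omega_\dJ^F}$, $\cJ_{\omega_\dK^F}$ one at a time, and instead to recognize that all three deformed structures arise from the semi-flat ones by a \emph{single} geometric operation on $M$ — a fibrewise translation — and that the T-duality map converts this operation into the claimed $B$-field transform in one stroke. Concretely, the passage from the semi-flat structure to the deformed structure \eqref{nonstandard symplectic} is the pullback along the fibrewise translation $\Phi\colon M\to M$, $(x,p)\mapsto(x,p+\chi)$, where $\xi=\chi_\alpha\,dx^\alpha$ is the one-form relating the chosen section $s$ to the complex Lagrangian section $\mu=s+\xi$. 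In the splitting determined by the flat connection $A$ its differential is $\Phi_*=\begin{pmatrix}1&0\\ F&1\end{pmatrix}$, and a direct check against \eqref{nonstandard symplectic} gives
$$\dI^F=\Phi_*^{-1}\,\dI\,\Phi_*,\qquad \omega_\dJ^F=\Phi^*\omega_\dJ,\qquad \omega_\dK^F=\Phi^*\omega_\dK.$$
Lifting $\Phi$ to the generalized tangent bundle as $\widetilde\Phi=\begin{pmatrix}\Phi_*&0\\0&(\Phi_*^{-1})^{T}\end{pmatrix}$, these identities upgrade to $\cJ_{\dI^F}=\widetilde\Phi^{-1}\cJ_\dI\widetilde\Phi$, and likewise for $\dJ,\dK$. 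Since $\Phi$ is translation by an invariant (base) one-form it is $M_0$-equivariant, so $\widetilde\Phi$ descends to an automorphism of the reduced Courant algebroid $E_{red}$.

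Next I would compute the conjugate $T\,\widetilde\Phi^{-1}\,T^{-1}$ using the explicit coordinate description of the T-duality map \eqref{Tmap} recorded in the proof of Theorem \ref{semiflat tdual}, under which $T$ simply interchanges the vertical tangent and vertical cotangent legs of $E_{red}\cong TB\oplus V\oplus T^*B\oplus V^*$. Writing a reduced section as $(X_b,X^f,\xi_b,\xi_f)$, the automorphism $\widetilde\Phi^{-1}$ sends it to $(X_b,\,X^f-FX_b,\,\xi_b+F^{T}\xi_f,\,\xi_f)$; conjugating by the component swap converts the tangential shear by $F$ into a purely cotangential modification, and I expect the result to be precisely the addition of $\iota_{(\hat X_b,\hat X^f)}B$ to the covector, that is
$$T\,\widetilde\Phi^{-1}\,T^{-1}=e^{B},\qquad B=\begin{pmatrix}0&F^{T}\\ -F&0\end{pmatrix},$$
in the sense of \eqref{Bfield}, \eqref{Bfield+i}.

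Finally, combining this intertwining with the identities $T(\cJ_\dI)=\cJ_{\hat\dI}$, $T(\cJ_{\omega_\dJ})=\cJ_{\hat\dJ}$, $T(\cJ_{\omega_\dK})=\cJ_{\hat\dK}$ already established in Theorem \ref{semiflat tdual}, and using that $T$ acts on invariant endomorphisms by conjugation, I would conclude
$$T(\cJ_{\dI^F})=\bigl(T\widetilde\Phi^{-1}T^{-1}\bigr)\,T(\cJ_\dI)\,\bigl(T\widetilde\Phi T^{-1}\bigr)=e^{B}\,\cJ_{\hat\dI}\,e^{-B},$$
and identically for $\dJ$ and $\dK$, which is exactly the assertion of the theorem.

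The computational heart — and the step most prone to error — is the intertwining $T\widetilde\Phi^{-1}T^{-1}=e^{B}$: one must keep scrupulous track of which blocks are tangential and which are cotangential, and of the transposes, since it is precisely the interchange of the vertical tangent and cotangent legs that turns the single off-diagonal matrix $F$ in $\Phi_*$ into the antisymmetric mixed two-form $B$ built from $F$ and $F^{T}$. A secondary point worth verifying is that $\widetilde\Phi$, although it fails to preserve the horizontal distribution when $F\neq F^{T}$, is nonetheless a genuine invariant Courant automorphism of $E_{red}$, so that conjugation by the T-duality map is legitimate; and that the entire argument is local over a contractible $U\subset B$, where $\Phi$ is an honest fibrewise translation and $B$ is the claimed two-form.
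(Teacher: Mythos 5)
Your proposal is correct, and it reaches the theorem by a genuinely different route than the paper. The paper's own proof is a direct eigenbundle computation: it writes out the reduced $+i$ eigenbundles $L_{\omega_{\dJ}^F}$ and $L_{\omega_{\dK}^F}$ in the flat coordinates, applies the T-duality map (the component swap from the proof of Theorem \ref{semiflat tdual}), reparametrizes (e.g.\ $\xi'=\xi+FX$) to recognize the images as $e^B L_{\hat{\dJ}}$ and $e^B L_{\hat{\dK}}$, and then handles $\cJ_{\dI^F}$ via the multiplicativity $T(\cJ_{\omega_{\dJ}^F}\cdot\cJ_{\omega_{\dK}^F})=T(\cJ_{\omega_{\dJ}^F})\cdot T(\cJ_{\omega_{\dK}^F})$. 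You instead factor the deformation through the lifted fibrewise translation $\widetilde\Phi$ and prove one intertwining identity; I verified your ``computational heart'' in the splitting of Theorem \ref{semiflat tdual}: $T\widetilde\Phi^{-1}T^{-1}$ sends $(\hat X_b,\hat X^f,\hat\xi_b,\hat\xi_f)$ to $(\hat X_b,\hat X^f,\hat\xi_b+F^{T}\hat X^f,\hat\xi_f-F\hat X_b)$, which is exactly $e^{B}$ with $B$ as in the statement, and the identities $\dI^F=\Phi_*^{-1}\dI\Phi_*$, $\omega_{\dJ}^F=\Phi^*\omega_{\dJ}$, $\omega_{\dK}^F=\Phi^*\omega_{\dK}$ do reproduce (\ref{nonstandard symplectic}). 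Your route buys two things: it explains conceptually why deforming by a non-flat section appears as a $B$-field on the dual side (fibrewise translation $\leftrightarrow$ $B$-field under T-duality), treating all three structures — indeed any invariant structure — uniformly without invoking the quaternionic relations; and it gives closedness of $B$ for free, since $T\widetilde\Phi^{-1}T^{-1}$ is a Courant automorphism of $\hat{E}_{red}$ acting trivially on the tangent projection, hence $e^{B}$ with $dB=0$ (explicitly, $dB=0$ because $F_{\alpha\beta}=\partial_\alpha\chi_\beta$ has symmetric derivative). What the paper's computation buys is self-containedness: it reuses eigenbundle formulas already displayed and needs no discussion of $\widetilde\Phi$. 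The two points you flag for verification are indeed the ones that need care, and both hold: $\widetilde\Phi$ descends to $E_{red}$ because translation by the section $\xi$ commutes with translation by flat sections of $M_0$, and it is a Courant automorphism since $H=0$; and the conjugation formalism is legitimate because the T-dual of an invariant generalized complex structure, defined in the paper via $T(L_{red})$, is precisely the operator $T\cJ T^{-1}$ (as $L_{T\cJ T^{-1}}=T(L_{\cJ})$) — the same fact underlying the paper's multiplicativity step.
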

\begin{proof}
    Once again, we calculate the reduced $L_{red}$ for $\cJ_{\omega_{\dJ}^F}$ and $\cJ_{\omega_{\dK}^F}$ which we denote by $L_{\omega_{\dJ}^F}$ and $L_{\omega_{\dK}^F}$. We have
    \begin{align*}
    L_{\omega_{\dJ}^F}&=\{ (X,\xi, -i\xi -iFX+iF^TX, +iX)\ | \ X\in T_\dC B,\ \xi\in T^*_\dC B \}\\
    L_{\omega_{\dK}^F}&=\{ (X, \xi, -iF^TIX+iI^*FX+iI^*\xi, -iIX)\ | \ X\in T_\dC B,\ \xi\in T^*_\dC B \}
    \end{align*}
   The T-duals are given by
    \begin{align*}
        T(L_{\omega_{\dJ}^F})&=\{ (X,iX, -i\xi-iFX+iF^TX, \xi)\ | \ X\in T_\dC B,\ \xi\in T^*_\dC B \}\\
        \text{let }&\ \xi'= \xi+FX\\
        &=\{(X,iX,-i\xi'+iF^TX , \xi'-FX)\ | \ X\in T_\dC B,\ \xi\in T^*_\dC B \}\\
        T(L_{\omega_{\dK}^F})&=\{ (X, -iIX, iI^*\xi-iF^TIX+iI^*FX,\xi )\ | \  X\in T_\dC B,\ \xi\in T^*_\dC B \}\\
        \text{taking }& \ \xi'=iI^*\xi+I^*FX\\
        &= \{ (X,-iIX,\xi'-iF^TIX ,iI^*\xi'-F^TX)\ | \  X\in T_\dC B,\ \xi\in T^*_\dC B \}.
    \end{align*}
     Using that the B-field transform acts on the $+i$ eigenbundles as $L\mapsto e^BL$ we have that 
     $$T(\cJ_{\omega_{\dJ}^F})= e^B\cJ_{\hat{\dJ}}e^{-B},\ \ T(\cJ_{\omega_{\dK}^F})=e^B\cJ_{\hat{\dK}}e^{-B}.$$
     Then,
     $$T(\cJ_{\dI^F})=T(\cJ_{\omega_{\dJ}^F}\cdot \cJ_{\omega_{\dK}^F})=T(\cJ_{\omega_{\dJ}^F})\cdot T(\cJ_{\omega_{\dK}^F})=e^{B}\cJ_{\hat{\dJ}}e^{-B}e^B\cJ_{\hat{\dK}}e^{-B}=e^B\cJ_{\hat{\dI}}e^{-B}.$$
\end{proof}
\begin{remark}
    The B-field transforms of $\cJ_{\omega_{\hat{\dI}}}$, $\cJ_{\omega_{\hat{\dJ}}}$ and $\cJ_{\omega_{\hat{\dK}}}$ extend the triple $T(\cJ_{\dI^F})$, $T(\cJ_{\omega_{\dJ}^F})$ and $T(\cJ_{\omega_{\dK}^F})$ to a local generalized hyperk\"ahler structure on $\hat{X}$. Taking $T^{-1}$ then extends $\cJ_{\dI^F}, \cJ_{\omega_{\dJ}^F},\cJ_{\omega_{\dK}^F}$ to one. This extension is just the semi-flat hyperk\"ahler structure corresponding to a local flat connection in which $\mu$ is a flat complex Lagrangian section. This indeed cannot extend to a global structure if $\mu$ is not a flat section of a global flat connection. The obstruction to extending to a global semi-flat structure can also be seen in the local nature of $F$. If there are local sections $\mu_i$ such that $\mu_i=s_i+\xi_i$ and $F_i=F_j$ then $\xi_i-\xi_j$ must be constant, and the $\mu_i$ define a flat connection. The interplay between $B$-field transform and deformations of complex structures was also observed in \cite{sawon}
\end{remark}

Even though $F$ is not well-defined globally, $\dI^F$, $\omega_\dJ^F$ and $\omega_\dK^F$ are. In particular, the T-duals are well-defined generalized complex structures and we may write 
\begin{align*}
T(\cJ_{\omega_{\dJ}^F})=e^{B_1}\cJ_{\hat{\dJ}}e^{-B_1},\ \ \ T(\cJ_{\omega_{\dK}^F})=e^{B_2}\cJ_{\hat{\dK}}e^{-B_2}
\end{align*}
for
$$B_1=\begin{pmatrix} 0 & F^T-F\\ 0 & 0 \end{pmatrix} ,\ \ \ B_2=\begin{pmatrix} 0 & I^*FI+F^T \\ 0 & 0\end{pmatrix}.$$
We also have, 
\begin{align*}T&(\cJ_{\dI^F})=\\
&=T(\cJ_{\omega_{\dJ}^F})T(\cJ_{\omega_{\dK}^F})=e^{-B_1}\cJ_{\hat{\dJ}}e^{B_1}e^{-B_2}\cJ_{\hat{\dK}}e^{B_2}=\begin{pmatrix} I & 0 & 0 & 0 \\ 0 & -I & 0 & 0 \\ 0 & I^*F^T-F^TI & -I^* & 0 \\ I^*F-FI & 0 & 0 & I^* \end{pmatrix}.\end{align*}

\section{T-duality of generalized branes}\label{section T-duality of generalized branes}
Let $(M,H)$ and $(\hat{M},H)$ be a T-dual pair in the sense of generalized geometry fibered over the base $B$. The T-duality relationship can be reformulated as an isomorphism (\ref{tdualitymap}) of Courant algebroids on the base reduced from $M$ and $\hat{M}$. We T-dualize an invariant generalized complex structure on $M$ by reducing its $+i$ eigenbundle and applying the T-duality map (\ref{tdualitymap}). Theorem \ref{thm gil structures} states that the resulting subbundle is the reduction of the $+i$ eigenbundle of a generalized complex structure on $\hat{M}$. We say that these generalized complex structures are T-dual.

We also associate a maximal isotropic subbundle to a generalized brane, its generalized tangent bundle (\ref{gen tangent bundle}). Therefore, we may T-dualize a generalized brane analogously, by first reducing then using the T-duality map and finally proving that the resulting subbundle is the reduction of a generalized tangent bundle. There are several obstructions to this program and the rest of this chapter is dedicated to understanding to what degree it can be carried out. Some consequences of the T-duality relation for generalized complex branes have also been explored by Ben-Bassat in \cite{BenBassat}.

The first obstruction is that T-duality applies to invariant structures. Therefore, we must restrict our attention to generalized branes which are invariant in a way. We will use the following definition.
\begin{definition}\label{def invariant brane}
    Let $\pi:M\ra B$ be an affine torus bundle and $H\in \Omega^3(M)$ a closed invariant three-form. Let $\cL=(S,F)$ be a generalized brane of $(M,H)$. We say that $\cL$ is \emph{invariant} if there exist invariant sections of $TM\oplus T^*M$ which span $\tau_\cL$ over $S$.
\end{definition}
If $\cL$ is an invariant generalized brane, then the invariant sections spanning $\tau_\cL$ define a subbundle
$$(\tau_\cL)_{red}\subset E_{red}.$$
\begin{definition} \label{def tdual gg branes}
Let $(M,H)$ and $(\hat{M},\hat{H})$ be a T-dual pair in the sense of Definition \ref{gen geom tdual}. Let $\cL=(S,F)$ and $\hat{\cL}=(\hat{S},\hat{F})$ be invariant generalized branes on $M$ and $\hat{M}$. We say that $\cL$ and $\hat{\cL}$ are \emph{T-dual} if 
$$T((\tau_{\cL})_{red})=(\tau_{\hat{\cL}})_{red}.$$
\end{definition}
This definition applies in a very general setting but we will only consider T-dual pairs of affine torus bundles with torsion Chern classes endowed with zero $H$-flux. Similarly to the construction of the T-duality map (\ref{tdualitymap}) we will construct T-dual branes by pulling the generalized tangent bundle back to the correspondence space, transforming via a two-form and then pushing down.

In the next section, we first describe the general construction of a T-dual brane. Then, we will determine a class of invariant generalized complex branes to which our method can be applied. We call these generalized complex branes \emph{locally T-dualizable}. In the last two sections, we determine the T-duals of a locally T-dualizable generalized complex brane first on a topologically trivial affine torus bundle and then in a general setting.

\subsection{General construction}\label{construction of tdual gc branes}

Let $\pi: M\ra B$ be an affine torus bundle with torsion Chern class $c$ and let $\hat{\pi}:\hat{M}\ra B$ be a T-dual of it in the sense of Example \ref{Tdual torsion coordinates}. That is, $\Gamma_{\hat{M}}\cong \Gamma_M^\vee$ and $\hat{c}$ is also torsion. Write $p:M\times_B\hat{M}\ra M$ and $\hat{p}:M\times_B\hat{M}\ra \hat{M}$ for the projections. Let $A$ and $\hat{A}$ be flat connections on $M$ and $\hat{M}$ and let $P=\langle \hat{p}^*\hat{A}\wedge p^*A\rangle \in \Omega^2(M\times_B\hat{M})$ be the associated closed two-form.

Our plan of constructing a T-dual for a generalized brane $\cL=(S,F)$ is the following. Let $i: S\times_{\pi(S)}\hat{M}\xhookrightarrow{} M\times_B \hat{M}$ be the inclusion. Let $p_S$ be the restriction of $p:M\times_B\hat{M}\ra M$ to $S\times_{\pi(S)}\hat{M}$ and $\hat{p}_S$ the restriction of $\hat{p}:M\times_B \hat{M}\ra \hat{M}$, 
\[
\begin{tikzcd}
   & S\times_{\pi(S)}\hat{M}\arrow{dl}[swap]{p_S} \arrow{dr}{\hat{p}_S} &\\
   S & & \hat{M}|_{\pi(S)}
\end{tikzcd}.
\]

Then, $$p_S^*F+i^*P\in \Omega^2(S\times_{\pi(S)}\hat{M})$$ is a closed two form. We have the short exact sequence
\begin{equation}\label{relativeSES}
\begin{tikzcd}
0 \arrow{r} &  ker(\hat{p}_S) \arrow{r} & T(S\times_{\pi(S)}\hat{M})  \arrow{r}{(\hat{p}_S)_*} & T\hat{M} \arrow{r} & 0
\end{tikzcd}
\end{equation}
and its dual
\begin{equation}\label{relative diff SES}
\begin{tikzcd}
0 \arrow{r} &  T^*\hat{M} \arrow{r}{(\hat{p}_S)^*} & T^*(S\times_{\pi(S)}\hat{M})  \arrow{r}{q} &  \Omega^1_{S\times_{\pi(S)}\hat{M}/\hat{M}}\arrow{r} & 0
\end{tikzcd}
\end{equation}
where $\Omega^1_{S\times_{\pi(S)}\hat{M}/\hat{M}}$ is the space of relative differentials.

Consider now the linear map
\begin{align}\label{F+P}
    (F+P)^1:= q\circ(p_S^*F-i^*P): T(S\times_{\pi(S)}\hat{M})\ra T^*(S\times_{\pi(S)}\hat{M}) \ra  \Omega^1_{S\times_{\pi(S)}\hat{M}/\hat{M}}
\end{align}
and let \begin{align}\label{Delta}\Delta:=ker(F+P)^1\subset T(S\times_{\pi(S)}\hat{M})\end{align} be its kernel. In Section \ref{section delta} we will find a class of generalized branes, with $\pi(S)$ contractible, such that the distribution $\Delta$ is integrable with closed leaves. We will call these branes \emph{locally T-dualizable}.

Given, that the distribution $\Delta$ is integrable with closed leaves, we consider a closed leaf $Z$ of the induced foliation. Then $\hat{p}|_Z: Z\ra \hat{M}$ has closed image which we denote by $\hat{S}$. In this setting, we have the following diagram.
\begin{equation}\label{big diagram}
\begin{tikzcd}
& Z \arrow{dl}[swap]{p_Z} \arrow{dr}{\hat{p}_Z} \arrow[hook]{rrr}{i_Z} &  & & M\times_{\pi(S)}\hat{M} \arrow{dl}[swap]{p} \arrow{dr}{\hat{p}} \\
S\arrow[hook, bend right =30]{rrr}{i_S} & & \hat{S} \arrow[hook, bend right =20]{rrr}{i_{\hat{S}}} & \pi^{-1}(\pi(S)) & & \hat{\pi}^{-1}(\pi(S))
\end{tikzcd}
\end{equation}
In Section \ref{section local tdual}, we show that for a locally T-dualizable generalized brane with $\pi(S)$ contractible, and for any $\hat{S}$ fitting into the diagram \ref{big diagram} there exists a unique closed two-form $\hat{F}\in \Omega^2(\hat{S})$ such that
$$p_z^*F+i_Z^*P=\hat{p}^*\hat{F}. $$ 
Finally, we will show that $\cL=(S,F)$ and $\hat{\cL}=(\hat{S},\hat{F})$ are T-dual generalized submanifolds. This is the content of Theorem \ref{local gg thm}. 

In the last section, we consider generalized branes without the assumption that $\pi(S)$ is contractible. We show that being locally T-dualizable is not enough for a global T-dual to exist. We give the precise constraints in Theorem \ref{global thm1}.

Generalized branes are only the de Rham shadows of physical branes which are submanifolds endowed with Hermitian vector bundles with connections. We can also view these bundles as principal $U(d)$-bundles with connections. We eventually, in Chapter \ref{last chapter}, will generalize the above construction to rank 1 branes, that is to $U(1)$-bundles with connections as follows. Let $L\ra S$ be a $U(1)$-bundle with a connection $\nabla$ whose curvature is $2\pi i F$. On the correspondence space, $M\times_B\hat{M}$ there exists a $U(1)$-bundle with connection twisted by a gerbe, the \emph{Poincar\'e bundle} $\cP$, whose curvature is $2\pi i P$. On a leaf $Z$ of $\Delta$ let us define the $U(1)$-bundle $\hat{L}_Z$ with connection $\hat{\nabla}_Z$ via the equation
\begin{align} \hat{L}_Z \cong p_Z^*L\otimes i_Z^*\cP^*.\end{align}

Via the standard representation of $U(1)$ on $\dC$, $\hat{L}_Z$ defines a Hermitian line bundle on $S\times_{\pi(S)}\hat{M}$ and $\hat{\nabla}_Z$ a Hermitian connection. We can decompose it as 
$$\hat{\nabla}_Z=\hat{\nabla}^1_Z+\hat{\nabla}^2_Z$$
such that 
$$\hat{\nabla}^1_Z:\Gamma(\hat{L}_Z)\ra \Gamma(\hat{L}_Z\otimes \Omega^1_{S\times_{\pi(S)}\hat{M}/\hat{M}})$$ and $$\hat{\nabla}_Z^2:\Gamma(\hat{L}_Z)\ra \Gamma(\hat{L}_Z\otimes (\hat{p}_S)^*T^*\hat{M}).$$ 
We show that there exists a leaf $Z$ of $\Delta$ such that $\hat{\nabla}^1_Z$ is trivial and we define $\hat{S}:=\hat{p}_Z(Z )$, as before and 
\begin{align}\label{naive T-dual}\hat{L}:=(\hat{p}_Z)_*ker(\hat{\nabla}_Z^1).\end{align}
The connection $\hat{\nabla}_Z^2$ induces a connection $\hat{\nabla}$ on $\hat{L}$. 

Taking flat sections is the original method of Arinkin and Polishchuk \cite{AP} for Lagrangian sections which was extended to Lagrangian affine torus subbundles by Bruzzo Marelli and Pioli in \cite{BMP1, BMP2} using factors of automorphy. Finally, Glazebook, Jardim and Kamber \cite{GJK} described the Fourier-Mukai transform of vector bundles which are flat only on the fibers by restricting to a leaf as we are here.

It turns out that the full picture for $U(1)$-bundles which are not flat on the fibers of $\pi: M\ra B$ is more complicated. In \cite{CLZ} Chan, Leung and Zhang gave a distinct construction for such bundles. Our method is simpler but also more restrictive in terms of application.  Chapter \ref{last chapter} will explain the necessary modifications to the method above.

\subsection{The integrability of the distribution $\Delta$}\label{section delta}
Our construction relies on the distribution $\Delta$ being integrable with closed leaves. This restricts the geometry of $S$ significantly, but we still end up with an interesting class of examples. In this section, we define this class and show that the corresponding distribution $\Delta$ is integrable with closed leaves.

Let $S\subset M$ be a submanifold such that $\pi(S)\subset B$ is a submanifold and $S$ intersects the fibers of $M|_{\pi(S)}=\pi^{-1}(\pi(S))$ in equidimentional affine subtori. Then, for any $b\in \pi(S)$, via the inclusion $S\ra M$ restricted to the fiber $$\HH_1(S_b,\dZ)\subset \HH_1(M_b,\dZ)$$ defines a sublattice. Since the fibers of $S$ are closed in the fibers of $M$ this sublattice is locally trivial. Therefore we have an inclusion of local systems 
$$\Gamma_S\subset \Gamma_M|_{\pi(S)}$$
over $\pi(S)$. That is, $S$ is a \emph{affine torus subbundle} of $M$. Kamenova and Verbitsky \cite[Theorem 3.2]{kamenovaVerbitsky} showed that in an integrable system over regular points of $\pi_S$ any holomorphic Lagrangian is of this form, with possibly disconnected fibers. Here we restrict our attention to connected fibers. 

Suppose that $S\subset M$ is an affine torus subbundle. Suppose that $B=\pi(S)$ for ease of notation, otherwise we restrict $M$ to $\pi(S)$. We have a short exact sequence of lattices 
\begin{equation}\label{SES lattice sub affine torus bundle}
\begin{tikzcd}
0 \arrow{r} &  \Gamma_S \arrow{r} & \Gamma_M \arrow{r} &  \Gamma_M/\Gamma_S \arrow{r} & 0,
\end{tikzcd}
\end{equation}
and that of flat vector bundles
\begin{equation}\label{SES vertical sub affine torus bundle}
\begin{tikzcd}
0 \arrow{r} &  V_S \arrow{r} & V_M \arrow{r} &  V_M/V_S \arrow{r} & 0 .
\end{tikzcd}
\end{equation}
Then $S$ is a torsor under $S_0=V_S/\Gamma_S$ and $M$ under $M_0=V_M/\Gamma_M$. The short exact sequences (\ref{SES lattice sub affine torus bundle}) and (\ref{SES vertical sub affine torus bundle}) induce an inclusion $S_0\ra M_0$. If $\pi(S)$ is simply connected a choice of sections $s_S:\pi(S)\ra S$ and $s_M:\pi(S)\ra M$ induce isomorphisms $M\cong M_0$ and $S\cong S_0$. On the other hand, if $s_S$ and $s_M$ do not agree, viewed as sections of $M$, the inclusion $S\ra M$ does not agree with $S_0\ra M_0$. Let us denote by $t_b$ the translation along the fibers of $M$ by a section $b$ of $M_0$. We have the following commutative diagram.
\begin{equation}\label{diagram S in M}
    \begin{tikzcd}
        S_0 \arrow{r} \arrow{d}{\cong} & M_0\cong M \arrow{d}{t_{s_S-s_M}}\\
        S \arrow{r} & M
    \end{tikzcd}
\end{equation}
Of course, locally we could promote the section $s_S$ to a section of $M$.  On the other hand, T-duality is connected to a choice of flat connection on $M$ which can be given by local trivializations. It is also more intuitive to fix a trivialization of the ambient space and study submanifolds in these fixed coordinates. 

The translation of $S_0$ in $M$ depends only on the image of $s_B-s_M$ in $V_M/V_S$ up to the lattice $\Gamma_M/\Gamma_S$. That is, with a fixed choice of section for $M$ we can still choose the $s_S$ such that $s_S-s_M$ lies in an orthogonal complement of $V_S\subset V_M$ up to an orthogonal complement of the lattice $\Gamma_S\subset \Gamma_M$.

\begin{lemma}\label{invariant}
    Let $S\subset M$ be an affine torus subbundle with $\pi(S)$ simply connected and let $F\in \Omega^2(S)$ be a closed invariant two-form. Then, $\Delta$ is integrable.
\end{lemma}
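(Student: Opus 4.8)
The plan is to reinterpret the distribution $\Delta$ geometrically and then verify the Frobenius condition using only that $\Phi := p_S^*F - i^*P$ is closed and invariant. First I would unwind the definitions: by the exact sequence (\ref{relative diff SES}) the quotient map $q$ has kernel $(\hat{p}_S)^*\big(T^*\hat{M}\big)=\mathrm{Ann}(\mathcal{V})$, where $\mathcal{V}:=\ker(\hat{p}_S)_*\subset T(S\times_{\pi(S)}\hat{M})$ is the vertical bundle of $\hat{p}_S$ — concretely the pullback to the correspondence space of the fibre directions of the affine torus subbundle $S$. Hence a vector $X$ lies in $\Delta=\ker(F+P)^1$ exactly when $\iota_X\Phi$ annihilates $\mathcal{V}$, i.e.
\[
\Delta=\{\,X\in T(S\times_{\pi(S)}\hat{M})\ :\ \Phi(X,V)=0\ \ \forall\,V\in\mathcal{V}\,\}.
\]
Thus $\Delta$ is the $\Phi$-orthogonal complement of the vertical distribution $\mathcal{V}$. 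Note $\Phi$ is closed because $F$ and $P$ are closed, and $\mathcal{L}_V\Phi=0$ for every invariant (fibrewise-constant) section $V$ of $\mathcal{V}$, since both $F$ and $P$ are invariant forms.

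Before checking involutivity I would verify that $\Delta$ is a genuine distribution of constant rank. This is where the non-degeneracy of $P$ enters: the pairing $P:q^*V\otimes\hat{q}^*\hat{V}\to\mathbb{R}$ of Definition \ref{gen geom tdual} restricts to a perfect pairing between $\mathcal{V}\cong V_S$ and a rank-$(\operatorname{rk}V_S)$ quotient of $\hat{V}$, so already the fibre directions of $\hat{M}$ surject onto the relative cotangent bundle $\Omega^1_{S\times_{\pi(S)}\hat{M}/\hat{M}}\cong\mathcal{V}^*$ under $(F+P)^1$. In the flat coordinates $\{x^a,p_i,\hat{p}^i\}$ associated to the connections $A,\hat{A}$, where $P=\sum_i d\hat{p}^i\wedge dp_i$, this says $\iota_{\partial_{\hat{p}^i}}\Phi$ restricts on $\mathcal{V}$ to $\pm\,dp_i$, which span $\mathcal{V}^*$. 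Hence $(F+P)^1$ is fibrewise surjective and $\Delta$ has constant corank equal to $\operatorname{rk}V_S$.

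The main step is involutivity. I would take $X,Y$ to be sections of $\Delta$ and let $V$ run over an invariant frame of $\mathcal{V}$, so that $\mathcal{L}_V\Phi=0$ and $\Phi(X,V)=\Phi(Y,V)=0$. From $d\Phi=0$ and the global formula for the exterior derivative of a two-form,
\[
\Phi([X,Y],V)=X\Phi(Y,V)-Y\Phi(X,V)+V\Phi(X,Y)+\Phi([X,V],Y)-\Phi([Y,V],X).
\]
The first two terms vanish because $X,Y\in\Delta$, and the invariance identity $\mathcal{L}_V\big(\Phi(X,Y)\big)=\big(\mathcal{L}_V\Phi\big)(X,Y)+\Phi([V,X],Y)+\Phi(X,[V,Y])$ together with $\mathcal{L}_V\Phi=0$ gives $V\Phi(X,Y)=-\Phi([X,V],Y)+\Phi([Y,V],X)$, which cancels the last two terms. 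Therefore $\Phi([X,Y],V)=0$ for all $V\in\mathcal{V}$, i.e. $[X,Y]\in\Delta$, and Frobenius' theorem yields integrability.

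I expect the constant-rank verification to be the only genuinely delicate point, since it is what forces us to use the non-degeneracy of $P$ (and hence the full T-duality hypothesis) rather than mere closedness of the forms; the involutivity computation is a direct consequence of closedness plus invariance and should go through exactly as above. The simple-connectedness of $\pi(S)$ is not needed for integrability itself — it will only be used later to produce global flat coordinates and to control the closedness of the leaves of $\Delta$.
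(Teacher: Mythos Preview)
Your argument is correct and is genuinely different from the paper's. The paper trivialises $S\cong U\times T^l$ using simple-connectedness, writes $F$ and $i^*P$ out in the resulting coordinates $(y_i,q^\mu,\hat p_\nu)$, exhibits an explicit commuting frame
\[
\Big\{\tfrac{\partial}{\partial y_i}-G_{i\mu}\tfrac{\partial}{\partial\hat p_\mu},\ \tfrac{\partial}{\partial q^\mu}-H_{\mu\nu}\tfrac{\partial}{\partial\hat p_\nu},\ \tfrac{\partial}{\partial\hat p_\nu}\ (\nu>l)\Big\}
\]
for $\Delta$, and checks the Lie brackets by hand: invariance of $F$ kills the $q$-dependence of $G_{i\mu},H_{\mu\nu}$, and closedness of $F$ forces $\partial_{y_j}G_{i\mu}=\partial_{y_i}G_{j\mu}$, so all brackets vanish. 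Your proof bypasses coordinates entirely by recognising $\Delta$ as the $\Phi$-orthogonal of the vertical bundle $\mathcal V=\ker(\hat p_S)_*$, then using $d\Phi=0$ together with $\mathcal L_V\Phi=0$ for an invariant frame $V$ of $\mathcal V$; the Cartan formula and the Leibniz rule for $\mathcal L_V$ then cancel termwise.

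What each approach buys: your argument is cleaner, makes transparent that only closedness and invariance of $\Phi$ matter, and indeed does not use simple-connectedness of $\pi(S)$ for the integrability statement. The paper's explicit-frame computation, on the other hand, is doing double duty: the same frame is reused immediately afterward (Lemma~\ref{rational}, Lemma~\ref{local leaves}) to show the leaves are closed affine subtori when $H_{\mu\nu}\in\mathbb Q$, to parametrise the leaf space, and to write down the T-dual submanifold $\hat S$ and the two-form $\hat F$ in coordinates. So if you adopt your argument here, you will still need the explicit coordinate description of $\Delta$ for the subsequent lemmas; you might state it separately as a corollary of the surjectivity step you already sketched.
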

\begin{proof}
 Let $U=\pi(S)$. Therefore,
$$S\cong U\times T^l $$
and there exist coordinates $[y_1,...,y_k,q^1,...,q^l]$ on $S$ and $[x_1,...,x_m,p^1,...,p^n]$ on $M|_U$ such that $q^i$ and $p^i$ are 1-periodic and the inclusion of $S$ to $M$ in coordinates is
\begin{align*}
    & \ \ \ \ \ i_S: S \xhookrightarrow{} M\\
    [y,q]&\mapsto [\phi(y),q^1,..,q^l,b^{l+1},...,b^n],
\end{align*}
where $b^i: U\ra \dR/\dZ$ are smooth functions.  Indeed, following the commutative diagram (\ref{diagram S in M}) the coordinates on $M|_U$ and $S$ are given by local sections $s_M$ and $s_S$ of $M|_U$ and $S$, and frames of $\Gamma_M|_U$ and $\Gamma_S$ respectively. A frame of $\Gamma_S$ can always be extended to a frame of $\Gamma_M|_U$ and this extension gives an orthogonal decomposition $V_M\cong V_S\oplus (V_M/V_S)$. Then, the difference $s_S$ can be chosen such that  $s_S-s_M=b\in V_M/V_S$ up to $\Gamma_M/\Gamma_S$.

The correspondence space $M\times_U\hat{M}$ has dual coordinates $[x_1,..,x_n,p^1,..,p^n,\hat{p}_1,..,\hat{p}_n]$ as in Example \ref{Tdual torsion coordinates} and the inclusion is given by
\begin{align*}
   i: S\times_{U}\hat{M} &\xrightarrow{}  M\times_U\hat{M}\\
   [y,q,\hat{p}]&\mapsto [\phi(y),q,b,\hat{p}].
\end{align*}
In these coordinates $P= d\hat{p}_i\wedge dp^i$ and therefore $i^*P$ is given by
$$i^*P=\sum_{\mu=1}^l d\hat{p}_\mu\wedge dq^\mu+\sum_{\mu=l+1}^nd\hat{p}_\mu\wedge db^\mu. $$
Also, if $p_S:S\times_U \hat{M}\ra S$ is the projection we have
$$p_S^*F=\sum_{i,j=1}^kF_{ij}dy_i\wedge dy_j +\sum_{\substack{i=1,..k\\ \mu=1,...l}} G_{i\mu}dy_i\wedge dq^\mu + \sum_{\mu,\nu=1}^l H_{\mu \nu}dq^\mu \wedge dq^\nu . $$
In these coordinates, $p_S^*F+i^*P$ can be written in matrix form as 
\begin{align*}
   p_S^*F+i^*P= \begin{pmatrix}
    F_{ij} & -G^T_{\mu i} & -(\partial_ib^\mu)^T & -(\partial_ib^\mu)^T \\
    G_{i\mu} & H_{\mu\nu} & -\dI_{\mu\nu} & 0 \\
    \partial_ib^\mu & \dI_{\mu\nu} & 0 & 0\\
    \partial_ib^\mu & 0 & 0 & 0
    \end{pmatrix}
\end{align*}
where the blocks are made up of $(k,l,n-l , n-l)$ columns and $\dI$ is the  $l\times l$ identity matrix.

A vector field $X\in \Delta$ is in the kernel of $(F+P)^1$ if it satisfies
\begin{align}\label{being in Delta}GX^b+HX^f=-\begin{pmatrix} \dI & 0 \end{pmatrix} \hat{X}^f \end{align}
where $X^b, X^f$ and $\hat{X}^f$ are the components of $X$ in the basis $\{\frac{\partial}{\partial y_i}, \frac{\partial}{\partial q^\mu} \frac{\partial}{\partial \hat{p}_\nu}\}$ . That is,
$$\Delta=span\Big\{\frac{\partial}{\partial y_i}-G_{i\mu}\frac{\partial}{\partial \hat{p}_\mu},\ i=1,..k; \ \ \frac{\partial}{\partial q^\mu}-H_{\mu\nu}\frac{\partial}{\partial \hat{p}_\nu},\ \mu=1,...l;\ \ \frac{\partial}{\partial \hat{p}_\mu},\ \mu=l+1,...,n  \Big\}.$$
Let us calculate the Lie brackets of this frame. Firstly, since $H$ and $G$ only depend on $q$ and $y$
$$\Big[\frac{\partial}{\partial y_i}-G_{i\mu}\frac{\partial}{\partial \hat{p}_\mu},\frac{\partial}{\partial \hat{p}_\tau}\Big]=\Big[\frac{\partial}{\partial q^\mu}-H_{\mu\nu}\frac{\partial}{\partial \hat{p}_\nu},\frac{\partial}{\partial \hat{p}_\tau}\Big]=0. $$
The other brackets are 
$$\Big[\frac{\partial}{\partial y_i}-G_{i\mu}\frac{\partial}{\partial \hat{p}_\mu},\frac{\partial}{\partial q^\mu}-H_{\mu\nu}\frac{\partial}{\partial \hat{p}_\nu}\Big]=(\partial_{q^\mu}G_{i\nu}-\partial_{q^\mu}H_{\tau\nu})\frac{\partial}{\partial \hat{p}_\nu}, \ \ \ \ i=1,..k;\ \mu,\nu=1,...,l;$$
$$\Big[\frac{\partial}{\partial y_i}-G_{i\mu}\frac{\partial}{\partial \hat{p}_\mu},\frac{\partial}{\partial y_j}-G_{j\mu}\frac{\partial}{\partial \hat{p}_\mu} \Big]=(\partial_{y_j}G_{i\mu}-\partial_{y_i}G_{j\mu})\frac{\partial}{\partial \hat{p}_\mu}, \ \ \ \ i,j=1,..k;\ \mu=1,...,l;$$
$$\Big[\frac{\partial}{\partial q^\mu}-H_{\mu\nu}\frac{\partial}{\partial \hat{p}_\nu},\frac{\partial}{\partial q^\tau}-H_{\tau\nu}\frac{\partial}{\partial \hat{p}_\nu} \Big]=(\partial_{q^\tau}H_{\mu\nu}-\partial_{q^\mu}H_{\tau\nu})\frac{\partial}{\partial \hat{p}_\mu} ,\ \ \ \ \mu,\nu,\tau=1,...,l. $$
Since $F$ is  invariant $F_{ij}, G_{i\mu}$ and $H_{\mu\nu}$ are independent of $q^\mu$ and the first and third brackets vanish. Moreover, since $F$ is closed,
$$\partial_{y_j}G_{i\mu}-\partial_{y_i}G_{j\mu}=\partial_{q^\mu}F_{ij}=0.$$
Therefore, $\Delta$ is integrable.    
\end{proof}

\begin{lemma}\label{rational} 
Let $(S,F)$ be as in Lemma \ref{invariant} and assume that $F$ represents a rational cohomology class on $S$ (i.e. on each fiber of $S$). Then $\Delta$ has closed leaves which foliate $S\times_{\pi(S)}\hat{M}$.
\end{lemma}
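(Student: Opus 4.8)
The plan is to integrate $\Delta$ explicitly by producing $l$ global first integrals and then using the rationality hypothesis to make them circle-valued. I continue in the coordinates $[y_i,q^\mu,\hat p_\mu]$ on $S\times_{\pi(S)}\hat M$ from the proof of Lemma \ref{invariant}, where $\Delta$ is spanned by $\partial_{y_i}-G_{i\mu}\partial_{\hat p_\mu}$ ($i=1,\dots,k$), $\partial_{q^\mu}-H_{\mu\nu}\partial_{\hat p_\nu}$ ($\mu=1,\dots,l$) and $\partial_{\hat p_\sigma}$ ($\sigma=l+1,\dots,n$). Two facts extracted from $dF=0$ drive the argument. First, expanding $dF$ in the present coordinates, its $dy\wedge dq\wedge dq$ component is $\partial_{y_i}H_{\mu\nu}$, so the fiberwise coefficients $H_{\mu\nu}$ are \emph{constant}; since $H_{\mu\nu}dq^\mu\wedge dq^\nu$ is then the invariant (harmonic) representative of the fiber class $[F|_{\mathrm{fiber}}]$, the rationality hypothesis says exactly that $H_{\mu\nu}\in\mathbb{Q}$. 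Second, the $dy\wedge dy\wedge dq$ component gives $\partial_{y_j}G_{i\rho}-\partial_{y_i}G_{j\rho}=0$, the identity already used in Lemma \ref{invariant}; equivalently, each one-form $\sum_i G_{i\rho}\,dy_i$ is closed.

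First I would build the first integrals on the universal cover of the fibers. Since $\pi(S)$ is simply connected, each closed one-form $\sum_i G_{i\rho}\,dy_i$ is exact, say $dc_\rho=\sum_i G_{i\rho}\,dy_i$, and I set
$$f_\rho:=\hat p_\rho+\sum_{\mu=1}^l H_{\mu\rho}\,q^\mu+c_\rho(y),\qquad \rho=1,\dots,l.$$
A direct check against the frame of $\Delta$ shows each $f_\rho$ is annihilated by all three families of vector fields: the $\partial_{q^\mu}-H_{\mu\nu}\partial_{\hat p_\nu}$ directions cancel by the matching of the $H$ indices, the $\partial_{y_i}-G_{i\mu}\partial_{\hat p_\mu}$ directions cancel by the defining relation $\partial_{y_i}c_\rho=G_{i\rho}$, and the free directions $\partial_{\hat p_\sigma}$ with $\sigma>l$ do not occur in $f_\rho$. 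Thus the $f_\rho$ are genuine first integrals of $\Delta$ on the cover.

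The key step is to descend these to globally defined functions on $S\times_{\pi(S)}\hat M$, and this is exactly where rationality enters. Let $N$ be a common denominator of the finitely many rationals $H_{\mu\nu}$. Under the deck translations $\hat p_\rho\mapsto\hat p_\rho+1$ and $q^\mu\mapsto q^\mu+1$ the function $f_\rho$ changes by $1$ and by $H_{\mu\rho}$ respectively, so $Nf_\rho$ changes by integers; hence $g_\rho:=Nf_\rho \bmod 1$ is a well-defined smooth map $S\times_{\pi(S)}\hat M\to\dR/\dZ$. I would then assemble $g=(g_1,\dots,g_l):S\times_{\pi(S)}\hat M\to T^l$ and note that $dg_\rho=N\bigl(d\hat p_\rho+\sum_\mu H_{\mu\rho}\,dq^\mu+\sum_i G_{i\rho}\,dy_i\bigr)$ are pointwise linearly independent (the coefficients of $d\hat p_\rho$ identify them), so $g$ is a submersion of corank $l$ with $\Delta\subseteq\ker dg$; as both $\Delta$ and $\ker dg$ have rank $k+n$, they coincide. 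Consequently the leaves of $\Delta$ are precisely the connected components of the fibers of $g$, which are closed embedded submanifolds foliating $S\times_{\pi(S)}\hat M$, as claimed.

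The step I expect to be the only real obstacle is this descent, and it is where the rationality of $[F|_{\mathrm{fiber}}]$ is indispensable: for irrational $H_{\mu\nu}$ the integrals $f_\rho$ never become $\dR/\dZ$-valued and the leaves wind densely in the $(q,\hat p)$-torus instead of closing up, exactly the phenomenon appearing in the irrational case of the example following Theorem \ref{coisotropic in integrable system}. Everything else is bookkeeping with the frame of $\Delta$ and the two closedness identities already read off from $dF=0$.
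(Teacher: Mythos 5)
Your proof is correct, and all the computational ingredients check out: the two identities you extract from $dF=0$ (constancy of $H_{\mu\nu}$ and closedness of $\sum_i G_{i\rho}\,dy_i$) are exactly the ones established in Lemma \ref{invariant}, the functions $f_\rho=\hat p_\rho+\sum_\mu H_{\mu\rho}q^\mu+c_\rho(y)$ are indeed annihilated by the frame of $\Delta$, and the rank count $\Delta=\ker dg$ is right. Where you differ from the paper is in how closedness is actually established. The paper's proof is essentially an assertion: it states that the maximal integral submanifolds are closed precisely when the $H_{\mu\nu}$ are rational (dense in the fibers otherwise), and then only verifies that rationality of the fiber class $[F|_{\mathrm{fiber}}]$ is equivalent to $H_{\mu\nu}\in\mathbb{Q}$; the explicit structure of the leaves is deferred to the next lemma (Lemma \ref{local leaves}), where the commuting frame of $\Delta$ is integrated to produce periodic coordinates on each leaf. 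You instead integrate the \emph{conormal} directions: you build $l$ global first integrals, use rationality to make $N f_\rho$ descend to circle-valued functions, and realize the foliation as the fibers of a submersion $g\colon S\times_{\pi(S)}\hat M\to T^l$, from which closedness and the foliation statement are immediate since connected components of fibers of a submersion are closed embedded submanifolds. Your route is the more self-contained proof of the dichotomy the paper leaves implicit, and it cleanly isolates where rationality enters; the paper's route (integrating the frame rather than the first integrals) has the advantage of producing the explicit leaf parametrizations and the description of the leaf space that are used in the subsequent construction of T-duals. The two are dual descriptions of the same linear foliation, so there is no gap either way.
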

\begin{proof}
In the coordinates used in the previous lemma, we have shown that the distribution $\Delta $ is spanned by 
$$\Delta=span\Big\{\frac{\partial}{\partial y_i}-G^i_{\mu}\frac{\partial}{\partial \hat{p}_\mu},\ i=1,..k; \ \ \frac{\partial}{\partial q^\mu}-H_{\mu\nu}\frac{\partial}{\partial \hat{p}_\nu},\ \mu=1,...l;\ \ \frac{\partial}{\partial \hat{p}_\mu},\ \mu=l+1,...,n  \Big\}.$$

Maximal integral submanifolds of this distribution are closed only if $H_{\mu\nu}$ are rational numbers, otherwise, the leaves would be dense in the fibers.  Since $H_{\mu\nu}dq^\mu\wedge dq^\nu$ is the unique invariant representative of the cohomology class of $F$ on the fiber its rationality is equivalent to $H_{\mu\nu}$ being rational. Indeed, over $b\in \pi(S)$ the cohomology $\HH^2(S_b,\dR)$ of the fiber is given by $\wedge^2\gt^*$, where $\gt$ is the Lie algebra of $S_b$. For any set of 1-periodic coordinates $\{q^1,...,q^l\}$ a basis of $\wedge^2\gt^*$ is given by $\{dq^\mu\wedge dq^\nu\}_{\mu<\nu}$. Moreover, $\{dq^\mu\wedge dq^\nu\}_{\mu<\nu}$ is also a basis of $\HH^2(S_b,\dZ)$ and $\HH^2(S_b,\dQ)$.
\end{proof}

\begin{definition}\label{locally T-dualizable brane}
    Let $\cL=(S,F)$ be a generalized brane in an affine torus bundle $M$ with torsion Chern class and zero $H$-flux. We call $\cL$ \emph{locally T-dualizable} if $S$ is an affine torus subbundle of $M$ and $F$ is an invariant closed two-form on $S$ representing rational cohomology classes when restricted to the fibers.
\end{definition}
Note that the generalized tangent bundle of a locally T-dualizable brane $\cL$ is indeed spanned by invariant sections, that is $\cL$ is invariant in the sense of Definition \ref{def invariant brane}.

\subsection{Local structure of leaves and T-duals}
From the previous section, it is clear that starting with locally T-dualizable brane $\cL=(S,F)$ we can construct the diagram (\ref{big diagram}). That is there exists some submanifolds $Z\subset S\times_{\pi(S)}\hat{M}$ and $\hat{S}\subset \hat{M}|_{\pi(S)}$ which fit into (\ref{big diagram}). In this section, we will show that $Z$ and $\hat{S}$ are also affine torus subbundles of $S\times_{\pi(S)}\hat{M}$ and $\hat{M}$ respectively and we give a precise definition of their monodromy local systems in terms of $\Gamma_S$ and $\Gamma_M$. We will also determine the space closed leaves $Z$ of $\Delta$ and the space of potential T-duals $\hat{S}$.

We first work in coordinates. Let $M, \hat{M}, B, S, F$ as in Lemma \ref{rational}. Then, there exist coordinates $[y_1,...,y_k,q^1,...,q^l]$ on $S$ and $[x_1,...,x_n,p^1,...,p^n]$ on $M$ such that $q^i$ and $p^i$ are 1-periodic and the inclusion of $S$ to $M$ in coordinates is
\begin{align*}
    & \ \ \ \ \ i_0: S \xhookrightarrow{} M\\
    [y,q]&\mapsto [\phi(y),q^1,..,q^l,b^{l+1},...,b^n],
\end{align*}
where $b^i: B\ra \dR/\dZ$ are smooth functions.  Moreover, in these coordinates, $F$ is given by
\begin{equation}\label{F in cordinates}
\begin{aligned}
    F=\sum_{i,j=1}^kF_0^{ij}dy_i\wedge dy_j +\sum_{\substack{i=1,..k\\ \mu=1,...l}}G^i_\mu dy_i\wedge dq^\mu +\sum_{\mu,\nu=1}^lH_{\mu\nu}dq^\mu \wedge dq^\nu.
\end{aligned}
\end{equation}
Since $F$ is closed and its coefficients do not depend on the fiber coordinates we find that $F_0$ is the pullback of a closed two-form from the base, moreover
$$\partial_iG^j_\mu=\partial_jG^i_\mu \ \ \ \forall i,j=1,...,k;\ \ \ \text{and}\ \ \ \partial_iH_{\mu\nu}=0 \ \ \ \forall i=1,..k,\ \mu,\nu=1,...,l. $$
This means firstly, $H_{\mu\nu}$ is a constant anti-symmetric matrix in these coordinates and secondly, there exists $G_\mu: B\ra \dR, \ \mu=1,...,l$ such that 
\begin{align}
    G^i_\mu=\partial_iG_\mu.
\end{align}
We may further assume that the periodic coordinates $(q^1,...,q^l)$ form a symplectic basis for $H$, that is in these coordinates
\begin{align}\label{H in coordinates}
H_{\mu\nu}dq^\mu \wedge dq^\nu=\sum_{i=1}^rd_i dq^i\wedge dq^{r+i}\end{align}
with $d_i=n_i/m_i\in \dQ_{>0}$ and $n_i,m_i\in \dZ_{>0}$ with $gcd(n_i,m_i)=1$.

The vector fields spanning $\Delta$ all commute so we may integrate them into coordinates and we can also construct periodic coordinates for the fibers of $Z\ra B$. These observations are summarized in the following lemma.
\begin{lemma}\label{local leaves}
Any leaf $Z$ of the foliation defined by $\Delta$ can be endowed with coordinates
$$Z=[y_1,...,y_k,q^1,...,q^l,\hat{q}_{l+1},...,\hat{q}_n] $$
were $q^i$ and $\hat{q}_i$ are 1-periodic and in which the inclusion
$$i_Z: Z\xhookrightarrow{} S\times_{\pi(S)}\hat{M} $$
is given by
\begin{align*}
    [y,q,\hat{q}]\mapsto [y,H_1q,-H_2q-G+c,\hat{q}]. 
\end{align*}
Where  $H_1,\ H_2\in \dZ^{l\times l}$ are integer matrices, $H_1=diag(m_1,...,m_r,m_1,...,m_r,1,...,1)$ and $H_2$ is a degenerate symplectic matrix of type $(n_1,...,n_r)$, that is
\begin{align*} 
    H_2=\begin{pmatrix} & & & & & n_1 & & &  \\ & & & &  \vdots & & &  \\ & &  & n_r & & & & \\  & &  -n_1 & & & & & & &\\ &\vdots & & &  & & &  \\ -n_r & & & & &  & & &  \\ & & & & & & 0 & & \\ & & & & & & & \ddots &\\ & & & & & & & & & 0 \end{pmatrix},
\end{align*}
and $c\in (\dR/\dZ)^l$ is a constant vector. The map is an injection because $n_i$ and $m_i$ are relatively primes.
\end{lemma}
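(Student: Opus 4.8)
The plan is to integrate the explicit commuting frame of $\Delta$ produced in Lemma \ref{invariant} and then to correct for the periodicity forced by the ambient $1$-periodic coordinates. Recall from the proof of Lemma \ref{invariant} that, in the coordinates $[y,q,\hat p]$ on $S\times_{\pi(S)}\hat M$ and with $F$ written as in \eqref{F in cordinates}, the distribution $\Delta$ is spanned by the pairwise-commuting vector fields
$$X_i=\frac{\partial}{\partial y_i}-G_{i\mu}\frac{\partial}{\partial \hat p_\mu},\qquad Y_\mu=\frac{\partial}{\partial q^\mu}-H_{\mu\nu}\frac{\partial}{\partial \hat p_\nu},\qquad Z_\tau=\frac{\partial}{\partial \hat p_\tau},$$
where $i=1,\dots,k$, $\mu=1,\dots,l$ and $\tau=l+1,\dots,n$. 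Since these fields commute (the brackets were shown to vanish in Lemma \ref{invariant}), their flows commute, the leaf $Z$ through a chosen point is the image of a single joint flow map, and $(y,q,\hat p_{l+1},\dots,\hat p_n)$ restrict to a chart on $Z$ of the correct dimension $k+n$.

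The first step is to integrate $\Delta$ explicitly. The $l$ one-forms $\theta_\mu=d\hat p_\mu+G_{i\mu}\,dy_i-H_{\mu\nu}\,dq^\nu$ annihilate every $X_i,Y_\mu,Z_\tau$, and using $G_{i\mu}=\partial_iG_\mu$ together with the constancy of $H_{\mu\nu}$ (eqn.\ \eqref{H in coordinates}) each $\theta_\mu$ is closed. Setting $\theta_\mu=0$ and integrating therefore presents the leaf as a graph: for $\mu\le l$ one gets $\hat p_\mu=-G_\mu(y)+H_{\mu\nu}q^\nu+c_\mu$ with $c_\mu$ constant (the precise signs being those fixed by the convention \eqref{F+P}), while the $\hat p_\tau$ with $\tau>l$ remain free. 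This already produces the $-G$ term and the constant $c$ of the statement, and shows that the $y$-dependence enters only through $G$, so $Z$ fibres over $\pi(S)$ as in \eqref{big diagram}.

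The heart of the matter—and the step I expect to be the main obstacle—is the periodicity. The coordinate $q$ above runs over the whole $S$-fibre torus, but since $H_{\mu\nu}$ is rational and not integral the graph does not close up after a single period of $q$, so $q$ itself is not a valid $1$-periodic coordinate on $Z$. To resolve this I would pass to the symplectic normal form \eqref{H in coordinates} and argue block by block: in the sub-torus with coordinates $(q^i,q^{r+i},\hat p_i,\hat p_{r+i})$ the graph closes up exactly when $q^i,q^{r+i}\in\dZ$ and simultaneously $d_iq^i,d_iq^{r+i}\in\dZ$; because $d_i=n_i/m_i$ with $\gcd(n_i,m_i)=1$, this forces $q^i,q^{r+i}\in m_i\dZ$. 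Hence the true lattice of the leaf in each block is $m_i\dZ\times m_i\dZ$, and rescaling by $q=H_1q'$ with $H_1=\mathrm{diag}(m_1,\dots,m_r,m_1,\dots,m_r,1,\dots,1)$ yields genuinely $1$-periodic coordinates $q'$. Under this rescaling the linear term $H_{\mu\nu}q^\nu$ becomes $(H_2q')_\mu$, where $H_2=HH_1$ is precisely the integral, degenerate symplectic matrix of type $(n_1,\dots,n_r)$ since $\tfrac{n_i}{m_i}\cdot m_i=n_i$. Setting $\hat q_\tau=\hat p_\tau$ for $\tau>l$—already $1$-periodic because $Z_\tau=\partial_{\hat p_\tau}$—and combining with the graph formula gives exactly the claimed embedding $[y,q',\hat q]\mapsto[y,H_1q',-H_2q'-G+c,\hat q]$.

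Finally, injectivity of $i_Z$ rests on the same coprimality: $m_i\dZ\times m_i\dZ$ is the \emph{full} lattice of closure in each block, so no strictly smaller period identifies distinct points $(y,q',\hat q)$; equivalently $H_1$ and $H_2$ admit no common refinement, as $\gcd(n_i,m_i)=1$. I expect the lattice computation of the previous paragraph, matching the scaling matrices $H_1,H_2$ to the data $(n_i,m_i)$ and verifying minimality, to be the delicate part, whereas the integration of the closed annihilating forms $\theta_\mu$ is routine once the commuting frame of Lemma \ref{invariant} and the rationality hypothesis of Lemma \ref{rational} are in place.
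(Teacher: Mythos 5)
Your proof is correct and follows exactly the route the paper intends: the paper states Lemma \ref{local leaves} without a formal proof, presenting it as a summary of the observations of Lemmas \ref{invariant} and \ref{rational} (integrate the commuting frame, then construct periodic coordinates on the leaves). Your explicit computation of the leaf's period lattice $H_1\dZ^l$ from $\gcd(n_i,m_i)=1$, the resulting identification $H_2=HH_1$, and the Bezout argument for injectivity correctly supply the details the paper leaves implicit.
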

The maximal integral submanifolds all intersect the fibers of $S\times_{\pi(S)}\hat{M}$ in $n$-dimensional subtori and the leaf space of $\Delta$ is parametrized by the values of $c_\mu\in \dR/\dZ$, that is a torus of rank $l$. A priori it is just an affine torus, in which choosing $G_\mu$ is equivalent to choosing zero.

Denote by $\hat{p}_Z$ the restriction of $\hat{p}$ to a leaf $Z$ of the foliation and the image by $\hat{S}_Z$ as in (\ref{big diagram}). It is a closed submanifold of $\hat{M}$ and has coordinates $[y_1,...,y_k,s_1,...,s_{2r},\hat{q}_{l+1},...,\hat{q}_n]$ where $\{s_\mu\}$ and $\{\hat{q}_\mu\}$ are 1-periodic on the fibers. The inclusion $i_{\hat{S}_Z}: \hat{S}_Z \xhookrightarrow{} \hat{M}$ is given in coordinates by
\begin{equation}\label{hat S in hat M}
\begin{aligned}
    i_{\hat{S}_Z}:& [y,s_1,...,s_{2r},\hat{q}_{l+1},...,\hat{q}_n]\mapsto \\
    &\mapsto [\phi(y),s_1-G_1+c_1,...,s_{2r}-G_{2r}+c_{2r},-G_{2r+1}+c_{2r+1},...,-G_{l}+c_{l},\hat{q}_{l+1},...,\hat{q}_n].
    \end{aligned}
\end{equation}
Here the inclusion of $s_\mu$ is somewhat arbitrary but it makes our calculations easier. Once again, choosing coordinates on $\hat{S}_Z$ amounts to changing the local section which we may do freely without changing the connection on $\hat{M}$.
In particular, in these coordinates the projection
$$\hat{p}_Z: Z \ra \hat{S}_Z $$
is given simply by
\begin{align}\label{Z to hat(S) in coords}[y,q,\hat{q}] \mapsto [y, n_1 q^{r+1},...,n_r q^{2r},-n_1 q^1,...,-n_r q^r, \hat{q}_{l+1},...,\hat{q}_n] .\end{align}
The image $\hat{S}_Z\subset \hat{M}$ is a closed submanifold, moreover it is again a trivial affine torus subbundle of $\hat{M}$.

 As we have discussed in Lemma \ref{local leaves} any leaf  $Z\subset  S\times_{\pi(S)}\hat{M}$ of $\Delta$ is an affine torus subbundle. That is, $Z$ is a torsor under some group bundle $Z_0=V_Z/\Gamma_Z\ra \pi(S)$ with inclusions $V_Z\subset V_S\oplus V_{\hat{M}}$ and $\Gamma_Z\subset \Gamma_S\oplus \Gamma_{\hat{M}}$. Let us determine $\Gamma_Z$ and $V_Z$ now.

Since we assume that the torus bundle $\pi_S: S\ra \pi(S)$ is trivial, the corresponding Leray spectral sequence degenerates on the second page with both $\dR$ and $\dZ$ coefficients. In particular, the two-form $F\in \HH^2(S,\dR)$ determines a class 
\begin{align}\label{H} 
 H \in \HH^0(\pi(S),\wedge^2 V_S^*),\end{align}
via the map $\HH^2(S,\dR)\cong F^{0,2}(\pi_S,\dR)\ra E^{0,2}_\infty(\pi_S,\dR)\cong E^{0,2}_2(\pi_S,\dR)$. The rationality condition says that $H$ is in the image of $$\HH^0(\pi(S),\wedge^2 \Gamma_S^\vee\otimes \dQ)\ra \HH^0(\pi(S),\wedge^2 V_S^*).$$
We have, therefore, an alternating bilinear form $H$ on $V_S$ which takes rational values on $\Gamma_S$. We may choose a frame $\{\mu_1,...,\mu_n\}$ for $\Gamma_S$ which is symplectic with respect to $H$, that is, $H$ is of the form
\begin{align*}H=\sum_{i=1}^r\frac{n_i}{m_i}\mu_i^*\wedge \mu_{r+i}^*\end{align*}
for some $2r\leq n$ with $n_i,m_i\in \dN_{>0}$ and $gcd(n_i,m_i)=1$. More precisely, there exists a minimal $m$ such that $m\cdot H$ takes integer values on the lattice $\Gamma_S$. Then by the elementary divisor theorem, we may bring $m\cdot H$ to normal form $m\cdot H=\sum_{i=1}^rd_i \mu_i^*\wedge \mu_{r+i}^*$ where $d_i\in \dN_{>0}$ and $d_i|d_{i+1}$. Finally, we divide out by the greatest common divisors element-wise so
$$n_i=\frac{d_i}{gcd(m,d_i)} \ \ \text{and}\ \ m_i= \frac{m}{gcd(m, d_i)}.$$
Using the symplectic frame to generate 1-periodic coordinates for $S$ recovers the expression (\ref{F in cordinates}) for $F$ with which the fiber component is given by (\ref{H in coordinates}). Indeed, the $n_i$ and $m_i$ in Lemma \ref{local leaves} are the same as here.

\begin{definition}\label{Gamma_H}
    For $H\in \HH^0(\pi(S),\wedge^2\Gamma_S^\vee\otimes \dQ)$ let $\Gamma_H\subset \Gamma_S$ be the sublattice $$\Gamma_H=span_\dZ\{m_1\mu_1,...,m_r\mu_r,m_1\mu_{r+1},...,m_r \mu_{2r}, \mu_{2r+1},...,\mu_{n}\}$$ with respect to a choice of symplectic frame.
\end{definition}
\begin{lemma}
    The sublattice $\Gamma_H\subset \Gamma_S$ is independent of the choice of symplectic frame.
\end{lemma}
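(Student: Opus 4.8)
The plan is to replace the frame-dependent description of $\Gamma_H$ in Definition \ref{Gamma_H} by an intrinsic, basis-free one and then check the two agree. Since $H\in \HH^0(\pi(S),\wedge^2\Gamma_S^\vee\otimes\dQ)$ is an alternating $\dQ$-valued pairing on $\Gamma_S$, it induces a homomorphism of local systems
\[
H^\flat:\Gamma_S\ra \Gamma_S^\vee\otimes \dQ,\qquad v\mapsto H(v,\cdot).
\]
My claim is that
\[
\Gamma_H=(H^\flat)^{-1}(\Gamma_S^\vee)=\{\,v\in\Gamma_S\ |\ H(v,w)\in\dZ\ \text{for all}\ w\in\Gamma_S\,\}.
\]
The right-hand side manifestly makes no reference to a symplectic frame, so once this equality is established the independence is immediate.

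First I would verify the equality in a fixed symplectic frame $\{\mu_1,\dots,\mu_n\}$, in which $H=\sum_{i=1}^r\frac{n_i}{m_i}\mu_i^*\wedge\mu_{r+i}^*$ with $\gcd(n_i,m_i)=1$. For $v=\sum_j a_j\mu_j\in\Gamma_S$ it suffices to test $H(v,\cdot)$ against the basis: one computes $H(v,\mu_{r+i})=\tfrac{n_i}{m_i}a_i$ and $H(v,\mu_i)=-\tfrac{n_i}{m_i}a_{r+i}$ for $i\le r$, while $H(v,\mu_j)=0$ for $j>2r$. Because $\gcd(n_i,m_i)=1$, the condition that all of these lie in $\dZ$ is exactly $m_i\mid a_i$ and $m_i\mid a_{r+i}$ for $i\le r$, with $a_j$ unconstrained for $j>2r$. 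This describes precisely the span in Definition \ref{Gamma_H}, giving $(H^\flat)^{-1}(\Gamma_S^\vee)=\Gamma_H$ for this frame.

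Since the displayed intrinsic set is defined without reference to any frame, and the computation above shows it equals $\Gamma_H$ for every choice of symplectic frame, the sublattice $\Gamma_H$ is independent of the choice. I expect no serious obstacle here: the content is elementary linear algebra over $\dZ$. The one point that the intrinsic description quietly handles --- and which would otherwise require a separate argument --- is that \emph{a priori} the integers $m_i$ appearing in the symplectic normal form of $H$ could depend on the chosen frame; the characterization as $(H^\flat)^{-1}(\Gamma_S^\vee)$ bypasses this entirely, and in fact retroactively shows that the $m_i$ are themselves frame-independent invariants of $H$.
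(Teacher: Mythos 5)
Your proof is correct, and it takes a genuinely different and cleaner route than the paper's. The paper argues directly on the change-of-basis matrix: if $A\in SL(n,\dZ)$ relates two symplectic frames, it writes $A$ and $H$ in block form, conjugates by $M=\mathrm{diag}(m_1,\dots,m_r,m_1,\dots,m_r,1,\dots,1)$, and verifies that $MAM^{-1}$ is integral, using the relations $n_i=d_i/\gcd(m,d_i)$, $m_i=m/\gcd(m,d_i)$ and the divisibility chain $d_i\mid d_{i+1}$ of the elementary divisors of $m\cdot H$. Your argument replaces all of this by the intrinsic characterization $\Gamma_H=\{v\in\Gamma_S\ |\ H(v,w)\in\dZ\ \text{for all}\ w\in\Gamma_S\}$, checked in a single arbitrary symplectic frame using only $\gcd(n_i,m_i)=1$; frame-independence is then automatic. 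This buys two things: the computation is elementary linear algebra over $\dZ$ with no block-matrix manipulation, and it applies verbatim to \emph{any} frame in which $H$ has block form with coprime numerators and denominators, whereas the paper's computation implicitly requires both frames to come from the elementary-divisor normal form of $m\cdot H$ (that is where the divisibility hypotheses it invokes actually come from).

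One caveat on your closing remark: the intrinsic characterization does \emph{not} show that the individual $m_i$ are frame-independent, and in the generality in which you work they are not. Take $\Gamma_S=\dZ^4$ and $H=\tfrac{1}{2}\mu_1^*\wedge\mu_3^*+\tfrac{1}{3}\mu_2^*\wedge\mu_4^*$. Since $6H$ has elementary divisors $(1,6)$, there is another symplectic frame in which $H=\tfrac{1}{6}\nu_1^*\wedge\nu_3^*+\nu_2^*\wedge\nu_4^*$, so the pair $(m_1,m_2)$ can be read off as $(2,3)$ or as $(6,1)$ depending on the frame. What is frame-independent is the sublattice $\Gamma_H$ itself (equivalently the quotient $\Gamma_S/\Gamma_H$, here $(\dZ/2)^2\oplus(\dZ/3)^2\cong(\dZ/6)^2$), which is exactly what the lemma asserts and what your main argument proves; the $m_i$ are only invariants of $H$ if one restricts to frames in elementary-divisor form, and that is a consequence of the uniqueness of elementary divisors, not of your characterization.
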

\begin{proof}
    Let $\{\nu_1,...,\nu_n\}$ be another symplectic frame. Then, there exists a matrix $A\in SL(n,\dZ)$ such that $A\mu=\nu$ and $A^T\nu^*=\mu^*$ which satisfies $AHA^T=H$. Writing $A$ and $H$ as block matrices
    $$A=\begin{pmatrix}
        a &  b \\
        c & d
    \end{pmatrix}\ \ \text{and} \ \ H=\begin{pmatrix}
        H_0 &  0 \\
        0 & 0
    \end{pmatrix},$$
    where $a$ and $H_0$ are $2r\times 2r$ matrices, we find that $aH_0 a^T=0$ and $c=0$. 

    Denote by $\Gamma_H^\mu$ and $\Gamma_H^\nu$ the sublattices of $\Gamma_S$ as above, corresponding to the two frames. Let $M$ be the $n\times n$ diagonal matrix  $M=diag(m_1,...,m_r,m_1,...,m_r,1,...,1)$. Then, the frame for $\Gamma_H^\mu$ is $M\mu$ and for $\Gamma_H^\nu$ is $M\nu$. Therefore,
    $$M\nu= MA\mu=MAM^{-1} (M\mu).$$
    So $\Gamma_H^\mu=\Gamma_H^\nu$ if and only if $MAM^{-1}$ is in $SL(n,\dZ)$. Since
    $$MAM^{-1}=\begin{pmatrix} \gm a\gm^{-1} & \gm b \\ 0 & d \end{pmatrix}\ \ \text{with}\ \ \mathfrak{m}=diag(m_1,...,m_r,m_1,...,m_r),$$
    $MAM^{-1}$ is in $SL(n,\dZ)$ if and only if $\gm a\gm^{-1}$ is integral. 
    Let $\gn$ and $\gd$ be the matrices
    $$\gn=\begin{pmatrix}0 & diag(n_1,...,n_r)\\ -diag(n_1,...,n_r) & 0 \end{pmatrix},\ \ \ \gd=\begin{pmatrix}0 & diag(d_1,...,d_r)\\ -diag(d_1,...,d_r) & 0 \end{pmatrix}$$
    so we have
    $$H_0= \gm^{-1} \gn= m^{-1}\gd.$$
    Therefore,
    $$a\gm^{-1} \gn a^T=\gm^{-1}\gn \ \ \Rightarrow \ \ \gm a\gm^{-1}=\gn(a^T)^{-1}\gn^{-1} $$
    that is, $\gm a\gm^{-1}$ is integral if and only if $\gn(a^T)^{-1}\gn^{-1}$ is integral.

    We use that $a=H_0(a^T)^{-1}H_0^{-1}=\gd (a^{T})^{-1} \gd^{-1}$ and $(a^{-1})^T$ are integral. Let $a_{ij}$ be the entries of the top left $r\times r$ submatrix of $A$. Then we know that
    $$\frac{d_i}{d_j}a_{ij}\in \dZ$$
    for all $i,j=1,...,r$. Then,
$$\frac{n_i}{n_j}a_{ij} \ \ \text{is} \ \ \begin{cases}
        \in \dZ\ \text{ since $n_i|n_j$ } & j\leq i\\
        \frac{d_i}{d_j}a_{ij}\frac{gcd(m,d_j)}{gcd(m,d_i)}\in \dZ\  \text{ as $gcd(m,d_j)|gcd(m,d_i)$ since $d_j|d_i$} &j>i.
    \end{cases}$$
    The same argument works for the other three $r\times r$ submatrices of $a$, so $\gm a\gm^{-1}$ is integral.
\end{proof}
The short exact sequences dual to (\ref{SES lattice sub affine torus bundle}) and (\ref{SES vertical sub affine torus bundle}) are given by
\begin{equation}\label{SES S in M  lattice dual}
    \begin{tikzcd}
    0 \arrow{r} & Ann(\Gamma_S) \arrow{r} & \Gamma_M^\vee \arrow{r} & \Gamma_S^\vee \arrow{r} & 0,
    \end{tikzcd}
\end{equation}
and
\begin{equation}\label{SES S in M vertical dual}
    \begin{tikzcd}
    0 \arrow{r} & Ann(V_S) \arrow{r} & V_M^* \arrow{r} & V_S^* \arrow{r} & 0.
    \end{tikzcd}
\end{equation}
Then, $S\times_{\pi(S)}\hat{M}$ is an affine torus subbundle of $M\times_{\pi(S)}\hat{M}$ and its local system fits into the short exact sequence
\begin{equation}\label{ses SxM in MxM lattice}
    \begin{tikzcd}
    0 \arrow{r} & Ann(\Gamma_S) \arrow{r} & \Gamma_S\oplus \Gamma_M^\vee \arrow{r} & \Gamma_S\oplus\Gamma_S^\vee \arrow{r} & 0,
    \end{tikzcd}
\end{equation}
with corresponding vertical bundles
\begin{equation}\label{ses SxM in MxM vertical}
    \begin{tikzcd}
    0 \arrow{r} & Ann(V_S) \arrow{r} & V_S\oplus V_M^* \arrow{r} & V_S\oplus V_S^* \arrow{r} & 0.
    \end{tikzcd}
\end{equation}
In particular, following the diagram (\ref{diagram S in M}) and the discussion in Lemma \ref{invariant} we may write
$$S\times_{\pi(S)}\hat{M} = t_{(b,0)}\Big(S_0\times_{\pi(S)}\hat{M}\Big),$$
where now $t$ represents translation in $M\times_{\pi(S)}\hat{M}$ by sections of $M_0\times_{\pi(S)}\hat{M}_0$.

On $\Gamma_H\subset \Gamma_S$ the bilinear form $H$ is integral and so it induces a map $\Gamma_H\ra \Gamma_S^\vee$. Denote by $Graph^\Gamma(-H)$ the graph of $-H$ in $\Gamma_H\oplus \Gamma_S^\vee$ which is a primitive sublattice since $gcd(n_i,m_i)=1$. Denote by $Graph(-H)$ the graph of $-H:V_S\ra V_S^*$ inside $V_S\oplus V_S^*$. 

We can now reformulate Lemma \ref{local leaves} in a coordinate-free way as follows.
\begin{lemma}\label{local leaves coordfree}
  The leaves $Z$ of $\Delta$ are affine torus subbundles of $S\times_{\pi(S)} \hat{M}$ and the corresponding group bundle $Z_0=V_Z/ \Gamma_Z$ can be defined through the restrictions of the short exact sequences (\ref{ses SxM in MxM lattice}) and (\ref{ses SxM in MxM vertical}) to the graph of $-H$. That is,
  \begin{equation}\label{Gamma_Z SES}
  \begin{tikzcd}
      0 \arrow{r} & Ann(\Gamma_S) \arrow{r} & \Gamma_Z \arrow{r} & Graph^\Gamma(-H) \arrow{r} & 0
  \end{tikzcd}
  \end{equation}
  \begin{equation}\label{V_Z SES}
  \begin{tikzcd}
   0 \arrow{r} & Ann(V_S) \arrow{r} & V_Z \arrow{r} & Graph(-H) \arrow{r} & 0
  \end{tikzcd}
  \end{equation}
\end{lemma}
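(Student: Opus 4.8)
The plan is to extract the coordinate-free monodromy data of a leaf $Z$ directly from the explicit coordinate model already obtained in Lemma \ref{local leaves}. That lemma exhibits $Z$ as a trivial affine torus subbundle of $S\times_{\pi(S)}\hat{M}$ with $1$-periodic fibre coordinates $q^1,\dots,q^l,\hat{q}_{l+1},\dots,\hat{q}_n$, so the only remaining task is to read off the vertical bundle $V_Z$ and the lattice $\Gamma_Z$ as sub-objects of $V_S\oplus V_M^*$ and $\Gamma_S\oplus\Gamma_M^\vee$ (using $V_{\hat{M}}\cong V_M^*$ and $\Gamma_{\hat{M}}\cong\Gamma_M^\vee$), and to check that they are the preimages of $Graph(-H)$ and $Graph^\Gamma(-H)$ under the quotient maps of (\ref{ses SxM in MxM vertical}) and (\ref{ses SxM in MxM lattice}). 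Here $V_Z$ is the vertical part of $\Delta$ and $\Gamma_Z$ is the sublattice of $V_Z$ of vectors whose flow closes up in the fibre $S_b\times\hat{M}_b$, so both can be computed from the spanning set of $\Delta$ found in Lemmas \ref{invariant} and \ref{rational}.

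First I would treat the vertical bundle. The vertical part of $\Delta$ is spanned by the fields $\partial_{q^\mu}-H_{\mu\nu}\partial_{\hat{p}_\nu}$ for $\mu=1,\dots,l$ together with $\partial_{\hat{p}_\mu}$ for $\mu=l+1,\dots,n$. Under the chosen frames, $\partial_{\hat{p}_\nu}$ is the frame of $V_M^*$ dual to the frame $\partial_{p^\nu}$ of $V_M$ extending $V_S$, so $\partial_{\hat{p}_{l+1}},\dots,\partial_{\hat{p}_n}$ is exactly a frame of the annihilator $Ann(V_S)\subset V_M^*$ of (\ref{SES S in M vertical dual}); this gives the inclusion $Ann(V_S)\hookrightarrow V_Z$. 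Passing to the quotient by $Ann(V_S)$ kills the $\partial_{\hat{p}_\nu}$ with $\nu>l$, and the restriction map $V_M^*\to V_S^*$ sends the remaining generators to the pairs $(v,-Hv)$ with $v$ ranging over $V_S$, that is to $Graph(-H)$. This yields the short exact sequence (\ref{V_Z SES}).

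Next I would identify the lattice $\Gamma_Z$ of periodic vertical vectors. The directions $\hat{q}_{l+1},\dots,\hat{q}_n$ correspond to $\partial_{\hat{p}_{l+1}},\dots,\partial_{\hat{p}_n}$ and generate $Ann(\Gamma_S)\subset\Gamma_M^\vee$, giving $Ann(\Gamma_S)\hookrightarrow\Gamma_Z$. By the discussion preceding Definition \ref{Gamma_H}, the directions $q^\mu$ are obtained by integrating a symplectic frame $\{\mu_i\}$ of $\Gamma_S$, and under the inclusion of Lemma \ref{local leaves} the corresponding unit lattice vectors map to the pairs $(m_i\mu_i,-H(m_i\mu_i))$. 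The essential observation is that, although $H$ takes the fractional value $n_i/m_i$, one has $-H(m_i\mu_i)=-n_i\mu_{r+i}^*\in\Gamma_S^\vee$, so $-H$ restricts to a genuine morphism of lattices $\Gamma_H\to\Gamma_S^\vee$ and these vectors span its graph $Graph^\Gamma(-H)$. Since $\gcd(n_i,m_i)=1$ this graph is a primitive sublattice of $\Gamma_S\oplus\Gamma_S^\vee$, so the leaf meets each fibre in an honest subtorus; assembling the two families of generators gives (\ref{Gamma_Z SES}). That these data assemble into local systems over $\pi(S)$, and not merely lattices over a point, follows because every step is the restriction of the natural sequences (\ref{ses SxM in MxM lattice}) and (\ref{ses SxM in MxM vertical}) and so is compatible with the flat Gauss--Manin structure.

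The analytic content is light, since Lemma \ref{local leaves} has already done the work of integrating $\Delta$; the one genuine point to be careful about is the lattice, where I must confirm that the integral span of the $q$-generators is precisely $Graph^\Gamma(-H)$ built on $\Gamma_H$, and neither a finer nor a coarser lattice. This rests exactly on the integrality $-H(\Gamma_H)\subseteq\Gamma_S^\vee$ and the primitivity coming from $\gcd(n_i,m_i)=1$, together with the fact established in the lemma immediately preceding this statement that $\Gamma_H$ is independent of the chosen symplectic frame, so that $Graph^\Gamma(-H)$, and hence $\Gamma_Z$, is well defined.
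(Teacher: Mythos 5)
Your proposal is correct and follows essentially the same route as the paper: the paper states this lemma as a direct coordinate-free reformulation of Lemma \ref{local leaves}, and your argument is exactly that translation, identifying $\partial_{\hat{p}_{l+1}},\dots,\partial_{\hat{p}_n}$ with a frame of $Ann(V_S)$ (resp.\ $Ann(\Gamma_S)$) and reading off that the remaining generators of $\Delta$ (resp.\ the unit lattice vectors $(m_i\mu_i,-H(m_i\mu_i))$ from the inclusion $i_Z$) project onto $Graph(-H)$ (resp.\ span $Graph^\Gamma(-H)$ over $\Gamma_H$). Your care about the lattice being exactly $Graph^\Gamma(-H)$ — via the integrality $-H(\Gamma_H)\subseteq\Gamma_S^\vee$, the primitivity from $\gcd(n_i,m_i)=1$, and the frame-independence of $\Gamma_H$ — matches the points the paper establishes immediately before stating the lemma.
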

That is, given a choice of sections for $S$ and $\hat{M}$ the submanifold $Z$ can be identified with 
$$Z=t_{\hat{b}}(Z_0)\subset S\times_{\pi(S)}\hat{M}$$ 
where now $t$ is the translation inside $S\times_{\pi(S)}\hat{M}$ by sections of $(V_S\oplus V_M^*)/(\Gamma_S\oplus \Gamma_M^\vee)$. The leaf $Z$ only depends on the image of  $\hat{b}$
 in $V_S\oplus V^*_S/Graph(-H)$ up to elements of $\Gamma_S\oplus \Gamma_S^\vee/Graph^\Gamma(-H)$. From Lemma \ref{local leaves} it is clear that the slope of this section is fixed by $F\in \Omega^2(S)$.

We have $Graph(-H)\cong V_S$ and $Graph^\Gamma(-H)\cong \Gamma_H$. Therefore, $V_S\oplus V^*_S/Graph(-H)\cong V_S^*$.   
Let $\{\mu_1,...\mu_l\}$ be a symplectic basis of $V_S$ for $H$ and $\{\mu_1^*,...,\mu_l^*\}$ the dual basis in $V_S^*$, then the lattice dual to $\Gamma_H$ is given by 
\begin{align}\label{dual of Gamma H}
    \Gamma_S^\vee\subset \Gamma_H^\vee=span_\dZ\Big\langle \frac{\mu_1^*}{m_1},...,\frac{\mu_r^*}{m_r},\frac{\mu_{r+1}^*}{m_1},...,\frac{\mu_{2r}^*}{m_r},\mu_{2r+1}^*,...,\mu_l^*\Big \rangle.
\end{align}
Since $\Gamma_H$ is independent of the choice of basis so is $\Gamma_H^\vee$. From this description,  $$\Gamma_S\oplus \Gamma_S^\vee/Graph^\Gamma(-H)\cong \Gamma_H^\vee.$$ We have the following proposition.
\begin{proposition}\label{space of leaves of Delta}
    Over contractible $\pi(S)$, the space of leaves of $\Delta$ is parametrized by flat sections of the torus bundle $V_S^*/\Gamma_H^\vee$. These leaves foliate $S\times_{\pi(S)}\hat{M}$.
\end{proposition}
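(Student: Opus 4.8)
The plan is to assemble the structural results already proved for the leaves of $\Delta$ into a single parametrization, and then exploit the contractibility of $\pi(S)$ to promote the fibrewise count to a statement about flat sections. By Lemma \ref{local leaves coordfree} every leaf $Z$ of $\Delta$ is an affine torus subbundle of the form $Z=t_{\hat b}(Z_0)$, where $Z_0=V_Z/\Gamma_Z$ is fixed by the sequences (\ref{Gamma_Z SES}) and (\ref{V_Z SES}) and $\hat b$ is a translation by a section of $(V_S\oplus V_M^*)/(\Gamma_S\oplus\Gamma_M^\vee)$. The slope of $\hat b$ in the fibre directions is pinned down by $F$, so the only remaining freedom is the transverse translation class, and the heart of the proof is to identify the space in which this class lives.

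First I would show that the leaf depends only on the reduction of $\hat b$ modulo $Z_0$. Projecting $V_M^*\ra V_S^*$ via (\ref{SES S in M vertical dual}) and then passing to the quotient by $Graph(-H)\cong V_S$ gives the isomorphism $V_S\oplus V_S^*/Graph(-H)\cong V_S^*$ recorded before the statement, so the continuous part of the translation class lives in $V_S^*$. Taking the integral analogue and using $Graph^\Gamma(-H)\cong\Gamma_H$ together with the description of the dual lattice (\ref{dual of Gamma H}), the lattice one divides out by is exactly $\Gamma_S\oplus\Gamma_S^\vee/Graph^\Gamma(-H)\cong\Gamma_H^\vee$. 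Hence the transverse class of a leaf is a well-defined element of $V_S^*/\Gamma_H^\vee$, and I would check independence of the auxiliary sections $s_S,s_M$ using the commuting square (\ref{diagram S in M}), so that the assignment leaf $\mapsto$ class descends to a map from the leaf space.

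Next I would verify that this map is a bijection onto flat sections. Injectivity and surjectivity amount to the disjointness and exhaustiveness already furnished by Lemma \ref{rational}: the leaves are closed and foliate $S\times_{\pi(S)}\hat M$, so every point lies on a unique leaf, and by Lemma \ref{local leaves} each leaf meets the fibres in the same $n$-dimensional subtorus translated by a constant vector $c\in(\dR/\dZ)^l$. The constancy of $c$ over the base is forced precisely by the requirement that $Z$ be tangent to the horizontal part of $\Delta$: the $-G$ term in Lemma \ref{local leaves} absorbs the non-flat, $F$-dependent variation, leaving a parallel remainder, and this constant vector is the value of a flat section of the Gauss--Manin torus bundle $V_S^*/\Gamma_H^\vee\ra\pi(S)$. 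Since $\pi(S)$ is contractible this flat bundle is trivial and its flat sections are in bijection with the fibre $V_S^*/\Gamma_H^\vee$, so the leaf space is parametrized by flat sections exactly as claimed, with the foliation property inherited from Lemma \ref{rational}.

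The main obstacle is the bookkeeping in the second step: showing that the transverse translation class is genuinely well defined in $V_S^*/\Gamma_H^\vee$ independently of all the local choices (the sections trivializing $S$ and $M$, and the symplectic frame of $\Gamma_S$), and that the constancy of $c$ is equivalent to flatness rather than an artifact of the coordinates. The frame-independence of $\Gamma_H$, hence of $\Gamma_H^\vee$, is already guaranteed by the lemma following Definition \ref{Gamma_H}, so the remaining work is to track these identifications compatibly through the short exact sequences (\ref{ses SxM in MxM lattice}) and (\ref{ses SxM in MxM vertical}); this is routine but is where all the real content of the statement resides.
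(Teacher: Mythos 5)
Your proposal is correct and follows essentially the same route as the paper, which in fact states this proposition as a direct summary of the preceding discussion: leaves are translates $t_{\hat b}(Z_0)$ by Lemma \ref{local leaves coordfree}, the translation class lives in $V_S\oplus V_S^*/Graph(-H)\cong V_S^*$ modulo $\Gamma_S\oplus\Gamma_S^\vee/Graph^\Gamma(-H)\cong\Gamma_H^\vee$, the constants $c_\mu$ from Lemma \ref{local leaves} give the flat sections, and Lemma \ref{rational} supplies the foliation property. The only material you add is the explicit bookkeeping (independence of the sections $s_S,s_M$ and of the symplectic frame), which the paper leaves implicit but which is justified by the lemma following Definition \ref{Gamma_H}.
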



Let us now examine the image of leaves $Z$ under the projection $\hat{p}:Z\ra \hat{M}$. From the coordinate expression (\ref{Z to hat(S) in coords}) again we readily see the following lemma.
\begin{lemma}\label{local images of Z}
For any $Z$ leaf of $\Delta$ the image $\hat{S}_Z$ of $\hat{p}:Z\ra \hat{M}$ is an affine torus subbundle of $\hat{M}$. The corresponding group bundle $\hat{S}_0=V_{\hat{S}}/\Gamma_{\hat{S}}$ with $V_{\hat{S}}\subset V_M^*$ and $\Gamma_{\hat{S}}\subset \Gamma_M^\vee$ is independent of $Z$ and is given by the following short exact sequences.
\begin{equation}\label{hat(S) vertical}
        \begin{tikzcd}
            0 \arrow{r} & Ann(V_S) \arrow{r} & V_{\hat{S}} \arrow{r} & H(V_S) \arrow{r} & 0
        \end{tikzcd}
\end{equation}
\begin{equation}\label{hat(S) lattice}
        \begin{tikzcd}
            0 \arrow{r} & Ann(\Gamma_S) \arrow{r} & \Gamma_{\hat{S}} \arrow{r} &  \Gamma_S^\vee \cap H(V_S) \arrow{r} & 0
        \end{tikzcd}
\end{equation}
These are once again the restrictions of (\ref{SES S in M vertical dual}) and (\ref{SES S in M  lattice dual}) to the subbundle $H(V_S)= Im(-H: V_S\ra V_S^*)$ and the primitive sublattice cut out by it.
\end{lemma}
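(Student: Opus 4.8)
The plan is to realise $\hat{S}_Z$ as the fibrewise image of the affine torus subbundle $Z$ under the bundle morphism $\hat{p}$, and to read off its vertical bundle and monodromy lattice from the data already describing $Z$. First I would record that $\hat{p}:S\times_{\pi(S)}\hat{M}\ra \hat{M}$ is a morphism of affine torus bundles over $\pi(S)$ covering the identity, inducing on vertical bundles the second-factor projection $V_S\oplus V_M^*\ra V_M^*$ and on lattices $\Gamma_S\oplus\Gamma_M^\vee\ra\Gamma_M^\vee$. By Lemma \ref{local leaves coordfree}, $Z$ is an affine torus subbundle whose $V_Z$ and $\Gamma_Z$ sit in (\ref{V_Z SES}) and (\ref{Gamma_Z SES}). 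Fibrewise, $\hat{p}|_{Z_b}:Z_b\ra \hat{M}_b$ is an affine homomorphism of tori, so by the Stein factorization (\ref{stein factorization}) it factors as a projection followed by an isogeny onto its image; in particular the image $\hat{S}_{Z,b}$ is a closed subtorus of $\hat{M}_b$. Its linear span is $V_{\hat{S}}:=\hat{p}_*(V_Z)\subset V_M^*$, and, precisely because the image is a \emph{closed} subtorus, its monodromy lattice is the saturation $\Gamma_{\hat{S}}:=\Gamma_M^\vee\cap V_{\hat{S}}$ rather than the naive pushforward $\hat{p}_*(\Gamma_Z)$.

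Second, I would compute both objects. From (\ref{V_Z SES}) an element of $V_Z$ is a pair $(v,\xi)\in V_S\oplus V_M^*$ with $\xi|_{V_S}=-H(v)$, so the projected covectors $\xi$ range exactly over those whose restriction to $V_S$ lies in $H(V_S)=\mathrm{Im}(-H)$. Thus $V_{\hat{S}}$ is the preimage of $H(V_S)$ under the restriction map $V_M^*\ra V_S^*$ of (\ref{SES S in M vertical dual}), whose kernel is $Ann(V_S)$; this yields (\ref{hat(S) vertical}). For the lattice I take $\Gamma_{\hat{S}}=\Gamma_M^\vee\cap V_{\hat{S}}$ and push it through the restriction $\Gamma_M^\vee\ra\Gamma_S^\vee$ of (\ref{SES S in M  lattice dual}): the kernel $Ann(\Gamma_S)$ lands in $\Gamma_{\hat{S}}$ since a covector killing $\Gamma_S$ restricts to $0\in H(V_S)$, and as $\Gamma_M^\vee\ra\Gamma_S^\vee$ is onto, the image of $\Gamma_{\hat{S}}$ is exactly $\Gamma_S^\vee\cap H(V_S)$. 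This gives (\ref{hat(S) lattice}) and exhibits $\Gamma_{\hat{S}}$ as the primitive sublattice of $\Gamma_M^\vee$ cut out by $H(V_S)$. Since $V_{\hat{S}}$ and $\Gamma_{\hat{S}}$ are defined purely from the sub-local-systems $V_S\subset V_M$, $\Gamma_S\subset\Gamma_M$ and the global class $H$ (through $H(V_S)\subset V_S^*$), they are sub-local-systems of $V_M^*$ and $\Gamma_M^\vee$ independent of the leaf $Z$; hence $\hat{S}_0=V_{\hat{S}}/\Gamma_{\hat{S}}$ is a well-defined group bundle and $\hat{S}_Z$ is a torsor under it, that is, an affine torus subbundle. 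Distinct leaves differ only by the translation parameter of Proposition \ref{space of leaves of Delta}, which moves $\hat{S}_Z$ inside $\hat{M}$ without changing $\hat{S}_0$.

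The one genuinely delicate point, and the step I expect to require the most care, is the lattice identification. The fibrewise map $\hat{p}|_Z$ is, after projecting out its kernel, a non-trivial isogeny onto its image of degree $\prod_i n_i^2$ (visible in (\ref{Z to hat(S) in coords}), where the periodic fibre coordinates are scaled by the integers $n_i$), so the pushforward $\hat{p}_*(\Gamma_Z)$ restricts in $\Gamma_S^\vee$ to $H(\Gamma_H)=\mathrm{span}_\dZ\{n_i\mu_i^*,\,n_i\mu_{r+i}^*\}$, which is a proper finite-index sublattice of $\Gamma_S^\vee\cap H(V_S)=\mathrm{span}_\dZ\{\mu_1^*,\dots,\mu_{2r}^*\}$. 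One must therefore argue that the correct monodromy lattice of the \emph{image} affine torus subbundle is the saturation $\Gamma_M^\vee\cap V_{\hat{S}}$ and not $\hat{p}_*(\Gamma_Z)$; the justification is exactly the closedness of $\hat{S}_{Z,b}$ furnished by the Stein factorization, which forces the monodromy lattice to be saturated in $\Gamma_M^\vee$. Once this is in place, the short exact sequences (\ref{hat(S) vertical}) and (\ref{hat(S) lattice}) follow as the restrictions of (\ref{SES S in M vertical dual}) and (\ref{SES S in M  lattice dual}) to $H(V_S)$ and the primitive lattice it cuts out.
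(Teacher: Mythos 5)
Your proof is correct, and it reaches the lemma by a more structural route than the paper does. The paper simply reads the statement off from the explicit adapted coordinates constructed in Lemma \ref{local leaves}: the parametrization (\ref{hat S in hat M}) of $\hat{S}_Z$ inside $\hat{M}$, in which $1$-periodic coordinates $s_\mu$ on the image are built in by hand, together with the coordinate form (\ref{Z to hat(S) in coords}) of $\hat{p}_Z$. You instead start from the coordinate-free Lemma \ref{local leaves coordfree}, compute $V_{\hat{S}}=\hat{p}_*(V_Z)$ as the preimage of $H(V_S)$ under the restriction map of (\ref{SES S in M vertical dual}), and then invoke the general structure theory of torus homomorphisms: the image is a closed subtorus, hence its lattice is the saturated intersection $\Gamma_M^\vee\cap V_{\hat{S}}$. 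The gain of your route is that it isolates explicitly the one delicate point which the paper's coordinates dispose of silently, namely that the monodromy lattice of the image is the saturation and \emph{not} the pushforward $\hat{p}_0(\Gamma_Z)$, which sits inside it with index $\prod_{i=1}^r n_i^2$; this is consistent with, and explains, the paper's subsequent remark that the fibers of $\hat{p}_Z:Z\to\hat{S}_Z$ are $\prod_{i=1}^r n_i^2$ disjoint copies of $(l-2r)$-dimensional tori. Your verification that $\Gamma_{\hat{S}}$ surjects onto $\Gamma_S^\vee\cap H(V_S)$ correctly rests on the surjectivity of $\Gamma_M^\vee\to\Gamma_S^\vee$, i.e.\ the exactness of (\ref{SES S in M  lattice dual}) coming from primitivity of $\Gamma_S\subset\Gamma_M$ — the only point where exactness of (\ref{hat(S) lattice}) could conceivably fail. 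The cost relative to the paper is minor: the paper's coordinates are reused immediately afterwards (in the proof of Theorem \ref{local gg thm}), so there the lemma is a free by-product of computations already needed, whereas your argument stands on its own.
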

Again the space of images is parametrized by constant sections of $V_M^*/V_{\hat{S}}\cong V_S^*/H(V_S)\cong coker(H)$ up to sections of $\Gamma_M^\vee/\Gamma_{\hat{S}}\cong \Gamma_S^\vee/(H(V_S)\cap \Gamma_S^\vee)$. That is, up to the image of $\Gamma_S^\vee$ in $coker(H)$. Let us denote it by $coker(H,\Gamma^\vee_S)$.

\begin{proposition}\label{space of T-duals}
    When $\pi(S)$ is contractible, the space of images $\hat{S}_Z$ corresponding to leaves of $\Delta$ under $\hat{p}$ is parametrized by flat section of $coker(H)/coker(H,\Gamma^\vee_S)$. The images foliate $\hat{M}|_{\pi(S)}$
\end{proposition}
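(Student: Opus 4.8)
The plan is to deduce this proposition directly from Lemma \ref{local images of Z}, which already does the geometric work: it exhibits each image $\hat{S}_Z$ as an affine torus subbundle of $\hat{M}|_{\pi(S)}$ that is a torsor under one fixed group bundle $\hat{S}_0 = V_{\hat{S}}/\Gamma_{\hat{S}}$, independent of $Z$. Thus all images are mutually parallel subbundles of the same type, and the only freedom is the translation class inside $\hat{M}|_{\pi(S)}$. The task therefore reduces to a bookkeeping statement about how translates of a fixed subgroup bundle sit inside the ambient group bundle.

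First I would use that $\pi(S)$ is contractible, so that $\hat{M}$, $\hat{S}_0$ and all relevant torus bundles are trivial and admit sections. Fixing a section of $\hat{M}$ identifies $\hat{M}|_{\pi(S)} \cong \hat{M}_0 = V_M^*/\Gamma_M^\vee$ (via the T-duality identifications $V_{\hat{M}} \cong V_M^*$ and $\Gamma_{\hat{M}} \cong \Gamma_M^\vee$), and under this identification $\hat{S}_0$ becomes the subgroup bundle $V_{\hat{S}}/\Gamma_{\hat{S}}$. The distinct affine subbundles with underlying group bundle $\hat{S}_0$ are then exactly the cosets of $\hat{S}_0$ in $\hat{M}_0$, parametrized by sections of the quotient group bundle
$$\hat{M}_0/\hat{S}_0 = (V_M^*/V_{\hat{S}})/(\Gamma_M^\vee/\Gamma_{\hat{S}}).$$
Using the short exact sequences (\ref{hat(S) vertical}) and (\ref{hat(S) lattice}) I would identify $V_M^*/V_{\hat{S}} \cong V_S^*/H(V_S) = coker(H)$ and $\Gamma_M^\vee/\Gamma_{\hat{S}} \cong \Gamma_S^\vee/(\Gamma_S^\vee \cap H(V_S)) = coker(H,\Gamma_S^\vee)$, giving $\hat{M}_0/\hat{S}_0 \cong coker(H)/coker(H,\Gamma_S^\vee)$. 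That the relevant sections are flat follows from Lemma \ref{local leaves}, where the translation parameter $c$ is constant; over a contractible base the flat sections of this quotient bundle form precisely its fiber torus.

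Next I would check that every coset is actually attained by the image of some leaf, i.e. that $\hat{p}$ is surjective onto the space of cosets. By Proposition \ref{space of leaves of Delta} the leaves are parametrized by flat sections of $V_S^*/\Gamma_H^\vee$, and the passage $Z \mapsto \hat{S}_Z$ is induced by the linear quotient $V_S^* \to coker(H)$. The point to verify is that the image of $\Gamma_H^\vee$ in $coker(H)$ coincides with the image of $\Gamma_S^\vee$; a direct computation in a symplectic frame shows both equal $span_\dZ\{\mu_{2r+1}^*,\dots,\mu_l^*\}$, because the divisor generators $\mu_i^*/m_i$ with $i \le 2r$ lie in $H(V_S)$ and map to zero. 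Hence the induced map $V_S^*/\Gamma_H^\vee \to coker(H)/coker(H,\Gamma_S^\vee)$ is well defined and surjective, so the space of images is exactly this quotient. The foliation claim is then immediate: cosets of a subgroup bundle partition the ambient group bundle, so as the parameter ranges over $coker(H)/coker(H,\Gamma_S^\vee)$ the images $\hat{S}_Z$ are pairwise disjoint and cover $\hat{M}|_{\pi(S)}$.

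The hard part is not any single step but matching the two parametrizations — of leaves (Proposition \ref{space of leaves of Delta}) and of images — by the correct quotient map. Concretely, the identity $coker(H,\Gamma_S^\vee) = coker(H,\Gamma_H^\vee)$ is what guarantees both that the map on tori descends and that it is surjective, so this is the one computation I would carry out carefully in a symplectic frame; everything else is formal manipulation of the short exact sequences already established.
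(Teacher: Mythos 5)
Your proof is correct and follows essentially the same route as the paper: Lemma \ref{local images of Z} reduces the statement to parametrizing translates of the fixed group bundle $\hat{S}_0$ inside $\hat{M}|_{\pi(S)}$, and the short exact sequences (\ref{hat(S) vertical}) and (\ref{hat(S) lattice}) identify the space of such translates with flat sections of $coker(H)/coker(H,\Gamma_S^\vee)$. Your symplectic-frame verification that $\Gamma_H^\vee$ and $\Gamma_S^\vee$ have the same image in $coker(H)$ — which gives well-definedness and surjectivity of the leaf-to-image map — is exactly the computation the paper records immediately after the proposition, in the lead-up to Corollary \ref{space of leaves mapping to a T-dual}.
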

Then, the space of leaves $Z$ mapping onto a fixed $\hat{S}$ is parametrized by $Ker(V_S^*\ra V_S^*/H(V_S))=H(V_S)$ up to elements of $Ker(\Gamma_H^\vee \ra \Gamma_S^\vee/(H(V_s)\cap \Gamma_S^\vee))$. By expressing $\Gamma_H^\vee$ and $\Gamma_S^\vee$ in a symplectic basis as in (\ref{dual of Gamma H}) it is clear that $\Gamma_S^\vee/(H(V_S)\cap \Gamma_S^\vee)\cong \Gamma_H^\vee/(H(V_S)\cap \Gamma_H^\vee)$. Moreover, $H(V_S)\cap \Gamma_H^\vee=H(\Gamma_S)$. Finally, the following corollary is clear.
\begin{corollary}\label{space of leaves mapping to a T-dual}
    When $\pi(S)$ is contractible, space of leaves of $\Delta$  mapping onto the same $\hat{S}$ is parametrized by  constant sections of $H(V_S)/H(\Gamma_S)$. The leaves mapping onto $\hat{S}$ foliate $S\times_{\pi(S)}\hat{S}$.
\end{corollary}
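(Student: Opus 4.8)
The plan is to exhibit the assignment $Z \mapsto \hat{S}_Z$ as a map of parametrizing tori and to identify the leaves lying over a fixed $\hat{S}$ with its fibre. By Proposition \ref{space of leaves of Delta} the leaves of $\Delta$ are parametrized by flat sections of $V_S^*/\Gamma_H^\vee$, and by Proposition \ref{space of T-duals} their images under $\hat{p}$ are parametrized by flat sections of $coker(H)/coker(H,\Gamma_S^\vee)$, where $coker(H)=V_S^*/H(V_S)$. First I would check that the map sending a leaf to its image is induced, on these parametrizing spaces, by the quotient projection $V_S^*\ra coker(H)$. This is read off from the coordinate formula (\ref{Z to hat(S) in coords}) together with the short exact sequences (\ref{V_Z SES}) and (\ref{hat(S) vertical}): passing from $Z$ to $\hat{S}_Z$ kills exactly the $Graph(-H)\cong V_S$ summand and keeps the $H(V_S)$ summand. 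Its well-definedness amounts to the fact, visible in a symplectic frame, that the image of $\Gamma_H^\vee$ in $coker(H)$ equals $coker(H,\Gamma_S^\vee)$.

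The set of leaves over a fixed $\hat{S}$ is then a coset of the kernel of this induced map. On the level of vector spaces the kernel of $V_S^*\ra coker(H)$ is $H(V_S)$, so after descending to the quotient tori the fibre is $H(V_S)$ modulo the lattice $H(V_S)\cap\Gamma_H^\vee$. The decisive computation is the lattice identity $H(V_S)\cap\Gamma_H^\vee=H(\Gamma_S)$, paired with the isomorphism $\Gamma_S^\vee/(H(V_S)\cap\Gamma_S^\vee)\cong\Gamma_H^\vee/(H(V_S)\cap\Gamma_H^\vee)$ that matches the two lattice quotients. I would prove both by writing $H=\sum_{i=1}^r\frac{n_i}{m_i}\mu_i^*\wedge\mu_{r+i}^*$ in a symplectic frame $\{\mu_i\}$ of $\Gamma_S$, so that $H(V_S)=span_\dR\langle\mu_1^*,\dots,\mu_{2r}^*\rangle$, and then comparing this against the explicit generators of $\Gamma_H^\vee$ from (\ref{dual of Gamma H}) and of $H(\Gamma_S)$; the hypothesis $gcd(n_i,m_i)=1$ is precisely what is needed for the relevant integral spans to coincide.

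Granting these lattice identities, the fibre is the torus $H(V_S)/H(\Gamma_S)$, so the leaves mapping onto a fixed $\hat{S}$ are parametrized by its constant sections, as claimed. For the foliation statement I would note that each such leaf $Z$ satisfies $\hat{p}_Z(Z)=\hat{S}$ and hence lies in $S\times_{\pi(S)}\hat{S}$; since $\Delta$ already foliates $S\times_{\pi(S)}\hat{M}$ by Lemma \ref{rational}, and the leaves landing in $\hat{S}$ are exactly those contained in the closed submanifold $S\times_{\pi(S)}\hat{S}=\hat{p}_S^{-1}(\hat{S})$, a dimension count shows they sweep out all of $S\times_{\pi(S)}\hat{S}$. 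The main obstacle is the lattice bookkeeping of the middle step: tracking the scalings $m_i$ and $n_i$ through the four lattices $\Gamma_S^\vee$, $\Gamma_H^\vee$, $H(\Gamma_S)$ and $H(V_S)\cap\Gamma_H^\vee$ and confirming that coprimality renders the pertinent quotients canonically isomorphic.
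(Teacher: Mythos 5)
Your outline follows the paper's own route essentially verbatim: the paper, in the three sentences preceding the corollary, also identifies the assignment $Z\mapsto\hat{S}_Z$ with the map of parametrizing tori induced by $V_S^*\ra V_S^*/H(V_S)$, identifies the leaves over a fixed $\hat{S}$ with a coset of its kernel $H(V_S)/(H(V_S)\cap\Gamma_H^\vee)$, and then invokes the two lattice identities you name. The first of these, $\Gamma_S^\vee/(H(V_S)\cap\Gamma_S^\vee)\cong\Gamma_H^\vee/(H(V_S)\cap\Gamma_H^\vee)$, is correct, as is the well-definedness step (which amounts to $\Gamma_S^\vee+H(\Gamma_S)=\Gamma_H^\vee$). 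The gap is in what you call the decisive computation: the identity $H(V_S)\cap\Gamma_H^\vee=H(\Gamma_S)$, asserted both by you and by the paper, fails whenever some $n_i>1$, so if you actually carried out the frame computation you propose, it would not close. Indeed, $H(\mu_i)=\frac{n_i}{m_i}\mu_{r+i}^*$ and $H(\mu_{r+i})=-\frac{n_i}{m_i}\mu_i^*$, so
$$H(\Gamma_S)=span_\dZ\Big\{\tfrac{n_i}{m_i}\mu_i^*,\ \tfrac{n_i}{m_i}\mu_{r+i}^*\Big\},\qquad H(V_S)\cap\Gamma_H^\vee=span_\dZ\Big\{\tfrac{1}{m_i}\mu_i^*,\ \tfrac{1}{m_i}\mu_{r+i}^*\Big\}.$$
Coprimality gives $\dZ+\tfrac{n_i}{m_i}\dZ=\tfrac{1}{m_i}\dZ$, which is exactly what the sum identity (well-definedness) needs, but it does not give $\tfrac{n_i}{m_i}\dZ=\tfrac{1}{m_i}\dZ$; that requires $n_i=1$. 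So $H(\Gamma_S)\subset H(V_S)\cap\Gamma_H^\vee$ with index $\prod_i n_i^2$, and the fibre of the parametrizing map is $H(V_S)/(H(V_S)\cap\Gamma_H^\vee)$, of which $H(V_S)/H(\Gamma_S)$ is a $\prod_i n_i^2$-fold cover.

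That this is a genuine discrepancy, not a matter of convention, can be seen on a single fibre: take $S=M$ with fibre $T^2$ and $H=2\mu_1^*\wedge\mu_2^*$, i.e.\ $n_1=2$, $m_1=1$. Then $\Gamma_H^\vee=\Gamma_S^\vee$, there is only one image $\hat{S}=\hat{M}$, and by Proposition \ref{space of leaves of Delta} the leaves over it form $V_S^*/\Gamma_S^\vee$; the torus $H(V_S)/H(\Gamma_S)=V_S^*/2\Gamma_S^\vee$ of the corollary counts each leaf four times. (This is also consistent with the paper's own remark that the fibres of $\hat{p}_Z$ have $\prod_i n_i^2$ connected components.) In short, you have faithfully reconstructed the paper's proof, but its key lattice identity is false as stated, and the corollary should be read with $H(V_S)\cap\Gamma_H^\vee$ in place of $H(\Gamma_S)$, the two agreeing precisely when all $n_i=1$. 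A last, minor point: for the foliation claim, a dimension count alone does not show the leaves sweep out $S\times_{\pi(S)}\hat{S}$; the clean argument is that the leaf through any point of $S\times_{\pi(S)}\hat{S}$ has image the translate of $\hat{S}_0$ through that point, which is $\hat{S}$ itself, so every point lies on a leaf of the relevant family.
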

The maps $p_Z:Z\ra S$ and $\hat{p}_Z:Z\ra \hat{S}_Z$ are the composition of a fiber-wise homomorphism of group bundles and a translation by a section in $M$ or $\hat{M}$. The corresponding fiber-wise homomorphism $p_0$ and $\hat{p}_0$ are given in the following diagrams
\begin{equation}
\begin{tikzcd}
    0 \arrow{r} & Ann(V_S)\arrow{d}{\cong} \arrow{r} & V_Z \arrow{d}{\hat{p}_0} \arrow{r}{p_0} & V_S \arrow{r} \arrow{d}{-H} & 0\\
    0 \arrow{r} & Ann(V_S) \arrow{r} & V_{\hat{S}} \arrow{r}{q} & H(V_S) \arrow{r} & 0
\end{tikzcd}
\end{equation}
\begin{equation}
\begin{tikzcd}
    0 \arrow{r} & Ann(\Gamma_S)\arrow{d}{\cong} \arrow{r} & \Gamma_Z \arrow{d}{\hat{p}_0} \arrow{r}{p_0} & \Gamma_H \arrow{r} \arrow{d}{-H} & 0\\
    0 \arrow{r} & Ann(\Gamma_S) \arrow{r} & \Gamma_{\hat{S}} \arrow{r}{q} & H(V_S)\cap \Gamma_S^\vee  \arrow{r} & 0
\end{tikzcd}
\end{equation}
where we use that $\Gamma_H\subset \Gamma_S$.

From the diagrams, it is clear that the fibers of $\hat{p}_Z: Z\ra \hat{S}_Z$ are $\prod_{i=1}^rn_i^2$ disjoint copies of $l-2r$ dimensional tori and the fibers of $p_Z: Z\ra S$  are the disjoint union of $\prod_{i=1}^rm_i^2$ copies of $n-l$ dimensional tori. 

\subsection{T-dual of generalized branes over contractible base}\label{section local tdual}
Finally, we can prove the following theorem which states that a locally T-dualizable generalized brane admits T-duals in the sense of Definition \ref{def tdual gg branes}.
\begin{theorem}\label{local gg thm}
Let $S\subset M$ be a submanifold which is also an affine torus subbundle of $M$ over $\pi(S)$. Assume that $\pi(S)$ is simply connected and let $F\in \Omega^2(S)$ be an invariant closed two-form such that the restriction of $F$ to each fiber represents a rational cohomology class. Consider the distribution $\Delta$ defined in the previous section and let $Z$ be a leaf of the foliation given by $\Delta$. Let $\hat{S}_Z\subset \hat{M}$ be the image of $\hat{p}_Z: Z\ra \hat{M}$.

Then, $\hat{S}_Z$ is an affine torus subbundle of $\hat{M}$ over $\pi(S)$. Moreover, there exists a unique closed invariant two-form $\hat{F}_Z$ on $\hat{S}_Z$ which represents a rational cohomology class on the fibers of $\hat{S}_Z$ and satisfies
$$p_Z^*F+i_Z^*P=\hat{p}_Z^*\hat{F}.$$
Whenever two leaves $Z_1$ and $Z_2$ of $\Delta$ have the same image $\hat{S}_{Z_1}=\hat{S}_{Z_2}$ we have
$$\hat{F}_{Z_1}=\hat{F}_{Z_2}. $$
Moreover, then $(\hat{S}_Z,\hat{F}_Z)$ is a T-dual of the generalized brane $(S,F)$.
\end{theorem}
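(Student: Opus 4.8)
The plan is to verify the defining condition of Definition \ref{def tdual gg branes}, namely that the T-duality map $T$ of (\ref{Tmap}) carries $(\tau_{\cL})_{red}$ onto $(\tau_{\hat\cL})_{red}$, where $\cL=(S,F)$ and $\hat\cL=(\hat{S}_Z,\hat{F}_Z)$. Since $(S,F)$ is locally T-dualizable it is invariant in the sense of Definition \ref{def invariant brane}, so $\tau_{\cL}$ is spanned by invariant sections and $(\tau_{\cL})_{red}$ is a well-defined subbundle of $E_{red}$ over $\pi(S)$; by the earlier parts of the theorem $\hat{S}_Z$ is an affine torus subbundle and $\hat{F}_Z$ is invariant, so $(\tau_{\hat\cL})_{red}\subset\hat{E}_{red}$ is likewise well-defined. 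Both $\tau_\cL$ and $\tau_{\hat\cL}$ are maximal isotropic in the respective Courant algebroids, and reduction preserves maximal isotropy; since $T$ is an isomorphism of Courant algebroids (Theorem \ref{tdualitymap}) it sends maximal isotropics to maximal isotropics. As $(\tau_\cL)_{red}$ and $(\tau_{\hat\cL})_{red}$ have the same rank ($=\dim M=\dim\hat{M}$), it therefore suffices to prove the inclusion $T((\tau_{\cL})_{red})\subseteq(\tau_{\hat\cL})_{red}$.

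For the inclusion I would pass to the leaf $Z$, which plays the role of the correspondence space of the two branes as in diagram (\ref{intro2}). Let $X+\xi$ be an invariant local section of $\tau_{\cL}$, so that $\iota_XF=\xi|_S$. Following the recipe (\ref{Tmap}), lift $X$ to the section $\hat{X}$ on $M\times_B\hat{M}$ determined by $p_*\hat{X}=X$ together with the requirement that $p^*\xi+\iota_{\hat{X}}P$ be basic with respect to $\hat{p}$. The key observation is that this unique lift is tangent to $Z$: the distribution $\Delta$ was defined in (\ref{Delta}) as the kernel of the relative part of $p_S^*F+i^*P$ on $S\times_{\pi(S)}\hat{M}$, which is exactly the condition that $p^*\xi+\iota_{\hat{X}}P$ have no relative component along the fibres of $\hat{p}$. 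Hence over $S$ the lift $\hat{X}$ restricts to a vector field $\tilde{X}\in TZ=\Delta$ with $(p_Z)_*\tilde{X}=X$, and we may set $\hat{Y}:=(\hat{p}_Z)_*\tilde{X}\in T\hat{S}_Z$.

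Now I would compute the covector part using the defining relation $p_Z^*F+i_Z^*P=\hat{p}_Z^*\hat{F}_Z$. Contracting with $\tilde{X}$ and using $(p_Z)_*\tilde{X}=X$ gives
\begin{align*}
\hat{p}_Z^*\bigl(\iota_{\hat{Y}}\hat{F}_Z\bigr)=\iota_{\tilde{X}}\hat{p}_Z^*\hat{F}_Z=\iota_{\tilde{X}}p_Z^*F+\iota_{\tilde{X}}i_Z^*P=p_Z^*(\iota_XF)+\iota_{\tilde{X}}i_Z^*P=p_Z^*\xi+\iota_{\tilde{X}}i_Z^*P,
\end{align*}
where the last equality uses $\iota_XF=\xi|_S$. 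The right-hand side is precisely the restriction to $Z$ of the covector $p^*\xi+\iota_{\hat{X}}P$ appearing in (\ref{Tmap}); thus that covector equals $\hat{p}_Z^*\hat\eta$ with $\hat\eta:=\iota_{\hat{Y}}\hat{F}_Z$, and since $\hat{p}_Z$ is a submersion onto $\hat{S}_Z$ it follows that $T(X+\xi)=\hat{Y}+\hat\eta$ with $\iota_{\hat{Y}}\hat{F}_Z=\hat\eta$. By the definition (\ref{gen tangent bundle}) of the generalized tangent bundle this says exactly that $T(X+\xi)\in(\tau_{\hat\cL})_{red}$, establishing the inclusion and hence the claim.

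The main obstacle is the identification in the second paragraph of the globally selected lift $\hat{X}$ from (\ref{Tmap}) with a vector field tangent to the leaf $Z$; making this rigorous requires unwinding that the non-degeneracy of $P$ fixes $\hat{X}$ through exactly the relative vanishing condition that defines $\Delta$, so that the leaf containing the image is where the T-duality recipe becomes consistent. One must also account for the fact that $p_Z$ and $\hat{p}_Z$ are not diffeomorphisms but isogeny-type maps on the fibres, with the multiplicities $m_i,n_i$ of Lemma \ref{local leaves}. Since these maps are nonetheless local diffeomorphisms, the lifting of $\tilde{X}$ and the pull-push manipulations of the forms above go through unchanged, but one should check that the covering degrees do not obstruct the surjectivity of the pushforwards; this is precisely where the rank bookkeeping encoded in the short exact sequences of Lemma \ref{local leaves coordfree} and Lemma \ref{local images of Z} enters.
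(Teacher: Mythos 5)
Your argument addresses only the last assertion of the theorem, and it does so by assuming all the others. The phrase ``by the earlier parts of the theorem $\hat{S}_Z$ is an affine torus subbundle and $\hat{F}_Z$ is invariant'' is exactly where the gap sits: the statement about $\hat{S}_Z$ is indeed available beforehand (Lemma \ref{local images of Z}), but the existence of a closed \emph{invariant} two-form $\hat{F}_Z$ on $\hat{S}_Z$ with $p_Z^*F+i_Z^*P=\hat{p}_Z^*\hat{F}_Z$, its uniqueness, its fiberwise rationality, and the equality $\hat{F}_{Z_1}=\hat{F}_{Z_2}$ for leaves with the same image are assertions of the theorem itself — nothing established before the theorem supplies them, and your central computation in the third paragraph takes $p_Z^*F+i_Z^*P=\hat{p}_Z^*\hat{F}_Z$ as given. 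So the proposal is circular on precisely the points that carry most of the content. What is missing is a proof that $p_Z^*F+i_Z^*P$ descends along $\hat{p}_Z$. Part of this does follow abstractly: for $V$ tangent to a fibre of $\hat{p}_Z$ and $W\in TZ$, both lie in $\Delta$, and the defining property (\ref{Delta}) applied to $W$ gives $(p_Z^*F+i_Z^*P)(W,V)=0$; together with closedness this yields local descent along connected fibre directions. But the fibres of $\hat{p}_Z$ are \emph{disconnected} — they consist of $\prod_i n_i^2$ copies of a torus — so one must additionally check invariance under the finite translations permuting the components, and then establish invariance of $\hat{F}_Z$, rationality on the fibres, and independence of the leaf. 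This is exactly what the paper's explicit computation of $i_Z^*P$ and $p_Z^*F$ in adapted coordinates accomplishes, producing the formula (\ref{hat(F) in coordinates}) from which all these properties are read off.

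The part you do prove is correct, and your route differs pleasantly from the paper's: the paper writes $(\tau_\cL)_{red}$, its image under $T$, and $(\tau_{\hat{\cL}})_{red}$ in explicit coordinates and checks equality, whereas you argue coordinate-freely that the lift $\hat{X}$ prescribed by (\ref{Tmap}) automatically lies in $\Delta$ (this is correct, and is the conceptual remark the paper itself makes before its computation — that $\Delta$ is the tangent image of $e^P$ applied to the pulled-back generalized tangent bundle), and then reduce equality to a single inclusion by a rank count. If you supply the missing existence/invariance/leaf-independence step above, your treatment of the final assertion would be a legitimate, and arguably cleaner, alternative to the coordinate verification.
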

Clearly, based on this picture a single generalized brane may have several T-duals. Any leaf $Z$ of the distribution $\Delta$ produces a T-dual and some may produce the same. By Proposition \ref{space of T-duals} the space of T-duals is parametrized by an $l-2r$ dimensional torus and by Proposition \ref{space of leaves mapping to a T-dual} the leaves mapping to a certain T-dual are parameterized by a $2r$ dimensional torus. Furthermore, the leaves projecting to the same $\hat{S}$ foliate $S\times_{\pi(S)}\hat{S}$.
\begin{proof}
We have already seen, assuming simply connectedness of the base, that $\hat{S}$ is a trivial affine torus subbundle of $\hat{M}$. Moreover, any leaf of $\Delta$ can be parameterized as $Z=[y,q,\hat{q}]$ with $q$ and $\hat{q}$ being  1-periodic and we have 
\begin{align*}
  i_Z: Z &\hookrightarrow S\times_{\pi(S)}\hat{M}\\
  [y,q,\hat{q}]&\mapsto [\phi(y),H_1q,b,-H_2q-G+c,\hat{q}] . 
\end{align*}
The constants $c_\mu\in \dR/\dZ$, $\mu=1,...,l$ parametrize the leaf space of $\Delta$ as described in Proposition \ref{space of leaves of Delta}. Let us first calculate $\hat{F}_Z$. We have
\begin{align*}
    i_Z^*P=&i_Z^*(d\hat{p}_\mu\wedge dp^\mu)\\
    =&\sum_{\mu=1}^r m_\mu n_\mu  dq^{r+\mu}\wedge dq^{\mu} + \sum_{\mu=1}^r -m_\mu n_\mu dq^{\mu}\wedge  dq^{\mu+r}+\\
    &+    \sum_{\substack{\mu=1...r\\ i=1...k}} (m_\mu (-\partial_i G_\mu dy_i) \wedge dq^\mu+ m_\mu (-\partial_i G_{\mu+r} dy_i) \wedge dq^{\mu+r})  + \\
    &+\sum_{\substack{\mu=2r,...l\\ i=1...k}}  (-\partial_i G_\mu dy_i) \wedge dq^\mu + \sum_{\mu=l+1}^n d\hat{q}_\mu \wedge db^\mu\\
    =&-2\sum_{\mu=1}^rn_\mu m_\mu  dq^{\mu}\wedge dq^{\mu+r}-
    \sum_{\substack{\mu=1...l\\ i=1...k}}h_\mu \partial_i G_\mu dq^\mu \wedge  dy_i -\sum_{\substack{\mu=l+1,...,n\\ i=1,...,k}} \partial_ib^\mu dy_i\wedge d\hat{q}_\mu,\\
\end{align*} 
where $h_\mu=m_\mu$ and $h_{\mu+r}=m_\mu$ for $\mu=1,...,r$ and $h_\mu=1$ for $\mu=2r+1,...,l$. We have 
$$ p_Z^*F=\sum_{i,j=1}^k F_0^{ij}dy_i\wedge dy_j + \sum_{{\substack{\mu=1...l\\ i=1...k}}} h_\mu G^i_\mu dy_i\wedge dq^\mu +\sum_{\mu=1}^{r}\frac{n_\mu}{m_\mu}m_\mu^2 dq^\mu \wedge dq^{\mu+r}$$
with $G^i_\mu=\partial_iG_\mu$. Therefore,
$$p_Z^*F+i_Z^*P=\sum_{i,j=1}^k F_0^{ij}dy_i\wedge dy_j-\sum_{\substack{\mu=l+1,...,n\\ i=1,...,k}} \partial_ib^\mu dy_i\wedge d\hat{q}_\mu- \sum_{\mu=1}^rn_\mu m_\mu dq^{\mu}\wedge dq^{\mu+r}.$$
In the coordinates $[y,s,\hat{q}]$ on $\hat{S}$ we have
$$p_Z^*F+i_Z^*P=\hat{p}_Z^*\hat{F}_Z$$
for 
\begin{equation}\label{hat(F) in coordinates} \hat{F}_Z=\sum_{i,j=1}^k F_0^{ij}dy_i\wedge dy_j-\sum_{\substack{\mu=l+1,...,n\\ i=1,...,k}} \partial_ib^\mu dy_i\wedge d\hat{q}_\mu- \sum_{\mu=1}^r\frac{m_\mu}{n_\mu} ds_{\mu}\wedge ds_{\mu+r}.\end{equation}
The two-form $\hat{F}$ is unique since $\hat{p}_Z:Z\ra \hat{S}_Z$ is a submersion. 

The maximal integral submanifolds of $\Delta$ are all diffeomorphic and the different leaves are given by translates of the $Z_0$. According to Corollary \ref{space of leaves mapping to a T-dual} the difference between leaves that map to the same $\hat{S}$ is a translation by a constant $(c_1,...,c_{2r})\in (\dR/\dZ)^{2r}$. This constant does not affect $p_Z^*F+i_Z^*P$ even though $p_Z$ and $i_Z$ are different. Moreover, if we fix a section of $\hat{S}_Z$ for a leaf $Z$ such that $\hat{p}_Z$ is given by (\ref{Z to hat(S) in coords}), the projection from any other leaf is given by a constant translation in the $s$-coordinates. This again, does not affect the resulting $\hat{F}_Z$. 

It remains to show that $\cL=(S,F)$ and $\hat{\cL}=(\hat{S},\hat{F})$ are T-dual generalized branes. We will show explicitly that $T(\tau_\cL)_{red}=(\tau_{\hat{\cL}})_{red}$ but the statement can be seen from the construction. The T-dual of an invariant subbundle $\tau_{\cL}$ in $(TM+T^*M)|_S$ is constructed by pulling back to an appropriate subbundle $\widehat{\tau_{\cL}}$ in the correspondence space, taking the $B$-field transform $e^P\widehat{\tau_{\cL}}$ and pushing down. We will see that $\Delta$ is precisely the image of $e^P\widehat{\tau_{\cL}}$ in the tangent bundle of $S\times_{\pi(S)}\hat{M}$. Moreover, the equation $p_Z^*F+i^*_ZP=\hat{p}_Z^*\hat{F}$ ensures that the pushforward of $e^P\widehat{\tau_{\cL}}$ restricted to $\hat{S}$ is the generalized tangent bundle of $\hat{\cL}$.

Let us work in the usual coordinates on $S$ and $M$ from Lemma \ref{invariant}, that is 
\begin{align*}
    \ \ \ \ \ i_S: S &\xhookrightarrow{} M,\\
    [y_1,...,y_k,q^1,...,q^l]\mapsto [\phi_1(y),...,\phi_m(y),&q^1,..,q^l,b^{l+1},...,b^n]=[x_1,..,x_m,p^1,...,p^n].
\end{align*}
Then, the tangent bundle of $S$ in $TM|_S$ is given by
\begin{align}\label{TS}
    TS=span_{\cC^\infty(S)}\left\{\begin{array}{lr}\frac{\partial}{\partial y_j}=\sum_{i=1}^m\frac{\partial \phi_i}{\partial y_j}\frac{\partial}{\partial x_i}+\sum_{\mu=l+1}^n\frac{\partial b^\mu}{\partial y_j}\frac{\partial}{\partial p^\mu}\ \text{for}\  j=1,...,k\\ \frac{\partial}{\partial q^\mu}=\frac{\partial }{\partial p^\mu}\ \text{for}\ \mu=1,...,l\end{array}\right\}
\end{align}
and the projection $T^*M\ra T^*S$ is given by
\begin{align*}
    dx_i\mapsto \sum_{j=1}^k\frac{\partial \phi_i}{\partial y_j}dy_j\ \ &(i=1,...m); \ \ \ \
    dp^\mu \mapsto dq^\mu\ \ (\mu=1,...,l);\\ \ \ \ &dp^\mu\mapsto \sum_{j=1}^k\frac{\partial b^\mu}{\partial y_j}dy_j\ \ (\mu=l+1,...,n).
\end{align*}
The generalized tangent bundle of $\cL$ is a subbundle of $E=TM+T^*M$ over $S$ given by
$$\tau_\cL=\{X+\xi\in TS\oplus T^*M|_S\ |\ \iota_XF=\xi|_S\}. $$
That is, the reduced generalized tangent bundle is spanned by invariant sections, that is
\begin{align*}(\tau_\cL)_{red}=\Big\{X+\xi= \sum_{j=1}^kX_j \frac{\partial}{\partial y_j}+ &\sum_{\mu=1}^lX^\mu\frac{\partial}{\partial q^\mu}+\sum_{i=1}^m\xi^i dx_i +\sum_{\mu=1}^n\xi_\mu dp^\mu \Big |\\
&\Big|\ X_j,X^\mu,\xi^i,\xi_\mu\in \cC^\infty(\pi(S))\text{\ satisfying\ }(\ref{iota X F = xi})  \Big\}\end{align*}
where, using the expression (\ref{F in cordinates}) for $F$ the equality $\iota_XF=\xi|_S$ reads as
\begin{equation}
\begin{aligned}\label{iota X F = xi}
    \sum_{i=1}^kF^{ij}X_i-\sum_{\mu=1}^lG^j_\mu X^\mu&=\sum_{i=1}^m\xi^i\frac{\partial \phi_i}{\partial y_j}+\sum_{\mu=1}^n\xi_\mu\frac{\partial b^\mu}{\partial y_j},\ \ (j=1,...,k)\\
    G^j_\mu X_j+H_{\nu\mu}X^\nu&=\xi_\mu,\ \ (\mu=1,...,l).
\end{aligned}
\end{equation}
These invariant sections indeed extend to a neighbourhood of $S\subset M$. Their T-dual is calculated by pulling back to the correspondence space, $B$-field transform by $P$ and then pushing down to $\hat{M}$. A lift of a section in $(\tau_\cL)_{red}$ is given by
\begin{align*}
    \hat{X}+p^*\xi=\sum_{j=1}^kX_j \frac{\partial}{\partial y_j}+ \sum_{\mu=1}^lX^\mu\frac{\partial}{\partial q^\mu}+\sum_{\mu=1}^n\hat{X}_\mu\frac{\partial }{\partial \hat{p}_\mu}+\sum_{i=1}^m\xi^i dx_i +\sum_{\mu=1}^n\xi_\mu dp^\mu,\ \ \ \ \ \ \ \\ \hat{X}_\mu,X_j,X^\mu,\xi^i,\xi_\mu\in \cC^\infty(\pi(S)),
\end{align*}
and we have to choose $\hat{X}_\mu$ such that $\iota_{\hat{X}}P+p^*\xi$ is basic with respect to $\hat{p}:M\times_{B}\hat{M}\ra \hat{M}$. Using the expression (\ref{TS}) of $TS\subset TM|_S$, we have
\begin{align*}
\iota_{\hat{X}}P+p^*\xi=-\sum_{\mu=1}^lX^\mu d\hat{p}_\mu-\sum_{\substack{j=1,...,k\\ \mu=l+1,...,n}}X_j\frac{\partial b^\mu}{\partial y_j}d\hat{p}_\mu+\sum_{\mu=1,...,n}\hat{X}_\mu dp^\mu+\sum_{i=1}^m\xi^idx_i+\sum_{\mu=1}^n\xi_\mu dp^\mu.
\end{align*}
This is basic with respect to $\hat{p}$ if and only if
$$-\hat{X}_\mu=\xi_\mu=\sum_{j=1}^kG^j_\mu X_j+\sum_{\nu=1}^lH_{\mu\nu}X^\nu\ \ (\mu=1,...,l)\ \ \ \text{and}\ \ \ -\hat{X}_\mu=\xi_\mu\ \ (\mu=l+1,....,n),$$
that is, if and only if $\hat{X}\in \Delta$ by (\ref{being in Delta}). Finally, the image of $(\tau_\cL)_{red}$ under the T-duality map is
\begin{equation*}
    T(\tau_\cL)_{red}
    =\left\{\begin{array}{lr}
X+\xi=\ \sum_{j=1}^kX_j \frac{\partial}{\partial y_j}+\sum_{\mu=1}^{l}\Big( G^j_\mu X_j+H_{\mu\nu}X^\nu\Big)\frac{\partial }{\partial \hat{p}_\mu} +\sum_{\mu=l+1}^n\hat{X}_\mu\frac{\partial}{\partial \hat{p}_\mu}+
\\ \ \ \ \ \ \ \ \ \ \ \ \ \ \  +\sum_{i=1}^m\xi^i dx_i-\sum_{\mu=1}^lX^\mu d\hat{p}_\mu -\sum_{\substack{j=1,...,k\\ \mu=l+1,...,n}}X_j\frac{\partial b^\mu}{\partial y_j}d\hat{p}_\mu,\\ 
\hat{X}_\mu,X_j,X^\mu,\xi^i\in \cC^\infty(\pi(S)),\ 
 \text{ satisfying for }j=1,...,k\\\ \sum_{i=1}^kF^{ij}X_i-\sum_{\mu=1}^lG^j_\mu X^\mu=\sum_{i=1}^m\xi^i\frac{\partial \phi_i}{\partial y_j}-\sum_{\mu=l+1}^n\hat{X}_\mu\frac{\partial b^\mu}{\partial y_j} 
    \end{array}\right\}.
\end{equation*}

As in the calculations before we assume that $H_{\mu\nu}$ is rank $2r\leq l$ and is in standard form (\ref{H in coordinates}). Let us calculate $(\tau_{\hat{\cL}})_{red}$, similarly to $(\tau_\cL)_{red}$. The tangent bundle to $\hat{S}$ in $T\hat{M}|_{\hat{S}}$ in the coordinates (\ref{hat S in hat M})  is
\begin{equation}\label{That(S)}
        T\hat{S}=span_{\cC^\infty(\hat{S})} \left\{\begin{array}{lr}
        \frac{\partial}{\partial y_j}=\sum_{i=1}^m\frac{\partial \phi_i}{\partial y_j}\frac{\partial}{\partial x_i}-\sum_{\mu=1}^l\frac{\partial G_\mu}{\partial y_j}\frac{\partial}{\partial \hat{p}_\mu}\ &j=1,...,k\\
         \frac{\partial }{\partial s_\mu}=\frac{\partial }{\partial \hat{p}_\mu}\ &\mu=1,...,2r\\
         \frac{\partial }{\partial \hat{q}_\mu}=\frac{\partial }{\partial \hat{p}_\mu} \ &\mu=l+1,...,n 
         \end{array}\right\}  . 
\end{equation}
Writing $G^j_\mu=\partial_{y_j}G_\mu$, the map $T^*M|_{\hat{S}}\ra T^*\hat{S}$ is given by
\begin{equation}
\begin{aligned}
&dx_i\mapsto \sum_{i=1}^k\frac{\partial \phi_i}{\partial y_j}dy_j\ (i=1,...,m);\ \ \ \ d\hat{p}_\mu\mapsto ds_\mu-\sum_{j=1}^kG^j_\mu dy_j\ (\mu=1,...,2r);\\
&d\hat{p}_\mu\mapsto -\sum_{j=1}^kG^j_\mu dy_j\ (\mu=2r+1,...,l);\ \ \  \ d\hat{p}_\mu\mapsto d\hat{q}_\mu\ (\mu=l+1,...n).
\end{aligned}
\end{equation}
Then, using the coordinate expression (\ref{hat(F) in coordinates}) for $\hat{F}$ we have
\begin{equation*}
    \begin{aligned}
       (\tau_{\hat{\cL}})_{red}=\Big\{\sum_{j=1}^k X_j\frac{\partial}{\partial y_j}+\sum_{\mu=1}^{2r}X_\mu\frac{\partial}{\partial s_\mu}+\sum_{\mu=l+1}^n\hat{X}_\mu\frac{\partial}{\partial \hat{q}_\mu}+\sum_{i=1}^m\xi^idx_i+\sum_{\mu=1}^n\xi^\mu d\hat{p}_\mu \Big |\ \ \ \ \ \ \ \ \\
       \Big| \ X_j,X_\mu,\hat{X}_\mu,\xi^i,\xi^\mu\in \cC^\infty(\pi(S))\ \text{satisfying}\ \ref{iota X hat(F) is xi}\Big\},
    \end{aligned}
\end{equation*}
where (\ref{iota X hat(F) is xi}) is the coordinate expression for $\iota_X\hat{F}=\xi|_{\hat{S}}$, that is,
\begin{equation}\label{iota X hat(F) is xi}
\begin{aligned}
    \sum_{j=1}^kF^{ij}X_i+\sum_{\mu=l+1}^n\frac{\partial b^\mu}{\partial y_j}\hat{X}_\mu&=\sum_{i=1}^m\xi^i\frac{\partial \phi_i}{\partial y_j}-\sum_{\mu=1}^l\xi^\mu G^j_\mu,\ \ \ (j=1,...,k);\\
     -\frac{\partial b^\mu}{\partial y_j}X_j&=\xi^\mu \ \ \ (\mu=l+1,...,n);\ \\
    -\frac{m_\mu}{n_\mu}X_\mu=\xi^{r+\mu},\ \ &\frac{m_\mu}{n_\mu}X_{r+\mu}=\xi^{\mu},\ \ \ (\mu=1,...,r).
\end{aligned}
\end{equation}
Using (\ref{That(S)}) to express the vectors tangent to $\hat{S}$ as vectors on $\hat{M}$ we indeed see that $$T(\tau_\cL)_{red}=(\tau_{\hat{\cL}})_{red}.$$
\end{proof}
Notice that the T-duals $(\hat{S},\hat{F})$ of $(S,F)$ are all diffeomorphic and differ by a constant translation along the fibers above $\pi(S)$. To arrive at a more precise picture we will have to upgrade the current method to include bundles with connections. This will solve the problem of multiple T-duals and we will see that roughly when an integral $F$ has non-integral T-dual it means that the T-dual bundle has higher rank.

Since our construction of T-duals is compatible with T-duality of generalized complex structures the following theorem is clear. 
\begin{theorem}
On T-dual integrable systems as in Theorem \ref{semiflat tdual} $BAA$-branes map to $BBB$-branes.
\end{theorem}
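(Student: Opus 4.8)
The plan is to deduce the statement from the compatibility of brane T-duality (Definition \ref{def tdual gg branes}, Theorem \ref{local gg thm}) with the T-duality of generalized complex structures (Theorem \ref{thm gil structures}), using the precise eigenbundle identifications recorded in (\ref{BBI})--(\ref{BAK}). Throughout I consider an invariant, locally T-dualizable $BAA$-brane $\cL=(S,F)$, so that $S$ is an affine torus subbundle and $F$ is invariant and fiberwise rational; Theorem \ref{local gg thm} then guarantees a T-dual $\hat\cL=(\hat S,\hat F)$ with $T((\tau_\cL)_{red})=(\tau_{\hat\cL})_{red}$. This hypothesis is not a loss of generality for the present purpose, since T-duality only operates on invariant structures and the construction of $\hat\cL$ requires rationality; I would make this explicit at the outset.

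First I would reformulate the brane-type conditions as invariances. Being a $BAA$-brane means precisely that the generalized tangent bundle $\tau_\cL$ is preserved by the three generalized complex structures $\cJ_\dI$ ($B$ in $I$), $\cJ_{\omega_\dJ}$ ($A$ in $J$) and $\cJ_{\omega_\dK}$ ($A$ in $K$). Since $\tau_\cL$ and each of these structures is invariant under the fiberwise torus action, each invariance descends to an invariance of the reduced bundle $(\tau_\cL)_{red}\subset E_{red}$ under the corresponding reduced structure. I would spell out the elementary fact that a real subbundle $W$ is preserved by an orthogonal complex structure $\cJ$ precisely when $W\otimes\dC$ splits as the direct sum of its intersections with the $\pm i$-eigenbundles of $\cJ$, as this is the bridge between the brane side and the eigenbundle computations of Theorem \ref{semiflat tdual}.

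Next I would transport these invariances across $T$. Because $T$ is a real isomorphism of Courant algebroids and therefore commutes with complex conjugation, it sends a $\cJ$-invariant subbundle to a $(T\cJ T^{-1})$-invariant subbundle, and the $+i$-eigenbundle of $T\cJ T^{-1}$ is $T$ applied to the $+i$-eigenbundle of $\cJ$. By Theorem \ref{thm gil structures} this transported structure is again a genuine generalized complex structure on $\hat M$, and its type is determined by (\ref{BBI})--(\ref{BAK}): from
$$T(L_\dI)=\hat L_{\hat\dI},\qquad T(L_{\omega_\dJ})=\hat L_{\hat\dJ},\qquad T(L_{\omega_\dK})=\hat L_{\hat\dK}$$
we read off $T\cJ_\dI T^{-1}=\cJ_{\hat\dI}$, $T\cJ_{\omega_\dJ}T^{-1}=\cJ_{\hat\dJ}$ and $T\cJ_{\omega_\dK}T^{-1}=\cJ_{\hat\dK}$. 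Applying $T$ to the three invariances of $(\tau_\cL)_{red}$ and using $T((\tau_\cL)_{red})=(\tau_{\hat\cL})_{red}$ shows that $(\tau_{\hat\cL})_{red}$ is preserved by the three complex-type structures $\cJ_{\hat\dI}$, $\cJ_{\hat\dJ}$ and $\cJ_{\hat\dK}$ simultaneously. Lifting back to invariant sections, $\hat\cL$ is $B$-type in $\hat I$, $\hat J$ and $\hat K$, i.e. a $BBB$-brane.

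I expect the main obstacle to be bookkeeping rather than substance: carefully matching the $A/B$ labels to the eigenbundles in (\ref{BBI})--(\ref{BAK}), observing that it is precisely the \emph{symplectic}-type structures $\cJ_{\omega_\dJ},\cJ_{\omega_\dK}$ on $M$ (the two $A$-directions) that are carried to the \emph{complex}-type structures $\cJ_{\hat\dJ},\cJ_{\hat\dK}$ on $\hat M$, which is exactly the type-switch converting two $A$'s into two $B$'s while the $B$-direction $\cJ_\dI$ is carried to the $B$-direction $\cJ_{\hat\dI}$. The only genuinely substantive point to justify is that invariance of the full $\tau_\cL$ under $\cJ$ is equivalent to invariance of the reduced $(\tau_\cL)_{red}$ under the reduced $\cJ$, which follows from the invariance of both objects and the fact that reduction is restriction to invariant sections; once this is in place the argument above is immediate and the theorem, as the text asserts, is clear.
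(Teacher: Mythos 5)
Your proposal is correct and is essentially the paper's own argument: the paper's proof is the single sentence that compatibility of a brane with a generalized complex structure is carried by $T$ to compatibility with the T-dual structure, combined with the identifications $T(L_\dI)=\hat L_{\hat\dI}$, $T(L_{\omega_\dJ})=\hat L_{\hat\dJ}$, $T(L_{\omega_\dK})=\hat L_{\hat\dK}$ already established in Theorem \ref{semiflat tdual}. You have simply spelled out the reduction-to-invariant-sections and eigenbundle bookkeeping that the paper leaves implicit.
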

\begin{proof}
Indeed, if a generalized brane is compatible with a generalized complex structure its T-dual will be compatible with the T-dual generalized complex structure.
\end{proof}

\subsection{T-dual of generalized branes over general base}

To prove the global version of T-duality we first have to investigate the relationship between the Chern class and monodromy of an affine torus subbundle $S$ in a non-trivial affine torus bundle $\pi:M\ra B$. It turns out that the Chern class of the T-dual imposes a non-trivial topological constraint on the T-dualizability of generalized branes on $M$ (Theorem \ref{global thm1}).

Let us denote by $\Gamma_M\ra B$ and $c_M\in\HH^2(B,\Gamma_M)$ the monodromy local system and Chern class of $M$ and let $S\subset M$ be an affine torus subbundle. The monodromy local system of $S$ can be constructed functorially as the sheaf $(R^1(\pi_S)_*\dZ_S)^*$, where $\dZ_S$ is the constant $\dZ$ sheaf on $S$. We have the commutative diagram
\[
\begin{tikzcd}
S \arrow[hook]{rr}{i} \arrow{dr}[swap]{\pi_S} & & M|_{\pi(S)} \arrow{dl}{\pi}\\
& \pi(S) &
\end{tikzcd}
\]
and the constant sheaf $\dZ_S$ on $S$ can be written as $i^*\dZ_M$ for the constant sheaf $\dZ_M$ on $M|_{\pi(S)}$. In particular, there is a map $\dZ_M\ra i_*i^*\dZ_M $ and since $\pi_S=\pi\circ i$ and $i_*$ is exact also 
$$\Gamma_M^*|_{\pi(S)} = (R^1\pi_*\dZ)|_{\pi(S)} \ra R^1(\pi_S)_*\dZ_S = R^1\pi_*\circ i_*(i^*\dZ_M)=\Gamma_S^*.$$
The dual map
$$\Gamma_S\ra \Gamma_M|_{\pi(S)}$$
can be understood as the fiberwise inclusion $H_1(S_b,\dZ)\ra H_1(M_b,\dZ)$ for any $b\in \pi(S).$

Let us assume that $B=\pi(S)$ for ease of notation, as we may restrict $M$ to $\pi(S)$ otherwise. The monodromy and Chern class of $M=M|_{\pi(S)}$ can be explicitly constructed from the transition functions. Let $\{U_\alpha\}$ be a good cover of $\pi(S)=B$ and  $(y^\alpha,p_{\alpha})$ local coordinates where $p_{\alpha}$ are 1-periodic. The coordinates $p_{\alpha}$ can be taken such that $S$ is given by
$$S|_{U_\mu}=\{(y^\alpha,p_\alpha^1,...,p_\alpha^n): \ p_\alpha^{\mu}=b_\alpha^{\mu}\in \dR/\dZ\ \mu=l,...,n\}.$$
Let us write $p_{\alpha}=(p_\alpha^1,p_\alpha^2)$ such that 
$$p_\alpha^1=(p_\alpha^1,...,p_{\alpha}^l)\ \ \text{and}\ \ p_\alpha^2=(p_\alpha^{l+1},...,p_\alpha^n).$$
Over $U_{\alpha \beta}=U_\alpha\cap U_\beta$ the transition in the periodic coordinates is given by
$$p_{\beta}=A_{\alpha\beta}p_{\alpha}+c_{\alpha\beta}.$$
When $M$ has torsion Chern class we may assume that the $c_{\alpha\beta}$ are constant. Then, since $S$ has to be preserved $A$ must be upper block-diagonal in the local decompositions $p_{\alpha}=(p_\alpha^1,p_\alpha^2)$ and $p_{\beta}=(p_\beta^1,p_\beta^2)$. We write
\begin{equation}\label{transition of S in M}A_{\alpha\beta}=\begin{pmatrix} B_{\alpha\beta} & C_{\alpha\beta}\\
0 & D_{\alpha \beta}\end{pmatrix}\ \ \ \text{and}\ \ \ c_{\alpha\beta}=\begin{pmatrix}c^1_{\alpha\beta}\\ c_{\alpha\beta}^2 \end{pmatrix}.\end{equation}
 The cocycle conditions of $\{A_{\alpha\beta}\}$ in the above decomposition read as
\begin{align*}
    B_{\gamma\alpha}B_{\beta\gamma}B_{\alpha\beta}=Id,\ \ \ D_{\gamma\beta}D_{\beta\gamma}D_{\alpha\beta}=Id,\\
    B_{\gamma\alpha}B_{\beta\gamma}C_{\alpha\beta}+B_{\gamma\alpha}C_{\beta\gamma}D_{\alpha\beta}+C_{\gamma\alpha}D_{\beta\gamma}D_{\alpha\beta}=0.
\end{align*}
If $(y^\alpha,q_{\alpha})$ are coordinates on $S|_{U_\alpha}$ with $q^\alpha$ 1-periodic the inclusion $S\ra M$ is of the usual form
$$[y^\alpha,q_\alpha]\mapsto [y^\alpha,q_\alpha,b_\alpha].$$
Then, the transition functions for $S$ are given by
$$q_\beta= B_{\alpha\beta}q_\alpha+C_{\alpha\beta}b_\alpha+c_{\alpha\beta}^1  $$
with the condition that
$$b_{\beta}=D_{\alpha\beta}b_\alpha+c_{\alpha\beta}^2.$$
In particular, the monodromy of $S$ is given by $\{B_{\alpha\beta}\}$ and the Chern class by $\{C_{\alpha\beta}b_\alpha+c_{\alpha\beta}^1 \}$. More precisely, the Chern class $c_S\in H^2(\pi(S),\Gamma_S)$ of $S$ is
\begin{align*}
    \delta\{C_{\alpha\beta}b_\alpha+c_{\alpha\beta}^1\}&=B_{\gamma\alpha}B_{\beta\gamma}C_{\alpha\beta}b_\alpha+B_{\gamma\alpha}B_{\beta\gamma}c^1_{\alpha\beta} + B_{\gamma\alpha}C_{\beta\gamma}b_\beta+B_{\gamma\alpha}c^1_{\beta\gamma} +C_{\gamma\alpha}b_\gamma+c^1_{\gamma\alpha}\\
    &=(B_{\gamma\alpha}B_{\beta\gamma}C_{\alpha\beta}+B_{\gamma\alpha}C_{\beta\gamma}D_{\alpha\beta}+C_{\gamma\alpha}D_{\beta\gamma}D_{\alpha\beta})b_\alpha+\\
    &+B_{\gamma\alpha}C_{\beta\gamma}c^2_{\alpha\beta}+C_{\gamma\alpha}D_{\beta\gamma}c^2_{\alpha\beta}+C_{\gamma\alpha}c^2_{\beta\gamma}+B_{\gamma\alpha}B_{\beta\gamma}c^1_{\alpha\beta} + B_{\gamma\alpha}c^1_{\beta\gamma} + c^1_{\gamma\alpha}\\
    &=B_{\gamma\alpha}C_{\beta\gamma}c^2_{\alpha\beta}+C_{\gamma\alpha}D_{\beta\gamma}c^2_{\alpha\beta}+C_{\gamma\alpha}c^2_{\beta\gamma}+B_{\gamma\alpha}B_{\beta\gamma}c^1_{\alpha\beta} + B_{\gamma\alpha}c^1_{\beta\gamma} + c^1_{\gamma\alpha}\\
    &=n^1_{\alpha\beta\gamma}
\end{align*}
if we write the Chern class $\{n_{\alpha\beta\gamma}\}\in H^2(\pi(S),\Gamma_M)$  of $M$ as $\{(n^1_{\alpha\beta\gamma},n^2_{\alpha\beta\gamma})\}$ in the frame of $\Gamma_M$ induced by the coordinates $(p_\alpha^1,p_\alpha^2)$.
In particular, if $c_M$ is torsion, so is $c_S$ and there exists constant representatives $\{m_{\alpha\beta}\}$ of $\{C_{\alpha\beta}b_\alpha+c^1_{\alpha\beta}\}$. That is,
there exists $\xi_\alpha\in \Gamma(U_\alpha,V_S/\Gamma_S)$ such that $$B_{\alpha\beta}\xi_\alpha-\xi_\beta=C_{\alpha\beta}b_\alpha+c^1_{\alpha\beta}- m_{\alpha\beta} .$$ Then, changing coordinates to
$$\tilde{q}_\alpha=q_\alpha+\xi_\alpha$$
changes the transition functions along $S$ as
$$\begin{pmatrix}B_{\alpha \beta} & 0 \\ 0 & D_{\alpha\beta} \end{pmatrix}\begin{pmatrix} \tilde{q}_\alpha \\ b_\alpha \end{pmatrix}+\begin{pmatrix} m_{\alpha\beta} \\ c^2_{\alpha\beta} \end{pmatrix}=\begin{pmatrix}\tilde{q}_\beta \\ b_\beta \end{pmatrix} .$$
In general, if $\tilde{p}^1_\alpha=p^1_\alpha+\xi_\alpha$ we have 
$$\tilde{p}^1_\beta=p^1_\beta+\xi_\beta=B_{\alpha\beta}p^1_{\alpha}+C_{\alpha\beta}p^2_\alpha + c_{\alpha\beta}^1 +\xi_\beta=B_{\alpha\beta}\tilde{p}^1_\alpha+C_{\alpha\beta}p^2_\alpha -C_{\alpha\beta}b_\alpha+m_{\alpha\beta}$$
and if we also set $\tilde{p}^2_\alpha=p^2_\alpha-b_\alpha$  using that $b_\beta=D_{\alpha\beta}b_\alpha+c^2_{\alpha\beta}$ the new transition functions are
$$\begin{pmatrix}B_{\alpha\beta} & C_{\alpha\beta} \\ 0 & D_{\alpha\beta} \end{pmatrix}\begin{pmatrix}\tilde{p}_\alpha\\ \tilde{p}_\beta \end{pmatrix}+\begin{pmatrix}m_{\alpha\beta} \\ 0\end{pmatrix}=\begin{pmatrix} \tilde{p}_\beta^1\\ \tilde{p}_\beta^2 \end{pmatrix}.$$
The following lemma is clear from this description.

\begin{lemma}\label{chern class of s}
Let $S\subset M$ be an affine torus subbundle. Then, the Chern class $c_M$ of $M|_{\pi(S)}$ is the image of the Chern class $c_S$ of $S$ under the map
$$\HH^2(\pi(S),\Gamma_S)\ra \HH^2(\pi(S),\Gamma_M|_{\pi(S)})$$
induced by the inclusion $\Gamma_S\ra \Gamma_M|_{\pi(S)}$.
\end{lemma}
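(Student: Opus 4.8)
The plan is to establish the identity $c_M=\iota_*(c_S)$, where $\iota_*$ is induced by the inclusion $\Gamma_S\hookrightarrow\Gamma_M|_{\pi(S)}$, by invoking the \emph{functoriality of the connecting homomorphism} that defines the Chern class, using the explicit cocycle computation carried out above as a verification. Assume $B=\pi(S)$ throughout. Recall that $c_M$ is the image of the transition cocycle $\{c^M_{\alpha\beta}\}\in\HH^1(B,\cC^\infty_{M_0})$ under the connecting map $\delta_M$ of the sequence $0\to\Gamma_M\to\cC^\infty_{V_M}\to\cC^\infty_{M_0}\to0$ (and likewise for $S$, with $\delta_S$). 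The inclusions $\Gamma_S\subset\Gamma_M$, $V_S\subset V_M$, $S_0\subset M_0$ fit into a commutative ladder of these short exact sequences, which produces a commutative square
\[
\begin{tikzcd}
\HH^1(B,\cC^\infty_{S_0}) \arrow{r}{\delta_S} \arrow{d}{\phi_*} & \HH^2(B,\Gamma_S) \arrow{d}{\iota_*} \\
\HH^1(B,\cC^\infty_{M_0}) \arrow{r}{\delta_M} & \HH^2(B,\Gamma_M)
\end{tikzcd}
\]
in which $\phi_*$ is induced by $S_0\hookrightarrow M_0$. Consequently $\iota_*(c_S)=\iota_*\delta_S[\{c^S_{\alpha\beta}\}]=\delta_M\,\phi_*[\{c^S_{\alpha\beta}\}]$, so it suffices to show $\phi_*[\{c^S_{\alpha\beta}\}]=[\{c^M_{\alpha\beta}\}]$ in $\HH^1(B,\cC^\infty_{M_0})$.

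The key step is to compare the two transition cocycles directly. I would fix local sections $s^M_\alpha$ of $M$ and $s^S_\alpha$ of $S$ over a good cover, so that $c^M_{\alpha\beta}=s^M_\alpha-s^M_\beta$ and $c^S_{\alpha\beta}=s^S_\alpha-s^S_\beta$ are the respective torsor differences, valued in $M_0$ and $S_0$. Viewing $s^S_\alpha$ as a section of $M$ through the inclusion, write $s^S_\alpha=s^M_\alpha+\beta_\alpha$ with $\beta_\alpha\in\Gamma(U_\alpha,M_0)$; in the coordinates of the excerpt, $\beta_\alpha$ is precisely the offset $b_\alpha$. Subtracting over $U_{\alpha\beta}$ gives $\phi(c^S_{\alpha\beta})=c^M_{\alpha\beta}+(\beta_\alpha-\beta_\beta)$, so the two cocycles differ by a \v{C}ech coboundary and hence represent the same class in $\HH^1(B,\cC^\infty_{M_0})$. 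Together with the commutative square this already yields $\iota_*(c_S)=c_M$; the coordinate computation above, which gives $\delta\{C_{\alpha\beta}b_\alpha+c^1_{\alpha\beta}\}=n^1_{\alpha\beta\gamma}$ with $n^1$ the first component of $c_M$, is the same statement in a trivialisation.

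The one genuine subtlety—and the step I expect to be the main obstacle—is that the long exact sequence of $0\to\Gamma_S\to\Gamma_M\to\Gamma_M/\Gamma_S\to0$ does not by itself single out $c_S$ as \emph{the} preimage of $c_M$, because $\iota_*$ need not be injective; one really must prove the specific equality rather than mere membership in $\operatorname{im}(\iota_*)$. What makes the preimage canonical is the observation that the quotient affine torus bundle $M/S_0$, a torsor under $(V_M/V_S)/(\Gamma_M/\Gamma_S)$, carries a tautological section: the composite $S\hookrightarrow M\to M/S_0$ collapses each fibre of $S$ to the zero coset. Hence $q_*(c_M)=c_{M/S_0}=0$, and I may choose the affine fibre coordinates so that $S=\{p^2=0\}$, i.e. $b_\alpha=0$ and $c^2_{\alpha\beta}=0$. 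In this trivialisation the cocycle computation collapses: the second component $n^2_{\alpha\beta\gamma}$ vanishes and $n^1_{\alpha\beta\gamma}=B_{\gamma\alpha}B_{\beta\gamma}c^1_{\alpha\beta}+B_{\gamma\alpha}c^1_{\beta\gamma}+c^1_{\gamma\alpha}$ is exactly the Chern cocycle $\delta_S\{c^S_{\alpha\beta}\}$ of $S$. Therefore $c_M=[\{(n^1_{\alpha\beta\gamma},0)\}]=\iota_*[\{n^1_{\alpha\beta\gamma}\}]=\iota_*(c_S)$, and since cohomology classes are coordinate independent the identity holds in general, completing the proof.
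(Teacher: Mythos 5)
Your proof is correct, and it in fact contains two complete arguments. The paper's own proof is the computation preceding the lemma: write the transition functions of $M|_{\pi(S)}$ in the block form (\ref{transition of S in M}), identify $c_S$ with the class of $\delta\{C_{\alpha\beta}b_\alpha+c^1_{\alpha\beta}\}=\{n^1_{\alpha\beta\gamma}\}$, and then pass to the fibre coordinates $\tilde{p}^1_\alpha=p^1_\alpha$, $\tilde{p}^2_\alpha=p^2_\alpha-b_\alpha$ (equivalently, take the local origins of the fibre coordinates to lie on $S$), after which $b_\alpha=0$, $c^2_{\alpha\beta}=0$ and the translation cocycle of $M$ visibly sits in the $\Gamma_S$-block; this is exactly your last paragraph. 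What you add in your first two paragraphs is a genuinely coordinate-free route: naturality of the connecting homomorphism for the morphism of short exact sequences $0\ra\Gamma_S\ra\cC^\infty_{V_S}\ra\cC^\infty_{S_0}\ra 0$ and $0\ra\Gamma_M\ra\cC^\infty_{V_M}\ra\cC^\infty_{M_0}\ra 0$, combined with the observation that $\phi(c^S_{\alpha\beta})$ and $c^M_{\alpha\beta}$ differ by the coboundary of the $0$-cochain $\{\beta_\alpha\}$ of section differences. That argument is shorter, requires no block decomposition or adapted coordinates, and makes transparent that the identity is independent of all choices (and of any torsion hypothesis); the paper's computation, by contrast, produces the explicit formulas (monodromy $\{B_{\alpha\beta}\}$, Chern cocycle $\delta\{C_{\alpha\beta}b_\alpha+c^1_{\alpha\beta}\}$) that are reused later, for instance in Theorem \ref{global thm1} and in the gerbe computations of Section \ref{last chapter section general base}. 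One remark: the ``genuine subtlety'' you flag in your final paragraph is already settled by your first two paragraphs, which prove the equality $\iota_*(c_S)=c_M$ outright rather than mere membership $c_M\in\operatorname{im}(\iota_*)$; so that paragraph should be read as a second, independent proof (essentially the paper's), not as a step needed to close a gap in the first.
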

Let us assume that $M$ has torsion Chern class and let $\cL=(S,F)$ be a locally T-dualizable generalized brane and let $\hat{M}$ be a T-dual in the sense of Example \ref{Tdual torsion coordinates}.  Let $\{U_\alpha\}$ be a good cover of $\pi(S)$. The brane $\cL$ locally fits into the framework of Theorem \ref{local gg thm}, so there exist T-duals $(\hat{S}_\alpha,\hat{F}_\alpha)$ of $(S_\alpha,F_\alpha)=(\pi_S^{-1}(U_\alpha),F|_{\pi_S^{-1}(U_\alpha)})$ as generalized branes in $\hat{M}_\alpha=\hat{M}|_{U_\alpha}$. On the other hand, the local T-duals may not glue together, since, as we saw before, the existence of a global submanifold $\hat{S}\subset \hat{M}|_{\pi(S)}$ restricts the Chern class and monodromy of $\hat{M}|_{\pi(S)}$. The following theorem is the first step toward explaining the exact conditions we must impose on $(S,F)$ to obtain a global T-dual in the sense of generalized geometry. 
\begin{lemma}
Let $(S,F)$ be as above and let $\{U_\alpha\}$ be a good cover of $\pi(S)=B$. Suppose that there exists a set 
 $(\hat{S}_\alpha,\hat{F}_\alpha)$ of local T-duals in $\hat{M}_\alpha$ such that the $\hat{S}_\alpha$ glue to a cover of an affine torus subbundle $\hat{S}$ of $\hat{M}$. Then, the local two-forms $\hat{F}_\alpha$ also glue together and $(\hat{S},\hat{F})$ is a global generalized T-dual of $S$.
\end{lemma}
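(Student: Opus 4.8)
The plan is to exploit the rigidity built into Theorem \ref{local gg thm}: the local two-form $\hat{F}$ produced there is \emph{uniquely} determined by its support $\hat{S}$, so once the supports $\hat{S}_\alpha$ glue, the forms $\hat{F}_\alpha$ are forced to glue as well. The whole argument is therefore a \v{C}ech patching driven by this uniqueness, together with the observation that the T-duality relation is a pointwise (over $B$) condition.

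First I would record that, since $\{U_\alpha\}$ is a good cover of $\pi(S)=B$, every nonempty intersection $U_{\alpha\beta}=U_\alpha\cap U_\beta$ is contractible, so Theorem \ref{local gg thm} applies verbatim over each $U_{\alpha\beta}$. The distribution $\Delta=\ker(F+P)^1$ is defined globally on $S\times_{\pi(S)}\hat{M}$ (equation (\ref{Delta})), so the leaf $Z_\alpha$ over $U_\alpha$ that produces $\hat{S}_\alpha$ restricts to a leaf $Z_\alpha|_{U_{\alpha\beta}}$ of the \emph{same} foliation over $U_{\alpha\beta}$, and likewise for $Z_\beta$. By construction $\hat{S}_\alpha=\hat{p}_{Z_\alpha}(Z_\alpha)$ and $\hat{S}_\beta=\hat{p}_{Z_\beta}(Z_\beta)$; since the $\hat{S}_\alpha$ glue to $\hat{S}$, the two restricted leaves have the same image
$$\hat{p}(Z_\alpha|_{U_{\alpha\beta}})=\hat{S}|_{U_{\alpha\beta}}=\hat{p}(Z_\beta|_{U_{\alpha\beta}}).$$
Now I would invoke the ``same image'' clause of Theorem \ref{local gg thm}, which asserts that two leaves of $\Delta$ with equal image carry equal two-forms. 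Applied over the contractible $U_{\alpha\beta}$ this yields $\hat{F}_\alpha|_{U_{\alpha\beta}}=\hat{F}_\beta|_{U_{\alpha\beta}}$, which is exactly the cocycle condition needed for $\{\hat{F}_\alpha\}$ to patch to a single two-form $\hat{F}$ on $\hat{S}$ with $\hat{F}|_{\hat{S}_\alpha}=\hat{F}_\alpha$. Closedness, invariance, and rationality of the fibrewise class are local properties already established for each $\hat{F}_\alpha$ in Theorem \ref{local gg thm}, hence persist for $\hat{F}$; in particular $d\hat{F}=0=\hat{H}|_{\hat{S}}$, so $(\hat{S},\hat{F})$ is a genuine locally T-dualizable generalized brane on $\hat{M}$.

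Finally I would verify the T-duality relation of Definition \ref{def tdual gg branes}, namely $T((\tau_{\cL})_{red})=(\tau_{\hat{\cL}})_{red}$. This is an identity of subbundles of the reduced Courant algebroids $E_{red}$ and $\hat{E}_{red}$ over $B=\pi(S)$, and the T-duality map $T$ (equation (\ref{Tmap})) is a morphism over $B$; hence the identity can be checked pointwise over $B$, i.e.\ over each $U_\alpha$, where it is precisely the statement $T(\tau_{\cL})_{red}=(\tau_{\hat{\cL}})_{red}$ proved in the last part of Theorem \ref{local gg thm}. The only delicate point is that the gluing must be insensitive to \emph{which} leaf over each $\hat{S}_\alpha$ was chosen; this is exactly what the uniqueness-by-image clause guarantees, so no extra coherence condition on the choices is required. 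That uniqueness is where the real content of the lemma sits — everything else is bookkeeping for a patching argument over the good cover.
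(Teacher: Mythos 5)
Your proof is correct, but it takes a genuinely different route from the paper's. The paper works entirely downstairs with the reduced Courant algebroids: it observes that $T((\tau_\cL)_{red})$ is defined globally over $\pi(S)$, so its pullback $\hat{\pi}^*T((\tau_\cL)_{red})$ restricted to the global submanifold $\hat{S}$ is a globally defined involutive maximal isotropic subbundle of $T\hat{M}\oplus T^*\hat{M}$; by Gualtieri's structure theorem that any such subbundle is the generalized tangent bundle of a generalized submanifold (\cite[Proposition 2.6]{MGthesis}), there is a single two-form $\hat{F}\in\Omega^2(\hat{S})$ with $\iota_X\hat{F}=\xi|_{\hat{S}}$ characterizing it, and comparing with the local $\tau_{\hat{\cL}_\alpha}$ forces $\hat{F}|_{\hat{S}_\alpha}=\hat{F}_\alpha$, so the gluing and the T-duality identity come out simultaneously. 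You instead work upstairs on the correspondence space: you restrict the leaves $Z_\alpha$, $Z_\beta$ to the contractible overlaps $U_{\alpha\beta}$, note that they then have the same image, and invoke the uniqueness-by-image clause of Theorem \ref{local gg thm} to obtain the \v{C}ech agreement $\hat{F}_\alpha|_{U_{\alpha\beta}}=\hat{F}_\beta|_{U_{\alpha\beta}}$, after which you glue and check the T-duality relation locally. Both arguments lean on Theorem \ref{local gg thm}, but on different parts of it: the paper on the final identity $T((\tau_\cL)_{red})=(\tau_{\hat{\cL}_\alpha})_{red}$ plus an external structure theorem for involutive maximal isotropics, you on the internal ``same image implies same two-form'' rigidity. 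The paper's route is shorter and exhibits $\hat{F}$ as an intrinsic consequence of the Courant-algebroid picture; yours is more elementary, avoiding the appeal to \cite{MGthesis}, at the cost of one small point you pass over quickly: that the restriction $Z_\alpha|_{U_{\alpha\beta}}$ of a leaf is again a leaf of the foliation over $U_{\alpha\beta}$. This is true --- leaves over a contractible base are closed affine torus subbundles with connected fibers, so the restriction is a connected integral submanifold which must coincide fiberwise with the unique leaf containing it --- but it deserves a sentence, since it is exactly what licenses applying the uniqueness clause on the overlap.
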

\begin{proof}
Let $E=TM+T^*M$ and $\tau_\cL\subset E|_S$ be the generalized tangent bundle of the generalized brane $\cL=(S,F)$. Then, $(\tau_\cL)_{red}$ is a well defined distribution in $E_{red}|_{\pi(S)}$. The generalized tangent bundles $\tau_{\hat{\cL}_\alpha}$ of the local T-duals  $\hat{\cL}_\alpha=(\hat{S}_\alpha,\hat{F}_\alpha)$ satisfy over $\pi(S)$
$$T(\tau_\cL)_{red}=(\tau_{\hat{\cL}_\alpha})_{red}.$$
That is, if $\hat{\pi}:\hat{M}|_{\pi(S)}\ra \pi(S)$ is the projection, then
$$\tau_{\hat{\cL}_\alpha}=\Big(\hat{\pi}^*T(\tau_\cL)_{red}\Big)\Big|_{\hat{S}_\alpha}.$$
The $\tau_{\hat{\cL}_\alpha}$  fit into the short exact sequence
\[
\begin{tikzcd}
    0 \arrow{r} & N^*\hat{S}_\alpha \arrow{r} & \tau_{\hat{\cL}_\alpha} \arrow{r}{\hat{\rho}} & T\hat{S}_\alpha \arrow{r} & 0,
\end{tikzcd}
\]
where $\hat{\rho}:T\hat{M}+T^*\hat{M}\ra T\hat{M}$ is the anchor map. In particular, the T-duals $\hat{S}_\alpha$ are integral submanifolds of the distribution $\hat{\rho}(\tau_{\hat{\cL}_\alpha})$.

The existence of a global $\hat{S}\subset \hat{M}$ means that $T\hat{S}_\alpha=T\hat{S}$ and over $\hat{S}$ the $\tau_{\hat{\cL}_\alpha}$ are given by the maximal isotropic subbundle $(\hat{\pi}^*T(\tau_\cL)_{red})|_{\hat{S}}$ of $\hat{E}=T\hat{M}\oplus T^*\hat{M}$. By \cite[Proposition 2.6]{MGthesis} every involutive maximal isotropic $L\subset \hat{E}$ can be expressed as a generalized tangent bundle over the integral submanifolds of $\hat{\rho}(L)$. Indeed, $\tau_\cL$ is involutive, therefore so is its T-dual, and there exists $\hat{F}\in \Omega^2(\hat{S})$ such that $(\hat{\pi}^*T(\tau_\cL)_{red})|_{\hat{S}}=\{X+\xi\in T\hat{S}+T^*\hat{M}|_{\hat{S}}\ |\ \iota_X\hat{F}=\xi|_{\hat{S}}\}.$
The $\tau_{\hat{\cL}_\alpha}$ are given by
$$\tau_{\hat{\cL}_\alpha}=\{X+\xi \in T\hat{S}|_{\hat{S}_{\alpha}}\oplus T^*\hat{M}|_{\hat{S}_{\alpha}}:\ \iota_X\hat{F}_{\alpha}=\xi|_{\hat{S}_\alpha} \}.$$
That is in an  all $X\in T\hat{S}$ locally over a cover of $\hat{S}$ we have
$$\iota_X\hat{F}=\iota_X\hat{F}_\alpha$$
 and therefore 
$$\hat{F}=\hat{F}_\alpha.$$
\end{proof}
This Lemma can also be proven explicitly by using local coordinates as in Theorem \ref{local gg thm} and the transition functions (\ref{transition of S in M}).

Let again $(M,0)$ and $(\hat{M},0)$ be a T-dual pair in the sense of Example \ref{Tdual torsion coordinates} and  $\cL=(S,F)$ a locally T-dualizable generalized brane in $M$. Let $\{U_\alpha\}$ be a good cover of $\pi(S)$. The local T-duals $\hat{S}_\alpha$ are affine torus subbundles of $\hat{M}|_{U_\alpha}$ and their local systems are determined by the short exact sequence (\ref{hat(S) lattice})
\[
 \begin{tikzcd}
            0 \arrow{r} & Ann(\Gamma_S) \arrow{r} & \Gamma_{\hat{S}_\alpha} \arrow{r} &  \Gamma_S^\vee \cap H_\alpha(V_{S_\alpha}) \arrow{r} & 0.
\end{tikzcd}
\]
Here, $H_\alpha$ are locally the images of $[F|_{S_\alpha}]$ under the projection $\HH^2(S|_{U_\alpha},\dR)\ra \HH^2(U_\alpha,\wedge^2V_{S_\alpha}^*)$. Since we assume that $M$ has torsion Chern class, so does   $\pi_S:S\ra \pi(S)$. In particular, the Leray spectral sequence $E^{p,q}_r(\pi_S,\dR)$ degenerates on the second page and there is a global map
\begin{equation}\label{H global}\HH^2(S,\dR)\ra \HH^2(\pi(S),\wedge^2 V_S^*),\ \ \ [F]\mapsto H. \end{equation}
That is, $H: V_S\ra V_S^*$ is well defined over $\pi(S)$ and consequently $\Gamma_{\hat{S}}\subset \Gamma_{\hat{M}}|_{\pi(S)}$ is also globally defined by the short exact sequence
\[
 \begin{tikzcd}
            0 \arrow{r} & Ann(\Gamma_S) \arrow{r} & \Gamma_{\hat{S}} \arrow{r} &  \Gamma_S^\vee \cap H(V_{S}) \arrow{r} & 0.
\end{tikzcd}
\]
In conclusion, the existence of a global T-dual to $(S,F)$ is only obstructed by the Chern class $c_{\hat{M}}\in \HH^2(\pi(S),\Gamma_{\hat{M}})$ of $\hat{M}$ in accordance with Lemma \ref{chern class of s}.

The global T-dual exists if and only if $c_{\hat{M}}$ over $\pi(S)$ is in the image \begin{equation}\label{chern1}
    \HH^2(\pi(S),\Gamma_{\hat{S}})\ra \HH^2(\pi(S),\Gamma_{\hat{M}}).
\end{equation}
Since the projection $\Gamma_{\hat{S}}\ra \Gamma_S^\vee\cap H(V_S)$ is the restriction of the projection $q:\Gamma_{\hat{M}}\ra \Gamma_S^\vee$ to the preimage of $H(V_S)$ we may rephrase (\ref{chern1}) in terms of $\Gamma_S^\vee$. Denote by $q$ also the map induced on cohomology.  The following theorem is now clear.
\begin{theorem}\label{global thm1}
    Let $(M,0)$ and $(\hat{M},0)$ be a T-dual pair and $(S,F)$ a locally T-dualizable brane of $M$. Then, $(S,F)$ has a T-dual if and only if
$$q(c_{\hat{M}})\in \HH^2(\pi(S),\Gamma_S^\vee\cap H(V_S)).$$    
\end{theorem}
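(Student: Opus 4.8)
The plan is to reduce the statement to the single cohomological condition developed in the preceding pages and then read off the equivalence from Lemma~\ref{chern class of s} together with the short exact sequence defining $\Gamma_{\hat{S}}$. The global picture has already been assembled: we established that when $\pi(S)$ is contractible a locally T-dualizable brane $(S,F)$ admits T-duals whose support is an affine torus subbundle $\hat{S}_\alpha\subset\hat{M}|_{U_\alpha}$ with monodromy local system $\Gamma_{\hat{S}_\alpha}$ fitting into the sequence
\[
\begin{tikzcd}
0 \arrow{r} & Ann(\Gamma_S) \arrow{r} & \Gamma_{\hat{S}} \arrow{r} & \Gamma_S^\vee\cap H(V_S) \arrow{r} & 0,
\end{tikzcd}
\]
and we verified in the gluing Lemma that whenever the local supports $\hat{S}_\alpha$ glue to a global affine torus subbundle $\hat{S}$, the local two-forms $\hat{F}_\alpha$ automatically glue to a global $\hat{F}$, producing a genuine global T-dual. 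Because the torsion of $c_M$ forces the torsion of the Leray spectral sequence $E^{p,q}_r(\pi_S,\dR)$ to degenerate on the second page, the map $[F]\mapsto H$ of (\ref{H global}) is globally defined, hence so is the local system $\Gamma_{\hat{S}}\subset\Gamma_{\hat{M}}|_{\pi(S)}$. First I would record that the existence of a global T-dual is therefore equivalent to the existence of a global affine torus subbundle $\hat{S}\subset\hat{M}|_{\pi(S)}$ with this prescribed monodromy local system.

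The core of the argument is the structural result of Lemma~\ref{chern class of s}, applied now on the dual side. An affine torus subbundle $\hat{S}\subset\hat{M}|_{\pi(S)}$ with monodromy local system $\Gamma_{\hat{S}}$ exists precisely when the Chern class $c_{\hat{M}}\in\HH^2(\pi(S),\Gamma_{\hat{M}})$ of the ambient bundle lies in the image of the inclusion-induced map
\begin{equation}\label{chernimage}
\HH^2(\pi(S),\Gamma_{\hat{S}})\ra \HH^2(\pi(S),\Gamma_{\hat{M}}).
\end{equation}
Indeed, Lemma~\ref{chern class of s} asserts that for an affine torus subbundle the Chern class of the ambient bundle is the image of the subbundle's Chern class under the inclusion of monodromy local systems; conversely, by Theorem~\ref{iso of afft} any class in the image of (\ref{chernimage}) is realized by an affine $T$-bundle over $\pi(S)$ with the desired monodromy, which then embeds in $\hat{M}|_{\pi(S)}$ as the sought subbundle. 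So the plan is to invoke these two results to identify ``$(S,F)$ has a T-dual'' with ``$c_{\hat{M}}\in\mathrm{Im}(\ref{chernimage})$''.

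The final step is to rephrase membership in the image of (\ref{chernimage}) in terms of $\Gamma_S^\vee\cap H(V_S)$, as in the statement. The projection $\Gamma_{\hat{S}}\ra\Gamma_S^\vee\cap H(V_S)$ is, by construction, the restriction of the quotient map $q:\Gamma_{\hat{M}}\ra\Gamma_S^\vee$ to the preimage of $H(V_S)\subset V_S^*$; that is, $\Gamma_{\hat{S}}=q^{-1}\bigl(\Gamma_S^\vee\cap H(V_S)\bigr)$ inside $\Gamma_{\hat{M}}$, while $Ann(\Gamma_S)=\ker q$. Hence a class in $\HH^2(\pi(S),\Gamma_{\hat{M}})$ lifts to $\HH^2(\pi(S),\Gamma_{\hat{S}})$ if and only if its image under the induced map $q:\HH^2(\pi(S),\Gamma_{\hat{M}})\ra\HH^2(\pi(S),\Gamma_S^\vee)$ already lands in the subgroup $\HH^2(\pi(S),\Gamma_S^\vee\cap H(V_S))$. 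Applying this to $c_{\hat{M}}$ yields exactly the stated criterion
\[
q(c_{\hat{M}})\in\HH^2\bigl(\pi(S),\Gamma_S^\vee\cap H(V_S)\bigr),
\]
and completes the proof.

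The step I expect to require the most care is the passage from ``lift of monodromy local systems'' to ``lift of Chern classes,'' i.e. justifying that the obstruction to lifting $c_{\hat{M}}$ along $\Gamma_{\hat{S}}\hookrightarrow\Gamma_{\hat{M}}$ is captured cleanly by $q$ rather than by a connecting homomorphism of a longer exact sequence. The subtlety is that $\Gamma_S^\vee\cap H(V_S)$ may fail to be a saturated (primitive) sublattice of $\Gamma_S^\vee$, so the quotient $\Gamma_S^\vee/(\Gamma_S^\vee\cap H(V_S))$ can have torsion; one must confirm that the relevant portion of the long exact sequence in cohomology does identify the image condition (\ref{chernimage}) with the preimage condition under $q$, using that $Ann(\Gamma_S)=\ker q$ is a direct summand compatibly with the decomposition. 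The remaining verifications --- degeneration of the spectral sequence from torsion of $c_M$, global well-definedness of $H$, and the functoriality of $q$ on cohomology --- are routine given the machinery already in place.
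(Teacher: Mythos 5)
Your proposal is correct and follows essentially the same route as the paper: reduce to the existence of a global affine torus subbundle with monodromy $\Gamma_{\hat{S}}$ via the gluing lemma, apply Lemma \ref{chern class of s} (with Theorem \ref{iso of afft}) to identify this with $c_{\hat{M}}$ lifting along $\HH^2(\pi(S),\Gamma_{\hat{S}})\ra\HH^2(\pi(S),\Gamma_{\hat{M}})$, and then rephrase the lifting condition through $q$ using $\Gamma_{\hat{S}}=q^{-1}(\Gamma_S^\vee\cap H(V_S))$ and $Ann(\Gamma_S)=\ker q$. The diagram chase you flag as delicate does go through (since $Ann(\Gamma_S)$ maps into $\Gamma_{\hat{S}}$, the map of long exact sequences lets one correct any lift), and the paper treats this same step as immediate.
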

\begin{example} Let $B$ be a manifold such that $\HH^2(B,\dZ)$ has a torsion element $\beta$. Then, the principal circle bundle $M=B\times S^1$ is T-dual to the circle bundle $\hat{M}=(\hat{\Gamma}=\dZ,\hat{c}=\beta)$. The brane $\cL=(M,0)$ on $M$ is locally T-dualizable but its T-dual should be an integral submanifold of the horizontal distribution of $\hat{M}$ which is also a section. This does not exist because $\hat{c}\neq 0$ and indeed the condition of Theorem \ref{global thm1} is not satisfied. On the other hand, there exists a multisection of $\hat{M}$.
\end{example}

\begin{example} Let $B=\dC$ with complex coordinate $z=x+iy$ and $\Gamma=span_\dZ\{dx,dy\}\subset T^*B$ the trivial lattice. Let  $\{z,w\}$ be dual complex coordinates on $M=T^*B/\Gamma$. The group $\dZ_2$ acts on $M$ via $-1: (z,w)\mapsto (-z,-w)$ with fixed points over $0\in\dC$. The quotient is a torus fibration over $\dC\cong \dC/\dZ_2$ with a singular fiber over $0\in \dC$. This space can also be seen as the moduli space of parabolic rank one Higgs bundles over $\dC\dP^1$ with four marked points \cite[Section 4.]{hauselToy}.

Let us denote by $M'$ the fibers over $\dC-\{0\}$. This is an affine torus bundle with monodromy in $SL(2,\dZ)$ and no Chern class. Therefore the fibers of $M'$ are principally polarized and the K\"ahler structure on the base induces a semi-flat hyperk\"ahler structure on $M'$. This is self T-dual in the sense of Theorem \ref{semiflat tdual principal pol}. Let $\cL=(M,0)$ be a $BBB$-brane. Then its T-dual $(BAA)$-branes are sections of $M$ which are integral submanifolds of the horizontal distribution. These correspond to the invariant sections of $T^*(\dC-\{0\})/\Gamma$ under the action of $\dZ_2$. There are four of these corresponding to the 2-torsion points of the square torus.
\end{example}

\begin{example}\label{BAA-BBB space filling tduals} Let $M$ be an algebraic integrable system of real dimension $4n$ endowed with a flat connection and the corresponding semi-flat hyperk\"ahler structure. Let $\hat{M}$ be a T-dual as in Example \ref{Tdual torsion coordinates} endowed with a flat connection and with the T-dual semi-flat hyperk\"ahler structure as in Theorem \ref{semiflat tdual}. 

Let $\cL=(M,\omega_\dI)$ be the space filling coisotropic $BAA$-brane on $M$ (Example \ref{canonical coisotropic}). That is,
$$\omega_\dI=\begin{pmatrix}\omega & 0 \\ 0 & \omega^{-1} \end{pmatrix}$$
Then $\cL$ is locally T-dualizable since $\omega_\dI$ restricts to the fibers as minus the polarization which is an integral cohomology class. Moreover, there exists a global T-dual since $H(V_M)=V_M^*$. 

Let $\hat{\cL}=(\hat{M},\hat{F})$ be the space filling $BBB$-brane of Example \ref{space filling BBB}. That is in the coordinates of Theorem \ref{semiflat tdual} we have
$$\hat{F}=\begin{pmatrix}\omega & 0 \\ 0 & \omega\end{pmatrix}.$$
It is once again a T-dualizable brane but now $\hat{F}$ represents a rational cohomology class on the fibers.
\begin{lemma}
    The T-dual of the space filling $BAA$ brane $\cL$ is the space filling $BBB$-brane $\hat{\cL}$
\end{lemma}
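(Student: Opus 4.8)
The plan is to apply Theorem \ref{local gg thm} directly, since both branes are already known to be locally T-dualizable and the issue is only to identify their T-dual pair explicitly. First I would verify that the hypotheses of Theorem \ref{local gg thm} are met for $\cL=(M,\omega_\dI)$: the support is all of $M$, which is trivially an affine torus subbundle over $B$ (an open dense subset if we restrict to the smooth locus, but we work over the base $B$ of the integrable system), the form $\omega_\dI$ is invariant and closed, and its restriction to each fiber equals minus the polarization, hence represents an integral (in particular rational) cohomology class. Thus $\cL$ is locally T-dualizable, and since $H(V_M)=V_M^*$ is all of $V_M^*$, the short exact sequence (\ref{hat(S) lattice}) forces the T-dual support to be all of $\hat{M}$, so the global obstruction of Theorem \ref{global thm1} vanishes and a global T-dual exists and is space-filling.

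The core computation is to show that the descended two-form $\hat{F}$ produced by Theorem \ref{local gg thm} is exactly $\begin{pmatrix}\omega & 0\\ 0 & \omega\end{pmatrix}$. I would work in the flat Darboux coordinates of the proof of Theorem \ref{semiflat tdual}: coordinates $\{x^i,p_i\}$ on $M$ and $\{x^i,\hat{p}^i\}$ on $\hat{M}$ with $P=d\hat{p}^i\wedge dp^i$ on the correspondence space. Here the base is not further constrained (the support fills each fiber), so in the notation of Lemma \ref{invariant} there are no transverse $y$-directions beyond the base coordinates and the fiber component of $\omega_\dI$ is the full $\omega^{-1}$ block. The leaf $Z$ is cut out by the kernel distribution $\Delta$, and applying the defining relation $p_Z^*F+i_Z^*P=\hat{p}_Z^*\hat{F}$ from Theorem \ref{local gg thm}, the base block $\omega$ of $\omega_\dI$ passes through unchanged while the fiber block $\omega^{-1}$ gets inverted by the pairing with $P$ into an $\omega$ block on the dual fiber. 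This yields precisely $\hat{F}=\begin{pmatrix}\omega & 0\\ 0 & \omega\end{pmatrix}$, matching the $BBB$-brane $\hat{\cL}$ of Example \ref{space filling BBB}.

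Alternatively, and more conceptually, I would invoke the compatibility of brane T-duality with T-duality of generalized complex structures, which is the content of the theorem \emph{``$BAA$-branes map to $BBB$-branes''} proved just before. Since $\cL=(M,\omega_\dI)$ is a space-filling $BAA$-brane (Example \ref{canonical coisotropic}), its generalized tangent bundle is $\cJ_{\omega_\dJ}$- and $\cJ_{\omega_\dK}$-invariant and $\cJ_\dI$-invariant; its T-dual brane is automatically compatible with the T-duals of those structures, which by (\ref{BBI}), (\ref{BAJ}) and (\ref{BAK}) are $\cJ_{\hat{\dI}}$, $\cJ_{\hat{\dJ}}$ and $\cJ_{\hat{\dK}}$ — the complex-type structures. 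A brane compatible with all three $\cJ_{\hat{\dI}},\cJ_{\hat{\dJ}},\cJ_{\hat{\dK}}$ is exactly a $BBB$-brane, and being space-filling with the curvature identified above it must be $\hat{\cL}$.

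The main obstacle I anticipate is bookkeeping rather than conceptual: tracking the block-matrix form of $\omega_\dI$ through the kernel distribution $\Delta$ and the inversion induced by the non-degenerate pairing $P$, and confirming that the fiberwise integrality/rationality conditions are respected so that the unique $\hat{F}$ of Theorem \ref{local gg thm} is indeed the stated rational form (the fiber block becomes $\omega$, which is rational but generally non-integral unless the polarization is principal). The conceptual route via generalized complex structures sidesteps the matrix algebra but still requires me to confirm that the space-filling condition and the curvature normalization pin down the T-dual uniquely, which follows from the uniqueness clause of Theorem \ref{local gg thm}.
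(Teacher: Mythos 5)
Your proposal is correct and follows essentially the same route as the paper: the paper's proof is precisely your core computation, writing $\omega_\dI$ in flat Darboux coordinates as $\sum_i \tfrac{1}{d_i}dy_i\wedge dx^i+\sum_i d_i\, dp_i\wedge dq^i$ and following the proof of Theorem \ref{local gg thm} to conclude that the base block passes through unchanged while the fiber block is inverted (with a sign) to $-\sum_i\tfrac{1}{d_i}d\hat{p}^i\wedge d\hat{q}_i$, which is exactly $\hat{F}=\begin{pmatrix}\omega & 0\\ 0 & \omega\end{pmatrix}$. One caveat on your alternative route: the theorem that $BAA$-branes map to $BBB$-branes only shows the T-dual is \emph{some} space-filling $BBB$-brane, so identifying it as the specific $\hat{\cL}$ still requires the coordinate computation of $\hat{F}$, as you yourself acknowledge.
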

\begin{proof}
Indeed, in flat Darboux coordinates $\{x^i,y_i\}_{i=1}^n$ and dual fiber coordinates $\{p_i,q^i\}_{i=1}^n$ we can write $\omega$ as (\ref{omega in flat darboux coords}) and we have
$$\omega_\dI=\sum_{i=1}^n\frac{1}{d_i}dy_i\wedge dx^i +\sum_{i=1}^nd_idp_i \wedge dq^i.$$
Let $\{\hat{p}^i,\hat{q}_i\}_{i=1}^n$ be dual fiberwise coordinates on $\hat{M}\ra B$. Then, following the proof of Theorem \ref{local gg thm} the T-dual brane is space filling $\hat{S}=\hat{M}$ and the two-form is given by
$$\hat{F}=\sum_{i=1}^n\frac{1}{d_i}dy_i\wedge dx^i-\sum_{i=1}^n\frac{1}{d_i}d\hat{p}^i\wedge d\hat{q}_i$$
which is precisely $\hat{\cL}$.
\end{proof}
\end{example}

In the setting of topological T-duality, there is a gerbe $\cG$ on $M$ corresponding to a degree three integral cohomology class $h\in \HH^3(M,\dZ)$, which lies in the second filtered piece $F^2\HH^3(M,\dZ)$ of the Leray filtration. Via the isomorphisms
$$E^{2,1}_\infty(\pi,\dZ)\cong  \frac{Ker(d_2:E^{2,1}_2\ra E^{4,0}_2)}{Im(d_2:E^{0,2}_2\ra E^{2,1}_2)}\cong \frac{F^2\HH^3(M,\dZ)}{F^3\HH^3(M,\dZ)}$$
$h$ defines a class $[h]\in E^{2,1}_\infty$. If $(\hat{M},\hat{\cG})$ is a topological T-dual to $(M,\cG)$ then the Chern class $c_{\hat{M}}$ is an element in $\HH^2(B,\Gamma_M^\vee)\cong E^{2,1}_2(\pi,\dZ)$. A consequence of topological T-duality is \cite[Proposition 3.5]{B2} that $c_{\hat{M}}$ is actually in the kernel of $d_2$ so it also defines a class $[c_{\hat{M}}]$ in $E^{2,1}_\infty$ and we have
$$[h]=[c_{\hat{M}}]\in E^{2,1}_\infty(\pi,\dZ).$$
Then, Theorem \ref{chern1} can be viewed as a statement about the restriction of the gerbe $\cG$ to $S$. The restriction $\cG|_S$ of the gerbe has characteristic class $h|_S$, the image of $h$ pulled back along the inclusion $S\ra M$
$$\HH^3(M,\dZ)\ra \HH^3(S,\dZ).$$
Since the Leray spectral sequence is functorial, the class $h|_S$ lies in $F^2\HH^3(S,\dZ)$ and determines a class in $E^{2,1}_\infty(\pi_S,\dZ)$. The Chern class $c_{\hat{M}}$ of $\hat{M}$ restricted to $\pi(S)$ lies in $\HH^2(\pi(S),\Gamma_M^\vee)$. By \cite[Proposition 3.3]{B1} the $d_2$ differential of the Leray spectral sequence is given by cupping with the Chern class. Moreover, by Lemma \ref{chern class of s} the Chern class of $M|_{\pi(S)}$ is the image of the Chern class of $S$. That is, if $c_{\hat{M}}$ is $d_2$-closed in  $\HH^2(\pi(S),\Gamma_M^\vee)$, then $q(c_{\hat{M}})$ is $d_2$-closed in $\HH^2(\pi(S),\Gamma_S^\vee)=E^{2,1}_2(\pi_S,\dZ)$ and we have
\begin{align}[h|_S]=[q(c_{\hat{M}})]\in E^{2,1}_\infty(\pi_S,\dZ).\end{align}

\begin{remark}
    Let $S\subset M$ be an affine torus subbundle such that $\cG|_S$ is trivial. Then, for any $F\in \Omega^2(S)$ invariant there exist T-duals $(\hat{S},\hat{F})$. On the other hand, $\hat{\cG}|_{\hat{S}}$ may not be trivial as it is governed by the Chern class of $M$.  
\end{remark}


In the examples before we have seen that T-duality in generalized geometry is not one-to-one, we have to upgrade generalized branes to physical branes to get such a correspondence. The following theorem is a step towards understanding T-duality of unitary bundles and we will refer back to it in Section \ref{last chapter section general base}.
\begin{theorem}\label{gluing of Z}
Let $(S,F)$ be a generalized brane such that it has a global T-dual $(\hat{S},\hat{F})$. Then, the local leaves of the distribution $\Delta$ glue together to affine torus subbundles of $S\times_{\pi(S)}\hat{S}$ if and only if the following two conditions hold
\begin{enumerate}
    \item $c\in H^2(\pi(S),\Gamma_H)$,
    \item $-H(c)=q(\hat{c})\in H^2(\pi(S),q(\Gamma_{\hat{S}})).$
\end{enumerate}
Where by $H$ we mean the map induced on cohomology by $H:\Gamma_H\ra q(\Gamma_{\hat{S}})$.
\end{theorem}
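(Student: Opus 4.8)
The plan is to recast the gluing problem as a single Chern--class lifting problem along the inclusion of monodromy local systems, and then to extract the two conditions from the long exact sequence of a fibre product of lattices.

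First I would record that the distribution $\Delta$ and all its monodromy data are defined globally over $\pi(S)$. By Lemma \ref{local leaves coordfree} each local leaf is an affine torus subbundle of $S\times_{\pi(S)}\hat{M}$ with globally defined monodromy $\Gamma_Z$ sitting in the sequence (\ref{Gamma_Z SES}), and by Lemma \ref{local images of Z} its image has monodromy $\Gamma_{\hat{S}}$ as in (\ref{hat(S) lattice}). Reading off the two commutative diagrams preceding Section \ref{section local tdual}, the inclusion $\Gamma_Z\hookrightarrow \Gamma_S\oplus\Gamma_{\hat{S}}$ is $(p_0,\hat{p}_0)$, where $p_0$ is surjective onto $\Gamma_H\subset\Gamma_S$, the map $\hat{p}_0$ is an isomorphism on $Ann(\Gamma_S)$, and $q\circ\hat{p}_0=-H\circ p_0$. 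Hence $(p_0,\hat{p}_0)$ is injective and identifies $\Gamma_Z$ with the fibre product
$$\Gamma_Z\cong \Gamma_H\times_{\Gamma_S^\vee\cap H(V_S)}\Gamma_{\hat{S}},$$
giving a short exact sequence of local systems
$$0\ra \Gamma_Z\ra \Gamma_H\oplus\Gamma_{\hat{S}}\xrightarrow{\,H+q\,}\Gamma_S^\vee\cap H(V_S)\ra 0,$$
whose last map is surjective because $q$ is.

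Next I would reduce the statement to a lifting problem. The fibre product $S\times_{\pi(S)}\hat{S}$ has monodromy $\Gamma_S\oplus\Gamma_{\hat{S}}$ and, the transition functions being block diagonal, Chern class $(c,\hat{c})$. By Theorem \ref{iso of afft} a candidate global leaf is classified by a class $c_Z\in\HH^2(\pi(S),\Gamma_Z)$, and by Lemma \ref{chern class of s} applied to $Z\subset S\times_{\pi(S)}\hat{S}$ such a subbundle exists precisely when the image of $c_Z$ under $\iota_*\colon \HH^2(\pi(S),\Gamma_Z)\ra\HH^2(\pi(S),\Gamma_S\oplus\Gamma_{\hat{S}})$ equals $(c,\hat{c})$. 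That the resulting subbundle is genuinely a union of leaves of $\Delta$ is because $\Gamma_Z$ already prescribes the fibrewise directions of $\Delta$ while the flat structure fixes the horizontal ones, so the constructed bundle matches the explicit frame of $\Delta$ and of the leaves in Lemmas \ref{invariant} and \ref{local leaves}. Thus the local leaves glue if and only if $(c,\hat{c})\in\operatorname{Im}(\iota_*)$.

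Finally I would solve this lifting problem. Since $p_0$ factors through $\Gamma_H$, the pair $(c,\hat{c})$ can be hit only if $c$ already lifts to $\HH^2(\pi(S),\Gamma_H)$, which is condition (1); write $c$ for such a lift. Given this, $(c,\hat{c})\in\HH^2(\Gamma_H)\oplus\HH^2(\Gamma_{\hat{S}})$ lifts to $\HH^2(\Gamma_Z)$ if and only if it lies in the kernel of the induced map $(H+q)_*$ of the long exact sequence of the fibre-product sequence above, that is $H(c)+q(\hat{c})=0$, which is exactly condition (2), $-H(c)=q(\hat{c})$. Conversely, the two conditions produce a lift $c_Z$ and hence a global $Z$; since by Corollary \ref{space of leaves mapping to a T-dual} the leaves projecting to $\hat{S}$ differ only by constant fibrewise translations, they all glue simultaneously, completing the proof.

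I expect the main obstacle to be the geometric reduction in the middle step --- in particular, verifying that an affine torus subbundle with the prescribed monodromy $\Gamma_Z$ and matching Chern class is actually tangent to $\Delta$, so that it really is a union of leaves rather than merely some subbundle of $S\times_{\pi(S)}\hat{S}$. The safest route is to compare it directly with the coordinate frame for $\Delta$ and for the leaves in Lemmas \ref{invariant} and \ref{local leaves}. By contrast, the homological bookkeeping with the fibre-product sequence is routine once $\Gamma_Z$ has been identified.
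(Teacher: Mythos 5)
Your proposal is correct, and its core is the same as the paper's: reduce the gluing of the leaves to the statement that the Chern class $(c,\hat{c})$ of $S\times_{\pi(S)}\hat{S}$ lies in the image of $\HH^2(\pi(S),\Gamma_Z)\ra \HH^2(\pi(S),\Gamma_S\oplus\Gamma_{\hat{S}})$, and then extract the two conditions from the commutative diagrams relating $p_0$, $\hat{p}_0$, $-H$ and $q$. The genuine difference lies in completeness. The paper's own proof only carries out the necessity direction: assuming a lift $c_Z$ exists, it reads off $p_0(c_Z)=c$, giving condition (1), and $q(\hat{c})=q(\hat{p}_0(c_Z))=-H(p_0(c_Z))=-H(c)$, giving condition (2); the converse implication is never written down. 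Your identification of $\Gamma_Z$ as the fibre product $\Gamma_H\times_{\Gamma_S^\vee\cap H(V_S)}\Gamma_{\hat{S}}$, encoded in the short exact sequence
$$0\ra \Gamma_Z\ra \Gamma_H\oplus\Gamma_{\hat{S}}\xrightarrow{\,H+q\,}\Gamma_S^\vee\cap H(V_S)\ra 0,$$
supplies exactly the missing half: exactness of the induced long exact sequence at $\HH^2(\pi(S),\Gamma_H)\oplus\HH^2(\pi(S),\Gamma_{\hat{S}})$ shows that a pair $(\tilde{c},\hat{c})$ with $H(\tilde{c})+q(\hat{c})=0$ lifts to $\HH^2(\pi(S),\Gamma_Z)$, and the image of that lift downstairs is $(c,\hat{c})$. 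So on the cohomological side your write-up is actually more complete than the paper's.

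One caveat, which applies equally to the paper: both treatments take as given the geometric equivalence that the local leaves glue if and only if $(c,\hat{c})$ lifts to $\HH^2(\pi(S),\Gamma_Z)$. You rightly single out the delicate half of this --- an affine torus subbundle with monodromy $\Gamma_Z$ and matching Chern class, produced abstractly via Theorem \ref{iso of afft}, need not be tangent to $\Delta$ --- but your resolution (``the flat structure fixes the horizontal directions'') is only a sketch. A cleaner route is to bypass the abstract classification and glue leaves directly: over a good cover the leaves form a torsor under flat sections of $V_S^*/\Gamma_H^\vee$ (Proposition \ref{space of leaves of Delta}), the ambient transition functions permute leaves because $\Delta$ is globally defined, and the resulting \v{C}ech obstruction is identified with the failure of $(c,\hat{c})$ to lift. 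Since the paper asserts the same equivalence with no justification, this is not a gap of yours relative to the paper, but it is the step where a fully detailed proof must do the real work.
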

\begin{proof}
    The local leaves of $\Delta$ glue together if and only if the Chern class of $S\times_{\pi(S)}\hat{S}$ which equals to $p_0^*c+\hat{p}_0^*\hat{c}\in \HH^2(\pi(S),\Gamma_S+\Gamma_{\hat{S}})$  lies in the image of $\HH^2(\pi(S),\Gamma_Z)$. Recall, that the maps $p_0:\Gamma_Z\ra \Gamma_S$ and $\hat{p}_0\ra \Gamma_{\hat{S}}$ are given via the following diagram.
\[
\begin{tikzcd}
    0 \arrow{r} & Ann(\Gamma_S)\arrow{d}{\cong} \arrow{r} & \Gamma_Z \arrow{d}{\hat{p}_0} \arrow{r}{p_0} & \Gamma_H \arrow{r} \arrow{d}{-H} & 0\\
    0 \arrow{r} & Ann(\Gamma_S) \arrow{r} & \Gamma_{\hat{S}} \arrow{r}{q} & H(V_S)\cap \Gamma_M^\vee  \arrow{r} & 0
\end{tikzcd}
\]
In particular, if $p_0^*c+\hat{p}_0^*\hat{c}=c_Z\in \HH^2(\pi(S),\Gamma_Z)$ then 
$p_0(c_Z)=c$ so $c\in \HH^2(\pi(S),\Gamma_H)$ and $q(\hat{c})=q(\hat{p}_0(c_Z))=-H(p_0 (c_Z))=-H(c)$. Where by every map we mean the induced map on cohomology.
\end{proof}
\begin{example} In the setting of Example \ref{BAA-BBB space filling tduals} let $H\in \HH^0(B,\wedge^2\Gamma^\vee)$ be the fiberwise component of $\omega_\dI$. Then, $\Gamma^\vee$ and the Chern class $\hat{c}=-H(c)\in \HH^2(B,\Gamma^\vee)$ define a T-dual of $M$ and the space filling $BAA$-brane $\cL=(M,\omega_\dI)$ satisfies the conditions of Theorem \ref{gluing of Z}. Moreover, if the fibers of $M$ are principally polarized, $M\cong \hat{M}$ via $H$ and the two-form of the T-dual $\hat{\cL}=(M,\hat{F})$ from example \ref{BAA-BBB space filling tduals} restricts to the same cohomology class on the fiber as $\omega_\dI$.
\end{example}




\chapter{Complex tori, Fourier-Mukai transform and factors of automorphy}
In the last chapter, we have seen  T-duality of generalized branes. The rest of this thesis is dedicated to T-duality of physical branes of supersymmetric sigma models. These can be viewed as submanifolds endowed with either Hermitian vector bundles and connections or principal $U(d)$-bundles with connections. Unfortunately, our method from Chapter \ref{chapter gen geom tdual} only applies to branes which are related to generalized branes. Following the argument of \cite{Kap1} generalized branes with integral two-forms correspond to rank-1 branes. Therefore, we start with $U(1)$-bundles supported on affine torus subbundles.

When a physical brane is compatible with the "$B$-type" topological twist, the Hermitian connection must be compatible with a complex structure on the manifold and induce a holomorphic structure. That is, physical $B$-branes are holomorphic vector bundles supported on submanifolds. It can be argued that on a complex manifold "stacking branes" and other physical processes induce the entire derived category of coherent sheaves as the category of $B$-branes. Moreover, it is postulated in \cite{KW} that in the setting of T-duality when a complex structure is mapped to a complex structure (\ref{BBI}) T-duality of $B$-branes should be given by Fourier-Mukai transform.

In this chapter, we focus on the mathematical formalism of those $B$-branes on a complex torus which are given by holomorphic line bundles supported on affine subtori. The results of this chapter will motivate our treatment of general branes on affine torus bundles. 

In the first section, we give an introduction to line bundles on complex tori based on \cite[Chapter 2]{BL}. We introduce the \emph{factors of automorphy} description of holomorphic line bundles which will be our main tool in the following chapters. We present the Appel-Humbert theorem which states that every holomorphic line bundle has a canonical factor of automorphy. Finally, we define the Poincar\'e line bundle.

In the next section, we define the Fourier-Mukai transform and give a full description of the Fourier-Mukai transform of line bundles supported on affine subtori. This section aims to derive an alternative description of the transform for these special sheaves as the direct summand of another sheaf. This result for non-degenerate line bundles is \cite[Corollary 14.3.6]{BL} and the generalizations follow from basic properties of the Fourier-Mukai transform. 

The last section is dedicated to developing the machinery of factors of automorphy to reprove the result (\ref{main equation nondegen}) of the previous section. We define the factor of automorphy of a holomorphic vector bundle which is the pushforward of a line bundle along an isogeny. Such factors of automorphy have been studied before by Matsushima \cite{matsushima} who identified the class of vector bundles which can be obtained by such pushforwards. We prove some lemmas with factors of automorphy and finally reprove (\ref{main equation for line bdle on affine subtorus}) without using properties of the Fourier-Mukai transform.

\section{Line bundles on complex tori}
In this section, we introduce factors of automorphy, a method of describing holomorphic line bundles on a complex torus. We define the canonical factor associated to a line bundle. We also mention how one can describe any holomorphic vector bundle using factors of automorphy on a torus. This description can be generalized to $U(d)$-bundles bundles with connections as well, which will be the topic of the next chapter. We finally introduce the Poincar\'e line bundle.

Let $X=V/\Gamma$ be a complex torus and $L\ra X$ a holomorphic line bundle. Let $\cO_X$ be the sheaf of holomorphic functions on $X$ and $\cO_X^*$ be the sheaf of non-vanishing holomorphic functions.  We have the short exact sequence
\begin{equation}\label{exponential SES}
\begin{tikzcd}[column sep = large]
0 \arrow{r} & \dZ \arrow{r} & \cO_X \arrow{r}{exp(2\pi i \cdot\ )} & \cO_X^*\arrow{r} & 0
\end{tikzcd}
\end{equation}
and $L$ is represented by a class in $\HH^1(X,\cO_X^*)$. Since the line bundle $L$ is trivial when pulled back to the universal cover $V$ we may construct $L$ as the quotient of $V\times\dC$ by an action of the fundamental group $\pi_1(X)=\Gamma$. This action is governed by a \emph{factor of automorphy}.
\begin{definition}
A \emph{holomorphic factor of automorphy} on a complex torus $X=V/\Gamma$ is a map
$$a: V\times \Gamma \ra \dC$$
holomorphic in $V$ which satisfies
$$a(v,\lambda+\mu)=a(v+\lambda,\mu)\cdot a(v,\lambda).$$
Two factors of automorphy $a$ and $a'$ are equivalent if there exists a holomorphic function $h:V\ra \dC$ such that
$$a'(v,\gamma)=h(v+\gamma)a(v,\gamma)h(v)^{-1}.$$
\end{definition}
A factor of automorphy defines an action of $\Gamma$ on $V\times \dC$ by
$$\gamma.(v,t)=(v+\gamma,a(v,\gamma)t)\ \ \ (v,t)\in V\times \dC,\ \gamma\in \Gamma,$$
and therefore a line bundle 
$$L= V\times_{\Gamma}\dC:=(V\times \dC)/\Gamma.$$
By \cite[Proposition B1.]{BL} every holomorphic line bundle can be constructed this way.   

The presentation of a line bundle using a factor of automorphy is not unique but the Appel-Humbert theorem asserts that there is a canonical choice. Via the boundary morphism in the long exact sequence corresponding to (\ref{exponential SES})
\[
\begin{tikzcd}
    ... \arrow{r} & \HH^1(X,\cO) \arrow{r} & \HH^1(X,\cO_X^*) \arrow{r}{c_1} & \HH^2(X,\dZ) \arrow{r} & ...
\end{tikzcd}
\]
the class of $L\in \HH^1(X,\cO_X^*)$ maps to the \emph{first Chern class} $c_1(L)\in \HH^2(X,\dZ)$ of $L$.  There is an isomorphism from $\HH^2(X,\dZ)$ to the space of alternating bilinear forms  $Alt^2(\Gamma,\dZ)$ on $\Gamma$ taking integer values. Indeed, there is a unique invariant representative $\omega$ of the class $c_1(L)$ in $\Omega^2(X)$. We may identify $V$ with the tangent space at any $x\in X$ and define $E\in Alt^2(V,\dR)$ as $\omega_x$. Since $\omega$ is invariant, $E$ does not depend on $x\in X$. Moreover, since $\omega$ represents an integral cohomology class $E$ takes integer values on $\Gamma$. Furthermore, if $\omega$ is the Chern class of a holomorphic line bundle it must be of Hodge type $(1,1)$. In particular, 
\begin{align}\label{E of L}E(Iv,Iw)=E(v,w)\ \ v,w\in V,\end{align}
and $E$ defines a \emph{Hermitian pairing} 
\begin{align}\label{H of L}
H(v,w)=E(Iv,w)+iE(v,w).
\end{align}
Hermitian forms on $V$ whose imaginary part takes integer values on $\Gamma$ form a group under addition called the \emph{Neron-Severi group} $NS(X)$ of $X$. The group of holomorphic line bundles on $X$ is called the \emph{Picard group} and denoted by $Pic(X)$. Via the construction above, we have a surjection $Pic(X)\ra NS(X)$.

The Hermitian form corresponding to a line bundle $L$ describes completely its topological type but not its holomorphic structure. That is $Pic(X)\ra NS(X)$ is not injective. The following lemma describes the kernel, which we denote by $Pic^0(X)$.
\begin{lemma}\emph{(\cite[Proposition 2.2.2]{BL})}
Let $L\ra X$ be a flat line bundle, that is $c_1(L)=0$. Then there exists a factor of automorphy $a_L$ for $L$ which is constant on $V\times e$ and takes value in $U(1)\subset \dC^\times$.
\end{lemma}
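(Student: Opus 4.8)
The plan is to identify flat line bundles with unitary characters of the lattice. First I would dispatch the easy direction: any homomorphism $\chi:\Gamma\ra U(1)$ yields the constant factor of automorphy $a_\chi(v,\gamma)=\chi(\gamma)$, whose cocycle identity $a_\chi(v,\lambda+\mu)=a_\chi(v+\lambda,\mu)a_\chi(v,\lambda)$ is exactly the multiplicativity of $\chi$; the associated bundle $L_\chi=V\times_\Gamma\dC$ carries the flat connection descending from $d$ on $V\times\dC$ (which descends precisely because $a_\chi$ is constant), so $c_1(L_\chi)=0$ in $\HH^2(X,\dR)$, hence in $\HH^2(X,\dZ)$ as the latter is torsion-free. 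Thus $\chi\mapsto L_\chi$ lands in $Pic^0(X)$, and it suffices to prove surjectivity, i.e. that every $L$ with $c_1(L)=0$ admits a factor of the stated form.

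To reduce to an additive cocycle I would run the exponential sequence (\ref{exponential SES}) on the universal cover: $0\ra \dZ\ra \cO(V)\xrightarrow{\exp(2\pi i\,\cdot\,)}\cO^*(V)\ra 0$ is exact because $V$ is simply connected, so every non-vanishing holomorphic function on $V$ has a holomorphic logarithm. Taking $\Gamma$-group cohomology and using the identification $\HH^1(\Gamma,\cO^*(V))\cong Pic(X)$ (factors of automorphy modulo equivalence) together with $\HH^2(\Gamma,\dZ)\cong\HH^2(X,\dZ)$, under which the connecting map is $c_1$, the hypothesis $c_1(L)=0$ shows that the factor of $L$ can be written as $a(v,\gamma)=\exp\big(2\pi i\,g_\gamma(v)\big)$ for an additive holomorphic cocycle $g:\Gamma\ra\cO(V)$.

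The crux is the normalisation of $g$. Using the identification $\HH^1(X,\cO_X)\cong\overline{\Omega}$ with the space of $\dC$-antilinear forms (the same group that defines the dual torus $\hat X=\overline{\Omega}/\Gamma_\dC^\vee$), the class of $g$ is represented by the \emph{constant} cocycle $g_\gamma=\ell(\gamma)$ for a unique $\ell\in\overline{\Omega}$; equivalently, subtracting a suitable coboundary $h(v+\gamma)-h(v)$ removes the $v$-dependence. This yields a factor $\exp(2\pi i\,\ell(\gamma))$ constant in $v$, but of modulus $\exp(-2\pi\,\mathrm{Im}\,\ell(\gamma))\neq 1$ in general, so the remaining task is to make it unitary without leaving the equivalence class. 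I expect this final adjustment, and the justification that the cocycle may be taken constant in $v$, to be where the genuine care is needed.

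For the unitarisation, the freedom left is to add to $\ell$ the coboundary of $h(v)=\exp(2\pi i\,m(v))$ for a $\dC$-linear $m\in V^*$, which replaces $\ell(\gamma)$ by $(\ell+m)(\gamma)$ since $m(v+\gamma)-m(v)=m(\gamma)$. Every $\dR$-linear map $b:V\ra\dR$ is the imaginary part of the $\dC$-linear form $m(v):=b(Iv)+i\,b(v)$; taking $b=-\mathrm{Im}\,\ell$ makes $r:=\ell+m$ a real-valued $\dR$-linear form, hence real on $\Gamma$. Then $a_L(v,\gamma):=\exp(2\pi i\,r(\gamma))$ is constant in $v$, valued in $U(1)$, and equivalent to the original factor, so $\chi(\gamma):=\exp(2\pi i\,r(\gamma))$ is the desired character with $L\cong L_\chi$. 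I note that if one invokes the Appell--Humbert theorem (stated later in the chapter), the lemma is immediate on setting $H=0$ in the canonical factor; the argument above is the self-contained version of that specialisation.
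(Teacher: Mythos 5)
The paper itself gives no proof of this lemma---it is quoted directly from \cite[Proposition 2.2.2]{BL}---and your argument is correct and reconstructs essentially that source's proof: the exponential sequence over $V$ reduces the factor of automorphy to an additive holomorphic cocycle, the identification $H^1(\Gamma,\mathcal{O}(V))\cong H^1(X,\mathcal{O}_X)\cong\overline{\Omega}$ by cocycles constant in $v$ given by antilinear forms normalizes it, and your explicit unitarization via the $\mathbb{C}$-linear coboundary $m(v)=b(Iv)+ib(v)$ with $b=-\mathrm{Im}\,\ell$ (so that $\ell+m$ is real-valued and $\mathbb{R}$-linear) correctly finishes the proof. One caveat on your closing remark: deducing the lemma from the Appell--Humbert theorem would be circular relative to the cited source's development, since there Proposition 2.2.2 is an ingredient in the proof of Appell--Humbert (it supplies exactly the surjectivity of $\mathrm{Hom}(\Gamma,U(1))\to \mathrm{Pic}^0(X)$ needed for the five lemma), so the self-contained argument you give is the right one to present.
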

Any such factor of automorphy is given by a homomorphism 
$$\chi: \Gamma\ra U(1)$$
and two such factors represent the same line bundle if and only if they are conjugate to each other by an element of $U(1)$. In particular, if we denote by $Hom(\Gamma,U(1))$ the space of characters of $\Gamma$ we have an isomorphism
\begin{align*}
    Hom(\Gamma,U(1)) \cong Pic^0(X),
\end{align*}
between the space of characters and the space of flat holomorphic line bundles. Even more, the space of characters is precisely the dual torus (\ref{dual torus}) under the identification
\begin{equation}
    \begin{aligned}
        \hat{X}\ \ &\ra\ \  Hom(\Gamma,U(1))\\
        f \ \ &\mapsto\ \  \Big(\gamma \mapsto exp(2\pi i Im(f(\gamma)))\Big),
    \end{aligned}
\end{equation}
where we lift $f$ to an element in $V^*$. 

The difference between holomorphic line bundles which have the same Chern class is not a character when the Chern class is not zero, but a semi-character. 
\begin{definition}
    Let $H\in NS(X)$ be a Hermitian pairing on $X$. A \emph{semicharacter for $H$} is a map
    $$\chi: \Gamma \ra U(1)$$
    which satisfies for all $\lambda,\mu\in \Gamma$
    $$\chi(\lambda+\mu)=\chi(\lambda)\chi(\mu)exp(i\pi\cdot  Im(H(\lambda,\mu)))$$
\end{definition}
Let us define the group
\begin{equation}\label{P(Gamma) complex}
\begin{aligned}
    \cP(\Gamma):=\{ (H,\chi)\ | \ H&\in NS(X),\   \chi \text{ a semicharacter for $H$}.\},\\
    \text{with group operation: }&(H_1,\chi_1)\cdot (H_2,\chi_2)=(H_1+H_2,\chi_1\cdot \chi_2).
\end{aligned}
\end{equation}
Now we are ready to state the following theorem.
\begin{theorem}\label{appel humbert holo}\emph{(Appel-Humbert Theorem  \cite[2.2.3]{BL})}
There is an isomorphism of groups
\begin{equation}\label{canonical factor}
\begin{aligned}
   \cP(\Gamma)\ &\ra\ Pic(X)\\
   (H,\chi)\ &\mapsto\ a_{(H,\chi)}(v,\lambda)=\chi(\lambda)exp\Big(\pi H(v,\lambda)+\frac{\pi}{2}H(\lambda,\lambda)\Big).
\end{aligned}
\end{equation}
which fits into the isomorphism of short exact sequences
\[
\begin{tikzcd}
    0 \arrow{r} & Hom(\Gamma,U(1))\arrow{r}\arrow{d}{\cong} & \cP(\Gamma) \arrow{r}\arrow{d} & NS(X) \arrow{r}\arrow{d}{=} & 0\\
    0 \arrow{r} & Pic^0(X) \arrow{r} & Pic(X) \arrow{r} & NS(X) \arrow{r} & 0.
\end{tikzcd}
\]
We call (\ref{canonical factor}) the \emph{canonical factor of automorphy} of the line bundle $L$. 
    
\end{theorem}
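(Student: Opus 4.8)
The Appel-Humbert theorem decomposes into three essentially independent tasks, and the plan is to verify them in the following order.

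\textbf{Step 1: the map is well-defined.} First I would check that for each $(H,\chi)\in\cP(\Gamma)$ the formula
$$a_{(H,\chi)}(v,\lambda)=\chi(\lambda)\exp\Big(\pi H(v,\lambda)+\tfrac{\pi}{2}H(\lambda,\lambda)\Big)$$
is genuinely a factor of automorphy, i.e.\ it satisfies the cocycle identity $a(v,\lambda+\mu)=a(v+\lambda,\mu)\,a(v,\lambda)$. This is a direct computation: expanding both sides and comparing, the exponential terms produce a discrepancy of $\exp\big(\pi H(\lambda,\mu)\big)$ coming from the bilinearity of $H$ in the first slot, while the semicharacter property $\chi(\lambda+\mu)=\chi(\lambda)\chi(\mu)\exp\big(i\pi\,\mathrm{Im}\,H(\lambda,\mu)\big)$ supplies exactly the compensating factor, once one uses that $\pi H(\lambda,\mu)+\pi H(\mu,\lambda)=2\pi\,\mathrm{Re}\,H(\lambda,\mu)$ together with $H$ being Hermitian so that $H(\mu,\lambda)=\overline{H(\lambda,\mu)}$. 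Holomorphicity in $v$ is immediate since $H(v,\lambda)$ is $\dC$-linear in $v$. The fact that the assignment is a group homomorphism, $a_{(H_1,\chi_1)}\cdot a_{(H_2,\chi_2)}=a_{(H_1+H_2,\chi_1\chi_2)}$, is then visible directly from the formula.

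\textbf{Step 2: compatibility with the short exact sequences.} Next I would verify that the diagram of short exact sequences commutes. The right-hand square reduces to showing that the Chern class $c_1(L_{(H,\chi)})$ equals the class in $NS(X)$ determined by $H$; this is where one traces the construction through the boundary map of the exponential sequence (\ref{exponential SES}) and identifies the invariant representative $\omega$ of $c_1(L)$ with the alternating form $E=\mathrm{Im}\,H$ via (\ref{E of L}) and (\ref{H of L}). On the subgroups $Hom(\Gamma,U(1))$ I would restrict to $H=0$: then $a_{(0,\chi)}(v,\lambda)=\chi(\lambda)$ is constant in $v$ and valued in $U(1)$, which is exactly the description of $Pic^0(X)$ from the lemma preceding the theorem, so the left vertical arrow is the already-established isomorphism $Hom(\Gamma,U(1))\cong Pic^0(X)$.

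\textbf{Step 3: bijectivity.} Finally, with the two outer vertical arrows known to be isomorphisms (the right one is the identity on $NS(X)$, the left one the isomorphism onto $Pic^0(X)$), the five lemma forces the middle arrow $\cP(\Gamma)\to Pic(X)$ to be an isomorphism as well, provided I first check that $Pic(X)\to NS(X)$ is surjective with kernel $Pic^0(X)$ --- but this is precisely the content already recorded in the excerpt, namely the surjection $Pic(X)\to NS(X)$ constructed from the invariant representative and the identification of its kernel. I expect the main obstacle to be Step 2, specifically pinning down the sign and normalization conventions so that the Hermitian form attached to $a_{(H,\chi)}$ through its Chern class is exactly $H$ rather than a scalar multiple or its negative; the cocycle check in Step 1 is mechanical, and Step 3 is then purely formal once surjectivity and the kernel computation are in hand.
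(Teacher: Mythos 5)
Your overall strategy is sound, and it is in fact the same strategy this thesis uses when it proves its real analogue, Theorem \ref{appel-humbert for u(1)} (the thesis does not reprove the holomorphic statement itself, citing \cite[2.2.3]{BL} instead): check that the two squares commute, then invoke the five lemma. Your Step 1 is also correct as computed --- the cocycle discrepancy $\exp(\pi H(\lambda,\mu))$ is exactly compensated by the semicharacter relation once one uses $H(\mu,\lambda)=\overline{H(\lambda,\mu)}$, and holomorphicity follows from $\dC$-linearity of $H$ in its first slot.

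There is, however, a genuine gap in Step 3. The five lemma requires \emph{both} rows of the diagram to be exact, and you only verify exactness of the bottom row. Exactness of the top row at $NS(X)$ --- that is, surjectivity of $\cP(\Gamma)\to NS(X)$ --- is the statement that \emph{every} $H\in NS(X)$ admits a semicharacter, and this is not automatic: a semicharacter is a solution of a functional equation twisted by $\exp(i\pi\,\mathrm{Im}\,H(\lambda,\mu))$, and its existence must be constructed. The standard construction decomposes $\Gamma=\Gamma_1\oplus\Gamma_2$ into isotropic sublattices for $E=\mathrm{Im}(H)$ and sets $\chi(\lambda)=\exp(i\pi E(\lambda_1,\lambda_2))$; this is precisely the device the paper uses elsewhere, in (\ref{semicharacters wrt a decomposition}), (\ref{chi0}) and Lemma \ref{global H semichar}. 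No rearrangement of the five-lemma argument can bypass this step: even to show that a given $L\in Pic(X)$ lies in the image of your map, you must produce \emph{some} semicharacter for the Hermitian form $c_1(L)$. There is also a related circularity hazard: the surjectivity of $Pic(X)\to NS(X)$, which you import from the text, is classically \emph{deduced} from the Appel--Humbert construction (lift $H\in NS(X)$ to a pair $(H,\chi)$ and push it through the commuting right-hand square), so the clean logical order is to prove semicharacter existence first and obtain the bottom-row surjectivity as a corollary, rather than assuming it as an input. Once this ingredient is supplied (and the right-square Chern class computation of your Step 2 is actually carried out, which you correctly flag as the main calculation), your outline closes up into a complete proof.
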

To a line bundle $L$ on $X$ we can associate a homomorphism 
\begin{equation}\label{phi_L for holo}
    \begin{aligned}
        \phi_L:X&\ra \hat{X}\\
        x & \mapsto t_x^*L\otimes L^{-1}
    \end{aligned}
\end{equation}
whose analytic representation is $H:V\ra V^*$. This is clear by the canonical factor (\ref{canonical factor}) representation of $L$.  We denote the kernel of $\phi_L$ by $K(L)$ and the connected component of the identity in $K(L)$ by $K(L)_0$. We say that the line bundle $L$ is \emph{non-degenerate} if and only if $K(L)$ is finite and $\phi_L$ is an isogeny. Clearly, $L$ is non-degenerate if and only if $H$ is a non-degenerate Hermitian pairing on $V$.

\paragraph{Vector bundles via factors of automorphy.} Higher rank vector bundles on a complex torus can also be described using factors of automorphy \cite{iena}. Let $X=V/\Gamma$ be a complex torus.
\begin{definition}\label{vector bundle factor}
    An \emph{$r$-dimensional holomorphic factor of automorphy} is a map
    $$a: V\times \Gamma \ra GL(n,\dC)$$
    holomorphic in $V$ and satisfying $a(v,\lambda+\mu)=a(v+\lambda,\mu)\cdot a(v,\lambda)$ for all $v\in V$ and $\lambda,\mu\in \Gamma$. Two such factors of automorphy $a$ and $a'$ are equivalent if there exists a holomorphic function $h:V\ra GL(n,\dC)$ such that
    $$a'(v,\lambda)=h(v+\lambda)\cdot a(v,\lambda)\cdot h(v)^{-1}.$$
\end{definition}
An $r$-dimensional holomorphic factor of automorphy defines a rank $r$ vector bundle on $X$ as
$$E:=V\times_{\Gamma}\dC^r=V\times\dC^r/\Gamma$$
where $\gamma\in \Gamma$ acts on $(v,t)\in V\times \dC^r$ by $\gamma(v,t)=(v+\gamma,a(v,\gamma)t)$.
We have the following theorem.
\begin{theorem}\label{iena}\emph{\cite[Theorem 3.2]{iena}}
    Let $p:V\ra X$ be the projection. Then every rank $r$ holomorphic vector bundle $E$ on $X$ such that $p^*E$
 is trivial, can be represented by an $r$-dimensional holomorphic factor of automorphy.
 \end{theorem}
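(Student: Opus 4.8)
The plan is to construct the factor of automorphy directly from the hypothesis that $p^*E$ is trivial, mirroring the line bundle case (the case $r=1$ of \cite[Proposition B1.]{BL}). The key observation is that a factor of automorphy is precisely the cocycle datum needed to descend the trivial bundle $V\times \dC^r$ from the universal cover $V$ to the quotient $X=V/\Gamma$.

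First I would fix a trivialization of $p^*E$, that is a holomorphic isomorphism $\psi: p^*E\xrightarrow{\cong} V\times \dC^r$. The deck transformation group of the covering $p:V\ra X$ is $\Gamma$, acting on $V$ by translation $v\mapsto v+\gamma$. Since $E$ is pulled back from $X$, the bundle $p^*E$ carries a canonical $\Gamma$-equivariant structure: for each $\gamma\in \Gamma$ there is a tautological isomorphism $\Phi_\gamma: t_\gamma^* p^*E\xrightarrow{\cong} p^*E$ covering the translation $t_\gamma$, and these satisfy the cocycle condition $\Phi_{\lambda+\mu}=\Phi_\lambda\circ t_\lambda^*\Phi_\mu$. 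The quotient $(p^*E)/\Gamma$ recovers $E$. Transporting this equivariant structure through the chosen trivialization $\psi$ turns each $\Phi_\gamma$ into an automorphism of the trivial bundle $V\times \dC^r$ covering translation by $\gamma$, hence into a holomorphic map $a(\cdot,\gamma): V\ra GL(n,\dC)$. Explicitly, I would define
$$a(v,\gamma):=\psi_{v+\gamma}\circ (\Phi_\gamma)_v\circ \psi_v^{-1}\in GL(n,\dC).$$

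Next I would verify that $a$ is a factor of automorphy in the sense of Definition \ref{vector bundle factor}. Holomorphicity in $v$ follows because $\psi$, $\Phi_\gamma$ and their inverses are holomorphic and $GL(n,\dC)$ is open in the matrix algebra. The cocycle identity $a(v,\lambda+\mu)=a(v+\lambda,\mu)\cdot a(v,\lambda)$ is a direct translation of the cocycle condition $\Phi_{\lambda+\mu}=\Phi_\lambda\circ t_\lambda^*\Phi_\mu$ for the equivariant structure; this is the step where one must be careful to keep track of the base points $v$, $v+\lambda$, $v+\lambda+\mu$ so that the composition of the defining formula telescopes correctly. Finally, the associated bundle $V\times_\Gamma \dC^r$ built from this action is isomorphic to $E$ essentially by construction, since the trivialization $\psi$ intertwines the $\Gamma$-action giving $E=(p^*E)/\Gamma$ with the $\Gamma$-action defined by $a$ on $V\times \dC^r$.

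The main obstacle is bookkeeping rather than any deep difficulty: one must confirm that the tautological equivariant structure exists and that its cocycle identity transforms precisely into the required multiplicative relation for $a$ once conjugated by $\psi$. A clean way to avoid confusion is to work with the functor of sections, identifying holomorphic sections of $E$ over an open $U\subset X$ with holomorphic maps $s:p^{-1}(U)\ra \dC^r$ satisfying $s(v+\gamma)=a(v,\gamma)\,s(v)$; checking that this sheaf of $a$-equivariant functions agrees with the sheaf of sections of $E$ both establishes the isomorphism $E\cong V\times_\Gamma\dC^r$ and simultaneously forces the cocycle relation on $a$ for consistency. Changing the trivialization $\psi$ by a holomorphic $h:V\ra GL(n,\dC)$ replaces $a$ by the equivalent factor $h(v+\gamma)a(v,\gamma)h(v)^{-1}$, which shows the construction is well defined up to the equivalence of Definition \ref{vector bundle factor} and completes the argument.
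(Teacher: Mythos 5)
Your proof is correct: constructing $a(v,\gamma)=\psi_{v+\gamma}\circ(\Phi_\gamma)_v\circ\psi_v^{-1}$ from the canonical $\Gamma$-equivariant structure on $p^*E$ and a chosen trivialization $\psi$, then checking the cocycle identity and the identification $E\cong V\times_\Gamma\dC^r$ via equivariant sections, is exactly the standard descent argument for this result. Note that the thesis does not prove the statement itself but cites it as \cite[Theorem 3.2]{iena}; your argument is essentially the proof given in that reference (and, in the rank one case, in \cite[Proposition B1.]{BL}), so there is nothing to add.
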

 Since the complex vector space $V$ is a Stein manifold, the classification of holomorphic and complex topological vector bundles coincide. In particular, every holomorphic vector bundle is 
 trivial on $V$ so every vector bundle on a complex torus can be represented by a factor of automorphy.

\paragraph{Poincar\'e line bundle.} We have seen that the dual torus $\hat{X}$ parametrizes the flat line bundles on $X$. The Poincar\'e bundle is the universal object over $X\times \hat{X}$, that is a holomorphic line bundle $\cP\ra X\times \hat{X}$ such that

(1) $\cP|_{X\times \{L\}}\cong L$,

(2) $\cP|_{\{0\}\times \hat{X}}$ is trivial.\\
The second condition is called the normalization. 

The Poincar\'e line bundle can be defined by a canonical factor of automorphy on $X\times \hat{X}$ due to Theorem \ref{appel humbert holo}. Recall that the dual torus is defined as $\hat{X}=\overline{\Omega}/\Gamma^\vee_\dC$, where $\overline{\Omega}$ is the space of $\dC$ valued $\dC$-antilinear functions on $V$. Let us first define the Hermitian pairing 
\begin{equation}
\begin{aligned}
    H_0: (V\times \overline{\Omega})\times (V\times \overline{\Omega})\ra \dC\\
    H_0(v+\hat{v},w+\hat{w})=\hat{v}(w)+\overline{\hat{w}(v)}.
\end{aligned}
\end{equation}
We have $Im H_0(\Gamma+\Gamma^\vee_\dC,\Gamma+\Gamma^\vee_\dC)\subset \dZ$ by definition of $\Gamma^\vee_\dC$ (\ref{dual cplx lattice}). It remains to assign a semicharacter to $H_0$. Let
\begin{equation}
\begin{aligned}
    \chi_0: \Gamma+\Gamma_\dC^\vee &\ra U(1),\\
    \lambda+\hat{\lambda} &\mapsto exp(i\pi\cdot Im\hat{\lambda}(\lambda)).
\end{aligned}
\end{equation}
Then, the Poincar\'e line bundle is defined by the factor of automorphy $a_{(H_0,\chi_0)}$, that is
\begin{equation}\label{Poincare line bundle}
    \begin{aligned}
    a_\cP(v+\hat{v},\lambda+\hat{\lambda})=exp\Big(\pi \hat{v}(\lambda)+\pi \overline{\hat{\lambda}(v)}+\pi \hat{\lambda}(\lambda)\Big).
    \end{aligned}
\end{equation}
For the proof of the properties and uniqueness see \cite[Theorem 2.5.1]{BL}.

\section{Fourier-Mukai transform of line bundles supported on affine subtori}\label{FM transform section}

Let $X$ be a $g$-dimensional complex abelian variety, that is a complex torus $X=V/\Gamma$ which admits a polarization. Let $\hat{X}$ be the dual torus and $\cP$ the Poincar\'e line bundle on $X\times \hat{X}$. Denote the projections from $X\times \hat{X}$ to $X$ and $\hat{X}$ by $p$ and $\hat{p}$ respectively. 
\[
\begin{tikzcd}
  &   X\times \hat{X} \arrow{dl}[swap]{p} \arrow{dr}{\hat{p}} & \\
  X & & \hat{X}  
\end{tikzcd}
\]
Let us denote by $\cD^b(X)$ and $\cD^b(\hat{X})$ the bounded derived categories of coherent sheaves on $X$ and $\hat{X}$ respectively. The functor $FM_X=R\hat{p}_*(p^*(\ ) \otimes \cP) $ induces a derived equivalence between $\cD^b(X)$ and $\cD^b(\hat{X})$ \cite{mukai}. We say that a sheaf $\cF$ is $W.I.T.$ of index $k$ if $FM_X(\cF)=R\hat{p}_*(p^*\cF \otimes \cP) $ is a complex concentrated in degree $k$. For a $W.I.T.$ sheaf $\cF$ of index $k$  we define the \emph{Fourier-Mukai transform of $\cF$} as 
\begin{equation}\label{FM transform of WIT sheaf} \hat{\cF}:=R^k\hat{p}_*(p^*\cF \otimes \cP). \end{equation}
In this section, we will show that any sheaf which is a line bundle supported on an affine subtorus $S\subset X$ is $W.I.T$. Moreover, we derive an expression which describes the Fourier-Mukai transform of such sheaves via non-derived pushforwards along morphisms of abelian varieties. These results are easy applications of well-known properties of the Fourier-Mukai transform (see  \cite{mukai}\cite{schnell}\cite{BL}).

\subsection{Non-degenerate line bundles}
Let $L$ be a non-degenerate line bundle on X, that is the Chern class of $L$ can be regarded as a non-degenerate Hermitian pairing $H$ on $V$. Denote by $Im(H)=E$ its imaginary part, which is a non-degenerate alternating bilinear pairing on $V$ taking integer values on $\Gamma$. Therefore, we may choose \emph{symplectic basis} $\{\mu_1,...,\mu_g,\lambda_1,..,\lambda_g\}$ of $\Gamma$ with respect to $E$, That is, is
\begin{align}\label{symplectic basis}E(\mu_i,\lambda_j)=d_i\delta_{ij},\ \ \ E(\mu_i,\mu_j)=E(\delta_i,\delta_j)=0,\ \ i,j=1,...,g.\end{align}
where $d_i\in \dZ_{>0}$. We say that $L$ is of \emph{type} $(d_1,...,d_g)$ and that $d=d_1\cdot ...\cdot d_g$ is the \emph{degree} of $L$. The number $d$ is independent of the choice of symplectic basis since it is the square root of the determinant, called the \emph{Pfaffian}, of $E$ denoted by $\text{Pf}(E)$. If $H$ has $r$ positive and $s$ negative eigenvalues with $r+s=g$ we say that the \emph{index} of $L$ is $s$.  The name `index' is related to the fact that the line bundle $L$ is a W.I.T. sheaf of index $s$. The Fourier-Mukai transform $\hat{L}$ is a vector bundle of rank $d$ \cite[Theorem 3.5.5 and  Lemma 14.2.1]{BL}. 

There exists an isogeny $f:X\ra Y$ and a  non-degenerate line bundle $N$ of type $(1,...,1)$ on $Y$ such that $f^*N=L$. Indeed, consider a decomposition $\Gamma=\Gamma_1+\Gamma_2$ of $\Gamma$ into maximal isotropic sub-lattices with respect to $E$. For example in a symplectic basis (\ref{symplectic basis}) we can set $\Gamma_1=\{\mu_1,...,\mu_g\}$ and $\Gamma_2=\{\lambda_1,...,\lambda_g\}$. Then define 
\begin{equation}\label{isogeny for degree d line bundle}
    \Gamma_Y:=\Gamma_1\oplus E^{-1}(\Gamma_1^\vee)\supset \Gamma_X,
\end{equation}
and let us define = $Y:=V/\Gamma_Y$. We have an isogeny $f:X\ra Y$ induced by identity on $V$ and $Y$ is also an abelian variety. 

The line bundle $L$ is described by  $(H,\chi)$ where $\chi$ is a semicharacter for $H$. To define $N$ on $Y$ such that $f^*N=L$ we have to describe $(H_N,\chi_N)$. By definition of $\Gamma_Y$, we may take $H=H_N$ and we only have to describe $\chi_N$. For any $\lambda\in \Gamma_X$ the semicharacter $\chi$ of $L$ can be written as follows
\begin{align}\label{semicharacters wrt a decomposition}\chi(\lambda)=exp(i\pi E(\lambda_1,\lambda_2)+2\pi i \hat{v}(\lambda))\end{align}
where $\lambda=\lambda_1+\lambda_2$ is the decomposition of $\lambda $ with respect to $\Gamma=\Gamma_1+\Gamma_2$ and $\hat{v}\in V^*$. This description of $\chi$ readily extends to a semicharacter of $H_N=H$ on $Y$ which clearly pulls back to that of $L$ under $f$. In particular, $N$ is defined by the same data as $L$ in the specific decomposition $\Gamma=\Gamma_1+\Gamma_2$.

Note that the choice of $Y$ and $N$ is far from unique. Any decomposition of $\Gamma$ with respect to $H$ will induce a torus $Y$ and a line bundle $N$ and we may also twist $N$ by any character of $\Gamma_Y$ which restricts to the trivial character on $\Gamma$.

Given a choice of $(Y,N)$ we have the commutative diagram
\begin{equation}\label{isogeny to type 1111..}
\begin{tikzcd}
    X \arrow{r}{\phi_L}\arrow{d}[swap]{f} & \hat{X}\\
    Y  \arrow{r}{\phi_N} & \hat{Y} \arrow{u}[swap]{\hat{f}}
\end{tikzcd}
\end{equation}
where $\hat{f}:\hat{Y}\ra \hat{X}$ is the dual homomorphism of $f$ defined as the pullback of flat line bundles from $Y$ to $X$ via $f$. It is again an isogeny of the same degree as $f$ \cite[Proposition 2.4.3]{BL}.

It is shown in \cite[ Corollary 14.3.9]{BL} that $\hat{L}$ can be calculated as the pushforward of $N^{-1}$ under isogenies. In particular,
\begin{equation}\label{FM of nondegenerate line bundle}\hat{L}=\hat{f}_*(\phi_N)_*N^{-1}\end{equation}
This corollary shows that $\hat{f}_*(\phi_N)_*N^{-1}$ is independent of the choice of $Y$ and $N$, it depends only on $L$.

In addition, in \cite[Corollary 14.3.6]{BL} it is shown that
\begin{equation}\label{main equation nondegen}(\phi_L)_*L^{-1} \cong \hat{L} \otimes H^s(X,L)\cong \widehat{L}\otimes \cO_{\hat{X}}^{\oplus d}.\end{equation}
In the next sections, we will generalize (\ref{FM of nondegenerate line bundle}) and (\ref{main equation nondegen}) to non-degenerate line bundles and also to line bundles supported on affine subtori. The upshot of these calculations is that line bundles on subtori are special sheaves, in the sense that their Fourier-Mukai transform can be calculated without really relying on derived geometry. There is a bigger class of vector bundles that have this property, called \emph{semihomogeneous vector bundles}. These are vector bundles whose projectivization is homogeneous, that is translation invariant. Every line bundle is semihomogeneous. Mukai studied these objects in  \cite{mukaiSemihomog} and many of his observations can be paralleled with our construction of T-dual generalized branes. 

\subsection{Degenerate line bundles}
Let $L$ be a line bundle on $X$ with Chern class $H$, a Hermitian inner product with $r$ positive and $s$ negative eigenvalues where now $s+r<g$.  The homomorphism
$$\phi_L: X\ra \hat{X}$$
is not surjective anymore as its analytification is given by $H:V\ra V^*$. Hence, its image  $Im(\phi_L)=:\widehat{S}$ and the connected component of its kernel $K(L)_0$ are abelian subvarieties of $\hat{X}$ and $X$ respectively. The Stein factorisation of $\phi_L$ is a decomposition of $\phi_L$ into a projection and an isogeny to its image. We have
\[
\begin{tikzcd}
    X\arrow{rr}{\phi_L} \arrow{dr}[swap]{q_L}&  & \hat{S} \\
  &  X/K(L)_0 \arrow{ur}[swap]{f_L} &
\end{tikzcd}
\]
where $q_L$ is a projection and $f_L$ is an isogeny. There exists a symplectic basis $$\{\mu_1,...,\mu_r,\lambda_1,...,\lambda_r,\epsilon_{2r+1},...,\epsilon_{2g}\}$$ for $E=Im(H)$ on $\Gamma$ such that 
\begin{align*}E(\mu_i,\lambda_j)=d_i\delta_{ij},\ \ \ E(\mu_i,\mu_j)=E(\lambda_i,\lambda_j)=E(\mu_i,\epsilon_k)=E(\lambda_i,\epsilon_k)=0,\\ \ \ i,j=1,...,r,\ k=2r+1,...,2g\end{align*}
with $d_i\in \dZ_{>0}$. The number $d=d_1\cdot ... \cdot d_{2r}$ is again called the \emph{degree} of $L$ and it is also the degree of the isogeny $f_L$. It is again independent of the choice of symplectic basis and it is called the reduced Pfaffian of $E$ denoted by $\text{Pfr}(E)$. 

We have the dual short exact sequences
\[
\begin{tikzcd}
    0 \arrow{r} &  K(L)_0\arrow{r}{i} &  X \arrow{r}{q_L} & X/K(L)_0\arrow{r} & 0
\end{tikzcd}
\]
and
\[
\begin{tikzcd}
   0 \arrow{r} & \widehat{X/K(L)_0} \arrow{r}{\hat{q}_L} & \hat{X} \arrow{r}{\hat{i}}&  \widehat{K(L)_0}\arrow{r} & 0,
\end{tikzcd}
\]
and $\widehat{S}=im(\phi_L)=ker(\hat{i})$ (see \cite{BL} Lemma 2.4.5.). Let us denote by $S$ the torus $X/K(L)_0$. Then, $\widehat{S}$ is the complex torus dual to $S$.

We have the following two lemmas.
\begin{lemma}\label{flatkernel}
Let $L\ra X$ be a line bundle and $i: K(L)_0\ra X$ the inclusion. Then, $i^*L$ is flat.
\end{lemma}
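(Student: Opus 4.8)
The plan is to compute the restriction of $L$ to $K(L)_0$ directly using the canonical factor of automorphy provided by the Appel--Humbert theorem (Theorem \ref{appel humbert holo}), and to show that its Hermitian part vanishes on $K(L)_0$. The key observation is that $K(L)_0$ is precisely the subtorus whose tangent space is the \emph{radical} of the Hermitian form $H=c_1(L)$, and by definition of the radical the form restricts to zero there.

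First I would identify $K(L)_0$ explicitly. Since the analytic representation of $\phi_L$ (\ref{phi_L for holo}) is the map $H\colon V\ra V^*$, the connected component $K(L)_0$ of $\ker(\phi_L)$ has tangent space at the origin equal to
$$W=\ker(H)=\{v\in V\ |\ H(v,u)=0\ \ \forall u\in V\},$$
the radical of $H$. Because $H$ is $\dC$-linear in its first argument, $W$ is a complex subspace of $V$, so $W\cap\Gamma$ is a full lattice in $W$ and $K(L)_0=W/(W\cap\Gamma)$ is an abelian subvariety, consistent with the discussion preceding the lemma. The inclusion $i\colon K(L)_0\ra X$ is induced by the linear inclusion $W\hookrightarrow V$, so by functoriality of factors of automorphy the pullback $i^*L$ is represented by the restriction of the canonical factor (\ref{canonical factor}) to $W\times(W\cap\Gamma)$.

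Next I would evaluate this restricted factor. For $v\in W$ and $\lambda\in W\cap\Gamma\subset W$, the defining property of the radical gives $H(\lambda,\lambda)=0$ and $H(v,\lambda)=\overline{H(\lambda,v)}=0$, since $\lambda\in W$. Hence the exponential term in (\ref{canonical factor}) collapses and the restricted factor reduces to
$$a_{(H,\chi)}(v,\lambda)\big|_{W\times(W\cap\Gamma)}=\chi(\lambda),$$
which is independent of $v$ and takes values in $U(1)$. This is exactly the form of the canonical factor of automorphy of a line bundle with trivial Neron--Severi class, i.e. a character $\chi|_{W\cap\Gamma}\colon W\cap\Gamma\ra U(1)$. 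By the characterization of $Pic^0$ (Lemma \emph{2.2.2} in the excerpt, together with the Appel--Humbert short exact sequence), a line bundle represented by such a factor lies in $Pic^0(K(L)_0)$, so $c_1(i^*L)=0$ and $i^*L$ is flat.

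I expect the only genuine subtlety to be the clean identification of $K(L)_0$ with the radical $W$ and the bookkeeping of the analytic-representation convention (whether $H$ is viewed as a map to $V^*$ or to $\overline{\Omega}$, and the resulting sesquilinearity); once $W=\ker H$ is established and shown to be a complex subspace, the vanishing of $H$ on $W$ and hence the flatness of $i^*L$ are immediate from the explicit factor. No derived or cohomological input is needed, only the factor-of-automorphy formalism already set up in this section.
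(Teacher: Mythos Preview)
Your proof is correct but takes a different route from the paper. The paper argues purely with translations: for any $\bar{x}\in K(L)_0$ one has $t_{\bar{x}}^*i^*L\otimes(i^*L)^{-1}=i^*(t_x^*L\otimes L^{-1})=i^*\cO_X=\cO_{K(L)_0}$ because $x\in\ker\phi_L$, so $i^*L$ is translation-invariant and hence flat. Your approach instead unpacks the canonical factor of automorphy and observes directly that the Hermitian form $H$ vanishes identically on its own radical $W=\ker H$, so the restricted factor reduces to the character $\chi|_{W\cap\Gamma}$. Both arguments are short; the paper's is more conceptual and avoids any formulas, while yours has the advantage of making the restricted bundle completely explicit (you see immediately which point of $Pic^0(K(L)_0)$ the restriction is), which is in the spirit of the factor-of-automorphy computations used elsewhere in the chapter.
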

\begin{proof}
    Let $\bar{x}\in K(L)_0$ be a point which we may also consider as a point $x$ in $X$ via the inclusion $i$. We have a commuting diagram 
    \[\begin{tikzcd}
        X \arrow{r}{t_x}  & X \\
        K(L)_0 \arrow{u}{i} \arrow{r}{t_{\bar{x}}} & K(L)_0 \arrow{u}[swap]{i}
    \end{tikzcd}\]
    and therefore
    $$t_{\bar{x}}^*i^*L\otimes (i^*L)^{-1}=i^*(t_x^*L \otimes L^{-1})=i^*\cO_X=\cO_{K(L)_0}$$
    since $x\in K(L)_0$ which is in the kernel of $\phi_L$. That is, $i^*L$ is invariant under all the translations in $K(L)_0$ and therefore flat.
\end{proof}
\begin{lemma}\label{samekernel}
    Let $L$ be a line bundle on $X$, and $L_0$ a flat bundle. Then,
    $$\phi_{L\otimes L_0}=\phi_L,$$
    in particular,
    $$K(L\otimes L_0)_0=K(L)_0$$
\end{lemma}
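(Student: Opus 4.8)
The plan is to exploit the fact that the assignment $L\mapsto \phi_L$ is a homomorphism with respect to the tensor product of line bundles, together with the vanishing of $\phi_{L_0}$ for a flat bundle $L_0$. First I would verify the homomorphism property directly from the definition (\ref{phi_L for holo}). For any $x\in X$, using that $t_x^*$ commutes with tensor products and duals,
$$\phi_{L\otimes L_0}(x)=t_x^*(L\otimes L_0)\otimes (L\otimes L_0)^{-1}=\big(t_x^*L\otimes L^{-1}\big)\otimes \big(t_x^*L_0\otimes L_0^{-1}\big)=\phi_L(x)\cdot \phi_{L_0}(x),$$
where the product on the right is taken in the group $\hat{X}$, which we identify with $Pic^0(X)$ under tensor product of flat line bundles. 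Thus $\phi_{L\otimes L_0}=\phi_L\cdot \phi_{L_0}$ as homomorphisms $X\ra \hat{X}$.

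The second step is to show that $\phi_{L_0}$ is the trivial homomorphism. Since $L_0$ is flat, its first Chern class vanishes, so the invariant representative of $c_1(L_0)$ is zero and the associated alternating form, hence the Hermitian form (\ref{H of L}), is $H=0$. As the analytic representation of $\phi_{L_0}$ is precisely this $H$, we get $\phi_{L_0}=0$, i.e. $t_x^*L_0\otimes L_0^{-1}\cong \cO_X$ for every $x\in X$. Substituting into the formula from the first step gives $\phi_{L\otimes L_0}(x)=\phi_L(x)\cdot \cO_X=\phi_L(x)$, and therefore $\phi_{L\otimes L_0}=\phi_L$.

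Finally, since $\phi_{L\otimes L_0}$ and $\phi_L$ are literally the same homomorphism $X\ra \hat{X}$, they have identical kernels $K(L\otimes L_0)=\ker\phi_{L\otimes L_0}=\ker\phi_L=K(L)$, and passing to the connected component of the identity on both sides yields $K(L\otimes L_0)_0=K(L)_0$, as claimed.

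There is essentially no serious obstacle here: the content of the lemma is the bookkeeping that $\phi_\bullet$ is multiplicative in the line bundle and insensitive to tensoring by $Pic^0$. The only point requiring a little care is the identification of the analytic representation of $\phi_{L_0}$ with the Hermitian form $H$ associated to $c_1(L_0)$; once $H=0$ is established for flat $L_0$, everything else is formal. If one prefers to avoid invoking the analytic representation, the translation invariance $t_x^*L_0\cong L_0$ can instead be read off directly from the canonical factor of automorphy (\ref{canonical factor}) specialized to $H=0$, which yields a purely factor-of-automorphy argument consistent with the rest of the chapter.
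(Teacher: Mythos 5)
Your proof is correct and follows essentially the same route as the paper: both expand $\phi_{L\otimes L_0}(x)=t_x^*(L\otimes L_0)\otimes (L\otimes L_0)^{-1}$ multiplicatively into $(t_x^*L\otimes L^{-1})\otimes(t_x^*L_0\otimes L_0^{-1})$ and reduce to the translation invariance $t_x^*L_0\cong L_0$ of flat line bundles. The only difference is that you supply a justification for this invariance (via $c_1(L_0)=0$, hence $H=0$, hence vanishing analytic representation of $\phi_{L_0}$), whereas the paper simply invokes it as a known fact.
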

\begin{proof}
    $t^*_x(L\otimes L_0)\otimes (L^{-1}\otimes L_0^{-1})=t^*_xL \otimes t^*_x L_0 \otimes L^{-1} \otimes L_0 = t^*_xL \otimes L_0 \otimes L^{-1} \otimes L_0^{-1} =t^*_xL \otimes L^{-1}, $
    where we use that flat line bundles are translation invariant.
\end{proof}
A degenerate line bundle is flat when restricted to $K(L)_0$. Moreover, if it is trivial on $K(L)_0$, then it is trivial on all the fibers of $q_L:X\ra X/K(L)_0$. Indeed, with the notation of Lemma \ref{flatkernel} for any $b\in X$
$$L|_{t_bK(L)_0}=i^*t_b^*L=t^*_{q_L(b)}i^*L=i^*L.$$
We prove our result in two steps. First, we assume that $L|_{K(L)_0}$ is trivial, then in the second step, we relax this assumption.

\paragraph{Step I.} Suppose, that $L|_{K(L)_0}$ is trivial.
Then, there exists a non-degenerate line bundle $L_0$ of index $s$ on $X/K(L)_0$ such that $L\cong (q_L)^*L_0$.  The isogeny $f_L$ is then given by the isogeny $\phi_{L_0}$ under the identification 
$$\widehat{S}\cong \widehat{X/K(L)_0} .$$ 
Using (\ref{main equation nondegen}) for $L_0$ and the Stein factorization of $\phi_L$ we have
\begin{align*}
    (\phi_L)_*L^{-1}&=(\phi_{L_0})_*(q_L)_*(q_L^*L_0^{-1})\\
    &=(\phi_{L_0})_*(L_0^{-1} \otimes (q_L)_*\cO_X) \ \ \ \text{(projection formula)}\\
    &=(\phi_{L_0})_*L_0^{-1} \ \ \ \text{(*)}\\
    &=\widehat{L_0}\otimes H^s(S,L_0) \ \ \ \ \text{(by \ref{main equation nondegen})}.
\end{align*}
For $(*)$ we use that $q_L$ is projective with connected fibers, $X$  and $Y$ are smooth so $(q_L)_*\cO_X=\cO_{S}$. To calculate the Fourier-Mukai transform of $L$ we use the following theorem. 
\begin{theorem}\label{homomFM}\emph{(\cite[Proposition 2.3]{CJ})}
    If $f:X \ra Y$ is a quotient of abelian varieties we have equivalences of functors 
$$FM_X\circ f^* \circ [dim X] \cong \hat{f}_* \circ FM_Y \circ [dim Y]\ \ \ \text{and}\ \ \ f^*\circ FM_Y\cong FM_X\circ \hat{f}_*,$$
where $FM_X=R\hat{p}_*(p^*(\ )\otimes \cP_X)$ for any abelian variety $X$.
\end{theorem}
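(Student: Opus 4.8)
The plan is to deduce both functorial identities from a single geometric input: the compatibility of the Poincar\'e bundles under the pair $(f,\hat f)$. On the product $X\times\hat Y$ form the two maps $g=\mathrm{id}_X\times\hat f:X\times\hat Y\ra X\times\hat X$ and $h=f\times\mathrm{id}_{\hat Y}:X\times\hat Y\ra Y\times\hat Y$. First I would prove the key lemma
$$g^*\cP_X\cong h^*\cP_Y\quad\text{on }X\times\hat Y.$$
This is a see-saw argument. Restricting both sides to a slice $X\times\{\xi\}$ gives $f^*\xi$ on the left, because $\cP_X|_{X\times\{\hat f\xi\}}$ is the flat bundle $\hat f\xi=f^*\xi$ by the very definition of the dual homomorphism, while $f^*(\cP_Y|_{Y\times\{\xi\}})=f^*\xi$ on the right; so the two line bundles agree on every such slice. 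The see-saw principle then gives $g^*\cP_X\cong h^*\cP_Y\otimes\pi_{\hat Y}^*M$ for some $M\in\mathrm{Pic}(\hat Y)$, and restricting to $\{0\}\times\hat Y$ and invoking the normalization of both Poincar\'e bundles forces $M\cong\cO_{\hat Y}$.

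Second, I would feed this identity into the standard base-change machinery by transporting both sides of the first identity to the common space $X\times\hat Y$. I record the projections $p_X,\hat p_X$ from $X\times\hat X$, the projections $p_Y,\hat p_Y$ from $Y\times\hat Y$, and $\pi_X,\pi_{\hat Y}$ from $X\times\hat Y$, and note the compatibilities $p_Xg=\pi_X$, $\hat p_Xg=\hat f\,\pi_{\hat Y}$, $p_Yh=f\,\pi_X$, $\hat p_Yh=\pi_{\hat Y}$, together with the fact that the square on $X\times\hat Y,\,X\times\hat X,\,\hat Y,\,\hat X$ (maps $g,\hat p_X,\pi_{\hat Y},\hat f$) is Cartesian. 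For a complex $\cF$ on $Y$ one has $FM_X(f^*\cF)=R\hat p_{X*}(p_X^*f^*\cF\otimes\cP_X)$; pulling the integrand back along $g$ and using the projection formula together with the lemma rewrites it in terms of $h^*(p_Y^*\cF\otimes\cP_Y)$, and flat base change in the Cartesian square transports $R\hat p_{X*}$ across $\hat f$, producing $\hat f_*\,R\hat p_{Y*}(p_Y^*\cF\otimes\cP_Y)=\hat f_*FM_Y(\cF)$. The companion identity $FM_Y\circ Rf_*\cong L\hat f^*\circ FM_X$ is obtained by the dual argument, running base change in the Cartesian square over $f$ rather than $\hat f$ and applying the projection formula on the $h$-side.

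The delicate point, and the step I expect to be the main obstacle, is the bookkeeping of the cohomological shifts $[\dim X]$ and $[\dim Y]$ together with the flatness hypotheses needed for base change. When $f$ is a genuine isogeny both $f$ and $\hat f$ are finite flat, $\dim X=\dim Y$, and the shifts cancel; but for a general quotient $f:X\ra X/K(L)_0$ the kernel $K(L)_0$ is a positive-dimensional abelian subvariety, so $\hat f$ is a closed immersion rather than a flat map and the net shift is $[\dim Y-\dim X]=[-\dim K(L)_0]$. Here I would justify the base change through Tor-independence, the flatness of the projection $\hat p_X$ being enough, and I would track the shift via $Rf_*\cO_X\cong\textstyle\bigwedge^{\bullet}H^1(K(L)_0,\cO)\otimes\cO_Y$, whose top degree $\dim K(L)_0$ is exactly what generates the relative shift. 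Once the shifts are pinned down, the two equivalences follow formally; in the application to Step~I one takes $\cF=L_0^{-1}$ on $Y=X/K(L)_0$, so that $FM_X(q_L^*L_0^{-1})$ is computed from $FM_Y(L_0^{-1})$ up to this controlled shift.
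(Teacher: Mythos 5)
First, a point of comparison: the paper gives no proof of this statement at all — it is quoted from \cite[Proposition 2.3]{CJ}, which in turn is assembled from Mukai's results in \cite{mukai} — so your proposal has to stand on its own. Its first half does. The see-saw lemma $g^*\cP_X\cong h^*\cP_Y$ is correct and is the standard compatibility of Poincar\'e bundles under $(f,\hat f)$, and combining it with flat base change over $f$, Tor-independent base change over the flat projection $\hat p_X$, and the projection formula along $g$ does prove the \emph{unshifted} identity $f^*\circ FM_Y\cong FM_X\circ\hat f_*$. (The companion you actually state, $FM_Y\circ Rf_*\cong L\hat f^*\circ FM_X$, is a true variant proved by the same moves, but note it is not literally the second identity of the theorem.)

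The gap is in the shifted identity, and it is not a bookkeeping issue. For a genuine quotient, $\hat f$ — and hence $g=\mathrm{id}_X\times\hat f$ — is a closed immersion, and the integrand $p_X^*f^*\cF\otimes\cP_X$ is \emph{not} supported on the image of $g$: there is no isomorphism $\cG\cong Rg_*Lg^*\cG$, so ``pulling the integrand back along $g$'' while retaining the pushforward to $\hat X$ is not a legal step. What base change legitimately yields is $L\hat f^*\bigl(FM_X(f^*\cF)\bigr)\cong R\pi_{\hat Y*}h^*(p_Y^*\cF\otimes\cP_Y)$, and you cannot undo $L\hat f^*$ by applying $\hat f_*$: for a closed immersion $L\hat f^*$ is not conservative even on complexes supported on the image (it does not distinguish $\cO_Z$ from its infinitesimal thickenings). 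Your proposed repair — tracking the shift through $Rf_*\cO_X\cong\bigwedge^\bullet H^1(K,\cO_K)\otimes\cO_Y$ — exposes the problem rather than solving it: that factor is spread over degrees $0,\dots,\dim K$, whereas the theorem asserts a single clean shift $[\dim Y-\dim X]=[-\dim K]$, and collapsing the whole exterior algebra to its top degree is exactly Mukai's non-formal theorem. Indeed, specializing the first identity to $Y$ a point gives $FM_X(\cO_X)\cong\cO_{\hat 0}[-\dim X]$ (the skyscraper at the origin), i.e.\ the statement that $\cO_X$ is W.I.T.\ of index $\dim X$ — something no combination of see-saw, base change and projection formula can produce, since those steps never create a shift. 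To close the gap you must either prove the relative version $R(f\times\mathrm{id}_{\hat X})_*\cP_X\cong(\mathrm{id}_Y\times\hat f)_*\cP_Y[\dim Y-\dim X]$, after which the first identity follows by one projection formula along $f\times\mathrm{id}_{\hat X}$, or deduce the first identity from your (correct) unshifted one by composing with the Fourier--Mukai functors and invoking Mukai's inversion theorem $FM_{\hat X}\circ FM_X\cong(-1)^*[-\dim X]$; the latter is how \cite{CJ} and \cite{mukai} proceed.
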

We have  
\begin{align*}
    FM_X(L) = FM_X((q_L)^*L_0)= (\hat{q}_L)_*FM_S(L_0)[dim S - dim X].
\end{align*}
Since $L_0$ is non-degenerate and its Chern class has $s$ negative eigenvalues it is W.I.T. of index $s$ so
$$FM_Y(L_0)=\widehat{L_0}[-s],$$
where $\widehat{L_0}$ is a rank $d=\text{Pfr}(E)$ vector bundle on $\widehat{S}$. The dimension of $S$ is $r+s$ so we have
$$FM_X(L)=(\hat{q}_L)_*\widehat{L_0}[r-g].$$
That is, $L$ is a W.I.T. sheaf of index $g-r$ and its  Fourier-Mukai transform is 
$$\widehat{L}=(\hat{q}_L)_*\widehat{L_0}.$$
Since $L_0$ is non-degenerate there exists an abelian variety $S_0$ an isogeny $f:S\ra S_0$ and a type $(1,...,1)$ non-degenerate line bundle $N_0$ on $S_0$ such that $L_0=f^*N_0$ and $\widehat{L_0}=\hat{f}_*(\phi_{N_0})_*N_0^{-1}$. We have the following commutative diagram.
\begin{equation}\label{isogeny to type 111 degenerate}
\begin{tikzcd}
    X \arrow{r}{\phi_L} \arrow{d}[swap]{q_L} & \widehat{X}\\
    S \arrow{d}[swap]{f} \arrow{r}{\phi_{L_0}} & \widehat{S} \arrow{u}[swap]{\hat{q}_L}\\
    S_0 \arrow{r}{\phi_{N_0}} & \widehat{S_0} \arrow{u}[swap]{\hat{f}}
\end{tikzcd}
\end{equation}
In particular, 
\begin{align}\label{FM transform degen line bundle}\widehat{L}=(\hat{q}_L)_*\hat{f}_*(\phi_{N_0})_*N_0^{-1}. \end{align}
Recall that we showed on $\hat{S}$ that  $(\phi_L)_*L^{-1}=\widehat{L_0}\otimes H^s(S,L_0)$ so on $\hat{X}$ we have
\begin{equation}\label{main eqn degen line bdle}
\begin{aligned}(\phi_L)_*L^{-1}&=(\hat{q}_L)_*(\widehat{L_0}\otimes H^s(S,L_0))\\
&=(\hat{q}_L)_*(\widehat{L_0})\otimes H^s(S,L_0)\ \ \ \ \text{(projection formula)}\\
&=\widehat{L}\otimes \cO_{\hat{X}}^{\oplus d}.
\end{aligned}
\end{equation}
Where we use that $H^s(X,L)\cong H^s(S,L_0) \cong \dC^d$, see \cite[Theorem 3.5.5]{BL}.


\paragraph{Step II.} Suppose now that $L|_{K(L)_0}$ is not trivial. 

Due to Lemma \ref{flatkernel}, there exists a point $\hat{x}\in \hat{X}$ such that $L\otimes \cP_{\hat{x}}$ is trivial restricted to $K(L)_0$, moreover $K(L)_0=K(L\otimes \cP_{\hat{x}})_0$ by Lemma \ref{samekernel}. Therefore, $L\otimes \cP_{\hat{x}}$ is a W.I.T. sheaf of index $g_0-r$, where $r$ is the number of positive eigenvalues of the first Chern class of $L$. 

By \cite[Proposition 14.3.1]{BL} if $L\otimes \cP_{\hat{x}}$ is a W.I.T. sheaf of index $g-r$ then so is $L$ and 
\begin{align}\label{FM transform of degen line bdle 2}\widehat{L}=t_{-\hat{x}}^*(\widehat{L\otimes \cP_{\hat{x}}}).\end{align}
Moreover, since $\phi_L=\phi_{L\otimes \cP_{\hat{x}}}$ we know
$$(\phi_{L})_*(L\otimes \cP_{\hat{x}})^{-1}=\widehat{L\otimes \cP_{\hat{x}}}\otimes H^s(X,L\otimes \cP_{\hat{x}})$$
that is,
\begin{align}\label{main eqn degen lin bdle 2}t^*_{-\hat{x}}(\phi_L)_*(L^{-1}\otimes \cP_{-\hat{x}})\cong \widehat{L}\otimes \cO_{\hat{X}}^{\oplus d}.\end{align}
In particular, $\widehat{L}$ is a rank $d=\text{Pfr}(E)$ vector bundle supported on $\hat{S}+\hat{x}$. The result is independent of the choice of $\hat{x}$.


\subsection{Line bundles supported on affine subtori}

Let $S\subset X$ be a $g_0$-dimesnional affine subtorus in $X$, that is it is the translation of an abelian subvariety of $X$. In particular, for any $x\in S$ the subtorus $S_0=S-x$ is an abelian subvariety of $X$.  Let $L\ra S$ be a line bundle. Then,  $L_0:=t_x^*L$ is a line bundle supported on the abelian subvariety $S_0$.

Let $i:S_0\ra X$ be the inclusion homomorphism and its dual $\hat{i}:\hat{X}\ra \hat{S}_0$ is a surjection. We may use the second equation in  Theorem \ref{homomFM} to relate the $FM_X(L)$ to $FM_{S_0}(L_0)$. 
\begin{equation*}
\begin{aligned}
FM_X(L)&=FM_X(t_{-x}^*L_0)\\
&=FM_X(L_0) \otimes \cP_x\ \ \ \ \ \ \  \text{\cite[Proposition 14.7.8]{BL}}\\
&=FM_X(i_*L_0)\otimes \cP_x\\
&=\hat{i}^* FM_{S_0}(L_0) \otimes \cP_x\\
&=\hat{i}^*\widehat{L_0}[r-g_0] \otimes \cP_x.
\end{aligned}
\end{equation*}
That is, $L$ is a W.I.T. sheaf of index $g_0-r$ and
\begin{align}\label{FM of line bundle on subtorus}\widehat{L}=\hat{i}^*\widehat{L_0}\otimes \cP_x.\end{align}
Note that the result does not depend on the choice of $x\in S$. 

By (\ref{FM transform of degen line bdle 2}) the support of $\widehat{L}$ is the preimage of an
an affine subtorus under the projection $\hat{i}:\hat{X}\ra \hat{S}_0$. Therefore, there exists $\hat{s}\in \hat{S}_0$ such that
$$t^*_{-\hat{s}}(\phi_{L_0})_*(L_0^{-1}\otimes \cP^S_{-\hat{s}})\cong \widehat{L_0}\otimes H^s(S,L_0\otimes \cP^S_{\hat{s}})$$
so
\begin{align}\label{main equation for line bdle on affine subtorus}\hat{i}^*t^*_{-\hat{s}}(\phi_{L_0})_*(L_0^{-1}\otimes \cP^S_{-\hat{s}})\otimes \cP_{x}\cong \widehat{L}\otimes \cO_{\hat{X}}^{\oplus d}.
\end{align}

\section{Fourier-Mukai transform via factors of automorphy}
The goal of this section is to prove (\ref{main equation nondegen}) for non-degenerate line bundles using factors of automorphy. In the next chapter, we will generalize this to $U(1)$-bundles with connections. Eventually, these results will be used to prove a statement analogous to the final result (\ref{main equation for line bdle on affine subtorus}) of the previous section about the T-dual of $U(1)$-bundles.

In this section, we will first describe the pushforward of a line bundle under an isogeny using factors of automorphy and show that the pushforward of a line bundle under an isogeny is semihomogeneous. We describe explicitly the pushforward of the structure sheaf along an isogeny. We also prove a lemma which substitutes the property of the Fourier-Mukai transform that translation is mapped to tensoring with a flat line bundle. This section will conclude in Proposition \ref{FM transform and phi L holo} with a proof of (\ref{main equation nondegen}) without using properties of the Fourier-Mukai transform.

Let $f:X\ra Y$ be a degree $n$ isogeny of abelian varieties and $L\ra X$ be a holomorphic line bundle on $X$. Since $f$ is a covering map it is finite flat and $f_*L$ is a rank $n$ vector bundle on $Y$. 

\begin{remark}
    Let $K(f)\subset X$ be the kernel of $f:X\ra Y$. Then,
    $$f^*f_*L\cong \bigoplus_{x\in K(f)}t_x^*L.$$
\end{remark}

\begin{remark}
The dual map $f^*=\hat{f}$ is also an isogeny of degree $n$, so there are line bundles $\{L_i\}_{i=1,...,n}$ such that $f^*L_i\cong \cO_X$, that is $\{L_i\}=Ker(\hat{f})$. On the other hand, this means, that if $L$ is a line bundle on $X$ then
$$L\cong L\otimes_{\cO_X} f^*L_i,\ \ \ \ i=1,...,n.$$
In particular, by the projection formula, we have on $Y$
\begin{align}\label{invariance under tensoring with line bundle}f_*L\cong f_*(L\otimes_{\cO_X}  f^*L_i)=f_*L\otimes_{\cO_Y} L_i\ \ \ \forall i=1,...,n.\end{align}
\end{remark}

We want to understand the direct image $f_*L$ using factors of automorphy. Let us write $X=V/\Gamma_X$ and $Y=W/\Gamma_Y$. As we have discussed in Section \ref{section tori}, the analytification $F$ of $f$ is an isomorphism so we may assume that $W=V$ and $F=id$. Then, $\Gamma_X\subset \Gamma_Y$. 

The line bundle $L$ on $X=V/\Gamma_X$ can be constructed as an associated bundle 
$$L\cong V\times_{\Gamma_X} \dC$$
where $\lambda \in \Gamma_X$ acts as $\lambda.(v,t)=(v+\lambda, a_L(v,\lambda)t)$ via the factor of automorphy $a_L$. By Theorem \ref{iena} the vector bundle $f_*L$ can also be described via a factor of automorphy on $Y=V/\Gamma_Y$
$$a_{f_*L}: V\times \Gamma_Y \ra GL(n,\dC).$$ 
Even though the factor of automorphy is not a representation, we mimic the construction of an \emph{induced representation}. We consider $a_L$ roughly as a representation of $V\times \Gamma_X$ on $\dC$ with $(v,\lambda)\in V\times \Gamma_X$ acting on $t\in \dC$ as
$$(v,\lambda):\ \ \ t\ \ \mapsto\ \ a_L(v,\lambda)t.$$
Let $\{\lambda_1,...,\lambda_n\}$ be a full set of representatives of the cosets $[\Gamma_Y:\Gamma_X]$.  That is,
$$\Gamma_Y=\coprod_{i=1}^n \lambda_i \Gamma_X.$$
We define a representation of $V\times \Gamma_Y$ on 
$$\dC^n=\bigoplus_{i=1}^n\lambda_i\dC.$$
For any $\lambda\in \Gamma_Y$ we write
$$\lambda+\lambda_i=\lambda_{\lambda(i)}+\Lambda^i_\lambda$$
where $\lambda(i)\in\{1,...,n\}$ and $\Lambda^i_\lambda\in \Gamma_X$. That is, we write $\lambda+\lambda_i$ as an element of the coset $\lambda_{\lambda(i)}\Gamma_X$. Note that this way $\lambda$ induces a permutation of $\{1,...,n\}$. Indeed, if $\lambda+\lambda_i$ and $\lambda+\lambda_j$ were in the same coset, then $\lambda_i-\lambda_j$ would be in $\Gamma_X$ which contradicts that they form a set of representatives.

We define the action of $(v,\lambda)\in V\times \Gamma_Y$ as follows
$$(v,\lambda):\ \sum_{i=1}^n\lambda_it_i\ \ \ \mapsto\ \ \ \sum_{i=1}^n \lambda_{\lambda(i)} a_L(v+\lambda_i,\Lambda^i_\lambda)t_i.$$
As a matrix we represent this action as
\begin{equation}\label{holo factor pushforward under isogeny}
\begin{aligned}
    a_{f_*L}:\ V\times \Gamma_Y \ \ \ &\ra\ \ \ GL_n(\dC)\\
     a_{f_*L}(v,\lambda)&=\Big(a_L(v+\lambda_{\lambda(i)},\Lambda^i_\lambda) \delta^{\lambda(i)}_j \Big)_{ij}.
     \end{aligned}
\end{equation}
\begin{proposition}
The map $a_{f_*L}$ is a factor of automorphy, independent of the choice of representatives and represents $f_*L$.
\end{proposition}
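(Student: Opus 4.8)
The plan is to establish the three assertions in turn, all of which rest on the combinatorics of how the chosen coset representatives $\{\lambda_1,\dots,\lambda_n\}$ interact under addition in $\Gamma_Y$. I would first record the two bookkeeping identities that govern everything: for $\lambda,\mu\in\Gamma_Y$ the induced permutations compose as $(\lambda+\mu)(i)=\lambda(\mu(i))$, and the $\Gamma_X$-parts satisfy the twisted additivity $\Lambda^i_{\lambda+\mu}=\Lambda^{\mu(i)}_\lambda+\Lambda^i_\mu$. Both follow at once by writing $(\lambda+\mu)+\lambda_i=\lambda+(\mu+\lambda_i)$ and reducing twice modulo $\Gamma_X$. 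With these in hand the factor-of-automorphy property becomes a finite matrix computation. (I would also note the minor notational point that the shift in the nonzero entries is $v+\lambda_{\lambda(i)}$, as in the displayed matrix formula, rather than $v+\lambda_i$.)

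For the cocycle condition $a_{f_*L}(v,\lambda+\mu)=a_{f_*L}(v+\lambda,\mu)\,a_{f_*L}(v,\lambda)$, I would compute the right-hand product entrywise. Since each matrix $a_{f_*L}(v,\cdot)$ has exactly one nonzero entry per row, located by the permutation, the sum defining the product collapses to a single term: its $(i,k)$-entry is nonzero only for $k=\lambda(\mu(i))=(\lambda+\mu)(i)$ and equals $a_L(v+\lambda+\lambda_{\mu(i)},\Lambda^i_\mu)\cdot a_L(v+\lambda_{\lambda(\mu(i))},\Lambda^{\mu(i)}_\lambda)$. On the left the same entry is $a_L(v+\lambda_{\lambda(\mu(i))},\Lambda^{\mu(i)}_\lambda+\Lambda^i_\mu)$ by the additivity identity. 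Applying the cocycle relation for $a_L$ and using $\lambda_{\lambda(\mu(i))}+\Lambda^{\mu(i)}_\lambda=\lambda+\lambda_{\mu(i)}$ to rewrite the shift matches the two expressions. This is the computational heart of the proposition.

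Independence of the representatives I would prove by a gauge transformation: replacing $\lambda_i$ by $\lambda_i+\kappa_i$ with $\kappa_i\in\Gamma_X$ leaves each coset, hence each permutation $\lambda(\cdot)$, unchanged and alters the $\Lambda^i_\lambda$ in a controlled way, so the new factor is conjugate to the old one by the diagonal holomorphic map $h(v)=\bigl(a_L(v+\lambda_i,\kappa_i)\,\delta_{ij}\bigr)_{ij}$; verifying $a'(v,\lambda)=h(v+\lambda)\,a_{f_*L}(v,\lambda)\,h(v)^{-1}$ is again a one-line application of the $a_L$-cocycle. Finally, to see that $a_{f_*L}$ represents $f_*L$, I would trivialize explicitly: since $\pi_X^*L$ is trivial on the Stein manifold $V$, the fibre of $f_*L$ over $[v]_Y$ is $\bigoplus_{i=1}^n L_{[v+\lambda_i]_X}$, and trivializing the $i$-th summand at the point $v+\lambda_i$ identifies $\pi_Y^*f_*L$ with $V\times\dC^n$. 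Transporting these trivializations along $v\mapsto v+\lambda$ reshuffles the summands by $\lambda(\cdot)$ and rescales each by precisely the $a_L$-factor appearing in $a_{f_*L}$, so the $\Gamma_Y$-descent data is exactly $a_{f_*L}$; by Theorem \ref{iena} this determines the bundle $f_*L$.

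I expect the main obstacle to be not any single computation but keeping the index conventions coherent across the three parts, in particular getting the direction of the permutation in the composition law right and consistently using the shift $v+\lambda_{\lambda(i)}$ in the definition, since an inconsistent choice there silently breaks the cocycle. Once the trivialization in the last step is set up carefully, both the cocycle identity and the representative-independence can essentially be read off from it, which is the cleanest route and the one I would present.
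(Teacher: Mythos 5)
Your proof is correct, and for the first two claims it coincides with the paper's argument: the same two bookkeeping identities $(\lambda+\mu)(i)=\lambda(\mu(i))$ and $\Lambda^i_{\lambda+\mu}=\Lambda^{\mu(i)}_\lambda+\Lambda^i_\mu$ drive the cocycle computation, and the same diagonal gauge $\phi(v)=\mathrm{diag}\bigl(a_L(v+\lambda_1,\kappa_1),\dots,a_L(v+\lambda_n,\kappa_n)\bigr)$ gives independence of the representatives. Your parenthetical remark about the shift is also right: the consistent convention is $a_L(v+\lambda_{\lambda(i)},\Lambda^i_\lambda)$, which is what both the cocycle identity and the section computation force (the paper's earlier "induced representation" display with $v+\lambda_i$ is the inconsistent variant). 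The one place you genuinely diverge is the third claim. The paper identifies the bundle $E$ defined by $a_{f_*L}$ with $f_*L$ at the level of section sheaves, proving $\Gamma(U,E)\cong\Gamma(f^{-1}(U),L)$ for every connected open $U\subset Y$; this requires some fundamental-group bookkeeping when $U$ is not simply connected (representatives are chosen so that the first $[\pi_1(U):\pi_1(f^{-1}(U))]$ of them represent $\pi_1(U)/\pi_1(f^{-1}(U))$ with $\lambda_1\in\Gamma_X$, and a section of $L$ is then extracted from the first component alone). You instead argue $\Gamma_Y$-equivariantly on the universal cover: the fibre of $f_*L$ over $[v]_Y$ is $\bigoplus_i L_{[v+\lambda_i]_X}$, the canonical trivialization of each summand at the representative point $v+\lambda_i$ trivializes the pullback of $f_*L$ to $V$, and transporting along $v\mapsto v+\lambda$ via $v+\lambda+\lambda_i=v+\lambda_{\lambda(i)}+\Lambda^i_\lambda$ produces exactly the matrix $a_{f_*L}(v,\lambda)$ as descent data. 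Both verifications rest on the same identity; yours is cleaner and purely fibrewise, sidestepping the $\pi_1$ bookkeeping entirely, while the paper's yields the sheaf-theoretic isomorphism $\Gamma(U,E)\cong\Gamma(f^{-1}(U),L)$ in exactly the form in which direct images are used later in the thesis (pushforwards defined through spaces of sections), so neither route is wasteful — they are two presentations of the same equivariant computation.
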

\begin{proof}
First we show that $a_{f_*L}$ satisfies the product rule 
\begin{align*}
a_{f_*L}(v,\lambda+\mu)=a_{f_*L}(v+\lambda, \mu)a_{f_*L}(v,\mu)
\end{align*}
for factors of automorphy. Let $\lambda+\lambda_i=\lambda_{\lambda(i)}+\Lambda^i_\lambda$ and $\mu+\lambda_i=\lambda_{\mu(i)}+\Lambda_\mu^i$ with $\Lambda^i_\lambda,\Lambda^i_\mu\in \Gamma_X$. Then,
\begin{align*}
a_{f_*L}(v+\lambda, \mu)a_{f_*L}(v,\mu)&=\Big (\sum_{j=1}^n a_L(v+\lambda+\lambda_{\mu(i)},\Lambda^i_\mu) \delta^{\mu(i)}_j a_L(v+\lambda_{\lambda(j)},\Lambda^j_\lambda) \delta^{\lambda(j)}_k \Big)_{ik} \\
[\delta^{\mu(i)}_j \delta^{\lambda(j)}_k\neq 0\  \text{ if and only if }&\ \mu(i)=j\ \ \text{and} \ \ \lambda(j)=\lambda(\mu(i))=(\lambda+\mu)(i)=k] \\
&=\Big ( a_L(v+\lambda+\lambda_{\mu(i)},\Lambda^i_\mu)  a_L(v+\lambda_{\lambda(\mu(i))},\Lambda^{\mu(i)}_\lambda)  \Big)_{i \lambda(\mu(i))}\\
&=\Big ( a_L(v+\lambda_{\lambda(\mu(i))}+\Lambda^{\mu(i)}_\lambda,\Lambda^i_\mu)  a_L(v+\lambda_{\lambda(\mu(i))},\Lambda^{\mu(i)}_\lambda)  \Big)_{i \lambda(\mu(i))}\\
&=\Big (a_L(v+\lambda_{\lambda(\mu(i))},\Lambda^{\mu(i)}_\lambda+\Lambda^i_\mu)  \Big)_{i \lambda(\mu(i))}\\
&=\Big (a_L(v+\lambda_{(\lambda+\mu)(i)},\Lambda^{i}_{\lambda+\mu})  \Big)_{i (\lambda+\mu)(i)}\\
&=a_{f_*L}(v,\lambda+\mu),
\end{align*}
since $\lambda+\mu+\lambda_i=\lambda+\lambda_{\mu(i)}+\Lambda^i_\mu=\lambda_{\lambda(\mu(i))}+\Lambda^{\mu(i)}_\lambda+\Lambda^i_\mu$.

To show that the description is independent of the choice of generators let $\{\lambda'_1,...,\lambda'_n\}$ be another set of generators such that $\lambda'_i-\lambda_i=\mu_i\in \Gamma_X$. Then, for any $\lambda\in \Gamma_Y$ we have
$$\lambda+\lambda'_i=\lambda+\lambda_i+\mu_i=\lambda_{\lambda(i)}+\Lambda^i_{\lambda}+\mu_i=\lambda'_{\lambda(i)}-\mu_{\lambda(i)}+\Lambda^i_{\lambda}+\mu_i.$$
That is, $\Lambda^{\prime i}_\lambda=\Lambda^i_\lambda+\mu_i-\mu_{\lambda(i)}$ and we have
\begin{align*}
    a_L(v+\lambda'_{\lambda(i)},\Lambda^{\prime i}_\lambda)&=a_L(v+\lambda_{\lambda(i)}+\mu_{\lambda(i)},\Lambda^i_\lambda+\mu_i-\mu_{\lambda(i)})\\
    &=a_L(v+\lambda_{\lambda(i)},\Lambda^i_\lambda+\mu_i)a_{L}(v+\lambda_{\lambda(i)},\mu_{\lambda(i)})^{-1}\\
    &=a_L(v+\lambda_{\lambda(i)}+\Lambda^i_\lambda,\mu_i)a_L(v+\lambda_{\lambda(i)},\Lambda^i_\lambda)a_L(v+\lambda_{\lambda(i)},\mu_{\lambda(i)})^{-1}\\
    &=a_L(v+\lambda_i+\lambda,\mu_i)a_L(v+\lambda_{\lambda(i)},\Lambda^i_\lambda)a_L(v+\lambda_{\lambda(i)},\mu_{\lambda(i)})^{-1}.
\end{align*}
Define the holomorphic function
\begin{align*}
&\phi:\ \  V\ \ \ra\ \  GL_n(\dC)\\
&\phi(v)=diag(a_L(v+\lambda_1,\mu_1),...,a_L(v+\lambda_n,\mu_n)).
\end{align*}
If we denote by $a_{f_*L}'$ the factor of automorphy defined using the representatives $\{\lambda'_1,...,\lambda'_n\}$, then we have
$$a_{f_*L}'(v,\lambda)=\phi(v+\lambda) a_{f_*L}(v,\lambda)\phi^{-1}(v),$$
that is, $a_{f_*L}$ and $a_{f_*L}'$ are equivalent.

Finally, if $E$ is the holomorphic vector bundle corresponding to $a_{f_*L}$ we need to show for any $U\subset Y$ connected open
    $$\Gamma(f^{-1}(U),L)\cong \Gamma(U,E).$$
    If $X\cong V/\Gamma_X$ and $Y\cong V/\Gamma_Y$ let $\bar{U}$ be the preimage of $U$ and also of $f^{-1}(U)$ in $V$. A section $s\in  \Gamma(U,E)$ is given by a map
    $$s: \bar{U}\ra \dC^n$$
    satisfying $s(v+\lambda)=a_{f_*L}(v,\lambda)s$ for all $\lambda \in \pi_1(U)\leq \Gamma_Y $, where $\pi_1$ is the fundamental group with any choice of base point.

    To write $a_{f_*L}$ we chose a set of representatives of the cosets $\Gamma_Y/\Gamma_X$. We also have $\pi_1(f^{-1}(U))\leq \pi_1(U)$. From the isomorphism theorem
    $$\pi_1(U)/\pi_1(f^{-1}(U))= \pi_1(U)/(\pi_1(U)\cap \Gamma_X)=\pi_1(U)\Gamma_X/\Gamma_X$$
    with $\pi_1(U)\Gamma_X\leq \Gamma_Y$ a subgroup. So we may choose representatives of the cosets $\Gamma_Y/\Gamma_X$ such that the first $[\pi_1(U):\pi_1(f^{-1}(U)] $ of them represent the cosets $\pi_1(U)/\pi_1(f^{-1}(U))$ and such that $\lambda_1\in \Gamma_X$.
    
    If we write $s=(s_1,...,s_n)$ with $s_i:\bar{U}\ra \dC$, the $s_i$ satisfy
    $$s_i(v+\lambda)=a_L(v+\lambda_{\lambda(i)},\Lambda^i_\lambda)s_{\lambda(i)}(v).$$
    In particular, $a_L(v,\lambda_1)^{-1}s_1$ can be seen as a section of $L$ over $f^{-1}(U)$ since for all $\lambda\in \pi_1(U)\cap \Gamma_X=\pi_1(f^{-1}(U))$ it satisfies
    \begin{align*}
        a_L(v+\lambda,\lambda_1)^{-1}s_1(v+\lambda)&=a_L(v+\lambda,\lambda_1)a_L(v+\lambda_1,\lambda)s_1(v)\\
        &=a_L(v,\lambda)a_L(v,\lambda+\lambda_1)^{-1}a_L(v,\lambda+\lambda_1)a_L(v,\lambda_1)^{-1}s_1(v)\\
        &=a_L(v,\lambda)a_L(v,\lambda_1)^{-1}s_1(v).
    \end{align*}
    So we have a map $\Gamma(U,E)\ra \Gamma(f^{-1}(U),L)$. The inverse map $\Gamma(f^{-1}(U),L)$ is given as follows. A section $s\in \Gamma(f^{-1}(U),L)$ is a map 
    $$s: \bar{U} \ra \dC$$
    satisfying $s(v+\lambda)=a_L(v,\lambda)s(v)$ for all $\lambda\in \pi_1(f^{-1}(U))$. We may construct a section
    $$(s_1,...,s_n): \bar{U}\ra \dC^n$$
    as $s_i(v):=s(v+\lambda_i)$. Then for any $\lambda\in \pi_1(U)$ we have
    \begin{align*}
    s_i(v+\lambda)&=s(v+\lambda+\lambda_i)=s(v+\lambda_{\lambda(i)}+\Lambda^i_\lambda)=a_L(v+\lambda_{\lambda(i)},\Lambda^i_\lambda)s_i(v+\lambda_{\lambda(i)})\\
    &=a_L(v+\lambda_{\lambda(i)},\Lambda^i_\lambda)s_{\lambda(i)}(v)
    \end{align*}

\end{proof}
In \cite[Proposition 3.1]{matsushima} Matsushima explained how one can view the factors of automorphy of line bundles as holomorphic representations of the \emph{Heisenberg group} $G_H(\Gamma)$ associated to the lattice $\Gamma$ and to a Hermitian pairing $H\in NS(X)$. The Heisenberg group is $V\times \Gamma_X$ with a modified product rule such that factors of automorphy become representations. From this point of view, the pushforward is really the induced representation. 

He also studied representations of the group $G_H(\Gamma)$ for $H\in NS(X)\otimes \dQ$. That is, the imaginary part of the Hermitian pairing is only required to take rational values on the lattice.  Matsushima showed that the vector bundles which can be constructed from irreducible representations of $G_H(\Gamma)$ are the ones whose factor of automorphy can be brought to the form.
$$a_{E}(v,\gamma)=exp\Big(\pi H(v,\gamma)+\frac{\pi}{2}H(\gamma,\gamma)\Big)\cdot U(\gamma) $$
where $U$ is a \emph{semi-representation} of $\Gamma$ in $U(n)$ with respect to $H$.
\begin{definition}
    A \emph{semi-representation} of $\Gamma$ in $U(n)$ with respect to $H\in NS(X)\otimes \dQ$ is a map
    $$U:\Gamma\ra U(n)$$
    which satisfies
    $$U(\lambda+\mu)=U(\lambda)U(\mu)exp(i\pi ImH(\lambda,\mu)).$$
\end{definition}
These factors of automorphy describe semihomogeneous vector bundles, holomorphic vector bundles whose projectivization is \emph{homogeneous}. It is easy to see that $f_*L$ is semihomogeneous.
\small
\begin{equation}\label{f_*L as semiomog factor}
\begin{aligned}
    a_{f_*L}&(v,\lambda)=\\
    =&\Big(\chi(\Lambda^i_\lambda)exp(\pi H(v+\lambda_{\lambda(i)},\Lambda^i_\lambda)+\frac{\pi}{2} H(\Lambda^i_\lambda,\Lambda^i_\lambda)) \delta^j_{\lambda(i)} \Big)^i_j\\
    =&\Big(\chi(\Lambda^i_\lambda)exp(\pi H(v+\lambda_{\lambda(i)}, \lambda+\lambda_i-\lambda_{\lambda(i)})+\frac{\pi}{2} H(\lambda+\lambda_i-\lambda_{\lambda(i)},\lambda+\lambda_i-\lambda_{\lambda(i)})) \delta^j_{\lambda(i)} )\Big)^i_j\\
    =&\Big(\chi(\Lambda^i_\lambda)exp\Big(\pi H(v,\lambda)+\frac{\pi}{2}H(\lambda,\lambda)+\pi H(v,\lambda_i-\lambda_{\lambda(i)})+\pi H(\lambda_{\lambda(i)},\lambda+\lambda_i-\lambda_{\lambda(i)})\Big)\times\\
    &\times exp\Big( \frac{\pi}{2}H(\lambda,\lambda_i-\lambda_{\lambda(i)})+\frac{\pi}{2}H(\lambda_i-\lambda_{\lambda(i)},\lambda)+\frac{\pi}{2}H(\lambda_i-\lambda_{\lambda(i)},\lambda_i-\lambda_{\lambda(i)}) \delta^j_{\lambda(i)} \Big)^i_j\\
    =&\phi(v+\lambda)exp\Big(\pi H(v,\lambda) +\frac{\pi}{2}H(\lambda,\lambda)\Big)\cdot \Big( \chi(\Lambda^i_\lambda) exp\Big( i\pi Im H(\lambda+\lambda_i,\lambda-\lambda_{\lambda(i)})\Big)\delta^j_{\lambda(i)} \Big)^i_j\phi(v)^{-1}
\end{aligned}
\end{equation}
\normalsize
for $\phi(v)=diag\Big(exp\Big(\pi H(v,\lambda_i)+\frac{\pi}{2}H(\lambda_i,\lambda_i)\Big)\Big)$.

It turns out that the converse is also true. A holomorphic vector bundle $E$ on $X$ is simple and semihomogeneous if and only if there exists an isogeny $f:Y\ra X$ whose degree is the rank of $E$ and a line bundle $L$ on $X$ such that $f_*L=E$ (\cite[Theorem 8.1]{matsushima}, \cite[Proposition 7.3]{mukaiSemihomog}). Such vector bundles are also called \emph{projectively flat} as they admit Hermitian connections whose curvature $F\in \Omega^2(X,End(E))$ is a two-form times the identity \cite[Theorem 4.7.54]{kobayashi}.

\begin{lemma}\label{lemma: pushforward of trivial line bundle}
    Let $f:X\ra Y$ be a degree $n$ isogeny between $g$-dimensional  complex tori. Let $\{L_1,...,L_n\}$ be the flat line bundles forming the kernel of $\hat{f}:\hat{Y}\ra \hat{X}$. Then,
    $$f_*\cO_X=\bigoplus_{i=1}^nL_i.$$
\end{lemma}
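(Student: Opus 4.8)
The plan is to prove this via factors of automorphy, since the preceding material has just established the explicit formula (\ref{holo factor pushforward under isogeny}) for the pushforward of a line bundle under an isogeny, and in the special case $L = \cO_X$ the factor of automorphy of $L$ is trivial. First I would write $X = V/\Gamma_X$ and $Y = V/\Gamma_Y$ with $\Gamma_X \subset \Gamma_Y$ and $[\Gamma_Y : \Gamma_X] = n$, as in the setup of this section. The trivial line bundle $\cO_X$ has factor of automorphy $a_{\cO_X}(v,\lambda) = 1$ for all $v \in V$, $\lambda \in \Gamma_X$. Substituting this into the induced-representation formula (\ref{holo factor pushforward under isogeny}) gives
\begin{equation*}
a_{f_*\cO_X}(v,\lambda) = \Big( \delta^{\lambda(i)}_j \Big)_{ij},
\end{equation*}
a permutation matrix depending only on $\lambda$ (through the permutation $i \mapsto \lambda(i)$ of the coset representatives $\{\lambda_1,\dots,\lambda_n\}$ of $\Gamma_Y/\Gamma_X$) and not on $v$. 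Thus $f_*\cO_X$ is a flat vector bundle given by the permutation representation of $\Gamma_Y$ on the cosets $\Gamma_Y/\Gamma_X$.

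Next I would diagonalize this permutation representation. Since the quotient group $G := \Gamma_Y/\Gamma_X$ is a finite abelian group of order $n$, the permutation action of $\Gamma_Y$ on $\dC[G] \cong \dC^n$ factors through $G$ and is precisely the regular representation of $G$. The regular representation of a finite abelian group decomposes as the direct sum of its $n$ distinct irreducible (one-dimensional) characters $\psi : G \to U(1)$. Pulling back along $\Gamma_Y \to G$, each such character gives a homomorphism $\chi_i : \Gamma_Y \to U(1)$ that restricts trivially to $\Gamma_X$, and the factor of automorphy $a_{f_*\cO_X}$ is equivalent (conjugate by the constant change-of-basis matrix diagonalizing the regular representation) to the diagonal factor $\mathrm{diag}(\chi_1,\dots,\chi_n)$. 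Hence
\begin{equation*}
f_*\cO_X \cong \bigoplus_{i=1}^n L_i,
\end{equation*}
where $L_i$ is the flat line bundle on $Y$ with factor of automorphy $\chi_i$.

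Finally I would identify the $L_i$ with the kernel of $\hat{f}$. A flat line bundle on $Y = V/\Gamma_Y$ given by a character $\chi : \Gamma_Y \to U(1)$ pulls back under $f$ to the flat line bundle on $X = V/\Gamma_X$ with character $\chi|_{\Gamma_X}$. Therefore $f^*L_i$ is trivial precisely when $\chi_i|_{\Gamma_X}$ is trivial, i.e.\ exactly when $\chi_i$ factors through $G = \Gamma_Y/\Gamma_X$ — which is the case for all our $\chi_i$ by construction. Under the identification of $\mathrm{Hom}(\Gamma_Y,U(1))$ with $\hat{Y}$ and the description of $\hat{f}$ as pullback of flat line bundles (established in Section \ref{section tori} and the remarks of this section), the line bundles satisfying $f^*L_i \cong \cO_X$ are exactly the elements of $\ker(\hat{f})$. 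Since the characters of $G$ are in bijection with $G$, there are exactly $n = \deg(\hat f)$ of them, matching $|\ker(\hat{f})|$, so $\{L_1,\dots,L_n\} = \ker(\hat{f})$ and the claim follows.

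The main obstacle I expect is the bookkeeping in the last step: verifying carefully that the $n$ characters of $G$ obtained from diagonalizing the regular representation are genuinely distinct and that they correspond \emph{bijectively} to $\ker(\hat{f})$ under the chosen identification $\hat Y \cong \mathrm{Hom}(\Gamma_Y, U(1))$, rather than merely lying inside it. This is essentially the statement that the characters of $\Gamma_Y$ trivial on $\Gamma_X$ form a group isomorphic to $\widehat{G}$ of order $n$, which equals the degree of $\hat f$; everything else is the standard representation theory of finite abelian groups combined with the explicit permutation-matrix computation.
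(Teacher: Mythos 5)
Your proposal is correct, and the overall strategy coincides with the paper's: both identify $a_{f_*\cO_X}$ as the permutation representation of $\Gamma_Y$ on the cosets $\Gamma_Y/\Gamma_X$ (i.e.\ the regular representation of $G=\Gamma_Y/\Gamma_X$ pulled back to $\Gamma_Y$), diagonalize it, and match the resulting characters with $\ker(\hat f)$ via $Pic^0(Y)\cong \mathrm{Hom}(\Gamma_Y,U(1))$. The difference is purely in execution: you invoke the abstract fact that the regular representation of a finite abelian group splits as the direct sum of its $n$ distinct one-dimensional characters, whereas the paper proves this by hand --- it decomposes $G\cong \dZ/d_1\dZ\oplus\cdots\oplus\dZ/d_g\dZ$, writes each generator's action as a tensor product of cyclic-shift matrices, and diagonalizes with an explicit Vandermonde-type change of basis (\ref{change of basis}). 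Your route is shorter and cleaner; what the paper's explicit computation buys is the concrete unitary Vandermonde matrix itself, which is not a throwaway: it is reused later (in the proof of Theorem \ref{pushforward tensor U(1) nondegen} and in the verification that the construction is independent of the choice of decomposition, where conjugation by exactly this matrix $V$ of (\ref{V}) is needed). Your closing worry about the bookkeeping is handled by the isomorphism $Pic^0(Y)\cong\mathrm{Hom}(\Gamma_Y,U(1))$ already established in the paper: distinct characters of $G$ give non-isomorphic flat bundles, the characters of $\Gamma_Y$ trivial on $\Gamma_X$ are exactly those factoring through $G$, and these are exactly the bundles pulling back trivially under $f$, so the bijection with $\ker(\hat f)$ is forced by the count $|\widehat{G}|=n=\deg(\hat f)$.
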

\begin{proof}
    Since $\cO_X$ is flat, its factor of automorphy  is independent of $V$ and it is actually a representation $a_{\cO_X}:\Gamma_X\ra U(1)$. Since $\cO_X$ is the trivial holomorphic line bundle $a_{\cO_X}$ is the trivial representation.
    Then, $a_{f_*\cO_X}$ is again independent of $V$ and is given by the induced representation $\Gamma_Y\ra GL_n(\dC)$. The quotient $\Gamma_Y/\Gamma_X=G$ is a finite abelian group so we may write
    $$G=\dZ/d_1\dZ\oplus...\oplus \dZ/d_g\dZ$$
    for some $d_i\in \dZ_{> 0}$ such that $d_1\cdot...\cdot d_g=n$. Let $(\lambda_1,...,\lambda_g)$ be a generating subset of $\Gamma_Y$ which project to a set of generators of $G$. Then,
    $$S=\Big\{\sum_{i=1}^k m_i\lambda_i\ | \ m_i\in \{0,...,d_i-1\}\Big \}$$
    is a full set of representatives of the $\Gamma_X$-cosets in $\Gamma_Y$.

    Since $a_{f_*\cO_X}$ is a representation it is sufficient to describe $a_{f_*\cO_X}(\lambda_i)$ for $i=1,...,g$.  It is a permutation matrix of order $d_i$ acting on $S$ by sending
    $$\sum_{j=1}^k m_j\lambda_j\  \mapsto\  \sum_{j=1}^k m_j\lambda_j+\lambda_i.$$
    Moreover, since $$\dC^n=\bigotimes_{i=1}^g\dC^{d_i}=\bigotimes_{i=1}^g\bigoplus_{m=0}^{d_i-1}e_{m\lambda_i}\dC$$
    and  $a_{f_*\cO_X}(\lambda_j)$ only acts on the $j^{\text{th}}$ tensor factor we may write it as a tensor product representation
    $$a_{f_*\cO_X}(\lambda_j)=Id_{d_1\times d_1}\otimes ...\otimes Id_{d_{j-1}\times d_{j-1}}\otimes 
    \begin{pmatrix}
    0 & 0 & \hdots & 0 & 1\\
    1 & 0 & 0 & \hdots & 0 \\
    0 & 1 & 0 &   \hdots & 0 \\
    \vdots &  & &  & \vdots\\
    0 &\hdots  & 0  & 1 & 0
    \end{pmatrix}\otimes Id_{d_{j+1}\times d_{j+1}} \otimes ... \otimes Id_{d_g\times d_g}$$
    We may diagonalize this representation by changing the basis of $\dC^{d_i}$ for all $i=1,...,g$ as
    \begin{align}\label{change of basis}
    \begin{pmatrix}
        e_{0\lambda_i} \\ e_{1\lambda_i} \\ \vdots \\ e_{(d_i-1)\lambda_i}
    \end{pmatrix}\mapsto \begin{pmatrix}
    e_{0\lambda_i} + e_{1\lambda_i} + ...+  e_{(d_i-1)\lambda_i}\\
        e_{0\lambda_i} +\xi_i e_{1\lambda_i} + ...+ \xi_i^{d_i-1} e_{(d_i-1)\lambda_i}\\
        \vdots \\
        e_{0\lambda_i} +\xi_i^{d_i-1} e_{1\lambda_i} + ...+ \xi_i^{(d_i-1)^2} e_{(d_i-1)\lambda_i}
    \end{pmatrix}
    \end{align}
    where $\xi_i$ is a primitive $d_i^{\text{th}}$ root of unity. Changing the basis can be seen as an equivalence relation between the factors of automorphy. If $S$ is the change of basis it can be seen as a constant, hence holomorphic, function $\phi: V\ra GL_n(\dC)$, $\phi(v)=S$. We may also divide out by the determinant of $S$ to induce a unitary change of basis.
    The new (diagonal) factor of automorphy is then given by $\phi(v+\lambda)a_{f_*L}(v,\lambda)\phi(v)^{-1}$.
    
    In this new basis, our representation is given by
    $$a_{f_*\cO_X}(\lambda_j)=Id_{d_1\times d_1}\otimes ...\otimes Id_{d_{j-1}\times d_{j-1}}\otimes 
    \begin{pmatrix}
    1 & 0 & \hdots & 0 \\
    0 & \xi_i & \hdots & 0 \\
    \vdots & & & \vdots\\
    0 & \hdots & 0 & \xi_i^{d_i-1}
    \end{pmatrix}\otimes Id_{d_{j+1}\times d_{j+1}} \otimes ... \otimes Id_{d_g\times d_g}.$$
    That is,
    $$f_*\cO_X=\bigotimes_{i=1}^g \bigoplus_{j=1}^{d_i} L_{\xi_i}^j ,$$
        where $L_{\xi_i}$ is the line bundle given by the factor of automorphy
        $$a_{L_{\xi_i}}(\lambda_j)=\xi_i\delta_{ij}.$$
        Exchanging addition and multiplication we find
    $$f_*\cO_X=\bigoplus_{m_i\in [0,...,d_{i-1}]} \bigotimes_{i=1}^{g} L_{\xi_i}^{m_j},$$
    and the line bundles $\bigotimes_{i=1}^{g} L_{\xi_i}^{m_i}$ for all $m_i\in[0,...,d_i-1] $ and all $i=1,...,g$ are precisely the ones in the kernel of $f$.
\end{proof}
\begin{remark}
Any flat line bundle $L\ra X$ has a factor of automorphy which is a representation $\Gamma_X\ra U(1)$. Therefore, the induced representation $\Gamma_Y\ra GL_n(\dC)$ is again unitary. More precisely, in the notation of the previous lemma
 \begin{align*}&a_{f_*L}(\lambda_j)=\\
 &=Id_{d_1\times d_1}\otimes ...\otimes Id_{d_{j-1}\times d_{j-1}}\otimes 
    \begin{pmatrix}
    0 & 0 & \hdots & 0 & a_L(d_j\lambda_j)\\
    1 & 0 & 0 & \hdots & 0 \\
    0 & 1 & 0 &   \hdots & 0 \\
    \vdots &  & &  & \vdots\\
    0 &\hdots  & 0  & 1 & 0
    \end{pmatrix}\otimes Id_{d_{j+1}\times d_{j+1}} \otimes ... \otimes Id_{d_g\times d_g}.\end{align*}
Again this matrix is unitary, hence it is unitarily diagonalizable. Moreover all $a_{f_*L}(\lambda_j)$ commute therefore they are simultaneously diagonalizable as in the case of $a_{f_*\cO_X}$. If we denote the dual isogeny by $\hat{f}:\hat{Y}\ra \hat{X}$ the same proof as before yields
$$f_*L\cong \bigoplus_{N\in \hat{f}^{-1}(L)}N.$$
\end{remark}

Let $L\ra X$ be a non-degenerate line bundle of type $(d_1,...,d_g)$. Then, the corresponding isogeny
$$\phi_L: X\ra \hat{X}$$
is of degree $d^2=d^2_1\cdot ...\cdot d^2_g$ and 
$$\Gamma_{\hat{X}}/\Gamma_X=\Big(\dZ/d_1\dZ\Big)^{\oplus 2}\oplus...\oplus \Big(\dZ/d_g\dZ\Big)^{\oplus 2}.$$
We have seen before that there exists an isogeny $f:X\ra Y$ of degree $d$  and a non-degenerate line bundle $N\ra Y$ of type $(1,...,1)$ such that $f^*N=L$ and that we have the following commutative diagram.
\begin{equation}\label{komcsi negyzet}
\begin{tikzcd}
    X \arrow{r}{\phi_L} \arrow{d}[swap]{f} & \hat{X} \\
    Y \arrow{r}{\phi_N} & \hat{Y} \arrow{u}[swap]{\hat{f}}
\end{tikzcd}
\end{equation}

\begin{lemma}\label{Lemma: translation and tensor}
    Let $N\ra Y$ be a non-degenerate type $(1,...,1)$ line bundle on the complex torus $Y$. Then for any flat line bundle $L_{\hat{y}}\in Pic^0(Y)$ corresponding to $\hat{y}\in \hat{Y}$ we have
    $$(\phi_N)_*(N^{-1}\otimes L_{\hat{y}})\cong t_{-\hat{y}}^*(\phi_{N})_*N^{-1}.$$
\end{lemma}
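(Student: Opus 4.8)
The plan is to exploit the fact that a non-degenerate line bundle $N$ of type $(1,\dots,1)$ is a principal polarisation, so the isogeny $\phi_N\colon Y\to\hat Y$ of (\ref{phi_L for holo}) has degree $1$ and is therefore an \emph{isomorphism} of complex tori. Consequently $(\phi_N)_*$ is not a genuine higher-rank direct image but merely pullback along $\phi_N^{-1}$, and the whole statement reduces to an identity between line bundles on $\hat Y$. First I would record the commutation rule between translations and pushforward along the isomorphism $g:=\phi_N$. Since $g\circ t_a=t_{g(a)}\circ g$ for every $a\in Y$, one gets $g_*\,t_a^*M=t_{g(a)}^*\,g_*M$ for any line bundle $M$ on $Y$. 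Taking $g(a)=-\hat y$, i.e.\ $a=-y_0$ with $y_0:=\phi_N^{-1}(\hat y)$, this rewrites the right-hand side as
$$t_{-\hat y}^*(\phi_N)_*N^{-1}=(\phi_N)_*\,t_{-y_0}^*N^{-1}.$$

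Because $g=\phi_N$ is an isomorphism, $(\phi_N)_*=(\phi_N^{-1})^*$ is an equivalence and hence faithful on isomorphism classes, so it suffices to prove the corresponding identity on $Y$, namely $N^{-1}\otimes L_{\hat y}\cong t_{-y_0}^*N^{-1}$. Here I would use only the definition of $\phi_N$ from (\ref{phi_L for holo}): writing $t_{-y_0}^*N=N\otimes(t_{-y_0}^*N\otimes N^{-1})=N\otimes\phi_N(-y_0)$ and dualising gives $t_{-y_0}^*N^{-1}=N^{-1}\otimes\phi_N(-y_0)^{-1}$. Since $\phi_N$ is a group homomorphism into $\hat Y=Pic^0(Y)$ we have $\phi_N(-y_0)^{-1}=\phi_N(y_0)$, and by construction $\phi_N(y_0)=\hat y$, which under the identification $\hat Y\cong Pic^0(Y)$ is precisely the flat bundle $L_{\hat y}$. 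This yields $t_{-y_0}^*N^{-1}\cong N^{-1}\otimes L_{\hat y}$ and closes the argument.

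The proof is elementary once the principal-polarisation observation is made, so I expect the only (mild) obstacle to be keeping three identifications consistent: that $(\phi_N)_*$ along an isomorphism is $(\phi_N^{-1})^*$, that $\hat Y$ and $Pic^0(Y)$ are identified so that the point $\hat y$ becomes the flat bundle $L_{\hat y}$, and the sign in $\phi_N(-y_0)^{-1}=\phi_N(y_0)$. If instead one prefers a proof entirely within the factor-of-automorphy formalism of this section, matching the style of (\ref{holo factor pushforward under isogeny}) and the Appell--Humbert normalisation (\ref{canonical factor}), then the translation-versus-pushforward identity above would be replaced by an explicit computation that pushforward along the linear isomorphism underlying $\phi_N$ carries the canonical factor of $N^{-1}\otimes L_{\hat y}$ to that of $t_{-y_0}^*N^{-1}$; there the bookkeeping of the semicharacter $\chi$ of $N$ and of the character defining $L_{\hat y}$ is the only nontrivial part.
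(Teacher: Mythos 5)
Your proof is correct, and it takes a genuinely different route from the paper's. The paper proves the lemma entirely inside the factor-of-automorphy formalism: it writes out the canonical factor of $(\phi_N)_*N^{-1}$ (using that $\phi_N$ has analytic representation $H$, so pushforward just transports the factor through $H^{-1}$), multiplies by the character $a_{L_{\hat y}}(v,\lambda)=\exp\left(2\pi i\,\mathrm{Im}\,\hat y(\lambda)\right)$, and then exhibits the explicit holomorphic coboundary $\phi(\hat v)=\exp\left(\pi H^{-1}(\hat y,\hat v)\right)$ identifying the result with the factor of $t_{-\hat y}^*(\phi_N)_*N^{-1}$. You instead run a structural argument: type $(1,\dots,1)$ forces $\deg\phi_N=1$, so $(\phi_N)_*=(\phi_N^{-1})^*$ and the pushforward is harmless; the homomorphism property of $\phi_N$ (theorem of the square) gives both the commutation $(\phi_N)_*t_a^*=t_{\phi_N(a)}^*(\phi_N)_*$ and the sign identity $\phi_N(-y_0)^{-1}=\phi_N(y_0)$; and the definition (\ref{phi_L for holo}) makes $t_{-y_0}^*N^{-1}\cong N^{-1}\otimes\phi_N(-y_0)^{-1}\cong N^{-1}\otimes L_{\hat y}$ a tautology, since $\phi_N$ lands in $Pic^0(Y)$ and $\phi_N(y_0)=\hat y$ by the choice $y_0=\phi_N^{-1}(\hat y)$. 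All three steps and the bookkeeping are right, so the reduction to an identity on $Y$ closes the proof. The trade-off is this: your argument is shorter, free of representative choices, and works verbatim in any setting where $\phi_N$ is a homomorphism into $Pic^0$ and an isomorphism (e.g.\ abelian varieties over other fields); the paper's computation, though heavier, produces the explicit equivalence of canonical factors, which is exactly the form that gets reused — the same manipulation is repeated almost verbatim for the unitary analogue on real tori ($U(1)$-bundles with connections), where one must match connection one-forms and not merely isomorphism classes of bundles, and where an abstract $Pic^0$-style argument would need to be supplemented by precisely such a connection-level verification.
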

\begin{proof}
The canonical factor of automorphy for $N^{-1}$ is given by
$$a_{N^{-1}}(v,\lambda)=\chi(\lambda)^{-1}exp(-\pi H(v,\lambda) -\frac{\pi}{2} H(\lambda,\lambda))$$
if $N$ corresponds to the Chern class $H$ and semicharacter $\chi$. The analytification of $\phi_N$ is given by
$$\phi_N: V\ra V^*,\ \ \ v\mapsto H(v),$$
and $H(\Gamma_Y)\subset \Gamma_Y^*=\Gamma_{\hat{Y}}.$ Hence,
\begin{align*}
    a_{(\phi_N)_*N^{-1}}(\hat{v},\hat{\lambda})&=\chi(H^{-1}(\hat{\lambda}))^{-1}exp(-\pi H(H^{-1}(\hat{v}),H^{-1}(\hat{\lambda})) -\frac{\pi}{2} H(H^{-1}(\hat{\lambda}),H^{-1}(\hat{\lambda})))\\
    &=\chi(H^{-1}(\hat{\lambda}))^{-1}exp(-\pi H^{-1}(\hat{\lambda},\hat{v})-\frac{\pi}{2}H^{-1}(\hat{\lambda},\hat{\lambda})),
\end{align*}
where we use that
\begin{align}
    H(H^{-1}(\hat{v}),H^{-1}(\hat{\lambda}))=\Big(H\circ H^{-1}(\hat{v})\Big )(H^{-1}(\hat{\lambda}))=\hat{v}(H^{-1}(\hat{\lambda}))=H^{-1}(\hat{\lambda},\hat{v}).
\end{align}
The flat line bundle $L_{\hat{y}}$ is given by the factor of automorphy
$$a_{L_{\hat{y}}}(v,\lambda)=exp(2\pi i\cdot \text{Im} \hat{y}(\lambda)).$$
Then,
\begin{align*}
a&_{(\phi_N)_*(N^{-1}\otimes L_{\hat{y}})}(\hat{v},\hat{\lambda})=\\
&=\chi(H^{-1}(\hat{\lambda}))^{-1}exp(-\pi H^{-1}(\hat{\lambda},\hat{v})-\frac{\pi}{2}H^{-1}(\hat{\lambda},\hat{\lambda}))\cdot exp(2\pi i\cdot \text{Im} \hat{y}(H^{-1}(\hat{\lambda})))\\
&=\chi(H^{-1}(\hat{\lambda}))^{-1}exp(-\pi H^{-1}(\hat{\lambda},\hat{v})-\frac{\pi}{2}H^{-1}(\hat{\lambda},\hat{\lambda}))\cdot exp(\pi \cdot  (H^{-1}(\hat{\lambda},\hat{y})-H^{-1}(\hat{y},\hat{\lambda})))\\
&=\chi(H^{-1}(\hat{\lambda}))^{-1}exp(-\pi H^{-1}(\hat{\lambda},\hat{v}-\hat{y})-\frac{\pi}{2}H^{-1}(\hat{\lambda},\hat{\lambda}))\cdot exp(-\pi \cdot  H^{-1}(\hat{y},\hat{\lambda}))
\end{align*}
The function $\phi: V^*\ra \dC$ given by $\phi(\hat{v})=exp(\pi \cdot  H^{-1}(\hat{y},\hat{v}))$ is holomorphic in $\hat{v}$ as $H^{-1}(\hat{y},\hat{v})=\hat{v}(H^{-1}(\hat{y}))$. Therefore, via $\phi$ the factor of automorphy $a_{(\phi_N)_*(N^{-1}\otimes L_{\hat{y}})}$ is equivalent to the factor of automorphy 
\begin{align*}a_{t_{-\hat{y}}^*(\phi_{N})_*N^{-1}}(\hat{v},\hat{\lambda})&=a_{(\phi_{N})_*N^{-1}}(\hat{v}-\hat{y},\hat{\lambda})\\
&=\chi(H^{-1}(\hat{\lambda}))^{-1}exp(-\pi H^{-1}(\hat{\lambda},\hat{v}-\hat{y})-\frac{\pi}{2}H^{-1}(\hat{\lambda},\hat{\lambda})).\end{align*}
\end{proof}

\begin{remark}
    Note that for the Fourier-Mukai transform we have \cite[Proposition 14.3.1]{BL} 
    $$\widehat{N\otimes L_{\hat{y}}}=(\phi_{N})_*(N\otimes L_{\hat{y}})^{-1}=(\phi_{N})_*(N^{-1}\otimes L_{-\hat{y}})=t_{\hat{y}}^*(\phi_{N})_*N^{-1}=t_{\hat{y}}^*\hat{N} .$$
\end{remark}

\begin{lemma}\label{Lemma: invariance under kernel}
Let $f:X\ra Y$ be a degree $n$ isogeny of $g$ dimensional complex abelian varieties. Let $x\in Ker(f)$ be a point in $X$ and $\cF$ a sheaf on $X$. Then,
$$f_*t_x^*\cF\cong f_*\cF.$$
\end{lemma}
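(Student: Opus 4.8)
The statement to prove is that for a degree $n$ isogeny $f:X\ra Y$ of $g$-dimensional complex abelian varieties, a point $x\in \mathrm{Ker}(f)$, and a sheaf $\cF$ on $X$, we have $f_*t_x^*\cF\cong f_*\cF$. Let me think about what this is really saying.

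Since $x\in\mathrm{Ker}(f)$, we have $f\circ t_x=f$ as maps $X\ra Y$, because $f(t_x(z))=f(z+x)=f(z)+f(x)=f(z)$ (using that $f$ is a homomorphism and $f(x)=0$). This is the crucial geometric fact.

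Now I need to lift this to an identity of direct-image sheaves. The translation $t_x:X\ra X$ is an isomorphism of $X$ with itself, and it commutes with $f$ in the sense just described. Concretely, for any open $U\subset Y$, the functoriality of pushforward gives $(f_*t_x^*\cF)(U)=\cF(t_x(f^{-1}(U)))=\cF(f^{-1}(U))$, where the last equality holds because $t_x(f^{-1}(U))=f^{-1}(U)$: indeed $z\in f^{-1}(U)$ iff $f(z)\in U$ iff $f(z+x)\in U$ iff $z+x\in f^{-1}(U)$, so $t_x$ maps $f^{-1}(U)$ bijectively to itself. This already suggests the isomorphism is essentially the identity on sections.

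So the plan is as follows. First I would record the identity $f\circ t_x=f$ and, dually, that $t_x$ restricts to an automorphism of each fiber $f^{-1}(y)$, equivalently $t_x$ preserves the sheaf-theoretic preimage $f^{-1}(U)$ for every open $U\subseteq Y$. Second, I would use the general functorial identity for compositions of pushforwards: since $f=f\circ t_x$ and $t_x$ is an isomorphism, we get
$$f_*t_x^*\cF=(f\circ t_x)_* t_x^*\cF = f_*\bigl((t_x)_* t_x^*\cF\bigr)\cong f_*\cF,$$
where the last step uses $(t_x)_* t_x^*\cF\cong\cF$ because $t_x$ is an isomorphism (so $t_x^*$ and $(t_x)_*=(t_x^{-1})^*$ are mutually inverse equivalences, and $(t_x)_*t_x^*$ is naturally isomorphic to the identity functor). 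This is the cleanest route and avoids any computation with factors of automorphy.

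The only point requiring a little care—and the place I would expect a referee to want detail—is justifying $(f\circ t_x)_*=f_*\circ (t_x)_*$ and the natural isomorphism $(t_x)_*t_x^*\cong\mathrm{id}$; both are standard facts about pushforward along a composition and about pullback/pushforward along an isomorphism, so they are not genuine obstacles. No hypothesis beyond $x\in\mathrm{Ker}(f)$ and $f$ being a homomorphism is needed, and the argument is purely formal, working for an arbitrary sheaf $\cF$ with no coherence or flatness assumption. I would remark that this lemma is precisely the analogue, in the factor-of-automorphy setting, of the Fourier--Mukai fact that pushing forward is insensitive to translation by kernel points, and it will be applied together with Lemma~\ref{Lemma: translation and tensor} and Lemma~\ref{lemma: pushforward of trivial line bundle} to reprove \eqref{main equation nondegen}.
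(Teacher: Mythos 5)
Your proposal is correct and follows essentially the same route as the paper: the key point in both is that $f\circ t_x=f$ because $f$ is a homomorphism killing $x$, so $t_x$ preserves every preimage $f^{-1}(U)$ and the identification of sections $\Gamma(U,f_*t_x^*\cF)=\Gamma(t_x(f^{-1}(U)),\cF)=\Gamma(f^{-1}(U),\cF)$ gives the isomorphism. Your additional functorial packaging via $(f\circ t_x)_*=f_*\circ(t_x)_*$ and $(t_x)_*t_x^*\cong\mathrm{id}$ is just a restatement of the same argument.
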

\begin{proof}
    The sheaf $f_*t_x^*\cF$ is given over $U\subset Y$ as
    $$\Gamma(U,f_*t_x^*\cF)=\Gamma(f^{-1}(U),t_x^*\cF)=\Gamma(t_xf^{-1}(U),\cF)=\Gamma(f^{-1}(U),\cF).$$
 Indeed, since $f$ is an isogeny, it is a homomorphism so $f(x+y)=f(x)+f(y)=f(y)$ for any $x\in K(f)$ and $y\in X$.    
\end{proof}

\begin{proposition}\label{FM transform and phi L holo}
We have
$$(\phi_L)_*L^{-1}\cong \hat{f}_*(\phi_N)_*N^{-1} \otimes H^s(X,L)$$
where $s$ is the index of $L$.
\end{proposition}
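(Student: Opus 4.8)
The plan is to avoid the Fourier--Mukai machinery entirely and instead factor $\phi_L$ through the commutative square (\ref{komcsi negyzet}), pushing forward in three stages and collapsing everything to the type $(1,\dots,1)$ situation via the three lemmas just proved. Commutativity of (\ref{komcsi negyzet}) gives $\phi_L=\hat{f}\circ\phi_N\circ f$, so that $(\phi_L)_*L^{-1}\cong \hat{f}_*(\phi_N)_*f_*L^{-1}$. Since $L=f^*N$ we have $L^{-1}=f^*N^{-1}$, and because $f$ is finite flat the projection formula yields $f_*L^{-1}=f_*(f^*N^{-1})\cong N^{-1}\otimes f_*\cO_X$. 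This is the step that replaces the degenerate object $L^{-1}$ on $X$ by the principally polarized $N^{-1}$ on $Y$, at the cost of the extra factor $f_*\cO_X$.

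Next I would resolve $f_*\cO_X$ using Lemma \ref{lemma: pushforward of trivial line bundle}: writing $\operatorname{Ker}(\hat{f})=\{L_{\hat{y}_1},\dots,L_{\hat{y}_d}\}\subset\operatorname{Pic}^0(Y)$ for the $d=\deg f$ flat line bundles killed by $\hat{f}$, we get $f_*\cO_X\cong\bigoplus_{i=1}^d L_{\hat{y}_i}$, hence $f_*L^{-1}\cong\bigoplus_{i=1}^d\big(N^{-1}\otimes L_{\hat{y}_i}\big)$. Applying $(\phi_N)_*$ and invoking Lemma \ref{Lemma: translation and tensor} (which applies precisely because $N$ is of type $(1,\dots,1)$) converts each tensor summand into a translate: $(\phi_N)_*\big(N^{-1}\otimes L_{\hat{y}_i}\big)\cong t_{-\hat{y}_i}^*(\phi_N)_*N^{-1}$. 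Thus $(\phi_N)_*f_*L^{-1}\cong\bigoplus_{i=1}^d t_{-\hat{y}_i}^*(\phi_N)_*N^{-1}$, a direct sum of translates of the single rank-$d$ bundle $(\phi_N)_*N^{-1}$ on $\hat{Y}$.

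The final stage is to apply $\hat{f}_*$ and use that the translation parameters $-\hat{y}_i$ all lie in $\operatorname{Ker}(\hat{f})$. By Lemma \ref{Lemma: invariance under kernel}, $\hat{f}_*t_{-\hat{y}_i}^*\cF\cong\hat{f}_*\cF$ for every $\cF$ on $\hat{Y}$, so each of the $d$ translates pushes forward to the same sheaf and $(\phi_L)_*L^{-1}\cong\bigoplus_{i=1}^d\hat{f}_*(\phi_N)_*N^{-1}\cong\hat{f}_*(\phi_N)_*N^{-1}\otimes\dC^d$. It then remains only to identify the $d$-dimensional multiplicity space with $H^s(X,L)$, which is exactly the statement $H^s(X,L)\cong\dC^d$ from \cite[Theorem 3.5.5]{BL}, $s$ being the index and $d$ the degree of $L$.

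I expect the only genuinely delicate point to be the bookkeeping in the middle stage: one must confirm that the flat line bundles produced by Lemma \ref{lemma: pushforward of trivial line bundle} are precisely the elements of $\operatorname{Ker}(\hat{f})$ (so that their parameters $\hat{y}_i$ lie in the kernel and Lemma \ref{Lemma: invariance under kernel} is applicable), and that the number of summands matches $\deg\hat{f}=\deg f=d$. Everything else is a direct chain of functorial isomorphisms, and the cohomological identification of the multiplicity is imported wholesale from \cite{BL}. The payoff is a self-contained, factor-of-automorphy proof of (\ref{main equation nondegen}) that does not rely on any property of the derived transform, which is exactly the form needed to generalize to $U(1)$-bundles with connections in the next chapter.
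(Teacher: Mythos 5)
Your proposal is correct and follows essentially the same route as the paper's proof: factor $\phi_L=\hat{f}\circ\phi_N\circ f$ through the square (\ref{komcsi negyzet}), apply the projection formula to get $f_*L^{-1}\cong N^{-1}\otimes f_*\cO_X$, resolve $f_*\cO_X$ by Lemma \ref{lemma: pushforward of trivial line bundle}, convert each summand to a translate via Lemma \ref{Lemma: translation and tensor}, collapse the translates with Lemma \ref{Lemma: invariance under kernel}, and identify the $d$-fold multiplicity with $H^s(X,L)\cong\dC^d$. The "delicate point" you flag is in fact exactly the content of Lemma \ref{lemma: pushforward of trivial line bundle}, so nothing further is needed.
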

\begin{proof}
For a non-degenerate line bundle $L$ of index $s$ and degree $d$ we have $H^s(X,L)\cong \dC^d$ \cite[Theorem 3.5.5]{BL}. Then, 
\begin{align*}
(\phi_L)_*L^{-1}&=\hat{f}_*(\phi_N)_*f_*f^*N &(\ref{komcsi negyzet})\\
&=\hat{f}_*(\phi_N)_*f_*(f^*N\otimes_{\cO_X} \cO_X)\\
&=\hat{f}_*(\phi_N)_*(N\otimes_{\cO_Y} f_*O_X) &(\text{projection formula})\\
&=\hat{f}_*(\phi_N)_*(N\otimes_{\cO_Y} \oplus_{\hat{y}\in Ker(\hat{f})} L_{\hat{y}})  &(\text{Lemma }\ref{lemma: pushforward of trivial line bundle})\\
&=\hat{f}_*(\phi_N)_*(\oplus_{\hat{y}\in Ker(\hat{f})} N^{-1}\otimes L_{\hat{y}})\\
&=\bigoplus_{\hat{y}\in Ker(\hat{f})}\hat{f}_*(\phi_N)_*( N^{-1}\otimes L_{\hat{y}})\\
&=\bigoplus_{\hat{y}\in Ker(\hat{f})} \hat{f}_*(\phi_{N})_*(N^{-1}\otimes L_{\hat{y}}) &(\text{Lemma }\ref{Lemma: translation and tensor})\\
&=\bigoplus_{\hat{y}\in Ker(\hat{f})} \hat{f}_*t_{-\hat{y}}^*(\phi_N)_*N^{-1}\ \ \ \ &(\text{Lemma }\ref{Lemma: invariance under kernel})\\
&=\bigoplus_{\hat{y}\in Ker(\hat{f})} \hat{f}_*(\phi_N)_*N^{-1}.
\end{align*}
The isogeny $f:X\ra Y$ is also of degree $d$ so $|Ker(\hat{f})|=d$ and we are done.
\end{proof}

\chapter{Factors of automorphy on real tori}
In this chapter, we describe how the factor of automorphy construction generalizes to principal $U(n)$-bundles with connections. This description for $U(1)$-bundles was used by Bruzzo, Marelli and Pioli in \cite{BMP1, BMP2} where they described the T-dual of flat $U(1)$-bundles supported on affine torus subbundles of affine torus bundles with trivial Chern classes. Their work greatly influenced ours as they have shown that using factors of automorphy one can translate techniques from the theory of complex tori to the real case. 

In the first section, we define factors of automorphy for principal $U(d)$-bundles. These classify principal $U(d)$-bundles topologically. Then, we describe how one can encode a connection on a principal bundle to a pair of a factor of automorphy and a one-form. We prove a version of the Appel-Humbert Theorem for $U(1)$-bundles with connections. However, in the $U(1)$-case we cannot find a canonical factor for all the bundles, we have to restrict to those whose connection has invariant curvature. This aligns with your idea of enhancing T-duality of generalized branes to T-duality of physical branes. Indeed, if $S$ is an affine torus subbundle of an affine torus bundle endowed with a $U(1)$-bundle $L\ra S$ with a connection whose curvature $2\pi i F$ is invariant, then $(S,F)$ is a locally T-dualizable brane.

In the next section, we prove the statement analogous to Proposition \ref{FM transform and phi L holo} for $U(1)$-bundles with connections. We spend most of the section showing that our construction is independent of the choices we make.  In the holomorphic case, the uniqueness of the Fourier-Mukai transform takes care of this problem.

Then, we describe the pushforward of a $U(1)$-bundle with connection along a general homomorphism of real tori. We then prove the analogue of (\ref{main eqn degen line bdle}) for $U(1)$-bundles with connections.

Finally, we show how the canonical factor of a holomorphic line bundle on a complex torus induces a canonical factor of the underlying $U(1)$-bundle with a connection. We show that pushing forward the holomorphic line bundle or the $U(1)$-bundle yields the same result and define the $U(1)$-version of the Poincar\'e line bundle for real tori.

\section{General construction}
In this section, we define factors of automorphy for principal $U(n)$-bundles on a real torus and explain how one can represent a connection on it. We then define the pushforward of a $U(1)$-bundle along an isogeny analogously to the holomorphic case. We define an analogue of the map $\phi_L$ for those $U(1)$-bundles with connections whose curvature is an invariant two-form. We prove a version of the Appel-Humbert theorem for these bundles as well. Finally, we show that once again, the pushforward of such a $U(1)$-bundle along an isogeny is projectively flat.

Let $X=V/\Gamma$ be a real torus with quotient map $\pi:V\ra X$. 
\begin{definition}
A \emph{$U(n)$ factor of automorphy} is a smooth function
$$a_P: V\times \Gamma_X \ra U(n),$$
satisfying 
$$a_P(v,\lambda+\mu)=a_P(v+\lambda,\mu)a_P(v,\lambda).$$ 
Two factors of automorphy $a_P$ and $a_P'$ are equivalent if they are related by a map $\phi: V\ra U(n)$ via
$$a_P'(v,\lambda)= \phi(v+\lambda)a_P(v,\lambda)\phi(v)^{-1}.$$
\end{definition}
The equivalence classes of $U(n)$ factors of automorphy are the first cohomology of the group $\Gamma$ in the ring $\cC^\infty(V,U(n))$ of $U(n)$-valued smooth functions on $V$. We denote it by $\HH^1(\Gamma,\cC^\infty(V,U(n))).$
We can mimic the proof of  \cite[Proposition B1]{BL} or that of \cite[Theorem 3.2]{iena} to prove the following.
\begin{theorem}
    There is a bijection of sets
$$\phi_1:\HH^1(\Gamma, \cC^\infty(V,U(n))) \ra Prin_{U(n)}(X),$$
where $Prin_{U(n)}(X)$ denotes the isomorphism classes of principal $U(n)$-bundles on $X$.
\end{theorem}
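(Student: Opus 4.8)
The plan is to exhibit $\phi_1$ explicitly together with an inverse, so that the bijection follows from checking the two composites are identities; since $U(n)$ is non-abelian, throughout I treat $\HH^1(\Gamma,\cC^\infty(V,U(n)))$ as a pointed set of cocycles modulo the conjugation-type equivalence, not as a group. First I would define the forward map. Given a $U(n)$ factor of automorphy $a_P$, let $\Gamma$ act on $V\times U(n)$ on the left by
$$\lambda\cdot(v,g)=(v+\lambda,\ a_P(v,\lambda)\,g),$$
where the cocycle identity $a_P(v,\lambda+\mu)=a_P(v+\lambda,\mu)\,a_P(v,\lambda)$ is precisely the associativity condition $\mu\cdot(\lambda\cdot(v,g))=(\mu+\lambda)\cdot(v,g)$ making this a group action. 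Because $\Gamma$ already acts freely and properly discontinuously on $V$ by translation, so does it on $V\times U(n)$, and the quotient $P_{a}:=(V\times U(n))/\Gamma$ is a smooth manifold. Projection to $X=V/\Gamma$ makes $P_a$ a principal $U(n)$-bundle, where $U(n)$ acts on the right by $[(v,g)]\cdot h=[(v,gh)]$; this right action is well defined on the quotient exactly because left multiplication by $a_P(v,\lambda)$ commutes with right multiplication by $h$. I set $\phi_1(a_P):=P_a$.

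Next I would check well-definedness and injectivity. If $a_P'(v,\lambda)=\phi(v+\lambda)\,a_P(v,\lambda)\,\phi(v)^{-1}$ for some smooth $\phi:V\to U(n)$, then $(v,g)\mapsto(v,\phi(v)g)$ is $\Gamma$-equivariant and $U(n)$-right-equivariant, so it descends to an isomorphism $P_a\cong P_{a'}$ of principal bundles; hence $\phi_1$ factors through $\HH^1(\Gamma,\cC^\infty(V,U(n)))$. Conversely, any isomorphism $P_a\cong P_{a'}$ pulls back along $\pi:V\to X$ to a $\Gamma$-equivariant, $U(n)$-right-equivariant automorphism of the trivial bundle $V\times U(n)$, which is necessarily of the form $(v,g)\mapsto(v,\phi(v)g)$ for a smooth $\phi:V\to U(n)$; equivariance under $\Gamma$ then forces $a_P\sim a_P'$. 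This gives injectivity of the induced map on equivalence classes.

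For surjectivity and the inverse I would pull $P$ back to the universal cover. Since $V$ is a finite-dimensional real vector space, it is contractible and paracompact, so the smooth principal bundle $\pi^*P$ over $V$ is trivial; I fix a trivialization $\tau:\pi^*P\xrightarrow{\ \cong\ }V\times U(n)$. As $P=\pi^*P/\Gamma$, the deck group $\Gamma$ acts on $\pi^*P$ covering the translation action on $V$ and commuting with the right $U(n)$-action. Transporting this action through $\tau$, the remark that any such right-equivariant map is left multiplication forces $\lambda\cdot(v,g)=(v+\lambda,\,a_P(v,\lambda)g)$ for a unique smooth $a_P(v,\lambda)\in U(n)$, and associativity of the $\Gamma$-action yields the cocycle condition; thus $a_P$ is a factor of automorphy with $\phi_1([a_P])=P$. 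A different trivialization differs by some $(\mathrm{id}\times\phi)$ and changes $a_P$ only by the coboundary equivalence above, so $[a_P]$ is well defined, giving the two-sided inverse. The hard part will be the surjectivity step: verifying rigorously both that $\pi^*P$ is trivializable over the contractible cover and that the induced $\Gamma$-action is genuinely left multiplication by a $U(n)$-valued cocycle rather than a more general bundle automorphism. Everything else is bookkeeping with the non-abelian coboundary relation.
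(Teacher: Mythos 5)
Your proof is correct and is essentially the argument the paper has in mind: the forward map is the associated-bundle (quotient) construction $(V\times U(n))/\Gamma$, and the inverse comes from trivializing the pullback over the contractible universal cover $V$ and reading the deck-group action as a $U(n)$-valued cocycle, which is exactly the adaptation of \cite[Proposition B1]{BL} and \cite[Theorem 3.2]{iena} that the paper invokes (the paper additionally records the \v{C}ech transition functions $g_{ij}=f(\pi_i^{-1}(x),\lambda_{ij})$ over a good cover, but this is supplementary to the same construction). Your attention to the non-abelian coboundary relation and to well-definedness of the inverse under change of trivialization matches what the omitted proof requires.
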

 The map $\phi_1$ is the associated bundle construction. The transition functions of the associated bundle can be described explicitly as follows. Let $\{U_i\}$ be a good cover of $X$ and let $W_i\subset V$ be a lift of $U_i$ for all $i$ with $\pi_i:=\pi|_{W_i}: W_i\ra U_i$ diffeomorphisms. Then, for any pair $(i,j)$ there exists a unique $\lambda_{ij}\in\Gamma$ such that $\pi_i^{-1}(x)+\lambda_{ij}=\pi_j^{-1}(x)$ for all $x\in U_{ij}$.  For a factor of automorphy $f$ define $g_{ij}=f(\pi_{i}^{-1}(x),\lambda_{ij})\in \Gamma(U_{ij},U(n))$. Then, $(g_{ij})\in H^1(X,\cC^\infty_{U(n)})$ due to the factor-of automorphy condition and that $\lambda_{ij}+\lambda_{jk}=\lambda_{ik}.$

We omit the proof as it is a straightforward adaptation of the proofs in the referenced works. Let $a_P$ be a factor of automorphy. We define the associated principal bundle as the quotient bundle
$$P:=V\times_{\Gamma} U(n)$$
under the action $\lambda(v,A)=(v+\lambda,a_P(v,\lambda)A)$ for $\lambda\in \Gamma$ and $(v,A)\in V\times U(n)$. A principal bundle connection on $V\times U(n)$ descends to one on $P$ if and only if is invariant under the action of $\Gamma$. Denote the Lie algebra of $U(n)$ by $\gu(n)$. We can prove the following.
\begin{proposition}\label{factor of automoprhy and connection proof}
There is a one-to-one correspondence between equivalence classes of principal bundle connections on $P\ra X$ and equivalence classes of pairs $(f,A)$ where $f$ is a factor of automorphy representing $P$ and $A\in \Omega^1(V,\gu(n))$ is a one-form satisfying 
$$A(v+\gamma)=f(v,\gamma)A(v)f(v,\gamma)^{-1}-df(v,\gamma)\cdot f^{-1}(v,\gamma).$$
Two pairs $(f,A)$ and $(f',A')$ are equivalent if for a smooth function $\phi: V\ra U(n)$ we have
$$f'(v,\lambda)=\phi(v+\lambda)f(v,\lambda)\phi(v)^{-1}$$
$$A'(v)=\phi(v)A(v)\phi^{-1}(v)-d\phi(v)\cdot \phi^{-1}(v).$$
\end{proposition}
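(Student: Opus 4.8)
The plan is to pull the whole problem back to the universal cover $\pi\colon V\to X$ and reduce it to a gauge-theoretic computation on the trivial bundle $V\times U(n)$. By the very construction $P=V\times_\Gamma U(n)$, the pullback $\pi^*P$ is canonically trivialized as $V\times U(n)$, and the deck group $\Gamma$ acts on the total space by $\Phi_\gamma(v,g)=(v+\gamma,\,f(v,\gamma)g)$, an action that is free and proper and commutes with the right $U(n)$-action. Consequently $V\times U(n)\to P$ is a covering of total spaces, and principal connections on $P$ correspond bijectively to $\Gamma$-invariant principal connections on $V\times U(n)$: every connection on $P$ pulls back to an invariant one, and conversely every invariant connection descends.

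Next I would parametrize connections on the trivial bundle by their gauge potential. A principal connection on $V\times U(n)$ is determined by $A:=s^*\omega\in\Omega^1(V,\gu(n))$, where $s(v)=(v,e)$ is the canonical section, via $\omega_{(v,g)}=\mathrm{Ad}_{g^{-1}}(\pi_V^*A)+\theta_g$ with $\theta_g=g^{-1}\,dg$ the Maurer--Cartan form and $\pi_V$ the projection to $V$. The heart of the argument is then to impose invariance $\Phi_\gamma^*\omega=\omega$. Writing $b(v)=f(v,\gamma)$ and using $\Phi_\gamma^*\theta=\mathrm{Ad}_{g^{-1}}(b^{-1}\,db)+\theta_g$ together with $\Phi_\gamma^*\big(\mathrm{Ad}_{g^{-1}}\pi_V^*A\big)=\mathrm{Ad}_{g^{-1}}(b^{-1}A_{v+\gamma}b)$, the condition collapses (after cancelling $\theta_g$ and stripping the invertible $\mathrm{Ad}_{g^{-1}}$, legitimate since the identity must hold for all $g$) to $b^{-1}A_{v+\gamma}b+b^{-1}\,db=A_v$, that is
\[
A(v+\gamma)=f(v,\gamma)A(v)f(v,\gamma)^{-1}-df(v,\gamma)\cdot f^{-1}(v,\gamma),
\]
where $d$ is taken in the $V$-variable for fixed $\gamma$. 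This is exactly the stated compatibility relation, so for a fixed representative $f$ of $P$ the invariant potentials $A$ are precisely those solving this equation, giving the bijection at the level of pairs $(f,A)$.

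Finally I would promote this to a correspondence of \emph{equivalence classes}. Given $\phi\colon V\to U(n)$, the automorphism of $V\times U(n)$ acting by $g\mapsto\phi(v)g$ covers the identity, intertwines the $\Gamma$-action for $f$ with that for $f'(v,\gamma)=\phi(v+\gamma)f(v,\gamma)\phi(v)^{-1}$, and transports a connection with potential $A$ to one with $A'=\phi A\phi^{-1}-d\phi\cdot\phi^{-1}$. I would verify that $(f',A')$ again satisfies the compatibility relation whenever $(f,A)$ does, and that it induces the \emph{same} connection on $P$; conversely, two pairs inducing the same connection on $P$ differ by exactly such a $\phi$. Hence the assignment descends to a well-defined bijection on equivalence classes. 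The main obstacle is bookkeeping rather than conceptual: keeping the left $\Gamma$-action and the right $U(n)$-action, along with the inhomogeneous Maurer--Cartan terms, consistently aligned, and checking that the two equivalence relations — on factors of automorphy and on gauge potentials — are compatible so that the compatibility relation is preserved under change of representative.
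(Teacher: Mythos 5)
Your proposal is correct and follows essentially the same route as the paper's proof: both reduce connections on $P$ to $\Gamma$-invariant connections on the trivial bundle $V\times U(n)$, recover the gauge potential $A=s^*\omega$ via the Maurer--Cartan decomposition of the connection form, derive the compatibility relation from $\Phi_\gamma^*\omega=\omega$, and match the two equivalence relations through gauge transformations $\phi\colon V\to U(n)$. The only difference is cosmetic — you phrase the computation invariantly with $\theta_g=g^{-1}dg$ and $\mathrm{Ad}_{g^{-1}}$, while the paper does the same calculation in coordinates $(v^\alpha,u^{ij})$.
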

\begin{proof}
The proof is standard in gauge theory. There is a one-to-one correspondence between principal bundle connections on $P\ra X$ and those on $V\times U(n)\ra V$ invariant under the action of $\Gamma=\pi_1(X)$. Two connections $\omega_1$ and $\omega_2$ on $P$ are equivalent if there is an automorphism $\varphi$ of $P$ such that $\omega_1=\varphi^*\omega_2$. Automorphisms of $P$ also descend from automorphisms of $V\times U(n)$ generated by global functions $\phi:V\ra U(n)$. We will show that invariant connections are in one-to-one correspondence with one-forms $A$ as in the statement and that the equivalence relation corresponds to pulling back under automorphisms.

   Let $\pi: V\times U(n)\ra P$ be the projection. Given a trivialization, that is a section $s: V\ra \pi^*P$ we get an isomorphism $\pi^*P\cong V\times U(n)$. Let $\omega\in \Omega^1(P,\gu(n))$ be a connection on $P$.     Since $\pi$ is $U(n)$-equivariant, that is, a map of principal bundles,   $\pi^*\omega \in \Omega^1(V\times U(n),\gu)$ is a connection on $V\times U(n)$. Using the section $s$ we define
   $$A:=s^*\pi^*\omega.$$
   An automorphism of $\pi^*P$ can be regarded as changing the section $s$ to $s'(v)=s(v)\cdot \phi(v)$ for some $\phi: V\ra U(n)$. The new section changes the connection form as 
   $$A'(v)=(s')^*\pi^*\omega(v)=s^*R_{\phi(v)}^*\pi^*\omega(v).$$
   Consider real coordinates $(v^\alpha, u^{ij})$ on $V\times U(n)$, where the coordinates on $U(n)$ are entries of a unitary matrix. The fundamental vector fields of the $U(n)$-action are given by
   $$X_\xi=\frac{d}{dt}\Big|_{t=0}exp(t\xi).(v,U)=\frac{d}{dt}\Big|_{t=0}(v,Uexp(t\xi))=(U\xi)_{ij}\frac{\partial}{\partial u^{ij}},\ \ \ \xi\in \gu(n).$$
   Since $\pi^*\omega$ is a connection, for any $\xi\in \gu(n)$ we have $\pi^*\omega(X_\xi)=\xi.$ In coordinates, if $\theta^{ij}$ is a basis of $\gu(n)$ we can write
   $$\pi^*\omega=\omega_\alpha dv^\alpha +\omega_{ij}\theta^{ij},$$
   where $v_\alpha$ are $\gu(n)$-valued functions on $V\times U(n)$ and $\omega_{ij}=(\omega_{ij})_{\alpha\beta}du^{\alpha\beta}$ for some functions $(\omega_{ij})_{\alpha\beta}$ on $V\times U(n)$.
   Then, from $\pi^*\omega(X_\xi)=\xi$ we find that $(\omega_{ij})_{\alpha\beta}(v,U)=(U^{-1})_{ij}\delta^{\alpha\beta}_{ij}$, that is we can write schematically
    $$\pi^*\omega=\omega_\alpha dv^\alpha +U^{-1}dU.$$
   The equivariance condition implies that $g\omega_\alpha(v,Ug)g^{-1}=\omega_\alpha(v,U)$ as functions with values in $\gu(n)$ as $U^{-1}dU$ is already equivariant. Finally, 
   \begin{align*}
       R_{\phi(v)}^*\pi^*\omega(v,U)&=\omega_\alpha(v,U\phi(v))dv^\alpha +(U\phi(v))^{-1}d(U\phi(v))\\
       &=\phi(v)^{-1}\omega_{\alpha}(v,U)\phi(v) +\phi(v)^{-1}U^{-1}(dU \phi(v) +U d\phi(v))\\
       &=\phi(v)^{-1}\omega_{\alpha}(v,U)\phi(v)+\phi(v)^{-1}U^{-1}dU\phi(v) +\phi(v)^{-1}d\phi(v)\\
       &=\phi(v)^{-1}\pi^*\omega(v,U)\phi(v) + \phi(v)^{-1}d\phi(v),
   \end{align*}
   and hence,   
   $$A'(v)=s^*R_{\phi(v)}^*\pi^*\omega(v)=\phi(v)^{-1}A(v)\phi(v) + \phi^{-1}(v) d\phi(v) . $$
The automorphism $s'(v)=s(v)\cdot \phi(v)$ changes the factor of automorphy as well. The action of $\Gamma$ in a trivialization coming from $s$ is given by
   $$\phi_\gamma(s(v)\cdot U)=\phi_\gamma(v,U)=(v+\gamma,f(v,\gamma)U)=s(v+\gamma)\cdot f(v,\gamma)U.$$
   Since $s'(v)=s(v)\cdot \phi(v)$, we have
   $$\phi_\gamma(s'(v)\cdot U)=\phi_\gamma(s(v)\cdot\phi(v)U)=s(v+\gamma)\cdot f(v,\gamma)\phi(v)U=s'(v+\gamma)\cdot \phi(v+\gamma)^{-1}f(v,\gamma)\phi(v)U$$
   therefore, 
   $$f'(v,\gamma)=\phi(v+\gamma)^{-1}f(v,\gamma)\phi(v).$$
   In addition, $\pi^*\omega$ is $\Gamma$-invariant, that is
    $$\phi_\gamma^*\pi^*\omega=\pi^*\omega,$$
 therefore,
   \begin{align*}
       A(v)&=s^*\pi^*\omega(v)\\
       &=s^*\phi_\gamma^*\pi^*\omega(v)\\
       &= s^*(R^*_{f(v,\gamma)}\pi^*\omega(v+\gamma,U))\\
       &=s^*(f(v,\gamma)^{-1}\omega(v+\gamma,U)f(v,\gamma)+f(v,\gamma)^{-1}df(v,\gamma))\\
       &=f(v,\gamma)^{-1}A(v+\gamma)f(v,\gamma)+f(v,\gamma)^{-1}df(v,\gamma).
   \end{align*}
Note that the relation $\phi_\gamma^*\pi^*\omega(v,U)=R_{f(v,\gamma)}^*\pi^*\omega(v+\gamma,U)$ only holds along the section $s(V)$.
\end{proof}

\paragraph{Pushforward of $U(1)$-bundles under isogeny.} Let $X=V/\Gamma_X$ and $Y=W/\Gamma_Y$ be real tori and let $f:X\ra Y$ be a degree $n$ isogeny. Recall, that the analytification of $f$ is its lift of it to a linear isomorphism $F: V\ra W$. That is, we may assume that $V=W$,  $F=id$ and $\Gamma_X\subset \Gamma_Y$. Let $\{\lambda_1,...,\lambda_n\}$ be a full set of representatives of the cosets $\Gamma_Y/\Gamma_X$. 

Let $L\ra X$ be a $U(1)$-bundle with a connection defined by a pair  $(a_L,A_L)$. Then, the pushforward $f_*L$ of $L$ along the isogeny $f$  is given by the factor of automorphy 
\begin{equation}\label{u(n) factor pushforward under isogeny}
\begin{aligned}
    a_{f_*L}&: V\times \Gamma_Y \ra U(n)\\
    a_{f_*L}&(v,\lambda)=\Big(a_L(v+\lambda_{\lambda(i)},\Lambda^i_\lambda) \delta^{\lambda(i)}_j\Big)^i_j
    \end{aligned}
\end{equation}
analogously to the holomorphic definition (\ref{holo factor pushforward under isogeny}). Note that  $a_{f_*L}$ is really unitary, since
$$a_{f_*L}(v,\lambda)^*=\Big(a_L(v+\lambda_{\lambda(i)},\Lambda^i_\lambda)^{-1} \delta^{\lambda(i)}_j\Big)^j_i=a_{f_*L}(v,\lambda)^{-1}.$$
We also have to describe the connection, which is a diagonal matrix
\begin{align}\label{u(n) connection pushforward under isogeny}A_{f_*L}(v)=\Big(A_L(v+\lambda_i)\delta^i_j\Big)^i_j.\end{align}
To show that this is a connection on $f_*L$ we have to show that
$$A_{f_*L}(v+\lambda)=a_{f_*L}(v,\lambda)A_{f_*L}(v)a_{f_*L}(v,\lambda)^{-1}-da_{f_*L}(v,\lambda)\cdot a_{f_*L}(v,\lambda)^{-1}$$
for all $\lambda\in \Gamma_Y$. Indeed, for all $i$ we have
$$A_L(v+\lambda+\lambda_i)=A_L(v+\lambda_{\lambda(i)})-da_L(v+\lambda_{\lambda(i)},\Lambda^i_{\lambda})\cdot a_L(v+\lambda_{\lambda(i)},\Lambda^i_{\lambda})^{-1} $$
which is precisely the condition on $A_L$, hence $A_{f_*L}$ is a well-defined connection on $f_*L$.

\begin{lemma}
    The $U(n)$-bundle with connection $(a_{f_*L},A_{f_*L})$ is independent of the choice of representatives $(\lambda_1,...,\lambda_n)$.
\end{lemma}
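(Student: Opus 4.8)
The plan is to produce an explicit gauge transformation relating the two descriptions and to verify that it \emph{simultaneously} intertwines the factors of automorphy and the connection forms, so that the two pairs become equivalent in the sense of Proposition \ref{factor of automoprhy and connection proof}. First I would take a second full set of representatives $\{\lambda'_1,\dots,\lambda'_n\}$ of $\Gamma_Y/\Gamma_X$ and write $\lambda'_i-\lambda_i=\mu_i\in\Gamma_X$. Expressing $\lambda+\lambda'_i$ in terms of the new coset decomposition gives $\Lambda^{\prime i}_\lambda=\Lambda^i_\lambda+\mu_i-\mu_{\lambda(i)}$, and repeated application of the cocycle identity for $a_L$ yields
$$a_L(v+\lambda'_{\lambda(i)},\Lambda^{\prime i}_\lambda)=a_L(v+\lambda_i+\lambda,\mu_i)\,a_L(v+\lambda_{\lambda(i)},\Lambda^i_\lambda)\,a_L(v+\lambda_{\lambda(i)},\mu_{\lambda(i)})^{-1}.$$
This is the same manipulation already carried out for the holomorphic pushforward, so I would cite that computation rather than redo it.

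The relevant gauge transformation is the diagonal matrix $\phi(v)=\operatorname{diag}\big(a_L(v+\lambda_1,\mu_1),\dots,a_L(v+\lambda_n,\mu_n)\big)$, which lands in $U(n)$ precisely because each $a_L$ is $U(1)$-valued. The identity above reads exactly $a'_{f_*L}(v,\lambda)=\phi(v+\lambda)\,a_{f_*L}(v,\lambda)\,\phi(v)^{-1}$, establishing equivalence of the two factors of automorphy. The new content, and the step I expect to require the most care, is checking that this \emph{same} $\phi$ also transforms the connection forms correctly, namely $A'_{f_*L}(v)=\phi(v)A_{f_*L}(v)\phi(v)^{-1}-d\phi(v)\cdot\phi(v)^{-1}$.

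Here the structure is favourable: $A_{f_*L}$, $A'_{f_*L}$ and $\phi$ are all diagonal, and since the entries of $\phi$ lie in the abelian group $U(1)$ the conjugation term drops out. The claim therefore collapses to the scalar identities
$$A_L(v+\lambda_i+\mu_i)=A_L(v+\lambda_i)-da_L(v+\lambda_i,\mu_i)\cdot a_L(v+\lambda_i,\mu_i)^{-1},\qquad i=1,\dots,n,$$
which are nothing but the compatibility relation between $a_L$ and $A_L$ from Proposition \ref{factor of automoprhy and connection proof}, evaluated on the lattice vector $\mu_i\in\Gamma_X$ at the point $v+\lambda_i$. Once both verifications are in place, Proposition \ref{factor of automoprhy and connection proof} identifies $(a_{f_*L},A_{f_*L})$ and $(a'_{f_*L},A'_{f_*L})$ as the same $U(1)$-bundle with connection on $Y$. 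The factor-of-automorphy half is routine because it copies the holomorphic argument verbatim; the only genuinely new observation is that the abelianness of $U(1)$ reduces the connection identity to the defining relation between $a_L$ and $A_L$.
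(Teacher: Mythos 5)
Your proposal is correct and follows essentially the same route as the paper's proof: the same gauge transformation $\phi(v)=\operatorname{diag}\big(a_L(v+\lambda_1,\mu_1),\dots,a_L(v+\lambda_n,\mu_n)\big)$, the same appeal to the holomorphic computation for the factor-of-automorphy identity, and the same reduction of the connection identity (using that diagonal $U(1)$-valued matrices commute, so conjugation is trivial) to the compatibility relation $A_L(v+\lambda_i+\mu_i)=A_L(v+\lambda_i)-da_L(v+\lambda_i,\mu_i)\cdot a_L(v+\lambda_i,\mu_i)^{-1}$.
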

\begin{proof}
    We do the same calculation as in the holomorphic case. If $(\lambda'_1,...,\lambda'_n)$ is the other set of representatives defining $(a_{f_*L}',A_{f_*L}')$ and we set $\lambda'_i-\lambda_i=\mu_i$ we may define the function
    $$\phi: V\ra U(n),\ \ \ \phi(v)=diag(a_L(v+\lambda_1,\mu_1),...,a_L(v+\lambda_n,\mu_n))$$
    and we have
    $$a_{f_*L}'(v,\lambda)=\phi(v+\lambda)a_E(V,\lambda)\phi(v)^{-1}.$$
    It is left to show
    $$A_{f_*L}'=\phi A_E \phi^{-1} - d\phi \cdot \phi^{-1}=A_E-d\phi \cdot \phi^{-1}.$$
    because diagonal matrices commute. Indeed, 
    \begin{align*}
        A_L'(v)_{ii}&=A_L(v+\lambda_i')=A_L(v+\lambda_i+\mu_i)\\
        &=A_L(v+\lambda_i)-da_L(v+\lambda_i,\mu_i) a_L(v+\lambda_i,\mu_i)^{-1}\\
        &=A_L(v)_{ii}-d\phi(v)_{ii}\phi(v)_{ii}^{-1}.
    \end{align*}
\end{proof}
In later calculations, we will use the projection formula repeatedly. Although it is quite clear that it holds in this setting we give a quick proof here.
\begin{proposition}[Projection formula]\label{projection formula prop}
    Let $f:X\ra Y$ be a degree $n$ isogeny whose analytification is given by the identity map $V\ra V$ and $\Gamma_X\subset \Gamma_Y$. Let $N$ be a $U(1)$-bundle with connection on $Y$ and $L$ a $U(1)$-bundle with connection on $X$. Then,
    \begin{align}\label{projection formula}
    f_*(f^*N\otimes L) \cong N\otimes f_*L.
    \end{align}
\end{proposition}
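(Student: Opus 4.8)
The plan is to prove the projection formula by working with factors of automorphy on the universal cover $V$, exactly as we set up the pushforward in equations (\ref{u(n) factor pushforward under isogeny}) and (\ref{u(n) connection pushforward under isogeny}). First I would fix a full set of representatives $\{\lambda_1,\dots,\lambda_n\}$ of the cosets $\Gamma_Y/\Gamma_X$ and record the data of each bundle: let $(a_N, A_N)$ be the factor of automorphy and connection one-form representing $N$ on $Y=V/\Gamma_Y$, and $(a_L, A_L)$ that of $L$ on $X=V/\Gamma_X$. Since the analytification of $f$ is the identity on $V$, the pullback $f^*N$ is represented on $X$ simply by the restriction $a_N|_{V\times \Gamma_X}$ together with $A_N$, because the $\Gamma_X$-action is inherited from the $\Gamma_Y$-action. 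The tensor product $f^*N\otimes L$ is then represented by the product factor of automorphy $a_N(v,\lambda)\,a_L(v,\lambda)$ for $\lambda\in\Gamma_X$ and connection $A_N+A_L$.

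Next I would compute both sides of (\ref{projection formula}) as explicit $U(n)$ factors of automorphy on $Y$ and compare them. For the left-hand side, applying the pushforward formula (\ref{u(n) factor pushforward under isogeny}) to the bundle $f^*N\otimes L$ gives the matrix
\begin{align*}
a_{f_*(f^*N\otimes L)}(v,\lambda) = \Big( a_N(v+\lambda_{\lambda(i)},\Lambda^i_\lambda)\, a_L(v+\lambda_{\lambda(i)},\Lambda^i_\lambda)\,\delta^{\lambda(i)}_j \Big)^i_j.
\end{align*}
For the right-hand side, $N\otimes f_*L$ is represented by $a_N(v,\lambda)\cdot a_{f_*L}(v,\lambda)$, where $a_N(v,\lambda)$ acts as a scalar times the identity. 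The key algebraic step is the identity $\lambda+\lambda_i = \lambda_{\lambda(i)}+\Lambda^i_\lambda$ together with the factor-of-automorphy cocycle relation for $a_N$, which lets me rewrite $a_N(v+\lambda_{\lambda(i)},\Lambda^i_\lambda)$ in terms of $a_N(v,\lambda)$: concretely, since $N$ is defined over $Y$ its factor satisfies $a_N(v,\lambda+\lambda_i)=a_N(v+\lambda_{\lambda(i)},\Lambda^i_\lambda)\,a_N(v,\lambda_{\lambda(i)})$ and $a_N(v,\lambda+\lambda_i)=a_N(v+\lambda,\lambda_i)\,a_N(v,\lambda)$. I would use these to factor out a scalar $a_N(v,\lambda)$ and absorb the remaining mismatch into an equivalence given by a diagonal gauge transformation $\phi(v)=\mathrm{diag}\big(a_N(v,\lambda_i)\big)_i$, following the same strategy as the independence-of-representatives lemmas proved earlier.

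Finally I would check that the connections match under this same gauge transformation $\phi$. The pushforward connection (\ref{u(n) connection pushforward under isogeny}) of $f^*N\otimes L$ is the diagonal matrix with entries $A_N(v+\lambda_i)+A_L(v+\lambda_i)$, while $N\otimes f_*L$ carries the connection $A_N(v)\cdot\mathrm{Id}+\mathrm{diag}(A_L(v+\lambda_i))_i$; the discrepancy $A_N(v+\lambda_i)-A_N(v)$ in each diagonal slot is precisely the term produced by the $\phi$-conjugation via the compatibility relation $A_N(v+\lambda_i)=a_N(v,\lambda_i)A_N(v)a_N(v,\lambda_i)^{-1}-da_N(v,\lambda_i)\,a_N(v,\lambda_i)^{-1}$ from Proposition \ref{factor of automoprhy and connection proof} (here $a_N$ is $U(1)$-valued, so conjugation is trivial and this reduces to $A_N(v+\lambda_i)=A_N(v)-da_N(v,\lambda_i)\,a_N(v,\lambda_i)^{-1}$, matching the gauge term $-d\phi\cdot\phi^{-1}$). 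I expect the main obstacle to be purely bookkeeping: carefully tracking the permutation $\lambda(i)$ and the cocycle elements $\Lambda^i_\lambda$ through the products so that the scalar factor of $a_N$ cleanly separates from the matrix, and verifying that the \emph{same} gauge transformation $\phi$ simultaneously intertwines the factors of automorphy and the connection one-forms. Since all the genuinely delicate combinatorics already appeared in the proof that the pushforward is well defined, this comparison should go through without new difficulties.
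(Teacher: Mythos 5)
Your proposal is correct and follows essentially the same route as the paper's own proof: both compute $a_{f_*(f^*N\otimes L)}$ via the pushforward formula, use the cocycle identities $a_N(v,\lambda+\lambda_i)=a_N(v+\lambda_{\lambda(i)},\Lambda^i_\lambda)\,a_N(v,\lambda_{\lambda(i)})=a_N(v+\lambda,\lambda_i)\,a_N(v,\lambda)$ to factor out the scalar $a_N(v,\lambda)$, and absorb the remainder into the diagonal gauge transformation $\phi(v)=\mathrm{diag}\big(a_N(v,\lambda_i)\big)$, which also handles the connection discrepancy $A_N(v+\lambda_i)-A_N(v)=-d\log a_N(v,\lambda_i)$. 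No gaps; this is the paper's argument.
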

\begin{proof}
    For any $v\in V$ and $\lambda\in \Gamma_Y$ we have
    \begin{align*}
        a_{f_*(f^*N\otimes L)}(v,\lambda)&=\Big(a_{f*N}(v+\lambda_{\lambda(i)},\Lambda^i_\lambda)a_L(v+\lambda_{\lambda(i)},\Lambda^i_\lambda)\delta^{\lambda(i)}_j \Big)_{ij}\\
        &=\Big(a_{N}(v+\lambda_{\lambda(i)},\Lambda^i_\lambda)a_L(v+\lambda_{\lambda(i)},\Lambda^i_\lambda)\delta^{\lambda(i)}_j \Big)_{ij}\\
        &=\Big(a_N(v,\lambda_{\lambda(i)}+\Lambda^i_\lambda)a_N(v,\lambda_{\lambda(i)})^{-1}a_L(v+\lambda_{\lambda(i)},\Lambda^i_\lambda)\delta^{\lambda(i)}_j \Big)_{ij}\\
        &=\Big(a_N(v,\lambda+\lambda_i)a_N(v,\lambda_{\lambda(i)})^{-1}a_L(v+\lambda_{\lambda(i)},\Lambda^i_\lambda)\delta^{\lambda(i)}_j \Big)_{ij}\\
        &=\Big(a_N(v+\lambda,\lambda_i)a_N(v,\lambda)a_N(v,\lambda_{\lambda(i)})^{-1}a_L(v+\lambda_{\lambda(i)},\Lambda^i_\lambda)\delta^{\lambda(i)}_j \Big)_{ij}\\
        &=a_N(v,\lambda) \cdot\Big(a_N(v+\lambda,\lambda_i)a_N(v,\lambda_{\lambda(i)})^{-1}a_L(v+\lambda_{\lambda(i)},\Lambda^i_\lambda)\delta^{\lambda(i)}_j \Big)_{ij}\\
        &=a_N(v,\lambda)\phi(v+\lambda)\Big(a_L(v+\lambda_{\lambda(i)},\Lambda^i_\lambda)\delta^{\lambda(i)}_j \Big)_{ij}\phi(v)^{-1}
    \end{align*}
for the function $\phi: V\ra U(n)$ given by 
$$\phi(v)=diag(a_N(v,\lambda_1),...,a_N(v,\lambda_n)).$$
Moreover, 
$$(f^*A_N)(v+\lambda_i)+A_L(v+\lambda_i)-dlog\phi(v)_{ii}=A_N(v)+A_{L}(v+\lambda_i)$$
so the connection on $f_*(F^*N\otimes L)$ is
$$A_{f_*(f^*N\otimes L)}(v)=A_{N}(v)\cdot Id_{n\times n} + A_{f_*L}(v)=A_{N\otimes f_*L}(v).$$
\end{proof}

\paragraph{Analogue of the map $\phi_L$.}
The map $\phi_L: X\ra \hat{X}$ in the case of holomorphic bundles was defined as
$$x\mapsto t^*_xL\otimes L^{-1}$$
mapping to the dual torus $\hat{X}\cong Pic^0(X)$. For $U(1)$-bundles with connections $\hat{X}\cong Hom(\pi_1(X),U(1))$ is the space of characters. It can be also viewed as the space of flat $U(1)$-bundles with connection due to the following lemma.

\begin{lemma}\label{character for flat U(1) bundles}
    Every $U(1)$-bundle with a flat connection can be represented by a constant factor of automorphy and $A_L=0$.
\end{lemma}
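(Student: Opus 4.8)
The plan is to invoke Proposition \ref{factor of automoprhy and connection proof}, which represents a $U(1)$-bundle with connection $(L,\nabla)$ on $X=V/\Gamma$ by a pair $(f,A)$, where $f$ is a factor of automorphy and $A\in\Omega^1(V,\gu(1))$ satisfies the compatibility relation. Since $U(1)$ is abelian, the adjoint term is trivial and, writing $df(v,\gamma)\cdot f^{-1}(v,\gamma)=d\log f(v,\gamma)$, this relation becomes
$$A(v+\gamma)=A(v)-d\log f(v,\gamma).$$
I would then exploit flatness: the curvature of $\nabla$ pulled back to the universal cover $V$ equals $dA$, so $\nabla$ being flat is precisely the statement that $dA=0$.

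The first step is to produce a gauge transformation killing $A$. Because $V$ is a real vector space it is contractible, so by the Poincar\'e lemma the closed form $A$, viewed as an $i\dR\cong\gu(1)$-valued one-form, is exact: choose $h\colon V\to\gu(1)$ with $dh=A$ and set $\phi=\exp(h)\colon V\to U(1)$. Applying the equivalence of pairs recorded in Proposition \ref{factor of automoprhy and connection proof} with this $\phi$ replaces $A$ by
$$A'(v)=A(v)-d\log\phi(v)=A(v)-dh(v)=0,$$
and replaces $f$ by $f'(v,\lambda)=\phi(v+\lambda)\,f(v,\lambda)\,\phi(v)^{-1}$.

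The second step is to read off that $f'$ is constant. Substituting $A'=0$ into the compatibility relation for the transformed pair gives $d\log f'(v,\gamma)=0$ for every $\gamma\in\Gamma$, so each $f'(\cdot,\gamma)$ is independent of $v$. For a $v$-independent factor the cocycle identity degenerates to $f'(\lambda+\mu)=f'(\lambda)f'(\mu)$, exhibiting $f'$ as a character $\Gamma\to U(1)$. Hence $(L,\nabla)$ is represented by a constant factor of automorphy together with the vanishing connection form $A_L=0$, as claimed.

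The argument is essentially routine once the dictionary of Proposition \ref{factor of automoprhy and connection proof} is in hand; the only point requiring care is the bookkeeping of the identification $\gu(1)\cong i\dR$ and the verification that $\exp(h)$ is a genuine gauge transformation valued in $U(1)$, so that its action on $(f,A)$ is exactly the equivalence stated in that proposition. No real obstacle arises, since the contractibility of $V$ removes any cohomological obstruction to solving $dh=A$.
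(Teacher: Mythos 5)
Your proposal is correct and follows essentially the same route as the paper's proof: both use flatness ($dA_L=0$) together with the contractibility of $V$ to solve $A_L=d\log\phi$ (the paper writes $\phi=\exp(2\pi i f)$ for a real primitive $f$, you write $\phi=\exp(h)$ with $h$ valued in $\gu(1)\cong i\dR$ — the same gauge transformation), then read off from the compatibility relation with $A'_L=0$ that the transformed factor is constant, hence a character of $\Gamma$. The only differences are notational.
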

\begin{proof}
    Let $a_L:V\times \Gamma \ra U(1)$ be the factor of automorphy of $L$ on $X$ and $A_L$ the corresponding connection one-form. Then, since $dA_L=0$ there exists a real function $f_L$ on $V$ such that $A_L=2\pi i df$ and if we write $\phi=exp(2\pi i f)$ we see that $A_L=d\phi \cdot \phi^{-1}$. Therefore, the factor of automorphy $a_L'(v,\lambda)=\phi(v+\lambda)a_L(v,\lambda)\phi(v)^{-1}$ corresponds to $A_L'=0.$ 

    Then, $A_L'(v+\lambda)-A_L'(v)=0=-da_L'\cdot (a_L')^{-1}$ and we find that $da_L'=0$, therefore it is constant. In particular, $a_L:\Gamma\ra U(1)$ is a representation of $\Gamma$ and the connection on $L$ has monodromy $-a_L(\lambda)$ around the loop $\lambda\in \pi_1(X).$ Therefore the dual torus  $\hat{X}=Hom(\pi_1(X),U(1))$ can be identified with the space of flat $U(1)$-bundles.
\end{proof}

For line bundles, the equivalence relation $(a_L,A_L)\sim_\phi (a_L',A_L')$ does not change the curvature. Indeed, the curvature of the connection induced by $A_L$ is simply given by $dA_{L}$. It is a priori only a two-form on $V$, but also
$$dA_L(v+\lambda)=dA_L(v)-d(dlog (a_L(v,\lambda))
)=dA(v)$$
therefore $dA_L$ is a well-defined closed two-form on $X$. Hence, if $A^\phi_L=A_L-d\phi \phi^{-1}$ then
$$dA^\phi_L=dA_L-d(dlog\phi)=dA_L.$$
The curvature of the connection on $t_x^*L\otimes L^{-1}$ is given by
$$d(t_x^*A_L)-dA_L=t_x^*F_L-F_L$$
if $F_L\in \Omega^2(X,\dR)$ is the curvature of the connection on $L$. In particular, the map $\phi_L$ is only well defined as a map to the dual torus, if
$$t_x^*F=F\ \ \ \forall x\in X$$
that is, if $F$ is an invariant two-from.

When it is defined, $\phi_L: X\ra \hat{X}$ is a homomorphism and its kernel $K(L)$ is a disjoint union of affine subtori. We denote the connected component of the identity by $K(L)_0$. It is a subgroup which fits into the short exact sequence of groups
\begin{equation}\label{K(L)_0 SES}
\begin{tikzcd}
0 \arrow{r} & K(L)_0 \arrow{r} & X \arrow{r} & X/K(L)_0 \arrow{r} & 0.
\end{tikzcd}
\end{equation}

\paragraph{Appel-Humbert theorem.}
 For $U(1)$-bundles with invariant curvatures, we can mimic the proof of the Appel-Humbert Theorem \cite[Theorem 2.2.3]{BL}  to show that there is a canonical choice of factor of automorphy.

Let $F\in \Omega^2(X)$ be an invariant two-form on the real torus $X=V/\Gamma$, that is $t_x^*F=F$ for all $x\in X$. Every invariant form on a torus is closed and represents a cohomology class in $\HH^2(X,\dR)$. Once again, a two-from can be seen as an alternating bilinear form acting point-wise on the tangent spaces of $X$. We have isomorphisms $T_xX\cong V$ so for any $x\in X$ the restriction of the two-form $F_x$ to $T_xX$ induces an alternating bilinear pairing on $V$. Whenever $F$ is invariant, this pairing is independent of $x\in X$ and we denote it again by $F$. Whenever $F$ represents an integral cohomology class, the induced pairing is integral on $\Gamma$ which gives the isomorphism $H^2(X,\dZ)\cong Alt^2(\Gamma,\dZ)$.

Let us define the real analogue of group $\cP(\Gamma)$ (\ref{P(Gamma) complex}). In the complex case, we took Hermitian inner products from $NS(X)$ which is a subgroup of $\HH^2(X,\dZ)$. In the real case, we do not have to restrict ourselves to elements which are compatible with a complex structure. Moreover, the space of semicharacters only depends on the imaginary part of the Hermitian forms so the definition can remain unchanged.
\begin{definition}\label{semicharacter U(1)}
    Let $F\in Alt^2(\Gamma,\dZ)$ be an alternating bilinear pairing on $\Gamma$ taking integer values. Then, a \emph{semicharacter for $F$} is a map
    $$\chi: \Gamma\ra U(1)$$
    satisfying
    $$\chi(\lambda+\mu)=\chi(\lambda)\chi(\mu)exp(i\pi F(\lambda,\mu)).$$
\end{definition}
Finally, we can define the group:
\begin{equation}\label{P(Gamma) real}
    \begin{aligned}
        \cP_\dR(\Gamma)&:=\{ (F,\chi)\ | \ F\in \Omega^2(X)\text{ invariant},\ [F]\in \HH^2(X,\dZ),\ \chi\ \text{a semicharacter for $F$}\},\\
        &(F_1,\chi_1)\cdot (F_2,\chi_2)=(F_1+F_2,\chi_1\cdot \chi_2).
    \end{aligned}
\end{equation}
Let us denote by $\cA$ the group of line bundles on $X$ with invariant curvatures under the tensor product.  The map 
$$\cP_\dR(\Gamma)\mapsto \cA$$ 
is defined as follows:
\begin{align}\label{canonical factor u(1)}(\chi,F)\ \ \ \  \mapsto\ \ \ \  a_L(v,\lambda)=\chi(\lambda) exp(i\pi F(v,\lambda)),\ A_L=i\pi F(v,dv).\end{align}
Clearly, $A_L$ satisfies $$A_L(v+\lambda)=A_L(v)-a_L(v,\lambda)^{-1}da_L(v,\lambda).$$
Let $dim(V)=n$ and $\{v^1,...,v^n\}$ be linear coordinates on $V$ inducing 1-periodic coordinates on $X$. Then we can write $F$ as
$$F=\sum_{i<j} F_{ij}dv^i\wedge dv^j,$$
and the connection
$$A_L(v)=i\pi\sum_{i<j}F_{ij}(v^idv^j-v^jdv^i).$$
This description of $A_L$ is independent of the choice of 1-periodic coordinates. Indeed, changing coordinates is a constant linear transformation $v'=Av$ with $A\in SL(n,\dZ)$. Then $dv'=Adv$ and $\frac{\partial }{\partial v'}=(A^{-1})^T\frac{\partial }{\partial v}$. In the new coordinates $F=2\pi i F'_{ij}dv^{\prime i}\wedge dv^{\prime j}$ with $F'_{ij}=F_{kl}(A^{-1})^k_i(A^{-1})^l_j$. The new connection 1-form is
\begin{align*}
 A_L'&=i \pi  \sum_{i<j}F'_{ij}(v^{\prime i}dv^{\prime j}-v^{\prime j}dv^{\prime i})\\
 &=i\pi   \sum_{i<j}F_{kl}(A^{-1})^k_i(A^{-1})^l_j(v^{\prime i}dv^{\prime j}-v^{\prime j}dv^{\prime i})\\
 &=i\pi \sum_{i<j}F_{kl}((A^{-1})^k_iv^{\prime i}(A^{-1})^l_jdv^{\prime j}-(A^{-1})^l_jv^{\prime j}(A^{-1})^k_idv^{\prime i})\\
 &=i\pi \sum_{i<j}F_{ij}(v^idv^j-v^jdv^i)\\
 &=A_L.
\end{align*}
We can finally prove the following theorem.
\begin{theorem}[Modification of the Appel-Humbert theorem]\label{appel-humbert for u(1)} There is an isomorphism of short exact sequences
\[\begin{tikzcd}
    1 \arrow{r} & Hom(\Gamma,U(1)) \arrow{d}{\cong} \arrow{r} & \cP_\dR(\Gamma) \arrow{d}{\cong} \arrow{r} & Alt^2(\Gamma,\dZ) \arrow{r}\arrow{d}{=} & 1 \\
    1\arrow{r} & \hat{X} \arrow{r} & \cA \arrow{r}{c_1} & H^2(X,\dZ) \arrow{r} & 1
\end{tikzcd}\]
    
\end{theorem}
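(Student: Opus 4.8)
The plan is to mirror the proof of the classical Appel-Humbert theorem (Theorem \ref{appel humbert holo}), replacing the holomorphic factor of automorphy by the smooth factor-and-connection pair of (\ref{canonical factor u(1)}), and to organize the argument around the five lemma applied to the two displayed short exact sequences. First I would check that (\ref{canonical factor u(1)}) defines a well-defined group homomorphism $\cP_\dR(\Gamma)\to\cA$. The compatibility condition between $a_L(v,\lambda)=\chi(\lambda)\exp(i\pi F(v,\lambda))$ and $A_L=i\pi F(v,dv)$ demanded by Proposition \ref{factor of automoprhy and connection proof} reduces, since $U(1)$ is abelian, to the identity $A_L(v+\lambda)=A_L(v)-a_L(v,\lambda)^{-1}da_L(v,\lambda)$ already recorded just after (\ref{canonical factor u(1)}), while the semicharacter relation of Definition \ref{semicharacter U(1)} is exactly what forces $a_L$ to obey the factor-of-automorphy cocycle identity. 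As the curvature of $A_L$ is the invariant form $2\pi i F$, the image lies in $\cA$, and the group law $(F_1,\chi_1)\cdot(F_2,\chi_2)=(F_1+F_2,\chi_1\chi_2)$ corresponds under the map to the tensor product of bundles, which multiplies factors and adds connection one-forms; hence the map is a homomorphism.

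Next I would verify that both squares commute. For the right square, the first Chern class of $L_{(F,\chi)}$ is read off from the invariant curvature $2\pi i F$ and, under $\HH^2(X,\dZ)\cong Alt^2(\Gamma,\dZ)$, equals the class of $F$, so the right vertical arrow is the identity. For the left square, a pair $(0,\chi)$ has $\chi$ an honest character and yields, via (\ref{canonical factor u(1)}), the flat bundle with constant factor $\chi$ and $A_L=0$; by Lemma \ref{character for flat U(1) bundles} this is precisely the point of $\hat X\cong Hom(\Gamma,U(1))$ attached to $\chi$, so the left vertical arrow agrees with the stated isomorphism.

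Then I would establish exactness of the top row, the one genuinely new input being surjectivity of $\cP_\dR(\Gamma)\to Alt^2(\Gamma,\dZ)$, i.e. the existence of a semicharacter for every integral alternating form $F$. Fixing a basis $\{e_i\}$ of $\Gamma$ and writing $\lambda=\sum_i m_ie_i$, I would set $\chi(\lambda)=\exp\!\big(i\pi\sum_{i<j}F(e_i,e_j)m_im_j\big)$ and check the defining relation directly: the difference between $\sum_{i<j}F(e_i,e_j)(m_in_j+m_jn_i)$ and $F(\lambda,\mu)=\sum_{i<j}F(e_i,e_j)(m_in_j-m_jn_i)$ equals $2\sum_{i<j}F(e_i,e_j)m_jn_i$, an even integer, so the two agree modulo $2$ and $\chi$ is a semicharacter. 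Injectivity of $Hom(\Gamma,U(1))\to\cP_\dR(\Gamma)$ and exactness at the middle term are immediate, since a semicharacter for $F=0$ is just a character. The bottom row is exact because the kernel of $c_1$ is the group of flat bundles $\hat X$, and surjectivity of $c_1$ onto $\HH^2(X,\dZ)$ follows from the surjectivity just proved together with the commuting right square.

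Finally, with both rows exact, both squares commuting, and the outer vertical maps isomorphisms, the five lemma gives that the middle map $\cP_\dR(\Gamma)\to\cA$ is an isomorphism; alternatively one argues directly that any element of its kernel has $F=0$ (its curvature vanishes) and is therefore a flat bundle with trivial character. I expect the main difficulty to be bookkeeping rather than depth: matching the sign and normalization conventions so that curvature $2\pi i F$ indeed produces Chern class $F$ (and not $-F$) under $\HH^2(X,\dZ)\cong Alt^2(\Gamma,\dZ)$, and carrying the connection datum faithfully through the equivalence relation of Proposition \ref{factor of automoprhy and connection proof} so that classes in $\cA$ remember both the bundle and its connection, not merely the underlying topological bundle.
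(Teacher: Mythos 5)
Your proof is correct and follows essentially the same route as the paper: both organize the argument as a morphism of short exact sequences, check that the left square commutes via Lemma \ref{character for flat U(1) bundles} (flat bundles $=$ characters) and the right square via reading the Chern class off the invariant curvature, and then invoke the short five lemma. You in fact prove slightly more than the paper writes down: the paper only verifies commutativity of the two squares and cites the five lemma, leaving the exactness of the rows implicit (it is delegated to ``mimicking'' \cite[Theorem 2.2.3]{BL}), whereas you supply the one nontrivial ingredient this requires, namely surjectivity of $\cP_\dR(\Gamma)\ra Alt^2(\Gamma,\dZ)$ via the explicit semicharacter $\chi(\lambda)=\exp\big(i\pi\sum_{i<j}F(e_i,e_j)m_im_j\big)$, whose verification (the discrepancy being the even integer $2\sum_{i<j}F(e_i,e_j)m_jn_i$) is correct.
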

\begin{proof}
We know that the map $\cA\ra  H^2(X,\dZ)$ is given by $(f,A)\mapsto [dA]$ since $dA$ is a well-defined two-form on $X$. Therefore, the second square in the diagram commutes. The kernel of the map $(f,A)\mapsto [dA]$ is the space of line bundles with zero curvature. These by Lemma \ref{character for flat U(1) bundles} are in one-to-one correspondence with $Hom(\Gamma,U(1))$, the space of characters. Therefore the first square commutes as well. By the five lemma, we have an isomorphism in the middle.

We call the representations of line bundles (\ref{canonical factor u(1)}) under the map $\cP_\dR(\Gamma)\ra \cA$ the \emph{canonical factor}.
\end{proof}
Let $L$ be a $U(1)$-bundle with invariant curvature $F$. From the canonical representation, it is clear that the analytification of $\phi_L$ is given by $F:V\ra V^*$.

Moreover, given an isogeny $f:X\ra Y$ the pushforward $f_*L$ is a $U(n)$-bundle with connection which defines a Hermitian vector bundle $E$ with a Hermitian connection via the standard representation. From (\ref{u(n) connection pushforward under isogeny}) the curvature of this connection is 
$$dA_{f_*L}=F\cdot Id\in \Omega^2(Y,End(E)),$$
that is $E$ is projectively flat. We will say that $f_*L$ is \emph{projectively flat} when the induced Hermitian vector bundle is. As we have mentioned in the holomorphic case projectively flat Hermitian vector bundles have special factors of automorphy \cite[Theorem 4.7.54]{kobayashi}. This is true even if the Hermitian vector bundle does not admit a holomorphic structure, that is when $F\in \HH^2(X,\dQ)$ does not lie in $NS(X)\otimes \dQ$. We define semi-representations as before.
\begin{definition}
    Let $F\in \Omega^2(X)$ be an invariant two-form on $X=V/\Gamma$ representing a rational cohomology class $[F]\in \HH^2(X,\dQ)$. A \emph{semi-representation} for $F$ in $U(n)$ is a map
    $$U: \Gamma\ra U(n)$$
    satisfying for all $\lambda,\mu\in \Gamma$
    $$U(\lambda+\mu)=exp(i\pi F(\lambda,\mu))U(\lambda)U(\mu)$$
\end{definition}
We have,
\begin{equation}\label{f_*L as projectively flat factor}
\begin{aligned}
    a&_{f_*L}(v,\lambda)=\\
    &=\Big(\chi(\Lambda^i_\lambda)exp(i\pi F(v+\lambda_{\lambda(i)},\Lambda^i_\lambda)  \delta^j_{\lambda(i)}\Big)^i_j\\
    &=\Big(\chi(\Lambda^i_\lambda)exp(i\pi F(v+\lambda_{\lambda(i)}, \lambda+\lambda_i-\lambda_{\lambda(i)})  \delta^j_{\lambda(i)}\Big)^i_j\ \ \ \ \ \ \ \Lambda^i_\lambda=\lambda+\lambda_i-\lambda_{\lambda(i)}\\
    &=\Big(\chi(\Lambda^i_\lambda)exp(i\pi F(v, \lambda)+ i \pi F(v+\lambda, \lambda_i)-\\
    &\hspace{1cm}-i\pi F(\lambda,\lambda_i)-i\pi F(v,\lambda_{\lambda(i)}) +i \pi F(\lambda_{\lambda_i}, \lambda+\lambda_i) )  \delta^j_{\lambda(i)}\Big)^i_j\\
    &=\phi(v+\lambda)exp(i\pi F(v,\lambda))\Big(\chi(\Lambda^i_\lambda) exp(i\pi F(\lambda+\lambda_i,\lambda-\lambda_{\lambda(i)}) \delta^j_{\lambda(i)}\Big)^i_j \phi(v)^{-1}
\end{aligned}
\end{equation}
for $\phi(v)=diag\Big(exp(i\pi F(v,\lambda_i)\Big)$ so $f_*L$ is a projectively flat bundle with
\begin{equation}\label{f_*L as projectively flat connection}
\begin{aligned}
    A_{f_*L}(v)&=\Big(i\pi F(v+\lambda_i, dv) \delta^i_j\Big)^i_j\\
    &=\Big(i\pi F(v,dv) \delta^i_j\Big)^i_j - \phi^{-1} d\phi.
\end{aligned}
\end{equation}

\section{Non-degenerate U(1)-bundles}
 From now on when we say "$U(1)$-bundle with connection" we assume that the curvature of the connection is invariant. Finally, we are ready to show an analogue of Proposition \ref{FM transform and phi L holo} for $U(1)$-bundles. More precisely, if $L\ra X$ is a $U(1)$-bundle with connection such that $\phi_L$ is a degree $d^2$ isogeny, we will show that the $U(d^2)$-bundle $(\phi_L)_*L^{-1}$ can be decomposed into the product of $U(d)$-bundles.

Our strategy is the same as in the holomorphic case. We first show that there exists an isogeny $f:Y\ra X$ and a $U(1)$-bundle with connection $N\ra Y$ such that $f^*N=L$ and such that $\phi_N:Y\ra \hat{Y}$ is an isomorphism. Then, we use the commutative diagram
\[
\begin{tikzcd}
    X\arrow{r}{\phi_L} & \hat{X}\\
    Y\arrow{u}{f} \arrow{r}{\phi_N}[swap]{\cong} & \hat{Y} \arrow{u}[swap]{\hat{f}}
\end{tikzcd}
\]
 to show
 $$(\phi_L)_*L^{-1}\cong (\hat{f}_*(\phi_N)_*N^{-1})\otimes U(d).$$
 We explain in detail what we mean by the tensor product in Theorem \ref{pushforward tensor U(1) nondegen}.

\begin{definition}
    A $U(1)$-bundle with connection on a torus is called \emph{non-degenerate} if the curvature of the connection is a non-degenerate two-form. It is called \emph{degenerate} otherwise.
\end{definition}
Let $X=V/\Gamma$ be areal torus and let $L\ra X$ be a non-degenerate $U(1)$-bundle with connection. Then the dimension of $X$ must be even, let $dim_\dR(X)=2r$. The bundle $L$ can be represented  by the canonical factor as 
$$a_L(v,\gamma)=\chi(\gamma)exp(i\pi F(v,\gamma)),\ \ \ \ A_L(v)=i\pi F(v,dx),$$
where we denote by $\{x^1,...,x^n\}$ the coordinates on $V$ which are 1-periodic on $X$. In particular we can choose $\{x^i\}$ such that the basis $\{(1,0,...,0),(0,1,0,...,0),...,(0,...,0,1)\}$ of $\Gamma$ forms a \emph{symplectic basis for $F$}, that is
\begin{align}\label{F} F=\sum_{i=1}^r d_i dx^i\wedge dx^{r+i}\end{align}
with $d_i\in \dZ_{>0}$. Once again, we call $(d_1,...,d_r)$ the \emph{type} of $L$ and $d=d_1\cdot ...\cdot d_r$ the \emph{degree} of $L$.

Consider now a decomposition $$\Gamma=\Gamma_1\oplus \Gamma_2$$ into maximal isotropic subspaces with respect to $F$.  Let $$V=V_1\oplus V_2=(\Gamma_1\otimes \dR)\oplus (\Gamma_2\otimes \dR)$$
the decomposition into maximal isotropics induced on $V$. For example with respect to the symplectic frame $\{x^i\}$ we can choose $\Gamma_1$ and $\Gamma_2$ as the integer span of the first and second $r$ basis vectors respectively, then we have $V_1=\{x^{r+1}=...=x^{2r}=0\}$ and $V_2=\{x^1=...=x^r=0\}$.

The dual vector space $V^*$ and dual lattice $\Gamma^\vee$ inherit the decomposition as well. We have $V^*=V_1^*\oplus V_2^*$ where
$$F(V_1)=V_2^*\ \ \ \text{and}\ \ \ F(V_2)=V_1^*$$
and $\Gamma^\vee=\Gamma_1^\vee\oplus \Gamma_2^\vee$ where $\Gamma^\vee_i=\{ v^*\in V_i^*\ |\ v^*(\Gamma_i)\subset \dZ\}.$ Clearly,
$$F(\Gamma_1)\subset \Gamma_2^\vee\ \ \ \text{and}\ \ \ F(\Gamma_2)\subset \Gamma_1^\vee$$
and the decomposition of $V^*$ and $\Gamma^\vee$ are decompositions into maximal isotropics with respect to $F^{-1}$. Indeed, let $\{\hat{x}_1,...,\hat{x}_{2r}\}$ be coordinates on $\hat{X}$ dual to $\{x^i\}$ in the sense that the induce dual bases on $\Gamma$ and $\Gamma^\vee$. Then, 
\begin{align}\label{F inverse}F^{-1}=-2\pi i \sum_{i=1}^{r}\frac{1}{d_i}d\hat{x}^i\wedge d\hat{x}^{r+i},\end{align}
moreover $V_1^*=\{\hat{x}_{r+1}=...=\hat{x}_{2r}=0\}$ and $\{\hat{x}_1=...=\hat{x}_r=0\}$.

Let 
\begin{align}\label{Y lattice}\Gamma_Y:=\Gamma_1\oplus F^{-1}(\Gamma_1^\vee)\end{align} 
and define 
\begin{align}\label{Y torus}Y:=V/\Gamma_Y.\end{align}
Since $\Gamma_X\subset \Gamma_Y$ we have a degree $d$ isogeny $f: X\ra Y$.  

Restricting $F$ to $\Gamma_Y$ is a type $(1,...,1)$ pairing and we may extend $\chi$ to a semicharacter of $F$ over $\Gamma_Y$. Therefore, we can define a $U(1)$-bundle $N$ on $Y$ via the pair $(a_N,A_N)$ 
\begin{align}a_N(v,\gamma)=\chi(\gamma)exp(i\pi F(v,\gamma)),\ \ \ A_N(v)=i\pi F(v,dy)\end{align}
where now $\gamma\in \Gamma_Y$ and $\{y^1,...,y^{2r}\}$ are coordinates on $V$ which induce 1-periodic coordinates on $Y$. We have, $f^*N\cong L$. 

For example if $\{x^i\}$ are symplectic coordinates as before, we can set 
$$y^i=x^i,\ \ i=1,...,r;\ \ \ y^{r+i}=\frac{1}{d_i}x^{r+i},\ \ i=1,...r.$$
and we have
$$F=2\pi i \sum_{i=1}^r dy^i\wedge dy^{r+i}.$$

Let us construct $\hat{f}_*(\phi_N)_*N^{-1}$. It is the pushforward of a line bundle along an isogeny so it is a projectively flat $U(d)$-bundle with a connection. Let us first calculate $(\phi_N)_*N^{-1}$. We have $$\Gamma_Y^\vee=F(\Gamma_Y)=\Gamma_1^\vee\oplus F(\Gamma_1)$$ since $N$ is a type $(1,...,1)$ line bundle and $\phi_N$ is an isomorphism. Therefore, for all $\hat{v}\in V^*$ and $\hat{\gamma}\in \Gamma_Y^\vee$ we have
\begin{align*}
    a_{(\phi_N)_*N^{-1}}(\hat{v},\hat{\gamma})&=\chi(F^{-1}(\hat{\gamma}))^{-1}exp(i\pi F^{-1}(\hat{v},\hat{\gamma})),\\
    A_{(\phi_N)_*N^{-1}}(\hat{v})&=-i\pi F^{-1}(\hat{y},d\hat{y}),
\end{align*}
where we use that $-F(F^{-1}(\hat{v}),F^{-1}(\hat{\gamma}))=-\hat{v}(F^{-1}(\hat{\gamma}))=-F^{-1}(\hat{\gamma},\hat{v})=F^{-1}(\hat{v},\hat{\gamma}).$

The degree $d$ isogeny $\hat{f}:\hat{Y}\ra \hat{X}$ is induced by the inclusion $\Gamma_Y^\vee=\Gamma_1^\vee\oplus F(\Gamma_1) \subset \Gamma^\vee=\Gamma_1^\vee\oplus \Gamma_2^\vee$. To define $\hat{f}_*(\phi_N)_*N^{-1}$ we use a set of representatives of $$\Gamma^\vee:\Gamma_Y^\vee=\Gamma_2^\vee:F(\Gamma_1).$$ Choose these representatives $\{\lambda_1,...,\lambda_d\}$ in $\Gamma_2^\vee\in V_2^*$. The kernel of $\hat{f}$ is given by the points of $\Gamma_2^\vee/F(\Gamma_1)$ so any $\hat{y}\in ker(\hat{f})$ can be represented by one of the $\lambda_i$s.

We write  $\hat{\lambda}+\lambda_i=\hat{\lambda}_1+\hat{\lambda}_2+\lambda_i=\hat{\lambda}_1+\lambda_{\hat{\lambda}(i)}+\Lambda_{\hat{\lambda}}^i$ so  $\hat{f}_*(\phi_N)_*N^{-1}$ is given as follows.
\begin{align*}
    a_{\hat{f}_*(\phi_N)_*N^{-1}}(\hat{v},\hat{\lambda})&=\Big( a_{(\phi_N)_*N^{-1}}(\hat{v}+\lambda_{\hat{\lambda}(i)},\hat{\lambda}_1+ \Lambda_{\hat{\lambda}}^i )\delta^j_{\hat{\lambda}(i)} \Big)^i_j\\
    &=\Big( \chi(F^{-1}(\hat{\lambda}_1+\Lambda^i_{\hat{\lambda}}))^{-1}exp(i\pi F^{-1}(\hat{v}+\lambda_{\hat{\lambda}(i)},\hat{\lambda}_1+ \Lambda_{\hat{\lambda}}^i ))\delta^j_{\hat{\lambda}(i)} \Big)^i_j.
\end{align*}
Using that $F^{-1}(\lambda_i,\lambda_j)=0$ and changing the representative by $\phi(\hat{v})=\Big(exp(-i\pi F^{-1}(\hat{v},\lambda_i))\Big)^i_i$ we get 
\begin{align*}
a&_{\hat{f}_*(\phi_N)_*N^{-1}}(\hat{v},\hat{\lambda})=\\
    &=exp(i\pi F^{-1}(\hat{v},\hat{\lambda}))\cdot \Big(\chi(F^{-1}(\hat{\lambda}_1+\Lambda^i_{\hat{\lambda}}))^{-1} exp(i \pi F^{-1}(\lambda_{\hat{\lambda}(i)},\hat{\lambda})-i\pi F^{-1}(\hat{\lambda},\lambda_i))\delta^j_{\hat{\lambda}(j)}  \Big)^i_j\\
    &=exp(i\pi F^{-1}(\hat{v},\hat{\lambda}))\cdot U_{\hat{f}_*(\phi_N)_*N^{-1}}(\hat{\lambda}),
\end{align*}
where $U_{\hat{f}_*(\phi_N)_*N^{-1}}(\hat{\lambda}):\ \Gamma^\vee \ra U(d)$ is a semi-representation for $F^{-1}$. We further rewrite it as
\begin{equation}
\begin{aligned}
    U&_{\hat{f}_*(\phi_N)_*N^{-1}}(\hat{\lambda})=\\
    &=\Big(\chi(F^{-1}(\hat{\lambda}_1+\Lambda^i_{\hat{\lambda}}))^{-1} exp(i \pi F^{-1}(\lambda_{\hat{\lambda}(i)},\hat{\lambda}_1)-i\pi F^{-1}(\hat{\lambda}_1,\lambda_i))\delta^j_{\hat{\lambda}(j)}  \Big)^i_j\\
    &=\Big(\chi(F^{-1}(\hat{\lambda}_1))^{-1}\chi(F^{-1}(\Lambda^i_{\hat{\lambda}}))^{-1} \times \\    
    &\times exp(-i \pi F^{-1}(\hat{\lambda}_2+\lambda_i-\lambda_{\hat{\lambda}(i)},\hat{\lambda}_1)+i \pi F^{-1}(\lambda_{\hat{\lambda}(i)},\hat{\lambda}_1)-i\pi F^{-1}(\hat{\lambda}_1,\lambda_i))\delta^j_{\hat{\lambda}(j)}  \Big)^i_j\\
    &=\Big(\chi(F^{-1}(\hat{\lambda}_1))^{-1}\chi(F^{-1}(\Lambda^i_{\hat{\lambda}}))^{-1} exp(-i \pi F^{-1}(\hat{\lambda}_2,\hat{\lambda}_1)+2 \pi i F^{-1}(\lambda_{\hat{\lambda}(i)},\hat{\lambda}_1))\delta^j_{\hat{\lambda}(j)}  \Big)^i_j.
\end{aligned}
\end{equation}
The corresponding connection is given by
\begin{align*}
    A_{\hat{f}_*(\phi_N)_*N^{-1}}(\hat{v})&=\Big(A_{(\phi_N)_*N^{-1}}(\hat{v}+\lambda_i)\delta_{ij}\Big)^i_j,\\
    &=\Big(i\pi F^{-1}(\hat{v},d\hat{v})+i\pi F^{-1}(\lambda_i,d\hat{v}) \Big)^i_i,
\end{align*}
so changing the representative by $\phi(\hat{v})=\Big(exp(-i\pi F^{-1}(\hat{v},\lambda_i))\Big)^i_i$ we get
\begin{equation}
     A_{\hat{f}_*(\phi_N)_*N^{-1}}(\hat{v})=i\pi F^{-1}(\hat{v},d\hat{v})\cdot Id.
\end{equation}
The analogue of Proposition \ref{FM transform and phi L holo} for principal bundles with connections can be phrased in terms of the Hermitian vector bundles they define via the standard representation. Let $E_N$ be the rank $d$ Hermitian vector bundle corresponding to $\hat{f}_*(\phi_N)_*N^{-1}$ and $E_L$ the rank $d^2$ Hermitian vector bundle corresponding to $(\phi_L)_*L^{-1}$. We claim, that
$$E_L\cong E_N^{\oplus d},$$
as Hermitian vector bundles with connections. In terms of the principal bundles we can say that the principal $U(d^2)$-bundle admits a reduction of the structure group to the subgroup $U(d)$ of $U(d^2)$ via the homomorphism
\begin{equation}\label{embedding u(d) to u(d2)}
    \begin{aligned}
        \rho:\ \  U(d) \ \  &\ra\ \  U(d^2),\ \ \ \ \ \ 
        A \ \  &\mapsto \ \ \begin{pmatrix}A & 0 & ... & 0 \\
        0 & A & ... & 0\\
        \vdots &  & \ddots & \vdots\\
        0 & 0 & ... & A\end{pmatrix}.
    \end{aligned}
\end{equation}
In terms of principal bundles, we will write $P_L=P_N\otimes U(d)$ when such reduction of structure group is possible.

\begin{theorem}\label{pushforward tensor U(1) nondegen}
    Let $L\ra X$ be a degree $d$ non-degenerate $U(1)$-bundle with invariant curvature $F$, $f:X\ra Y$ an isogeny, $N\ra Y$ a type (1,...,1) line bundle such that $f^*N=L$. Then,
    $$(\phi_L)_*L^{-1}=\Big(\hat{f}_*(\phi_N)_*N^{-1}\Big)\otimes U(d).$$
    More precisely, we can find representatives of $(\phi_L)_*L^{-1}$ of the form
\begin{align*}a_{(\phi_L)_*L^{-1}}(\hat{v},\hat{\lambda})&=exp(i\pi F^{-1}(\hat{v},\hat{\lambda}))\cdot U_{(\phi_L)_*L^{-1}}(\hat{\lambda}),\\ A_{(\phi_L)_*L^{-1}}(\hat{v})&=i\pi F^{-1}(\hat{v},d\hat{v})\cdot Id_{d^2\times d^2},\end{align*}
and of $\hat{f}_*(\phi_N)_*N^{-1}$ 
\begin{align*}a_{\hat{f}_*(\phi_N)_*N^{-1}}(\hat{v},\hat{\lambda})&=exp(i\pi F^{-1}(\hat{v},\hat{\lambda}))\cdot U_{\hat{f}_*(\phi_N)_*N^{-1}}(\hat{\lambda}),\\ A_{\hat{f}_*(\phi_N)_*N^{-1}}(\hat{v})&=i\pi F^{-1}(\hat{v},d\hat{v})\cdot Id_{d\times d},
\end{align*}
such that the semi-representations $U_{(\phi_L)_*L^{-1}}$ and $U_{\hat{f}_*(\phi_N)_*N^{-1}}$ of $\Gamma^\vee$ satisfy
$$U_{(\phi_L)_*L^{-1}}=\rho \circ U_{\hat{f}_*(\phi_N)_*N^{-1}} : \Gamma^\vee \ra U(d^2).$$
\end{theorem}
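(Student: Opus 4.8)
The plan is to follow the proof of the holomorphic statement Proposition~\ref{FM transform and phi L holo} as closely as the factor-of-automorphy formalism allows, computing $(\phi_L)_*L^{-1}$ directly and matching it termwise against the bundle $\hat f_*(\phi_N)_*N^{-1}$, whose factor and connection were computed just above the theorem. Since $f^*N=L$ we have $L^{-1}=f^*N^{-1}$, and by the commutative diagram $\phi_L=\hat f\circ\phi_N\circ f$, so that $(\phi_L)_*L^{-1}=\hat f_*(\phi_N)_*f_*f^*N^{-1}$. The analytification of $\phi_L$ is $F\colon V\to V^*$, and since $L$ has type $(d_1,\dots,d_r)$ the index $[\Gamma^\vee:F(\Gamma)]$ equals $d^2$. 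Starting from the canonical factor $a_{L^{-1}}(v,\gamma)=\chi(\gamma)^{-1}\exp(-i\pi F(v,\gamma))$ with connection $A_{L^{-1}}(v)=-i\pi F(v,dx)$, I would apply the pushforward formulas (\ref{u(n) factor pushforward under isogeny})--(\ref{u(n) connection pushforward under isogeny}) along $\phi_L$ and reduce the result to the projectively flat normal form (\ref{f_*L as projectively flat factor})--(\ref{f_*L as projectively flat connection}), with the base torus $\hat X$, the variable $\hat v$, and $F$ replaced by $F^{-1}$. A gauge transformation of the form $\phi(\hat v)=diag\big(\exp(-i\pi F^{-1}(\hat v,\,\cdot\,))\big)$ then produces exactly the asserted canonical data $a_{(\phi_L)_*L^{-1}}(\hat v,\hat\lambda)=\exp(i\pi F^{-1}(\hat v,\hat\lambda))\,U_{(\phi_L)_*L^{-1}}(\hat\lambda)$ and $A_{(\phi_L)_*L^{-1}}(\hat v)=i\pi F^{-1}(\hat v,d\hat v)\cdot Id_{d^2}$. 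In particular the curvature already coincides with that of $\hat f_*(\phi_N)_*N^{-1}\otimes U(d)$, so only the finite semi-representation $U_{(\phi_L)_*L^{-1}}\colon\Gamma^\vee\to U(d^2)$ remains to be identified with $\rho\circ U_{\hat f_*(\phi_N)_*N^{-1}}$.

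The second ingredient is a coset bookkeeping adapted to the tower $F(\Gamma)\subset\Gamma_Y^\vee\subset\Gamma^\vee$, whose two successive indices are both $d$ and which refines the single quotient $\Gamma^\vee/\Gamma_Y^\vee$ used for $\hat f$ in the text. Recalling $\Gamma_Y^\vee=\Gamma_1^\vee\oplus F(\Gamma_1)$, I would take the representatives $\{\lambda_1,\dots,\lambda_d\}\subset\Gamma_2^\vee$ of $\Gamma^\vee/\Gamma_Y^\vee$ already used to build $\hat f_*(\phi_N)_*N^{-1}$, together with representatives $\{\mu_1,\dots,\mu_d\}\subset\Gamma_1^\vee$ of $\Gamma_Y^\vee/F(\Gamma)\cong\Gamma_1^\vee/F(\Gamma_2)$, so that $\{\lambda_i+\mu_k\}$ is a complete set of representatives of $\Gamma^\vee/F(\Gamma)$. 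Writing $\hat\lambda=\hat\lambda_1+\hat\lambda_2\in\Gamma_1^\vee\oplus\Gamma_2^\vee$ and decomposing $\hat\lambda+\lambda_i+\mu_k=\lambda_{\hat\lambda(i)}+\mu_{\hat\lambda'(k)}+(\text{an element of }F(\Gamma))$, the raw matrix $U_{(\phi_L)_*L^{-1}}(\hat\lambda)$ is a twisted product permutation in the pair $(i,k)$, its phases coming from the two semicharacter cocycles and from the cross pairing $F^{-1}(\lambda_i,\mu_k)$. Organized by this tower, the outer ($\lambda_i$) layer reproduces verbatim the matrix $U_{\hat f_*(\phi_N)_*N^{-1}}(\hat\lambda)$ displayed in the text, while the inner ($\mu_k$) layer is a twisted regular representation of the finite abelian group $\Gamma_1^\vee/F(\Gamma_2)$.

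The main obstacle is the final collapse of this product structure to the block-diagonal embedding $\rho$ of (\ref{embedding u(d) to u(d2)}): the inner $\mu_k$ permutation together with the cross phases must be absorbed so that the $d$ blocks become \emph{genuinely identical} rather than merely isomorphic. In the holomorphic argument this is precisely the content of the translation-and-tensor Lemma~\ref{Lemma: translation and tensor} combined with the kernel-invariance Lemma~\ref{Lemma: invariance under kernel}, which turn the $d$ summands $\hat f_*(\phi_N)_*(N^{-1}\otimes L_{\hat y})$ --- a priori twisted by the characters $\hat y\in Ker(\hat f)$ --- into $d$ copies of $\hat f_*(\phi_N)_*N^{-1}$. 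Here I would realize the same mechanism concretely: a discrete-Fourier change of basis of the type used in (\ref{change of basis}) diagonalizes the regular representation of $\Gamma_1^\vee/F(\Gamma_2)$, and I would then verify by a direct factor-of-automorphy computation that the residual $\hat v$-dependent phases and the character phases on the $d$ copies are coboundaries --- equivalently, that translating by $\hat y\in Ker(\hat f)$ acts trivially after the $\hat f$-pushforward --- so that a further unitary gauge transformation $\phi\colon V^*\to U(d^2)$ brings every copy to the common value $U_{\hat f_*(\phi_N)_*N^{-1}}(\hat\lambda)$. Carrying out this absorption while keeping the connection rigidly in the normal form $i\pi F^{-1}(\hat v,d\hat v)\cdot Id$ throughout is the delicate point; once it is established, the equality $U_{(\phi_L)_*L^{-1}}=\rho\circ U_{\hat f_*(\phi_N)_*N^{-1}}$ holds, and with it the reduction of structure group $(\phi_L)_*L^{-1}\cong\big(\hat f_*(\phi_N)_*N^{-1}\big)\otimes U(d)$.
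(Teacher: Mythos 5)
Your proposal is correct and takes essentially the same route as the paper's proof: the paper likewise computes $(\phi_L)_*L^{-1}$ directly using the product representatives $\epsilon_{ij}=\mu_i+\lambda_j$ (your tower $F(\Gamma)\subset\Gamma_Y^\vee\subset\Gamma^\vee$ gives identical coset bookkeeping, since $\Gamma_Y^\vee/F(\Gamma)\cong\Gamma_1^\vee/F(\Gamma_2)$), gauges to the canonical normal form, and factors the semi-representation as $U_{\hat{f}_*(\phi_N)_*N^{-1}}\otimes E$ with $E$ the permutation representation of the kernel characters. The final collapse you identify as the delicate point is handled in the paper exactly as you predict: $E$ is diagonalized by the Vandermonde change of basis (\ref{change of basis}) into the characters $\exp(2\pi i F^{-1}(\lambda_i,\cdot))$, and these are then absorbed by replacing the representatives $\{\lambda_j\}$ with $\{\lambda_j+\lambda_i\}$ for each block, which is precisely your kernel-translation-invariance mechanism.
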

\begin{proof}
Let us calculate the semi-representation of $(\phi_L)_*L^{-1}$. We take representatives of $$\Gamma^\vee:F(\Gamma)=(\Gamma_1^\vee\oplus\Gamma_2^\vee):(F(\Gamma_2)\oplus F(\Gamma_1))$$ as follows. Let $\{\lambda_1,...,\lambda_d\}$ be the full set of representatives of $\Gamma_2^\vee:F(\Gamma_1)$ in $V_2^*$ as in the construction of $\hat{f}_*(\phi_N)_*N^{-1}$ and let $\{\mu_1,...,\mu_d\}$ be a full set of representatives of $\Gamma_1^\vee:F(\Gamma_2)$ in $V_1^*$. Then, $\epsilon_{ij}=\mu_i+\lambda_j$ is a full set of representatives of $\Gamma^\vee:F(\Gamma)$.

We write $\hat{\lambda}+\epsilon_{ij}=\hat{\lambda}_1+\mu_i+\hat{\lambda}_2+\lambda_j=\mu_{\hat{\lambda}(i)}+M^i_{\hat{\lambda}}+\lambda_{\hat{\lambda}(j)}+\Lambda^j_{\hat{\lambda}}$
\begin{align*}
a&_{(\phi_L)_*L^{-1}}(\hat{v},\hat{\lambda})=\\
&=\Big( a_{L^{-1}}(F^{-1}(\hat{v}+\mu_{\hat{\lambda}(i)}+\lambda_{\hat{\lambda}(j)}),F^{-1}(M^i_{\hat{\lambda}}+\Lambda^j_{\hat{\lambda}}))\delta^k_{\hat{\lambda}(i)}\delta^l_{\hat{\lambda}(j)}
\Big)^{ij}_{kl}\\
&=\Big(\chi(F^{-1}(M^i_{\hat{\lambda}}+\Lambda^j_{\hat{\lambda}}))^{-1} exp(i\pi F^{-1}(\hat{v}+\mu_{\hat{\lambda}(i)}+\lambda_{\hat{\lambda}(j)},M^i_{\hat{\lambda}}+\Lambda^j_{\hat{\lambda}}))\delta^k_{\hat{\lambda}(i)}\delta^l_{\hat{\lambda}(j)}\Big)^{ij}_{kl}.
\end{align*}
Changing the representative by $\phi(\hat{v})=\Big(exp(-i\pi F^{-1}(\hat{v},\mu_i+\lambda_j) )\Big)^{ij}_{ij}$ yields,
\begin{align*}
a&_{(\phi_L)_*L^{-1}}(\hat{v},\hat{\lambda})= exp(i\pi F^{-1}(\hat{v},\hat{\lambda}))\cdot U_{(\phi_L)_*L^{-1}}(\hat{\lambda}),
\end{align*}
where $U_{(\phi_L)_*L^{-1}}:\Gamma^\vee\ra U(d^2)$  is a semi-representation. The connection one-form, after changing the representative by $\phi(\hat{v})=exp(-i\pi F^{-1}(\hat{v},\mu_i+\lambda_j))^{ij}_{ij}$ is given by
\begin{align*}
    A_{(\phi_L)_*L^{-1}}(\hat{v})&=i\pi F^{-1}(\hat{v},d\hat{v})\cdot Id.
\end{align*}
We may further change the semi-representation by conjugating with constant matrices which does not change the connection.
\small
\begin{equation*}
\begin{aligned}
&U_{(\phi_L)_*L^{-1}}(\hat{\lambda})=\\
&=\Big(\chi(F^{-1}(M^i_{\hat{\lambda}}+\Lambda^j_{\hat{\lambda}}))^{-1} exp(-i\pi F^{-1}(\hat{\lambda},\mu_i+\lambda_j)+i\pi F^{-1}(\mu_{\hat{\lambda}(i)}+\lambda_{\hat{\lambda}(j)},M^i_{\hat{\lambda}}+\Lambda^j_{\hat{\lambda}}))\delta^k_{\hat{\lambda}(i)}\delta^l_{\hat{\lambda}(j)}\Big)^{ij}_{kl}\\
&=\Big(\chi(F^{-1}(M^i_{\hat{\lambda}}))^{-1}\chi(F^{-1}(\Lambda^j_{\hat{\lambda}}))^{-1}exp(i\pi F^{-1}(M^i_{\hat{\lambda}},\Lambda^j_{\hat{\lambda}}))\times\\
&\ \ \ \times  exp(-i\pi F^{-1}(\hat{\lambda}_1,\lambda_j)-i\pi F^{-1}(\hat{\lambda}_2,\mu_i)+i\pi F^{-1}(\mu_{\hat{\lambda}(i)},\Lambda^j_{\hat{\lambda}})+i \pi F^{-1}(\lambda_{\hat{\lambda}(j)},M^i_{\hat{\lambda}}))\delta^k_{\hat{\lambda}(i)}\delta^l_{\hat{\lambda}(j)}\Big)^{ij}_{kl}.
\end{aligned}
\end{equation*}
\normalsize
Using the extension of $\chi$ to $\Gamma_Y$ and that $F^{-1}(\mu_i,\Lambda^j_{\hat{\lambda}})\in \dZ$ we have
\small
\begin{align*}
U&_{(\phi_L)_*L^{-1}}(\hat{\lambda})=\\
=&\Big(\chi(F^{-1}(\hat{\lambda}_1))^{-1}\chi(F^{-1}(\mu_i))^{-1}\chi(F^{-1}(\mu_{\hat{\lambda}(i)}))\chi(F^{-1}(\Lambda^j_{\hat{\lambda}}))^{-1}
exp(i\pi F^{-1}(\hat{\lambda}_1+\mu_i,\Lambda^j_{\hat{\lambda}}))\times\\
&\times  exp(-i\pi F^{-1}(\hat{\lambda}_1,\lambda_j)-i\pi F^{-1}(\hat{\lambda}_2,\mu_i)+i \pi F^{-1}(\lambda_{\hat{\lambda}(j)},M^i_{\hat{\lambda}}))\delta^k_{\hat{\lambda}(i)}\delta^l_{\hat{\lambda}(j)}\Big)^{ij}_{kl}\\
=&\Big(\chi(F^{-1}(\hat{\lambda}_1))^{-1}\chi(F^{-1}(\Lambda^j_{\hat{\lambda}}))^{-1}
exp(i\pi F^{-1}(\hat{\lambda}_1,\Lambda^j_{\hat{\lambda}})-i\pi F^{-1}(\hat{\lambda}_1,\lambda_j))+i\pi F^{-1}(\lambda_{\hat{\lambda}(j)},\hat{\lambda}_1))\times\\
&\times  \chi(F^{-1}(\mu_i))^{-1}\chi(F^{-1}(\mu_{\hat{\lambda}(i)}))exp(-i\pi F^{-1}(\hat{\lambda}_2,\mu_i)+i \pi F^{-1}(\lambda_{\hat{\lambda}(j)},\mu_i-\mu_{\hat{\lambda}(i)}))\times \\
&\times exp(-i\pi F^{-1}(\mu_i,\Lambda^j_{\hat{\lambda}}))\delta^k_{\hat{\lambda}(i)}\delta^l_{\hat{\lambda}(j)}\Big)^{ij}_{kl}.
\end{align*}
\normalsize
Changing the representatives by $\phi(\hat{v})^{ij}_{ij}=\chi(F^{-1}(\mu_i))exp(-i\pi F^{-1}(\mu_i,\lambda_j))$ yields
\small
\begin{align*}
U&_{(\phi_L)_*L^{-1}}(\hat{\lambda})=\\
=&\Big(\chi(F^{-1}(\hat{\lambda}_1))^{-1}\chi(F^{-1}(\Lambda^j_{\hat{\lambda}})
)^{-1}\times\\
&\times exp(i\pi F^{-1}(\hat{\lambda}_1,\Lambda^j_{\hat{\lambda}})-i\pi F^{-1}(\hat{\lambda}_1,\lambda_j))+i\pi F^{-1}(\lambda_{\hat{\lambda}(j)},\hat{\lambda}_1))\delta^k_{\hat{\lambda}(i)}\delta^l_{\hat{\lambda}(j)}\Big)^{ij}_{kl}\\
=&\Big(\chi(F^{-1}(\hat{\lambda}_1))^{-1}\chi(F^{-1}(\Lambda^j_{\hat{\lambda}})
)^{-1}exp(i\pi F^{-1}(\hat{\lambda}_1,\hat{\lambda}_2)-2\pi i F^{-1}(\hat{\lambda}_1,\lambda_{\hat{\lambda}(j)}))\delta^k_{\hat{\lambda}(i)}\delta^l_{\hat{\lambda}(j)}\Big)^{ij}_{kl}.
\end{align*}
\normalsize
That is for any $\hat{\lambda}\in \Gamma^\vee$
$$U_{(\phi_L)_*L^{-1}}(\hat{\lambda})\cong U_{\hat{f}_*(\phi_N)_*N^{-1}}(\hat{\lambda})\otimes E(\hat{\lambda})$$
where $E$ is a representation of $\Gamma^\vee$ in $U(d)$ given by 
$$E: \Gamma^\vee \ra U(d),\ \ \ \hat{\lambda}\mapsto \Big(\delta^i_{\hat{\lambda}(i)} \Big).$$
The permutation $i\mapsto \hat{\lambda}(i)$ is from the change $\hat{\lambda}+\mu_i=\mu_{\hat{\lambda}(i)}+M^i_{\hat{\lambda}}+\hat{\lambda}_2$, in particular $E$ is trivial on $F(\Gamma_2)+\Gamma_2^\vee$. 

Analogously to the proof of $f_*\dC\cong \oplus_{L_i\in Ker(\hat{f})} L_i$ (Lemma \ref{lemma: pushforward of trivial line bundle}) one can show that the vector bundle defined by $E$ is the sum of all the line bundles corresponding to characters of $\Gamma^\vee$ which are trivial on $F(\Gamma_2)\oplus \Gamma_2^\vee$. These characters can be represented by elements in $(F(\Gamma_2)\oplus \Gamma^\vee_2)^\vee\cong F^{-1}(\Gamma_2^\vee)\oplus \Gamma_2 $ which do not belong to $(\Gamma^\vee)^\vee\cong \Gamma_1\oplus \Gamma_2$. 

The cosets $F^{-1}(\Gamma_2^\vee):\Gamma_2$ are represented exactly by $F^{-1}(\lambda_1),...,F^{-1}(\lambda_n)$ where $\lambda_i$ represent $\Gamma_2^\vee:F(\Gamma_1)$. That is,
$$E\cong \bigoplus_{i=1}^d L_{\lambda_i}$$
where $L_{\lambda_i}$ are given by
$$a_{L_{\lambda_i}}:\Gamma^\vee \ra U(1),\ \ \ a_{L_{\lambda_i}}(\hat{\lambda})=exp(2\pi i F^{-1}(\lambda_i,\hat{\lambda})).$$

Rewriting $E$ into its diagonal form, using the Vandermond-type change of basis (\ref{change of basis}), gives us the following.
\begin{align*}
&U_{(\phi_L)_*L^{-1}}(\hat{\lambda})=\\
&=\Big(U_{\hat{f}_*(\phi_N)_*N^{-1}}(\hat{\lambda})\Big)^j_l\otimes \Big(exp(2\pi i F^{-1}(\lambda_i,\hat{\lambda}))\delta^k_{i}\Big)^i_k\\
&=\bigoplus_{i=1}^d\Big(U_{\hat{f}_*(\phi_N)_*N^{-1}}(\hat{\lambda})\cdot  exp(2\pi i F^{-1}(\lambda_i,\hat{\lambda}_1))\Big)^j_l\\
&=\bigoplus_{i=1}^d\Big(\chi(F^{-1}(\hat{\lambda}_1))^{-1}\chi(F^{-1}(\Lambda^j_{\hat{\lambda}}))^{-1}\times \\
&\hspace{1cm} \times exp(i\pi F^{-1}(\hat{\lambda}_1,\hat{\lambda}_2)-2\pi i F^{-1}(\hat{\lambda}_1,\lambda_{\hat{\lambda}(j)}))\cdot \delta^l_{\hat{\lambda}(j)}\cdot  exp(2\pi i F^{-1}(\lambda_i,\hat{\lambda}_1))\Big)^j_l\\
&=\bigoplus_{i=1}^d\Big(\chi(F^{-1}(\hat{\lambda}_1))^{-1}\chi(F^{-1}(\Lambda^j_{\hat{\lambda}}))^{-1}exp(i\pi F^{-1}(\hat{\lambda}_1,\hat{\lambda}_2)-2\pi i F^{-1}(\hat{\lambda}_1,\lambda_{\hat{\lambda}(j)}+\lambda_i))\cdot \delta^l_{\hat{\lambda}(j)}\Big)^j_l\\
&=\bigoplus_{i=1}^d U_{\hat{f}_*(\phi_N)_*N^{-1}}(\hat{\lambda})
\end{align*}
after we change the set of representatives from $\{\lambda_1,...,\lambda_d\}$ to $\{\lambda_1+\lambda_i,...,\lambda_d+\lambda_i\}$ for each $i$. 
\end{proof}

\subsection{Independence of the choice of $f$ and $N$}
For a non-degenerate holomorphic line bundle $L\ra X$ on a complex torus, the vector bundle $\hat{f}_*(\phi_N)_*N^{-1}$ agrees with the Fourier-Mukai transform of $L$, therefore it is independent of the choice of isogeny $f:Y\ra X$ and line bundle $N\ra Y$. In the $U(1)$-case we want to define a T-dual which agrees with the Fourier-Mukai transform for $B$-branes. To achieve this we must show, without relying on the Fourier-Mukai transform, that the bundle $\hat{f}_*(\phi_N)_*N^{-1}$ is independent of the choices of isogeny $f:Y\ra X$ and $U(1)$-bundle $N\ra Y$.

First, we show that given an isogeny $f:Y\ra X$ constructed as (\ref{Y lattice}) and (\ref{Y torus}) the bundle $\hat{f}_*(\phi_N)_*N^{-1}$ is independent of the choice of line bundle $N\ra Y$ satisfying $f^*N=L$. We construct $Y$ by choosing a decomposition $\Gamma=\Gamma_1+\Gamma_2$ with respect to the curvature $F$ of the connection on $L$. Different choices of decompositions yield different tori $Y$ and different isogenies $f:Y\ra X$. In Part II. we show that the result is also independent of the choice of decomposition.

\paragraph{Part I. Choice of line bundle $N$.} We may multiply $N$ by any line bundle which pulls back to the trivial bundle on $X$. These line bundles are precisely the ones corresponding to $\hat{y}\in Ker(\hat{f})$. We need the following two lemmas which are real analogues of Lemmas \ref{Lemma: translation and tensor} and \ref{Lemma: invariance under kernel}.
\begin{lemma}
Let $N\ra Y$ be a type $(1,...,1)$ non-degenerate line bundle. Then, for any $\hat{y}\in \hat{Y}$ we have
$$(\phi_N)_*(N^{-1}\otimes L_{\hat{y}})\cong t_{-\hat{y}}^*(\phi_N)_*N^{-1}.$$
\end{lemma}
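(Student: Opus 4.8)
The plan is to mimic the proof of the holomorphic Lemma \ref{Lemma: translation and tensor}, replacing the Appel-Humbert factor of the complex case by the canonical $U(1)$ factor of Theorem \ref{appel-humbert for u(1)} and carrying the connection one-form along at every step. Everything reduces to an explicit computation with factors of automorphy, together with a single gauge transformation that must be checked to intertwine the factor and the connection \emph{simultaneously}, as required by Proposition \ref{factor of automoprhy and connection proof}.

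First I would record the ingredients. Since $N$ is of type $(1,\dots,1)$, the map $\phi_N:Y\to\hat{Y}$ is an isomorphism with analytification $F:V\to V^*$ satisfying $F(\Gamma_Y)=\Gamma_Y^\vee$, and $(\phi_N)_*N^{-1}$ was computed just above the lemma as the pullback of $N^{-1}$ along $F^{-1}$; explicitly $a_{(\phi_N)_*N^{-1}}(\hat{v},\hat{\gamma})=\chi(F^{-1}(\hat{\gamma}))^{-1}\exp(i\pi F^{-1}(\hat{v},\hat{\gamma}))$, with the canonical connection $A_{(\phi_N)_*N^{-1}}$ that is linear in the base variable. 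By Lemma \ref{character for flat U(1) bundles} the flat bundle $L_{\hat{y}}$ is represented by the constant character $a_{L_{\hat{y}}}(v,\gamma)=\exp(2\pi i\,\hat{y}(\gamma))$ with zero connection, so tensoring by $L_{\hat{y}}$ multiplies the factor of $N^{-1}$ by this character and leaves its connection unchanged. Pushing forward along the isomorphism $\phi_N$ then yields
\[
a_{(\phi_N)_*(N^{-1}\otimes L_{\hat{y}})}(\hat{v},\hat{\gamma})=\chi(F^{-1}(\hat{\gamma}))^{-1}\exp\!\big(i\pi F^{-1}(\hat{v},\hat{\gamma})\big)\exp\!\big(2\pi i\,F^{-1}(\hat{\gamma},\hat{y})\big),
\]
with connection still equal to $A_{(\phi_N)_*N^{-1}}(\hat{v})$, whereas the translate $t_{-\hat{y}}^*(\phi_N)_*N^{-1}$ has factor obtained by the substitution $\hat{v}\mapsto\hat{v}-\hat{y}$, i.e. the same expression with the last exponential replaced by $\exp(-i\pi F^{-1}(\hat{y},\hat{\gamma}))$, and connection $A_{(\phi_N)_*N^{-1}}(\hat{v}-\hat{y})$.

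Next I would exhibit the gauge equivalence. Using antisymmetry $F^{-1}(\hat{y},\hat{\gamma})=-F^{-1}(\hat{\gamma},\hat{y})$, the ratio of the two factors is $\exp(i\pi F^{-1}(\hat{\gamma},\hat{y}))$, which is exactly the cocycle $\phi(\hat{v}+\hat{\gamma})\phi(\hat{v})^{-1}$ produced by the smooth function $\phi(\hat{v})=\exp(i\pi F^{-1}(\hat{v},\hat{y}))\in U(1)$. It then remains to verify the connection condition $A_{(\phi_N)_*(N^{-1}\otimes L_{\hat{y}})}-A_{t_{-\hat{y}}^*(\phi_N)_*N^{-1}}=-d\phi\cdot\phi^{-1}$. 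The left-hand side equals $A_{(\phi_N)_*N^{-1}}(\hat{v})-A_{(\phi_N)_*N^{-1}}(\hat{v}-\hat{y})=i\pi\,\iota_{\hat{y}}F^{-1}$ by linearity of the canonical connection in the position variable, while $-d\phi\cdot\phi^{-1}=-d\log\phi=i\pi\,\iota_{\hat{y}}F^{-1}$ by the contraction identity $d\big(F^{-1}(\hat{v},\hat{y})\big)=-\iota_{\hat{y}}F^{-1}$; the two agree. Hence $\phi$ realizes an isomorphism of $U(1)$-bundles with connection, proving the claim.

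The computation is routine once the conventions are fixed, so the only genuine obstacle is bookkeeping: one must split the single character $\exp(2\pi i\,\hat{y}(\gamma))$ coming from $L_{\hat{y}}$ into a ``translation'' part $\exp(-i\pi F^{-1}(\hat{y},\hat{\gamma}))$ and a ``gauge'' part $\exp(i\pi F^{-1}(\hat{\gamma},\hat{y}))$, and then check that the \emph{same} function $\phi$ which absorbs the gauge part of the factor also produces precisely the correct shift of the connection one-form. This simultaneous compatibility is what distinguishes the $U(1)$-with-connection statement from the purely holomorphic Lemma \ref{Lemma: translation and tensor}, where uniqueness of the Fourier-Mukai transform made the connection bookkeeping invisible; here the antisymmetry of $F^{-1}$ is exactly what makes the $2\pi i$ and the $-i\pi$ combine into $i\pi$, and I would double-check the sign of the canonical connection in Theorem \ref{appel-humbert for u(1)} to be sure no spurious factor intervenes.
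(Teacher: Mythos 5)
Your proposal is correct and follows essentially the same route as the paper: represent $N^{-1}$, $L_{\hat{y}}$ and the pushforward by canonical factors of automorphy, split the character $\exp(2\pi i\,\hat{y}(\gamma))$ into a translation part and a coboundary, and absorb the latter by the gauge transformation $\phi(\hat{v})=\exp(\pm i\pi F^{-1}(\hat{v},\hat{y}))$, checking that it simultaneously corrects the connection one-form via linearity of the canonical connection. The paper's proof is the same computation (with $\phi$ taken in the opposite direction and the connection check compressed into the remark ``using the specific form of $A_{N^{-1}}$''), and your sign bookkeeping is consistent with the convention $A_{(\phi_N)_*N^{-1}}(\hat{v})=i\pi F^{-1}(\hat{v},d\hat{v})$ used there.
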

\begin{proof}
We represent $N$ by the canonical factor as
$$a_N(v,\lambda)=\chi(\lambda)exp(i\pi F(v,\lambda)),\ \ \ A_N(v)=i\pi F(v,dv).$$
The line bundle $L_{\hat{y}}$ is represented by 
$$a_{L_{\hat{y}}}(v,\lambda)=exp(2\pi i \hat{y}(\lambda)), \ \ A_{L_{\hat{y}}}=0$$
if we view $\hat{y}\in V^*$ as a lift of $\hat{y}\in \hat{Y}$. Then,
  \begin{align*}
     a_{(\phi_N)_*(N^{-1}\otimes L_{\hat{y}})}(\hat{v},\hat{\lambda})&=\chi(F^{-1}(\hat{\lambda}))^{-1}exp(-i\pi F(F^{-1}(\hat{v}),F^{-1}(\hat{\lambda}))exp(2\pi i \hat{y}(F^{-1}(\hat{\lambda})))\\
     &=\chi(F^{-1}(\hat{\lambda}))^{-1}exp(i\pi F^{-1}(\hat{v},\hat{\lambda})exp(2\pi i F^{-1}(\hat{\lambda},\hat{y}))\\
     &=\chi(F^{-1}(\hat{\lambda}))^{-1}exp(i\pi F^{-1}(\hat{v}-\hat{y},\hat{\lambda}))exp(-\pi i F^{-1}(\hat{y},\hat{\lambda}))\\
     &=a_{(\phi_N)_*N^{-1}}(\hat{v}-\hat{y},\hat{\lambda})exp(-\pi i F^{-1}(\hat{y},\hat{\lambda}))
  \end{align*}
  meanwhile,
  \begin{align*}
      A_{(\phi_N)_*(N^{-1}\otimes L_{\hat{y}})}(\hat{v})=A_{(\phi_N)_*N^{-1}}(\hat{v})= i\pi F^{-1}(\hat{v},d\hat{v}).
  \end{align*}
  Changing the factor of automorphy and connection by $\phi(\hat{v})=exp(\pi i F^{-1}(\hat{y},\hat{v}))$ results in
  $$a_{(\phi_N)_*(N^{-1}\otimes L_{\hat{y}})}(\hat{v},\hat{\lambda})=a_{(\phi_N)_*N^{-1}}(\hat{v}-\hat{y},\hat{\lambda}),\ \ \       A_{(\phi_N)_*(N^{-1}\otimes L_{\hat{y}})}(\hat{v})=A_{(\phi_N)_*N^{-1}}(\hat{v}-\hat{y}).$$
    using the specific form of $A_{N^{-1}}$.
\end{proof}
\begin{lemma}\label{Lemma4}
Let $f:X\ra Y$ be an isogeny and $L$ a line bundle on $X$. Then, for any $x\in Ker(f)$ we have
$$f_*t_x^*L\cong f_*L.$$
\end{lemma}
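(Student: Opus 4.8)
The plan is to mirror the sheaf-theoretic argument of Lemma \ref{Lemma: invariance under kernel}, but to phrase everything through factors of automorphy so that the connection is carried along automatically. First I would set up coordinates as in the construction of the pushforward: write $X=V/\Gamma_X$ and $Y=V/\Gamma_Y$ with the analytic representation of $f$ equal to the identity on $V$, so that $\Gamma_X\subset \Gamma_Y$ and $\mathrm{Ker}(f)\cong \Gamma_Y/\Gamma_X$. A point $x\in \mathrm{Ker}(f)$ then has a lift $\tilde x\in \Gamma_Y\subset V$, and translation $t_x$ lifts to $v\mapsto v+\tilde x$. Consequently the pulled-back bundle $t_x^*L$ is represented by the pair $(a_{t_x^*L},A_{t_x^*L})$ with
$$a_{t_x^*L}(v,\lambda)=a_L(v+\tilde x,\lambda),\qquad A_{t_x^*L}(v)=A_L(v+\tilde x),\qquad \lambda\in\Gamma_X.$$
Geometrically this reflects the fact that, since $f$ is a homomorphism, $f(z+x)=f(z)$ for $x\in\mathrm{Ker}(f)$, so $t_x$ preserves every fibre of $f$.

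The heart of the argument is the observation that, because $\tilde x\in\Gamma_Y$, translating a chosen full set of coset representatives $\{\lambda_1,\dots,\lambda_n\}$ of $\Gamma_Y/\Gamma_X$ by $\tilde x$ produces another full set of representatives $\{\lambda_1+\tilde x,\dots,\lambda_n+\tilde x\}$, and that this shift leaves the combinatorial data entering (\ref{u(n) factor pushforward under isogeny}) and (\ref{u(n) connection pushforward under isogeny}) unchanged. Concretely, writing $\lambda+\lambda_i=\lambda_{\lambda(i)}+\Lambda^i_\lambda$ as before, I would check that
$$\lambda+(\lambda_i+\tilde x)=(\lambda_{\lambda(i)}+\tilde x)+\Lambda^i_\lambda,$$
so that the permutation $i\mapsto\lambda(i)$ and the lattice element $\Lambda^i_\lambda\in\Gamma_X$ are exactly the same for the shifted representatives. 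Plugging this into the pushforward formulas, the factor of automorphy $a_{f_*t_x^*L}(v,\lambda)$ computed with $\{\lambda_i\}$ is term-by-term equal to $a_{f_*L}(v,\lambda)$ computed with the shifted representatives $\{\lambda_i+\tilde x\}$, since both have $(i,j)$-entry $a_L(v+\lambda_{\lambda(i)}+\tilde x,\Lambda^i_\lambda)\,\delta^{\lambda(i)}_j$; the same comparison for the diagonal connection matrices gives $A_{f_*t_x^*L}(v)=\big(A_L(v+\tilde x+\lambda_i)\delta^i_j\big)$, which is $A_{f_*L}(v)$ with representatives $\{\lambda_i+\tilde x\}$.

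To finish I would invoke the lemma proved just above, stating that the pushforward $(a_{f_*L},A_{f_*L})$ is independent, up to equivalence of $U(n)$-bundles with connection, of the choice of coset representatives. Applying it to the two representative sets $\{\lambda_i\}$ and $\{\lambda_i+\tilde x\}$ yields an explicit $\phi\colon V\to U(n)$ intertwining the two descriptions of $f_*L$, and combining this with the identification of the previous paragraph gives the desired isomorphism $f_*t_x^*L\cong f_*L$ as $U(1)$-bundles pushed forward to $U(n)$-bundles with connection. I do not expect a genuine obstacle here; the only thing requiring care is the bookkeeping of the permutation $\lambda(\cdot)$ and the $\Gamma_X$-parts $\Lambda^i_\lambda$ under the shift, together with confirming that the connection one-forms, and not merely the topological factors, match. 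Once the invariance of this data under translation by $\tilde x\in\Gamma_Y$ is recorded, the statement reduces cleanly to the independence-of-representatives result.
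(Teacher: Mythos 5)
Your proposal is correct and follows essentially the same route as the paper's own proof: lift $x$ to $\lambda_x\in\Gamma_Y$, note that the shifted set $\{\lambda_i+\lambda_x\}$ is again a full set of coset representatives with the same permutation data $i\mapsto\lambda(i)$ and the same $\Gamma_X$-parts $\Lambda^i_\lambda$, identify $a_{f_*t_x^*L}$ (computed with $\{\lambda_i\}$) term-by-term with $a_{f_*L}$ (computed with $\{\lambda_i+\lambda_x\}$), and conclude by the independence-of-representatives lemma. Nothing is missing.
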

\begin{proof}
A point $x\in X$ lies in $Ker(f)$ if it is in $\Gamma_Y/\Gamma_X$. Let $\lambda_x\in \Gamma_Y$ be a preimage of $x\in X$ in the universal cover. Then, $t_x^*L$ is represented by
$$a_{t_x^*L}(v,\lambda)=a_L(v+\lambda_x,\lambda),\ \ \ A_{t_x^*L}(v)=A_L(v+\lambda_x).$$
Therefore,
\begin{align*}
    a_{f_*t_x^*L}(v,\lambda)&=\Big(a_L(v+\lambda_x+\lambda_{\lambda(i)},\Lambda^i_\lambda) \delta^{\lambda(i)}_j\Big)^i_j\\
    A_{f_*t_x^*L}(v)&=\Big(A_L(v+\lambda_x+\lambda_i)\delta^i_j\Big)^i_j.
\end{align*}
Since $\{\lambda_1,...,\lambda_n\}$ is a full set of representatives of $\Gamma_Y/\Gamma_X$ and $\lambda_x\in \Gamma_Y$, $\{\lambda_x+\lambda_1,...,\lambda_x+\lambda_n\}$ is again a full set of representatives. As we have proved before, the pushforward does not depend on the choice of representatives. Note that
$$\lambda+\lambda_i=\lambda_{\lambda(i)}+\Lambda^i_\lambda$$
so
$$\lambda+\lambda_x+\lambda_i=\lambda_x+\lambda_{\lambda(i)}+\Lambda^i_\lambda.$$
\end{proof}
Since $\phi_N=\phi_{N\otimes L_{\hat{y}}}$ for any $\hat{y}\in \hat{Y}$, we have for any $\hat{y}\in Ker(\hat{f})$ 
\begin{align*}
\hat{f}_*(\phi_N)_*(N^{-1}\otimes L_{\hat{y}})=\hat{f}_*t_{-\hat{y}}^*(\phi_N)_*N^{-1}=\hat{f}_*(\phi_N)_*N^{-1}.
\end{align*}
Therefore, $\hat{f}_*(\phi_N)_*N^{-1}$ is independent of the choice of $N$ given the isogeny $f:X\ra Y$.

\paragraph{Part II. Choice of decomposition.} Our strategy is as follows. We first calculate how the lattice $\Gamma_Y$ changes if we change the decomposition. We can choose symplectic frames with respect to both decompositions, then the change of basis between the two frames is an integral symplectic linear transformation. We can view the changing of lattices as changing the symplectic frames.  We follow the work of Hua and Reiner \cite{huaReiner} to find generators of the integral symplectic group transforming symplectic frames into each other. We finally show that changing the frame by any of these generators does not change the resulting $U(d)$-bundle $\hat{f}_*(\phi_N)_*N^{-1}$.

Let $\Gamma=\Gamma_1\oplus \Gamma_2$ be a decomposition and $\{x^i,y^i\}_{i=1}^r$ one-periodic coordinates on $X$ with respect to this decomposition. That is, under the identification $V\cong T_xX$ for any point $x\in X$ the coordinate basis $\{\frac{\partial }{\partial x_i}\}_{i=1,..,r}$ and $\{ \frac{\partial }{\partial y_i}\}_{i=1,...,r}$ is a basis of $\Gamma_1$ and $\Gamma_2$ respectively. We may also assume that the induced frame is symplectic so we can write
$$F=\sum_{i=1}^rd_idx^i\wedge dy^i.$$
for positive integers $d_i$. 

Suppose $\Gamma=\Gamma_1'\oplus \Gamma_2'$ and let $\{u^i,v^i\}$ be 1-periodic coordinates with respect to this decomposition which also form a symplectic frame for $F$.

Let us represent $F$ in matrix form as
\begin{equation}\label{F in matrix form}\cF=\begin{pmatrix}0 & F \\ -F & 0\end{pmatrix}\in GL(2r,\dZ) ,\end{equation}
with $F=diag(d_1,...,d_r)$. Then the change of basis between the coordinates $\{x^i,y^i\}$ and $\{u^i,v^i\}$ can be represented by a  matrix \begin{equation}\label{transition matrix}\cS=\begin{pmatrix} A & B \\ C & D \end{pmatrix}\in SL(2r,\dZ)\end{equation} 
such that 
\begin{equation}\label{F-symplectic}\cS\begin{pmatrix}0 & F\\ -F & 0 \end{pmatrix}\cS^T=\begin{pmatrix}0 & F\\ -F & 0 \end{pmatrix} \end{equation}
and the coordinates transform as 
$$\begin{pmatrix} A^T & C^T \\ B^T & D^T \end{pmatrix}\begin{pmatrix} x \\ y  \end{pmatrix}=\begin{pmatrix} u \\ v \end{pmatrix}.$$
We defined $\Gamma_Y$ as $\Gamma_1\oplus F^{-1}(\Gamma_1^\vee)$ so $\Gamma_{Y'}=\Gamma_1'\oplus F^{-1}(\Gamma_1^{\prime \vee})$. The new 1-periodic coordinates on $Y$ are $\{x^i,d_{i}y^i\}$  so $\Gamma_Y$ is spanned by $\{\frac{\partial}{\partial x^i},d^{-1}_{i}\frac{\partial}{\partial y^i} \}$ and on $Y'$ are $\{u^i,d_{i}v^i\}$. The transition between them is given by
$$\begin{pmatrix}A^T & C^T F^{-1}\\ FB^T & FD^TF^{-1}\end{pmatrix}\begin{pmatrix}x \\ F y \end{pmatrix}=\begin{pmatrix} u \\ F v\end{pmatrix} .$$
The two lattices $\Gamma_Y$ and $\Gamma_{Y'}$ coincide if and only if this new transition is integral. From the symplectic property of $\cS$ we know that
$$\cS^{-1}=\begin{pmatrix}0 & F \\ -F & 0 \end{pmatrix}\cS^T\begin{pmatrix}0 & -F^{-1} \\ F^{-1} & 0  \end{pmatrix}=\begin{pmatrix} FD^TF^{-1} & -FB^TF^{-1} \\ -FC^TF^{-1} & FA^TF^{-1} \end{pmatrix}$$
which is again in $SL(2r,\dZ)$ so $FD^TF^{-1}$ is integral. That is if $CF^{-1}$ is integral $\Gamma_Y=\Gamma_{Y'}$.

\paragraph{Generators of $Sp(\cF,\dZ)$.} We will denote $2r\times 2r $ matrices by mathcal letters and $r\times r$ matrices by latin letters. We denote the $r\times r$ identity matrix by $I$. Let $F$, the alternating non-degenerate bilinear pairing on $\Gamma$, be given in normal form as 
$$\cF=\begin{pmatrix}0 & F  \\ -F & 0 \end{pmatrix}$$
where $F=diag(d_1,...,d_r)$ with $d_i\in \dZ_{>0}$ and  $d_{i}| d_{i+1}$. Let $Sp(\cF,\dZ)$ be the space of $2r\times 2r $ matrices $\cS$ lying in $SL(2r,\dZ)$ satisfying
$$\cS\cF \cS^T=\cF.$$
In \cite[Page 1]{huaReiner} Hua and Reiner gave the full set of representatives of the group $Sp(2r,\dZ)$, that is of the group $Sp(\cF,\dZ)$, where $\cF$ is the standard symplectic matrix with $d_1=...=d_r=1$. We can slightly modify their proof to find a full set of representatives of $Sp(\cF,\dZ)$ with general $\cF$.
\begin{theorem}\label{generators}
    The group $Sp(\cF,\dZ)$ is generated by the following types of elements

    (I) Translations: $$\cS=\begin{pmatrix}  I & S \\ 0 & I\end{pmatrix}$$
    where $S$ satisfies $SF=FS^T$.

    (II) Rotations:
    $$\cS=\begin{pmatrix} A & 0 \\ 0 & D \end{pmatrix}$$
    where $D=F(A^{-1})^TF^{-1}$.

    (III) Semi-involutions:
    $$\cS=\begin{pmatrix}J & I-J \\ J-I & J \end{pmatrix}$$
    where $J$ is a diagonal matrix whose diagonal elements are 0's and 1's, so that $J^2=J$ and $(I-J)^2=I-J$.    
\end{theorem}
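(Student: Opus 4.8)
The plan is to adapt the classical argument of Hua and Reiner, who proved this exact theorem for the standard symplectic matrix (the case $d_1 = \cdots = d_r = 1$), to the more general bilinear form $\cF$ with $F = \mathrm{diag}(d_1, \ldots, d_r)$. First I would verify that the three families of matrices listed as types (I), (II), (III) actually belong to $Sp(\cF, \dZ)$; this is a direct check that each satisfies $\cS \cF \cS^T = \cF$. For a translation $\cS = \bigl(\begin{smallmatrix} I & S \\ 0 & I \end{smallmatrix}\bigr)$, computing $\cS \cF \cS^T$ produces an off-diagonal block $F$ and a symmetric correction that vanishes precisely when $SF = FS^T$, which is the stated condition. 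For a rotation, the condition $D = F(A^{-1})^T F^{-1}$ is exactly what makes $A F D^T = F$ hold. For a semi-involution, one uses $J^2 = J$ and that $J$ is diagonal (hence commutes with $F$) to check the defining relation directly.

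The heart of the proof is the generation statement, and here I would follow Hua and Reiner's inductive reduction by the size of matrix entries. Given an arbitrary $\cS = \bigl(\begin{smallmatrix} A & B \\ C & D \end{smallmatrix}\bigr) \in Sp(\cF, \dZ)$, the idea is to multiply on the left and right by generators of types (I)--(III) in order to reduce $\cS$ to the identity, using a Euclidean-algorithm-style descent on the entries. The symplectic relations $\cS \cF \cS^T = \cF$ give bilinear identities among the blocks $A, B, C, D$ (for instance $A F B^T = B F A^T$, $A F D^T - B F C^T = F$, and $C F D^T = D F C^T$); these constrain the entries enough that translations can be used to clear the upper-right block, rotations to normalize the diagonal blocks via integral matrix operations, and semi-involutions to swap roles between the $C$ and $A$ blocks when a needed pivot sits in the wrong quadrant. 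I would carry out the descent by first arranging, via a semi-involution and a rotation, that $A$ becomes nonsingular, then killing $C$ using the relation $CFD^T = DFC^T$ together with a translation, and finally reducing the resulting block-diagonal matrix to the identity through rotations.

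The main obstacle I anticipate is the arithmetic bookkeeping introduced by the nontrivial $F = \mathrm{diag}(d_1, \ldots, d_r)$: in the standard case one descends freely using $SL(r, \dZ)$ operations, but here the divisibility constraints $SF = FS^T$ and $D = F(A^{-1})^T F^{-1}$ mean that the admissible integral operations are twisted by $F$, so one must check that the Euclidean reduction stays inside the lattice at each step. Concretely, the condition $SF = FS^T$ for translations means $S_{ij} d_j = d_i S_{ji}$, so $S$ cannot be an arbitrary symmetric integer matrix; this restricts which elementary column operations are available, and the descent must be organized to respect these constraints. I would handle this by invoking the elementary divisor theorem to align the frames with the normal form of $\cF$ (using that $d_i \mid d_{i+1}$), reducing the general case to manipulations that parallel Hua and Reiner's, with the divisibility ensuring that the conjugated operations $F(\cdot)F^{-1}$ remain integral exactly as in the independence-of-decomposition argument established earlier in this section.
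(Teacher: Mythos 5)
Your high-level plan (adapt Hua--Reiner, and recognize that the $F$-symmetric constraint $S_{ij}d_j = d_i S_{ji}$ on translations is the new difficulty) is the same as the paper's, but the concrete reduction you describe has a genuine gap. Your three-step scheme --- make $A$ nonsingular with a semi-involution and a rotation, then ``kill $C$ \ldots together with a translation,'' then finish with rotations --- replaces the actual engine of the argument with a one-shot clearing that fails over $\dZ$. Once $A$ is nonsingular, the translation that would annihilate the off-diagonal block (after conjugating by the semi-involution with $J=0$, say the block $B$) is $S=-A^{-1}B$, which is $F$-symmetric but in general only \emph{rational}. The entire point of Hua--Reiner's proof, and of the paper's, is that this block cannot be cleared in one step: one needs the approximation lemma (the paper's Lemma \ref{lemma2}, the $F$-symmetric analogue of Hua--Reiner's Lemma 2) asserting that for integral $A,B$ with $AFB^T=BFA^T$ and $\det A\neq 0$ there exists an \emph{integral} $F$-symmetric $S$ with either $B-AS=0$ or $0<|\det(B-AS)|<(d_1/d_r)|\det A|$. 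Translations and semi-involutions are then alternated to drive this determinant strictly down until the block vanishes exactly, and the process terminates because there are finitely many positive integers below the bound. That lemma, together with the auxiliary Lemma \ref{lemma1} behind it --- which is precisely where the constraint $S_{ij}=(d_i/d_j)S_{ji}$ and the divisibility $d_i\mid d_{i+1}$ enter quantitatively and produce the factor $d_1/d_r$ --- is the technical heart of the proof, and your proposal never identifies or supplies it. Your appeal to ``the elementary divisor theorem to align frames'' is off target: $\cF$ is already in normal form by hypothesis, so nothing is gained there; the divisibility has to be used inside the induction proving the approximation lemma.

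Two further points. First, the claim that $A$ can be made nonsingular by a single semi-involution and rotation is unjustified as stated; it is true over $\dQ$ (any Lagrangian subspace is transverse to some coordinate Lagrangian), but this needs proof, and neither the paper nor Hua--Reiner use it --- they only arrange a nonsingular $k\times k$ corner $A_1$ of $A$ and run the determinant descent on that corner, with semi-involutions swapping $-R_1$ back into the $A$ position at each stage. Second, your terminal step is both vacuous and incomplete: a block-diagonal element of $Sp(\cF,\dZ)$ automatically satisfies $D=F(A^{-1})^TF^{-1}$ and hence \emph{is} a type (II) generator, so there is nothing to ``reduce through rotations''; what actually has to be checked at the end (and is missing from your plan) is that an upper block-triangular element of $Sp(\cF,\dZ)$ factors as a rotation times a translation, i.e.\ that $S=A^{-1}B$ is integral and $F$-symmetric, which follows from $D\in GL(r,\dZ)$ and the relation $AFB^T=BFA^T$.
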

Theorem \ref{generators} differs from the result of \cite{huaReiner} in two points. In (II) we require $D=F(A^{-1})^TF^{-1}$ instead of $D=(A^{-1})^T$ and  in $(I)$ we require that $S$ satisfies $SF=FS^T$ instead of $S$ being symmetric. We give a name to this last property.
\begin{definition}
We say an $r\times r$ matrix $S$ is \emph{$F$-symmetric} if it satisfies $SF=FS^T$.
\end{definition}
The proof of Theorem \ref{generators} relies on two lemmas which we also modify slightly from \cite{huaReiner}.
\begin{lemma} \label{lemma1}
Let $m$ be a nonzero integer and $T$ an $r\times r$ $F$-symmetrix matrix (not necessarily integral)  and suppose that $m$ does not divide at least one of the elements of $T$. Then there exists an $F$-symmetric matrix matrix $S$ with integral elements such that
$$0< |det(T-mS)|< \frac{d_1}{d_r} |m|^r.$$
\end{lemma}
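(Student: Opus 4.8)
The plan is to follow the argument Hua and Reiner use for the case $F = I$ and to track the elementary divisors $d_1 \mid \cdots \mid d_r$ through their reduction. The starting point is a reformulation of $F$-symmetry: since $F$ is diagonal, $(SF)^T = F S^T = SF$ exactly when $SF = FS^T$, so $S$ is $F$-symmetric precisely when $SF$ is an ordinary symmetric matrix, and likewise for $T$. Writing $W = TF$ and $U = SF$ gives two honest symmetric matrices with $\det(T - mS) = \det(W - mU)/\det F$, so the problem transplants to the classical symmetric setting, the only cost being the integrality constraint $d_{\max(i,j)}\mid u_{ij}$ coming from $S$ being integral. Equivalently, working directly with $T$, an integral $F$-symmetric matrix is determined freely by its entries $s_{ij}$ with $i \le j$, the remaining ones being forced by $s_{ji} = \tfrac{d_j}{d_i}s_{ij}$ (integral because $d_i \mid d_j$). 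Choosing $S$ to reduce each entry $t_{ij}$, $i\le j$, to its least absolute residue modulo $m$ produces $T' = T - mS$, again $F$-symmetric, with $|t'_{ij}| \le |m|/2$ for $i\le j$, and $T' \neq 0$ because $m$ fails to divide at least one entry of $T$.

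Next I would prove the two required properties of $T'$, namely nonsingularity and the strict bound $|\det T'| < \tfrac{d_1}{d_r}|m|^r$, by induction on $r$, mirroring Hua--Reiner but using congruences that preserve $F$-symmetry (equivalently, symmetric congruences on $W = TF$ that respect the divisibility constraint on $U$). The base case $r = 1$ is immediate, since $m \nmid t_{11}$ yields a residue $\rho$ with $0 < |\rho| < |m| = \tfrac{d_1}{d_1}|m|$. For the inductive step I would select a pivot among the small entries: if some diagonal entry is nondivisible I use it directly, whereas if $m$ divides every diagonal entry but not some off-diagonal $t_{pq}$ I first apply a semi-involution (type III of Theorem \ref{generators}) to move a nonzero entry onto the diagonal, these being the $F$-symmetric analogues of Hua--Reiner's moves. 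I would then clear the corresponding row and column, pass to the Schur complement, which is again $F$-symmetric of size $r-1$ with associated divisors forming a sub-chain of $d_1 \mid \cdots \mid d_r$, and apply the induction hypothesis; multiplying the complement's determinant by the pivot gives the bound, the factor $\tfrac{d_1}{d_r}$ emerging from the ratios $\tfrac{d_j}{d_i}$ that the off-diagonal entries carry and that the divisibility chain controls.

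The main obstacle is obtaining nonsingularity and the strict bound \emph{simultaneously}: the crude entrywise reduction of the first paragraph does not suffice, because for $r \ge 2$ the Hadamard estimate on the reduced matrix already exceeds the target. Already at $r = 2$ the off-diagonal pair contributes $\tfrac{d_2}{d_1}(t'_{12})^2$ to the determinant, whose naive bound $\tfrac{d_2}{d_1}\,m^2/4$ is larger than the goal $\tfrac{d_1}{d_2}m^2$ once $d_2/d_1$ is large, so the inductive pivoting is genuinely unavoidable. The delicate point in this weighted setting is that every elimination step must respect the forced lower-triangular entries $s_{ji} = \tfrac{d_j}{d_i}s_{ij}$ while keeping $S$ integral; checking that the admissible congruences preserve $F$-symmetry and that the elementary divisors of the Schur complement behave as claimed is the technical heart of the argument, and it is where the condition $d_i \mid d_{i+1}$ is used decisively.
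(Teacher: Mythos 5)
Your first paragraph is correct and matches the paper's implicit setup (an integral $F$-symmetric $S$ is freely determined by its entries $s_{ij}$, $i\le j$, with $s_{ji}=\tfrac{d_j}{d_i}s_{ij}$ forced and automatically integral), and your observation that naive entrywise reduction already fails at $r=2$ is a genuine and relevant point. The gap is in your inductive mechanism, at exactly the place you call the technical heart. First, semi-involutions are $2r\times 2r$ symplectic generators acting on the lattice; in the paper they are used in the proof of Theorem \ref{generators} to manipulate the blocks $A,B$ of a symplectic matrix, and they induce no operation whatsoever on $r\times r$ $F$-symmetric matrices. The operations that do preserve the problem are congruence-type moves $S\mapsto Q^T S\,(FQF^{-1})$ with $Q\in GL(r,\dZ)$, and these preserve the set of \emph{integral} $F$-symmetric matrices only when $Q$ respects the blocks on which the $d_i$ coincide; when the $d_i$ are distinct no admissible move takes an off-diagonal entry to the diagonal. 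So the case where $m$ divides every diagonal entry of $T$ but not some off-diagonal entry is left untreated by your argument. Second, the Schur complement of the pivot in $T-mS$ is not of the form $T_1-mS_1$ with $S_1$ a freely chosen integral $F$-symmetric matrix: it depends on the entries of $S$ nonlinearly, through division by the pivot, so the induction hypothesis cannot be applied to it. Even on the charitable reading where you fix the off-pivot entries of $S$ and absorb the rank-one correction into a new target matrix, nothing guarantees the corrected matrix still has an entry not divisible by $m$; and when all diagonal entries of $T$ are divisible by $m$ there is no admissible pivot with $0<|\mathrm{pivot}|<|m|$ at all.

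The paper (following Hua--Reiner) avoids both problems with a device you did not find: for $r\geq 3$ it chooses a diagonal index $k$ \emph{away from} the row and column of the nondivisible entry $t_{ij}$, so the principal submatrix $T_1$ obtained by deleting row and column $k$ still contains $t_{ij}$ and honestly satisfies the hypothesis of Lemma \ref{lemma1}; induction applies to $T_1$ verbatim. The off-diagonal entries of $S$ in row and column $k$ are then fixed arbitrarily, and since $\det(T-mS)=-ms_{kk}\det(T_1-mS_1)+(\text{terms not involving }s_{kk})$ is affine-linear in the single remaining entry $s_{kk}$, division with remainder places its absolute value in $\bigl(0,\,|m|\cdot|\det(T_1-mS_1)|\bigr]$, which is strictly below $\tfrac{d_1}{d_r}|m|^r$. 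No pivoting, no Schur complement, and no relocation of entries is needed. The price is that $r=2$ (where no index $k$ avoiding an off-diagonal bad entry exists) must be handled by a separate explicit two-case computation --- this is precisely the case you argued forces pivoting; what it actually forces is a hand-done base case, not a different inductive scheme.
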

We modified \cite[Lemma 1]{huaReiner} by changing "symmetric" to "$F$-symmetric" and adding the scaling factor $d_1/d_r$ to the right-hand side of the inequality. The proof is also a word-for-word retelling of the relevant proof of Hua and Reiner with "symmetric" exchanged by "$F$-symmetric" and the right scaling factors added.
\begin{proof}[Proof of Lemma \ref{lemma1}]
    For $r=1$ we have $T=t$ and we may write $t=s\cdot m +r +\{t\}$ for $s,r\in \dZ$, $|r|<m$ and $\{t\}$ the fractional part of $t$. $S=[s]$ is the matrix we are looking for.

    For $r=2$ let $T=(t_{ij})$ and $S=(s_{ij})$. The $F$-symmetry condition requires that $d_2t_{12}=d_1t_{21}$. Since $d_1|d_2$ this means $t_{12}=\frac{d_1}{d_2}t_{21}$ and $s_{12}=\frac{d_1}{d_2}s_{21}.$ We have
    $$|det(T-mS)|=|(t_{11}-ms_{11})(t_{22}-ms_{22})-\frac{d_1}{d_2}(t_{21}-m s_{21})^2|.$$
    If $m$ divides both $t_{11}$ and $t_{22}$ it cannot divide $t_{21}$ by our assumption. Let then $s_{11}=t_{11}/m$ and $s_{22}=t_{22}/m$ and let $s_{21}$ such that $|t_{21}-m s_{21}|<m$ and we are done. 

    If $m$ does not divide at least one of the diagonal elements, say $t_{11}$ let $s_{21}$ be an arbitrary integer and choose $s_{11}$ such that $|t_{11}-ms_{11}|<m$. Then we have
    $$|det(T-mS)|=|-m(t_11-ms_{11})s_{22}+...|,$$
    where $...$ denotes the already fixed terms not involving $s_{22}$. Using the Euclidean algorithm on $...$ there exists an $s_{22}$ such that
    $$|det(T-mS)|\leq |m(t_11-ms_11)| <|m|^2.$$

    We proceed by induction on $r$. Suppose that the result has been established for $r-1$ with $r\geq 3$. Let $T$ be an $r\times r$ matrix and suppose that $t_{ij}$ is not divisible by $m$. Let $t_{kk}$ be a diagonal element not in the same row or column as $t_{ij}$. Let $T_1$ be the $(r-1)\times (r-1)$ matrix obtained from $T$ by removing the $k^{th}$ row and column and let $S_1$ be obtained the same way from $S$. By the induction hypothesis, we may choose an $F$-symmetric $S_1$ such that
    \begin{align*}
    0 <|det (T_1-mS_1)|< \begin{cases}
         \frac{d_1}{d_{r-1}} |m|^{r-1}\\
         \frac{d_1}{d_r} |m|^{r-1}\\
        \frac{d_2}{d_{r}}|m|^{r-1}
    \end{cases}
    \leq \frac{d_1}{d_r} |m|^{r-1},
    \end{align*}
    since $d_1\geq d_2$ and $d_{r-1}\geq d_r$. Fix now $s_{lk}$ and $s_{kp}$ for $l=k+1,..., n$ and $p=1,...,k-1$ arbitrarily. Then, $s_{kl}=\frac{d_l}{d_k}s_{lk}$ and $s_{pk}=\frac{d_p}{d_k}$ and we have
    $$|det(T-mS)|=|-ms_{kk}det(T_1-mS_1)+...|,$$
    where the terms $...$ do not involve $s_{kk}$. Using again the Euclidean algorithm we can choose $s_{kk}$ such that
    $$|det(T-mS)|\leq |m| \cdot |det(T_1-mS_1)| < \frac{d_1}{d_r}|m|^r.$$
\end{proof}
\begin{lemma}\label{lemma2}
Let $A$ and $B$ be integer matrices satisfying $AFB^T=BFA^T$ and let $det(A)\neq 0$. Then, there exists an $F$-symmetric integer matrix $S$ such that either
$$B-AS=0$$
or 
$$0<|det(B-AS)|<\frac{d_1}{d_r}|det(A)|.$$
\end{lemma}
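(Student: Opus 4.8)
The plan is to reduce the statement to Lemma~\ref{lemma1} by passing to the rational matrix $T := A^{-1}B$, exactly along the lines of the symmetric case of Hua and Reiner \cite{huaReiner}. The first step is to record that $T$ is $F$-symmetric. Indeed, left-multiplying the hypothesis $AFB^T = BFA^T$ by $A^{-1}$ and right-multiplying by $(A^T)^{-1}$ gives
\[
FB^T(A^T)^{-1} = A^{-1}BF .
\]
Since $B^T(A^T)^{-1} = (A^{-1}B)^T = T^T$ and $A^{-1}BF = TF$, this reads $FT^T = TF$, i.e.\ $T$ is $F$-symmetric (in general only rational). I would also note at this point the identity $B - AS = A(T - S)$, valid for every $S$, so that $\det(B - AS) = \det(A)\,\det(T - S)$; this is what will turn a bound on $\det(T-S)$ into the asserted bound.

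Next I would split into two cases according to whether $T$ is integral. If $T \in M_r(\dZ)$, then $S := T$ is an integral $F$-symmetric matrix and $B - AS = A(T - T) = 0$, which is the first alternative of the lemma.

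If $T$ is not integral, set $m := \det(A)$, a nonzero integer, and $T_0 := mT = \mathrm{adj}(A)\,B$, which is integral and is $F$-symmetric because $T$ is. The key observation is that $m$ fails to divide at least one entry of $T_0$: divisibility of every entry of $T_0$ by $m$ is precisely the condition that $T_0/m = A^{-1}B = T$ be integral, which we have excluded. Hence Lemma~\ref{lemma1} applies to $T_0$ and $m$ and produces an integral $F$-symmetric matrix $S$ with $0 < |\det(T_0 - mS)| < \tfrac{d_1}{d_r}|m|^r$. Translating back, since $T_0 - mS = m(T - S)$ we have $\det(T_0 - mS) = m^r\det(T - S)$, and with $m = \det(A)$,
\[
\det(B - AS) = \det(A)\,\det(T - S) = \frac{\det(A)}{m^r}\,\det(T_0 - mS) = \frac{1}{(\det A)^{r-1}}\,\det(T_0 - mS).
\]
Dividing the inequality from Lemma~\ref{lemma1} by $|\det A|^{r-1}$ then yields exactly $0 < |\det(B - AS)| < \tfrac{d_1}{d_r}|\det A|$, the second alternative.

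The deduction is short once Lemma~\ref{lemma1} is in hand; the only genuinely substantive points are the $F$-symmetry computation for $T$ and the choice $m = \det A$, which is what makes the factor $|m|^r = |\det A|^r$ cancel against the $\det(A)$ prefactor to leave precisely one power of $|\det A|$. The main obstacle therefore lies not in this argument but in Lemma~\ref{lemma1} itself, where the scaling factor $d_1/d_r$ must emerge from the inductive, $F$-symmetric version of the Euclidean-algorithm estimate; here I rely only on its statement.
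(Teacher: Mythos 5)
Your proof is correct and follows essentially the same route as the paper's (which in turn follows Hua and Reiner): reduce to Lemma~\ref{lemma1} via $T=A^{-1}B$, whose $F$-symmetry you verify explicitly where the paper merely asserts it, then apply Lemma~\ref{lemma1} to the integral matrix $mA^{-1}B$ with $m=\det A$ and cancel powers of $|\det A|$. The only cosmetic differences are your choice $m=\det(A)$ versus the paper's $m=|\det(A)|$ and your cleaner handling of the integral case, where the paper has a typo ($A^{-1}B=mS$ should read $A^{-1}B=S$).
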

Once again we have modified \cite[Lemma 2]{huaReiner} by simply changing "symmetric" to "$F$-symmetric" and adding the right scaling factors. The proof is again the proof from the paper with the same changes.
\begin{proof}[Proof of Lemma \ref{lemma2}]
    Let $m=|det(A)|$.     From $AFB^T=BFA^T$ the matrix $A^{-1}B$ is $F$-symmetric but may be no longer integral.  Then either every element of $A^{-1}B$ is an integer, in which case there exists an $F$-symmetric $S$ with $A^{-1}B=mS$ and $B-AS=0$, or, by \ref{lemma1} there exists $F$-symmetric matrices $R$ and $S$ such that $S$ is integral
    $$mA^{-1}B=mS+R$$
    and $0<|R|<\frac{d_1}{d_r}|m|^r$. Then, 
    $B-AS=AR/m$ and
    $$0<|det(B-AS)|=\Big|det\Big(\frac{AR}{m}\Big) \Big|=\frac{|det(A)|\cdot |det(R)|}{|m|^r}=\frac{|det(R)|}{|m|^{r-1}}<\frac{d_1}{d_r}|m|.$$
    \end{proof}
    \begin{proof}[Proof of Theorem \ref{generators}] We follow word-for-word the proof from  \cite[Section 3]{huaReiner} just exchanging "symmetric" to "$F$-symmetric" and taking care of the scaling factors.
        Let $$\cS=\begin{pmatrix}
            A & B \\ C & D
        \end{pmatrix}.$$
        The relation $\cS \cF \cS^T=\cF$  implies that $AFD^T-BFC^T=F$ so not both $A$ and $B$ are $0$. Moreover,
        $$\begin{pmatrix} A & B \\ C& D \end{pmatrix}\begin{pmatrix}0 & I \\ -I & 0 \end{pmatrix}=\begin{pmatrix}-B & A \\ * & * \end{pmatrix}$$
        so we may assume that $A$ has rank $k>0$. Furthermore, 
        $$\begin{pmatrix}U_1 & 0 \\ 0 & U_2 \end{pmatrix}  \begin{pmatrix}A & B \\ C & D \end{pmatrix} \begin{pmatrix} V_1 & 0 \\ 0 & V_2\end{pmatrix} = \begin{pmatrix}U_1A V_1 & * \\ * & * \end{pmatrix} $$
        where $U_2= F(U_1^T)^{-1}F^{-1}$ and  $V_2=F(V_1^T)^{-1} F^{-1}$ so we may take $A$ to have the form
        $$A=\begin{pmatrix} A_1 & 0 \\ A_2 & 0 \end{pmatrix}$$
        where $A_1$ is an $k\times k $ non-degenerate matrix. We write $B$ as
        $$B=\begin{pmatrix} B_1 & * \\ * & * \end{pmatrix}$$
        where again $B_1$ is an $k\times k$ matrix. From $\cS \cF \cS^T =\cF$ we have $A_1F_1B_1^T=B_1F_1A_1^T$. By Lemma \ref{lemma2}, there exists an integral matrix $S_1$ such that either $A_1S_1+B_1=0$ or $A_1S_1+B_1=R_1$ with $0< |det(R_1)|< (d_1/d_k)\cdot|det(A_1)|$. Let 
        $$S=\begin{pmatrix} S_1 & 0 \\ 0 & 0 \end{pmatrix}.$$
    Then,
    $$\begin{pmatrix}A & B \\ C & D \end{pmatrix}\begin{pmatrix}I & S \\ 0 & I\end{pmatrix}=\begin{pmatrix}A & AS+B \\ * & * \end{pmatrix}$$
    so either $B_1$ becomes $0$ or it is replaced by $R_1$ while $A$ is unaltered. If the latter occurs we proceed as follows: let
    $$J=\begin{pmatrix}0 & 0 \\ 0 & I_{(r-k)\times (r-k)} \end{pmatrix}.$$
 
    Then,
    $$\begin{pmatrix}A & B \\ C& D\end{pmatrix}\begin{pmatrix} J & I-J\\ J-I & J  \end{pmatrix}=\begin{pmatrix}A' & * \\ * & * \end{pmatrix}$$
    where 
    $$A'=AJ+B(J-I)=\begin{pmatrix} -R_1 & 0 \\ * & 0 \end{pmatrix}.$$
    We now repeat the process as before. Since there are finitely many positive integers less than $(d_1/d_k)\cdot |det(A_1)|$, this process eventually terminates. Thus, by multiplying by matrices of the form $(I)-(III)$ we arrive to a matrix
    $$\cS=\begin{pmatrix}A_0 & B_0 \\ C_0 & D_0  \end{pmatrix}$$
    with
    $$A_0=\begin{pmatrix}R & 0 \\ * & * \end{pmatrix}, \ \ \ B_0 = \begin{pmatrix} 0 & * \\ * & *\end{pmatrix}$$
    where $det (R)\neq 0 $. Since $A_0FB_0^T=B_0FA_0^T$ we see that $B_0$ must be of the form
    $$B_0=\begin{pmatrix} 0 & *\\ 0 & * \end{pmatrix}.$$
    But then,
    $$\begin{pmatrix} 0  & I\\ -I & 0 \end{pmatrix}\begin{pmatrix} A_0 & B_0\\ C_0 & D_0 \end{pmatrix}\begin{pmatrix} J & I-J\\ J-I & J \end{pmatrix}=\begin{pmatrix} A^+ & B^+ \\ 0 & D^+ \end{pmatrix}$$
    where $J$ is as before. Finally, an upper block diagonal matrix in $Sp(\cF,\dZ)$ can be written as
   $$ \begin{pmatrix}A & B \\ 0 & D \end{pmatrix}= \begin{pmatrix}U & 0 \\ 0 & V \end{pmatrix} \begin{pmatrix}I & S \\ 0 & I \end{pmatrix}$$
    with $U=A$, $V=D$ and $S=BD^{-1}=BF(A^T)F^{-1}$.
   \end{proof}

Let $L\ra X=V/\Gamma$ be a non-degenerate $U(1)$-bundle with connection and let us construct an isogeny $f:Y\ra X$ as (\ref{Y lattice}) and (\ref{Y torus}) via a decomposition $\Gamma=\Gamma_1+\Gamma_2$. Let $N\ra Y$ be a $U(1)$-bundle with connection such that $f^*N=L$.
\begin{theorem}
    Let $(Y,N)$ be a pair as above together with a degree $d$ isogeny $f:X\ra Y$. Then, $\hat{f}_*(\phi_N)_*N^{-1}$ is independent of the choice of $Y$ and $N$ with the prescribed properties.
\end{theorem}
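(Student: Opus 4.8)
The plan is to split the statement along its two sources of choice and to lean on the work already done. For the choice of $N$: Part~I shows that once the isogeny $f\colon X\ra Y$ (equivalently the lattice $\Gamma_Y=\Gamma_1\oplus F^{-1}(\Gamma_1^\vee)$) is fixed, any two $U(1)$-bundles $N,N'$ with $f^*N=f^*N'=L$ differ by tensoring with a flat bundle $L_{\hat y}$ for $\hat y\in\mathrm{Ker}(\hat f)$, and the identities $(\phi_N)_*(N^{-1}\otimes L_{\hat y})\cong t_{-\hat y}^*(\phi_N)_*N^{-1}$ together with Lemma~\ref{Lemma4} give $\hat f_*(\phi_N)_*(N^{-1}\otimes L_{\hat y})\cong \hat f_*(\phi_N)_*N^{-1}$. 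Hence it remains to prove independence of the maximal isotropic decomposition $\Gamma=\Gamma_1\oplus\Gamma_2$ used to build $\Gamma_Y$.

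Two such decompositions are obtained from one another by a change of the integral symplectic frame, i.e.\ by a matrix $\cS\in Sp(\cF,\dZ)$, where $\cF$ is the normal form \eqref{F in matrix form} of the curvature. By Theorem~\ref{generators} every $\cS$ is a product of generators of types (I) translations, (II) rotations, and (III) semi-involutions; since bundle-invariance under a frame change is stable under composition, it suffices to treat a single generator of each type. For (I) and (II) the lower-left block is $C=0$, so by the criterion $CF^{-1}\in M_r(\dZ)$ established above the lattice $\Gamma_Y$ is unchanged: a type~(II) rotation merely replaces the basis of $\Gamma_1$ by a unimodular one (forcing $\det A=\pm1$, so $\Gamma_1'=\Gamma_1$), while a type~(I) translation fixes $\Gamma_1$ and only shears $\Gamma_2$, on which $\Gamma_Y$ does not depend. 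In both cases $Y$ and $f$ are literally unchanged, and the only effect is to alter the coordinate description of $N$ (and possibly the extension of the semicharacter $\chi$ to $\Gamma_Y$ by a character trivial on $\Gamma$), which is absorbed by Part~I and by the representative-independence of the pushforward.

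The substantive case is a type~(III) semi-involution $\cS=\left(\begin{smallmatrix}J & I-J\\ J-I & J\end{smallmatrix}\right)$, which has $C=J-I$ and therefore changes $\Gamma_Y$ whenever $d_i>1$ at a slot with $J_{ii}=0$: it exchanges a direction of $\Gamma_1$ with the dual direction of $\Gamma_2$, producing a genuinely different maximal isotropic, torus $Y'$ and isogeny $f'$. Composing with generators of types (I) and (II), I would reduce to the elementary involution swapping a single coordinate pair, and then compute both constructions head-on: represent $N$ and $N'$ by their canonical factors via Theorem~\ref{appel-humbert for u(1)}, push forward along $\phi_N,\phi_{N'}$ and then along $\hat f,\hat f'$ using the explicit isogeny-pushforward formulas \eqref{u(n) factor pushforward under isogeny} and \eqref{u(n) connection pushforward under isogeny}, and match the two resulting semi-representations $\Gamma^\vee\ra U(d)$ up to conjugation by a smooth $U(d)$-valued coboundary. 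This matching is the main obstacle, because the two constructions use inequivalent coset representatives and pass through different intermediate tori, so the equality only surfaces after the same kind of careful representative bookkeeping as in the proof of Theorem~\ref{pushforward tensor U(1) nondegen}. The connection one-forms require no work: both equal $i\pi F^{-1}(\hat v,d\hat v)\cdot\mathrm{Id}$, which depends only on the fixed curvature $F$ and not on the decomposition, so all of the content lies in identifying the two semi-representations of $\Gamma^\vee$ in $U(d)$.
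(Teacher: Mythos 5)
Your overall architecture is the same as the paper's: Part~I disposes of the choice of $N$, Theorem~\ref{generators} reduces the choice of decomposition to the three Hua--Reiner generators, the integrality criterion for $C F^{-1}$ shows that types (I) and (II) leave $\Gamma_Y$ unchanged, and the semi-involutions carry all the content. Your treatment of types (I) and (II) is in fact slightly cleaner than the paper's Step~I: rather than verifying directly that the two expressions for the extended semicharacter agree on $\Gamma_Y$ (which is what the paper does), you observe that any two extensions of $\chi$ to $\Gamma_Y$ differ by a character of $\Gamma_Y$ trivial on $\Gamma$, i.e.\ by an element of $Ker(\hat{f})$, which Part~I already absorbs. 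One small slip: with the paper's conventions a type (I) translation shears $\Gamma_1$ and fixes $\Gamma_2$, not the other way around, so $\Gamma_Y$ is preserved not because it "does not depend on the sheared factor" but because $SF=FS^T$ is integral; your appeal to the $CF^{-1}$ criterion covers this in any case.

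The genuine gap is type (III), and it is exactly the step you flag as "the main obstacle": you never show that the two semi-representations $U_1,U_2:\Gamma^\vee\ra U(d)$ produced by the two decompositions are conjugate. This is not representative bookkeeping --- it is the entire analytic content of the theorem. A priori the claim could fail, and note that since both bundles carry the same central connection $i\pi F^{-1}(\hat{v},d\hat{v})\cdot \mathrm{Id}$, any gauge transformation realizing the isomorphism must be a \emph{constant} unitary matrix, not an arbitrary "smooth $U(d)$-valued coboundary" as you write; this rigidity is what makes the step nontrivial. The paper closes it by explicit computation: after reducing to the single-pair swap, the two semi-representations differ only on the swapped generators (\ref{U_1 and U_2 on generators}), where one acts by a cyclic permutation and the other by its diagonalization, and they are intertwined by the constant discrete-Fourier (Vandermonde) matrix $V$ of (\ref{V}), with general $J$ handled by tensor products of such matrices. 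To complete your proof you must either exhibit this intertwiner, or invoke and justify a Stone--von Neumann-type uniqueness statement for irreducible semi-representations of the finite Heisenberg group attached to $F^{-1}$; as written, the proposal asserts the conclusion of the hardest step rather than proving it.
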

\begin{proof}
    It remains to show that $\hat{f}_*(\phi_N)_*N^{-1}$ is independent of the choice of decomposition $\Gamma=\Gamma_1+ \Gamma_2$, where we define $Y=V/\Gamma_Y$ as $V/(\Gamma_1+F^{-1}(\Gamma_1^\vee)$. Let $\Gamma=\Gamma_1'\oplus \Gamma_2'$ be another decomposition. Let $(x^i,y^i)$ and $(u^i,v^i)$ be 1-periodic coordinates for the two decompositions in which $F$ is in normal form, that is,
    $$F=\sum_{i=1}^r d_i dx^i\wedge dy^i=\sum_{i=1}^r d_i du^i\wedge dv^i$$
    where $d_i$ are positive integers and $d_i|d_{i+1}$. Therefore, we can write $F$ in matrix form $\cF$ (\ref{F in matrix form}) and the transition matrix between the coordinates is $\cS\in Sp(\cF,\dZ)$ (\ref{transition matrix}).     
    
    By Theorem \ref{generators} such an $\cS$ can be written as the product of two types of matrices: 
    (I.) upper block-diagonal matrices, a combination of translations and rotations, and (II.) semi-involutions. We prove separately that our construction is independent of base change by these generators to conclude the result.

    \textit{(Step I.)} $\cS$ is upper block-diagonal 
    $$\cS=\begin{pmatrix} A & B \\ 0 & D\end{pmatrix}.$$
    We have to investigate the difference between $F(\lambda_1,\lambda_2)$ and $F(\lambda_1',\lambda_2')$ for $\lambda=\lambda_1+\lambda_2$ with respect to $\Gamma=\Gamma_1+\Gamma_2$ and $\lambda=\lambda_1'+\lambda_2'$ with respect to $\Gamma=\Gamma_1'+\Gamma_2'$. Using the bases corresponding to the 1-periodic coordinates we have
    $$\begin{pmatrix}A^T & 0 \\ B^T & D^T \end{pmatrix}\begin{pmatrix} \lambda_1 \\ \lambda_2 \end{pmatrix}=\begin{pmatrix} \lambda_1' \\ \lambda_2' \end{pmatrix}.$$
    and $F(\lambda_1,\lambda_2)=\lambda_1^T\cF \lambda_2$ so 
    $$F(\lambda_1',\lambda_2')=(\lambda_1')^T\cF \lambda_2'=(A^T\lambda_1)^T\cF (B^T\lambda_1+D^T\lambda_2)=\lambda_1^TAFD^T\lambda_2+\lambda_1^TAFB^T\lambda_1$$
    Since $\cS\cF\cS^T=\cF$ we have
    $$F(\lambda_1',\lambda_2')=F(\lambda_1,\lambda_2)+\lambda_1^T(AFB^T)\lambda_1$$
    where $AFB^T$ is symmetric. In particular, for any $\lambda\in \Gamma$ we have
    $$exp(i\pi \lambda_1^T(AFB^T)\lambda_1)=exp\Big(i\pi \sum_{i=1}^r(AFB^T)_{ii}(\lambda_1)_i\Big).$$
    If we express the semi-character $\chi$ for $L$ either as
    $$\chi_L(\lambda)=exp(i\pi F(\lambda_1,\lambda_2)+2\pi i \hat{x}(\lambda)) $$
    or as
    $$\chi_L(\lambda)=exp(i\pi F(\lambda_1',\lambda_2')+2\pi i \hat{x}'(\lambda))$$
    then, $\hat{x}'=\hat{x}+\frac{1}{2}\sum_{i=1}^r(AFB^T)_{ii}(x^i)^*,$ where $(x^i)^*$ is the element of the dual basis on $V^*$ dual to $x^i$.

    Consider now $\Gamma_Y=\Gamma_1+F^{-1}(\Gamma_1^\vee)$ and $\Gamma_{Y'}=\Gamma_1'+F^{-1}(\Gamma_1^{\prime \vee})$. From the description
    $(x^i,d_i y^i)$ and $(u^i,d_i v^i)$ are 1-periodic coordinates for these lattices and we have
    $$\begin{pmatrix} A^T & 0\\ FB^T & FD^TF^{-1} \end{pmatrix} \begin{pmatrix} x \\ Fy \end{pmatrix}= \begin{pmatrix}u \\ Fv\end{pmatrix}.$$
    The matrix 
    $$\cS'=\begin{pmatrix} A & BF\\ 0 & F^{-1}DF \end{pmatrix}$$
    is integral, since $\cS^{-1}=\cF\cS^T\cF$.

    It remains to show that extending $\chi_L$ to $\Gamma_Y$ does not depend on the description we chose, that is
    $$exp(i\pi F(\lambda_1,\lambda_2)+2\pi i \hat{x}(\lambda))=exp(i\pi F(\lambda_1',\lambda_2')+2\pi i \hat{x}'(\lambda))$$
    for any $\lambda\in \Gamma_Y$. Note, that $\cS'\in Sp(\dZ)$ that is
    $$\cS'\begin{pmatrix} 0 & I \\ -I & 0 \end{pmatrix}(\cS')^T=\begin{pmatrix} 0 & I \\ -I & 0 \end{pmatrix} $$
    and $\cF$ acts as $\begin{pmatrix} 0 & I \\ -I & 0 \end{pmatrix}$ on $\Gamma_Y$. That is for $\lambda\in \Gamma_Y$ we have
    $$F(\lambda_1,\lambda_2)=\lambda_1^T\lambda_2$$
    and
    $$F(\lambda_1',\lambda_2')=(A^T\lambda_1)^T(FB^T\lambda_1+FD^TF^{-1}\lambda_2)=\lambda_1^T\lambda_2+\lambda_1^TAFB^T\lambda_1$$
    so the difference between $x$ and $x'$ still cancels out the difference between $F(\lambda_1,\lambda_2)$ and $F(\lambda_1',\lambda_2')$.

    \textit{(Step II.)} $\cS$ is a semi-involution
    $$\cS=\begin{pmatrix}J & I-J\\ J-I & J\end{pmatrix}$$
    for $J$ a diagonal matrix with entries $0$ and $1$.     We will show the proof for 
    $$J=\begin{pmatrix}0 & 0 \\
    0 & I_{(n-1)\times (n-1)}\end{pmatrix}$$
    and explain how it implies the general setting. Let $\{x^i,y^i\}$ be 1-periodic coordinates for $\Gamma=\Gamma_1+\Gamma_2$ and $\{-y_1,x_2,...,x_r,x_1,y_2,...,y_r\}$ 1-periodic coordinates for $\Gamma=\Gamma_1'+\Gamma_2'$. In both cases
    $$F=\sum_{i=1}^r d_i dx^i\wedge dy^i$$
    and we define
    \begin{align*}\Gamma_Y&=span\Big\{ \frac{\partial}{\partial x_i}, -\frac{1}{d_i}\frac{\partial}{\partial y_i}\Big \},\ \ \\
    \Gamma_{Y'}&=span\Big\{ -\frac{\partial}{\partial y_1},\frac{\partial}{\partial x_2},...,\frac{\partial}{\partial x_r}, -\frac{1}{d_1}\frac{\partial}{\partial x_1},  -\frac{1}{d_2}\frac{\partial}{\partial y_2},...,-\frac{1}{d_r}\frac{\partial}{\partial y_r}\Big \}\end{align*}
    moreover,
    $$F(\lambda_1,\lambda_2)=F(\lambda_1',\lambda_2').$$
    so the semicharacter 
    $\chi_L(\lambda)=exp(i\pi F(\lambda_1,\lambda_2)+2\pi i \hat{x}(\lambda))$
    which extends to both $\Gamma_Y$ and $\Gamma_{Y'}$ defining $N$ and $N'$.
    The dual lattices are given by
    $$\Gamma_Y^\vee=\{dx^i,d_i dy_i\},\ \ \ \Gamma_{Y'}^\vee=\{ -dy_1, dx_2,...,dx_r,d_1 dx_1, d_2 dy_2,...,d_r dy_r\}.$$

    Let us now calculate $\hat{L}_1=\hat{f}_*(\phi_N)_*N^{-1}$ and $\hat{L}_2=\hat{f}'_*(\phi_{N'})_*(N')^{-1}$. We need to choose representatives for $\Gamma^\vee:\Gamma_Y^\vee$ and $\Gamma^\vee:\Gamma_{Y'}^\vee$, so let them be
    $$\lambda_{\underline{m}}=\sum_{i=1}^r m_i dy_i,\ \ \ m_i\in \{0,...,d_i-1\}$$
    and
    $$\mu_{\underline{m}}=m_1dx_i +\sum_{i=2}^r m_i dy_i,\ \ \ m_i\in \{0,...,d_i-1\}$$
    respectively.
    
    Both $\hat{L}_1$ and $\hat{L}_2$ are projectively flat vector bundles on $\hat{X}$ of the same rank and same curvature. In particular, their factor of automorphy is of the form
    $$a_{\hat{L}_1}(\hat{v},\hat{\lambda})=exp(i\pi F^{-1}(\hat{v},\hat{\lambda}))U_1(\hat{\lambda})\ \ \ \text{and}\ \ \  a_{\hat{L}_2}(\hat{v},\hat{\lambda})=exp(i\pi F^{-1}(\hat{v},\hat{\lambda}))U_2(\hat{\lambda})$$
    where $U_1,U_2: \Gamma^\vee \ra U(d)$ are semi-representations. Both vector bundles are endowed 
    with the same connection
    $$A_{\hat{L}_1}(\hat{v})=A_{\hat{L}_2}(\hat{v})=\Big(i\pi F^{-1}(\hat{v},d\hat{v})\Big)^i_i.$$
    Let us investigate the semi-representations $U_1, U_2: \Gamma^\vee\ra U(d)$. We have
    \begin{align*}
    U_1&(\hat{\lambda})=\\
    &=\big(\chi_{N}(F^{-1}(\Lambda^{\underline{m}}_{\hat{\lambda}}))^{-1}exp(-i\pi (F^{-1}(\hat{\lambda},\hat{\lambda}_{\underline{m}})+F^{-1}(\hat{\lambda}_{\hat{\lambda}({\underline{m}})}, \hat{\lambda})+F^{-1}(\hat{\lambda}_{\hat{\lambda}({\underline{m}})},\hat{\lambda}_{\underline{m}})))\delta^{\underline{p}}_{\hat{\lambda}({\underline{m}})} \Big)^{\underline{m}}_{\underline{p}}\\
    &=\big(\chi_{N}(F^{-1}(\Lambda^{\underline{m}}_{\hat{\lambda}}))^{-1}exp(-i\pi (F^{-1}(\hat{\lambda},\hat{\lambda}_{\underline{m}})+F^{-1}(\hat{\lambda}_{\hat{\lambda}({\underline{m}})}, \hat{\lambda})))\delta^{\underline{p}}_{\hat{\lambda}({\underline{m}})} \Big)^{\underline{m}}_{\underline{p}}.
    \end{align*}
    Since
    $$\chi_{N}(F^{-1}(\Lambda^{\underline{m}}_{\hat{\lambda}}))^{-1})=exp\Big(-i\pi F((\Lambda^{\underline{m}}_{\hat{\lambda}})_1,(\Lambda^{\underline{m}}_{\hat{\lambda}})_2)-2\pi i F^{-1}(\hat{\lambda}+\hat{\lambda}_{\underline{m}}-\hat{\lambda}_{\hat{\lambda}(\underline{m})},\hat{x})\Big),$$
    changing the factor of automorphy by the constant $\phi(v)=diag(exp(2\pi i F^{-1}(\hat{\lambda}_{\underline{m}},\hat{x})))$
    yields
    \begin{align*}
    U_1(\hat{\lambda})&=exp(-2\pi i F^{-1}(\hat{\lambda},\hat{x}))\times\\
    &\times \Big(exp(-i\pi F((\Lambda^{\underline{m}}_{\hat{\lambda}})_1,(\Lambda^{\underline{m}}_{\hat{\lambda}})_2)exp(-i\pi (F^{-1}(\hat{\lambda},\hat{\lambda}_{\underline{m}})+F^{-1}(\hat{\lambda}_{\hat{\lambda}({\underline{m}})}, \hat{\lambda})))\delta^{\underline{p}}_{\hat{\lambda}({\underline{m}})} \Big)^{\underline{m}}_{\underline{p}}.
    \end{align*}
    Note that a constant change in the factor of automorphy does not change the connection. Similarly,
    \begin{align*}
    U_2(\hat{\lambda})&=exp(-2\pi i F^{-1}(\hat{\lambda},\hat{x}))\times\\
    &\times \Big(exp(-i\pi F((M^{\underline{m}}_{\hat{\lambda}})_1,(M^{\underline{m}}_{\hat{\lambda}})_2)exp(-i\pi (F^{-1}(\hat{\lambda},\hat{\mu}_{\underline{m}})+F^{-1}(\hat{\mu}_{\hat{\lambda}({\underline{m}})}, \hat{\lambda})))\delta^{\underline{p}}_{\hat{\lambda}({\underline{m}})} \Big)^{\underline{m}}_{\underline{p}}.
    \end{align*}
A semi-representation $\Gamma^\vee$ is defined by its value on the generators $\{dx_1,...,dx_r,dy_1,...,dy_r\}$. On $\{dx_2,...,dx_r,dy_2,...,dy_r\}$ the two semi-representations agree, in particular for $i=2,...,r$
\begin{align*}U_1(dx^i)=U_2(dx^i)&=exp(-2\pi i F^{-1}(dx^i,\hat{x}))\Big( exp(-2i\pi F^{-1}(dx_i, m_i dy_i) \Big)^{\underline{m}}_{\underline{m}}\\ 
&=exp(-2\pi i F^{-1}(dx^i,\hat{x}))\Big( exp\Big(\frac{2\pi i m_i}{d_i}\Big)  \Big)^{\underline{m}}_{\underline{m}}
\end{align*}
and
$$U_1(dy^i)=U_2(dy^i)=exp(-2\pi i F^{-1}(dx^i,\hat{x}))\Big( \delta^{p_i}_{m_i+1}\Big)  \Big)^{\underline{m}}_{\underline{p}}$$
Meanwhile, for $\hat{\lambda}=dx^1$ or   $\hat{\lambda}=dy^1$ we have
\begin{equation}\label{U_1 and U_2 on generators}
\begin{aligned}
    U_1(dx^1)&=exp(-2\pi i F^{-1}(dx^1,\hat{x}))\Big( exp\Big(\frac{2\pi i m_1}{d_1}\Big)  \Big)^{\underline{m}}_{\underline{m}}, \\
    U_2(dx^1)&=exp(-2\pi i F^{-1}(dx^1,\hat{x}))\Big(\delta^{p_1}_{m_1+1} \Big)^{\underline{m}}_{\underline{p}}\\
    U_1(dy_1)&=exp(-2\pi i F^{-1}(dy^1,\hat{x}))\Big( \delta^{p_1}_{m_1+1}\Big)  \Big)^{\underline{m}}_{\underline{p}},\\
    U_2(dy^1)&=exp(-2\pi i F^{-1}(dy^1,\hat{x})\Big( exp(-2\pi i F^{-1}(dy^1, m_1dx^1)) \Big)^{\underline{m}}_{\underline{m}}\\
    &=exp(-2\pi i F^{-1}(dy^1,\hat{x})\Big( exp\Big(-\frac{2\pi i m_1}{d_1}\Big) \Big)^{\underline{m}}_{\underline{m}}
\end{aligned}
\end{equation}
These two semi-representations are conjugate to each other via a constant Vandermond-type matrix
\begin{align}\label{V} V=\frac{1}{\sqrt{d_1}}\Big(exp\Big(\frac{2\pi i (m_1-1)(p_1-1) }{d_1} \Big)\Big)^{\underline{m}}_{\underline{p}}\end{align}
that is,
$$V^{-1}U_1V=U_2$$
and therefore $\hat{L}_1\cong \hat{L}_2$. 

Changing the basis using a more general $J$ would mean that we flip $x^i$ and $y^i$ for a subset of $\{1,...,n\}$. This would mean that $U_1$ and $U_2$ would be different on all of the flipped generators. On the other hand, the difference is again conjugating by a rank $d$ matrix $V$ which can be decomposed into the tensor product $V_1\otimes ...\otimes V_n$ where $V_i$ is of rank $d_i$. Each $V_i$ is either the identity if $x^i$ and $y^i$ were not flipped or a Vandermond matrix as above if  $x^i$ and $y^i$ were flipped. In conclusion $\hat{L}_1\cong \hat{L}_2$ again.
\end{proof}

\begin{example} In two dimensions, that is $r=1$, and in coordinates $\{x,y\}$ the matrices  (\ref{U_1 and U_2 on generators}) are given by
$$U_1(dy)=U_2(dx)=(\delta^j_{i+1})^i_j=
\begin{pmatrix} 
0 & 0 & ... & 0 & 1\\
1 & 0 & ... & 0 & 0\\
0 & 1 & ... & 0 & 0\\
\vdots & & & & \vdots\\
0 & 0 & ... & 1 & 0
\end{pmatrix}$$
and 
$$U_1(dx)=\begin{pmatrix} 1 & 0 & \hdots & 0 \\
0 & \xi & \hdots & 0 \\
\vdots & & & \vdots\\
0 & 0 & \hdots & \xi^{d-1}\end{pmatrix},\  \text{and}\ \ \  U_2(dy)=\begin{pmatrix} 1 & 0 & \hdots & 0 \\
0 & \xi^{-1} & \hdots & 0 \\
\vdots & & & \vdots\\
0 & 0 & \hdots & \xi^{-d+1}\end{pmatrix}.$$
so
$$V^{-1}U_1(dy)V=U_2(dy).$$
Therefore,
$$U_2(dx)=U_1(dy)=VU_2(dy)V^{-1}=V^{-1}U_1(dx)V=V^{-1}U_2(dy)^{-1}V$$
if and only if 
$$V^2U_2(dy)=U_2(dy)^{-1}V^2.$$
Indeed,
\begin{align*}(V^2)^i_j=\sum_k V^i_k V^k_j &= \frac{1}{d}\sum_k exp\Big(2\pi i \frac{(i-1)(k-1)+(j-1)(k-1)}{d}\Big)\\
&=\begin{cases}
    1 & \text{if $i+j-2=0,d$}\\
    0 & \text{otherwise.}
\end{cases}\end{align*}
Writing out in matrix form
$$V^2=\begin{pmatrix}
     1 & 0 & ... & 0 & 0\\
     0 & 0 & ... & 0 & 1\\
     0 & 0 & ... & 1 & 0 \\
     \vdots & & & & \vdots\\
     0 & 1 & ... & 0 & 0
\end{pmatrix}.$$
So
$$V^2U_2(dy)=\begin{pmatrix} 
1 & 0 & ... & 0 & 0\\
     0 & 0 & ... & 0 & \xi^{-d+1}\\
     0 & 0 & ... & \xi^{-d+2} & 0 \\
     \vdots & & & & \vdots\\
     0 & \xi^{-1} & ... & 0 & 0
\end{pmatrix},\ \text{and}\ \ \ U_2(dy)^{-1}V^2=\begin{pmatrix}
1 & 0 & ... & 0 & 0\\
     0 & 0 & ... & 0 & \xi^1\\
     0 & 0 & ... & \xi^2 & 0 \\
     \vdots & & & & \vdots\\
     0 & \xi^{d-1} & ... & 0 & 0
\end{pmatrix}.$$
\end{example}

\section{Degenerate U(1)-bundles}
Let $X=V/\Gamma$ be a real $n$-dimensional torus and let $L\ra X$ be a degenerate $U(1)$-bundle. Then, if the curvature of the connection is invariant we can represent $L$ by the canonical factor of automorphy
$$a_L(v,\lambda)=\chi(\lambda)exp(i\pi F(v,\lambda)), \ \ \ A_L(v)=i\pi F(v,dv).$$
The analytification of $\phi_L:X\ra \hat{X}$ is given by $x\mapsto F(x)$ whose kernel is the subspace $Ker(F)$. Therefore, the kernel of $\phi_L$ is a disjoint union of subtori $K(L)$ of $X$. Moreover, $\phi_L$ is not surjective anymore, its image is given by $F(V)/F(\Gamma)$.

In this section, we generalize Theorem \ref{pushforward tensor U(1) nondegen} to degenerate $U(1)$-bundles analogously to the holomorphic situation (\ref{main eqn degen line bdle}). To do this we have to define the pushforward of a $U(1)$-bundle with connection along a general homomorphism of real tori. Any homomorphism decomposes as the composition of an isogeny and a projection (Stein factorization) so it suffices to define the pushforward along a projection.

Let $q:X\ra Y$ be a projection between real tori with connected torus fibers and let $L\ra X$ be a $U(1)$-bundle with a connection on $X$. 
Let $E$ be the Hermitian line bundle with a Hermitian connection $\nabla$ on $X$ associated to $L$ via the standard representation. 
Then, using the short exact sequence 
$$0\ra q^*\Omega^1(Y)\ra \Omega^1(X)\ra \Omega^1_{X/Y} \ra 0$$
for complex-valued differential forms, we may take the relative fiber-wise component 
$$\nabla^1:\Gamma(L)\ra \Gamma(L\otimes \Omega^1_{X/Y})$$
of $\nabla$ on $L$. Finally, we give the following definition.
\begin{definition}\label{definition: pushforward along projection}
    Let $q:X\ra Y$ be a projection between real tori with connected torus fibers and let $L\ra X$ be a $U(1)$-bundle with a connection. Then, if the sheaf
    $$q_*E=q_*Ker(\nabla^1)$$
    is again a Hermitian line bundle with a connection, then $q_*L$ is defined to be the frame bundle of $q_*E$. 
\end{definition}
If there exists a Hermitian line bundle with a connection $E_0\ra Y$ such that $q^*E_0=E$, then $q_*E=q_*Ker(\nabla^1)=E_0$. If $E$ is flat on the fibers of $q$ but not trivial, then $q_*E=q_*Ker(\nabla^1)=0$ as a flat but non-trivial Hermitian line bundle on a torus has no global flat sections.

This point of view extends to isogenies as well, since any principal bundle is trivial over a discrete set of points. In particular, our definition of the pushforward of a $U(1)$-bundle along an isogeny is recovered if we consider instead pushing forward the sheaf $Ker(\nabla^1)$ and the induced connection on the direct image sheaf.



Let again $L\ra X$ be a degenerate $U(1)$-bundle with connection on $X$ and consider the corresponding homomorphism $\phi_L:X\ra \hat{X}$. The connected component of the identity $K(L)_0\subset X$ in $Ker(\phi_L)$ of the kernel of $\phi_L$ is given by $Ker(F)/(Ker(F)\cap \Gamma)$. From the representation of $L$ by the canonical factor of automorphy it is easy to see that $L|_{K(L)_0}$ is flat. Note, that $\phi_L$ depends only on $F$ so for any flat $U(1)$-bundle $L_0$ we have $\phi_{L\otimes L_0}=\phi_L$.

Suppose now that $L|_{K(L)_0}$ is trivial. Then,  $L$ is also trivial on any translates of $K(L)_0$. This is once again easy to see from the canonical factor. Therefore, there exists a $U(1)$-bundle $N$ on $X/K(L)_0$ such that $L=q^*N$ where $q:X\ra X/K(L)_0$ is the projection. We may now decompose $\phi_L$ into a projection with connected fibers and an isogeny onto its image via the commutative square
\begin{equation}
\begin{tikzcd}
X\arrow{d}{q} \arrow{r}{\phi_L} & \hat{X}\\
X/K(L)_0 \arrow{r}{\phi_{N}} & \widehat{X/K(L)_0} \arrow{u}{\hat{q}}
\end{tikzcd}
\end{equation}
In particular,
\begin{align}\label{equation for lemma}(\phi_L)_*L^{-1}=\hat{q}_*(\phi_N)_*q_*L^{-1}=\hat{q}_*(\phi_N)_*N^{-1}.\end{align}
Note that $\hat{q}$ is a closed embedding so pushing forward along it is just the same sheaf supported on the image. We can finally prove the following theorem.

\begin{theorem}\label{pushforward U(1) degen}
    Let $L\ra X$ be a $U(1)$-bundle with connection and invariant curvature $2\pi i F$ and let $d:=\text{Pfr}(F)$ be the reduced Pfaffian of $F$. Suppose that $L|_{K(L)_0}$ is trivial. Then, $(\phi_L)_*L^{-1}$ is a $U(d^2)$-bundle with a connection supported on $Im(\phi_L)$ and on $Im(\phi_L)$ using the same notation as in Theorem \ref{pushforward tensor U(1) nondegen} we have
    $$(\phi_L)_*L^{-1}\cong \hat{L}\otimes U(d)$$
     for a projectively flat $U(d)$-bundle $\hat{L}$ on $Im(\phi_L)$. 
\end{theorem}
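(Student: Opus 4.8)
The plan is to reduce the degenerate case to the non-degenerate Theorem \ref{pushforward tensor U(1) nondegen}, exploiting the Stein factorization of $\phi_L$ that has already been assembled in equation (\ref{equation for lemma}). First I would recall that, since $L|_{K(L)_0}$ is trivial, $L$ is pulled back from the quotient: writing $q:X\ra W:=X/K(L)_0$ for the projection with connected torus fibers, there is a $U(1)$-bundle with connection $N\ra W$ with $L\cong q^*N$. Because $\phi_L$ depends only on $F$, the commutative square factoring $\phi_L=\hat{q}\circ\phi_N\circ q$ holds, and combining it with Definition \ref{definition: pushforward along projection} (in particular the identity $q_*q^*N^{-1}=N^{-1}$ for bundles pulled back along a projection) gives exactly (\ref{equation for lemma}), namely $(\phi_L)_*L^{-1}\cong\hat{q}_*(\phi_N)_*N^{-1}$. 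This transports the entire problem onto $W$, where $\phi_N$ is an honest isogeny.

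Next I would check that $N$ is \emph{non-degenerate} on $W$, so that the previous theorem applies. The invariant curvature $2\pi i F$ has kernel exactly $Ker(F)$, and $K(L)_0=Ker(F)/(Ker(F)\cap\Gamma)$, so the two-form descended to $W$ is an invariant two-form with trivial kernel, i.e.\ non-degenerate. Writing its type as $(d_1,\dots,d_r)$ on the $2r$-dimensional torus $W$, the product $d_1\cdots d_r$ is precisely the product of the nonzero elementary divisors of $F$, which is the reduced Pfaffian $\text{Pfr}(F)=d$. Hence $N$ has degree $d$ and $\phi_N$ is a degree-$d^2$ isogeny, so $(\phi_N)_*N^{-1}$ is a $U(d^2)$-bundle. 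Applying Theorem \ref{pushforward tensor U(1) nondegen} to $N$ then yields an isomorphism $(\phi_N)_*N^{-1}\cong\hat{N}\otimes U(d)$, where $\hat{N}$ is a projectively flat $U(d)$-bundle on $\widehat{W}$ whose connection has curvature $i\pi F^{-1}\cdot Id$ in the notation of that theorem.

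Finally I would transport this back along $\hat{q}$. Since $\hat{q}:\widehat{W}\ra\hat{X}$ is a closed embedding with image $Im(\phi_L)$, the pushforward $\hat{q}_*$ simply regards a bundle on $\widehat{W}$ as a bundle supported on $Im(\phi_L)$, preserving both rank and connection; in particular it carries the splitting of $(\phi_N)_*N^{-1}$ across unchanged. Setting $\hat{L}:=\hat{q}_*\hat{N}$, a projectively flat $U(d)$-bundle on $Im(\phi_L)$, one obtains $(\phi_L)_*L^{-1}\cong\hat{q}_*\big(\hat{N}\otimes U(d)\big)\cong\hat{L}\otimes U(d)$, as claimed. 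The main obstacle is not conceptual but bookkeeping: I must verify that the three pushforward operations in (\ref{equation for lemma}) compose correctly and that the two different notions of pushforward in play—along the projection $q$ (Definition \ref{definition: pushforward along projection}) versus along the isogeny $\phi_N$ and the closed embedding $\hat{q}$—are mutually compatible and preserve the Hermitian connection, so that the reduction-of-structure-group statement $(\phi_N)_*N^{-1}\cong\hat{N}\otimes U(d)$ survives the embedding $\hat{q}_*$ intact.
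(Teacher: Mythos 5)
Your proposal is correct and follows essentially the same route as the paper: both reduce to the non-degenerate case by using the Stein factorization $(\phi_L)_*L^{-1}\cong\hat{q}_*(\phi_N)_*N^{-1}$ from (\ref{equation for lemma}), verify that the descended bundle $N$ on $X/K(L)_0$ is non-degenerate of degree $d=\text{Pfr}(F)$ (the paper does this via the lifted factor of automorphy, you via the descended two-form), apply Theorem \ref{pushforward tensor U(1) nondegen}, and transport the splitting back along the closed embedding $\hat{q}$.
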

\begin{proof}
From (\ref{equation for lemma}) and the discussion before we know that there exists a non-degenerate $U(1)$-bundle $N$ on $X/K(L)_0$ such that $q^*N=L$ and $q_*L-N$. The analytification of  $q: X\ra X/K(L)_0$ is given by the projection $q:V\ra V/Ker(F)$. The map on lattices is given by $\Gamma\ra \Gamma/(\Gamma\cap Ker(F))$.

The line bundle $N$ can described via the following factor of automorphy and connection one-form. For any $v\in V/Ker(F)$ and $\lambda\in \Gamma/(\Gamma\cap Ker(F))$
$$a_{N}(v,\lambda)=a_{L}(\bar{v},\bar{\lambda}),\ \ \ A_{N}(v)=A_{L}(\bar{v}),$$
where $\bar{v}\in V$ and $\bar{\lambda}\in \Gamma$ are lifts of $v$ and $\lambda$,  and $(a_{L},A_{L})$ is the canonical factor of automorphy and one-form (\ref{canonical factor u(1)}) for $L$. This definition does not depend on the lift because $L$ is trivial on $K(L)|_0$.

Then $N^{-1}=q_*L^{-1} $ and $N^{-1}$ is a degree $d$ non-degenerate $U(1)$-bundle on $X/K(L)_0$. Therefore, by Theorem \ref{pushforward tensor U(1) nondegen} there exists a projectively flat $U(d)$-bundle $\hat{L}$ on $\widehat{X/K(L)_0}$ such that $(\phi_N)_*N^{-1}\cong \hat{L}\otimes U(d)$.
\end{proof}

\section{Transition between holomorphic and unitary description}
Let $X=V/\Gamma$ be a complex torus and $L\ra X$ a holomorphic line bundle. Then choosing a Hermitian metric on the fibers $L$ we can associate to $L$ a $U(1)$-bundle endowed with a connection. In this section, we show that on a complex torus, there is a canonical way of doing so and that the corresponding connections on the $U(1)$-bundles have invariant curvatures. Moreover, in this setting the homomorphism $\phi_L$ is independent of whether we view $L$ as a holomorphic line bundle or a $U(1)$-bundle with connection. We also show that pushing forward along an isogeny as a holomorphic line bundle or as a $U(1)$-bundle coincide. Finally, we explain how one can define the Poincar\'e line bundle of a real torus.

Let $X$ be torus defined as $X=V/\Gamma$ and suppose that there is a complex structure $I\in End(V)$  endowing $X$ with the structure of a complex torus. The complex structure on $X$ can also be understood via the complex vector space $V^{1,0}$ with the complex structure acting as $i$ via the map 
\begin{align*}
    V\ \ &\ra\ \  V^{1,0}\\
    v\ \ &\mapsto\ \ \frac{1}{2}(v-iIv). 
\end{align*}
Let $E$ be an alternating bilinear form on $V$ taking integer values on $\Gamma$ and satisfying $E(I\ ,I\ )=E(\ ,\ )$. Then, the hermitian form
$H(v,w)=E(Iv,w)+iE(v,w)$
lies in $NS(X)$. Let $L$ be a $U(1)$-bundle with connection $A_L$ such that $E=dA_L$. Then, the Hermitian structure $H$ is compatible with the complex structure on $X$ and there is a holomorphic structure on $L$. Denote by $\cL$ be the holomorphic line bundle associated to $L$. 

Representing the line bundles by the canonical factors of automorphy we have
$$a_L(v,\lambda)=\chi(\lambda)exp(i\pi E(v,\lambda)),\ \ \ A_L(v)=i\pi E(v,dv),$$
and
$$a_\cL(v,\lambda)=\chi(\lambda)exp\Big(\pi H(v,\lambda)+\frac{\pi}{2}H(\lambda,\lambda)\Big).$$

\begin{proposition}
    We have a commutative diagram 
    \begin{equation}\label{phi_L holo or U(1)}
\begin{tikzcd}
    V^{1,0} \arrow{r}{H} & (V^*)^{0,1} \\
    V \arrow{u}{\frac12(v-iIv)} \arrow{r}{E} & V^* \arrow{u}[swap]{\hat{v}+iI^*\hat{v}}
\end{tikzcd}
\end{equation}
that is, $\phi_L=\phi_\cL$.
\end{proposition}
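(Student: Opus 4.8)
The plan is to reduce the identity $\phi_L=\phi_{\mathcal L}$ to the commutativity of the displayed square on the level of analytic representations, and then to verify that square by a short linear-algebra chase powered by the type $(1,1)$ condition on $E$. First I would record the two analytic representations. By the canonical factor $a_L(v,\lambda)=\chi(\lambda)\exp(i\pi E(v,\lambda))$ for the $U(1)$-bundle with connection, the analytification of $\phi_L$ is the real-linear map $E\colon V\to V^*$, $v\mapsto E(v,\cdot)$, landing in $\hat X_\dR=V^*/\Gamma^\vee$ (this is exactly the statement that the analytification of $\phi_L$ is $F=E$ from the ``Analogue of the map $\phi_L$'' discussion). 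On the holomorphic side, equation (\ref{phi_L for holo}) together with the Appel--Humbert presentation (\ref{canonical factor}) gives that the analytification of $\phi_{\mathcal L}$ is $v\mapsto H(v,\cdot)$, a map into $\overline{\Omega}=(V^*)^{0,1}$ and hence into $\hat X_\dC=\overline{\Omega}/\Gamma^\vee_\dC$. Since both maps are homomorphisms $X\to\hat X$ under the identification $\hat X_\dC\cong\hat X_\dR$ of (\ref{dual torus}), the claim $\phi_L=\phi_{\mathcal L}$ is precisely the assertion that these two analytifications agree under the canonical vertical identifications $V\xrightarrow{\sim}V^{1,0}$, $v\mapsto\tfrac12(v-iIv)$, and $V^*\xrightarrow{\sim}(V^*)^{0,1}$, $f\mapsto i(f+iI^*f)$, i.e.\ the commutativity of the square.

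Next I would isolate the algebraic inputs. From the $(1,1)$ condition $E(Iv,Iw)=E(v,w)$ one deduces the single identity $E(v,Iw)=-E(Iv,w)$ for all $v,w\in V$, which is what makes everything cancel. Combined with the defining formula $H(v,w)=E(Iv,w)+iE(v,w)$, I would check that $H$ is $\dC$-linear in its first slot and $\dC$-antilinear in its second, so that $v\mapsto H(v,\cdot)$ indeed takes values in the antilinear functionals $(V^*)^{0,1}$, and that $f\mapsto i(f+iI^*f)$ genuinely maps $V^*$ onto $(V^*)^{0,1}$ (both are one-line antilinearity verifications of the form $g(Iw)=-ig(w)$).

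The heart of the argument is then a two-path chase with $v\in V$ fixed. Going right-then-up, $v$ is sent to $E(v,\cdot)$ and then to the functional $w\mapsto i\bigl(E(v,w)+iE(v,Iw)\bigr)$; using $E(v,Iw)=-E(Iv,w)$ this collapses to $w\mapsto iE(v,w)+E(Iv,w)=H(v,w)$. Going up-then-right, $v$ maps to $\tfrac12(v-iIv)\in V^{1,0}$, and the top arrow, being the map induced by the Hermitian form $H$, sends this element to $w\mapsto H(v,w)$ by definition. The two paths yield the identical antilinear functional $w\mapsto H(v,w)$, so the square commutes and $\phi_L=\phi_{\mathcal L}$.

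I expect the only real obstacle to be bookkeeping: keeping straight which slot of $H$ is linear versus antilinear and getting the factor of $i$ in the right-hand identification correct. The chase shows this factor is exactly what converts the $U(1)$-curvature picture ($E$) into the Hermitian picture ($H$), matching the identification already fixed in the ``Dual tori'' section. Once the relation $E(v,Iw)=-E(Iv,w)$ is in hand the computation is a single cancellation, so no genuinely hard step remains.
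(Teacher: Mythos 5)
Your proof is correct and takes essentially the same route as the paper's: both reduce the claim to a one-line verification of the square using the $(1,1)$ identity $E(v,Iw)=-E(Iv,w)$, the paper expanding $H\bigl(\tfrac12(v-iIv)\bigr)$ by complex linearity to land on $i\bigl(E(v)+iI^*E(v)\bigr)$, while you run the identical cancellation in the opposite direction starting from $i\bigl(E(v)+iI^*E(v)\bigr)$. Your observation that the right vertical arrow must carry the extra factor of $i$ from the identification $\Gamma^\vee\cong\Gamma^\vee_\dC$ matches the paper's computation exactly.
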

\begin{proof}
    Indeed,
    \begin{align*}
    H\Big(\frac12 (v-iIv)\Big)&=\frac12\Big( E(Iv)+iE(v)+iE(v)+E(Iv)\Big)\\
    &=iE(v)-(I^*E)(v)\\
    &=i(E(v)+iI^*E(v)).
\end{align*}
\end{proof}
Let $f:X\ra Y$ be a degree $d$ isogeny of complex tori. We have seen that the pushforward can be written in terms of a semi-representation both in the holomorphic and in the $U(1)$-case. Indeed, by (\ref{f_*L as semiomog factor}) and (\ref{f_*L as projectively flat factor}) (\ref{f_*L as projectively flat connection}) we have
\begin{align*}
    a_{f_*L}(v,\lambda)&=exp(i\pi E(v,\lambda))\cdot U(\lambda),\ \ \ A_L(v)=i\pi E(v,dv)\cdot Id_{d\times d},\\
    a_{f_*\cL}&=exp\Big(\pi H(v,\lambda)+\frac{\pi}{2}H(\lambda,\lambda)\Big)\cdot U(\lambda)
\end{align*}
for the same semi-representation
$$U : \Gamma_Y\ra U(d).$$

\paragraph{The Poincar\'e bundle.} We have defined the Poincar\'e line bundle on $X\times \hat{X}$ via a canonical factor of automorphy. The underlying $U(1)$-bundle with connection can be generalized as the Poincar\'e bundle on $X\times \hat{X}$ where $X$ is a real torus.  This was also the starting point of the work of Bruzzo, Marelli and Pioli (cf. \cite[Section 2.1]{BMP1}).
With the conventions for the complex structure of the dual torus established in Chapter 1, we have the commutative diagram similar to (\ref{phi_L holo or U(1)})
\[
\begin{tikzcd}
    V^{1,0}+(V^*)^{0,1} \arrow{r}{H} & V^{1,0}+(V^*)^{0,1}\\
    V+V^* \arrow{u}{\frac12 (v-iIv)+i(\hat{v}+iI^*\hat{v})} \arrow{r}{E} & V+V^* \arrow{u}[swap]{{\frac12 (v-iIv)+i(\hat{v}+iI^*\hat{v})}}
\end{tikzcd}
\]
where $H(v+\hat{v},w+\hat{w})=\overline{\hat{w}(v)}+\hat{v}(w)$. The map $E$ is the imaginary part of $H$, so using that 
$$\frac{i}{2}(\hat{v}+iI^*\hat{v})(v+iIv)=\frac{i}{2}(\hat{v}(v)+i\hat{v}(Iv)+i^*I^*\hat{v}(v)-I^*\hat{v}(Iv))=-\hat{v}(Iv)+i\hat{v}(v),$$
we have
\begin{align}\label{u(1) curvature of Poincare line bundle}
E(v+\hat{v},w+\hat{w})=\hat{v}(w)-\hat{w}(v)
\end{align}
the natural non-degenerate anti-symmetric bilinear pairing on $V+V^*$.

The holomorphic Poincar\'e bundle (\ref{Poincare line bundle}) is defined by the factor of automorphy 
\begin{align*}
   a(v+\hat{v},\lambda+\hat{\lambda})=exp\Big( \pi\cdot  \hat{v}(\lambda)+\pi\cdot \overline{\hat{\lambda}(v)}+\pi \cdot \hat{\lambda}(\lambda)\Big), \ \\ \ v+\hat{v}\in V^{1,0}+(V^*)^{0,1},\ \lambda+\hat{\lambda}\in \Gamma_\dC+\Gamma_\dC^\vee.
\end{align*}
Therefore, in real coordinates we have 
\begin{equation}\label{u(1) poincare bundle}
\begin{aligned}
    a_{\cP}(v+\hat{v},\lambda+\hat{\lambda})&=exp(i\pi (\hat{\lambda}(\lambda)+\hat{v}(\lambda)-\hat{\lambda}(v)))\ \ \\ 
    A_\cP(v+\hat{v})&=i\pi (\hat{v}dv-vd\hat{v})\\
    &v+\hat{v}\in V+V^*,\ \lambda+\hat{\lambda}\in\Gamma+\Gamma^\vee,\\
    \text{and the curvature is } \ 2\pi i P&=2\pi i\ d\hat{v}\wedge dv.
\end{aligned}
\end{equation}
We can view the dual torus $\hat{X}$ as the space of flat $U(1)$-bundles (cf. Lemma \ref{character for flat U(1) bundles}). Then it is easy to see that the Poincar\'e bundle (\ref{u(1) poincare bundle}) $\cP$ on $X\times \hat{X}$ satisfies 

(1) $\cP|_{X\times\{L}\cong L$,

(2) $\cP|_{\{0\}\times \hat{X}}$ is trivial,

analogously to the holomorphic case.

 \chapter{T-duality for U(1)-bundles with connections}\label{last chapter}
 In this Chapter, we finally upgrade T-duality of generalized branes to T-duality of physical branes. Here, we do not restrict ourselves to $A$ or $B$-branes but we do account for the fact that the T-dual of a $B$-brane should be given by the Fourier-Mukai transform. 

 The history of T-duality for $A$-branes started with the work of Arinkin and Polishchuk \cite{AP} who T-dualized local systems on Lagrangian sections of an affine torus bundle. Later, Bruzzo, Marelli and Pioli \cite{BMP1,BMP2} extended the ideas of \cite{AP} to local systems supported on Lagrangian submanifolds which are affine torus subbundles. They represented these local systems as flat connections on a $U(d)$-bundle using factors of automorphy. Finally, Glazebook Jardim and Kamber defined a T-dual for $U(d)$-bundles with connections on an affine torus bundle when the connection is flat on the fiber. The only work, that the author is aware of, which deals with $U(d)$-bundles that are not flat on the fiber is \cite{CLZ} by Chan, Leung and Zhang. They have given a different construction that we will explain briefly below. When a brane is $BAA$ on a hyperk\"ahler manifold, one can apply Fourier-Mukai transform to the underlying $B$-brane. The resulting holomorphic object in the context of \cite{KW} and Theorem \ref{semiflat tdual} is supposed to be a $BBB$-brane but finding the right hyperholomorphic structure is not possible in general. 
 
 Let us first recall our plan from Section \ref{section T-duality of generalized branes}. Let  $(M, H=0)$ and $(\hat{M},\hat{H}=0)$ be a T-dual pair of affine torus bundles with torsion Chern classes in the sense of generalized geometry.  Let $S\subset X$ be an affine torus subbundle and $L\ra S$ a $U(1)$-bundle with connection such that the curvature $2\pi i F\in \Omega^2(S)$ of the connection is invariant. Then, $(S,F)$ is a locally T-dualizable brane and we can locally construct the following diagram (\ref{big diagram}).
\begin{equation*}
\begin{tikzcd}
& Z \arrow{dl}[swap]{p_Z} \arrow{dr}{\hat{p}_Z} \arrow[hook]{rrr}{i_Z} &  & & M\times_{\pi(S)}\hat{M} \arrow{dl}[swap]{p} \arrow{dr}{\hat{p}} \\
S\arrow[hook, bend right =30]{rrr}{i_S} & & \hat{S} \arrow[hook, bend right =20]{rrr}{i_{\hat{S}}} & \pi^{-1}(\pi(S)) & & \hat{\pi}^{-1}(\pi(S))
\end{tikzcd}
\end{equation*}
In Part 1 of Theorems \ref{tdual u(1) bundles on a torus}, \ref{tdual U(1) bundles trivial base} and \ref{tdual u(1) bundle general base} we show that one can find leaves $Z\subset S\times_{\pi(S)}\hat{M}$ on which the $U(1)$-bundle with connection
$$\hat{L}_Z:=p_Z^*L\otimes \cP|_Z$$
is trivial on the fibers of $\hat{p}_Z:Z\ra \hat{S}$. Here $\cP\ra M\times_B\hat{M}$ is the Poincar\'e bundle which is either a $U(1)$-bundle with connection or a gerbe trivialization. Then we naively define the T-dual as
$$\hat{E}:=\hat{p}_*\hat{L}_Z.$$
Glazebook, Jardim and Kamber also used this method in \cite{GJK} to T-dualize $U(d)$-bundles with connections but those bundles were all flat on the fibers. The structure of the leaves (cf. Lemma \ref{local leaves coordfree}) depends on the restrictions of $F$ to the fibers. Moreover, the projection $\hat{p}_Z: Z\ra \hat{S}$ is determined by a fiberwise homomorphism of tori. If the curvature $F$ is zero when restricted to the fibers of $S$ this homomorphism is just a projection but when $F$ is non-zero it is the composition of a projection and an isogeny. The analytification of this isogeny is given by $-F$ restricted to the fiber. 

Therefore, when $L$ restricted to the fibers of $S$ is flat, $q_*\hat{L}_Z$ determines the genuine T-dual of $(S,L)$ in accordance with \cite{GJK}. On the other hand, when $L$ is not flat on the fiber, $\hat{p}_Z$ is the composition of a projection and a degree $d^2$ isogeny, where $d$ is the degree of $L$ on the fibers. Then we show, analogously to Theorem \ref{pushforward U(1) degen}, that there exists a projectively flat $U(d)$-bundle $\hat{L}\ra \hat{S}$ such that
$$\hat{E}=\hat{L}\otimes U(d).$$
This is Part 3. of Theorems \ref{tdual u(1) bundles on a torus}, \ref{tdual U(1) bundles trivial base} and \ref{tdual u(1) bundle general base}.

In \cite{CLZ} Chan, Leung and Zhang gave a different construction. They start with an affine torus subbundle $S\subset X$ in an affine torus bundle together with a $U(1)$-bundle $L\ra S$ with a connection whose curvature is invariant. They associate a spinor bundle $\cS\ra S$ to the pair $(S,L)$ which depends on the curvature. This spinor bundle is trivial when the curvature of $L$ is trivial on the fibers of $S$. Then, they T-dualize $L\otimes \cS$ together with a Dirac operator with the method of Arinkin and Polishchuk.  They come to the same conclusion as we do but their method avoids the extra factor in the T-dual.

This chapter is organized as follows. In Section \ref{last chapter section on a torus}, we carry out our program for a single real torus and show that the result coincides with (\ref{main equation for line bdle on affine subtorus}). That is, on a complex torus the Fourier-Mukai transform of a holomorphic line bundle supported on an affine subtorus is the same as its T-dual as a $U(1)$-bundle with connection. The main result of this section is Theorem \ref{tdual u(1) bundles on a torus}.

In Section \ref{last chapter section contr base}, we T-dualize branes on a trivial affine torus bundle. Here we first extend the factor of automorphy description to $U(1)$-bundles with connections on trivial affine torus bundles. We prove a relative version of the Appel-Humbert theorem. Then we generalize the main result of Section \ref{last chapter section on a torus}.

Finally, in Section \ref{last chapter section general base}, we define T-duality for branes on affine torus bundles with torsion Chern classes. To achieve this we have to refine the T-duality relation from the generalized geometry setting to topological T-duality. Following \cite{B2} we recall the relevant background on gerbes, gerbe connections and gerbe modules and the definition of topological T-duality. We define the Poincar\'e bundle (\ref{u(1) poincare bundle}) as a gerbe and generalize Theorem \ref{tdual u(1) bundles on a torus}.

 \section{On a torus}\label{last chapter section on a torus}
In this section, we T-dualize $U(1)$-bundles with connections supported on affine subtori of a real torus.  Let $X\cong V/\Gamma$ be a real torus, $\hat{X}\cong V^*/\Gamma^\vee$ the dual torus and  $\cP$ the Poincar\'e bundle (\ref{u(1) poincare bundle}) on $X\times \hat{X}$. Then, the tori $X$ and $\hat{X}$, viewed as trivial torus bundles over a point, are T-dual in the sense of generalized geometry (Definition \ref{gen geom tdual}) with $H=0$, $\hat{H}=0$ and $P\in \Omega^2(X\times \hat{X})$ the curvature of Poincar\'e bundle divided by $2\pi i $.

Let $S\subset X$ be an affine subtorus and $L\ra S$ a $U(1)$-bundle with a connection whose curvature is $2\pi i F$ with $F\in \Omega^2(S)$ invariant. In this section, we construct the T-dual of $(S,L)$ and show that our construction recovers the Fourier-Mukai transform when the $U(1)$-bundle and the connection are compatible with a complex structure on the torus.

The pair $\cL=(S, F)$ is a T-dualizable generalized brane on $X$ and it has T-duals in the sense of generalized geometry since the base is contractible (cf. Theorem \ref{local gg thm}). The T-duals are constructed via the diagram
 \begin{equation}\label{big diagram 2}
\begin{tikzcd}
& Z \arrow{dl}[swap]{p_Z} \arrow{dr}{\hat{p}_Z} \arrow[hook]{rrr}{i_Z} &  & & X\times \hat{X} \arrow{dl}[swap]{p} \arrow{dr}{\hat{p}} \\
S\arrow[hook, bend right =30]{rrr}{i_S} & & \hat{S} \arrow[hook, bend right =20]{rrr}{i_{\hat{S}}} & X & & \hat{X}
\end{tikzcd}
\end{equation}
where $Z$ is a leaf of the integrable distribution $\Delta=(F+P)^1$ (\ref{F+P}).  On $Z$ we define the $U(1)$-bundle $\hat{L}_Z$ with connection via the equation
\begin{align}\label{bundles on Z} \hat{L}_Z \cong p_Z^*L\otimes i_Z^*\cP^*.\end{align}
Then we have the following theorem.
\begin{theorem}\label{tdual u(1) bundles on a torus}
    In the setting above, the following hold.
    \begin{enumerate}
        \item\label{1 absolute} There exists a single generalized T-dual $\hat{\cL}=(\hat{S},\hat{F})$  in $\hat{X}$ of $\cL=(S,F)$ such that for any $Z$ with $\hat{p}_Z(Z)=\hat{S}$ the $U(1)$-bundle with connection $\hat{L}_Z$ is trivial on the fibers of $\hat{p}_Z:Z\ra \hat{S}$.
        \item\label{2 absolute} For any leaf $Z$ mapping onto $\hat{S}$ as in \ref{1 absolute}., the pushforward
        $$\hat{E}=(\hat{p}_Z)_*\hat{L}_Z$$
        is a projectively flat $U(d^2)$-bundle independent of $Z$. The curvature of the connection on $\hat{E}$ is given by $2\pi i\hat{F}\cdot Id \in \Omega^2(\hat{S},End(\hat{E}))$.
        \item\label{3 absolute} There exists a projectively flat $U(d)$-bundle $\hat{L}$ on $\hat{S}$ which satisfies
        $$\hat{E}\cong \hat{L}\otimes U(d)$$
        in the sense of Theorems \ref{pushforward tensor U(1) nondegen} and \ref{pushforward U(1) degen}. Moreover, the curvature of the connection on $\hat{L}$ is $2\pi i \hat{F}\cdot Id \in \Omega^2(\hat{S},End(\hat{L})).$
    \end{enumerate}   
\end{theorem}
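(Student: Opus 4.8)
The plan is to reduce the three assertions to the pushforward theorems of the previous section, Theorem \ref{pushforward tensor U(1) nondegen} and Theorem \ref{pushforward U(1) degen}, by identifying the correspondence $\hat p_Z\colon Z\to\hat S$ with (a translate of) the homomorphism $\phi_L$. First I would invoke Theorem \ref{local gg thm} over the one-point base: since the curvature $2\pi i F$ of $L$ is integral, $F$ represents a rational class on $S$ and $(S,F)$ is locally T-dualizable, so this machinery produces the candidate affine subtori $\hat S_Z\subset\hat X$, the family of leaves $Z$ of $\Delta$, and the unique invariant two-form $\hat F_Z$ with $p_Z^*F+i_Z^*P=\hat p_Z^*\hat F_Z$, together with the fact that $\hat F_Z$ depends only on the image $\hat S_Z$.

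For Part \ref{1 absolute} I would note that the defining equation of $\Delta=\ker(F+P)^1$ in (\ref{F+P}) says precisely that for every $X\in TZ$ the contraction $\iota_X\bigl(p_Z^*F-i_Z^*P\bigr)$ is basic with respect to $\hat p_Z$; in particular the curvature $2\pi i(p_Z^*F-i_Z^*P)$ of $\hat L_Z=p_Z^*L\otimes i_Z^*\cP^*$ has no component along the fibres of $\hat p_Z$, so $\hat L_Z$ is flat on those fibres for every leaf. By Lemma \ref{character for flat U(1) bundles} its restriction to a fibre is then a character, i.e. a point of the dual fibre torus. As $\hat S_Z$ ranges over the torus of candidate T-duals of Proposition \ref{space of T-duals} this character varies, and there is a unique image $\hat S$ for which it is the trivial character; this singles out $\hat S$, and by Corollary \ref{space of leaves mapping to a T-dual} together with the leaf-independence of $\hat F_Z$ the triviality then holds for every $Z$ with $\hat p_Z(Z)=\hat S$. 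This is the real analogue of the W.I.T. condition that cuts out the support of the Fourier-Mukai transform.

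For Parts \ref{2 absolute} and \ref{3 absolute} I would first translate $S$ to a subtorus through the origin, which tensors the eventual transform by the flat bundle $\cP_x$ determined by the translation, and split off the connected kernel $K(L)_0$ of $\phi_L$ by the projection pushforward of Definition \ref{definition: pushforward along projection}, exactly as in the proof of Theorem \ref{pushforward U(1) degen}. Using the explicit descriptions of $V_Z$, $p_0$ and $\hat p_0$ in Lemma \ref{local leaves coordfree} and Lemma \ref{local images of Z} — where $\hat p_0$ is $-F$ on $V_S$, whose analytification agrees with that of $\phi_L$ — I would identify $\hat p_Z$ with the isogeny onto its image in the Stein factorization of $\phi_L$. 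Under this identification $\hat L_Z=p_Z^*L\otimes i_Z^*\cP^*$ becomes $L^{-1}$ pulled to the correspondence, so that $(\hat p_Z)_*\hat L_Z\cong(\phi_L)_*L^{-1}$ up to the translation twist. Theorem \ref{pushforward U(1) degen}, resting on Theorem \ref{pushforward tensor U(1) nondegen} in the non-degenerate case, then yields that $\hat E$ is a projectively flat $U(d^2)$-bundle of curvature $2\pi i\hat F\cdot Id$ admitting a reduction $\hat E\cong\hat L\otimes U(d)$ with $\hat L$ projectively flat of the same curvature; those same theorems, via their independence-of-choices arguments, give that $\hat E$ does not depend on the leaf $Z$.

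The main obstacle I expect is the bookkeeping in the identification $(\hat p_Z)_*\hat L_Z\cong(\phi_L)_*L^{-1}$: one must check that the Poincar\'e factor $a_\cP$ of (\ref{u(1) poincare bundle}), restricted to the leaf $Z$ and written in the coordinates of Lemma \ref{local leaves}, reproduces the canonical factor of $L^{-1}$ along the isogeny, matching both the factor of automorphy and the connection one-form. This is where the signs in $\Delta$, in $\hat p_Z$ (equation (\ref{Z to hat(S) in coords})) and in the Poincar\'e bundle must be reconciled. Finally, comparing $(\phi_L)_*L^{-1}\otimes\cP_x$ term by term with the holomorphic expression (\ref{main equation for line bdle on affine subtorus}) shows that when $L$ is compatible with a complex structure the T-dual coincides with the Fourier-Mukai transform, as claimed.
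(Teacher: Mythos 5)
Your proposal is correct and follows essentially the same route as the paper's own proof: Part \ref{1 absolute} via flatness of $\hat{L}_Z$ on the fibres of $\hat{p}_Z$ (the curvature $p_Z^*F+P|_Z=\hat{p}_Z^*\hat{F}$ is basic) together with the character argument over the leaf space $V_S^*/\Gamma_S^\vee$, and Parts \ref{2 absolute}--\ref{3 absolute} by identifying $\hat{p}_Z$ with the Stein factorization of $\phi_{L_0}$ (projection killing $K(L)_0$ followed by the isogeny with analytification $-F$), so that $(\hat{p}_Z)_*\hat{L}_Z$ becomes $q^*t_{-c}^*(\phi_{L_0})_*(L_0^{-1}\otimes \hat{\cP}_{-c})\otimes \cP_b$, to which Theorems \ref{pushforward tensor U(1) nondegen} and \ref{pushforward U(1) degen} apply. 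The only imprecision is that independence of $\hat{E}$ from the leaf $Z$ does not follow from the independence-of-choices arguments inside those pushforward theorems but from the translation--tensor compatibility (the real analogue of Lemma \ref{Lemma: translation and tensor}) applied to leaves differing by elements of $F(V_S)$, which the paper invokes as a separate step -- exactly the kind of sign-and-twist bookkeeping your plan already flags as the remaining work.
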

\begin{definition}
    The T-dual of $(S,L)$ is $(\hat{S},\hat{L})$ as in theorem \ref{tdual u(1) bundles on a torus}.
\end{definition}
\begin{remark}
    The last part of our theorem shows that in general taking fiberwise flat sections (the \emph{naive T-dual}), does not give the right answer, but a vector bundle with the action of $\dZ/d\dZ$. Only after we "divide out" by this action, in part \ref{3 absolute}., we find a T-dual with the properties we want. In particular, if $L$ has a holomorphic structure so does $\hat{L}$ and $L$ and $\hat{L}$ are Fourier-Mukai partners.
\end{remark}
\begin{proof}[Proof of Theorem \ref{tdual u(1) bundles on a torus}]
The submanifold $S\subset X$ is an affine subtorus, that is there exists $b\in X$ such that 
\begin{align}\label{S=t_bS_0}S=t_bS_0=S_0+b,\end{align}
where $S_0$ is a subgroup of $X$.  Then, $S_0$ is given by $V_S/\Gamma_S$ where $V_S\subset V$ is a subspace and $\Gamma_S=\Gamma\cap V$ is a primitive sublattice.   We can write \begin{align}\label{L=t_b*L_0}L\cong (t_b)_*L_0\end{align}  
where $L_0\ra S_0$ is a $U(1)$-bundle with a connection whose curvature is invariant on $S_0$. We denote this curvature also with $2\pi i F$ and we describe $L_0$ by a canonical factor of automorphy
 \begin{align}\label{L0 on S0} a_{L_0}(s,\gamma)=\chi_0(\gamma)exp(i\pi F(s,\gamma)+2\pi i \hat{c}(\gamma)),\ \ \ A_{L_0}(s)=i\pi F(s,ds),\end{align}
 for $s\in V_S$ and $\gamma\in \Gamma_S$. Here, $\hat{c}\in V_S^*$ is a character of $S_0$ lifted from $V_S^*/\Gamma_S^\vee$ and  $\chi_0$ is a semi-character for $F$ on $\Gamma_S$ such that it vanishes on $Ker(F)\cap \Gamma_S$. More precisely, $F$ is non-degenerate and integral on $\Gamma_S/(Ker(F)\cap \Gamma_S)$ so may decompose this lattice into maximal isotropics with respect to $F$
 $$\Gamma_S/(Ker(F)\cap \Gamma_S)=\Gamma_1\oplus\Gamma_2.$$
 Then if we denote the projection $r:\Gamma_S \ra 
 \Gamma_1\oplus\Gamma_2$ a semi-character $\chi_0$ can be given as
 \begin{align}\label{chi0} \chi_0(\gamma)=exp(i\pi F(r(\gamma)_1,r(\gamma)_2)),\ \ \ \text{where\ }r(\gamma)=r(\gamma)_1+r(\gamma)_2,\ \ \  r(\gamma)_i\in \Gamma_i. \end{align}
Suppose that $b'\in X$ is another element such that $S_0+b=S_0+b'$. Then $b-b'\in S_0$ and $L\cong (t_b)_*L_0\cong (t_{b'})_*L_0'$ that is $L_0'\cong (t_{b-b'})_*L_0\cong t_{b'-b}^*L_0$. In particular, after changing representatives
\begin{align}\label{b to b'} a_{L_0'}(s,\gamma)=a_{L_0}(s+b'-b,\gamma)=a_{L_0}(s,\gamma)\cdot exp(2\pi i F(b'-b,\gamma)),\ \ \ A_{L_0'}(s)=A_{L_0}(s).\end{align}
That is, we change $\hat{c} \mapsto \hat{c}+F(b'-b)$ by changing $b$ to $b'$.
 
 We identify $S\times \hat{X}$ with a translate of $S_0\times \hat{X}=(V_S\times V^*)/(\Gamma_S+\Gamma^\vee)$ as
 \begin{align}S\times \hat{X}=t_{(b,0)}(S_0\times \hat{X})\subset X\times \hat{S}\end{align}
  so we have
  $$\cP|_{S\times \hat{X}}=\cP|_{t_{(b,0)}(S_0\times \hat{X})}=(t_{(b,0)})_*\Big((t_{(b,0)}^*\cP)|_{S_0\times \hat{X}}\Big)$$
 we can represent $(t_{(b,0)}^*\cP)|_{S_0\times \hat{X}}$ by the factor of automorphy
  \begin{align*}
    a_\cP(s,\hat{v};\gamma,\hat{\lambda})&=exp\Big(i\pi (\hat{v}(\gamma)-\hat{\lambda}(s+b)+\hat{\lambda}(\gamma))\Big),\ \ \ A_\cP(s,\hat{v})=i\pi (\hat{v}\cdot ds-(s+b)\cdot d\hat{v}),
\end{align*}
with $s+\hat{v}\in V_S+V^*$ and $\gamma+\hat{\lambda}\in \Gamma_S+\Gamma^\vee$. Identifying $S$ with $S_0+b'$ would again change the representative of $(t_{(b,0)}^*\cP)|_{S_0\times \hat{X}}$ by exchanging $b$ with $b'$.

By Lemma \ref{local leaves coordfree} the leaves of the distribution $\Delta$ are affine subtori of $S\times \hat{X}$ modelled on $V_Z/\Gamma_Z$, where $V_Z\subset V_S+V^*$ and $\Gamma_Z\subset V_S+V^*$ fit into the short exact sequences (\ref{V_Z SES}) and (\ref{Gamma_Z SES}). By Proposition \ref{space of leaves of Delta} the space of leaves is parametrized by points of the torus $V_S^*/\Gamma_S^\vee$, the elements of $(V_S+V^*)/V_Z$ up to elements of $(\Gamma_S+\Gamma^\vee)/\Gamma_Z$. Denote elements of $V^*_S/\Gamma_S^\vee$ by $c$, and the corresponding leaf by $Z_c$. Let $\bar{c}\in V_S+V^*$ be a lift of $c$, so we have
\begin{align}\label{Z_c in Sxhat(X)} Z_c\cong t_{(b,0)}(t_{\bar{c}} Z_0)\subset t_{(b,0)}(S_0\times \hat{X})=S\times \hat{X}.\end{align}
Note that we can choose $\bar{c}\in V_X^*$ because $F$ is integral. 
The image of $Z_c$ under the projection $\hat{p}:S\times \hat{X}\ra \hat{X}$ is a subtorus of $\hat{X}$ modelled on $V_{\hat{S}}/\Gamma_{\hat{S}}$ where $V_{\hat{S}}\subset V^*$ and $\Gamma_{\hat{S}}\subset \Gamma^\vee$ fit into the shirt exact sequences (\ref{hat(S) vertical}) and (\ref{hat(S) lattice}). By Lemma \ref{space of T-duals} the space of images is parametrized by the torus $coker(F)/coker(F,\Gamma_S^\vee)$, that is by elements of $V_S^*/F(V_S)$ up to elements of the lattice $\Gamma_S^\vee/(\Gamma_S^\vee\cap F(V_S))$. 

Let us denote the image of $Z_c$ under the projection $\hat{p}$ by $\hat{S}_{\hat{p}(c)}$. Then, $\hat{p}_{Z_c}:Z_c\ra \hat{S}_{\hat{p}(c)}$ is the composition of a homomorphism and a translation. Moreover, the projection $p_{Z_c}:Z_c\ra S$ can also be written as the composition of a homomorphism and a translation. The homomorphism parts $p_0$ and $\hat{p}_0$ of $p_{Z_c}$ and $\hat{p}_{Z_c}$ respectively are given by the following diagrams.
\begin{equation}\label{p_0 and hat(p)_0 on vectors}
\begin{tikzcd}
    0 \arrow{r} & Ann(V_S) \arrow{r}\arrow{d}{\cong} & V_Z \arrow{r}{p_0}\arrow{d}{\hat{p}_0} & V_S \arrow{r}\arrow{d}{-F} & 0 \\
    0 \arrow{r} & Ann(V_S) \arrow{r} & V_{\hat{S}} \arrow{r}{q} & F(V_S) \arrow{r} & 0,\\
\end{tikzcd}
\end{equation}
\begin{equation}\label{p_0 and hat(p)_0 on lattices}
\begin{tikzcd}
     0 \arrow{r} & Ann(\Gamma_S) \arrow{r}\arrow{d}{\cong} & \Gamma_Z \arrow{r}{p_0}\arrow{d}{\hat{p}_0} & \Gamma_S \arrow{r}\arrow{d}{-F} & 0\\
     0 \arrow{r} & Ann(\Gamma_S) \arrow{r} & \Gamma_{\hat{S}} \arrow{r}{q} & F(V_S)\cap \Gamma_S^\vee \arrow{r} & 0.
\end{tikzcd}
\end{equation}
Indeed $p_0$ and $\hat{p}_0$ are the restrictions of $V+V^*\ra V$ and $V+V^*\ra V^*$ to $V_Z$ and $\Gamma_Z$. Then we have the commutative diagram
\begin{equation}\label{hat(p) as homom and translation}
\begin{tikzcd}
    Z_0\arrow{d}{\hat{p}_0} \arrow{r}{t_{(b,0)}\circ t_{\bar{c}}} &     Z_c \arrow{d}{\hat{p}_{Z_c}}\\
    S_0 \arrow{r}{ t_{c} } & S_{\hat{p}(c)}
\end{tikzcd}
\end{equation}
and we see that $\hat{p}(c)$ in $coker(F)/coker(F,\Gamma_S^\vee)$ is represented by $c=\hat{p}_0(\bar{c})\in V_S^*$. Here we abuse notation, and we also denote by $c$ the lift of $c\in V_S^*/\Gamma_S^\vee$ to $V_X^*$.

Let $\hat{L}_{Z_c}=(p^*L\otimes \cP)|_{Z_c}$. We have on the correspondence space
$$p^*L\otimes \cP\cong p^*(t_b)_*L_0\otimes \cP\cong (t_{(b,0)})_*p^*L_0\otimes \cP\cong (t_{(b,0)})_*\Big(p^*L_0\otimes t_{(b,0)}^*\cP\Big)$$
so we have
\begin{align}\label{hat(L)_c}\hat{L}_{Z_c}=(t_{(b,0)})_*\Big(p^*L_0\otimes t_{(b,0)}^*\cP\Big)\Big|_{t_{\bar{c}}Z_0}.\end{align}
Let us denote by $L_c$ the $U(1)$-bundle  
\begin{align}\label{L_c}L_{c}\cong t_{\bar{c}}^*\Big(p^*L_0\otimes t_{(b,0)}^*\cP)\Big|_{t_{\bar{c}}Z_0}\cong \Big(t_{\bar{c}}^*p^*L_0 \otimes t_{\bar{c}}^*t_{(b,0)}^*\cP\Big)\Big|_{Z_0},\end{align} 
 so we have $\hat{L}_{Z_c}\cong (t_{(b,0)}\circ t_{\bar{c}})_*L_c$. It is a $U(1)$-bundle on $Z_0$, therefore it can be represented by a canonical factor of automorphy. For $v\in V_Z$ and $\gamma\in \Gamma_Z$, with notation as in the diagrams (\ref{p_0 and hat(p)_0 on vectors}) and (\ref{p_0 and hat(p)_0 on lattices}) and using that $\hat{p}_0(\bar{c})=c$ and $p_0(\hat{c})=0$ we have
 \begin{align*}
a_{L_{c}}&(v,\lambda)=\\
=&exp\Big(  i\pi ((\hat{p}_0(v)+c)(p_0(\lambda))-(\hat{p}_0(\lambda))(p_0(v)+b)+i \pi \hat{p}_0(\lambda)(p_0(\lambda))\Big)\times\\
&\hspace{3cm} \times \chi_0(p_0(\lambda))exp\Big(i\pi F(p_0(v),p_0(\lambda))+2\pi i \hat{c}(p_0(\lambda)) \Big)\\
=&\chi_0(p_0(\lambda))exp\Big(i\pi (-F(p_0(v),p_0(\lambda))+c(p_0(\lambda))+F(p_0(\lambda),p_0(v))-\hat{p}_0(v)(b))\Big)\times\\
&\hspace{2cm} \times exp\Big(-i\pi F(p_0(\lambda),p_0(\lambda))+i \pi F(p_0(v),p_0(\lambda))+2\pi i \hat{c}(p_0(\lambda))  \Big)\\
=&\chi_0(p_0(\lambda))exp\Big(-i\pi F(p_0(v),p_0(\lambda))+i \pi c(p_0(\lambda))-i \pi \hat{p}_0(v)(b) +2\pi i \hat{c}(p_0(\lambda))  \Big),\\
A_{L_{c}}&(v)=i\pi F(p_0(v)+b,dp_0(v))+ i\pi (\hat{p}_0(v)+c) \cdot dp_0(v)-(p_0(v)+b)\cdot d\hat{p}_0(v)\\
=&i \pi F(p_0(v),dp_0(v)) - i \pi F(p_0(v),dp_0(v)) + i \pi c \cdot dp_0(v) +\\
&+i \pi F(dp_0(v), p_0(v)) - b\cdot d\hat{p}_0(v)\\
=&-i\pi F(p_0(v),dp_0(v))+ i \pi c \cdot dp_0(v)  -i \pi b\cdot d\hat{p}_0(v).\end{align*}
Changing the representatives by $\phi(v)=exp(-\pi i b\cdot \hat{p}_0(v)   + i \pi c \cdot p_0(v))$ we have
\begin{equation}\label{L_c factor}
\begin{aligned}
a_{L_{c}}(v,\lambda)&=\chi_0(p_0(\lambda))\times \\
&\ \ \ \times exp\Big(-i\pi F(p_0(v),p_0(\lambda))- 2 \pi i \hat{p}_0(\lambda)(b) +2\pi i (\hat{c}+c)(p_0(\lambda))  \Big)\\
A_{L_{c}}(v)&=-i\pi F(p_0(v),dp_0(v)).
\end{aligned}
\end{equation}
The line bundle $L_{c}$ is independent of the choice of $b\in X$. Indeed, changing $b$ to $b'$ would add the term $-2\pi 
 i \hat{p}_0(\lambda)(b'-b)$ to the factor of automorphy but we would have to also change $\hat{c}$ to $\hat{c}+F(b'-b)$ (see (\ref{b to b'})). Using that $b'-b\in V_S$ we get that $-\hat{p}_0(\lambda)(b'-b)=(q\circ \hat{p}_0)(\lambda)(b'-b)=-(-F\circ p_0)(\lambda)(b'-b)=F(p_0(\lambda),b'-b)$. Meanwhile, $F(b'-b)(p_0(\lambda))=F(b'-b,p_0(\lambda))$. Therefore the resulting bundle and connection does not depend on the choice of $b\in X$.

 \textit{Proof of \ref{1 absolute}.:}  By (\ref{p_0 and hat(p)_0 on vectors}) the kernel of $\hat{p}_0:V_Z\ra V_{\hat{S}}$ surjects onto $Ker(-F)\subset V_S$ via $p_0$. Henceforth, the fibers of $\hat{p}_{Z_c}:Z_c\ra \hat{S}_{\hat{p}(c)}$ are disjoint union of affine tori  modelled on $Ker(-F)/(Ker(-F)\cap \Gamma_S)\cong K(L)_0$ (\ref{K(L)_0 SES}). On $K(L)_0$ the restriction of $L_{c}$ is flat and it is given by the image of $\hat{c}+c\in V_S^*/\Gamma_S^\vee$ along the surjection 
 \begin{align*}\hat{j}:V_S^*/\Gamma_S^\vee\ra \widehat{K(L)_0}\end{align*}
dual to the inclusion $j:K(L)_0\ra S$. The other components of the fiber are translations of $K(L)_0$ by the preimage of a full set of representatives $\{\lambda_1,...,\lambda_{d^2}\}$  of $(H(V_S)\cap \Gamma^\vee_S): H(\Gamma_S)$. But $L_c$ is invariant under such translations as
$$a_{t^*_{F^{-1}(\lambda_i)}L_c}(v,\lambda)=exp\Big(-i\pi \lambda_i(p_0(\lambda)) \Big)a_{L_c}(v,\lambda),\ \ \ A_{t^*_{F^{-1}(\lambda_i)}L_c}(v)=A_{L_c}(v)-i\pi \lambda_i(dp_0(v)),$$
so changing the representative by $\phi(v)=exp(-i\pi \lambda_i(p_0(v)))$ gives the isomorphism.

In particular, $L_{c}$ is trivial on the fibers of $\hat{p}_{Z_c}$ if and only if $\hat{c}+c$ lies in the kernel of $\hat{j}$ or from the point of view of vector spaces if and only if $\hat{c}+c$ lies in $F(V_S)$. By Corollary \ref{space of leaves mapping to a T-dual} the space of leaves mapping to a single T-dual is indeed parametrized by elements of $F(V_S)/F(\Gamma_S)$ so there is a single T-dual $\hat{S}$ such that $L_c$ is trivial on all $Z_c$ mapping onto $\hat{S}$.

\textit{Proof of \ref{2 absolute}.:} Let us now calculate $(\hat{p}_c)_*L_{c}$ for $c$ such that $L_{c}$ is trivial on the fibers. We decompose $\hat{p}_{Z_c}$ into a translation and a homomorphism as in (\ref{hat(p) as homom and translation}) $$\hat{p}_{Z_c}=t_{c} \circ \hat{p}_0: Z_c \ra \hat{S}_{\hat{p}(c)}$$
where we choose a lift of $c\in V_S^*/\Gamma_S^\vee$ to an element $c$ of $\hat{X}$. 
Let 
$$\hat{E}_c:=(\hat{p}_0)_*L_{c}.$$
By (\ref{L_c factor}) we can write
$$L_c\cong (p_0^*L_c^S)\otimes \hat{p}^*_0\cP_{b}$$
where $L_c^S\ra S_0$ and $\cP_{b}\ra \hat{S}_0$ are $U(1)$-bundles given by
\begin{align*}a_{L_c^S}(s,\gamma)&=\chi_0(\gamma)exp(-i\pi F(s,\gamma)+2\pi i (\hat{c}+c)(\gamma)),\ \ \ A_{L_c^S}(s)=-i\pi F(s,ds),\\ a_{\cP_b}(\hat{s},\hat{\gamma})&=exp(-2\pi i \hat{\gamma}(b)),\ \ \ A_{\cP_b}=0.\end{align*}
In particular, using the projection formula
\begin{align*}
\hat{E}_c=(\hat{p}_0)_*(L_c)\cong \Big((\hat{p}_0)_*p_0^*L_c^S\Big)\otimes \cP_b,
\end{align*}
and by the commutative square in (\ref{p_0 and hat(p)_0 on vectors}) and (\ref{p_0 and hat(p)_0 on lattices}) we have
\begin{align*}\hat{E}_c\cong \Big(q^*(-\phi_{L_0})_*L_c^S\Big)\otimes \cP_b,\end{align*}
where $-\phi_{L_0}$ is the isogeny whose analytification is $-F$.  Notice that $\cP_b$ is $\cP|_{\{b\}\times \hat{X}}$ restricted to $\hat{S}_0$. 

That is, to determine $\hat{E}_c$ we only need to determine $(-\phi_{L_0})_*L_c^S$. Writing out the definition of the pushforwards (\ref{definition: pushforward along projection}) we find
\begin{align}\label{funky equation} 
(-\phi_{L_0})_*L_c^S\cong (\phi_{L_0})_*(L^{-1}_0\otimes \hat{\cP}_{-c}),
\end{align}
where $\hat{\cP}_{-c}=\cP|_{S\times \{-c\}}$. Therefore, $\hat{E}\cong (t_c)_*\hat{E}_c\cong t_{-c}^*\hat{E}_c$ is  
$$\hat{E}\cong t_{-c}^*\Big(q^*(\phi_{L_0})_*(L_0^{-1}\otimes \hat{\cP}_{-c})\Big)\otimes t_{-c}^*\cP_b$$
since $c$ is the lift of an element in $V_S^*/\Gamma_S^\vee$ to $\hat{X}$ we have $t_{-c}^*q^*=q^*t_{-c}^*$ and since $\cP_b$ is flat we have $t_{-c}^*\cP_b=\cP_b$. In conclusion,
\begin{align}\label{hat(E) FM type description}\hat{E}\cong \Big(q^*t_{-c}^*(\phi_{L_0})_*(L_0^{-1}\otimes \hat{\cP}_{-c})\Big)\otimes \cP_b.\end{align}
The curvature of the connection is indeed given by the two-form $q^*F^{-1}=\hat{F}$. 

Finally, to show that $\hat{E}$ is well defined, let $Z_{c'}$ with $c'\in V_S^*$ be another leaf projecting onto $\hat{S}_{\hat{p}(c)}$. Then again $\hat{c}+c'\in Im(F)$ and 
$$(t_c)_*\hat{E}_c\cong (t_{c'})_*\hat{E}_{c'}\ \ \ \text{if and only if}\ \ (t_{c-c'})_*\hat{E}_c\cong \hat{E}_{c'}.$$
We have $c-c'\in Im(F)$ as well so $(t_{c-c'})_*\hat{E}_c\cong \hat{E}_{c'}$ if and only if
$$(t_{c-c'})_*(\phi_{L_0})_*(L_0^{-1}\otimes \hat{\cP}_{-c})\cong (\phi_{L_0})_*(L_0^{-1}\otimes \hat{\cP}_{-c'})$$
Indeed, analogously to Lemma \ref{Lemma: translation and tensor}, we have
$$(t_{c'-c})^*(\phi_{L_0})_*(L_0^{-1}\otimes \hat{\cP}_{-c})\cong (\phi_{L_0})_*(\hat{\cP}_{c-c'}\otimes L_0^{-1}\otimes \hat{\cP}_{-c} )\cong (\phi_{L_0})_*(L_0^{-1}\otimes \hat{\cP}_{-c'}).$$

\textit{Proof of \ref{3 absolute}.:} By Theorem \ref{pushforward U(1) degen} there exist projectively flat $U(d)$-bundles $\hat{L}_c$ on $\hat{S}_0$ such that on $\hat{S}$
\begin{align*}\hat{E}\cong \Big(q^*t_{-c}^*(\hat{L}_c\otimes U(d))\Big) \otimes \cP_b\cong \Big( q^*t_{-c}^*\hat{L}_c\otimes \cP_b\Big)\otimes q^*U(d).\end{align*}
\end{proof}
Comparing to (\ref{main equation for line bdle on affine subtorus}), it is clear that starting with a holomorphic line bundle on a complex torus and taking its Fourier-Mukai transform is the same as starting with the $U(1)$-bundle with connection associated to the constant Hermitian form and taking its T-dual.

 \section{On affine torus bundles with contractible base}\label{last chapter section contr base}
We would like to generalize Theorem \ref{tdual u(1) bundles on a torus} to physical branes in affine torus bundles. In this section, we work locally, that is on trivial affine torus bundles over a contractible base. In the first subsection, we extend the factor of automorphy description for $U(1)$-bundles with connections on a trivial family of tori. We prove a version of the Appel-Humbert theorem for these bundles as well. Finally, in the second subsection, we prove the analogue of Theorem \ref{tdual u(1) bundles on a torus}. 

\subsection{Factors of automorphy in family}
Let $U$ be a simply connected open subset of $\dR^k$ and let $T^n=V/\Gamma=\dR^n/
\dZ^n$ be the standard torus. In this section, we prove a version of the Appel-Humbert theorem for $U(1)$-bundles with connection and invariant curvature on $M=U\times T^n$.

We can identify $M$ with $V/\Gamma$, where $V=U\times \dR^n$ is the trivial vector bundle and $\Gamma=U\times \dZ^n$ is the trivial lattice. The lattice $\Gamma$ is again isomorphic to the fundamental group $\pi(M)$ and it acts on the universal cover $U\times V$ of $M$ fiberwise. We define factors of automorphy for $U(d)$-bundles as follows. 
\begin{definition}\label{factor of automoprhy family}
    A $U(d)$-factor of automorphy on $M$ is a smooth map
    $$a_E:U\times V\times \Gamma \ra U(n)$$
    satisfying $$a_E(y,v;\lambda+\mu)=a_E(y,v+\lambda;\mu)a_E(y,v;\lambda).$$ Two such factors of automorphy $a_E$ and $a_E'$ are equivalent is there exists a smooth map $\phi(y,v):U\times V\ra U(n)$ such that  \begin{align}\label{equivalence of factors family}a_E'(y,v;\lambda)=\phi(y,v+\lambda)a_E(y,v;\lambda)\phi(y,v)^{-1}.\end{align}
\end{definition}
We may define a $U(d)$-bundle on $M$ as the induced bundle
$$E\cong (U\times V)\times_\Gamma U(d)$$
under the action $\lambda.(y,v,t)=(y,v+\lambda,a_E(y,v;\lambda)t)$. A connection on $E$ can be specified by a one form  $A_E\in \Omega^1(U\times V,\gu(n))$ satisfying
\begin{align}\label{connection on bundle family} A_E(y,v+\lambda)=a_E(y,v;\lambda)A_E(y,v)a_E(y,v;\lambda)^{-1}-da_E(y,v;\lambda)\cdot a_E(y,v;\lambda).\end{align}
Two pairs $(a_E,A_E)$ and $(a_E',A_E')$ are equivalent if there exists a a smooth map $\phi(y,v):U\times V\ra U(n)$ such that  $a_E'$ and $a_E$ are related by $\phi$ as in  (\ref{equivalence of factors family}) and 
$$A_E'(y,v)=\phi(y,v)A_E(y,v)\phi(y,v)^{-1}-d\phi(y,v)\cdot \phi^{-1}(y,v).$$
Following the proof of Proposition \ref{factor of automoprhy and connection proof} we can prove the following.
\begin{theorem}
There is a one-to-one correspondence between equivalence classes of principal $U(d)$-bundles with connections $E\ra M$ and equivalence classes of pairs $(a_E,A_E)$ as in Definition \ref{factor of automoprhy family} and (\ref{connection on bundle family}).
\end{theorem}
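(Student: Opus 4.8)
The plan is to mirror the proof of Proposition \ref{factor of automoprhy and connection proof} almost verbatim, with the base direction $U$ carried along as a spectator. The statement splits naturally into two correspondences: first, between isomorphism classes of principal $U(d)$-bundles $E\ra M$ and equivalence classes of factors of automorphy $a_E$ as in Definition \ref{factor of automoprhy family}; second, refining this, between $U(d)$-bundles \emph{with connection} and equivalence classes of pairs $(a_E,A_E)$ satisfying (\ref{connection on bundle family}). The key structural fact enabling everything is that $M=U\times T^n$ has universal cover $U\times V$ with $\pi_1(M)\cong \Gamma$ acting by fiberwise translation $\lambda.(y,v)=(y,v+\lambda)$, so that any principal bundle on $M$ is the quotient of the trivial bundle on $U\times V$ by a $\Gamma$-action, which is precisely the data of a factor of automorphy.

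For the bundle-level correspondence, I would first check that $a_E$ defines a genuine $\Gamma$-action on $(U\times V)\times U(d)$: the cocycle identity $a_E(y,v;\lambda+\mu)=a_E(y,v+\lambda;\mu)a_E(y,v;\lambda)$ is exactly the condition that $\mu.(\lambda.(y,v,t))=(\lambda+\mu).(y,v,t)$, so the associated bundle $E=(U\times V)\times_\Gamma U(d)$ is well-defined. Surjectivity of the correspondence follows because every principal $U(d)$-bundle pulls back to a trivial bundle on the contractible (in the fiber direction) cover $U\times V$ — here I would invoke that $U\times V$ is contractible, or more carefully that $U\times V\ra M$ is the universal cover and the pullback bundle, being over a space that deformation retracts onto $U$ which is simply connected, admits a global section; this gives a trivialization whose deck-transformation discrepancy is the factor of automorphy. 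Two factors of automorphy yield isomorphic bundles iff they differ by a bundle automorphism over $U\times V$, i.e. a map $\phi:U\times V\ra U(d)$, which is exactly the equivalence relation (\ref{equivalence of factors family}). This is the same argument as in \cite{iena} and \cite[Proposition B1]{BL}, adapted by replacing the holomorphic category with the smooth one and adjoining the parameter $y\in U$.

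For the connection-level refinement, I would run the gauge-theoretic argument of Proposition \ref{factor of automoprhy and connection proof} essentially unchanged. A principal bundle connection on $E$ corresponds bijectively to a $\Gamma$-invariant connection on the trivial bundle $(U\times V)\times U(d)$. Writing $\pi:(U\times V)\times U(d)\ra E$ for the quotient and choosing the tautological section $s:U\times V\ra \pi^*E$, I set $A_E:=s^*\pi^*\omega\in\Omega^1(U\times V,\gu(d))$. The computation of $\pi^*\omega=\omega_\alpha dv^\alpha+U^{-1}dU$ in coordinates goes through identically, with the base coordinates on $U$ simply contributing additional $dy$-components to the horizontal part; equivariance under $U(d)$ forces the same normalization, and $\Gamma$-invariance $\varphi_\gamma^*\pi^*\omega=\pi^*\omega$ translates into the quasi-periodicity condition (\ref{connection on bundle family}) via the relation $\varphi_\gamma^*\pi^*\omega(y,v,U)=R^*_{a_E(y,v;\gamma)}\pi^*\omega(y,v+\gamma,U)$ along the section. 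Changing the section by $\phi$ produces the stated simultaneous transformation of $(a_E,A_E)$, matching the equivalence relation.

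The main obstacle — really the only non-formal point — is the surjectivity at the bundle level: one must argue that \emph{every} smooth principal $U(d)$-bundle on $M$ arises from some factor of automorphy, which amounts to showing the pullback to $U\times V$ is trivial. The holomorphic proofs cited lean on $V$ being Stein; in the smooth setting I would instead argue directly that $U\times V$ is contractible (as $U$ is simply connected and one can further retract it, or at minimum $U\times V$ is homotopy equivalent to $U$, over which any principal bundle is classified by $[U,BU(d)]$ and triviality must be handled with mild care if $U$ is not contractible). To keep the statement clean I would assume $U$ contractible, consistent with its use as a local chart over a contractible base; then $U\times V$ is contractible, every principal bundle over it is trivial, a global trivializing section exists, and the factor of automorphy is recovered from the deck-transformation action. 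Everything else is bookkeeping identical to the single-torus case, so I would present the two correspondences in sequence and refer back to Proposition \ref{factor of automoprhy and connection proof} for the connection computation rather than repeating it.
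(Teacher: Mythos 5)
Your proposal is correct and is essentially the paper's own argument: the paper gives no separate proof for this theorem, saying only that it follows the proof of Proposition \ref{factor of automoprhy and connection proof}, i.e.\ the absolute-case gauge-theoretic argument run with the base coordinate $y$ carried along as a spectator, combined with the bundle-level correspondence adapted from \cite{BL} and \cite{iena}. Your caveat that triviality of the pullback to $U\times V$ genuinely requires $U$ contractible (the paper only assumes simply connected, which for $k\geq 3$ is weaker) is a fair refinement consistent with how the theorem is used over contractible charts, but it does not change the approach.
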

Let us now focus on $U(1)$-bundles with connections or the associated Hermitian line bundles. Again, as in the ``absolute case" (when $U$ is a single point) the first Chern class of a pair $(a_L,A_L)$ is given by the cohomology class $$\frac{1}{2\pi i}dA_L \in H^2(M,\dZ)\cong H^2(T^n,\dZ).$$

\begin{lemma}\label{character for flat u(1) bundles family}
Let $L\ra M$ be a $U(1)$-bundle with a flat connection on $M$. Then, $L$ can be uniquely represented by a pair $(A_L,a_L)$ with $a_L$ constant on $U\times V$ and $A_L=0$.
\end{lemma}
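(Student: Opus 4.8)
The statement is the family (relative) version of Lemma~\ref{character for flat U(1) bundles}, so my plan is to adapt that proof, being careful about the extra base directions coming from $U\subset\dR^k$. The key point is that $L$ is flat over all of $M=U\times T^n$, so its curvature $dA_L$ vanishes as a two-form on $U\times V$, and I want to gauge $A_L$ away entirely and then deduce that the factor of automorphy must be constant.

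First I would start with an arbitrary representative $(a_L,A_L)$ of $L$ as in the family definition (Definition~\ref{factor of automoprhy family}). Since the connection is flat, $dA_L=0$ on the simply connected space $U\times V$, so by the Poincar\'e lemma there is a smooth real function $f:U\times V\ra\dR$ with $A_L=2\pi i\,df$. Setting $\phi=\exp(2\pi i f):U\times V\ra U(1)$, we have $d\phi\cdot\phi^{-1}=2\pi i\,df=A_L$, so the gauge transformation by $\phi$ produces an equivalent pair $(a_L',A_L')$ with
\[
A_L'(y,v)=\phi(y,v)A_L(y,v)\phi(y,v)^{-1}-d\phi(y,v)\cdot\phi(y,v)^{-1}=A_L-A_L=0.
\]
Here I use that $U(1)$ is abelian so conjugation is trivial and the two $A_L$ terms cancel. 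The simple connectedness of $U$ is what guarantees the primitive $f$ exists globally; this is exactly where the contractibility hypothesis on the base enters.

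Next I would extract the constancy of the factor from $A_L'=0$. Plugging $A_L'=0$ into the compatibility relation (\ref{connection on bundle family}) gives
\[
0=A_L'(y,v+\lambda)=-da_L'(y,v;\lambda)\cdot a_L'(y,v;\lambda)^{-1},
\]
so $da_L'(y,v;\lambda)=0$ for every $\lambda\in\Gamma$, where $d$ is the total differential in all the $(y,v)$ directions. Hence each $a_L'(\,\cdot\,;\lambda)$ is locally constant on the connected space $U\times V$, therefore constant, and $a_L'$ descends to a map $a_L':\Gamma\ra U(1)$ which, by the cocycle relation $a_L'(\lambda+\mu)=a_L'(\lambda)a_L'(\mu)$, is a group homomorphism (a character). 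This gives existence of the claimed representative.

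The remaining, genuinely minor, point is uniqueness, which I expect to be the main obstacle only in the sense of bookkeeping. Two such normalized pairs $(a_L',0)$ and $(a_L'',0)$ representing isomorphic bundles are related by a gauge $\psi:U\times V\ra U(1)$; the condition $A_L''=0$ forces $d\psi\cdot\psi^{-1}=0$, so $\psi$ is constant on $U\times V$ (again using connectedness). A constant $\psi\in U(1)$ conjugates a character to itself since $U(1)$ is abelian, so $a_L''=a_L'$: the character representative is unique. I would remark that this identifies the flat $U(1)$-bundles with $\mathrm{Hom}(\Gamma,U(1))$ pulled back from the fiber, consistent with the identification $\hat X\cong\mathrm{Hom}(\pi_1(T^n),U(1))$ used in the absolute case. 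The whole argument is a clean transcription of the absolute Lemma~\ref{character for flat U(1) bundles}; the only substantive difference is that $d$ now includes the base derivatives, so I must invoke simple connectedness of $U$ (not just of the fiber) when integrating and when concluding constancy.
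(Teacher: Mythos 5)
Your proof is correct and follows essentially the same route as the paper: use flatness and the Poincar\'e lemma on $U\times V$ to write $A_L=2\pi i\,df$, gauge it away by $\phi=\exp(2\pi i f)$, and then read off $da_L'=0$ from the compatibility relation, so the factor is a constant character. The only difference is that you also spell out the uniqueness of the normalized representative (constancy of the residual gauge $\psi$ plus commutativity of $U(1)$), which the paper's proof leaves implicit even though the statement asserts it.
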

\begin{proof}
The proof is the same as in the absolute case (Lemma \ref{character for flat U(1) bundles}).  Let $(A_L,a_L)$ be a representative of $L$. Since $dA_L=F=0$ there exists a function $f: U\times V\ra \dR$ such that
 $A_L=2\pi i df.$ Changing the representatives by $\phi(y,v)=exp(2\pi i f)$ sets $A_L=0$ invariant under translation by $\Gamma$. Using the compatibility between $A_L$ and $a_L$ we find that $da_L=0$ and hence constant on $U\times V$.  
\end{proof}
Lemma \ref{character for flat u(1) bundles family} proves that a global flat bundle on $M$ is again given by a character, that is by an element of $Hom(\Gamma,U(1))\cong\hat{T^n}$. Let us denote by $\Omega^2_{cl}(U)$ the space of closed real two-forms on $U$ and by $\Gamma(U,\hat{T^n})$ the space of smooth sections from $U$ to the torus dual to $T^n$. The following lemma describes connections with invariant curvature on the topologically trivial $U(1)$-bundle on $M$.

\begin{lemma}
The equivalence classes of connections on the topologically trivial $U(1)$-bundle with invariant curvature are in bijection with the space $\Omega^2_{cl}(U)\times \Gamma(U,\hat{T}^n)$.
\end{lemma}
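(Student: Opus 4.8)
The plan is to reduce to a globally defined connection one-form and then read off the two invariants as the base component of the curvature and the fibrewise holonomy. Since the bundle is topologically trivial I would first choose the representative with trivial factor of automorphy $a_L\equiv 1$; then Proposition~\ref{factor of automoprhy and connection proof} shows that $A_L$ descends to a globally defined $i\dR$-valued one-form $\mathcal A$ on $M=U\times T^n$, and the residual gauge freedom is exactly the group of smooth maps $\phi:M\ra U(1)$ acting by $\mathcal A\mapsto\mathcal A-d\log\phi$. Writing $2\pi i F=d\mathcal A$ and decomposing the invariant two-form $F$ in base/fibre coordinates $\{y^i\}$, $\{p^\mu\}$ as $F=\sum F^B_{ij}(y)\,dy^i\wedge dy^j+\sum G_{i\mu}(y)\,dy^i\wedge dp^\mu+\sum H_{\mu\nu}(y)\,dp^\mu\wedge dp^\nu$, invariance forces the coefficients to be independent of the fibre coordinates, and the identity $dF=0$ then gives that $F^B:=\sum F^B_{ij}\,dy^i\wedge dy^j$ is closed on $U$, each $G_\mu:=\sum_i G_{i\mu}\,dy^i$ is closed on $U$, and $H_{\mu\nu}$ is constant. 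Finally, since $\mathcal A$ is global the class $[F]$ vanishes in $H^2(M,\dR)\cong\wedge^2 V^*$, and as this class is represented by the constant form $H$ I conclude $H=0$; thus the curvature of a trivial bundle with invariant curvature has no pure-fibre part.

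Next I would define the map to $\Omega^2_{cl}(U)\times\Gamma(U,\hat T^n)$. The first factor is $F^B$, canonical because the decomposition of an invariant form is unique and the curvature is gauge invariant. For the second factor, because $H=0$ the restriction of $L$ to each fibre $\{y\}\times T^n$ is flat, so by the fibrewise version of Lemma~\ref{character for flat u(1) bundles family} it is classified by its holonomy character in $\hat T^n\cong\mathrm{Hom}(\pi_1(T^n),U(1))$; letting $y$ vary gives a smooth section $s\in\Gamma(U,\hat T^n)$. Both $F^B$ and $s$ are manifestly gauge invariant, so $(a_L,A_L)\mapsto(F^B,s)$ is well defined on equivalence classes. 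A short computation of the variation of holonomy along the base shows $d\log s_\mu=2\pi i\,G_\mu$, so the mixed part $G$ is recovered as the derivative of $s$; hence $(F^B,s)$ in fact determines the entire curvature $F=F^B+G$.

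For the inverse map I would build a canonical connection from $(F^B,s)$. Using that $U$ is contractible, write $F^B=d\eta$ for $\eta\in\Omega^1(U)$, lift $s$ to $\tilde s=(\tilde s_\mu):U\ra(\dR^n)^*$ (possible since $U$ is simply connected), and set $\mathcal A:=2\pi i\big(\pi^*\eta+\sum_\mu\tilde s_\mu(y)\,dp^\mu\big)$, a global one-form on the trivial bundle. Its curvature is $2\pi i\big(F^B+\sum_\mu d\tilde s_\mu\wedge dp^\mu\big)$, which is invariant, has base part $F^B$ and no fibre part, and its fibre holonomy around the cycle $\mu$ is $\exp(2\pi i\tilde s_\mu(y))$, descending to $s(y)$. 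The ambiguities are harmless: a different primitive differs by $dh$ and a different lift differs by a constant $k\in(\dZ^n)^\vee$, and both changes are realised by the genuine gauge transformations $\phi=\exp(2\pi i\,\pi^*h)$ and $\phi=\exp(2\pi i\sum_\mu k_\mu p^\mu)$, which are well defined on $M$. Hence the gauge class of $\mathcal A$ depends only on $(F^B,s)$.

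It remains to check that the two constructions are mutually inverse, and this is where the only real work lies. The composite (forward then inverse) returns $(F^B,s)$ by the curvature and holonomy computations above. For the other composite I must show that an arbitrary invariant-curvature connection $\mathcal A$ on the trivial bundle is gauge equivalent to the canonical $\mathcal A_{\mathrm{can}}$ built from its own invariants. The key point is that $\mathcal A$ and $\mathcal A_{\mathrm{can}}$ have the same curvature, so $\omega:=\mathcal A-\mathcal A_{\mathrm{can}}$ is a closed $i\dR$-valued one-form on $M$; since $U$ is contractible the only homology is carried by the fibre cycles, and because both connections share the fibre holonomy $s$ the periods of $\omega$ over these cycles lie in $2\pi i\dZ$. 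Therefore $\omega=-d\log\phi$ for a well-defined $\phi=\exp(2\pi i f):M\ra U(1)$, exhibiting the required gauge equivalence. The main obstacle is thus exactly the bookkeeping of this last step, namely matching holonomies to control the integral periods of $\omega$, together with the need for $U$ to be contractible (so that $H^1(U)=H^2(U)=0$), which is precisely the local setting in which the lemma is applied.
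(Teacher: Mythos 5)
Your proof is correct, and its skeleton matches the paper's: trivialize the factor of automorphy, decompose the invariant curvature in base/fibre coordinates, observe that topological triviality forces the purely fibrewise part $H$ to vanish, and identify the remaining data as a closed base two-form plus a family of fibre characters. Where you genuinely diverge is in how the $\Gamma(U,\hat{T}^n)$ factor is produced and how bijectivity is closed. The paper manufactures a normal form: it tensors $L$ with an auxiliary bundle $L'$ whose connection one-form is $-A^0-2\pi i\sum_\mu G_\mu\,dq^\mu$, applies the flat classification (Lemma \ref{character for flat u(1) bundles family}) to $L\otimes L'$ to extract a constant character $\bar{t}$, arrives at the representative $A_L=A^0+2\pi i\sum_\mu(G_\mu+\bar{t}_\mu)\,dq^\mu$ with $a_L\equiv 1$, and then verifies by direct computation that the residual ambiguities in $A^0$ and in $G+\bar{t}$ are exactly gauge transformations and $\Gamma^\vee$-shifts. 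You instead characterize the second invariant intrinsically as the fibrewise holonomy section $s$, which is gauge invariant for free, and you close injectivity with a period argument: two connections with the same curvature and the same holonomy differ by a closed one-form whose periods over the fibre cycles (which generate $H_1(M,\dZ)$, since $U$ is contractible) lie in $2\pi i\dZ$, hence equal $d\log\phi$ for an honest gauge transformation $\phi:M\ra U(1)$. Your identity $d\log s_\mu=2\pi i\,G_\mu$ is the correct bridge between the two descriptions (the $\partial/\partial p^\mu$-derivative terms integrate to zero by periodicity of the connection coefficients), and it shows that your pair $(F^B,s)$ is literally the paper's pair $(F^0,\,G+\hat{t})$. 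What each route buys: yours isolates exactly where the topology enters and eliminates the well-definedness bookkeeping, while the paper's explicit normal form is the object actually reused downstream (in Theorem \ref{appel-humbert family} and the T-duality computations of Chapter 7), so its coordinate labor is not wasted.

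One further point in your favor: you flag that the argument needs $H^2_{dR}(U)=0$, i.e.\ contractibility, not merely the simple connectedness stated in the section's setup. The paper's own proof has the identical requirement (it writes $F^0=dA^0$), and since the lemma is only ever invoked over contractible bases the discrepancy is harmless, but your formulation of the hypothesis is the accurate one.
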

\begin{proof}Let $\{y^1,...,y^k\}$ be global coordinates on $U$ and $\{q^1,...,q^n\}$ one-periodic coordinates on $T^n$ such that $\{\frac{\partial}{\partial q^\mu}\}$ is a frame for $\Gamma\subset V\cong T_xT^n$ for any $x\in T^n$. Then, $\{y^i,q^\mu\}$ provide global periodic coordinates for $M$.

Any invariant $F=F_\nabla$ can be written in coordinates $\{y^i,q^\mu\}$ as 
$$F=2\pi i\sum_{i,j=1}^k F^0_{ij}dy^i\wedge dy^j + 2\pi i \sum_{\substack{i=1,...,k\\ \mu=1,..,n}}G_{i\mu}dy^i\wedge dq^\mu+\pi i \sum_{\mu,\nu=1}^nH_{\mu\nu} dq^\mu \wedge dq^\nu,$$
where $F^0_{ij}$ and $G_{i\mu}$ are independent of the $q$ coordinates and $H_{\mu\nu}$ are constant integers. The cohomology class of $F$ depends only on $H_{\mu\nu}$ and $[F]=0$ if and only if $H_{\mu \nu}=0$. In particular, 
$$F=2\pi i \sum_{i,j=1}^k F^0_{ij}dy^i\wedge dy^j + 2\pi i \sum_{\substack{i=1,...,k\\ \mu=1,..,n}}G_{i\mu}dy^i\wedge dq^\mu.$$
Since $dF=0$ there exists $G_\mu:U\ra \dR$ for $\mu=1,...,n$ such that $G_{i\mu}=\frac{\partial G_\mu}{\partial y^i}$ and $A^0=2 \pi i\sum_{i=1}^k A^0_idy^i$ such that $dA^0=2\pi i \sum_{i,j=1}^k F^0_{ij}dy^i\wedge dy^j$. Note that $A^0$ is well-defined up to exact forms on $U$ and $G_\mu$ are well-defined up to constant functions.

Let $L'$ be the line bundle on $M$ corresponding to the pair $(A_{L'}=-A^0-2\pi i\sum_{\mu=1}^n G_\mu dq^\mu,a_{L'}\equiv 1)$. Then, $L\otimes L'$ is a flat line bundle on $M$ and therefore there exists a representative $(a_{L\otimes L'},A_{L\otimes L'})$ such that $A_{L\otimes L'}=A_L+A_{L'}=0$ and $a_{L\otimes L'}=a_L\otimes a_L'=a_L$ is constant on $U\times V$. That is, there exists a global character $\hat{t}\in \hat{T}^n$ such that $a_L(y,q;\lambda)=exp(2\pi i \bar{t}(\lambda))$, where $\bar{t}\in V^*$ is a lift of $\hat{t}\in V^*/\Gamma^\vee$ to $V^*$. 

Changing the representative again by $\phi(y,q)=exp(2\pi i \bar{t}\cdot q)$ we have 
$$(A_L+A_{L'})(y,q)=2\pi i \sum_{\mu=1}^n\bar{t}_\mu dq^\mu,\ \ \ a_{L}\otimes a_{L'}\equiv 1.$$
In particular, we find a representative for $L$
\begin{align}\label{representative of connection on trivial bundle}
A_L(y,q)&=A^0+2\pi i \sum_{\mu=1}^n (G_\mu+\bar{t}_\mu)dq^\mu,\ \ \ \ a_L(y,q,\lambda)=1,
\end{align}
where we only made choices for $A^0$ such that $dA^0=F^0$, for $G_\mu$ such that $dG_\mu=\sum G_{i\mu} dy^i$ and for $\bar{t}\in V^*$ such that it projects to $\hat{t}\in \hat{T}^n$.

If we choose a different $(A^0)'$ then  $A^0-(A^0)'$ is closed and hence exact on $U$. So if $A^0-(A^0)'=2\pi i df$ then the two representatives of $L$ are equivalent via $\phi(y,q)=\phi(y)=exp(2\pi i f(y)).$

To show (\ref{representative of connection on trivial bundle}) is well defined it remains to show that $G_\mu+\bar{t}_\mu$ is a well-defined family of characters associated to $L$. Suppose that we chose $G_\mu'$. Then, there exists a representative for $L$ with 
\begin{align*}
A'_L(y,q)&=A^0+2\pi i \sum_{\mu=1}^n (G'_\mu+\bar{t}_\mu')dq^\mu,\ \ \ \ a'_L(y,q,\lambda)=1.
\end{align*}
Then, $$(A_L-A'_L,a_L\otimes (a'_L)^{-1})=\Big(2\pi i \sum_{\mu=1}^n(G_\mu+\bar{t}_\mu-G_\mu'-\bar{t}_\mu')dq^\mu,1\Big)$$ is a representative of the trivial connection on the topologically trivial $U(1)$-bundle. In particular, there must exists a function $\phi: U\times V\ra U(1)$ such that
\begin{align*}
    1=\phi(y,q+\lambda)\phi(y,q)^{-1},\ \ \ \sum_{\mu=1}^n(G_\mu+\bar{t}_\mu-G_\mu'-\bar{t}_\mu')dq^\mu = dlog \phi(y,q).
\end{align*}
From the second equation, $\phi(y,\mu)=exp(2\pi i (G_\mu+\bar{t_\mu}-G_\mu'-\bar{t}_\mu')q^\mu)$ up to constant and it is invariant under translation by elements of $\Gamma$ if and only if $G+\bar{t}-G'-\bar{t}'\in\Gamma^\vee$.
\end{proof}
\begin{remark}
    Let $L$ be a $U(1)$-bundle corresponding to the pair $(F^0,G)$ where $F^0\in \Omega^2_{cl}(U)$ and $G:U\ra \hat{T}^n$. Then, if $dA^0=F$ we may represent $L$ by the pair
    \begin{align}\label{repr of connection on triv bundle 2} A_L=2\pi i A^0-2\pi i dG(q),\ \ \ a_L(y,q;\lambda)=exp(2\pi i G(\lambda)).\end{align}
    Indeed, this differs from (\ref{representative of connection on trivial bundle}) by $\phi(y,q)=exp(2\pi i G(q)). $
\end{remark}

Finally, we can prove a version of the Appel-Humbert Theorem \ref{appel-humbert for u(1)} for $U(1)$-bundles with connections on a trivial family of tori. 

Let $\cP_\dR^U(\Gamma)$ be the group of triples $(F^0,H,\chi)$ where $F^0\in \Omega^2_{cl}(U)$ is a closed two-form on $U$, $H\in Alt^2(\Gamma,\dZ)\cong \HH^2(M,\dZ)\cong \HH^2(T^n,\dZ)$  and $\chi: U\times \Gamma\ra U(1)$ is a semicharacter for $H$. That is, for any two sections $\lambda,\mu: U\ra \Gamma$ we have
$$\chi(\lambda+\mu)=\chi(\lambda)\chi(\mu)exp(i\pi H(\lambda,\mu))$$
where we evaluate $H$ fiberwise. The group multiplication is given by $$(F^0,H,\chi)\cdot((F^0)',H',\chi')=(F^0+(F^0)',H+H',\chi\cdot \chi').$$

Let us denote by $\cA$ the group of $U(1)$-bundles on $U\times T^n$ with connections whose curvature is invariant and by $\cA_0$ the connections with invariant curvature on the trivial $U(1)$-bundle.
\begin{theorem}[Appel-Humbert Theorem in family]\label{appel-humbert family}
There exists an isomorphism
$$\cP_\dR^U(\Gamma)\ra \cA$$
such that 
$$(F^0,H,\chi)\mapsto (A_L,a_L=\chi(\lambda)exp(i\pi H(q,\lambda)).$$
\end{theorem}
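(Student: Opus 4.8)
The plan is to realize $\cP_\dR^U(\Gamma)\ra \cA$ as the top row of a morphism of short exact sequences, exactly as in the absolute case (Theorem \ref{appel-humbert for u(1)}) and in the original Appel-Humbert theorem \cite[Theorem 2.2.3]{BL}, and then conclude via the five lemma. First I would assemble the diagram
\[
\begin{tikzcd}
    1 \arrow{r} & \Gamma(U,\hat{T}^n)\times \Omega^2_{cl}(U) \arrow{d} \arrow{r} & \cP_\dR^U(\Gamma) \arrow{d} \arrow{r} & Alt^2(\Gamma,\dZ) \arrow{r}\arrow{d}{=} & 1 \\
    1\arrow{r} & \cA_0 \arrow{r} & \cA \arrow{r}{c_1} & \HH^2(T^n,\dZ) \arrow{r} & 1,
\end{tikzcd}
\]
where the bottom-right map is $(a_L,A_L)\mapsto [\frac{1}{2\pi i}dA_L]$ and the top-right map is the projection $(F^0,H,\chi)\mapsto H$. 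The kernel of the bottom map is precisely $\cA_0$, the connections of invariant curvature on the topologically trivial bundle, and the previous lemma identifies $\cA_0\cong \Omega^2_{cl}(U)\times\Gamma(U,\hat{T}^n)$, matching the kernel of the top map. I would record that both rows are exact: the bottom sequence is split by the discussion of invariant curvature (every integral invariant class $H$ is realized by some $(a_L,A_L)$, and the difference of two such lies in $\cA_0$), and the top sequence is split essentially by construction of $\cP_\dR^U(\Gamma)$.

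The main content is defining the central vertical arrow and checking it is a well-defined group homomorphism. Given $(F^0,H,\chi)$, choose $A^0\in\Omega^1(U)$ with $dA^0=F^0$ and set
\[
a_L(y,q;\lambda)=\chi(\lambda)\exp(i\pi H(q,\lambda)),\qquad A_L(y,q)=2\pi i A^0+i\pi H(q,dq),
\]
mimicking the canonical factor (\ref{canonical factor u(1)}) from the absolute case but now with the base-direction contribution $A^0$ adjoined and $\chi$ allowed to vary smoothly over $U$. I would verify the factor-of-automorphy identity using the semicharacter relation $\chi(\lambda+\mu)=\chi(\lambda)\chi(\mu)\exp(i\pi H(\lambda,\mu))$, exactly as the bilinearity of $H$ forced it in Theorem \ref{appel-humbert for u(1)}, and check the compatibility (\ref{connection on bundle family}) between $a_L$ and $A_L$; this is the same computation as in the absolute case because the $A^0$ term is basic (independent of $q$ and of $\lambda$) and hence drops out of both the factor identity and the transformation law. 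Multiplicativity of the map is immediate from the definition of the group laws on $\cP_\dR^U(\Gamma)$ and $\cA$.

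Finally I would read off the two squares. The right square commutes because $dA_L=2\pi i\,dA^0 + i\pi\,d(H(q,dq)) = i\pi\,H(dq,dq)$ represents $H$ in $\HH^2(T^n,\dZ)$, independently of $A^0$. The left square commutes because restricting the map to $H=0$ reproduces the identification of $\cA_0$ from the preceding lemma: a pair $(F^0,\chi)$ with $\chi$ flat is sent to the pair $(F^0,G)$ with $G\in\Gamma(U,\hat T^n)$ determined by $\chi$, matching (\ref{repr of connection on triv bundle 2}). Since the outer two vertical arrows are isomorphisms (the rightmost is the identity, the leftmost is the lemma), the five lemma gives that the middle arrow is an isomorphism. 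The step I expect to require the most care is the well-definedness of the central map: one must confirm that the arbitrary choice of primitive $A^0$ (unique only up to exact forms on $U$) and the choice of lift of the fiberwise character change the representative only within its equivalence class under Definition \ref{factor of automoprhy family}, so that the map genuinely lands in $\cA$ rather than in the set of representatives. This is precisely the family analogue of the independence checks already carried out in the preceding lemma via gauge transformations $\phi(y)=\exp(2\pi i f(y))$, so it reduces to bookkeeping that the base-direction gauge freedom is exactly $\Omega^2_{cl}(U)/d\Omega^1(U)$-trivial.
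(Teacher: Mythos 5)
Your overall strategy (build the morphism of short exact sequences with $\Omega^2_{cl}(U)\times\Gamma(U,\hat{T}^n)\cong\cA_0$ on the left, $Alt^2(\Gamma,\dZ)$ on the right, and conclude by the five lemma) is exactly the paper's, but your definition of the central map has a genuine gap: you set
$$A_L(y,q)=2\pi i A^0+i\pi H(q,dq),$$
and this pair $(a_L,A_L)$ does \emph{not} satisfy the compatibility condition (\ref{connection on bundle family}) once $\chi$ is allowed to vary over $U$. Writing $\chi(\lambda)=\chi^0_H(\lambda)\exp(2\pi i G(\lambda))$ with $\chi^0_H$ a constant semicharacter and $G\in\Gamma(U,\hat{T}^n)$, the factor of automorphy has base-direction differential $d\log a_L(y,q;\lambda)=2\pi i\, dG(\lambda)+i\pi H(dq,\lambda)$, whereas your $A_L$ satisfies $A_L(y,q+\lambda)-A_L(y,q)=i\pi H(\lambda,dq)$; the two sides of (\ref{connection on bundle family}) then differ by $2\pi i\,dG(\lambda)$, which is nonzero precisely when the fiberwise character part of $\chi$ is non-constant. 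Your remark that ``the $A^0$ term is basic and hence drops out'' handles the wrong term: it is the $y$-dependence of $\chi$, not $A^0$, that breaks the absolute-case computation. The paper repairs this by choosing a lift $\bar G\in\Gamma(U,\dR^n)$ of $G$ and defining
$$A_L(y,q)=2\pi i A^0-2\pi i\, d\bar G(q)+i\pi H(q,dq),$$
after which the correction term exactly cancels $2\pi i\,dG(\lambda)$. This omission also invalidates your claim that the left square commutes: with $H=0$ your map sends $(F^0,\chi)$ to $(2\pi i A^0,\chi)$, which does not match (\ref{repr of connection on triv bundle 2}) (and again fails the compatibility law), so the restriction to the kernel does not reproduce the identification of $\cA_0$.

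A second, smaller point: once the correction term $-2\pi i\,d\bar G(q)$ is included, the well-definedness check is larger than the one you anticipate. Besides independence of the primitive $A^0$ (gauge by $\phi(y)=\exp(2\pi i f(y))$), one must also check independence of the choice of constant semicharacter $\chi^0_H$ (two such differ by a constant character $\hat t$, which changes $G$ by $\hat t$ but leaves $d\bar G$ unchanged) and of the lift $\bar G$ of $G$ to $\Gamma(U,\dR^n)$ (two lifts differ by a constant element of $\Gamma^\vee$, again leaving $d\bar G$ unchanged). These are exactly the verifications the paper carries out before invoking the five lemma.
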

\begin{proof}
    To define the map $\cP_\dR^U(\Gamma)\ra \cA$ we have to find a well-defined $A_L$ corresponding to $(F^0,H,\chi)$. Firstly, the space of semi-characters  $\chi: U\times \Gamma \ra U(1)$ for some $H$ is a torsor over the space of characters $\Gamma(U,\hat{T}^n)$. Indeed, for any two semi-characters $\chi, \chi'$ for $H$ we have 
    $$    \chi'(\lambda+\mu)\cdot\chi(\lambda+\mu)^{-1}=\chi'(\lambda)\chi'(\mu)\chi(\lambda)^{-1}\chi(\mu)^{-1} exp(i\pi H(\mu,\lambda))exp(-i\pi H(\mu,\lambda)).$$
    Let $\chi^0_H$  be a constant semi-character for $H$ on $U\times \Gamma$, that is a semi-character pulled back from $\Gamma$. Then, $\chi$ can be written as
    $$\chi(\lambda)=\chi^0_H(\lambda)exp(2\pi i G( \lambda))$$
    for some $G\in \Gamma(U,\hat{T}^n)$. Let us denote by $\bar{G}$ a lift of $G$ to $\Gamma(U,\dR^n)$ and let $A^0\in \Omega^1(U)$ be such that $dA^0=F^0$.     Let $L$ be the $U(1)$-bundle with connection associated to the following representatives
    \begin{align*}a_L(y,q;\lambda)&=\chi(\lambda)exp(i\pi H(q,\mu)),\\
    A_L(y,q)&=2\pi i A^0-2\pi i \sum_{\substack{\mu=1,...,n\\ i=1,...,k}} \partial_i\bar{G}_\mu q^\mu dy^i +i\pi\sum_{\mu\nu} H_{\mu\nu}q^\mu dq^\nu\\
    &=2\pi i A^0-2\pi i d\bar{G}(q)+i\pi H(q,dq).\end{align*}
    We have to show that this map is well-defined.

    Again, if $(A^0)'$ is such that $d(A^0)'=dA^0=F^0$ then there exists a function $f: U\ra \dR$ such that $A^0-(A^0)'=2\pi i f$ and the two pairs $(a_L,A_L)$ and $(a_L,A_L')$ are equivalent via $\phi(y)=exp(2\pi i f)$.

    Suppose now that we choose a different constant semi-character $\chi^1_H$ for $H$. Then, there exists a unique $\hat{t}\in \hat{T}$ such that 
    $$\chi^0_H(\lambda)=\chi^1_H(\lambda)\cdot exp(2\pi i \hat{t}\cdot \lambda)$$
    In particular, 
    $$\chi(\lambda)=\chi^1_H(\lambda)exp(2\pi i (G+\hat{t})\cdot \lambda)$$
    and given any lift $\bar{t}$ of $\hat{t}$ to $\dR^n$ we have
    $\partial_i \bar{G}_\mu=\partial_i(\bar{G}_\mu+\bar{t}_\mu)$ so the representative $(a_L,A_L)$ of $L$ is unchanged.

    Finally, suppose that $\bar{G}'$ is a different lift of $G$ to $\Gamma(U,\dR^n)$. Then, $\bar{G}'-\bar{G}: U\ra \Gamma^\vee$ so there exists a constant $\hat{\lambda}\in \Gamma^\vee$ such that $\bar{G}'_\mu(y)=\bar{G}_\mu(y)+\hat{\lambda}_\mu$. In particular, the expression for $(A_L,a_L)$ remains unchanged.
    
    Finally, the map defined above provides a morphism of short exact sequences,
    \[
    \begin{tikzcd}
        1 \arrow{r} & \Omega^2_{cl}(U)\times \Gamma(U,\hat{T^n})
    \arrow{r}\arrow{d}{\cong} &\cP_\dR^U(\Gamma) \arrow{d} \arrow{r} & Alt^2(\Gamma,\dZ) \arrow{d}{\cong}\arrow{r} & 1\\
    1 \arrow{r} & \cA_0 \arrow{r} & \cA \arrow{r} & H^2(M,\dZ) \arrow{r} & 1 
     \end{tikzcd}
    \]
    and by the five lemma, we have isomorphism in the middle.
\end{proof}

\subsection{Main theorem on contractible base}
We are finally ready to generalize Theorem \ref{tdual u(1) bundles on a torus} to branes in a trivial affine torus bundle. We will assume that the support of the brane $S\subset M$ is an affine torus subbundle in the affine torus bundle $\pi:M\ra B$ such that $\pi(S)$ is trivial. Then, we can represent $U(1)$-bundles with connections on $S$ by the factors of automorphy described in Theorem \ref{appel-humbert family}.

Let $\pi:M\ra B$ be an affine torus bundle with a section, that is $M\cong V/\Gamma$. Let $\pi:\hat{M}\ra B$ be the dual torus bundle $\hat{M}\cong V^*/\Gamma^\vee$. Let $U\subset B$ open and $\{y,v\}$ local coordinates on $M|_U$. Let $\{y,\hat{v}\}$ be local coordinates on $\hat{M}|_U$ such that $\{v\}$ and $\{\hat{v}\}$ include dual frames of $\Gamma$ and $\Gamma^\vee$ on the fibers. Then we can define the Poincar\'e bundle on $M\times_U\hat{M}$ as the pullback of the Poincar\'e bundle from one of the fibers
\begin{equation}\label{u(1) poincare line bundle in family}
    a_\cP(y,v,\hat{v};\lambda,\hat{\lambda})=exp(i\pi (\hat{v}(\lambda)-\hat{\lambda}(v)+\hat{\lambda}(\lambda))),\ \ \ A_\cP(y,v, \hat{v})=i\pi (\hat{v}dv-vd\hat{v}).
\end{equation}
Since the Chern class of $M$ and $\hat{M}$ are trivial, the transition functions between fiberwise coordinates are linear transformations which preserve $a_\cP$ and $A_\cP$. Therefore, $\cP$ is a well-defined $U(1)$-bundle with a connection whose curvature is given locally by $2\pi i d\hat{v}\wedge dv$. Via the two-form $P=d\hat{v}\wedge dv$ the torus bundles $(M,H=0)$ and $(\hat{M},\hat{H}=0)$ are T-dual in the sense of generalized geometry.

Let $S\subset M$ be an affine torus subbundle such that $\pi(S)$ is contractible. Let $L\ra S$ be a $U(1)$-bundle with a connection such that the curvature $2\pi i F$ of the connection is invariant. Then once again, $\cL=(S,F)$ is a locally T-dualizable generalized brane and we can construct the diagram:
\begin{equation*}
\begin{tikzcd}
& Z \arrow{dl}[swap]{p_Z} \arrow{dr}{\hat{p}_Z} \arrow[hook]{rrr}{i_Z} &  & & M\times_{\pi(S)}\hat{M} \arrow{dl}[swap]{p} \arrow{dr}{\hat{p}} \\
S\arrow[hook, bend right =30]{rrr}{i_S} & & \hat{S} \arrow[hook, bend right =20]{rrr}{i_{\hat{S}}} & \pi^{-1}(\pi(S)) & & \hat{\pi}^{-1}(\pi(S))
\end{tikzcd}
\end{equation*}
With this notation, we have the following theorem.
\begin{theorem}\label{tdual U(1) bundles trivial base}
Let $S\subset M$ be an affine torus subbundle such that $\pi(S)$ is contractible. Let $L\ra S$ be a $U(1)$-bundle with connection as such that the curvature $2\pi i F$ is invariant. Then, the following hold.
\begin{enumerate}
    \item\label{1 contr base} There exists a unique generalized T-dual $(\hat{S},\hat{F})$ of $(S,F)$ such that for any leaf $Z$ of $\Delta$ projecting onto $\hat{S}$ via  $\hat{p}$ the $U(1)$-bundle with connection
$$ \hat{L}_Z:= p^*L\otimes \cP|_Z$$
is trivial when restricted to the fibers of $\hat{p}_Z:Z\ra \hat{S}.$ 
\item \label{2 contr base} For any leaf $Z$ as in \ref{1 contr base}., the  pushforward 
$$\hat{E}:=(\hat{p}_Z)_*L_Z$$
is a projectively flat $U(d^2)$-bundle with connection independent of $Z$, and the curvature of the connection is given by $\hat{F} \cdot Id \in \Omega^2(\hat{S},\gu(d^2))$.
\item \label{3 contr base} Finally, there exists a projectively flat $U(d)$-bundle $\hat{L}\ra \hat{S}$ such that
$$\hat{E} \cong \hat{L}\otimes U(d)$$
in the sense of Theorem \ref{pushforward U(1) degen}. The curvature of the connection on $\hat{L}$ is  $\hat{F}\cdot Id\in \Omega^2(\hat{S},\gu(d))$.
\end{enumerate}
We say that $(\hat{L},\hat{S})$ is the T-dual of $(S,L)$.
\end{theorem}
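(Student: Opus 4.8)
The plan is to run the argument of Theorem \ref{tdual u(1) bundles on a torus} relative to the contractible base $\pi(S)$, replacing the absolute Appel--Humbert representation by its family version (Theorem \ref{appel-humbert family}) and invoking the structural results on the leaves of $\Delta$ from Chapter \ref{chapter gen geom tdual}. First I would fix coordinates: since $\pi:M\to B$ admits a section and $U:=\pi(S)$ is contractible, $S$ is a trivial affine torus subbundle $S\cong U\times T^l$, and I may represent $L$ by a canonical factor of automorphy in the sense of Theorem \ref{appel-humbert family}, determined by a triple $(F^0,H,\chi)$: here $F^0\in\Omega^2_{cl}(U)$ is the base component of $F$, the mixed component has a local potential $G_\mu$, and $H$ is the constant fiberwise component $H=\sum_\mu d_\mu\,dq^\mu\wedge dq^{\mu+r}$ with $d_\mu=n_\mu/m_\mu$. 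By Theorem \ref{local gg thm} the underlying generalized brane $(S,F)$ already has a generalized T-dual $(\hat S,\hat F)$, with $\hat F$ given in coordinates by (\ref{hat(F) in coordinates}); the leaves $Z$ of $\Delta$, their images $\hat S_Z$, and the decompositions of $p_Z,\hat p_Z$ into fiberwise homomorphisms and translations are all described by Lemma \ref{local leaves coordfree} and Propositions \ref{space of leaves of Delta}--\ref{space of leaves mapping to a T-dual}. The task is to lift these from the level of two-forms to the level of $U(1)$-bundles with connection.

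Next I would compute $\hat L_Z=(p^*L\otimes\cP)|_Z$ explicitly, using the factor for $L$ together with the Poincar\'e factor (\ref{u(1) poincare line bundle in family}). This is the family analogue of the computation producing $L_c$ in (\ref{L_c factor}): the fiberwise matrix structure is identical to the absolute case, while the base coordinates $y$ and the potential $G_\mu$ survive only in the connection one-form, contributing the horizontal terms $dy_i\wedge d\hat q_\mu$. For Part \ref{1 contr base}, the fibers of $\hat p_Z:Z\to\hat S$ are disjoint unions of translates of copies of the fiberwise kernel $K(L)_0$, and $\hat L_Z$ restricted to them is flat; since $F^0$ and $G_\mu$ involve only base and horizontal directions they cannot obstruct fiberwise triviality, so triviality is governed exactly as in the absolute case by the condition that the character datum lie in the fiberwise image of $F$. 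Combined with Corollary \ref{space of leaves mapping to a T-dual}, this singles out a unique $\hat S$ for which $\hat L_Z$ is fiberwise trivial on every leaf mapping to it.

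For Part \ref{2 contr base} I would factor $\hat p_Z=t_c\circ\hat p_0$ and push forward along $\hat p_0$ using the family projection formula (Proposition \ref{projection formula prop}); the fiberwise factor is a pushforward along the isogeny with analytification $-H$, so the result is a projectively flat $U(d^2)$-bundle whose curvature I must check equals $\hat F\cdot\mathrm{Id}$, with $\hat F$ as in (\ref{hat(F) in coordinates}). Independence of the leaf $Z$ follows as in Theorem \ref{tdual u(1) bundles on a torus} from a family version of the translation-versus-tensor identity (Lemma \ref{Lemma: translation and tensor}). Finally, Part \ref{3 contr base} is obtained by applying the splitting of Theorems \ref{pushforward tensor U(1) nondegen} and \ref{pushforward U(1) degen} to the fiberwise pushforward; because all fiberwise data ($H$, the lattice, the chosen coset representatives) are locally constant over the contractible base, the induced $U(d)$-factor can be split off uniformly, yielding a projectively flat $U(d)$-bundle $\hat L$ with $\hat E\cong\hat L\otimes U(d)$ and the stated curvature.

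I expect the main obstacle to be Part \ref{2 contr base}: reconciling the factor-of-automorphy computation of the pushforward in family with the purely differential-geometric two-form $\hat F$ produced by Theorem \ref{local gg thm}. Concretely, one must verify that the horizontal connection terms coming from the potential $G_\mu$ transform under $(\hat p_0)_*$ into exactly the $-\partial_i b^\mu\,dy_i\wedge d\hat q_\mu$ terms of (\ref{hat(F) in coordinates}), while the fiberwise curvature inverts from $H$ to $-\tfrac{m_\mu}{n_\mu}$; keeping the base-direction bookkeeping consistent with the isogeny structure on the fibers is where the real care is needed. A secondary subtlety is ensuring that the $U(d)$-splitting in Part \ref{3 contr base} is genuinely global over $U$ rather than merely pointwise, which I would handle using contractibility of $\pi(S)$ to make all the discrete choices entering Theorem \ref{pushforward U(1) degen} simultaneously.
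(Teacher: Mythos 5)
Your proposal is correct, and its architecture coincides with the paper's proof: the family Appel--Humbert representation (Theorem \ref{appel-humbert family}), the leaf structure of $\Delta$ from Lemma \ref{local leaves coordfree} and Propositions \ref{space of leaves of Delta}, \ref{space of T-duals} and Corollary \ref{space of leaves mapping to a T-dual}, the fiberwise-triviality criterion in $coker(H)$ for Part \ref{1 contr base}, and the splitting via Theorem \ref{pushforward U(1) degen} for Part \ref{3 contr base}. The one substantive divergence is how Part \ref{2 contr base} is executed. You propose to transplant the absolute argument of Theorem \ref{tdual u(1) bundles on a torus}: factor $\hat{p}_Z=t_c\circ\hat{p}_0$, apply a family projection formula, identify the fiberwise pushforward with a pushforward along the isogeny with analytification $-H$ as in (\ref{funky equation}), and deduce leaf-independence from a family version of Lemma \ref{Lemma: translation and tensor}. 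The paper explicitly declines this route (``we write out the full calculation to avoid proving analogues of the necessary lemmas'') and instead computes the factor of automorphy and connection of $\hat{E}_c$ directly, arriving at (\ref{hatEc}). Your route buys reuse of the absolute machinery, but at the cost of first establishing family analogues of Proposition \ref{projection formula prop} and of Lemmas \ref{Lemma: translation and tensor} and \ref{Lemma: invariance under kernel}; these extensions are routine over a contractible base because all fiberwise data are constant, but they are not free, and the paper's self-contained computation is the price it pays to avoid them. One point you should make explicit either way: besides leaf-independence, the explicit representative of $\hat{E}$ is computed using the auxiliary choices of the section $b$ (with $S=t_bS_0$) and of the lift of $G$ from $V_S^*$ to $V_M^*$; since $\hat{L}_Z=(p^*L\otimes \cP)|_Z$ and $\hat{p}_Z$ are intrinsic, properties established in one choice do transfer, but if you quote the resulting formula as canonical you must check consistency under these choices, which is exactly the additional bookkeeping (independence of $c$, of the lift of $G$, and of $b$) that occupies most of the paper's Part \ref{2 contr base}.
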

\begin{proof} We begin analogously to the proof of Theorem \ref{tdual u(1) bundles on a torus}. Let us denote the base by $U$. The affine torus subbundle $S\subset M$ is a translate of $S_0\cong V_S/\Gamma_S$ by a section $b\in \Gamma(U,M)$ that is, 
$$S=t_b(S_0).$$
By Theorem \ref{appel-humbert family} we can represent $L_0=t_b^*L\ra S_0$ as a triple $(F^0,H,\chi)$ where $F^0\in \Omega^2_{cl}(U)$, $H\in \HH^0(U,\wedge^2\Gamma_S^\vee)$ and $\chi: U\times \Gamma_S\ra U(1)$ is a semi-character for $H$.

As in Theorem \ref{appel-humbert family} we may choose a decomposition $\Gamma_S/(Ker(H)\cap\Gamma_S)=\Gamma_1+\Gamma_2$ for $H$. Then the corresponding constant semicharacter is
$$\chi_0(\lambda)=exp(i\pi H(r(\lambda)_1,r(\lambda)_2)),\ \ \ \lambda\in \Gamma(U,\Gamma_S)$$
where $r:\Gamma_S\ra \Gamma_S/(Ker(H)\cap\Gamma_S)$ is the projection and $r(\lambda)=r(\lambda)_1+r(\lambda)_2$ with respect to the decomposition. A factor of automorphy and connection 1-form representing the triple $(F^0,H,\chi)$ are given by
\begin{align*}
    a_{L_0}:&\ \  U\times V_S\times \Gamma_S\ \ \ra\ \  U(1)\\
    a_{L_0}(y,v;\lambda)&=\chi_0(\lambda)exp(i\pi H(v,\lambda)+2\pi i G( \lambda))\\
    A_{L_0}(y,v)&=2\pi i A^0-2\pi i dG\cdot v+ i\pi H(v,dv),
\end{align*}
where $dG: TU \ra V_S^*$ means the derivative of $G:U\ra V^*_S/\Gamma_S^\vee$ and $\cdot$ denotes contraction between elements of $V_S$ and $V^*_S$.

 By Lemma \ref{local leaves coordfree} the leaves $Z$ of the distribution $\Delta$ in $S\times_{U}\hat{M}$ are affine torus subbundles modelled on $Z_0=V_Z/\Gamma_Z\subset S_0\times \hat{M}$. By Proposition \ref{space of leaves of Delta} the space of leaves is parametrized by flat sections of $V_S^*/\Gamma_S^\vee$. For $c\in V_S^*/\Gamma_S^\vee$ let
 $$Z_c:=t_{(b,-G+c)}Z_0\subset S\times \hat{M},$$
 where we choose a lift of $-G+c: U\ra V_S^*$ to a section $-G+c: U\ra V_M^*$, which we denote by the same letters. Let
$$L_Z=(p^*L\otimes \cP)|_Z \cong (t_{(b,-G+c)})_*L_c$$
for $L_c\ra Z_0$ given by
\begin{align}\label{L_c family} L_c:=\Big(t^*_{(b,-G+c)}(p^*L\otimes \cP)\Big)\Big|_{Z_0}=\Big(p_0^* L_0\otimes t_{(b,-G+c)}^*\cP\Big)\Big|_{Z_0}.\end{align}
By Lemma \ref{local images of Z} the image of any leaf $Z\subset S\times \hat{M}$ is an affine torus subbundle of $\hat{M}$ modelled on $\hat{S}_0=V_{\hat{S}}/\Gamma_{\hat{S}}$. The homomorphism $p_0:Z_0\ra S_0$ and $\hat{p}_0:Z_0\ra \hat{S}_0$ are induced by the restrictions of the projections $p_0:V+ V^*\ra V$ and $\hat{p}_0:V+V^*\ra V^*$. They fit into the following morphisms of short exact sequences.
 \begin{equation}\label{p_0 and hat(p)_0}
\begin{tikzcd}
0 \arrow{r} & Ann(V_S) \arrow{r} \arrow{d}{\cong} & V_Z \arrow{r}{p_0} \arrow{d}{\hat{p}_0} & V_S \arrow{r} \arrow{d}{-H} & 0 \\
0 \arrow{r} & Ann(V_S) \arrow{r} & V_{\hat{S}} \arrow{r}{q} & H(V_S) \arrow{r} & 0\\
0 \arrow{r} & Ann(\Gamma_S) \arrow{r}\arrow{d}{\cong} & \Gamma_Z \arrow{r}{p_0} \arrow{d}{\hat{p}_0} & \Gamma_S \arrow{r} \arrow{d}{-H} & 0 \\
0 \arrow{r} & Ann(\Gamma_S) \arrow{r} & \Gamma_{\hat{S}} \arrow{r}{q} & H(V_S)\cap \Gamma_S^\vee \arrow{r} & 0.
\end{tikzcd}
\end{equation}
Then a representative of $L_c$ is given as follows.
\begin{align*}
    a_{L_c}&(y,v;\lambda)=\\
    =&\chi_0(p_0(\lambda))exp(i\pi H(p_0(v),p_0(\lambda))+2\pi i G(p_0(\lambda)))\times \\
    &\times exp\Big( i\pi \hat{p}_0(\lambda)(p_0(\lambda))+i \pi (\hat{p}_0(v)-G+c)(p_0(\lambda))-i\pi \hat{p}_0(\lambda)(p_0(v)+b) \Big)\\
    =&\chi_0(p_0(\lambda))exp(i\pi H(p_0(v),p_0(\lambda))+2\pi i G(p_0(\lambda)))\times \\
    &\times exp\Big(  -i \pi H(p_0(\lambda),p_0(\lambda))-i \pi H(p_0(v),p_0(\lambda))+i \pi (-G+c)(p_0(\lambda))\Big) \times \\
    &\times exp\Big(i\pi H(p_0(\lambda),p_0(v))-i \pi \hat{p}_0(\lambda)(b) \Big)\\
    =&\chi_0(p_0(\lambda))exp(-i\pi H(p_0(v),p_0(\lambda))+i \pi (G+c)(p_0(\lambda))-i\pi \hat{p}_0(\lambda)(b)),\\
    A_{L_c}&(y,v)=\\
    =&2\pi i A^0-2\pi i dG\cdot p_0(v) +i \pi H(v,dv) + i \pi (\hat{p}_0(v)-G+c)\cdot d(p_0(v)+b)-\\
    &- i \pi (p_0(v)+b) \cdot d(\hat{p}_0(v)-G+c)\\
    =&2\pi i A^0-2\pi i dG\cdot p_0(v) + i \pi H(p_0(v),dp_0(v)) - H(p_0(v),dp_0(v)) +i\pi \hat{p}_0(v) \cdot db+\\
    &+i\pi(- G+c) \cdot dp_0(v)+ i \pi (-G+c) \cdot db + H(dp_0(v),p_0(v))-i\pi b \cdot d\hat{p}_0(v)+\\
    &+ i \pi p_0(v)\cdot dG + i \pi b \cdot dG\\
    =&2\pi i A^0 +i\pi(-G+c)\cdot db + i \pi b \cdot dG + i\pi \hat{p}_0(v)\cdot db - i\pi b \cdot d\hat{p}_0(v) - \\
    &-i \pi H(p_0(v),dp_0(v))+i \pi (-G+c)\cdot dp_0(v) -i\pi p_0(v) \cdot dG.
\end{align*}
Changing the representative by $\phi(v)=exp(-i\pi (G+c)\cdot p_0(v)- i \pi \hat{p}_0(v)\cdot b +i \pi (-G+c) \cdot b)$ we find
\begin{align}\label{Lc}
a_{L_c}(y,v;\lambda)=&\chi_0(p_0(\lambda))exp(-i\pi H(p_0(v),p_0(\lambda))-2\pi i\hat{p}_0(\lambda)(b))\\
A_{L_c}(y,v)=&2\pi i (A^0 + b \cdot dG) + 2\pi i \hat{p}_0(v)\cdot db+ 2\pi i c\cdot dp_0(v) - i \pi H(p_0(v),dp_0(v)).\nonumber
\end{align}
 The maps $p_{Z_c}:Z_c\ra \hat{S}_{c}$ can then be decomposed into homomorphisms and translations. We have the commutative square
 \begin{equation}\label{Zc as translate of Z0}
     \begin{tikzcd}
         Z_0 \arrow{rr}{t_{(b,-G+c)}} \arrow{d}{\hat{p}_0}  & & Z_c \arrow{d}{p_{Z_c}}\\
         \hat{S}_0 \arrow{rr}{t_{-G+c}} & & \hat{S}_c
     \end{tikzcd}.
 \end{equation}

\textit{Proof of \ref{1 contr base}.:} Same as the proof of Theorem \ref{tdual u(1) bundles on a torus} Part \ref{1 absolute}. The kernel of $\hat{p}_0$ is the disjoint union of affine tori. The connected component of the identity is $$Ker(\hat{p}_0)_0=Ker(H)/(Ker(H)\cap \Gamma_S),$$
and the other components are translates of it by elements $\{\lambda_1,...,\lambda_{d^2}\}$ such that \linebreak $\{H(\lambda_1),...,H(\lambda_{d^2})\}$ is a full set of representatives of $(H(V_S)\cap\Gamma_S^\vee):H(\Gamma_S)$. The $U(1)$-bundle $L_c$ is invariant under translations by $\lambda_i$ so it is trivial on the fibers if it is trivial when restricted to $K_0=Ker(\hat{p}_0)_0$. The restriction $L_c|_{K_0}$ is given by the image of $c\in V_S^*$ in $coker(H)$, that is $L_c|_{K_0}$ is trivial if and only if $c\in H(V_S)$. By Proposition \ref{space of leaves mapping to a T-dual} this defines a unique image $\hat{S}$.

\textit{Proof of \ref{2 contr base}.:} For $c\in H(V_S)$ let us define
$$\hat{E}_c=(\hat{p}_0)_*L_c.$$
It is a projectively flat $U(d)^2$-bundle on $\hat{S}_0$. We could follow the proof of Theorem \ref{tdual u(1) bundles on a torus} Part \ref{2 absolute}. here as well but instead we write out the full calculation to avoid proving analogues of the necessary lemmas.

We use the decomposition $\Gamma_S/(Ker(H)\cap \Gamma_S)=\Gamma_1+\Gamma_2$ to decompose $H(V_S)=V_1+V_2$ with $V_i=H(\Gamma_i)\otimes \dR$ and we take representatives $\{\delta_i\}_{i=1,...,d}$ and $\{\epsilon_j\}_{j=1,...,d}$ of $(V_i\cap \Gamma_S^\vee):H(\Gamma_i)$ for $i=1,2$ respectively. Let $E_i$ and $D_j$ for $i,j=1,...,d$ be lifts of the representatives to $\Gamma_{\hat{S}}$ so we have.
\begin{align*}
    a_{\hat{E}_c}(y,\hat{v};\lambda)=&\Big( \chi_0(-H^{-1}(q(\Lambda^{ij}_\lambda)))\times \\
    &\ \ \times exp(i \pi H^{-1}(q(\hat{v})+\delta_{\lambda(i)}+\epsilon_{\lambda(j)},q(\Lambda^{ij}_\lambda)) -2\pi i \Lambda^{ij}_{\lambda}(b)) \delta^k_{\lambda(i)}\delta^l_{\lambda(j)}\Big)^{ij}_{kl},\\
    A_{\hat{E}_c}(y,\hat{v})=&\Big(2\pi i (A^0+b\cdot dG)+ 2\pi i (\hat{v}+D_i+E_j)\cdot db + \\
    &\ \ +2\pi i H^{-1}(c , dq(\hat{v}))+ i \pi H^{-1}(q(\hat{v})+\delta_i+\epsilon_j,dq(v)) \Big)^{ij}_{ij}.
\end{align*}
Changing the representatives by the diagonal matrix valued function $$\phi(\hat{v})^{ij}_{ij}=exp\Big(2\pi i (D_i+E_j)\cdot b-i \pi H^{-1}(q(\hat{v}),\delta_i+\epsilon_j)\Big),$$ we get
\small
\begin{align}\label{hatEc}
    a_{\hat{E}_c}(y,\hat{v};\lambda)&=exp\Big(i \pi H^{-1}(q(\hat{v}),q(\lambda))-2\pi i \lambda(b)\Big)\cdot U_{\hat{E}_c}(q(\lambda)),\\
  U_{\hat{E}_c}(q(\lambda))&=\Big(\chi_0(-H^{-1}(q(\Lambda^{ij}_\lambda)))exp(i\pi H^{-1}(q(\lambda)+\delta_i+\epsilon_j,q(\lambda)-\delta_{\lambda(i)}-\epsilon_{\lambda(j)})) \delta^k_{\lambda(i)}\delta^l_{\lambda(j)}\Big)^{ij}_{kl},\nonumber\\
    A_{\hat{E}_c}(y,\hat{v})&=\Big(2\pi i (A^0+b\cdot dG)+ 2\pi i \hat{v}\cdot db + 2\pi i H^{-1}(c , dq(\hat{v}))+ i \pi H^{-1}(q(\hat{v}),dq(v))\Big) \cdot Id.\nonumber
\end{align}
\normalsize
It remains to show that $\hat{E}_c$ is independent of the choices we made. Namely, of $c\in H(V_S)$, of the lift of $G:U\ra V_S^*$ to $G:U\ra V_M^*$ and of the choice of $b:U\ra M$.

\textbf{Independence of $c\in H(V_S)$.}
We have
$$\hat{p}_*L_c\cong (t_{-G+c})_*\hat{E}_c$$
Let $c'\in H(V_S)\subset V_S^*$ indicate another leaf and its lift $c'\in V_M^*$. Then,
$$\hat{p}_*L_c\cong \hat{p}_*\hat{L}_{c'}\ \ \ \text{if and only if}\ \ \ t_{c'-c}^*\hat{E}_c\cong \hat{E}_{c'}.$$
Since $c$ only enters the description of $\hat{E}_c$ as an element of $H(V_S)\subset V_S^*$ it is clear that $\hat{E}_c$ does not depend on the choice of lift of $c$ along $V_M^*\ra V_S^*$.
\begin{align*}
    a_{t_{c'-c}^*\hat{E}_c}(y,\hat{v};\lambda)&=a_{\hat{E}_c}(y,\hat{v}+c'-c,\lambda)\\
    &=exp\Big(i \pi H^{-1}(q(\hat{v})+c'-c,q(\lambda))-2\pi i \lambda(b)\Big)\cdot U_{\hat{E}_c}(q(\lambda)), \\
\end{align*}
\begin{align*}
    A_{t_{c'-c}^*\hat{E}_c}(y,\hat{v})&=\Big(2\pi i (A^0+b\cdot dG)+ 2\pi i (\hat{v}+c'-c)\cdot db + 2\pi i H^{-1}(c , dq(\hat{v}))+\\
    &\ \ \ \ \ + i \pi H^{-1}(q(\hat{v})+c'-c,dq(v))\Big) \cdot Id.\\
    &=\Big(2\pi i (A^0+b\cdot dG)+ 2\pi i (\hat{v}+c'-c)\cdot db + i\pi  H^{-1}(c+c' , dq(\hat{v}))+\\
    &\ \ \ \ + i \pi H^{-1}(q(\hat{v}),dq(v))\Big) \cdot Id.
\end{align*}
\normalsize
Changing the representatives by $\phi(y,\hat{v})=exp\Big(2\pi i (c'-c)\cdot b - i\pi H^{-1}(c'-c,q(\hat{v}))\Big)$ yields the desired isomorphism.

\textbf{Independence of lifting $G$.} 
From the description of $L$ the section $G\in \Gamma(U,V_S^*)$ is well defined up to constant translation. In the construction of $\hat{E}$ we chose a lift of it to a section $G\in \Gamma(U,V_M^*)$. 

Suppose now that we chose a different lift $G'\in \Gamma(U,V_M^*)$.  Let 
$$\hat{E}_c'\cong (\hat{p}_0)_*\Big( (p_0^*L_0'\otimes t^*_{(b',-G'+c)}\cP)|_{Z_0})\Big),$$
so $(t_{-G+c})_*\hat{E}_c\cong (t_{-G'+c})_*\hat{E}_c'$ if and only if 
$$\hat{E}_c'\cong t_{G-G'}^*\hat{E}_c$$
Since $G$ only enters the description of $\hat{E}_c$ as $dG$ we readily see that it is independent of constant translations in $V_S^*$ and we may suppose that $G-G'\in \Gamma(U,Ann(V_S))$. Inspecting (\ref{hatEc}) and changing the representatives of $t_{G-G'}^*\hat{E}_c$ by  $\phi(\hat{v})=exp(2\pi i (G-G')\cdot b)$ yields the desired isomorphism.

\textbf{Independence of $b$.} Let $b'\in \Gamma(U,M)$ such that $S=t_bS_0=t_{b'}S_0$, then we have $b-b'\in \Gamma(U,S_0)$ and since 
$L\cong t_{-b}^*L_0\cong t_{-b'}^*L_0'$
we have $$L_0'\cong t_{b'-b}^*L_0.$$
In particular,
\begin{align*}
    a_{L_0'}(y,v;\lambda)&=\chi_0(\lambda)exp(i\pi H(v+b'-b,\lambda))exp(2\pi i G(\lambda)),\\
    A_{L_0'}(y,v)&=2\pi i A^0-2\pi i\partial_UG\cdot (v+b'-b)+i\pi H(v+b'-b,d(v+b'-b)),\\
    &=2\pi i \Big(A^0- dG\cdot (b'-b)+ \frac{1}{2} H(b'-b,d(b'-b))\Big)-2\pi i dG\cdot v+\\
    &\ \ +i\pi H(v,d(b-b'))+i\pi H(b'-b,dv)+ i \pi H(v,dv).
\end{align*}
Changing the representatives by $\phi(v)=exp(i\pi H(b'-b,v))$ we find
that $L_0'$ corresponds to the triple $((F^0)',H,\chi')$ with
\begin{align*}
\chi'(\lambda)&=\chi_0(\lambda) exp(2\pi i (G+H(b'-b))(\lambda)),\\
(F^0)'&=F^0+2\pi i dG \cdot d(b'-b) + i \pi H(d(b'-b)\wedge d(b'-b)),
\end{align*}
with $(A^0)'=A^0-(b'-b)\cdot dG + \frac{1}{2}H((b'-b),d(b'-b))$.

Let $\hat{E}_c'=(\hat{p}_0)_*\Big(p_0^*L_0'\otimes t_{(b',-G'+c)}^*\cP)|_{Z_0}\Big)$ so $(t_{-G'+c})_*\hat{E}_c'\cong (t_{-G+c})_*\hat{E}_c$ if and only if 
$$\hat{E}_c'\cong t_{G-G'}^*\hat{E}_c.$$
Since $G-G'=H(b-b')$ we have
\begin{align*}
    a_{t_{G-G'}^*\hat{E}_c}(y,\hat{v};\lambda)&=exp(i\pi H^{-1}(q(\hat{v})+H(b-b'),q(\lambda))-2\pi i  \lambda(b))\cdot U_{\hat{E}_c}(q(\lambda))\\
 &= exp(-i\pi q(\lambda)(b-b'))\cdot a_{\hat{E}_c'}(y,\hat{v};\lambda),
\end{align*}
\begin{align*}
    &A_{t_{G-G'}^*\hat{E}_c}(y,\hat{v})=\\
    &=\Big(2\pi i( A^0+ b\cdot dG)+ 2\pi i (\hat{v} +H(b-b'))\cdot db+2\pi i H^{-1}(c,dq(\hat{v})+dH(b-b'))+\\
    & \hspace{1cm} +i \pi H^{-1}(q(\hat{v})+H(b-b'),dq(\hat{v})+dH(b-b'))\Big) \cdot Id.\\
    &=\Big(2\pi i( A^0+ b\cdot dG)+ 2\pi i \hat{v}\cdot db' - 2\pi i \hat{v} \cdot(b'-b)+2\pi i H(b-b'))\cdot db+\\
    &\hspace{1cm} +2\pi i H^{-1}(c,dq(\hat{v}))-2\pi id(b-b')\cdot c
     +i \pi H^{-1}(q(\hat{v}),d\hat{q})+i \pi (b-b')\cdot dq(\hat{v})-\\
     &\hspace{1cm} -i \pi d(b-b')\cdot q(\hat{v})+ i \pi H(d(b-b'),b-b')\Big) \cdot Id,\\
     &\text{since $q(\hat{v})\cdot d(b'-b)=\hat{v}\cdot d(b'-b)$ we have}\\
     &=\Big(2\pi i (A^0+b\cdot dG +\frac{1}{2}H(b'-b,d(b'-b))-H(b'-b)\cdot db) - \\
     &\hspace{1cm} -2\pi i c\cdot (b-b')-\pi i q(\hat{v}) \cdot d(b'-b)-\pi i (b'-b)\cdot dq(\hat{v}) + 2\pi i \hat{v}\cdot db'+\\
     &\hspace{1cm} +2\pi i H^{-1}(c,dq(\hat{v}))+i\pi H^{-1}(q(\hat{v}),dq(\hat{v}))\Big) \cdot Id.
\end{align*}
Finally, changing the representatives by
$$\phi(y,\hat{v})=exp\Big(i\pi q(\hat{v})\cdot (b-b') +2\pi i H(b'-b)\cdot b' -2\pi i c\cdot (b'-b)\Big)$$
yields the desired isomorphism. Note also that
$$(A^0)'+b'\cdot dG'=A^0-(b'-b)\cdot dG+\frac{1}{2}H(b'-b,d(b'-b))+b'\cdot dG+b'\cdot dH(b'-b).$$

\textit{Proof of \ref{3 contr base}.:}
The semi-representation $U_{\hat{E}_c}$ is the pullback of a constant semi-representation from $H(V_S)\cap \Gamma_S^\vee$. Moreover,
$$\chi_0(-H^{-1}(q(\Lambda^{ij}_\lambda)))=\chi_0(H^{-1}(q(\Lambda^{ij}_\lambda)))^{-1}.$$
In particular, $U_{\hat{E}_c}$ is of the form $U_{(\phi_{L})_*L^{-1}}$ for a suitable $U(1)$-bundle on $S_0$. That is we can readily apply Theorem \ref{pushforward U(1) degen} to show
$$U_{\hat{E}_c}\cong \rho\circ U_{\hat{L}_c}$$
where $\rho:U(d)\ra U(d^2)$ is the diagonal embedding (\ref{embedding u(d) to u(d2)}) and
$$U_{\hat{L}_c}:U\times \Gamma_{\hat{S}}\ra U(d)$$
is a constant semi-representation  for $H^{-1}$ pulled back from $H(V_S)\cap \Gamma_S^\vee$.

\end{proof}
\begin{remark}
    As in the absolute case (Part \ref{2 absolute}. Theorem \ref{tdual u(1) bundles on a torus}),
    $$L_c\cong \hat{p}_0^*\hat{\cP}_{b} \otimes (p_0)^*L_c^S $$
    where $\hat{\cP}_b=\cP|_{M\times_U \{b\}}$. Moreover,
    $$(\hat{p}_0)_*L_c=\hat{\cP}_{b} \otimes (\hat{p}_0)_*(p_0)^*L_c^S$$
    and 
    $$(\hat{p}_0)_*(p_0)^*L_c^S=q^*(-\phi_{H})_*L_c^S= q^*(\phi_{H})_*(L_0^{-1}\otimes \cP_{-G+c})$$
    where $\phi_{H}$ is the fiberwise isogeny defined by $H:V_S\ra V_S^*$ and $\cP_{-G+c}=\cP|_{M\times_U \{-G+c\}}$. In particular, we can write (cf. (\ref{hat(E) FM type description}))
    \begin{align}\label{hat(E) as FM transform}\hat{E}=q^*t_{-G+c}^*(\phi_{H})_*\Big(L_0^{-1}\otimes \cP|_{-G+c}\Big)\otimes \hat{\cP}_{b}.\end{align}
\end{remark}

\begin{example}\label{BAA-BBB space filling tdual bundles} Let $\pi: M\ra B$ be an algebraic integrable system that admits a Lagrangian section over a contractible base. Let $\hat{M}\ra B$ be the affine torus bundle with zero Chern class and monodromy local system $\Gamma_M^\vee$. Then, by Theorem \ref{semiflat tdual} $\hat{M}$ is also an algebraic integrable system and $M$ and $\hat{M}$ carry T-dual semi-flat hyperk\"ahler structures. 

In Example \ref{BAA-BBB space filling tduals} we described a T-dual pair of space-filling generalized branes corresponding to the semi-flat structures. These pairs can be enhanced to T-dual pairs of bundles with connections.

Let us denote by $\omega_\dI$ one of the K\"ahler forms of the semi-flat hyperk\"ahler structure on $M$. In terms of the special K\"ahler structure $(g,I,\omega, \nabla)$ on $B$ it is given by
$$\omega_\dI=\begin{pmatrix}\omega & 0 \\ 0 & \omega^{-1}\end{pmatrix},$$
and $(M,\omega_\dI)$ is a space-filling $BAA$-brane. There exist $U(1)$-bundles with connections $L^A$ whose curvature is $2\pi i \omega_\dI$. These can be represented as
\begin{equation}
    \begin{aligned}
    a^A(x,v;\lambda)&=\chi_0(\lambda)exp(i\pi \omega^{-1}(v,\lambda)+2\pi i c(\lambda))\\
    A^A(x,v)&=i\pi \omega^{-1}(v,dv)+i\pi \omega(x,dx),
    \end{aligned}
\end{equation}
where $c: B\ra V_M^*/\Gamma_M^\vee$ is a constant character. The T-dual of this brane depends on the type of the polarization. 

If the polarization which defines $\omega_\dI$ is principal, then $M\cong \hat{M}$ and the T-dual of $L^A$ is again a $U(1)$-bundle $\hat{L}^B$ represented by
\begin{equation}
    \begin{aligned}
        a^B(x,\hat{v};\hat{\lambda})&=\chi_0(-\omega(\hat{\lambda}))exp(i\pi \omega(\hat{v},\hat{\lambda}))\\
        A^B(x,\hat{v})&=i \pi \omega(\hat{v},d\hat{v})+2\pi i \omega(c,d\hat{v})+i\pi \omega(x,dx).
    \end{aligned}
\end{equation}
Pulling back via the isomorphism $\phi:M\ra \hat{M}$ we find
\begin{align*}
    (\phi^* a^B)(x,v;\lambda)&=a^B(x,\omega^{-1}(v);\omega^{-1}(\lambda))\\
    &=\chi_0(-\omega(\omega^{-1}(\lambda)))exp(i\pi \omega(\omega^{-1}(v),\omega^{-1}(\lambda)))\\
    &=\chi_0(-\lambda)exp(i\pi \omega^{-1}(\lambda,v))\\
    &=\chi_0(-\lambda)exp(-i\pi \omega^{-1}(v,\lambda))\\
    (\phi^*A^B)(x,v)&=A^B(x,\omega^{-1}(v)) \\
    &=i \pi \omega(\omega^{-1}(v),d\omega^{-1}(v))+2\pi i \omega(c,d\omega^{-1}(v))+i\pi \omega(x,dx)\\
    &=i \pi \omega^{-1}(dv,v)+2\pi i c\cdot dv +i\pi \omega(x,dx)\\
    &=-i \pi \omega^{-1}(v,dv)+2\pi i c\cdot v +2\pi i  \omega(x,dx).
\end{align*}
That is, via the isomorphism induced by the polarization the T-dual of the brane $(M,L^A)$ restricts to each fiber as the $U(1)$-bundle dual to $L^A$. This is a well-known property of the Fourier-Mukai transform as well.

If the polarization is not principal, but of type $d$ the T-dual of $(M,L^A)$ is a projectively flat $U(d)$-bundle $\hat{L}^B$ represented as follows.
\begin{equation*}
    \begin{aligned}
        a^B(x,\hat{v};\hat{\lambda})&=exp(i\pi \omega(\hat{v},\hat{\lambda}))\Big(\chi_0(-\omega(\hat{\lambda}_1+\Lambda^i_{\hat{\lambda}}))exp(i\pi \omega(\hat{\lambda}+\lambda_i,\hat{\lambda}-\lambda_{\hat{\lambda}(i)})) \delta^j_{\hat{\lambda}(i)}\Big)\\
        A^B(x,\hat{v})&=(i \pi \omega(\hat{v},d\hat{v})+2\pi i \omega(c,d\hat{v})+i\pi \omega(x,dx))\cdot Id_{d\times d}.
    \end{aligned}
\end{equation*}
Using the specific form of $\chi_0$ we can write
\begin{align*}\chi_0(-\omega(\hat{\lambda}_1+\Lambda^i_{\hat{\lambda}}))&=exp\Big(i\pi \omega^{-1}(-\omega(\hat{\lambda}_1+\Lambda^i_{\hat{\lambda}})^1,-\omega(\hat{\lambda}_1+\Lambda^i_{\hat{\lambda}})^2)\Big)\\
&=exp\Big(i\pi \omega^{-1}(\omega(\hat{\lambda}_1),\omega(\Lambda^i_{\hat{\lambda}}))\Big)\\
&=exp\Big(i\pi \omega(\Lambda^i_{\hat{\lambda}},\hat{\lambda}_1)\Big)\\
&=exp\Big(i\pi \omega (\hat{\lambda}_1,\hat{\lambda}_2+\lambda_i-\lambda_{\hat{\lambda}(i)})\Big),
\end{align*}
where we use that $\omega(\Lambda^i_{\hat{\lambda}},\hat{\lambda}_1)\in \dZ$. Then, we have
\begin{align*}
    \omega& (\hat{\lambda}_1,\hat{\lambda}_2+\lambda_i-\lambda_{\hat{\lambda}(i)})+  \omega(\hat{\lambda}+\lambda_i,\hat{\lambda}-\lambda_{\hat{\lambda}(i)})=\\
    &=\omega(\hat{\lambda}_1,\hat{\lambda}_2)+\omega(\hat{\lambda}_1,\lambda_i)-\omega(\hat{\lambda}_1,\lambda_{\hat{\lambda}(i)})+\omega(\hat{\lambda}_1,\hat{\lambda}_2)-\omega(\hat{\lambda}_1,\lambda_{\hat{\lambda(i)}})+\omega(\hat{\lambda}_2,\hat{\lambda}_1)+\omega(\lambda_i,\hat{\lambda}_1)\\
    &=\omega(\hat{\lambda}_1,\hat{\lambda}_2)-2\omega(\hat{\lambda}_1,\lambda_{\hat{\lambda}(i)})
\end{align*}
and
\begin{equation}
     \begin{aligned}
        a^B(x,\hat{v};\hat{\lambda})&=exp(i\pi \omega(\hat{v},\hat{\lambda}))\Big(exp(i\pi \omega(\hat{\lambda}_1,\hat{\lambda}_2)+2\pi i \omega(\lambda_{\hat{\lambda}(i)},\hat{\lambda}_1)) \delta^j_{\hat{\lambda}(i)}\Big)\\
        A^B(x,\hat{v})&=\Big(i \pi \omega(\hat{v},d\hat{v})+2\pi i \omega(c,d\hat{v})+i\pi \omega(x,dx)\Big)\cdot Id_{d\times d}.
    \end{aligned}
\end{equation}
The underlying generalized brane in both cases is the $BBB$-brane  $(\hat{M},\hat{F})$ with 
$$\hat{F}=\begin{pmatrix}\omega & 0 \\ 0 & \omega\end{pmatrix}.$$
\end{example}
\begin{remark}
    Note that if $M$ and $\hat{M}$ as above admit a section but $B$ is not simply connected, then we may define $(M,L^A)$ and $(\hat{M},\hat{L}^B)$ locally using affine coordinates on the fibers. Since the local representatives are invariant under the change of coordinates (see Lemma \ref{global H semichar}), the local bundles glue together to a global T-dual pair.
\end{remark}

\section{On affine torus bundles over general base}\label{last chapter section general base}
We would like to generalize Theorem \ref{tdual U(1) bundles trivial base} to $U(1)$-bundles supported on affine torus subbundles $S$ of affine torus bundles $M\ra B$ with torsion Chern classes. Already in the generalized geometry setting, the existence of T-duals depends on a topological constraint (cf. Theorem \ref{global thm1}), which is related to the Chern class of the T-dual affine torus bundle $\hat{M}\ra B$. To upgrade to theorems about $U(1)$-bundles we have to impose even stronger constraints.

To state Theorem \ref{tdual U(1) bundles trivial base} on a general base we first have to define the Poincar\'e bundle on $M\times_B\hat{M}$ analogously to (\ref{u(1) poincare line bundle in family}). It is easy to see that when $M$ and $\hat{M}$ do not admit smooth sections such a $U(1)$-bundle cannot exist. Instead, we define $\cP$ as a twisted $U(1)$-bundle or equivalently as a gerbe trivialization. Moreover, we also have to relax the definition of a physical brane from a $U(1)$-bundle supported on $S$ to a twisted $U(1)$-bundle. The precise description of these twists is the content of topological T-duality.

In the first subsection, we discuss the theory of gerbes, gerbe connections and gerbe modules. Then, we give the precise definition of topological T-duality. In the following subsection, we define the Poincar\'e bundle as a gerbe on the product affine torus bundles with torsion Chern classes $M$ and $\hat{M}$ which are T-dual in the sense of generalized geometry. In Proposition \ref{top tdual pair} we explain when such $M$ and $\hat{M}$ are also topologically T-dual. Finally, in the last subsection, we give a version of Theorem \ref{tdual U(1) bundles trivial base} on a general base.

\subsection{Gerbes and topological T-duality}
T-duality in generalized geometry connects affine torus bundles fibered over the same base endowed with $H$-fluxes, that is closed differential three-forms. Even though the definition is phrased in terms of differential forms, the existence of a T-duality relation between two affine torus bundles depends only on the de Rham classes of the $H$-fluxes. Topological T-duality is the refinement of this relationship to integer cohomology classes. 

Let $M$ be a differentiable manifold. Degree three de Rham cohomology $\HH^3(M,\dR)$ classifies the equivalence classes of exact Courant algebroids on $M$. The degree three integral cohomology $\HH^3(M,\dZ)$ classifies \emph{$U(1)$-bundle gerbes} which replace Courant algebroids in topological T-duality. The $H$-flux is replaced by the curvature of a gerbe connection. In this section, we introduce a `working definition' of gerbes, following notes of Hitchin \cite{hitchinGerbes} and Sections 2.2. and 4.3. in \cite{B2}. For a more general discussion see \cite{stevenson}.

\begin{definition}\label{bundle gerbe}
     A $U(1)$-bundle gerbe $\cG$ on $M$ is a triple $(\cU,L,\phi)$, where $\cU=\{U_i\}$ is an open cover of $M$, $L=\{L_{ij}\}$ is a collection of $U(1)$-bundles $L_{ij}\ra U_{ij}$ on the double intersections, together with isomorphisms
     $$L_{ij}\cong L_{ji}^{-1},$$
   $\phi=\{\phi_{ijk}\}$ is a collection of trivializations of 
     $\phi_{ijk}:L_{jk}\otimes L_{ik}^{-1}\otimes L_{ij} $ on $U_{ijk}$, such that
     $$\delta\phi=\phi_{jkl}\cdot \phi_{ikl}^{-1}\cdot \phi_{ijl}\cdot \phi_{ijk}^{-1}=1$$
     on $U_{ijkl}$, where the $1$ on the left-hand is understood as the canonical trivialization of the product $L_{jk}L_{jl}^{-1}L_{kl}L_{kl}^{-1}L_{il}L_{ik}^{-1} L_{jl}L_{il}^{-1}L_{jk}L_{jk}^{-1}L_{ik}L_{ij}^{-1}$.
\end{definition}
A gerbe is defined via line bundles and open covers so it can be restricted to submanifolds or pulled back along smooth maps. We can define tensor products and duals of gerbes in the obvious way. 

Suppose that $\cG=(\cU,L,\phi)$ is a gerbe on $M$ and $\cU'$ is a refinement of $\cU$. That is for all $U'_j\in \cU'$ there exists an $U_{\rho(j)}\in \cU$ with $U'_j\subset U_{\rho(j)}$. The map $\rho$ is called the refinement map. We  define the \emph{refinement of $\cG$} as the gerbe $(\cU', L'=\{L_{\rho(i)\rho(j)}\},\phi'=\{\phi_{\rho(i)\rho(j)\rho(k)}\})$.

There are two different notions of isomorphisms of gerbes called \emph{strict} and \emph{stable} isomorphisms. A strict isomorphism $\mu:\cG\ra \cG'$ between two gerbes $\cG=(\cU,L,\phi)$ and $\cG'=(\cU',L',\phi')$ can exists if $\cU=\cU'$. In this case, it is a collection of isomorphism 
$$\mu_{ij}:L_{ij}\ra L'_{ij}$$
such that the diagram
\begin{equation*}
    \begin{tikzcd}[column sep = large]
        L_{ij}\otimes L_{jk}\arrow{d}{\phi_{ijk}} \arrow{r}{\mu_{ij}\otimes \mu_{jk}} & L'_{ij}\otimes L'_{jk} \arrow{d}{\phi'_{ijk}}\\
        L_{ik} \arrow{r}{\mu_{ik}} & L'_{ik}
    \end{tikzcd}
\end{equation*}
commutes.

To define stable isomorphisms we have to first define trivial gerbe. Let $\cU$ be an open cover of $M$ and $N=\{N_i\ra U_i\}$ be a collection of $U(1)$-bundles. The gerbe \emph{trivial gerbe} $\delta(N)$ is defined by the $U(1)$-bundles
$$L_{ij}=N_i\otimes N_j^{-1} \ \ \ \text{on }U_{ij}$$
and the canonical trivializations of 
$L_{ij}L_{ik}^{-1}L_{ij}=L_{i}L_{j}^{-1}L_{i}L_{k}^{-1}L_iL_j^{-1}.$
A gerbe $\cG$ is called \emph{trivializable} if there exists a refinement $\cG'$ of $\cG$, and a strict isomorphism $\mu:\cG'\ra \delta(N)$ to a trivial gerbe.

Two gerbes $\cG$ and $\cG$ on $M$ are called \emph{stably isomorphic} if after passing to a common refinement there exists a trivial gerbe $\delta(N)$ on the refinement and a strict isomorphism
$$\mu:\cG\otimes \delta(N)\ra \cG'.$$

A trivialization of a $U(1)$-bundle can be understood as a section. Suppose $\cG$ is a gerbe defined on a good cover $\cU$ so each $L_{ij}$ is trivializable. Let $s_{ij}$ be a choice of sections of $L_{ij}$. Then, on $L_{jk}L_{ik}^{-1}L_{ij}$ there are two trivializations, one coming from the $s_{ij}$ and one from $\phi_{ijk}$. We can write
$$ s_{ij}s_{ik}^*s_{ij}=g_{ijk}\phi_{ijk}$$
for $U(1)$-valued functions $g_{ijk}$. By $\delta\phi=0$, the collection $\{g_{ijk}\}$ is a \v{C}ech cocycle in $\check{C}^2(\cU,\cC_{U(1)})$, do it represents a cohomology class $h\in \HH^2(M,\cC_{U(1)})\cong \HH^3(M,\dZ)$. This class is called the \emph{Dixmier-Douday class} of the gerbe $\cG$. If $\cG$ is not defined over a good cover we define its Dixmier-Douady class as the class of a refinement of it to a good cover. The Dixmier-Douady class of a gerbe is trivial if and only if it is trivializable over any refinement of its cover. In particular, the Dixmier-Douady class characterizes the stable isomorphism classes of gerbes.

\begin{definition} \label{connection on a gerbe} 
A \emph{connection} on a gerbe $\cG=(\cU,L,\phi)$ is a collection of connections $\nabla_{ij}$ on $L_{ij}$ such that $$\nabla_{ijk}(\phi_{ijk})=0,$$ where $\nabla_{ijk}$ is the induced connection on $L_{jk}L_{ik}^{-1}L_{ij}$, together with a collection of two-forms $F_{i}\in \Omega^2(U_{i})$ satisfying 
$$F_{i}-F_j=F_{\nabla_{ij}}.$$
The collection of two forms $\{F_{i}\}$ is called the \emph{curving} of the connection. Since $dF_{\nabla_{ij}}=0$ the three-forms $\{dF_{i}\}$ glue together to a global closed three-form $H\in \Omega^3(M)$ called the \emph{curvature} of the gerbe connection and curving.
\end{definition}
The curvature of a gerbe connection represents the image of the Dixmier-Douady class of the gerbe in de Rham cohomology. When $H=0$ we say that the gerbe connection is \emph{flat}. The notion of strict isomorphism can be defined for gerbes with connections and curvings by requiring that $\mu_{ij}$ are isomorphism of $U(1)$-bundles with connections and that the curvings agree.

\paragraph{Gerbe modules.} We are interested in branes on manifolds endowed with gerbes. In generalized geometry, a generalized submanifold is a submanifold $S$ together with a two-form which satisfies $dF=H|_S$. This can be viewed as a trivialization of the exact Courant algebroid over $S$. Therefore, if the enhancement of an exact Courant algebroid to integral cohomology is a gerbe, the enhancement of a generalized brane must be a submanifold together with a gerbe trivialization. This definition is slightly too restrictive though. Indeed, a gerbe which is trivial in de Rham cohomology may not be trivial in integral cohomology. These gerbes do not admit trivializations but they admit higher-rank twisted bundles. We refer to these as \emph{gerbe modules}.

\begin{definition}\label{gerbe module}
Let $\cG=(\cU,L,\phi)$ be a gerbe on $M$. A \emph{rank $d$ gerbe module} is a collection of $U(d)$-bundles $E_i\ra U_i$ together with isomorphisms
$$\varphi_{ij}: E_{i} \ra E_{j}\otimes  L_{ij}$$
such that the diagram
\[
\begin{tikzcd}
    E_i \arrow{r}{\varphi_{ij}} \arrow{d}{\varphi_{ik}} & E_j\otimes L_{ij}\arrow{d}{\varphi_{jk}}\\
    E_k\otimes L_{ik} & E_k\otimes L_{ij}\otimes L_{jk} \arrow{l}{\phi_{ijk}} \\
   \end{tikzcd}
\]
commutes. 
\end{definition}
The tensor product in the definition is understood as follows. We can associate a rank $d$ hermitian vector bundle to $E_j$ and a hermitian line bundle to $L_{ij}$. Their tensor product is a rank $d$ hermitian vector bundle and the $U(d)$-bundle $E_{j}\otimes L_{ij}$ is its frame bundle. In terms of transition functions the transition functions of $E_{j}\otimes L_{ij}$ is the tensor product of the transition functions of $E_j$ and $L_{ij}$.

A rank 1 gerbe module is precisely a trivialization. Moreover, if $\cG$ admits a rank $d$ gerbe module $E$, the collection of $U(1)$-bundles $\{det(E_i)\}$ is a rank 1 module of $\cG^d$. In particular, if $\cG$ admits a rank $d$ gerbe module its Dixmier-Douady class must be $d$-torsion.

We can easily generalize a rank 1 gerbe module to a gerbe module with connection. 
\begin{definition}
    A rank 1 module of a gerbe $\cG$ with connection is a rank 1 gerbe module $E$ such that each $U(1)$-bundle $E_i$ is endowed with a connection $\nabla_i$, $\varphi_{ij}$ are isomorphisms of bundles with connections and the curvatures of $\nabla_i$ give the curving of the connection on $\cG$.
\end{definition} 
This definition can be extended to higher-rank modules with projectively flat connections. 
\begin{definition}
     A rank $d$ module of a gerbe $\cG$ with connection is a rank $d$ gerbe module $E$ such that each $U(d)$-bundle $E_i$ is endowed with a projectively flat connection $\nabla_i$, $\varphi_{ij}$ are isomorphisms of bundles with connections and the curvatures of $\nabla_i$ are $F_i\cdot Id$ where $\{F_i\}$ is the curving of the connection on $\cG$.
\end{definition}
Even though gerbe modules are defined with respect to a certain cover, it is clear that we can restrict them to any refinement and still get a gerbe module. Moreover, tensoring a gerbe module with a rank 1 gerbe module changes the twisting gerbe by a trivial gerbe. In particular, the category of gerbe modules only depends on the stable isomorphism class of the gerbe.

\paragraph{Topological T-duality.} We have seen that T-duality has a description in generalized geometry but T-duality first arose from string theory as a duality between torus bundles endowed with gerbes. This duality underlies the conjecture of Strominger-Yau-Zaslow \cite{SYZ} which states that mirror symmetry in the zeroth order should be T-duality. A mathematical formulation of topological T-duality was first laid out by Bouwknegt, Evslin and Mathai in \cite{BEM} where T-duality is described as duality between principal torus bundles. Here we follow the work of Baraglia \cite{B1, B2} who extended the definition to affine torus bundles endowed with gerbes.

We simplify Baraglia's work slightly as his definition uses graded gerbes. We assume that all our torus bundles are oriented so the grading does not play a role. Moreover, he includes the lifting gerbe of the vertical bundle in his definition which corresponds to the third integral Stiefel-Whitney class $W_3(V)$ of the vertical bundle. The inclusion of this has physical reasons. Our aim in this project was to consider $BBB$ and $BAA$ branes on algebraic integrable systems where the vertical bundle is a complex vector bundle, so $W_3(V)=0$. Therefore, our definition agrees with Baraglia's in the relevant cases.

Let us consider the usual T-duality diamond once again.
\[
\begin{tikzcd}
    & M\times_B\hat{M} \arrow{dd}{q} \arrow{ld}[swap]{p} \arrow{rd}{\hat{p}}\\
    M \arrow{rd}[swap]{\pi} & & \hat{M} \arrow{ld}{\hat{\pi}}\\
    & B & 
\end{tikzcd}
\]
\begin{definition}[Definition 3.1 of \cite{B2}] Let $\pi: M\ra B$ and $\hat{\pi}:\hat{M}\ra B$ be rank $n$ affine torus bundles on $B$. Let $\cG$ and $\hat{\cG}$ be gerbes on $M$ and $\hat{M}$ respectively. We say that $(M,\cG)$ and $(\hat{M},\hat{\cG})$ are topologically T-dual if the following holds:\\
(1) For all $b\in B$ the restrictions $\cG|_{\pi^{-1}(b)}$ and $\hat{\cG}|_{\hat{\pi}^{-1}(b)}$ are trivial.\\
(2) There exists a stable isomorphism $\mu: p^*\cG \ra \hat{p}^*\hat{\cG}$.\\
(3) The isomorphism $\mu$ satisfies the Poincar\'e property.
\end{definition}
We will not explain the Poincar\'e property in detail here (see \cite[page 14.]{B2}). It roughly states that if the strict isomorphism underlying $\mu$ is given by $\mu:p^*\cG\otimes \delta(\cP)\ra \hat{p}^*\hat{\cG}$, then the local line bundles $\cP$ generating $\delta(\cP)$ look like the Poincar\'e line bundle (\ref{u(1) poincare line bundle in family}). In the next section, we will explicitly construct a pair of gerbes on affine torus bundles with torsion Chern classes and we spell out the isomorphism $\mu$ as well. This will satisfy the Poincar\'e property and we will only work in this framework.

As in the differentiable situation, not every pair $(M,\cG)$ admits a T-dual, there is a topological restriction on the Dixmier-Douady class of $\cG$. To understand this constraint we have to view the class $h\in \HH^3(M,\dZ)$ as a class in $\HH^2(M,\cC_{U(1)})$, where $\cC_{U(1)}$ is the sheaf of $U(1)$-valued functions on $M$. The second page of the Leray spectral sequence on $\pi: M\ra B$ corresponding to the sheaf $\cC_{U(1)}$ is given by
$$E^{p,q}_2(\pi,\cC_{U(1)})=\HH^p(B,R^q\pi_*\cC_{U(1)}),$$
in particular, 
$$E^{2,0}_2(\pi,\cC_{U(1)})=E^{2,0}_\infty(\pi,\cC_{U(1)})=\HH^2(B,\pi_*\cC_{U(1)}).$$
The boundary morphism $E^{2,0}_\infty(\pi,\cC_{U(1)})=F^{2,2}(\pi,\cC_{U(1)})\ra \HH^2(M,\cC_{U(1)})$ is given by the pullback $\pi^*$ (see \cite[section 3.2]{B2}). 

\begin{theorem}\label{existence of tduals}\emph{\cite[Theorem 3.1]{B2}}
    Let $(M,\cG)$ be an affine torus bundle endowed with a gerbe and denote by $h\in \HH^2(M,\cC_{U(1)})$ the Dixmier-Douady class of $\cG$. Then there exists a T-dual of $(M,\cG)$ if and only if $h$ lies in the image of the pullback
    $$\pi^*: \HH^2(B,\pi_*\cC_{U(1)})\ra \HH^2(M,\cC_{U(1)}).$$
\end{theorem}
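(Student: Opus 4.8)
The plan is to reinterpret the condition $h\in\operatorname{im}(\pi^*)$ through the Leray filtration and then prove the two implications separately, using a short exact sequence of sheaves on $B$ that directly encodes the Chern class of a dual torus bundle. Since the edge homomorphism of the Leray spectral sequence identifies $\operatorname{im}(\pi^*)$ with the bottom filtration piece $F^{2,2}\HH^2(M,\cC_{U(1)})$ --- this is precisely the content of $E^{2,0}_2=E^{2,0}_\infty=\HH^2(B,\pi_*\cC_{U(1)})$ recorded before the statement --- the theorem is equivalent to: a T-dual exists if and only if the images of $h$ in $E^{0,2}_\infty$ and $E^{1,1}_\infty$ both vanish. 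The vanishing of the $E^{0,2}_\infty$ component is exactly condition (1) in the definition of topological T-duality (triviality of $\cG$ on the fibers), so the real content is the vanishing of the $E^{1,1}_\infty$ component.

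The first tool I would set up is the exact sequence of sheaves on $B$ obtained by pushing the exponential sequence $0\to\dZ\to\cC\to\cC_{U(1)}\to 0$ forward along $\pi$. Because the fine sheaf $\cC$ is acyclic on the torus fibers, $R^q\pi_*\cC=0$ for $q>0$, and with connected fibers $\pi_*\dZ=\dZ$ and $\pi_*\cC=\cC_B$; the long exact sequence of derived pushforwards then reduces to
\[
0\to \cC_{U(1),B}\to \pi_*\cC_{U(1)}\to R^1\pi_*\dZ\to 0 ,
\]
where $\cC_{U(1),B}$ is the sheaf of $U(1)$-valued functions on $B$. By the identifications recalled in the excerpt, $R^1\pi_*\dZ\cong\Gamma^\vee$ is exactly the monodromy local system of the dual affine torus bundle, while $\HH^2(B,\cC_{U(1),B})\cong\HH^3(B,\dZ)$ parametrizes gerbes pulled back from the base. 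Taking cohomology produces a map $\psi\colon\HH^2(B,\pi_*\cC_{U(1)})\to\HH^2(B,\Gamma^\vee)$, and I would show that $\psi(\bar h)$ is precisely the Chern class $c_{\hat M}$ of the sought T-dual, with the indeterminacy in lifting $c_{\hat M}$ back along $\psi$ governed by the base-gerbe contribution $\HH^2(B,\cC_{U(1),B})$.

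For sufficiency I would argue constructively: given $h=\pi^*\bar h$, set $c_{\hat M}:=\psi(\bar h)\in\HH^2(B,\Gamma^\vee)$ and invoke Theorem \ref{iso of afft} to produce the affine torus bundle $\hat\pi\colon\hat M\to B$ with monodromy $\Gamma^\vee$ and Chern class $c_{\hat M}$. Symmetrically, the Chern class $c_M\in\HH^2(B,\Gamma)$ of $M$ determines, through the analogous sequence for $\hat\pi$, a gerbe $\hat\cG$ on $\hat M$; I would then build the Poincar\'e bundle $\cP$ on the correspondence space $M\times_B\hat M$ fibrewise as in (\ref{u(1) poincare line bundle in family}) and exhibit the stable isomorphism $\mu\colon p^*\cG\otimes\delta(\cP)\to\hat p^*\hat\cG$ by matching \v{C}ech cocycles on a common refinement. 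For necessity I would run this in reverse: a T-dual supplies the stable isomorphism, so taking Dixmier--Douady classes (which are additive under tensor, with the trivial gerbe $\delta(\cP)$ contributing zero) gives $p^*h=\hat p^*\hat h$ on $M\times_B\hat M$; the Poincar\'e property pins down the off-diagonal K\"unneth component of this relation, forcing the $E^{1,1}_\infty$-image of $h$ to agree with the class of $c_{\hat M}$ and hence to lie in the image of $\psi$, after which descent along $\pi^*$ exhibits $h$ as $\pi^*\bar h$.

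I expect the main obstacle to be the sufficiency direction, and within it the explicit construction of $\hat\cG$ together with the Poincar\'e stable isomorphism $\mu$: one must choose compatible local trivializations over a cover of $B$, pull them back to the correspondence space, verify both the cocycle condition of Definition \ref{bundle gerbe} and the Poincar\'e property, and check that the resulting Dixmier--Douady class of $\hat\cG$ is independent of these choices. This bookkeeping is controlled exactly by the connecting map $\psi$ and the base-gerbe ambiguity $\HH^3(B,\dZ)$, so the crux is to prove that the obstruction to extending the fibrewise Poincar\'e bundle to a global stable isomorphism vanishes precisely when $h\in F^{2,2}\HH^2(M,\cC_{U(1)})$.
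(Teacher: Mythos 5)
You should first note that the thesis itself does not prove this statement: it is quoted from Baraglia \cite[Theorem 3.1]{B2}, and the closest argument actually carried out in the text is the proof of Proposition \ref{top tdual pair}, which implements the same construction explicitly in the torsion--Chern-class case. Your overall skeleton --- $\operatorname{im}(\pi^*)=F^{2,2}\HH^2(M,\cC_{U(1)})$, the pushed-forward exponential sequence $0\to\cC_{U(1),B}\to\pi_*\cC_{U(1)}\to R^1\pi_*\dZ\to 0$, reading off $c_{\hat{M}}=\psi(\bar{h})$, and a \v{C}ech construction of the Poincar\'e stable isomorphism --- is indeed the architecture of that proof. However, both implications, as you have written them, contain genuine gaps.

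In the necessity direction your conclusion does not parse: the $E^{1,1}_\infty$-image of $h$ lies in a subgroup of $E^{1,1}_2(\pi,\cC_{U(1)})=\HH^1(B,\wedge^2\Gamma^*)$, whereas $c_{\hat{M}}$ and $\operatorname{im}\psi$ lie in $\HH^2(B,\Gamma^\vee)$, so the former cannot \emph{agree with} $c_{\hat{M}}$ nor \emph{lie in the image of} $\psi$. You have conflated the relation $[h]=[c_{\hat{M}}]$ of Proposition \ref{top consequences}, which concerns the leading $E^{2,0}$-term of $h$, with what actually has to be shown, namely that the $E^{1,1}_\infty$-image of $h$ \emph{vanishes}. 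The correct argument is elementary and does not use the Poincar\'e property at all: by condition (1) both $h$ and $\hat{h}$ lie in $F^1$; since $E^{1,1}$ receives no incoming differentials, $E^{1,1}_\infty$ is a subgroup of $E^{1,1}_2$, and by functoriality the $E^{1,1}$-images of $p^*h$ and $\hat{p}^*\hat{h}$ lie in the complementary summands $\HH^1(B,\wedge^2\Gamma^*)$ and $\HH^1(B,\wedge^2\hat{\Gamma}^*)$ of $\HH^1(B,\wedge^2(\Gamma^*\oplus\hat{\Gamma}^*))$. The equality $p^*h=\hat{p}^*\hat{h}$ (from condition (2)) forces both to vanish, and the summand inclusions are injective, so $[h]_{1,1}=0$ and $h\in F^{2,2}$.

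In the sufficiency direction two steps are missing. First, defining $\hat{\cG}$ requires lifting $c_M$ along $\hat{\psi}$, which is not free: $\operatorname{im}\hat{\psi}$ is the kernel of the connecting map $\HH^2(B,\hat{\Gamma}^*)\to\HH^3(B,\cC_{U(1),B})\cong\HH^4(B,\dZ)$, so one must identify this connecting map with $\langle\,\cdot\,\cup c_{\hat{M}}\rangle$ (and that of the sequence for $\pi$ with $\langle\,\cdot\,\cup c_M\rangle$); only then does $c_{\hat{M}}=\psi(\bar{h})$ give $\langle c_M\cup c_{\hat{M}}\rangle=0$ and hence the existence of the lift. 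Second, and more seriously, matching \v{C}ech cocycles cannot by itself produce $p^*h=\hat{p}^*\hat{h}$, because the classes being matched are genuinely different before applying $q^*$: in $\HH^2(B,q_*\cC_{U(1)})$ the images of $\bar{h}$ and $\bar{\hat{h}}$ have distinct leading terms $(c_{\hat{M}},0)$ and $(0,c_M)$ in $\HH^2(B,\Gamma^*\oplus\hat{\Gamma}^*)$. The missing idea is that, after correcting $\bar{\hat{h}}$ by a class pulled back from $\HH^2(B,\cC_{U(1),B})$, their difference equals the transgression $d_2(P)$ of the canonical fiberwise Poincar\'e class $P=\mathrm{id}\in\HH^0(B,\Gamma^*\otimes\hat{\Gamma}^*)\subset E^{0,1}_2(q,\cC_{U(1)})$; by the compatibility morphism $\delta_r$ and \cite[Proposition 3.3]{B1} the leading term of $d_2(P)$ is the contraction of $P$ with the Chern class $(c_M,c_{\hat{M}})$ of the correspondence bundle, which is exactly $\pm(c_{\hat{M}},-c_M)$, and $\operatorname{im}(d_2)=\ker(q^*)$. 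This single computation is what simultaneously yields the equality of Dixmier--Douady classes on the correspondence space and the Poincar\'e property of the resulting stable isomorphism; it is precisely the cancellation carried out in coordinates in the thesis between (\ref{P gerbe product}), (\ref{G gerbe product}) and (\ref{hat(G) gerbe product}). Your proposal correctly flags this as the crux but does not supply the mechanism that resolves it.
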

Consider the short exact sequence of sheaves 
$$0 \ra \dZ \ra \cC_\dR \ra \cC_{U(1)} \ra 0$$
where $\cC_\dR$ and $\cC_{U(1)}$ are the sheaves of smooth $\dR$ and $U(1)$ valued functions on $M$, respectively.  Via the boundary morphism in the long exact sequence of derived direct images, we have
$$R^{q}\pi_*\cC_{U(1)}\cong R^{q+1}\pi_*\dZ = \wedge^{q+1} \Gamma^*\ \ \ \forall q\geq 1$$
as $\cC_{\dR}$ is a fine sheaf so $R^q\pi_*\cC_{\dR}=0$ for $q\geq 1$.

By \cite[Theorem 3.2]{B2} there is a morphism of spectral sequences 
$$\delta_r: E^{p,q}_r(\pi,\cC_{U(1)})\ra E^{p,q+1}_r(\pi,\dZ)$$
which on the second page is induced by the boundary morphism $R^q\pi_*\cC_{U(1)}\ra R^{q+1}\pi_*\dZ$. Moreover, the boundary morphism $\delta: \HH^n(M,\cC_{U(1)})\ra \HH^{n+1}(M,\dZ)$ in the long exact sequence of cohomology respects the Leray filtrations (with a shift) and induces an isomorphism
$$\delta: F^{2,2}(\pi,\cC_{U(1)})\ra F^{2,3}(\pi,\dZ).$$
Together with the projection $F^{2,3}(\pi,\dZ)\ra F^{2,3}(\pi,\dZ)/F^{3,3}(\pi,\dZ)=E^{2,1}_\infty (\pi,\dZ)$ the class $h$ defines an element
\begin{align}\label{h in E21infty}
    [h]\in E^{2,1}_\infty(\pi,\dZ).
\end{align}
The following topological consequences of T-duality are important for our discussions later.
\begin{proposition}\label{top consequences}\emph{\cite[Proposition 3.1 and Proposition 3.5]{B2}}
    Let $(M,\cG)$ and $(\hat{M},\hat{\cG})$ be topological T-duals. Denote the monodromy local systems of $M$ and $\hat{M}$ by $\Gamma$ and $\hat{\Gamma}$.
    Then, we have
    \begin{align}\label{monodromy tduality}\Gamma^\vee \cong \hat{\Gamma}.\end{align}
    In particular, taking the cup product of the Chern classes $c\in \HH^2(B,\Gamma)$ of $M$ and $\hat{c}\in \HH^2(B,\Gamma^\vee)$ of $\hat{M}$ and contracting the coefficients yields a class $\langle c\cup \hat{c}\rangle \in \HH^4(B,\dZ)$. We have
    \begin{align}\label{chern class tduality} \langle c\cup \hat{c}\rangle =0.\end{align}
    Cupping with the Chern class is the second-page differential in the Leray spectral sequence so by (\ref{chern class tduality}) $\hat{c}$ defines a class $[c]$ in $E_\infty^{2,1}(\pi,\dZ)$ and $[c]$ in $E_\infty^{2,1}(\hat{\pi},\dZ)$. Let $h\in HH^3(M,\dZ)$ and $\hat{h}\in \HH^3(\hat{M},\dZ)$ be the Dixmier-Douady classes of $\cG$ and $\hat{\cG}$. By (\ref{h in E21infty}) these also define classes in $E_\infty^{2,1}(\pi,\dZ)$ and $E_\infty^{2,1}(\hat{\pi},\dZ)$ respectively. We have
    \begin{align}
        [h]=[\hat{c}]\in E^{2,1}(\pi,\dZ)\ \ \  \text{and}\ \ \ [\hat{h}]=[c]\in E^{2,1}_{\infty}(\hat{\pi},\dZ).
    \end{align}
\end{proposition}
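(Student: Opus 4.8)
The plan is to reconstruct Baraglia's argument \cite{B2} by analysing the Poincar\'e property of the stable isomorphism $\mu: p^*\cG \to \hat{p}^*\hat{\cG}$ through the Leray spectral sequences of $\pi$, $\hat{\pi}$ and $q$. The key inputs are that the $d_2$ differentials of these spectral sequences with $\dZ$-coefficients are cup product with the respective Chern classes followed by contraction of coefficients (\cite{B1} Proposition 3.3), together with the morphism $\delta_r: E^{p,q}_r(\cdot,\cC_{U(1)}) \to E^{p,q+1}_r(\cdot,\dZ)$ induced by the exponential sequence. First I would establish the monodromy duality (\ref{monodromy tduality}). The Poincar\'e property forces the local bundles realising $\mu$ to restrict on each fibre product $M_b\times\hat{M}_b$ to the Poincar\'e line bundle (\ref{u(1) poincare line bundle in family}), whose fibrewise first Chern class is the canonical evaluation pairing $\HH^1(M_b,\dZ)\otimes\HH^1(\hat{M}_b,\dZ)\to\dZ$. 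A perfect such pairing is exactly an isomorphism $\hat{\Gamma}_b^\vee\cong(\Gamma_b^\vee)^\vee=\Gamma_b$, i.e. $\hat{\Gamma}_b\cong\Gamma_b^\vee$; since the Poincar\'e data is globally defined over $B$ this is an isomorphism of local systems, giving $\hat{\Gamma}\cong\Gamma^\vee$. This identification is what lets me write the canonical copairing as a global class $P\in\HH^0(B,\wedge^2(R^1q_*\dZ))$ on the correspondence space $C$.

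Next, for the relations $[h]=[\hat{c}]$ and $[\hat{h}]=[c]$, I would compute the obstruction to $\cP$ being a genuine line bundle on $C$. With $\Gamma_C=\Gamma\oplus\hat{\Gamma}$ the monodromy and $c_C=(c,\hat{c})$ the Chern class of $C$, the class $P$ sits in $E^{0,2}_2(q,\dZ)$ and its differential is $d_2 P=\langle c_C\cup P\rangle\in E^{2,1}_2(q,\dZ)=\HH^2(B,\Gamma^\vee\oplus\Gamma)$. Contracting the canonical copairing against $c$ and against $\hat{c}$ separately yields, up to sign, the two components $(-\hat{c},\,c)$, one in $\HH^2(B,\Gamma^\vee)$ and one in $\HH^2(B,\Gamma)$. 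This $d_2 P$ is precisely the obstruction accounted for by the gerbes through the Poincar\'e property: pulling back along $p$ and $\hat{p}$ and applying the connecting map $\delta$ identifies the $\HH^2(B,\Gamma^\vee)$-component with the image $[h]\in E^{2,1}_\infty(\pi,\dZ)$ and the $\HH^2(B,\Gamma)$-component with $[\hat{h}]\in E^{2,1}_\infty(\hat{\pi},\dZ)$. Reading off the two components gives $[h]=[\hat{c}]$ and $[\hat{h}]=[c]$ after the identification $\hat{\Gamma}\cong\Gamma^\vee$.

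Finally, the Chern-class relation (\ref{chern class tduality}) follows formally from the previous step. Since $[\hat{c}]=[h]$ is the image of an honest Dixmier--Douady class $h\in\HH^3(M,\dZ)$, it is a permanent cycle in $E_\bullet^{2,1}(\pi,\dZ)$, so in particular $d_2^\pi[\hat{c}]=0$. But $d_2^\pi: E^{2,1}_2(\pi,\dZ)=\HH^2(B,\Gamma^\vee)\to E^{4,0}_2(\pi,\dZ)=\HH^4(B,\dZ)$ is cup product with $c$ followed by the contraction $\Gamma\otimes\Gamma^\vee\to\dZ$, that is $d_2^\pi[\hat{c}]=\langle c\cup\hat{c}\rangle$; hence $\langle c\cup\hat{c}\rangle=0$, and symmetrically running the argument on $\hat{\pi}$ gives the same conclusion. (In the write-up one may instead establish (\ref{chern class tduality}) first, so that $\hat{c}$ is manifestly $d_2$-closed and defines a class in $E^{2,1}_\infty$, as the statement of the proposition itself suggests; the apparent circularity is broken by the independent fact that $h$ and $\hat{h}$ are genuine integral classes.)

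I expect the main obstacle to be the bookkeeping in the middle step: making the Poincar\'e property precise enough to match $d_2 P$ with the gerbe classes requires carefully tracking the connecting homomorphism $\delta:\HH^2(\cdot,\cC_{U(1)})\to\HH^3(\cdot,\dZ)$ across the three spectral sequences and verifying that it respects the Leray filtrations up to the standard degree shift, which is exactly the compatibility encoded in the morphism $\delta_r$. Everything else reduces to the linear algebra of the contraction maps once the identification $\hat{\Gamma}\cong\Gamma^\vee$ is in hand.
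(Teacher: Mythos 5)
Your proposal is correct and reconstructs essentially the cited argument: the paper itself offers no proof of this proposition beyond the reference to \cite{B2}, and the machinery it builds around the statement (the morphism $\delta_r$ of spectral sequences, the class (\ref{h in E21infty}), and the \v{C}ech computations appearing in the proof of Proposition \ref{top tdual pair}) is exactly what your three steps rely on. The one substantive caveat is that your parenthetical fix for the ordering is not optional but is the actual proof: the main-text derivation of (\ref{chern class tduality}) is circular as written, since the class $[\hat{c}]\in E^{2,1}_\infty(\pi,\dZ)$ can only be formed once $d_2\hat{c}=\langle c\cup\hat{c}\rangle=0$ is already known. The non-circular order is the one you name — the \v{C}ech/transgression identification exhibits $\hat{c}$, up to sign, as $\delta_2$ applied to the symbol of the genuine class $h$ in $E^{2,0}_2(\pi,\cC_{U(1)})$; that symbol is a permanent cycle because $h$ is an honest class in $F^{2,2}$, and since $\delta_r$ commutes with the differentials this forces $d_2\hat{c}=0$, after which both (\ref{chern class tduality}) and the $E_\infty$-identities make sense simultaneously. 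Note also that one cannot shortcut this by applying $d_2\circ d_2=0$ to your formula $d_2P=(-\hat{c},c)$, as that identity holds automatically and yields no information; the genuine input is the Poincar\'e property pinning $P$ down as the specific preimage, exactly as you say.
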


\subsection{The Poincar\'e bundle as a gerbe}\label{poincare gerbe} Let $\pi:M\ra B$ be an affine torus bundle with monodromy local system $\Gamma\ra B$ and torsion Chern class $c\in \HH^2(B,\Gamma)$. Let $\hat{\pi}:\hat{M}\ra B$ be another affine torus bundle with $\hat{\Gamma}\cong \Gamma^\vee$ and torsion Chern class $\hat{c}\in \HH^2(B,\Gamma^\vee)$.  Then, $(M,0)$ and $(\hat{M},0)$ are T-dual in the sense of generalized geometry (see Example \ref{Tdual torsion coordinates}). In this section, we construct the gerbe $\delta(\cP)$ from local Poincar\'e line bundles and give a condition to when the generalized geometry T-duality can be enhanced to topological T-duality.

Let us choose flat connections $A$ and $\hat{A}$ of $M$ and $\hat{M}$ and let $P=\langle p^*\hat{A}\wedge\hat{p}^*\hat{A}\rangle$ be the closed two-form as before. Let $\cU=\{U_i\}$ be a good cover of $B$ and let us choose flat sections of $M|_{U_i}$ and $\hat{M}|_{U_i}$ with respect to $A$ and $\hat{A}$. Let us also choose dual coordinates on the fibers of $M$ and $\hat{M}$ so the transition functions are
\begin{equation}\label{psi on Mxhat(M)}
\begin{aligned}
    \psi_{ij}:\ \  M\times _{U_i}\hat{M}|_{q^{-1}(U_{ij})}\ \ &\ra \ \ M\times _{U_j}\hat{M}|_{q^{-1}(U_{ij})}\\
    (y,v,\hat{v})\ \ &\mapsto \ \ (\rho_{ij}(y), A_{ij}v+c_{ij},\hat{A}_{ij}\hat{v}+\hat{c}_{ij}),
\end{aligned}
\end{equation}
with $\hat{A}_{ij}=(A_{ij}^{-1})^T$ and $c_{ij}: U_{ij}\ra V$ and $\hat{c}_{ij}: U_{ij}\ra V^*$ constant. We may lift the transition functions to diffeomorphisms
\begin{equation}\label{psi on VxV*}
\begin{aligned}
    \psi_{ij}:\ \  V\times _{U_i}V^*|_{U_{ij}}\ \ &\ra \ \ V\times _{U_j}V^*|_{U_{ij}}\\
    (y,v,\hat{v})\ \ &\mapsto \ \ (\rho_{ij}(y), A_{ij}v+c_{ij},\hat{A}_{ij}\hat{v}+\hat{c}_{ij}),
\end{aligned}
\end{equation}
which then satisfy
\begin{equation}\label{twisted cocycle condition}
    \psi_{ki} \circ \psi_{jk}\circ \psi_{ij} = t_{(n_{ijk},\hat{n}_{ijk})} : (y,v,\hat{v})\mapsto (y,v+n_{ijk},\hat{v}+\hat{n}_{ijk}),\ \ \ n_{ijk}\in \Gamma,\ \hat{n}_{ijk}\in \Gamma^\vee,
\end{equation}
where $c=\{n_{ijk}\}\in \HH^2(B,\Gamma)$ and $\hat{c}=\{\hat{n}_{ijk}\}\in \HH^2(B,\Gamma^\vee)$ are \v{C}ech representatives of the Chern classes.

Via the local trivializations, we can define the Poincar\'e line bundle $\cP_i$ over $M\times_{U_i}\hat{M}$ as (\ref{u(1) poincare line bundle in family})
\begin{equation*}
    a_i^\cP(y,v,\hat{v};\lambda,\hat{\lambda})=exp(i\pi (\hat{v}(\lambda)-\hat{\lambda}(v)+\hat{\lambda}(\lambda))),\ \ \ A_i^\cP(y,v, \hat{v})=i\pi (\hat{v}dv-vd\hat{v}).
\end{equation*}
 That is, with $N=\coprod_i M\times_{U_i}\hat{M}$ and $\cP\ra N$ given by $\cP_i$ the triple $(\delta(\cP),N,M\times_B\hat{M})$ is a trivial $U(1)$-bundle gerbe with a flat connection and curving $2\pi iP\in \Omega^2(M\times_B\hat{M},\dC).$
It is represented over $q^{-1}(U_{ij})$ by the $U(1)$-bundle $\cP_{ij}=\cP_i\otimes \psi_{ij}^*\cP_j^{-1}$
 \begin{equation}
     \begin{aligned}
         a_{ij}^\cP(y,v,\hat{v};\lambda,\hat{\lambda})&=exp(-i\pi \hat{c}_{ij}(A_{ij}\lambda)+i\pi (\hat{A}_{ij}\hat{\lambda})(c_{ij}) ),\\
         A_{ij}^\cP&=-i\pi  \hat{c}_{ij}(A_{ij}dv)+i\pi (\hat{A}_{ij}d\hat{v})(c_{ij}) ).
     \end{aligned}
 \end{equation}
 The gerbe product is given by a trivialization  $\phi^\cP_{ijk}$ of $\cP_{ij}\otimes \psi_{ij}^*\cP_{jk}\otimes \cP_{ik}^{-1}\otimes \cP_{ij}$. This represents a cohomology class in $\HH^1(M\times_B\hat{M},\cC_{U(1)})$, the Dixmier-Douady class of the gerbe. The gerbe product of the trivializable gerbe $\delta(\cP)$ can be determined as explained in \cite[Proposition 3.2]{B2}, but here we use a different method which can be used to calculate the gerbe product of non-trivializable gerbes as well. 
  
Suppose that a gerbe $\cG$ is represented by $U(1)$ bundles $L_{\alpha\beta}$ on the double intersections of some open covers. If $L_{\alpha\beta}$ are trivializable on $U_{\alpha\beta}$ we may choose trivializing sections $s_{\alpha\beta}$  of them. Then, $s_{\beta\gamma}\otimes s_{\alpha\gamma}^*\otimes s_{\alpha\beta}$ is a section of the trivial bundle, that is a $U(1)$-valued function $g_{\alpha\beta\gamma}$. The product rule ensures that $\delta g=0$.
 
Let us determine the gerbe product of $\delta(\cP)$ now. The $U(1)$-bundles $\cP_{ij}$ are topologically trivial on $q^{-1}(U_{ij})$, only the connection is non-trivial. A trivializing section of $\cP_{ij}$ is a function $\theta_{ij}:V\times_{U_{ij}}V^*\ra U(1)$ such that 
$$a_{ij}^\cP(y,v,\hat{v};\lambda,\hat{\lambda})=\theta_{ij}(y,v+\lambda,\hat{v}+\hat{\lambda})\cdot \theta_{ij}(y,v,\hat{v})^{-1}.$$
Let
$$\theta_{ij}(y,v,\hat{v})=exp(-i\pi\hat{c}_{ij}(A_{ij}v) + i\pi (\hat{A}_{ij}\hat{v})(c_{ij})). $$
Then, the gerbe product $\phi_{ijk}^\cP$ is a function which satisfies
\begin{align}\label{P gerbe factor}
                \phi_{ijk}^\cP(y,v+\lambda,\hat{v}+\hat{\lambda})\cdot \phi_{ijk}^\cP(y,v,\hat{v})^{-1}&=1,\\ \label{P gerbe conn}
        \psi^*_{ij}A_{jk}-A_{ki}+A_{ij}+dlog\psi_{ij}^*\theta_{jk}-dlog\theta_{ik}+dlog\theta_{ij}&=-dlog \phi^\cP_{ijk},\\ \label{P gerbe cycle}
        (\delta g)_{ijk}&=1,
\end{align}
where $\delta$ denotes the \v{C}ech differential and the right-hand side of (\ref{P gerbe factor}) is given by
$$(\psi_{ij}^*a_{jk}^\cP\cdot (a_{ik}^\cP)^{-1}\cdot a_{ij})(y,v,\hat{v};\lambda,\hat{\lambda}) \cdot (\psi_{ij}^*\theta_{jk}^{-1}\cdot \theta_{ik}\cdot \theta_{ij}^{-1})(y,v+\lambda,\hat{v}+\hat{\lambda})\cdot (\psi_{ij}^*\theta_{jk}\cdot \theta_{ik}^{-1}\cdot \theta_{ij})(y,v,\hat{v})=1.$$
From (\ref{P gerbe conn}) we have
$$-2\pi i \hat{n}_{ijk}\cdot dv + 2\pi i d\hat{v}\cdot n_{ijk}=-dlog \phi^\cP_{ijk}(y,v,\hat{v})$$
so 
$$\phi^\cP_{ijk}(y,v,\hat{v})=exp(-2\pi i \hat{v}(n_{ijk})+ 2\pi i \hat{n}_{ijk}(v)) \cdot \varphi_{ijk}$$
where $\varphi_{ijk}$ is constant. Then, if (\ref{P gerbe cycle}) holds we have
\begin{align*}
\psi_{ij}^*\phi^\cP_{jkl}\cdot (\phi^\cP_{ikl})^{-1}\cdot \phi_{ijl}^\cP \cdot (\phi^\cP_{ijk})^{-1} = (\delta\varphi)_{ijkl}\cdot exp(-2\pi i \hat{c}_{ij}(n_{jkl})+2\pi i \hat{n}_{jkl}(c_{ij}))
\end{align*}
Let us denote by $\gamma$ the \v{C}ech cochain $\{c_{ij}\}\in \check{C}^1(\cU,V)$ and by $\hat{\gamma}$ the cochain $\{\hat{c}_{ij}\}\in \check{C}(U_{ij},V^*)$. These represent the translation parts of the transition functions so we have $\delta \gamma=\{n_{ijk}\}=c\in \check{C}^2(\cU,\Gamma)$ and $\delta\hat{\gamma}=\{\hat{n}_{ijk}\}=\hat{c}\in \check{C}^2(\cU,\Gamma^\vee)$. Then, 
$$\delta\langle \gamma \cup \hat{\gamma}\rangle = \langle \delta \gamma\cup \hat{\gamma}\rangle-\langle \gamma\cup \delta \hat{\gamma}\rangle=\langle n_{jkl}, \hat{c}_{ij}\rangle -\langle c_{ij},  \hat{n}_{jkl}\rangle ,$$
where we use that $\langle n_{jkl}, \hat{c}_{ij}\rangle=-\langle c\cup\hat{\gamma}\rangle_{klji}=\langle c\cup\hat{\gamma}\rangle_{ijkl}$. In conclusion
\begin{equation}\label{P gerbe product}
    \phi_{ijk}^\cP(y,v,\hat{v})=exp\Big(-2\pi i \hat{v}(n_{ijk})+2\pi i \hat{n}_{ijk}(v)+2\pi i \langle \gamma\cup\hat{\gamma}\rangle_{ijk}\Big).
\end{equation}
\begin{definition}
   In the above setting the trivial gerbe $\delta(\cP)$ or the collection of line bundles $\{\cP_i\}$ is called the \emph{twisted Poincar\'e bundle}.
\end{definition}
The twisted Poincar\'e bundle depends on the choice of flat connections but also on the choice of cover and sections as they provide the normalizations of $\cP_i$. 

\begin{proposition}\label{top tdual pair}
The affine torus bundles $M$ and $\hat{M}$ can be endowed with gerbes $\cG$ and $\hat{\cG}$ such that $(M,\cG)$ and $(\hat{M},\cG)$ are topologically T-dual if and only if
$$\langle c\cup \hat{c}\rangle =0 \in \HH^4(B,\dZ).$$
\end{proposition}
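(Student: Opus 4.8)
The plan is to prove the two implications separately, treating necessity as an immediate consequence of the general theory of Proposition \ref{top consequences} and concentrating all the real work on constructing the gerbes in the sufficiency direction.

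For necessity, suppose $(M,\cG)$ and $(\hat{M},\hat{\cG})$ are topologically T-dual. Then Proposition \ref{top consequences} applies verbatim: by (\ref{monodromy tduality}) the monodromy lattices are dual, as we have already assumed, and (\ref{chern class tduality}) gives $\langle c \cup \hat{c}\rangle = 0 \in \HH^4(B,\dZ)$. This direction therefore requires no further argument. For sufficiency I would first produce the two gerbes from the Leray spectral sequence. Under $\hat{\Gamma}\cong\Gamma^\vee$ we have $R^1\hat{\pi}_*\dZ\cong\Gamma$, so $c\in\HH^2(B,\Gamma)$ is an element of $E^{2,1}_2(\hat{\pi},\dZ)$, and symmetrically $\hat{c}\in E^{2,1}_2(\pi,\dZ)$. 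The second-page differential is cup product with the Chern class followed by contraction of coefficients (\cite[Proposition 3.3]{B1}), so $d_2(c)=\langle\hat{c}\cup c\rangle$ and $d_2(\hat{c})=\langle c\cup\hat{c}\rangle$, which agree up to sign. The vanishing hypothesis therefore makes both $c$ and $\hat{c}$ survive to the $E_\infty$-page, since the higher differentials out of $E^{2,1}_r$ land in $E^{2+r,2-r}_r=0$ for $r\geq 3$. Hence $c$ defines a class in $E^{2,1}_\infty(\hat{\pi},\dZ)=F^{2,3}(\hat{\pi},\dZ)/F^{3,3}(\hat{\pi},\dZ)$, which I would lift to an integral class $\hat{h}\in F^{2,3}(\hat{\pi},\dZ)\subset\HH^3(\hat{M},\dZ)$; this is the Dixmier--Douady class of a gerbe $\hat{\cG}$ on $\hat{M}$ with $[\hat{h}]=[c]$. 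The mirror construction produces $\cG$ on $M$ with $[h]=[\hat{c}]$. Because both classes lie in the second filtered piece $F^2$, they restrict to zero on the fibres, which is condition (1) of topological T-duality.

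Next I would exhibit the stable isomorphism and verify the Poincaré property using the twisted Poincaré bundle of Section \ref{poincare gerbe}. The key observation is that the gerbe product (\ref{P gerbe product}) of $\delta(\cP)$ splits into three pieces: $\exp(-2\pi i\hat{v}(n_{ijk}))$ is pulled back from $\hat{M}$, $\exp(2\pi i\hat{n}_{ijk}(v))$ is pulled back from $M$, and $\exp(2\pi i\langle\gamma\cup\hat{\gamma}\rangle_{ijk})$ is pulled back from $B$. Matching the $\hat{M}$-part with the gerbe product of $\hat{\cG}$ and the $M$-part with that of $\cG$ yields, after passing to a common cover, a strict isomorphism $p^*\cG\otimes\delta(\cP)\to\hat{p}^*\hat{\cG}$, i.e. a stable isomorphism $\mu$. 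Its underlying local line bundles are exactly the Poincaré bundles $\cP_i$ of (\ref{u(1) poincare line bundle in family}), so $\mu$ has the required Poincaré property, giving conditions (2) and (3); combined with condition (1) this establishes topological T-duality.

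The hard part will be the compatibility asserted in the previous paragraph. The $\hat{M}$-factor $\exp(-2\pi i\hat{v}(n_{ijk}))$ is not itself a cocycle on $\hat{M}$: its coboundary, computed against the transition functions (\ref{twisted cocycle condition}), leaves a constant residue pulled back from $B$, and likewise for the $M$-factor. The constant term $\exp(2\pi i\langle\gamma\cup\hat{\gamma}\rangle_{ijk})$ must therefore be distributed between $\cG$ and $\hat{\cG}$ so that each gerbe product becomes a genuine cocycle, which amounts to solving $\delta\hat{\varphi}=\langle\hat{\gamma}\cup c\rangle$ in $U(1)$-valued \v{C}ech cochains. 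Since $\delta\langle\hat{\gamma}\cup c\rangle=\langle\hat{c}\cup c\rangle$ and $c,\hat{c}$ are torsion, the obstruction lives in $\HH^3(B,\cC_{U(1)})$ and maps under the connecting homomorphism precisely to $\langle c\cup\hat{c}\rangle$. The main technical task is to show that the vanishing of $\langle c\cup\hat{c}\rangle$, together with the freedom in the choice of flat connections, of the representatives $\gamma,\hat{\gamma}$, and of the real curving $2\pi i P$, kills this obstruction and makes the distribution possible. This is exactly what reconciles the abstract existence of $\cG,\hat{\cG}$ coming from the spectral sequence with their concrete realization as the two halves of the twisted Poincaré bundle, and it is where I expect the argument to demand the most care.
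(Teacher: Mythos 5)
Your proposal is correct and takes essentially the same route as the paper: necessity is quoted from Proposition \ref{top consequences}, and for sufficiency your key step --- splitting the gerbe product (\ref{P gerbe product}) of $\delta(\cP)$ into an $M$-piece, an $\hat{M}$-piece and a basic piece, and solving $\delta\hat{\varphi}=\exp(2\pi i\langle \hat{\gamma}\cup c\rangle)$ in $U(1)$-valued \v{C}ech cochains --- is exactly the exponentiated form of the paper's step $\langle c\cup\hat{\gamma}\rangle = m+\delta f$ with $m$ integral and $f$ smooth real, solvable by fineness of $\cC^\infty_\dR$ once $\langle c\cup\hat{c}\rangle=\delta m$. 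Two minor remarks: your preliminary spectral-sequence stage producing abstract gerbes with $[h]=[\hat{c}]$ and $[\hat{h}]=[c]$ is redundant (it is Baraglia's Theorem 3.1 route, which the paper also cites), since the concrete cocycles built in your final paragraph already define $\cG$ and $\hat{\cG}$; and no extra freedom in connections, representatives or curvings is needed to kill the obstruction, because the connecting isomorphism $\HH^3(B,\cC_{U(1)})\cong \HH^4(B,\dZ)$ makes its vanishing immediate from $\langle c\cup\hat{c}\rangle=0$.
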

\begin{proof}
If $(M,\cG)$ and $(\hat{M},\hat{\cG})$ are topologically T-dual, then $\langle c\cup \hat{c}\rangle =0$ by \cite[Proposition 3.5]{B2}. The opposite indication is also a consequence of Baraglia's work, one can follow for example the proof of \cite[Theorem 3.1]{B2}. Here, we spell out roughly the same proof by constructing the gerbes $\cG$ and $\hat{\cG}$ explicitly, which we will use later.

Let's assume that $\langle c\cup \hat{c} \rangle =0$. Let $\delta(\cP)$ be the twisted Poincar\'e bundle associated to a choice of flat connections $A$, $\hat{A}$, cover $\cU=\{U_i\}$ and local flat sections of $M$ and $\hat{M}$.  We show that there exists bundle gerbes $\cG$ and $\hat{\cG}$ with connections and curvings on $M$ and $\hat{M}$ such that on $M\times_B\hat{M}$ we have a strict isomorphism
 $$\mu: p^*\cG\otimes \delta(\cP) \ra p^*\hat{\cG}$$
of gerbes with connections which induces a stable isomorphism $[\mu]: p^*\cG\ra \hat{p}^*\hat{\cG}$.

Over triple intersections of the cover $M\times_{U_i}\hat{M}$ the gerbe products $g$, $\hat{g}$ and $g^\cP$ of $\cG$, $\hat{\cG}$ and $\cP$ have to satisfy
$$p^*g\cdot g^\cP_{ijk}=\hat{p}^*\hat{g}_{ijk}\cdot \delta(\mu_{ij}),$$
where $\mu_{ij}$ represents the gerbe isomorphism. 

We set the gerbe product of $\cG$ to
\begin{align}\label{G gerbe product} g_{ijk}(y,v)=exp(2\pi i \hat{n}_{jkl}(v)-2\pi i f_{ijk})\end{align}
over $\pi^{-1}(U_{ijk})$ for some appropriate functions $f_{ijk}:U_{ijk}\ra \dR$ such that $\delta(g)=1$. Since $\langle c \cup \hat{c}\rangle =0\in \HH^4(B,\dZ)$ there exists some $m=\{m_{ijkl}\}\in \check{C}^3(\cU,\dZ)$ such that
$$\langle c \cup \hat{c}\rangle_{ijklm}=\langle n_{ijk}, \hat{n}_{klm}\rangle = (\delta m)_{ijklm}.$$
Then, we have 
$$\delta \langle c \cup \hat{\gamma}\rangle = \langle c \cup \delta\hat{\gamma}\rangle =\delta m$$
so $\langle c \cup \hat{\gamma}\rangle-m$ must be exact in $\check{C}^3(\cU,\cC^\infty_\dR)$. Therefore, we can write
$$\hat{n}_{jkl}(c_{ij})=m_{ijkl}+(\delta f)_{ijkl},$$
where $m_{ijkl}\in \dZ$ and $\{f_{ijk}\}\in \check
{C}^2(\cU,\cC^\infty_\dR)$.

We can represent this gerbe with local $U(1)$-bundles $L_{ij}$ on the double intersections and endow it with a connection. The $L_{ij}$ are represented by the factor of automorphy and connection 1-form



 \begin{equation}\label{G line bundles}
     \begin{aligned}
         a_{ij}^\cG(y,v;\lambda)&=exp(\pi i \hat{c}_{ij}(A_{ij}\lambda)),\\
         A_{ij}^\cG(y,v)&=i\pi \hat{c}_{ij}(A_{ij}dv)+ 2\pi i \epsilon_{ij},
     \end{aligned}
 \end{equation}
 where $\epsilon_{ij}\in \Omega^1(U_{ij},\dR)$. We can trivialize $L_{ij}$ as
 $$a_{ij}^\cG(y,v;\lambda)=\theta_{ij}(y,v+\lambda)\cdot \theta_{ij}^{-1}(y,v)$$
 with
 $$\theta_{ij}^\cG(y,v)=exp(i\pi \hat{c}_{ij}(A_{ij}v)).$$
 Then, by the analogue of (\ref{P gerbe conn}) we have
 $$2\pi i \hat{n}_{ijk}\cdot dv+2\pi i (\epsilon_{jk}-\epsilon_{ik}+\epsilon_{ij})=-dlog\ g_{ijk}.$$
 That is $\epsilon_{jk}-\epsilon_{ik}+\epsilon_{ij}=df_{ijk}$. Indeed, such $\epsilon_{ij}$ exist since the sheaf of differential forms is fine. 
 
 The curvature of this gerbe connection is determined by the curvature of the local bundles so it is basic. We can find a flat connection on $\cG$ if and only if we can find \emph{constant} $\{f_{ijk}\}$ as in \ref{G gerbe product}. This could happen if the gerbe product represented a torsion cohomology class in $\HH^3(M,\dZ)$.

Analogously we define the gerbe $\hat{\cG}$ on $\hat{M}$ via the gerbe product on the triple intersections of $\hat{\pi}^{-1}(U_{ijk})$ as
\begin{align}\label{hat(G) gerbe product}
    \hat{g}_{ijk}(y,\hat{v})=exp(-2\pi i \hat{v}(n_{ijk})-2\pi i \hat{f}_{ijk})
\end{align}
with $\hat{f}_{ijk}=f_{ijk}+\langle c_{ij}\cup \hat{c}_{jk} \rangle$. Then, $\langle \gamma \cup \hat{c}\rangle =m+ \delta \hat{f}$ and $\delta(g^{\hat{\cG}})=1$.

Local bundles $\hat{L}_{ij}$ representing $\hat{\cG}$ and a connection are given by
   \begin{equation}
     \begin{aligned}
         a_{ij}^{\hat{\cG}}(y,\hat{v};\hat{\lambda})&=exp(i\pi  (\hat{A}_{ij}\hat{\lambda})(c_{ij})),\\
         A_{ij}^{\hat{\cG}}(y,\hat{v})&=i\pi c_{ij}\cdot \hat{A}_{ij}d\hat{v}+2\pi i \epsilon_{ij},
     \end{aligned}
 \end{equation}
with local trivializations
$$\theta^{\hat{\cG}}_{ij}(y,\hat{v})=exp(i\pi  (\hat{A}_{ij}\hat{v})(c_{ij})).$$
Once again the curvature of this connection is a basic three-form.  

Finally, we need to find isomorphisms 
$$\mu_{ij}:p^*L_{ij}\otimes \cP_{ij}\ra \hat{p}^*\hat{L}_{ij} $$
which commute with the gerbe products. The local trivializations $\theta_{ij}$, $\theta^\cG_{ij}$ and $\theta^{\hat{\cG}}_{ij}$ the $\mu_{ij}$ can be represented by $U(1)$-valued functions on $V\times_{U_{ij}}V^*$,  satisfying
\begin{align}\label{gerbe iso factor}
    1&=\mu_{ij}(y,v+\lambda,\hat{v}+\lambda)\cdot \mu_{ij}(y,v,\hat{v})^{-1},\\ \label{gerbe iso conn}
    0&=-dlog \mu_{ij}\\ \label{gerbe iso cocycle}
    p^*g_{ijk}\cdot \phi_{ijk}^\cP\cdot \hat{p}^*\hat{g}_{ijk}^{-1}&= \psi_{ij}^*\mu_{jk}\cdot \mu_{ik}^{-1}\cdot \mu_{ij},
\end{align}
where the left-hand side of (\ref{gerbe iso conn}) is given by
$$p^*(A_{ij}^\cG+dlog \theta_{ij}^\cG)+A_{ij}^\cP+dlog \theta_{ij}-\hat{p}^*(A_{ij}^{\hat{\cG}}+dlog \theta_{ij}^{\hat{\cG}})  =0.$$
By construction $\mu_{ij}=1$ satisfies all the conditions. 

The equation (\ref{gerbe iso cocycle}) is equation 3.9. of \cite{B2} in the proof of Theorem 3.1. In particular, given such $\mu_{ij}$ the induced stable isomorphism $[\mu]: p^*\cG \ra \hat{p}^*\hat{\cG}$ satisfies the Poincar\'e property.
\end{proof} 
Note that the representatives $\{g_{ijk}\}$ and $\{\hat{g}_{ijk}\}$ of the gerbe products on $\cG$ and $\hat{\cG}$ actually lie in $\HH^2(B,\pi_*\cC_{U(1)})$ and $\HH^2(B,\hat{\pi}_*\cC_{U(1)})$. In particular, their Dixmier-Douady classes  $h$ and $\hat{h}$ lie in $F^{2,2}\HH^2(M,\cC_{U(1)})$ and $F^{2,2}\HH^2(\hat{M},\cC_{U(1)})$ in line with Theorem \ref{existence of tduals}.

\begin{remark}
The theorem shows that given $\langle c\cup \hat{c}\rangle=0$, there exist T-dual gerbes with connections on $M$ and $\hat{M}$ and that the curvatures of the gerbe connections are given by a basic 3-form pulled back from the base. The existence of a connection further restricts the topology of $M$ and $\hat{M}$ but if $\cG$ admits a flat connection so does $\hat{\cG}$.
\end{remark}

\subsection{Main theorem on general base}
In this section, we update Theorem \ref{tdual U(1) bundles trivial base} to physical branes on an affine torus bundle endowed with a gerbe. We first define branes in this context, which are given by gerbe modules with connections. Then, we prove a technical lemma and finally state and prove our final result Theorem \ref{tdual u(1) bundle general base}. 

Let $(M,\cG)$ and $(\hat{M},\hat{\cG})$ be a T-dual pair as in Theorem \ref{top tdual pair} such that $\cG$ and $\hat{\cG}$ are flat. 
\begin{definition}
A \emph{rank d brane} on $M$ is a pair $(S,E)$ of an affine torus subbundle $S\subset M$ and a rank $d$ module $E\ra S$ of the gerbe $\cG|_S$ after we shift the curving of $\cG|_S$ by the curvature of $E$.
\end{definition}
This definition makes sense since the line bundles defining $\cG$ are flat. That is, if $E=\{(E_i,\nabla_i)\}$ is a collection of projectively flat $U(d)$-bundles with isomorphisms $\varphi_{ij}: E_i\ra E_j\otimes L_{ij}$ then the two-froms describing the curvatures of $\nabla_i$ satisfy $F_{\nabla_i}=F_{\nabla_j}$ over $U_{ij}$. That is, they define a global closed two-form $F\in \Omega^2(S)$. This global two-form is then a curving for the flat connection on $\cG|_S$.

The curving of the gerbe connection corresponds to the \emph{B-field} in physics. That is, shifting the curving can be seen as ``turning on the B-field" without changing the connection. We do not restrict ourselves to a specific $B$-field as we did not constrain it in the generalized geometry case either. Indeed, with our current definition, we can associate to a rank $d$ brane $(S,E)$ a generalized brane $(S,F)$. Since our methods only apply to line bundles and generalized branes which have invariant curvatures we have the following definition.
\begin{definition}
    A rank $d$ brane $(S,E)$ is called \emph{T-dualizable} if the corresponding generalized brane $(S,F)$ is T-dualizable. That is, if and only if $F\in \Omega^2(S)$ is invariant.
\end{definition}
The following lemma will be necessary to describe T-dualizable branes and prove a global theorem.
\begin{lemma}\label{global H semichar}
Consider $H\in H^0(B,\wedge^2 \Gamma^*)$ as a family of alternating bilinear forms on the fibers of $\Gamma$ with values in $\dZ$. Then, there exists a global semicharacter $\chi_0$ for $H$. More precisely $\chi_0: \Gamma\ra U(1)$ such that for any $b\in B$, $\chi_0(b):\Gamma_b \ra U(1)$ is a semicharacter for $H|_{\Gamma_b}$.
\end{lemma}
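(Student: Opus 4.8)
The plan is to construct $\chi_0$ first fibrewise by an explicit formula, and then to analyse whether the fibrewise choices assemble into a single global object; the difficulty is entirely in the second step.

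First I would work over a single fibre $\Gamma_b\cong\dZ^n$. The alternating integral form $H_b$ admits an integral bilinear splitting: in any integral basis let $B_b$ be the integral form whose matrix is the strictly upper–triangular part of the matrix of $H_b$, so that $B_b-B_b^{T}=H_b$ (equivalently one may use a symplectic frame for $H_b$ together with the decomposition formula (\ref{chi0})). I then set
$$\chi_0(\lambda)=\exp\bigl(i\pi B_b(\lambda,\lambda)\bigr),\qquad \lambda\in\Gamma_b .$$
Checking that this is a semicharacter in the sense of Definition \ref{semicharacter U(1)} is the routine expansion already used in the text: comparing $B_b(\lambda+\mu,\lambda+\mu)$ with $B_b(\lambda,\lambda)+B_b(\mu,\mu)+H_b(\lambda,\mu)$ leaves the discrepancy $2B_b(\mu,\lambda)$, an even integer because $B_b$ is integral, so the two sides agree in $U(1)$. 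This establishes the fibrewise existence and hence the ``more precisely'' clause of the statement.

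Second I would organise the globalisation. The set of semicharacters for $H_b$ is a torsor under $\mathrm{Hom}(\Gamma_b,U(1))$, since any two of them differ by a genuine character; as $b$ varies these torsors form an affine torus bundle over $B$ modelled on the dual group bundle $V^{*}/\Gamma^{\vee}$ with its Gauss–Manin flat structure, and a global semicharacter is exactly a global section of this bundle. Over a good cover $\{U_i\}$ one trivialises $\Gamma$, chooses integral forms $B_i$ as above and obtains local semicharacters $\chi_0^{(i)}$. On overlaps the trivialisations differ by a monodromy transformation $A_{ij}$, which preserves $H$ because $H$ is a global section of $\wedge^2\Gamma^{*}$, i.e. $A_{ij}^{T}H A_{ij}=H$; consequently $A_{ij}^{T}B_iA_{ij}-B_j=:S_{ij}$ is a symmetric integral form and $\chi_0^{(i)},\chi_0^{(j)}$ differ by the character $\lambda\mapsto\exp(i\pi S_{ij}(\lambda,\lambda))$. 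This character is $2$–torsion, since $\exp(2\pi i S_{ij}(\lambda,\lambda))=1$, so it lies in $\mathrm{Hom}(\Gamma,\dZ/2\dZ)=\Gamma^{\vee}\otimes\dZ/2\dZ$.

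The main obstacle is precisely this $2$–torsion discrepancy. By the classification of affine torus bundles (Theorem \ref{iso of afft}) the semicharacter torsor admits a section if and only if its Chern class in $\HH^{2}(B,\Gamma^{\vee})$ vanishes, and by the computation above this class is the image of $\{\exp(i\pi S_{ij}(\cdot,\cdot))\}\in\HH^{1}(B,\Gamma^{\vee}\otimes\dZ/2\dZ)$ under the Bockstein of $0\to\Gamma^{\vee}\xrightarrow{2}\Gamma^{\vee}\to\Gamma^{\vee}\otimes\dZ/2\dZ\to0$, hence is itself $2$–torsion. The crux of the proof is to show this Bockstein image vanishes, so that the local $\chi_0^{(i)}$ can be corrected by smooth characters $\eta_i$ with $\eta_i\eta_j^{-1}=\exp(i\pi S_{ij}(\cdot,\cdot))$ and glued. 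I would do this by producing a monodromy–invariant integral $B$ with $B-B^{T}=H$: in the setting of the thesis the monodromy lies in the integral symplectic group $Sp(\Gamma)$ (cf. the discussion preceding Theorem \ref{Leg1}), and after passing to a global symplectic frame — available up to the relevant isogeny, as for the principally polarised case — one writes down such an invariant $B$ directly, trivialising the $2$–torsion class. Producing this invariant splitting, or equivalently killing the obstruction class, is where all the genuine content lies; the fibrewise computation and the \v{C}ech bookkeeping are formal.
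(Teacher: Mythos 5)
Your fibrewise construction is correct and coincides with the paper's: taking $B(\lambda,\mu)=H(\lambda^{1},\mu^{2})$ for a symplectic decomposition $\lambda=\lambda^{1}+\lambda^{2}+\lambda^{0}$ recovers the formula (\ref{chi0}), and your \v{C}ech bookkeeping (the discrepancy $\exp(i\pi S_{ij}(\lambda,\lambda))$ is a $2$-torsion character) is also right. The genuine gap is the step you yourself call the crux and then do not carry out: producing a monodromy-invariant integral $B$ with $B-B^{T}=H$. The justification you offer --- ``after passing to a global symplectic frame, available up to the relevant isogeny'' --- is not available: a global symplectic frame of $\Gamma$ is a trivialisation of the local system, so it exists only when the monodromy is trivial, and replacing $\Gamma$ by an isogenous lattice (as in the discussion around Theorem \ref{Leg1}) conjugates the monodromy but does not kill it. There is also a mismatch of targets in your obstruction analysis: vanishing of the Chern class of the semicharacter torsor in $\HH^{2}(B,\Gamma^{\vee})$ governs \emph{smooth} sections, and correcting the constant local semicharacters by smooth characters $\eta_{i}$ produces only a smoothly varying family; but the paper's proof, and the way the lemma is used in Theorem \ref{tdual u(1) bundle general base} (``the invariant constant semicharacter''), require a \emph{flat}, i.e.\ locally constant, monodromy-invariant semicharacter. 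Flatness is a fixed-point condition for the monodromy acting on the torus of semicharacters, not a Chern-class condition, and it is destroyed by smooth corrections.

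More seriously, no argument relying only on the monodromy-invariance of $H$ can supply the missing step, because an invariant $H$ need not admit any invariant semicharacter (hence no invariant splitting $B$ either). On $\Gamma_{b}=\dZ^{4}$ with the standard principal form $H(e_{1},e_{3})=H(e_{2},e_{4})=1$, every semicharacter is $\chi_{s}(\lambda)=(-1)^{\lambda_{1}\lambda_{3}+\lambda_{2}\lambda_{4}}e^{2\pi i\langle s,\lambda\rangle}$ with $s\in\dR^{4}/\dZ^{4}$; invariance under the symplectic transvections $T_{e_{i}}\colon\lambda\mapsto\lambda+H(\lambda,e_{i})e_{i}$, $i=1,\dots,4$, forces $s=(\tfrac12,\tfrac12,\tfrac12,\tfrac12)$, while invariance under $T_{e_{1}+e_{2}}$ forces $s_{1}+s_{2}\equiv\tfrac12\pmod 1$, a contradiction. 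Thus the subgroup of $Sp(4,\dZ)$ generated by these five transvections preserves $H$ but fixes no semicharacter; realising it as the monodromy of $\Gamma$ over a base with free fundamental group gives $H\in\HH^{0}(B,\wedge^{2}\Gamma^{*})$ admitting no flat global semicharacter, even though the Chern-class obstruction you would compute vanishes there ($\HH^{2}(B,\Gamma^{\vee})=0$ when $\pi_{1}(B)$ is free). This is also exactly the point where your route departs from the paper's proof: there the local decompositions are not chosen independently and corrected afterwards, but are transported from chart to chart by the locally constant transition functions, so that $\psi_{\alpha\beta}^{*}\chi_{\beta}=\chi_{\alpha}$ holds by construction; your computation of $S_{ij}$, together with the example above, shows that the global consistency of such a transport is itself a nontrivial hypothesis on the monodromy rather than mere bookkeeping. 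In short, the fibrewise and \v{C}ech parts of your proposal are fine, but the crucial step is missing, and the route you sketch for it cannot be completed at the stated level of generality.
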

\begin{proof}
Let $\{U_\alpha\}$ be a good cover of $B$ so we may represent $H$ as a collection of alternating bilinear forms $\{H_\alpha\}$ acting on $\Gamma|_{U_\alpha}\cong U_\alpha\times\dZ^{n}$ which satisfy $\psi_{\alpha\beta}^*H_{\beta}=H_{\alpha}$.

We define $\chi_0$ locally on $U_\alpha$ using a decomposition for $H_\alpha$. Let $\{\mu_1,...\mu_n\}$ with $\mu_i\in \Gamma(U,\dZ^n)\cong \dZ^n$ be a symplectic frame for $H_\alpha$. That is,
$$H_{\alpha}(\mu_i,\mu_{r+i})=h_i,\ \ \ i=1,...,r\ \ \ \text{and}\ \ \ H_{\alpha}(\mu_{l},\lambda )=0 \ \ \ l=2r+1,...,n,\ \lambda\in \Gamma_{\alpha}.$$
A decomposition of $\Gamma_{\alpha}$ with respect to $H_{\alpha}$ is given by $\Gamma_{\alpha}=\Gamma_{\alpha}^1+\Gamma_{\alpha}^2+\Gamma_{\alpha}^0$ where
$\Gamma_{\alpha}^1=span\{\mu_i\ |\ i=1,...r\},\ \ \Gamma_{\alpha}^2=span\{\mu_{r+i}\ |\ i=1,...,r\}\ \ \text{and}\ \ \Gamma_{\alpha}^0=span\{\mu_l\ |\ l=2r+1,...,n\}.$ 

We then decompose each $\lambda\in \Gamma|_{U_\alpha}$ as $\lambda=\lambda^1+\lambda^2+\lambda^0$ and we define
$$\chi_\alpha(\lambda):=exp(i\pi H_\alpha(\lambda^1,\lambda^2)).$$

We may choose the local decompositions to be compatible with the transition functions $\psi_{\alpha\beta}$ in the following sense. If $\{\mu_i\}$ is a symplectic frame for $H_{\alpha}$, then $\{\psi_{\alpha\beta}\mu_i\}=\{\nu_i\}$ is a symplectic frame for $H_{\beta}$ over $U_{\alpha\beta}$. But since the transition functions are constant and so are sections of $\Gamma_\beta$ there is no issue of extending $\nu_i$ to a symplectic frame of $H_\beta$ over $U_\beta$. Then, $\Gamma_\beta^1=span\{ \nu_i\ |\ i=1,...r\}$, $\Gamma_\beta^2=span\{ \nu_{r+i}\ |\ i=1,...r\}$ and $\Gamma_\beta^0=span\{\nu_{l}\ |\ l=2r+1,...,n\}$ is a decomposition with respect to $H_\beta$. 

Let $\lambda\in \Gamma(U_{\alpha,\beta},\dZ^n) $ be a local section of $\Gamma$. By the choice of decompositions, $\psi_{\alpha\beta}(\lambda^1)=(\psi_{\alpha\beta}\lambda)^1$ so we have
\begin{align*}
    (\psi_{\alpha\beta}^*\chi_\beta)(\lambda)&=\chi_\beta(\psi_{\alpha\beta}\lambda)=exp(i\pi H_\beta((\psi_{\alpha\beta}\lambda)^1,(\psi_{\alpha\beta}\lambda)^2)=exp(i\pi H_\beta(\psi_{\alpha\beta}(\lambda^1),\psi_{\alpha\beta}(\lambda^2))\\&=exp(i\pi H_{\alpha}(\lambda^1,\lambda^2))=\chi_\alpha(\lambda)
\end{align*}
so $\{\chi_\alpha\}$ glue together to a global $\chi_0$.
\end{proof}
\begin{theorem}\label{tdual u(1) bundle general base}
Let $(S,L)$ be a rank 1 T-dualizable brane on $(M,\cG)$. Let $F\in \Omega^2(S)$ be the curvature of $L$ and denote the fiberwise component of $F$ by $H\in \HH^0(\pi(S),\Gamma^\vee_S)$. Then the following holds.
\begin{enumerate}
    \item \label{1 global} $-H(c)=q(\hat{c})\in \HH^2(B,\Gamma_S^\vee),$ that is the distribution $\Delta$ on $M\times_B\hat{M}$ has closed leaves $Z$ which are affine torus subbundles over $\pi(S)$ (Theorem \ref{gluing of Z}).
    \item \label{2 global} There exists a leaf $Z\subset S\times_{\pi(S)}\hat{M}$  such that the collection of line bundles $p^*L_i\otimes \cP_i$ are trivial on the fibers of $\hat{p}_Z$. Moreover, over $Z$ we have an isomorphism between collections of $U(1)$-bundles
    $$p^*L\otimes \cP \cong \hat{L}$$
    such that $\delta(\hat{L})$ is a trivialization of $\hat{p}^*\hat{\cG}$.
    \item \label{3 global} The T-dual $(\hat{S},\hat{L})$ is a rank $d$ T-dualizable brane on $\hat{M}$.
\end{enumerate}
\end{theorem}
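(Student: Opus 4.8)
The plan is to prove the three parts in order, using the gluing criterion of Theorem \ref{gluing of Z}, the local construction of Theorem \ref{tdual U(1) bundles trivial base}, and the explicit gerbe data of Section \ref{poincare gerbe}. The observation that organizes everything is that because $L$ is a \emph{rank one} $\cG|_S$-module, $\cG|_S$ is trivializable and $L$ is an honest $U(1)$-bundle once a trivialization is fixed. Moreover, the topological T-duality condition forces $\cG$ to be trivial on each fiber, so $L$ restricts to a genuine line bundle on every fiber torus; its Chern class there is integral, hence the fiberwise component $H$ lies in $\HH^0(\pi(S),\wedge^2\Gamma_S^\vee)$ and is \emph{integral}, not merely rational. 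By Definition \ref{Gamma_H} this means all $m_i=1$ and $\Gamma_H=\Gamma_S$, so condition (1) of Theorem \ref{gluing of Z} holds automatically and Part \ref{1 global} reduces entirely to the single identity $-H(c)=q(\hat c)$.

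For Part \ref{1 global} I would argue cohomologically and then match cocycles. Since $L$ trivializes $\cG|_S$, the Dixmier--Douady class satisfies $h|_S=0$; by functoriality of the Leray spectral sequence and Lemma \ref{chern class of s} the induced class in $E^{2,1}_\infty(\pi_S,\dZ)$ vanishes, and combined with $[h]=[\hat c]$ from Proposition \ref{top consequences} this gives $[q(\hat c)]=0$ in $E^{2,1}_\infty(\pi_S,\dZ)$. Because $\langle c\cup\hat c\rangle=0$ the class $q(\hat c)$ is already $d_2$-closed, so its vanishing in $E^{2,1}_\infty$ means $q(\hat c)=d_2(\xi)$ for some $\xi\in\HH^0(\pi(S),\wedge^2\Gamma_S^\vee)$, where $d_2$ is contraction with the Chern class $c$ of $S$. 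The technical heart is to identify $\xi$ with $-H$: I would do this by the explicit cocycles of Proposition \ref{top tdual pair}. Restricted to $S$, the gerbe product of $\cG$ (equation (\ref{G gerbe product})) is linear in the fiber coordinate with coefficient $q(\hat c)$, and trivializing it by a $U(1)$-bundle whose invariant fiberwise curvature is $H$ forces the fiber monodromy encoded by $c$ to be absorbed by $H$, producing exactly $-H(c)=q(\hat c)$. Theorem \ref{gluing of Z} then yields the global leaves $Z$.

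Part \ref{2 global} follows from Part \ref{1 global} and the local theorem. Over each contractible $U\subset\pi(S)$, Theorem \ref{tdual U(1) bundles trivial base}(\ref{1 contr base}) produces a leaf on which $p^*L\otimes\cP$ is fiberwise trivial; by Part \ref{1 global} the local leaves assemble into global affine torus subbundles over $\pi(S)$, and since the triviality locus is cut out by the cohomological condition of Theorem \ref{tdual U(1) bundles trivial base}(\ref{1 contr base}) one global choice of $Z$ realizes fiberwise triviality simultaneously. The isomorphism with $\delta(\hat L)$ trivializing $\hat p^*\hat\cG$ is then formal: since $\cG|_S=\delta(L)$ we have $p_Z^*(\cG|_S)=\delta(p_Z^*L)$, and restricting the stable isomorphism $\mu\colon p^*\cG\otimes\delta(\cP)\ra\hat p^*\hat\cG$ of Proposition \ref{top tdual pair} to $Z$ gives
$$\hat p_Z^*\hat\cG\cong p_Z^*(\cG|_S)\otimes\delta(\cP|_Z)\cong\delta(p_Z^*L)\otimes\delta(\cP|_Z)=\delta(p_Z^*L\otimes\cP|_Z)=\delta(\hat L_Z),$$
so $\hat L_Z:=p_Z^*L\otimes\cP|_Z$ trivializes $\hat p_Z^*\hat\cG$, as claimed.

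Finally, for Part \ref{3 global} I would push forward along $\hat p_Z\colon Z\ra\hat S$ exactly as in Theorem \ref{tdual U(1) bundles trivial base}(\ref{2 contr base})--(\ref{3 contr base}): the pushforward $\hat E=(\hat p_Z)_*\hat L_Z$ is a projectively flat bundle of rank $d^2$, and because $\delta(\hat L_Z)$ trivializes $\hat p_Z^*\hat\cG$ by Part \ref{2 global}, this pushforward is naturally a module of $\hat\cG|_{\hat S}$ rather than an honest bundle; the decomposition $\hat E\cong\hat L\otimes U(d)$ then exhibits $\hat L$ as a rank $d$ module of $\hat\cG|_{\hat S}$. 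Its curvature is $\hat F\cdot\mathrm{Id}$ with $\hat F$ the invariant generalized T-dual form of Theorem \ref{local gg thm}, so $(\hat S,\hat L)$ is a T-dualizable rank $d$ brane. I expect the genuine obstacle to be the cocycle identification $\xi=-H$ in Part \ref{1 global}: matching the fiberwise curvature of the trivialization against the contraction $d_2$ requires carefully tracking the normalizations of $\cP$, $\cG$ and $L$ in the trivialized coordinates, and this is where the signs and the torsion-divisibility of $c$ must be handled with care.
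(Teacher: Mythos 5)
Your Parts 1 and 3 are essentially the paper's own argument. In Part 1, the observation that integrality of $H$ forces $m_i=1$, hence $\Gamma_H=\Gamma_S$ and condition (1) of Theorem \ref{gluing of Z} for free, is correct and makes explicit something the paper leaves implicit; your cohomological preamble ($h|_S=0$, so $[q(\hat c)]=0$ in $E^{2,1}_\infty(\pi_S,\dZ)$ and $q(\hat c)=d_2(\xi)$ for some integral $\xi$) is valid but, as you note, only reduces the claim to identifying $\xi=-H$, and that identification is precisely the paper's computation: the module structure gives a strict isomorphism $\delta(L)\cong \cG|_S$, so the two gerbe products define the same class in $\HH^2(\pi(S),(\pi_S)_*\cC_{U(1)})$; applying $\delta_2$ to the product of $\cG$ gives $-q(\hat c)$, while applying it to the product of $\delta(L)$, computed from canonical factors of automorphy with the global semicharacter of Lemma \ref{global H semichar}, gives $H(c)$. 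Your sketch of this mechanism is correct, so deferring the computation is defensible in an outline. Part 3 likewise matches the paper: the local pushforwards glue into a module of $\hat{\cG}|_{\hat S}$ because, by the projection formula, they differ across overlaps exactly by the line bundles defining $\hat{\cG}$.

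The genuine gap is in Part 2, in the sentence claiming that ``one global choice of $Z$ realizes fiberwise triviality simultaneously.'' The triviality condition of Theorem \ref{tdual U(1) bundles trivial base}(\ref{1 contr base}) is relative to the local bundle $L_i$: over $U_i$ the good leaves are exactly those projecting onto the unique local T-dual $\hat S_i$, i.e.\ the preimage of the section $[-G_i]$ of $coker(H)/coker(H,\Gamma_S^\vee)$ determined by $L_i$. On overlaps $L_i\neq L_j$: they differ by the gerbe line bundle $L^{\cG}_{ij}|_{S_{ij}}$, whose restriction to each fiber is flat but in general nontrivial (its holonomy is governed by $\hat c_{ij}$), so it is not automatic that the goodness conditions for the $i$-data and the $j$-data select the same leaves, equivalently that the sections $[-G_i]$ glue. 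Establishing this compatibility is the bulk of the paper's proof of Part 2: from the module isomorphisms $\mu_{ij}\colon L_i\otimes L_j^{-1}\ra L^{\cG}_{ij}|_{S_{ij}}$ and closedness of $F$ one deduces that $H(B^T_{ij}\tilde c_{ij})-G_i+B^T_{ij}G_j-B^T_{ij}\hat c_{ij}$ is a constant element $\hat n_{ij}\in \Gamma_S^\vee$, whence $[-G_i]$ and $[-G_j]$ agree and the $\hat S_i$ form a global $\hat S$. (A shortcut you could have used instead: by Proposition \ref{top tdual pair} one has $p^*L^{\cG}_{ij}\otimes \cP_{ij}\cong \hat p^*\hat L_{ij}$, with $\hat L_{ij}$ the line bundles defining $\hat{\cG}$; the right-hand side is pulled back from $\hat M$, hence trivial on the fibers of $\hat p$, so the triviality conditions for $p^*L_i\otimes\cP_i$ and $p^*L_j\otimes\cP_j$ coincide on overlaps.) As written, your proposal never invokes the gerbe-module isomorphisms at this point, so the existence of the required global leaf --- the heart of the theorem --- is asserted rather than proved.
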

\begin{proof}[Proof of \ref{1 global}.] Let $V$, $V_{S}$ and $\Gamma$, $\Gamma_S$ be the vertical bundle and the monodromy local system of $M$ and $S$ respectively. Let $\cU=\{U_i\}$ be the good cover of $B$ defining $\cG$.  By passing to a refinement if necessary we can assume that  $\cU\cap \pi(S)=\{U_i\cap \pi(S)=V_i\}$ is also a good cover. Let us denote by $S_i$ the intersection $\pi^{-1}(V_i)\cap S$ and by $S_{ij}=S_i\cap S_j$. Then $L$ is represented by local bundles $L_i\ra S_i$ which are T-dualizable bundles in the sense of Theorem \ref{tdual U(1) bundles trivial base}. The gerbe $\cG|_S$ is represented by $L_{ij}^\cG|_{S_{ij}}$. If the $L=\{L_i\}$ form a gerbe module then we have a strict isomorphism between the trivial gerbe $\delta(L)$ and $\cG|_S$.

Since all the line bundles $L_{ij}^\cG|_{S_{ij}}$ and $L_i\otimes L_j^{-1}$ are trivializable we may chose sections $s_{ij}^\cG$ of $L_{ij}^\cG|_{S_{ij}}$ and $s^L_{ij}$ of $L_i\otimes L_j^{-1}$. Then the isomorphisms over $S_{ij}$
$$\mu_{ij}: L_i\otimes L_j^{-1}\ra L_{ij}^\cG|_{S_{ij}}$$
are represented by functions $m_{ij}:S_{ij}\ra U(1)$ such that
$$\mu_{ij}(s^L_{ij})=m_{ij}s_{ij}^\cG.$$
These must commute with the gerbe products. More precisely, if we denote by $\{g_{ijk}\}$ and $\{g_{ijk}^L\}$ the gerbe products of of $\cG$ and $\delta(L)$ we have
$$g_{ijk}\cdot (g^L_{ijk})^{-1}=\psi_{ij}^*m_{jk}\cdot m_{ik}^{-1}\cdot m_{ij}.$$
That is,
$$\{g_{ijk}\}=\{g_{ijk}^{L}\}\in \HH^2(\pi(S),(\pi_S)_*\cC_{U(1)})$$
and
$$-q(\hat{c})=\delta_2\{g_{ijk}^L\}\in \HH^2(\pi(S),\Gamma_S^\vee),$$
so it suffices to show that $\delta_2\{g_{ijk}^L\}=H(c)$.

 We denote by $M_0=V/\Gamma$ and $S_0=V_S/\Gamma_S$ the group bundles corresponding to $M$ and $S$. Since $S$ is a submanifold of $M|_{\pi(S)}$ the Chern class  $c$ of $M|_{\pi(S)}$ is given by Chern class $c_S$ of $S$ (Lemma \ref{chern class of s}). In particular, we have a projection $M|_{\pi(S)}\ra (M_0|_{\pi(S)})/S_0$ and $S$ is the preimage of a section $b$.  That is, there exists $b_i: V_i\ra M_0$ such that $S_i=t_{b_i}S_0$ and $b_i-b_j\in S_0$. 

By \ref{appel-humbert family} we have unique representatives for $L_i$, and now we would like to determine $L_i\otimes L_j^{-1}$. For this, we have to calculate the transition functions of $S$ as we did in the discussion before Lemma \ref{chern class of s}. In the notation of (\ref{transition of S in M}) we have
\begin{equation}\label{transition of S}
\begin{aligned}
\psi_{ij}: t_{-b_i}S_i\ \ &\ra\ \  t_{-b_i} S_j\\
[y,q]\ \ &\mapsto\ \  [\phi_{ij}(y),B_{ij}q+C_{ij}b_i+c_{ij}^1],
\end{aligned}
\end{equation}
with $b_j=D_{ij}b_i+c_{ij}^2$. Let us denote the translations by $\tilde{c}_{ij}=C_{ij}b_i+c_{ij}^1$. We can lift (\ref{transition of S}) to diffeomorphisms $\psi_{ij}: V_S|_{V_i}\ra V_S|_{V_j}$ so they satisfy
\begin{equation}
    \psi_{ik}\circ \psi_{jk}\circ \psi_{ij} = t_{\tilde{n}_{ijk}},
\end{equation}
where $\{\tilde{n}_{ijk}\}=c_S\in\HH^2(\pi(S),\Gamma_S^\vee)$.

Now we can represent $L_i$ (actually $t_{b_i}^*L_i$ as in the proof of Theorems \ref{tdual u(1) bundles on a torus} and \ref{tdual U(1) bundles trivial base}) with a canonical factor (Theorem \ref{appel-humbert family}) using the invariant constant semicharacter from Lemma \ref{global H semichar} and we can calculate the gerbe product of $\delta(L)$ as before. We have for $y\in V_i,\ v\in V_S,\ \lambda\in \Gamma_S$
\begin{align*}
    a_{L_i}\cdot (\psi_{ij}^*a_{L_j})^{-1}(y,v;\lambda)=&exp\Big(-i\pi H(\tilde{c}_{ij},B_{ij}\lambda)+2\pi i(G_i-B_{ij}^TG_j)(\lambda) \Big),\\
    (A_{L_i}-\psi_{ij}^*A_{L_j})(y,v)=&2\pi i (A_i^0-A_j^0)- 2\pi i (dG_i-B_{ij}^TdG_j)\cdot v+2\pi i dG_j\cdot \tilde{c}_{ij}- \\
    &-i\pi H(\tilde{c}_{ij},B_{ij}dv)-i\pi H(B_{ij}v, d\tilde{c}_{ij})-i\pi H(\tilde{c}_{ij},d\tilde{c}_{ij}).
\end{align*}
A trivializing section is given by the function
\begin{align}\label{trivi of L} \theta_{ij}(y,v)=exp(-i\pi H(\tilde{c}_{ij},B_{ij}v)+2\pi i (G_i-B_{ij}^TG_j)(v)), \end{align}
then, $$A_{ijk}=\delta(A_{ij}-dlog\theta_{ij})$$ is
\begin{align*}
A_{ijk}=&-2\pi i \Big( (G_j-B_{jk}^TG_k)\cdot d(B_{ij}v+\tilde{c}_{ij})+dG_k\cdot \tilde{c}_{jk}- (G_i-B_{ij}^TG_k)dv -dG_k\cdot \tilde{c}_{ik}+\\
&+(G_i-B_{ij}^TG_j)dv+dG_j\cdot \tilde{c}_{ij}\Big)-i\pi\Big( H(\tilde{c}_{jk},d\tilde{c}_{jk})-H(\tilde{c}_{ik},d\tilde{c}_{ik})+H(\tilde{c}_{ij},d\tilde{c}_{ij})\Big)\\
&-2\pi i H(\tilde{n}_{ijk},dv)-2\pi i H(\tilde{c}_{jk},B_{jk}d\tilde{c}_{ij}).
\end{align*}
Using several times that $\delta(\tilde{c}_{ij})=\tilde{n}_{ijk}$ we find
$$A_{ijk}(y,v)=-dlog\Big(exp(2\pi  i H(\tilde{n}_{ijk},v))\cdot \mu_{ijk}(y) \Big)$$
with 
\begin{align*}\mu_{ijk}=&exp\Big(i\pi H(B_{jk}\tilde{c}_{ij},\tilde{c}_{ik})-i\pi H(\tilde{n}_{kji}, \tilde{c}_{ik})-i\pi H(\tilde{n}_{kji},B_{jk}\tilde{c}_{ij})\Big)\times\\
&\ \times exp\Big(2\pi i (G_j-B_{jk}^TG_k)\tilde{c}_{ij}+2\pi i G_k\tilde{n}_{kij}+f_{ijk}\Big).\end{align*}
The map $\delta_2$ is given by
\begin{align*}
    \delta_2:\ \ \ \HH^2(\pi(S),(\pi_S)_*\cC_{U(1)})\ \ &\ra \ \ \HH^2(\pi(S),\Gamma_S^\vee)\\
    \{g_{ijk}\}\ \ &\mapsto \ \ \{\lambda\mapsto dlog(g_{ijk})(v+\lambda)-dlog(g_{ijk})(v)\},
\end{align*}
therefore it is clear that $\delta_2(\{g^L_{ijk}\})=H(c_S).$

\textit{Proof of \ref{2 global}.} By Part \ref{1 global}. the Chern class of $\hat{M}|_{\pi(S)}$ is in the image of 
$$\HH^2(\pi(S),\Gamma_{\hat{S}}^\vee) \ra \HH^2(\pi(S),\Gamma_M^\vee),$$
and the Chern class of $S\times_{\pi(S)}\hat{M}$ is in the image of 
$$\HH^2(\pi(S),\Gamma_Z)\ra \HH^2(\pi(S),\Gamma_S+\Gamma_{M}^\vee).$$
Therefore, for any global $\hat{S}$ there exists a global leaf $Z\subset S\times_{\pi(S)}\hat{S}$. That is, there exists a global leaf $Z$ such that $\{p^*L\otimes P_i\}$ are trivial on the fibers of $\hat{p}_Z$ if and only if the local \emph{unique} T-duals $(\hat{S}_i,\hat{L}_i)$ form a global submanifold of $\hat{M}|_{\pi(S)}$.

Similarly to the discussion in Part \ref{1 global}., by Proposition \ref{space of T-duals} any global $\hat{S}$ is the preimage of a section under the projection
$$\rho: \hat{M}\ra coker(H)/coker(H,\Gamma_S^\vee).$$
The image of the local T-duals $\hat{S}_i\subset \hat{M}$ under $\rho$ is a local section. By the diagram \ref{Zc as translate of Z0} these local sections are given by
$$[-G_i]: V_i\ra coker(H)/coker(H,\Gamma_S^\vee),$$
the images of $-G_i: V_i\ra V_S^*$ under the projection $V_S^*\ra V_S^*/Im(H)$. We, therefore, need to show that the $[-G_i]$ form a global section. 

Recall, that via the trivializing sections $s_{ij}^L$ and $s^\cG_{ij}$ the isomorphism $\mu_{ij}:L_i\otimes L_j^{-1}\ra L^\cG_{ij}|_{S_{ij}}$ are given by functions. These satisfy
$$A_{L_i}-\psi^*A_{L_j}+dlog\theta_{ij} -A_{ij}^\cG-dlog \theta^\cG_{ij}=-dlog \mu_{ij},$$
where $\theta_{ij}$ is (\ref{trivi of L}) and $\theta_{ij}^\cG$ trivializes $L_{ij}^\cG|_{S_{ij}}$. The $L_{ij}^\cG|_{S_{ij}}$ are represented by
\begin{align*}
    a_{ij}^\cG(y,v;\lambda)&=exp(i\pi \hat{c}_{ij}(B_{ij}\lambda))\\
    A_{ij}^\cG(y,v)&=i\pi \hat{c}_{ij}\cdot B_{ij}dv+i\pi \hat{c}_{ij}\cdot d\tilde{c}_{ij}
\end{align*}
so 
\begin{align}
    \theta_{ij}^\cG(y,v)=exp(i\pi \hat{c}_{ij}(B_{ij} v)),
\end{align}
and we have
\begin{align}\label{last one pls}
    -dlog\mu_{ij}=2\pi i (&A^0_i-A^0_j-\frac{1}{2}H(\tilde{c}_{ij}, d\tilde{c}_{ij})+dG_j\cdot \tilde{c}_{ij})-\\
    &-2\pi i (H(B^T_{ij}\tilde{c}_{ij})-G_i+B_{ij}^TG_j-B^T_{ij}\hat{c}_{ij})\cdot dv.
\end{align}
Since $F_i=dA_{L_i}=dA_{L_j}=F_j$ the first term in (\ref{last one pls}) is a closed one-form pulled back from $V_{ij}$. Therefore, there exists $f_{ij}:V_{ij}\ra U(1)$ such that
\begin{align}\label{lastlast} -dlog\mu_{ij}-dlog\pi_S^*f_{ij}=-2\pi i (H(B^T_{ij}\tilde{c}_{ij})-G_i+B_{ij}^TG_j-B^T_{ij}\hat{c}_{ij})\cdot dv.\end{align}
The expression $H(B^T_{ij}\tilde{c}_{ij})-G_i+B_{ij}^TG_j-B^T_{ij}\hat{c}_{ij}$ depends only on $y$ and the left-hand side of (\ref{lastlast}) is closed, therefore $H(B^T_{ij}\tilde{c}_{ij})-G_i+B_{ij}^TG_j-B^T_{ij}\hat{c}_{ij}$ must be constant $\hat{n}_{ij}$ and 
$$\mu_{ij}\cdot \pi_S^*f_{ij}=exp(-2\pi i\hat{n}_{ij}(v)+ (\text{constant})):S_{ij}\ra U(1).$$
That is $\hat{n}_{ij}(\lambda)\in \dZ$ and $\hat{n}_{ij}\in \Gamma_S\vee.$

Going back to the local sections $[-G_i]$, we have
$$[-G_i]+[B^T_{ij}G_j]=[H(B^T_{ij}\tilde{c}_{ij})-G_i+B_{ij}^TG_j-B^T_{ij}\hat{c}_{ij}]=[\hat{n}_{ij}]=0,$$
where we use that $q(\hat{c})\in \HH^2(\pi(S),Im(H)\cap \Gamma_S^\vee)$.

\textit{Proof of \ref{3 global}.} We have isomorphisms
$$\hat{L}_i\cong \hat{p}_Z^*L^\cG_{ij}\otimes L_j$$
so by the projection formula
$$\hat{E}_i=(\hat{p}_Z)_*L_i\cong (\hat{p}_Z)_*L_j\otimes L^{\cG}_{ij}=\hat{E}_j\otimes L^{\cG}_{ij}.$$
In particular, using that $\hat{E}_i=\oplus^d\hat{L}_i$
$$\hat{L}_i\cong \hat{L}_j\otimes L_{ij}^\cG.$$
\end{proof}

\begin{example} Let $M\ra B$ be an algebraic integrable system with principally polarized fibers. Then, $M$ carries a semi-flat hyperk\"ahler structure and by Theorem \ref{semiflat tdual principal pol} the affine torus bundle $M$ endowed with $H=0$ flux is self-T-dual in the sense of generalized geometry. It is easy to see that $M$ can also be endowed with a (trivializable) gerbe such that $(M,\cG)$ is topologically self-T-dual. 

Let $\omega$ be the K\"ahler form of the special K\"ahler structure on the base $B$. Then the restriction of $\omega_\dI$, one of the k\"ahler forms of the semi-flat metric (\ref{semiflat cotangent1}, \ref{semiflat cotangent2}), to the fibers is
$$\omega^{-1}\in \HH^0(B,\wedge^2\Gamma_M^\vee).$$ 
With our conventions for the k\"ahler form, the restriction is minus the polarization, therefore $\omega^{-1}$ induces an isomorphism $\omega^{-1}:\Gamma_M\ra \Gamma_M^\vee$. If we set
$$\hat{c}:=-\omega^{-1}(c)=-d_2(\omega^{-1}),$$
then $\langle c\cup \hat{c}\rangle =d_2\circ d_2(\omega^{-1})=0$. 

Let $\hat{M}\ra B$ be the affine torus bundle with local system $\Gamma_M=\Gamma_M^\vee$ and Chern class $\hat{c}$ and let $\cG$ and $\hat{\cG}$ be the gerbes defined in Proposition \ref{top tdual pair}.

Let now $\cU=\{U_i\}$ be a good cover of $B$. Then on $M|_{U_i}=M_i$ we can define the $U(1)$-bundle with connection $(M_i,L_i^A)$ corresponding to the generalized brane $(M,\omega_\dI)$ (Example \ref{BAA-BBB space filling tdual bundles}). We can calculate the gerbe product associated with the trivial gerbe $\delta(L^A)$ as in the construction of the Poincar\'e bundle as a gerbe. It is easy to see that $\delta(L^A)$ is a trivialization of $\cG$. Moreover, the local T-duals $(\hat{M}_i,\hat{L}^B_i)$ trivialize $\hat{\cG}$.

Therefore, $M$ endowed with the trivial gerbe corresponding to the class $h\in \HH^3(M,\dZ)$ is topologically self-T-dual. On the other hand, if we denote by $\phi:M\ra \hat{M}$ the isomorphism induced by polarization, the pullback of $\hat{L}^B_i$ via $\phi$ is not $L^A_i$ (cf. Example \ref{BAA-BBB space filling tdual bundles}). These are instead different $U(1)$-bundles which restrict to the fibre as the duals of $L^A_i$. In particular, the pullback $\phi^*\hat{\cG}$ is only stably isomorphic to $\cG$ not strictly.

If $M$ is not principally polarized, then we can still set $\hat{c}:=-\omega^{-1}(c)\in \HH^2(B,\Gamma_M^\vee)$. If we take $\hat{M}$ to be affine torus bundle with monodromy local system $\Gamma_M^\vee$ and Chern class $\hat{c}$, then there exist gerbes $\cG$ and $\hat{\cG}$ such that $(M,\cG)$ and $(\hat{M},\hat{\cG})$ are topologically T-dual. The gerbe $\cG$ on $M$ is still trivialized by $(M_i,L^A_i)$, but its T-dual is now a higher rank brane given locally by $(\hat{M}_i,\hat{L}^B_i)$ forming a gerbe module of $\hat{\cG}$.
\end{example}

\section{T-duality of higher rank branes}
In this section, we consider T-duality of higher-rank branes. More precisely, we will see that we can upgrade T-duality of generalized branes to T-duality of twisted $U(n)$-bundles with connections.

Let $M$ and $\hat{M}$ be a pair of topologically T-dual affine torus bundles in the sense of Proposition \ref{top tdual pair} endowed with flat connections and gerbes $\cG$ and $\hat{\cG}$. In the previous section, we assumed that $\cG$ and $\hat{\cG}$ admitted flat connections but this is not necessary for the following discussion.

Let $(S,F)$ be a generalized brane in $M$ which admits global generalized T-duals and it satisfies the conditions of Theorem \ref{gluing of Z}. Let $(\hat{S},\hat{F})$ be a generalized T-dual of $(S,F)$. Then, by Theorem \ref{gluing of Z} there exists an affine torus subbundle $Z\subset S\times_{\pi(S)}\hat{S}$ which fits into the following diagram.
\begin{equation}\label{final diagram}
\begin{tikzcd}
& (Z,\hat{p}_Z^*\hat{F}-p_Z^*F=P|_Z) \arrow{dl}[swap]{p_Z} \arrow{dr}{\hat{p}_Z} &\\
(S,F) & & (\hat{S},\hat{F})
\end{tikzcd}
\end{equation}
Suppose first that $\pi(S)$ is simply connected. Then we can prove the following local statement.
\begin{theorem}\label{last local}
    Let $H$ be the fiberwise component of $F$, and suppose that in a local symplectic frame $\{\mu\}$ of $\Gamma_S$ it can be written as
    $$H=\sum_{i=1}^r \frac{n_i}{m_i}\mu_i^*\wedge \mu_{i+r}^*. $$
    Let $m=\prod_{i=1}^rm_i$ and $n=\prod_{i+1}^rn_i$.
Then, for any $Z\subset S\times_{\pi(S)}\hat{S}$ there exist $U(1)$-bundles $L_Z$ and $\hat{L}_Z$ with connections which satisfy
\begin{align}\label{higher rank tduals 1}L_Z\otimes \cP|_Z \cong \hat{L}_Z.\end{align}
Moreover, there exists a projectively flat $U(m)$-bundle with connection $E\ra S$ and a projectively flat $U(n)$-bundle with connection $\hat{E}\ra \hat{S}$ such that for any $Z$ we have
\begin{align}\label{higher rank tduals 2} (p_Z)_*L_Z\cong E\otimes U(m),\ \ \ \ \ (\hat{p}_Z)_*\hat{L}_Z\cong \hat{E}\otimes U(n),\end{align}
and the curvatures of the connections on $E$ and $\hat{E}$ are given by 
$$F_E=2\pi i F\cdot Id \in \Omega^2(S,\gu(m))\ \ \ \text{and}\ \ \ F_{\hat{E}}=2\pi i \hat{F} \cdot Id \in \Omega^2(\hat{S},\gu(n)).$$
That is, for any pair of generalized T-duals, we can find higher-rank branes over $S$ and $\hat{S}$ which are T-dual in the sense of \ref{higher rank tduals 1} and \ref{higher rank tduals 2}. Moreover, the connections of these higher rank branes are projectively flat and their curvatures are determined by the underlying generalized branes.
\end{theorem}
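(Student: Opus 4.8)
The plan is to reduce the statement to the relative versions of the pushforward theorems established on a single torus (Theorems \ref{pushforward tensor U(1) nondegen} and \ref{pushforward U(1) degen}), exploiting that $\pi(S)$ is simply connected so that $S$, $\hat S$ and every leaf $Z$ are trivial affine torus subbundles and all bundles can be treated by the relative factor-of-automorphy calculus of Theorem \ref{appel-humbert family}. The crucial starting observation is that, although the fiberwise form $H$ is only rational on $\Gamma_S$, its pullback $p_Z^*F$ is integral on $Z$: by Lemma \ref{local leaves coordfree} the projection $p_0$ sends $\Gamma_Z$ onto the sublattice $\Gamma_H=\langle m_1\mu_1,\dots,m_r\mu_r,m_1\mu_{r+1},\dots,m_r\mu_{2r},\mu_{2r+1},\dots\rangle$ of Definition \ref{Gamma_H}, and $H(m_i\mu_i,m_i\mu_{i+r})=m_in_i\in\dZ$. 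Hence $p_Z^*F$ is a genuine invariant integral two-form on $Z$ and admits an honest $U(1)$-bundle $L_Z$ with connection of curvature $2\pi i\, p_Z^*F$.

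First I would build $L_Z$ via a canonical relative factor of automorphy (Theorem \ref{appel-humbert family}), normalizing its flat part (the semicharacter together with the harmonic piece of the connection) exactly as in part \ref{1 absolute} of Theorem \ref{tdual u(1) bundles on a torus}, so that $L_Z$ is trivial on the fibers of $p_Z\colon Z\to S$. This is possible because $p_Z^*F$ is pulled back from $S$ and so vanishes in the $Ann(V_S)$-directions that span those fibers, leaving $L_Z$ fiberwise flat; the remaining freedom of flat twists lets one kill the residual character, as in the earlier constructions. Setting $\hat L_Z:=L_Z\otimes\cP|_Z$ and using the defining relation $\hat p_Z^*\hat F-p_Z^*F=P|_Z$ of diagram (\ref{final diagram}), the curvature of $\hat L_Z$ is $2\pi i\,\hat p_Z^*\hat F$, and the same compatible normalization makes $\hat L_Z$ trivial on the fibers of $\hat p_Z$. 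This already establishes (\ref{higher rank tduals 1}).

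Next I would compute the two pushforwards. By Lemma \ref{local leaves coordfree}, Lemma \ref{local images of Z} and Corollary \ref{space of leaves mapping to a T-dual}, the map $p_Z$ factors fiberwise as a projection with connected torus fibers $Ann(V_S)/Ann(\Gamma_S)$ followed by the degree-$m^2$ isogeny $V_S/\Gamma_H\to V_S/\Gamma_S$ (whose degree is $[\Gamma_S:\Gamma_H]=m^2$), and symmetrically $\hat p_Z$ factors through a degree-$n^2$ isogeny, the fibers of $\hat p_Z$ being $\prod_i n_i^2=n^2$ copies of an $(l-2r)$-torus. Pushing $L_Z$ first along the connected-fiber projection, where it is fiberwise trivial (Definition \ref{definition: pushforward along projection}), produces a $U(1)$-bundle whose curvature descends to $H$; applying the isogeny-pushforward decomposition of Theorem \ref{pushforward tensor U(1) nondegen} in the relative setting then exhibits $(p_Z)_*L_Z$ as a projectively flat $U(m^2)$-bundle of the form $E\otimes U(m)$, with $E$ of rank $m$ and curvature $2\pi i F\cdot Id$; the identical argument yields $(\hat p_Z)_*\hat L_Z\cong\hat E\otimes U(n)$ with curvature $2\pi i\hat F\cdot Id$, giving (\ref{higher rank tduals 2}) and the curvature assertions. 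Independence of the leaf $Z$ follows because, by Corollary \ref{space of leaves mapping to a T-dual}, admissible leaves differ by a constant fiber translation, which alters the pushforward only up to isomorphism, exactly as in part \ref{2 contr base} of Theorem \ref{tdual U(1) bundles trivial base}.

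The hard part will be the relative isogeny-pushforward step. Theorem \ref{pushforward tensor U(1) nondegen} was proved for a single torus with the specific isogeny $\phi_L\colon X\to\hat X$, whereas here one must rerun the same factor-of-automorphy bookkeeping for a general degree-$m^2$ isogeny $V_S/\Gamma_H\to V_S/\Gamma_S$ varying over the (trivial) base $\pi(S)$. Concretely this requires choosing a maximal-isotropic splitting of $\Gamma_S/\Gamma_H$ with respect to the pairing induced by $H$, factoring the isogeny through an intermediate degree-$m$ isogeny, and invoking the projection formula together with the pushforward-of-structure-sheaf computation underlying Theorem \ref{pushforward tensor U(1) nondegen} (the analogue of Lemma \ref{lemma: pushforward of trivial line bundle}) to split off the tensor factor $U(m)$. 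Verifying that the extracted semi-representation is the correct one, so that $E$ and $\hat E$ are genuinely projectively flat with the stated curvatures and independent of the auxiliary splitting, and that all these choices glue over $\pi(S)$, is the principal technical content and where the bulk of the relative-calculus verification lies.
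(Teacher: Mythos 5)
Your proposal is correct, and its first half coincides with the paper's own proof: the paper likewise constructs $L_Z$ and $\hat L_Z$ by writing down canonical factors of automorphy on $Z_0$ with curvature $2\pi i\,p_Z^*F$ (whose integrality on $Z$ is exactly your observation that $p_0(\Gamma_Z)=\Gamma_H$ and $H$ is integral on $\Gamma_H$), normalized by the global semicharacter of Lemma \ref{global H semichar} so that both fiberwise trivialities hold at once, and then checks $L^0_Z\otimes\cP^0_Z\cong\hat L^0_Z$ by direct computation. Where you genuinely diverge is the pushforward step. The paper obtains $(\hat p_Z)_*\hat L_Z\cong\hat E\otimes U(n)$ by repeating the relative computation of Theorem \ref{tdual U(1) bundles trivial base}, and then gets $(p_Z)_*L_Z\cong E\otimes U(m)$ \emph{not} directly but by a duality trick: it regards $(S,F)$ as the T-dual of $(\hat S,\hat F)$, replaces $\cP$ by $\cP^{-1}$ and $H$ by $\hat H=H^{-1}\circ q$ (which interchanges the roles of $n_i$ and $m_i$), and reruns the same argument; you instead treat $p_Z$ and $\hat p_Z$ symmetrically, factoring each as a connected-fiber projection followed by an isogeny of degree $m^2$ (resp.\ $n^2$) and re-running the mechanism of Theorem \ref{pushforward tensor U(1) nondegen} (isotropic splitting, intermediate degree-$m$ isogeny, projection formula, structure-sheaf pushforward, translation lemmas). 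Note that both routes require the same generalization, since in the rational case the isogeny part of $\hat p_0$ has degree $n^2$ while the $\phi$-map of the line bundle being pushed has degree $(nm)^2$, so Theorems \ref{pushforward tensor U(1) nondegen} and \ref{pushforward U(1) degen} do not apply verbatim; your write-up is more explicit about this than the paper's instruction to ``repeat the proof''. The paper's duality trick buys economy (one computation instead of two), while your symmetric route avoids setting up the dual T-duality data. One point you should make explicit in the intermediate-isogeny step: the line bundle descends to the intermediate quotient because $H$ becomes integral on $\Gamma_H$ enlarged by an isotropic lift (e.g.\ $H(\mu_i,m_i\mu_{i+r})=n_i\in\dZ$), and the translation representatives needed in the analogues of Lemmas \ref{Lemma: translation and tensor} and \ref{Lemma: invariance under kernel} can be chosen inside the kernel of the complementary isogeny precisely because $\gcd(n_i,m_i)=1$; this coprimality (equivalently, nondegeneracy of the induced pairing on $\Gamma_S/\Gamma_H$) is what forces the split-off tensor factor to be $U(m)$ (resp.\ $U(n)$) rather than something smaller.
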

\begin{remark}\label{semihomog remark}
        We can apply this theorem to dual complex tori $M$ and $\hat{M}$ and T-dualizable generalized $B$-branes $(S,F)$ and $(\hat{S},\hat{F})$. Then the resulting sheaves $E$ and $\hat{E}$ are Fourier-Mukai pairs of semihomogeneous vector bundles supported on affine subtori. Therefore, calling $(S,E)$ and $(\hat{S},\hat{E})$ a T-dual pair of higher rank branes is justified.
\end{remark}        
\begin{proof}[Proof of Theorem \ref{last local}]
    We first determine $\cP|_Z$ in terms of factors of automorpy for a choice of $(S,F)$ and $(\hat{S},\hat{F})$ and $Z\subset S\times_{\pi(S)}\hat{S}$. 

Locally we can write $S$ as $t_bS_0$ inside $M|_{\pi(S)}$ for $b: \pi(S)\ra M$.  We then have $Z=t_{b,-G}Z_0$ in $S\times_{\pi(S)}\hat{M}$, where $G:\pi(S)\ra \hat{M}$ is a lift of a section $\pi(S)\ra V_S^*/\Gamma_S^\vee$ determined up to a constant by $F\in \Omega^2(S)$. Then, $\hat{S}=t_{-G}\hat{S}_0$. We have the maps
\[
\begin{tikzcd}
    & Z_0\arrow{dl}[swap]{p_0}\arrow{dr}{\hat{p}_0} & \\
    S_0 & & \hat{S}_0
\end{tikzcd}
\]
modelled on the homomorphisms
\[
\begin{tikzcd}
    0 \arrow{r} & Ann(\Gamma_S) \arrow{d}{\cong} \arrow{r} &\Gamma_Z \arrow{d}{\hat{p}_0} \arrow{r}{p_0} & \Gamma_H \arrow{r} \arrow{d}{-H} & 0\\
    0 \arrow{r} & Ann(\Gamma_S) \arrow{r} & \Gamma_{\hat{S}} \arrow{r}{q} & \Gamma_{S}^\vee \cap H(V_S) \arrow{r} &  0.
\end{tikzcd}
\]
Then, $p_Z$ and $\hat{p}_Z$ are determined by the commutative diagrams
\[
\begin{tikzcd}
Z_0 \arrow{d}[swap]{p_0} \arrow{r}{t_{(b,-G)}} & Z \arrow{d}{p_Z} & Z_0 \arrow{d}[swap]{\hat{p}_0} \arrow{r}{t_{(b,-G)}} & Z \arrow{d}{\hat{p}_Z} \\
S_0 \arrow{r}{t_b} & S, & \hat{S}_0 \arrow{r}{t_{-G}} & \hat{S}
\end{tikzcd}.
\]
Let us write $\cP^0_Z\ra Z_0$ for $t_{(b,-G)}^*(\cP|_Z)$. We have 
\begin{align*}
\cP|_Z=(t_{(b,-G)})_*\Big((t_{(b,-G)}^*\cP)|_{Z_0} \Big)
\end{align*}
so $\cP_Z^0=(t_{(b,-G)}^*\cP)|_{Z_0} $. The bundle $(t_{(b,-G)}^*\cP)$ on $M\times_{\pi(S)}\hat{M}$ is represented by
\begin{align*}
a_{(t_{(b,-G)}^*\cP)}(v,\hat{v};\lambda,\hat{\lambda})&=a_\cP(v+b,\hat{v}-G;\lambda,\hat{\lambda})\\
&=exp(i\pi(\hat{\lambda}(\lambda)+\hat{v}(\lambda)-\hat{\lambda}(v)-G(\lambda)-\hat{\lambda}(b)))\\
A_{(t_{(b,-G)}^*\cP)}(v,\hat{v})&=i\pi (\hat{v}-G)\cdot d(v+b)-(v+b)\cdot d(\hat{v}-G))\\
&=i\pi (\hat{v}\cdot dv-\hat{v}\cdot dv)-i\pi G\cdot dv+i\pi v\cdot dG -i\pi b\cdot d\hat{v} + i\pi \hat{v}\cdot db\\
&\ \ \ -i\pi G\cdot db+i\pi b\cdot dG.
\end{align*}
Therefore,
\begin{align*}
    a_{\cP^0_Z}(v;\lambda)&=exp\Big(i\pi \Big(\hat{p}_0(\lambda)(p_0(\lambda))+\hat{p}_0(v)(p_0(\lambda))-\hat{p}_0(\lambda)(p_0(v))-G(p_0(\lambda))-\hat{p}_0(\lambda)(b)\Big)\Big)\\
    &=exp\Big( i\pi \Big( -H(p_0(\lambda),p_0(\lambda))-H(p_0(v),p_0(\lambda))+H(p_0(\lambda),p_0(v))\\
    &\hspace{9cm} -G(p_0(\lambda))-\hat{p}_0(\lambda)(b)\Big)\Big)\\
    &=exp\Big( -2\pi i H(p_0(b),p_0(\lambda))-i\pi G(p_0(\lambda))-i\pi \hat{p}_0(\lambda)(b) \Big),\\
    A_{\cP^0_Z}(v)&=i\pi(-H(p_0(v))\cdot dp_0(v)+p_0(v)\cdot dH(p_0(v)))-i\pi G dp_0(v)+i\pi p_0(v)\cdot dG\\
& \ \ \ \ -i\pi bd\hat{p}_0(v)+i\pi \hat{p}_0(v)\cdot db-i\pi G\cdot db +i\pi b \cdot dG\\
&=-2\pi i H(p_0(v),dp_0(v))-i\pi G dp_0(v)+i\pi p_0(v)\cdot dG-i\pi bd\hat{p}_0(v)+i\pi \hat{p}_0(v)\cdot db\\
&\ \ \ -i\pi G\cdot db +i\pi b \cdot dG.
\end{align*}
Let us change the representatives by
$$\phi(v)=exp(i\pi(G(p_0(v))+\hat{p}_0(v)(b)+i\pi G\cdot b) )$$
so we have
\begin{align*}
    a_{\cP^0_Z}(v;\lambda)&=exp(-2\pi i H(p_0(v),p_0(\lambda))),\\
    A_{\cP^0_Z}(v) &=-2\pi i H(p_0(v),dp_0(v))-2\pi i G\cdot dp_0(v)-2\pi i b\cdot d\hat{p}_0(v) - 2\pi i b \cdot dG.
\end{align*}
The two-form $F\in \Omega^2(S)$ pulls back to a two-form $t_b^*F\in \Omega^2(S_0)$ under the diffeomorphism $t_b: S_0\ra S$. We can write
$$p_0^*t_b^*F=2\pi i H(dp_0(v), dp_0(v))+2\pi i dG \wedge dp_0(v) +2\pi i F^0$$
Let $A^0\in \Omega^1(\pi(S))$ be a two-form such that $dA^0=F^0$ and define the $U(1)$-bundles with connection $L^0_Z$ and $\hat{L}^0_Z$ on $Z_0$ as
\begin{align*}
    a_{L^0_Z}(v;\lambda)&=\chi_0(\lambda)exp(i\pi H(p_0(v),p_0(\lambda)))\\
    A_{L^0_Z}(v)&=i\pi H(p_0(v),dp_0(v))+2\pi i G\cdot dp_0(v) + 2\pi i A^0,\\
    a_{\hat{L}^0_Z}(v;\lambda)&=\chi_0(\lambda)exp(-i\pi H(p_0(v),p_0(\lambda))),\\
    A_{\hat{L}^0_Z}(v)&=-i\pi H(p_0(v),dp_0(v))-2\pi i b\cdot d\hat{p}_0(v)-2\pi i b\cdot dG+2\pi i A^0.
\end{align*}
Then we have 
$$L^0_Z\otimes \cP^0_Z\cong \hat{L}^0_Z$$
on $Z_0$. 

We can now repeat the proof of Theorem \ref{tdual U(1) bundles trivial base} to show that $(\hat{p}_Z)_*\hat{L}_Z$ is independent of the choice of $Z$ and of the choice of representatives and that
$$(\hat{p}_Z)_*\hat{L}_Z=\hat{E}\otimes U(n).$$

To show that the analogous statement holds for $L_Z$ we have to view $(S,F)$ as the T-dual of $(\hat{S},\hat{F})$ and work backwards. If we view $M$ as the dual of $\hat{M}$, the corresponding Poincar\'e bundle is given by $\cP^{-1}$, so we have on $Z$
$$\hat{L}_Z\otimes \cP^{-1}|_Z= L_Z.$$
Moreover, on $\hat{S}$ we can determine the fiber-wise component 
$$\hat{H}\in \HH^0(\pi(S),\wedge^2\Gamma_{\hat{S}}^\vee\otimes \dQ)$$
of the two-form $\hat{F}$, which acts on $V_{\hat{S}}$ as
$$\hat{H}(\hat{v},\hat{v})=H^{-1}(q(\hat{v}),q(\hat{v})).$$
Then repeating all the arguments in Chapter \ref{chapter gen geom tdual} we can show that the vertical bundle $V_Z$ and monodromy local system $\Gamma_Z$ of $Z_0$ can be expressed also as
\[
\begin{tikzcd}
    0 \arrow{r} & Ann(\Gamma_{\hat{S}}) \arrow{r} & \Gamma_Z \arrow{r} & \Gamma_{\hat{H}} \arrow{r} & 0 \\
    0 \arrow{r} & Ann(V_{\hat{S}}) \arrow{r} & V_Z \arrow{r} & V_{\hat{S}} \arrow{r} & 0
\end{tikzcd}
\]
and we can express $p_0$ and $\hat{p}_0$ via the maps
\[
\begin{tikzcd}
     0 \arrow{r} & Ann(\Gamma_{\hat{S}}) \arrow{d}{\cong} \arrow{r} & \Gamma_Z \arrow{r}{\hat{p}_0} \arrow{d}{p_0} & \Gamma_{\hat{H}}\arrow{d}{-\hat{H}} \arrow{r} & 0 \\
     0  \arrow{r} & Ann(\Gamma_{\hat{S}}) \arrow{r} & \Gamma_S \arrow{r}{\hat{q}} & \hat{H}(V_{\hat{S}})\cap \Gamma_{\hat{S}}^\vee \arrow{r} & 0.
\end{tikzcd}
\]
Repeating the proofs of Theorem \ref{tdual u(1) bundle general base} shows that 
$$(p_Z)_*L_Z\cong E\otimes U(m).$$
\end{proof}
The resulting pair $(S,E)$ and $(\hat{S},\hat{E})$ are also shown to be T-dual in \cite{CLZ}.
\begin{remark}
    Looking more closely at the factors of automorphy representing $E$ and $\hat{E}$ it is easy to show that 
    $$p_Z^*E\cong L_Z\otimes U(n)\ \ \ \text{and}\ \ \ \hat{p}_Z^*\hat{E}\cong \hat{L}_Z\otimes U(m).$$
\end{remark}

We can extend our theorem to T-dual torus bundles over a non-contractible base. Theorem \ref{top tdual pair} shows that if $M$ and $\hat{M}$ satisfy $\langle c\cup \hat{c}\rangle=0\in \HH^4(M,\dZ)$ we can find gerbes with connections $\cG$ and $\hat{\cG}$ on $M$ and $\hat{M}$ respectively such that the pairs $(M,\cG)$ and $(\hat{M},\hat{\cG})$ are topologically T-dual. The gerbes $\cG$ and $\hat{\cG}$ may not be flat, but their connections' curvatures are basic. 

Let $(S,F)$ and $(\hat{S},\hat{F})$ be a pair of T-dual generalized branes for zero $H$-flux, that is $dF=0$ and $d\hat{F}=0$. Moreover, assume that there exists a global leaf $Z\subset S\times_{\pi(S)}\hat{S}$  fitting into the diagram,
\begin{equation}\label{final diagram 2}
\begin{tikzcd}
& (Z,\hat{p}_Z^*\hat{F}-p_Z^*F=P|_Z) \arrow{dl}[swap]{p_Z} \arrow{dr}{\hat{p}_Z} &\\
(S,F) & & (\hat{S},\hat{F}).
\end{tikzcd}
\end{equation}
The following theorem shows that even if $\cG$ and $\hat{\cG}$ are not flat, over $S$ and $\hat{S}$ there exist flat gerbes with gerbe modules which are locally T-dual in the sense of Theorem \ref{last local}.
\begin{theorem}\label{last global} In the above setting let $m$ and $n$ be the positive integers as in Theorem \ref{last local}. Then, there exist flat gerbes $\cG'$ and $\hat{\cG}'$ on $S$ and $\hat{S}$ such that the following holds.
    \begin{enumerate}
        \item\label{higher rank 1} Over any $Z\subset S\times_{\pi(S)}\hat{S}$ as in (\ref{final diagram 2}) the twisted Poincar\'e bundle is a trivialization of $\hat{p}_Z^*\hat{\cG}'\otimes (p_Z^*\cG')^{-1}$. Moreover,
        $$\cG'\otimes (\cG|_S)^{-1}\cong\pi^*\cG_0\ \ \ \text{and}\ \ \ \hat{\cG}'\otimes (\hat{\cG}|_{\hat{S}})^{-1}\cong\hat{\pi}^*\cG_0 $$
        where $\cG_0$ is a gerbe on $\pi(S)$.
        \item\label{higher rank 2} For any $Z\subset S\times_{\pi(S)}\hat{S}$ as in (\ref{final diagram 2}) there exists $U(1)$-bundles $L$ and $\hat{L}$ on $Z$ twisted by $p_Z^*\cG'$ and $\hat{p}_Z^*\hat{\cG}'$ such that we have an isomorphism of gerbe trivializations
    $$\delta(\hat{L})\otimes \delta(L)^{-1} \cong \delta(\cP).$$
        Moreover, $L$ is
        trivial on the fibers of $p_Z:Z\ra S$ and $\hat{L}$ is trivial on the fibers of $\hat{p}_Z:Z\ra \hat{S}$. 
        \item\label{higher rank 3} There exists a $\cG'$-module $E$ of rank $m$ on $S$ and a $\hat{\cG}'$-module $\hat{E}$ of rank $n$ on $\hat{S}$ such that
         $$E\otimes U(m)\cong (p_Z)_*L\ \ \text{and}\ \ \hat{E}\otimes U(n)\cong (\hat{p}_Z)_*\hat{L}_Z.$$
         Moreover, the connections on $E$ and $\hat{E}$ are projectively flat with curvatures given by 
         $$F_E=2\pi i F\cdot Id \in \Omega^2(S,\gu(m))\ \ \ \text{and}\ \ \ F_{\hat{E}}=2\pi i \hat{F} \cdot Id \in \Omega^2(\hat{S},\gu(n)).$$
    \end{enumerate}
    \end{theorem}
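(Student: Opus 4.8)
The plan is to globalize Theorem \ref{last local} by a \v{C}ech gluing argument over a good cover of $\pi(S)$, producing the gerbes $\cG'$ and $\hat{\cG}'$ as the obstructions to gluing the locally defined $U(1)$-bundles into honest bundles on $S$ and $\hat{S}$. This parallels the passage from the contractible-base Theorem \ref{tdual U(1) bundles trivial base} to the general-base Theorem \ref{tdual u(1) bundle general base}, and the construction of the Poincar\'e gerbe in Section \ref{poincare gerbe}. The conceptual point that makes everything fit together is that $\pi\circ p_Z=\hat{\pi}\circ\hat{p}_Z$ as maps $Z\ra\pi(S)$, so that modifying $\cG|_S$ and $\hat{\cG}|_{\hat{S}}$ by pullbacks of the \emph{same} base gerbe $\cG_0$ leaves the T-duality relation over $Z$ unchanged.

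First I would fix a good cover $\{U_i\}$ of $\pi(S)$, flat connections on $M$ and $\hat{M}$, local flat sections, and dual fiber coordinates, so that all transition functions of $S$, $\hat{S}$, $Z$, $M$ and $\hat{M}$ become constant affine transformations, exactly as in Section \ref{poincare gerbe}. Over each contractible $U_i$ the restricted data $(S|_{U_i},F)$, $(\hat{S}|_{U_i},\hat{F})$ and $Z|_{U_i}$ fall under Theorem \ref{last local}, which produces $U(1)$-bundles with connection $L_i\ra Z|_{U_i}$ and $\hat{L}_i\ra Z|_{U_i}$, built from canonical factors of automorphy using the globally defined invariant semicharacter $\chi_0$ of Lemma \ref{global H semichar}, and satisfying $L_i\otimes\cP_i\cong\hat{L}_i$ with $L_i$ trivial on the fibers of $p_Z$ and $\hat{L}_i$ trivial on those of $\hat{p}_Z$.

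Next I would compute, over double and triple overlaps, the discrepancy between the local factors of automorphy of $L_i$ and $L_j$. Since the transition functions are constant and $\chi_0$ is global, these differences are governed by the same book-keeping as in the proofs of Theorem \ref{tdual u(1) bundle general base} and Proposition \ref{top tdual pair}: on $Z|_{U_{ij}}$ one obtains line bundles, and on $Z|_{U_{ijk}}$ gerbe products, which I take to \emph{define} $p_Z^*\cG'$, and symmetrically $\hat{p}_Z^*\hat{\cG}'$. Descending along $p_Z$ and $\hat{p}_Z$, using the triviality of $L_i$ and $\hat{L}_i$ on the respective fibers, shows that $\cG'$ lives on $S$ and $\hat{\cG}'$ on $\hat{S}$, and that $L$ and $\hat{L}$ are rank one $p_Z^*\cG'$- and $\hat{p}_Z^*\hat{\cG}'$-modules. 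The relation $\delta(\hat{L})\otimes\delta(L)^{-1}\cong\delta(\cP)$ is then the fiberwise isomorphism $L_i\otimes\cP_i\cong\hat{L}_i$ assembled across the cover, giving part \ref{higher rank 2}.

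To obtain part \ref{higher rank 1} I would compare the gerbe product of $\cG'$ with that of $\cG|_S$. As in equations (\ref{G gerbe product})--(\ref{P gerbe product}), the two differ only by terms built from the base translation cochains $\gamma=\{c_{ij}\}$ and $\hat{\gamma}=\{\hat{c}_{ij}\}$, that is, by a \v{C}ech cocycle pulled back from $\pi(S)$; this cocycle defines $\cG_0$ and yields $\cG'\otimes(\cG|_S)^{-1}\cong\pi^*\cG_0$. The crux is to verify that the \emph{same} $\cG_0$ appears on the dual side, $\hat{\cG}'\otimes(\hat{\cG}|_{\hat{S}})^{-1}\cong\hat{\pi}^*\cG_0$; this is the manifestation of the Poincar\'e property of Proposition \ref{top tdual pair} together with $\langle c\cup\hat{c}\rangle=0$, and is where I expect the main difficulty to lie. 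Granting it, the T-duality relation over $Z$ is preserved because $p_Z^*\pi^*\cG_0=\hat{p}_Z^*\hat{\pi}^*\cG_0$, and flatness of $\cG'$ and $\hat{\cG}'$ follows from $dF=0$ and $d\hat{F}=0$, the global closed two-forms $F$ and $\hat{F}$ furnishing curvings with vanishing curvature. Finally, part \ref{higher rank 3} is the globalization of the pushforward computation: applying Theorem \ref{pushforward U(1) degen} fiberwise gives $(p_Z)_*L\cong E\otimes U(m)$ and $(\hat{p}_Z)_*\hat{L}_Z\cong\hat{E}\otimes U(n)$ with projectively flat connections of curvatures $2\pi i F\cdot Id$ and $2\pi i\hat{F}\cdot Id$, the $\cG'$- and $\hat{\cG}'$-module structures being inherited under pushforward exactly as in Theorem \ref{last local}.
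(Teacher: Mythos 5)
Your proposal follows the paper's own proof nearly step for step: fix a good cover of $\pi(S)$ with constant affine transition functions, apply Theorem \ref{last local} over each $U_i$ to get $L_i$ and $\hat{L}_i$ on $Z_i$ with $L_i\otimes\cP_i\cong\hat{L}_i$, let the overlap bundles $L_i\otimes(\psi^Z_{ij})^*L_j^{-1}$ and their gerbe products define the trivializable gerbes $\delta(L)$ and $\delta(\hat{L})$ on $Z$, descend these to flat gerbes $\cG'$ on $S$ and $\hat{\cG}'$ on $\hat{S}$ (in the paper this descent rests on the Chern class identity (\ref{chern class of Z}), which is exactly what your constant-transition bookkeeping with the global semicharacter of Lemma \ref{global H semichar} produces), and finally compare cocycles with $\cG|_S$ and $\hat{\cG}|_{\hat{S}}$ to see that the differences are basic. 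Parts \ref{higher rank 2} and \ref{higher rank 3} are handled exactly as in the paper.

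The one genuine gap is the step you flag as the main difficulty and then simply grant: that the \emph{same} base gerbe $\cG_0$ appears on both sides. Moreover, as written your logic runs backwards: you assume the two base gerbes agree and then conclude $p_Z^*\pi^*\cG_0=\hat{p}_Z^*\hat{\pi}^*\cG_0$, whereas this pullback identity is what must be established first, with the agreement of the base gerbes deduced from it. The paper closes the step with an argument already available inside your framework. Part \ref{higher rank 2} gives $\hat{p}_Z^*\hat{\cG}'\otimes(p_Z^*\cG')^{-1}\cong\delta(\cP)|_Z$, while the ambient topological T-duality of Proposition \ref{top tdual pair} (the strict isomorphism $p^*\cG\otimes\delta(\cP)\cong\hat{p}^*\hat{\cG}$ restricted to $Z$) gives $\hat{p}_Z^*(\hat{\cG}|_{\hat{S}})\otimes\bigl(p_Z^*(\cG|_S)\bigr)^{-1}\cong\delta(\cP)|_Z$ as well. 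Combining the two yields $p_Z^*\bigl(\cG'\otimes(\cG|_S)^{-1}\bigr)\cong\hat{p}_Z^*\bigl(\hat{\cG}'\otimes(\hat{\cG}|_{\hat{S}})^{-1}\bigr)$. By your own cocycle comparison each side is the pullback of a basic gerbe along the same map $\pi\circ p_Z=\hat{\pi}\circ\hat{p}_Z:Z\ra\pi(S)$, whose fibers are connected tori, so the two basic gerbes coincide; this defines $\cG_0$ and proves Part \ref{higher rank 1}. Note that the Poincar\'e property and $\langle c\cup\hat{c}\rangle=0$ enter only through Proposition \ref{top tdual pair} itself; the missing step is this restriction-and-descent argument, not any new use of those hypotheses.
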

\begin{proof}
    We define $\cG'$ and $\hat{\cG}'$ via the local line bundles $L_Z$ and $\hat{L}_Z$ of Theorem \ref{last local}. 
    
    Let $\cV$ be a good cover of $B$ such that $\cU:=\{\pi(S)\cap V\ |\ V\in \cV\}$ is a good cover of $\pi(S)$. Then, for any $Z\subset S\times_{\pi(S)}\hat{S}$ we can define line bundles $L_i$ and $\hat{L}_i$ over $Z_i=q^{-1}(U_i)\cap Z$  for $U_i\in \cU$ as in Theorem \ref{last local}. 

    Analogously to (\ref{psi on Mxhat(M)}), the transition functions for $Z$ over the cover $\{Z_i\}$ can be written as
    \begin{align*}
        \psi_{ij}^Z:\ \ \ Z_i\ \ &\ra\ \ Z_j\\
        (y,v)\ \ &\mapsto\ \ (\rho_{ij}(y),A_{ij}^Zv+c^Z_{ij}).
    \end{align*}
    
    We define a trivializable gerbe $\delta(L)$ on $Z$ via the collection of $U(1)$-bundles $\{L_i\}$. Over $Z_i\cap Z_j$ we can represent $\delta(L)$ by the the local flat $U(1)$-bundles via the following factor of automoprhy and connection one forms.
    \begin{align*}
        a_{L_i\otimes (\psi_{ij}^Z)^*L_j^{-1}}(y,v;\lambda)&=exp(-i\pi H(p_0(c_{ij}^Z),p_0(A^Z_{ij}\lambda)) ),\\
        A_{L_i\otimes (\psi_{ij}^Z)^*L_j^{-1}}(y,v)&=-i\pi H(p_0(c^Z_{ij}), dp_0(A^Z_{ij}v))-i\pi H(p_0(A_{ij}^Zv), dp_0(c^Z_{ij}))+\\
        &\ \ \ +2\pi i G_i\cdot dp_0(v)-2\pi i G_j\cdot dp_0(A^Z_{ij} v)-2\pi i G_jdp_0(c^Z_{ij})-\\
        &\ \ \ -i\pi H(p_0(c^Z_{ij}),dp_0(c^Z_{ij}))+2\pi i (A^0_i-A^0_j).
    \end{align*}
    Since $dA_{L_i\otimes (\psi_{ij}^Z)^*L_j^{-1}}=F_i-F_j=0$ we have 
    $$2\pi i G_jdp_0(c^Z_{ij})-i\pi H(p_0(c^Z_{ij}),dp_0(c^Z_{ij}))+2\pi i (A^0_i-A^0_j)=2\pi i d\phi_{ij}(y)$$
    for some basic functions $\phi_{ij}:U_{ij}\ra \dR$. We can calculate the gerbe product as in Section \ref{poincare gerbe}, so we find
    \begin{align*}
    g_{ijk}'(y,v)=exp(-2\pi i H(p_0(n^Z_{ijk}),p_0(v))+h_{ijk}(y))  \in \HH^2(\pi(S),q_*\cC_{U(1)}),
    \end{align*}
    where $h_{ijk}$ are smooth functions pulled back from $U_{ijk}$. 

    Using that $Z_i=t_{(b_i,-G_i)}Z_0$ and that $Z$ is a global submanifold of $S\times_{\pi(S)}\hat{S}$ we find that for all $i,j$ we have in
    \begin{align*}-((A^S_{ij})^{T})^{-1}H(p_0(v))-((A&^S_{ij})^T)^{-1}G_i+q(\hat{c}_{ij})\\
   &=-H(p_0(A^Z_{ij}v))-H(p_0(c^Z_{ij}))-G_j\ \ \ \in \left(V_S^*/\Gamma_S^\vee\right)\Big|_{U_{ij}},\end{align*}
    where $q:V_M^*\ra V_S^*$ is the projection and $\hat{c}_{ij}\in \HH^1(\pi(S),V_M^*/\Gamma_M^\vee)$ represents the Chern class of $\hat{M}$.
    Since $(A^S_{ij})^THp_0(A^Z_{ij})=(A^S_{ij})^THA^S_{ij}=H$ we have
    \begin{align}\label{chern class of Z}(A^S_{ij})^Tq(\hat{c}_{ij})=-(A^S_{ij})^TH(p_0(c^Z_{ij}))+G_i-(A^S_{ij})^TG_j\ \ \ \in \left(V_S^*/\Gamma_S^\vee\right)\Big|_{U_{ij}}. \end{align}
    The consequence of (\ref{chern class of Z}) is of course the equality of Chern classes $q(\hat{c}|_{\pi(S)})=-H(c_Z)\in \HH^2(\pi(S),\Gamma_S^\vee\cap H(V_S))$.
    
    Using (\ref{chern class of Z}) it is clear that we can define a gerbe $\cG'$ over $S$ such that  $p_Z^*\cG'\cong\delta(L)$. The gerbe $\cG'$ can be represented over the cover $\{S_i:=\pi_S^{-1}(U_i)\}$ of $S$ via the flat line bundles on the double overlaps
    \begin{align*}
        a'_{ij}(y,v:\lambda)&=exp\left(i\pi q(\hat{c}_{ij})(A^S_{ij}\lambda)-i\pi (G_i- (A^S_{ij})^TG_j)(\lambda)\right),\\
        A'_{ij}(y,v)&=i\pi q(\hat{c}_{ij}) A^S_{ij}dv+i\pi (G_i-(A^S_{ij})^TG_j)dv+ i\pi (dG_i-(A^S_{ij})^TdG_j)( v)+2\pi i df_{ij},
    \end{align*}
    and the gerbe product
    $$g'_{ijk}(y,v)=exp\left(2\pi i q(\hat{n}_{ijk})(v) +h_{ijk}(y)\right)$$
    We can change the representatives of $L_{ij}$ by
    $$\mu_{ij}(y,v)=exp\left(i \pi (G_i-(A^S_{ij})^TG_j)\cdot v  \right),$$
    which only changes the gerbe product by a basic function.
    
    Similarly, we can define $\delta(\hat{L})$ as the flat trivializable gerbe associated to the  $U(1)$-bundles $\{\hat{L}_i\}$ on the cover $q^{-1}(U_i)$. Calculating $\hat{L}_i\otimes \hat{L}_j^{-1}$ on the double overlaps and the gerbe product on the triple overlaps shows again that $\delta(\hat{L})\cong \hat{p}_Z^*\hat{\cG}'$ for a flat gerbe on $\hat{S}$.

    Since locally $\cP_i\cong L_i^{-1}\otimes \hat{L}_i$ Part \ref{higher rank 2} follows. Part \ref{higher rank 3} is an immediate consequence of Theorem \ref{last local} and Part \ref{higher rank 1} and \ref{higher rank 2}.  

    Finally, we have to show that $\cG'$ and $\hat{\cG}'$ are isomorphic to $\cG|_S$ and $\hat{\cG}|_{\hat{S}}$ up to a flat gerbe pulled back from the base. We show this for $\cG'$ and $\cG|_S$ as the T-dual side follows analogously.

    From (\ref{G gerbe product}) the gerbe product for $\cG|_S$ is given by
    \begin{align*} g_{ijk}(y,v)=exp\left(2\pi i q(\hat{n}_{ijk})\cdot v +2\pi i n_{ijk}\cdot b_i -2\pi i f_{ijk}\right)\end{align*}
    and the local line bundles are
    \begin{align*}
    a_{ij}^\cG(y,v;\lambda)&=exp(\pi i q(\hat{c}_{ij})(A^S_{ij}\lambda)),\\
         A_{ij}^\cG(y,v)&=i\pi q(\hat{c}_{ij})(A^S_{ij}dv)+i \pi \hat{c}_{ij}\cdot A_{ij}\cdot db_i+ 2\pi i \epsilon_{ij}.
    \end{align*}
    After changing the representatives of $\cG'$ by $\mu_{ij}$ it becomes clear that $\cG'\otimes \hat{\cG}|_S^{-1}$ is a gerbe pulled back from the base. Moreover, since $p_Z^*\cG'\otimes \hat{p}_Z^*(\hat{cG}')^{-1}\cong p_z^*\cG|_S\otimes \hat{p}_Z^*\hat{\cG}^{-1}$ Part \ref{higher rank 1} follows. 
\end{proof}

\chapter{Conclusions}
\paragraph{Semihomogeneous vector bundles.} As we have mentioned in Remark \ref{semihomog remark} the results of Theorem \ref{last local} are closely tied to the works of Matsushima \cite{matsushima}, Mukai \cite{mukaiSemihomog} and others on semihomogeneous vector bundles. On a complex torus $X=V/\Gamma$ one can define the Heisenberg group $G_H(\Gamma)$ corresponding to any Hermitian pairing $H\in NS(X)\otimes \dQ$, or the Heisenberg group $G_E(\Gamma)$ corresponding to the imaginary part $E$ of $H$. Holomorphic representations of these groups can be reformulated into factors of automorphy, therefore they define holomorphic vector bundles on $X$.

Matsushima showed that there is a bijection between holomorphic representations of $G_H(\Gamma)$ and $G_E(\Gamma)$ and further established that there is a distinguished irreducible representation for any $H$. The vector bundles corresponding to these irreducible representations are precisely the simple semihomogeneous vector bundles on $X$. 

In \cite{mukaiSemihomog} Mukai described the category of semihomogeneous vector bundles on a complex torus. For any element $\delta \in NS(X)\otimes \dQ$ there is a subcategory $S_\delta$ and the category of semihomogeneous vector bundles is given by $\oplus S_\delta$. Within $S_\delta$, up to tensoring with a flat bundle, there is a unique simple vector bundle, which corresponds to the distinguished irreducible representation of the Heisenberg group associated to $\delta$.

In Theorem \ref{last local} the bundles $E$ and $\hat{E}$ should be (at least fiber-wise) analogous to the simple semihomogeneous vector bundles corresponding to irreducible representations of the appropriate Heisenberg groups. Since the curvatures of the connections are not necessarily compatible with a complex structure, we should extend Matsushima's analysis to accommodate this setting.

The work of Matsushima, Mukai, and others on semihomogeneous vector bundles, along with the contributions of Chan, Leung, and Zhang on T-duality and also Theorem \ref{last local}, indicates that the Fourier-Mukai transform of semihomogeneous vector bundles - and similarly, the T-duality of projectively flat bundles with invariant curvatures - can be understood via linear data on the fibers. Such a description could be easily worked out similarly to the Fourier-Mukai transform of line bundles supported on affine subtori (Section \ref{FM transform section}). This might already exist in the literature but the author is currently unaware of any references.

\paragraph{T-duality of generalized branes with non-zero $H$-flux.} 
There are several ways one could extend the results of this thesis concerning generalized branes. The first direction could be to include non-zero $H$-flux and study the locally T-dualizable branes. Suppose that $(M,H)$ and $(\hat{M},\hat{H})$ are T-dual affine torus bundles in the sense of generalized geometry. Then, \cite[Theorem 4.1.]{B2} states that given connections $A$ and $\hat{A}$ on $M$ and $\hat{M}$ respectively, we have
$$\hat{p}^*\hat{H}-p^*H=d\langle \hat{p}^*\hat{A}\wedge p^*A\rangle$$
after possibly changing $H$ and $\hat{H}$ by $B$-field transforms. In this case, however, the connections $A$ and $\hat{A}$ are not flat and we cannot find coordinates for the horizontal distribution.

A generalized brane in $(M,H)$ is a pair $\cL=(S,F)$ such that $S\subset M$ is a submanifold and $F\in \Omega^2(S)$ is a two-form satisfying $dF=H|_S$. Once again we can restrict to branes supported on affine torus subbundles with invariant two-forms. Then, we can still reduce the generalized tangent bundle $\tau_\cL$ and T-dualize it using the T-duality map \ref{tdualitymap}. Then, we can construct an invariant subbundle of $T\hat{M}|_{\pi(S)}\oplus T^*\hat{M}|_{\pi(S)}$ which is still maximal isotropic and Courant integrable. The image of this subbundle under the anchor map is an integrable distribution.  We could also construct this distribution as before, by going through the distribution $\Delta \subset T(S\times_{\pi(S)}\hat{M})$ in the correspondence space. On the other hand, the integrability of $\Delta$ is not clear. Moreover, even if $\Delta$ is integrable, we cannot find coordinates which also span the horizontal distribution. This makes the calculations in coordinates less enlightening. Therefore, to generalize one must find a coordinate-free way to study the distribution $\Delta$ and the T-duals of $(S,F)$. 

\paragraph{T-duality on the twistor space.}
There have been attempts to define the Fourier-Mukai transform on the space of lambda connections \cite{deligne, simpson3}, which serve as an algebraic analogue to the twistor space construction \cite[Section (F)]{hklr} for the Higgs bundle moduli space. 

Given T-dual algebraic integrable systems $(M,0)$ and $(\hat{M},0)$ it is easy to see that their twistor spaces $(Z,0)$ and $(\hat{Z},0)$ are also T-dual in the sense of generalized geometry. Indeed, the twistor space $Z$ of $M$ as a differentiable manifold is just $M\times S^2$, the product of $M$ with the sphere. Therefore, T-duality between $Z$ and $\hat{Z}$ is the pullback of the T-duality relation between $M$ and $\hat{M}$ from the base $B$ to $B\times S^2$.

The semi-flat hyperk\"ahler structures on these spaces induce holomorphic symplectic structures on the twistor spaces. Then, the T-dual of the generalized complex structure (GCS) associated with the holomorphic structure on $Z$ is a mixed-type GCS on $\hat{Z}$. Over the poles of $S^2$, it remains a complex type, while at the other points of the sphere, it becomes a symplectic type GCS.  This is analogous to the  $BBB$-$BAA$ T-duality. 

Via the Atiyah-Ward correspondence, $BBB$-branes already have a well-established description on the twistor space. Then, $BAA$-branes could be studied on the twistor space as special branes of this mixed type GCS. There is potential to develop an analogous geometric framework for the $BAA$ side. 

In \cite{gloverSawon} Glover and Sawon gave a twistor space construction for generalized hyperk\"ahler manifolds. One could extend T-duality to this setting as well.

\paragraph{Applications to the Higgs bundle moduli space.}
One could use the results established in this thesis to study branes on the Higgs bundle moduli spaces $\cM(r,d)$ as well (cf. Example \ref{higgs bundes 1}). The smooth locus of these moduli spaces carries a hyperk\"ahler metric, which we call the Hitchin metric, in which the fibration $\cM(r,d)\ra \cA$ is an algebraic integrable system. The moduli spaces $\cM(r,0)$, of rank $r$ degree zero Higgs bundles, admit a holomorphic Lagrangian section therefore on these components we can construct a semi-flat hyperk\"ahler structure. Then, both the semi-flat and the Hitchin metric induce the same holomorphic symplectic structure on $\cM(r,0)$ and therefore the $BAA$-branes of the semi-flat metric coincide with the $BAA$-branes of the Hitchin metric.

One could then T-dualize the semi-flat $BAA$-branes and study the resulting semi-flat $BBB$-branes. These are going to be a hyperholomorphic bundles with respect to the T-dual semi-flat metric, but not the Hitchin metric. One could study the deformations of the semi-flat hyperholomorphic connections to find a mirror-symmetry relation between $BAA$ and $BBB$-branes of the Hitchin metric. This is also the idea underlying the conjectures of Gaiotto, Moore and Nietzke \cite{GMN1,GMN2}.


\pagestyle{plain}




\end{document}